\title{Chiral differential operators via\\quantization of the holomorphic $\sigma$-model}
\author{Vassily Gorbounov}
\address{University of Aberdeen,
Higher School of Economics, and the Laboratory of Algebraic Geometry and Homological Algebra at the Moscow Institute of Physics and Technology}
\email{ vgorb10@gmail.com}
\author{Owen Gwilliam}
\address{University of Massachusetts, Amherst}
\email{gwilliam@math.umass.edu}
\urladdr{http://people.math.umass.edu/~gwilliam/}
\author{Brian Williams}
\address{Northeastern University}
\email{brianwilliams.math@gmail.com}
\urladdr{https://web.northeastern.edu/brwilliams/}
\begin{document}
\maketitle

\begin{abstract}
The curved $\beta\gamma$ system is a nonlinear $\sigma$-model with a Riemann surface as the source and a complex manifold $X$ as the target. Its classical solutions pick out the holomorphic maps from the Riemann surface into $X$. Physical arguments identify its algebra of operators with a vertex algebra known as the chiral differential operators (CDO) of $X$. We verify these claims mathematically by constructing and quantizing rigorously this system using machinery developed by Kevin Costello and the second author, which combine renormalization, the Batalin-Vilkovisky formalism, and factorization algebras. Furthermore, we find that the factorization algebra of quantum observables of the curved $\beta\gamma$ system encodes the sheaf of chiral differential operators. In this sense our approach provides deformation quantization for vertex algebras. As in many approaches to deformation quantization, a key role is played by Gelfand-Kazhdan formal geometry. We begin by constructing a quantization of the $\beta\gamma$ system with an $n$-dimensional formal disk as the target. There is an obstruction to quantizing equivariantly with respect to the action of formal vector fields $\Vect$ on the target disk, and it is naturally identified with the first Pontryagin class in Gelfand-Fuks cohomology. Any trivialization of the obstruction cocycle thus yields an equivariant quantization with respect to an extension of $\Vect$ by $\hOmega^2_{cl}$, the closed 2-forms on the disk. By machinery mentioned above, we then naturally obtain a factorization algebra of quantum observables, which has an associated vertex algebra easily identified with the formal $\beta\gamma$ vertex algebra. Next, we introduce a version of Gelfand-Kazhdan formal geometry suitable for factorization algebras, and we verify that for a complex manifold $X$ with trivialized first Pontryagin class, the associated factorization algebra recovers the vertex algebra of CDOs of~$X$.\\

Le syst\`{e}me b\^{e}ta gamma incurv\'{e} est un mod\`{e}le sigma nonlin\'{e}aire de
source une surface de Riemann et de cible une vari\'{e}t\'{e} complexe $X$. Ses
solutions classiques sont donn\'{e}es par des cartes holomorphes de la
surface de Riemann dans $X$. Les arguments physiques identifient son
alg\`{e}bre d'op\'{e}rateurs avec une alg\`{e}bre vertex connue sous le nom
d'op\'{e}rateurs diff\'{e}rentiels chiraux (CDO) de $X$. Nous v\'{e}rifions ces
affirmations de mani\`{e}re math\'{e}matique en construisant et en quantifiant
rigoureusement ce syst\`{e}me en utilisant sur les techniques d\'{e}velopp\'{e}es
par Kevin Costello et le second auteur, combinant \`{a} la fois les outils
de la renormalisation, le formalisme de Batalin-Vilkovisky, et les
alg\`{e}bres \`{a} factorisation. En outre, nous prouvons que l'alg\`{e}bre \`{a}
factorisation des observables quantiques du syst\`{e}me b\^{e}ta gamma courb\'{e}
encode les gerbes d'op\'{e}rateurs diff\'{e}rentiels chiraux. En un sens,
notre approche fournit une quantification par d\'{e}formation pour les
alg\`{e}bres de vertex. Comme dans de nombreuses approches \`{a} la
quantification par d\'{e}formations, la g\'{e}om\'{e}trie formelle de
Gelfand-Kazhdan joue un r\^ole cl\'{e}. Nous commençons par construire une
quantification du syst\`{e}me b\^{e}ta gamma \`{a} valeur un disque formel de
dimension n. Il existe une obstruction \`{a} l'existence d'une
quantification qui soit \'{e}quivariante par rapport \`{a} l'action des champs
de vecteurs formels sur le disque; cette obstruction s'identifie
naturellement \`{a} la premi\`{e}re classe de Pontryagin de la cohomologie de
Gelfand-Fuks. Toute trivialisation du cocycle d'obstruction donne
ainsi une quantification \'{e}quivariante vis-\`{a}-vis d'une extension de
champs de vecteurs formels par les 2-formes ferm\'{e}es sur le disque.
D'apr\`{e}s les r\'{e}sultats cit\'{e} ci-dessus, nous en d\'{e}duisons
naturellement une alg\`{e}bre \`{a} factorisation d'observables quantiques, \`{a}
laquelle est associ\'{e}e une alg\`{e}bre de vertex qui s'identifie \`{a}
l'alg\`{e}bre vertex formelle de type beta gamma. Par ailleurs, nous
introduisons une version de la g\'{e}om\'{e}trie formelle de Gelfand-Kazhdan
adapt\'{e}e aux alg\`{e}bres \`{a} factorisation et nous v\'{e}rifions que, pour une
vari\'{e}t\'{e} complexe munie d'une trivialisation de sa premi\`{e}re classe de
Pontryagin, l'alg\`{e}bre \`{a} factorisation associ\'{e}e d\'{e}crit l'alg\`{e}bre vertex.
\end{abstract}


\part*{Introduction}

The curved $\beta\gamma$ system is an elegant nonlinear $\sigma$-model with a Riemann surface $\Sigma$ as the source and a complex manifold $X$ as the target. The equations of motion pick out the holomorphic maps $\Sigma \to X$. Thus, from a purely mathematical perspective, it is a compelling example to study 
because the classical theory naturally involves complex geometry and so must the quantization, although the meaning is less familiar. From a physical perspective, the curved $\beta\gamma$ system arises naturally as a close cousin of more central theories: it is a half-twist of the $(0,2)$-supersymmetric $\sigma$-model \cite{WittenCDO}, and it is also the chiral part of the infinite volume limit of the usual (non-supersymmetric) $\sigma$-model (see the appendix). In consequence, the curved $\beta\gamma$ system exhibits many features of these theories while enjoying the flavor of complex geometry, rather than super- or Riemannian geometry.

In mathematics, however, this theory first appeared in a hidden form in the work of Beilinson-Drinfeld and Malikov-Schechtman-Vaintrob \cite{BD,MSV}, and it was subsequently developed by many mathematicians (see \cite{KV,Cheung,Bressler} among much else). The {\em chiral differential operators} (CDOs) on a complex $n$-manifold $X$ are a sheaf of vertex algebras locally resembling a vertex algebra of $n$ free bosons, and the name indicates the analogy with the differential operators, a sheaf of associative algebras on $X$ locally resembling the Weyl algebra for $T^*\CC^n$. Unlike the situation for differential operators, which exist on any manifold $X$, such a sheaf of vertex algebras exists only if $\ch_2(X) = 0$ in $H^2(X, \Omega^2_{cl})$, and each choice of trivialization $\alpha$ of this characteristic class yields a different sheaf $\CDO_{X,\alpha}$. In other words, there is a gerbe over $X$ of vertex algebras \cite{GMS}. The appearance of this topological obstruction (essentially the first Pontryagin class, but non-integrally) was surprising, and even more surprising was that the character of this vertex algebra was the Witten genus of $X$, up to a constant depending only on the dimension of $X$ \cite{BorLib}. These results exhibited the now-familiar rich connections between conformal field theory, geometry, and topology, but arising from a mathematical process rather than a physical argument. 

Witten \cite{WittenCDO} explained how CDOs on $X$ arise as the perturbative piece of the chiral algebra of the curved $\beta\gamma$ system, by combining standard methods from physics and mathematics. (In elegant lectures on the curved $\beta\gamma$ system \cite{Nek}, with a view toward Berkovit's approach to the superstring, Nekrasov also explains this relationship.  Kapustin \cite{KapCDR} gave a similar treatment of the closely-related chiral de Rham complex.) This approach also gave a different understanding of the surprising connections with topology, in line with anomalies and elliptic genera as seen from physics. Let us emphasize that only the perturbative sector of the theory appears (i.e., one works near the constant maps from $\Sigma$ to $X$, ignoring the nonconstant holomorphic maps); the instanton corrections are more subtle and not captured just by CDOs (see \cite{KapOrlov} for a treatment of the instanton corrections for complex tori).

In this paper we construct mathematically the perturbative sector of the curved $\beta\gamma$ system via the approach to quantum field theory developed in \cite{CosBook, CG1, CG2}, thus providing a rigorous construction of the path integral for the curved $\beta\gamma$ system. That means we work in the homotopical framework for field theory known as the Batalin-Vilkovisky (BV) formalism, in conjunction with Feynman diagrams and renormalization methods. As a very brief gloss, the BV formalism amounts to deforming the classical action $S^\cl$ to a ``quantized action'' $S^\q = S^\cl + \hbar S_{(1)} + \hbar^2 S_{(2)} + \cdots$ satisfying a condition known as the quantum master equation. This quantized action $S^\q$ provides a formal substitute for the path integral; more precisely, it is a homological version of the integrand ``$\exp(-S(\phi)/\hbar) \mathcal{D}\phi$'' for the path integral. Indeed, given this quantized action $S^\q$, one can extract the algebraic relations that hold between the expected values of observables. Thus the quantum master equation encodes homologically the condition that the associated quantum integrand is well-posed. We find, for instance, that the curved $\beta\gamma$ system admits a quantized action satisfying the quantum master equation only if the target manifold $X$ has $\ch_2(X) = 0$, where $\ch_2(X)$ is a component of the Chern character. (Given our context, this condition is that the first Pontryagin class vanishes.) This condition was found in the earlier mathematical work by quite different methods.

One key feature of the framework in \cite{CG1, CG2} is that every BV theory yields a factorization algebra of observables. (We mean here the version of factorization algebras developed in \cite{CG1, CG2}, not the version of Beilinson and Drinfeld \cite{BD}.)
In our situation, the theory produces a factorization algebra living on the source manifold $\CC$, 
and the machinery of \cite{CG1, CG2} allows one to extract a vertex algebra from this factorization algebra.
Our main result is that this vertex algebra is the CDOs.
Thus, we show that in a wholly mathematical setting, one can start with the action functional for the curved $\beta\gamma$ system
and recover the sheaf $\CDO_{X,\alpha}$ of vertex algebras on $X$ via the algorithms of \cite{CosBook, CG1, CG2}.
To accomplish this, we develop machinery that ought to be useful in constructing nonlinear $\sigma$-models in the BV framework and allows one to analyze explicitly the resulting factorization algebra.

\begin{rmk*}
In a sense, the curved $\beta\gamma$ system is a perfect testing ground for the formalism of \cite{CosBook, CG1, CG2}: physical arguments about anomalies and moduli ought to be codified on the BV quantization side, and the consequences on the factorization algebra side ought to recover the vertex algebra constructions of \cite{MSV,GMS}. The work here shows that the formalism passes this test.
\end{rmk*}

Let us explain a little about our methods before stating our theorems precisely. The main technical challenge is to encode the nonlinear $\sigma$-model in a way so that the BV formalism of \cite{CosBook} applies. In \cite{WG2}, Costello introduces a sophisticated approach by which he recovers the anomalies and the Witten genus as partition function, but it seems difficult to relate CDOs directly to the factorization algebra of observables of his quantization. Instead, we use formal geometry {\it \`a la} Gelfand and Kazhdan \cite{GK}, as applied to the Poisson $\sigma$-model by Kontsevich \cite{KonDQ} and Cattaneo-Felder \cite{CF}.  The basic idea of Gelfand-Kazhdan formal geometry is that every $n$-manifold $X$ looks, very locally, like the formal $n$-disk, and so any representation $V$ of the formal vector fields and formal diffeomorphisms determines a vector bundle $\cV \to X$, by a sophisticated variant of the associated bundle construction. (Every tensor bundle arises in this way, for instance.) In particular, the Gelfand-Kazhdan version of characteristic classes for $V$ live in the Gelfand-Fuks cohomology $H^*_{\GF}(\Vect)$ and map to the usual characteristic classes for $\cV$. There is, for instance, a Gelfand-Fuks version of the Witten class for every tensor bundle.

Thus, we start with the $\beta\gamma$ system with target the formal $n$-disk $\widehat{D}^n = \rm{Spec}\,\CC[[t_1,\ldots,t_n]]$ and examine whether it quantizes \emph{equivariantly} with respect to the actions of formal vector fields $\Vect$ and formal diffeomorphisms on the formal $n$-disk. (These actions are compatible, so that we have a representation of a Harish-Chandra pair.) We call this theory the \emph{equivariant formal $\beta\gamma$ system of rank $n$}.

\begin{thm*}
The $\Vect$-equivariant formal $\beta\gamma$ system of rank $n$ has an anomaly given by a cocycle $\ch_2(\widehat{D}^n)$ in the Gelfand-Fuks  complex ${\rm C}^*_{\GF}(\Vect ; \widehat{\Omega}^2_{n,cl})$. This cocycle determines a Lie algebra extension $\TVect$ of $\Vect$. The cocycle is exact in ${\rm C}^*_{\GF}(\TVect ; \hOmega^2_{n,cl})$, and yields a $\TVect$-equivariant BV quantization, unique up to homotopy. The partition function of this theory over the moduli of elliptic curves is the formal Witten class in the Gelfand-Fuks  complex ${\rm C}^*_{\GF}(\Vect, \bigoplus_k \widehat{\Omega}_n^k[k])[[\hbar]]$.
\end{thm*}

Gelfand-Kazhdan formal geometry is used often in deformation quantization. See, for instance, the elegant treatment by Bezrukavnikov-Kaledin \cite{BK}. Here we develop a version suitable for vertex algebras and factorization algebras, which requires allowing homotopical actions of the Lie algebra $\Vect$. (Something like this appears already in \cite{BD,KV,Malikov2008}, but we need a method with the flavor of differential geometry and compatible with Feynman diagrammatics. It would be interesting to relate directly these different approaches.) In consequence, our equivariant theorem implies the following global version.
 
\begin{thm*}
Let $X$ be a complex manifold. The curved $\beta\gamma$ system admits a BV quantization if the characteristic class $\ch_2(X)$ vanishes, and each choice of trivialization $\alpha$ yields a BV quantization, unique up to homotopy. The associated factorization algebra on $X$ recovers the vertex algebra $\CDO_{X,\alpha}$ of chiral differential operators associated to the trivialization $\alpha$. Moreover, the partition function is the Witten class ${\rm Wit}(X)$ in $\bigoplus_k H^*(X, \Omega^k[k])$, where $\Omega^k$ here denotes the sheaf of holomorphic $k$-forms.
\end{thm*}

\begin{rmk*} 
In physics, ordinarily, the partition function refers to the full path integral.
What we are referring to is the path integral of the effective action on the zero modes in perturbation theory. 
\end{rmk*}

To identify $\CDO_{X,\alpha}$ as coming from the factorization algebra, we prove general statements relating factorization algebras for such chiral theories with vertex algebras. Indeed, our work shows how the elegant formulas uncovered in \cite{MSV,GMS} arise explicitly by canceling the anomalies that appear in setting the integrand of the quantum measure (or to use BV language, in finding a solution to the quantum master equation). Via the factorization algebra of observables, these BV manipulations become the computations that Witten and Nekrasov used in explaining why the curved $\beta\gamma$ system should recover the chiral differential operators.

\begin{rmk*}
We wish to emphasize that our central goal in this paper 
is not to provide yet another method for constructing sheaves of vertex algebras
or another understanding of the geometry behind the Pontryagin class as an anomaly.
Instead our goal is to offer an explanation for how CDOs appear as a quantization,
from a path integral perspective as rigorously encoded in the BV formalism.

One essential application of this perspective is the establishment of modularity for characters for general chiral conformal field theories. 
This manifest modularity for the characters of observables in the BV formalism is due to their explicit presentation via Feynman diagrams as integrals over the elliptic curve. 
\end{rmk*}

Our techniques for assembling BV theories in families --- and their factorization algebras in families --- apply to many $\sigma$-models already constructed , such as the topological $B$-model \cite{LiLi}, Rozansky-Witten theory \cite{CLL}, and topological quantum mechanics \cite{GG1, GLL}. They also allow us to recover quickly nearly all the usual variants on CDOs and structures therein, such as the chiral de Rham complex and the Virasoro actions, and we intend to explain that elsewhere. Other veins of research are also opened up, notably new approaches to quantum sheaf cohomology and to the curved $\beta\gamma$ system with higher-dimensional source manifold.

\subsection{Overview}

The paper is divided into three parts. Part I is devoted purely to the vertex algebra of chiral differential operators, Part II constructs the curved $\beta\gamma$ system as a BV field theory and analyzes its associated factorization algebra of observables, and Part III explains how to recover the vertex algebra from the factorization algebra. Each Part has its own introduction with a detailed overview of its contents. We emphasize that Parts I and II can be read independently; only in Part III are the two stories in explicit dialogue.

\subsection{Acknowledgements}

This work would not have been possible without the support of several organizations.
First, it was the open and stimulating atmosphere of the Max Planck Institute for Mathematics 
that made it so easy to begin our collaboration.
Moreover, it is through the MPIM's great generosity that we were able to continue
work and finish the paper during several visits by VG and BW.
Second, we benefited from the support and convivial setting of the Hausdorff Institute for Mathematics
and its Trimester Program ``Homotopy theory, manifolds, and field theories" during the summer of 2015.
Third, the Oberwolfach Workshop ``Factorization Algebras and Functorial Field Theories" in May 2016
allowed us all to gather in person and finish important discussions.
In addition, OG enjoyed support from the National Science Foundation as a postdoctoral fellow under Award DMS-1204826, 
and BW enjoyed support as a graduate student research fellow under Award DGE-1324585.  
Finally, this research was carried out, in part, within the HSE University Basic Research Program
and funded by the Russian Academic Excellence Project~'5-100'.

For OG there is a large cast of mathematicians whose questions, conversation, and interest 
have kept these issues alive and provided myriad useful insights that are now hard to enumerate in detail.
He thanks Kevin Costello for introducing him to the $\beta\gamma$ system in graduate school --- and for innumerable discussions since --- as well as Dan Berwick-Evans, Ryan Grady, and Yuan Shen
for grappling collaboratively with \cite{WG2} throughout that period.
Si Li's many insights and questions have shaped this work substantially.
Matt Szczesny's guidance at the Northwestern CDO Workshop was crucial; 
his subsequent encouragement is much appreciated.
OG would also like to thank Stephan Stolz and Peter Teichner for the still-running conversation about
conformal field theory that influences strongly his approach to the subject.
Finally, he thanks Andr\'e Henriques, John Francis, and Scott Carnahan for letting him eavesdrop as they chatted about CDOs over a decade ago.

BW feels fortunate to have stepped into this community early in his graduate work and has benefited from the support of many of the individuals mentioned above. 
First and foremost, he thanks his adviser Kevin Costello for guidance and Si Li for helping him to harness Feynman diagrams in the context of the BV formalism. 
He also thanks Ryan Grady, Matt Szczesny, and Stephan Stolz for invitations to talk about this project as well as valuable input on various aspects of it. 
In addition, numerous discussions with Dylan William Butson, Chris Elliott, and Philsang Yoo about perturbative QFT have informed his work. 
Finally, we would like to thank Matt Szczesny and James Ladouce for pointing out numerous typos and providing feedback on an earlier draft of this paper.

\tableofcontents
\newpage

\part*{Part I: Gelfand-Kazhdan descent and chiral differential
  operators}

The goal of this part is to provide a construction of chiral differential operators via the methods of Gelfand-Kazhdan formal geometry; this approach is a modest modification of an approach described for the chiral de Rham complex in \cite{MSV}. (In subsequent work we will provide a BV construction of chiral de Rham complex, along with a family of related sheaves of vertex algebras.) Our phrasing here aims to highlight the parallels with the next part, where we introduce a homotopy-coherent version of Gelfand-Kazhdan formal geometry that works nicely with the Batalin-Vilkovisky formalism and Feynman diagrammatics and thus allows us to construct a factorization algebra refining CDOs.

Recall that Gelfand-Kazhdan formal geometry is an approach to any ``natural'' construction in differential geometry, i.e., to constructions that apply uniformly to all manifolds of a given dimension (or with some other common, local geometric structure). The basic idea is that on any $n$-manifold, the immediate neighborhood of every point looks the same, and so if some construction can be described on any sufficiently small neighborhood and is equivariant for local diffeomorphisms, the construction should apply to every $n$-manifold. In other words, it is a kind of refinement of tensor calculus. To be more precise, in formal geometry, one works with a ``formal'' neighborhood of a point $p$ in $\RR^n$, namely the ``space'' whose algebra of functions is the $\infty$-jets of functions at $p$ ({\em aka} Taylor series at $p$ of functions). Let us denote this space by $\hD^n$.The relevant group of ``formal'' diffeomorphisms then means the $\infty$-jet at $p$ of diffeomorphisms that fix $p$ ({\em aka} Taylor series at $p$ of diffeomorphisms), which we will denote by $\Aut_n$. (Note that for every point $p$, the group is isomorphic.) Every $n$-manifold $X$ possesses a canonical {\em flat} principal $\Aut_n$-bundle $X^{coor}$ whose fiber over $p \in X$ is the space of formal coordinates centered at $p$ and is equipped with a flat connection valued in formal vector fields $\Vect$ (which is slightly larger than the Lie algebra of $\Aut_n$).  In the context of this paper we are interested in complex manifolds and there is a corresponding bundle of holomorphic formal coordinates. These are $\infty$-jets of biholomorphisms.

From this reasoning we see that every $\Aut_n$-representation $V$ that has a compatible action of $\Vect$ produces a flat vector bundle $\cV_X$ over each $n$-dimensional manifold $X$ whose horizontal sections typically encode familiar vector bundles. Such a representation is called a Harish-Chandra module. As an example, consider $\hO_n$, the functions on $\hD^n$, whose flat sections over $X$ are smooth functions on $X$ (holomorphic functions in the complex case). Or consider $\hT_n$, the vector fields on $\hD^n$, whose flat sections over $X$ are vector fields on $X$. This construction of a vector bundle on $X$ from an $\Aut_n$-representation is an example of Gelfand-Kazhdan descent. In light of this, it should be no surprise that there is a Gelfand-Kazhdan version of characteristic classes for these vector bundles that recovers the usual Chern classes.

\begin{rmk} 
The Gelfand-Kazhdan approach to formal geometry can also be applied to more interesting geometries. For example, symplectic, Poisson, or even Riemannian geometry can be encapsulated by the formalism.
\end{rmk}

Chiral differential operators, like differential operators, are easy to define locally on an $n$-manifold, using coordinates. The challenge is to glue these local descriptions to produce the global object. The vertex algebra $\hCDO_n$ of CDOs on a formal $n$-disk $\hD_n$ is well-known, but it is not equivariant for the Harish-Chandra pair of automorphisms of $\hD_n$. The failure to be equivariant is a characteristic class that globalizes to the first Pontryagin class, or $\ch_2(T_X)$, in Dolbeault cohomology. This class defines an extension of the Harish-Chandra pair, and $\hCDO_n$ is equivariant for this extension. Each choice of trivialization $\alpha$ of $\ch_2(T_X)$ encodes an extension $\Tilde{X}^{coor}_\alpha$ of $X^{coor}$ to a flat principal bundle for this extension of pairs. Hence, one can apply Gelfand-Kazhdan descent to $\hCDO_n$ along $\Tilde{X}^{coor}_\alpha$ to produce a vertex algebra, and it is precisely the chiral differential operators on $X$ associated to the trivialization $\alpha$.

Sections \ref{sec hc descent} and \ref{sec gk descent} of this part are devoted to articulating rigorously this machinery in a format convenient for our problem. As mentioned parenthetically, we need a slight enlargement of the theory of flat vector bundles involving Harish-Chandra pairs, which consist of a Lie group and a thickening of its Lie algebra.

Specifically, in Section \ref{sec gk descent} we formulate a version of Harish-Chandra descent that we call {\em Gelfand-Kazhdan descent} that is suitable for our purposes. In Section \ref{sec vertex alg} we recall well-known facts about the vertex algebra of affine chiral differential operators and extracting the relavent Harish-Chandra structures. Sections \ref{sec ext desc} is devoted to developing an extended version of Gelfand-Kazhdan descent that is applied to CDOs in Section ~\ref{sec vertex desc}.

We will extract some familiar properties and structures of the sheaf of chiral differential operators from the perspective of Gelfand-Kazhdan formal geometry. For instance, we show in Proposition \ref{prop conformal cdo} that chiral differential operators have the structure of a sheaf of {\em conformal} vertex algebras only if $X$ is Calabi-Yau (in addition to having $\ch_2(T_X) = 0$, of course). Moreover, we show how the Witten genus appears as the character of the sheaf of CDOs, which has already appeared in the works \cite{Cheung, BorLib}. 

\begin{rmk} 
We should emphasize here that Part I is not the truly novel aspect of this paper. As mentioned in \cite{MSV}, the standard arguments of Gelfand-Kazhdan formal geometry apply to the chiral de Rham complex, and they certainly knew that a minor extension of such formal geometry should allow one to construct CDOs. Our main goal in Part I is to explain these standard arguments and this extension carefully and systematically. We do this for two reasons: first, to allow a systematic comparison with the BV quantization in Part II, and second, to provide a general tool that ought to be applicable to constructing many more interesting vertex algebras. There are elegant machines for such purposes, thanks to \cite{BD, KV}, but we wanted a version closer to the concrete computations that most interest us.
\end{rmk}

\section{Flat vector bundles and Harish-Chandra descent} \label{sec hc descent}

This section is a discussion of the theory of vector bundles with flat connection arising from Harish-Chandra pairs. It establishes notation and terminology but can likely be used as a reference. In this section we have largely treated both smooth and holomorphic geometry, but throughout the rest of the paper we are concerned with the latter.

\subsection{Algebra of Harish-Chandra pairs}

\subsubsection{Harish-Chandra pairs}
All Lie algebras and Lie groups will be defined over $\CC$. For $G$ a Lie group, we use $\Lie(G)$ to denote its associated Lie algebra, which can be identified with the tangent space of the identity element. To start, we work with finite-dimensional groups and algebras, but we will eventually discuss certain infinite-dimensional examples.
 
\begin{dfn} A {\em Harish-Chandra pair} (or HC-pair) is a pair $(\fg, K)$ where $\fg$ is a Lie algebra and $K$ is a Lie group together with
\begin{itemize}
\item[(i)] an action of $K$ on $\fg$, $\rho_K : K \to {\rm Aut}(\fg)$
\item[(ii)] an injective Lie algebra map $i : {\rm Lie}(K) \hookrightarrow \fg$
\end{itemize}
such that the action of $\Lie(K)$ on $\fg$ induced by $\rho_K$,
\ben
{\rm Lie}(\rho_K) : {\rm Lie}(K) \to {\rm Der}(\fg),
\een
is the adjoint action induced from the embedding $i: {\rm Lie}(K) \hookrightarrow \fg$.
\end{dfn}

\begin{ex} If $G$ is a Lie group and $K$ is a closed subgroup, then
  the pair $({\rm Lie}(G), K)$ is a HC-pair. 
\end{ex}

\begin{dfn} 
A {\em morphism of Harish-Chandra pairs} $(\mathfrak{f}, f) : (\fg, K) \to (\fg',K')$ is 
\begin{itemize}
\item[(i)] a map of Lie algebras $\mathfrak{f} : \fg \to \fg'$ and
\item[(ii)] a map of Lie groups $f : K \to K'$
\end{itemize}
such that the diagram in Lie algebras
\ben
\xymatrix{
{\rm Lie}(K) \ar[r]^-{{\rm Lie}(f)} \ar[d]_-{i} & {\rm Lie}(K') \ar[d]_-{i'} \\
\fg \ar[r]^-{\mathfrak{f}} & \fg' 
}
\een
commutes. 
\end{dfn}

\subsubsection{Modules}

Fix a HC-pair $(\fg,K)$. In this section we set up the notion of a module for~$(\fg,K)$. Below, we discuss modules in the category of vector spaces, but the definition is easily generalized to $\CC$-linear symmetric monoidal categories.

\begin{dfn} \label{hcmod}
A {\em $(\fg,K)$-module} is a vector space $V$ together with 
\begin{itemize}
\item[(i)] a Lie algebra map $\rho_\fg : \fg \to {\rm End}(V)$ and
\item[(ii)] a Lie group map $\rho_K : K \to \GL(V)$ 
\end{itemize}
such that the composition
\ben
\xymatrix{
{\rm Lie}(K) \ar[r]^-{i} & \fg \ar[r]^-{\rho_\fg} & {\rm End}(V)
}
\een
equals ${\rm Lie}(\rho_K)$. 

A {\em morphism of $(\fg,K)$-modules} is a linear map intertwining the actions of $\fg$ and $K$.

Denote the category of $(\fg,K)$-modules by $\Mod_{(\fg,K)}$.
Denote by $\Mod_{(\fg,K)}^{fin}$ the full subcategory whose objects consist of modules whose underlying vector space is finite-dimensional.
\end{dfn}

\subsection{Bundles}

We will need the analog of a torsor for a pair $(\fg,K)$ over a manifold $X$.
Our definitions are structural and so apply equally well to both
smooth and complex manifolds. In the complex case we will need
the notion of a holomorphic $(\fg,K)$-torsor. 

When $X$ is complex, we use $\cO^{hol}(X)$ to denote the space of
holomorphic functions and $\cX^{hol}(X)$ to denote holomorphic vector fields, 
i.e., holomorphic sections of $T^{1,0}X$. 
We let $\Omega^k(X)$ denote the space of smooth $k$-forms and $d_{\dR}$ the de Rham differential. 
If $X$ is complex, then $\Omega^{k,l}(X)$ denotes the smooth $(k,l)$-forms according to the Hodge decomposition. 
We denote by $\Omega^k_{hol}(X)$ the space of holomorphic $k$-forms,
i.e., holomorphic sections of~$\Lambda^k T^{*,(1,0)}X$. Finally, when
we consider a differential graded vector space $(V,\d)$ we let $V^\#$
denote the underlying graded vector space $V$. For instance,
$\Omega^\#(X)$ denotes the graded vector space of differential forms on $X$. 

\begin{dfn} \label{gkbun}
A {\em $(\fg,K)$-principal bundle with flat connection} (or more concisely, flat $(\fg,K)$-bundle) over $X$ is 
\begin{itemize}
\item[(i)] a principal $K$-bundle $P \to X$ and
\item[(ii)] a $K$-invariant $\fg$-valued $1$-form on $P$, $\omega \in \Omega^1(P; \fg)$
\end{itemize}
such that 
\begin{itemize}
\item[(1)] for all $a \in {\rm Lie}(K)$, we have $\omega(\xi_a) = a$ where $\xi_a \in \cX(P)$ denotes the induced vector field and 
\item[(2)] $\omega$ satisfies the Maurer-Cartan equation
\ben
\d_{dR} \omega + \frac{1}{2}[\omega,\omega] = 0
\een
where the bracket is taken in the Lie algebra $\fg$. 
\end{itemize}
\end{dfn}

In particular, if $\fg = \Lie(K)$, then condition (1) encodes the usual notion of a connection on a principal $K$-bundle, while condition (2) says that the connection is {\em flat}, i.e., we have a principal bundle for the discrete group~$K^\delta$ underlying~$K$.

Recall that one can interpret a connection on a principal $K$-bundle as a splitting of the tangent bundle $TP$ into horizontal and vertical components in a $K$-equivariant way. 
Let $T_\pi P$ denote the vertical tangent bundle (i.e., the kernel of the projection map $TP \to \pi^* TX$);
note that $T_\pi P$ is canonically isomorphic to the trivial bundle $\ul{{\rm Lie}(K)}$ over $P$.
A connection $\omega \in \Omega^1(P,{\rm Lie}(K))$ then determines a splitting
\ben
TP = T_\pi P \oplus H_\omega
\een
where $H_\omega \subset TP$ is defined as $\ker \omega$. That is, $H_\omega|_{p}$ is the subspace of $T_p P$ consisting of all vectors $X_p$ such that $\omega(X_p) = 0$ so that $H_{\omega}|_p \cong T_{\pi(p)}X$.   

There is a similar interpretation for Harish-Chandra pairs.
The embedding $i: \Lie(K) \hookrightarrow \fg$ determines a map of trivial bundles $i_P: \ul{{\rm Lie}(K)} \to \ul{\fg}$ over $P$. Define $T_\fg P$ to be the pushout
\ben
\xymatrix{
\ul{{\rm Lie}(K)} \ar[r] \ar[d] & \ul{\fg} \ar[d] \\
TP \ar[r] & T_\fg P
}
\een
in bundles over $P$. Then a $K$-equivariant element $\omega \in \Omega^1(P; \fg)$ satisfying (ii) above is equivalent to a $K$-equivariant splitting
\ben
T_\fg P = \ul{\fg} \oplus H_{\omega}, 
\een
where $H_{\omega}|_p \cong T_{\pi(p)} X$. 

Note that if there is an inclusion of Lie groups $K \hookrightarrow G$ inducing $\Lie(K) \hookrightarrow \Lie(G) = \fg$, 
then this data is a flat $G$-bundle along with a reduction of structure group to a flat $K$-bundle.
This example indicates that the Harish-Chandra version is a useful replacement in the case where the map $i:\Lie(K) \to \fg$ does \emph{not} integrate to a map of Lie groups. 

\begin{ex} The most
important example is the case where $\fg = \Vect$, the Lie algebra
of formal vector fields, and $K = \GL_n$. In fact, $\Vect$ is not the
Lie algebra of any Lie group. The pair $(\Vect,\GL_n)$ is
fundamental for Gelfand-Kazhdan descent, defined in later sections.
\end{ex}

In the complex case it is natural to include the holomorphic structure. 

\begin{dfn}\label{holgk} 
Let $X$ be a complex manifold, $K$ a complex Lie group, and $\fg$ a complex Lie algebra. 
A {\em holomorphic} $(\fg,K)$-principal bundle with flat connection is a holomorphic principal
$K$-bundle $P \to X$ together with a $K$-invariant $\fg$-valued
holomorphic $1$-form $\omega \in \Omega^1_{hol}(P;\fg)$ such that 
\begin{itemize}
\item[(1)] for all $a \in \Lie(K)$, we have $\omega(\xi_a) = a$ where
  $\xi_a \in \cX^{hol}(P)$; 
\item[(2)] $\omega$ satisfies the Maurer-Cartan equation
\ben
\partial \omega + \frac{1}{2} [\omega, \omega] = 0 .
\een 
\end{itemize}
\end{dfn}

\begin{rmk} 
Since $\omega$ in Definition \ref{holgk} is assumed to be holomorphic, i.e., $\dbar \omega = 0$, 
the Maurer-Cartan equation is equivalent to 
\ben
\d_{dR} \omega + \frac{1}{2} [\omega, \omega] = 0
\een 
where $\d_{dR} = \partial + \dbar$ is the full de Rham differential decomposed via the complex structure on $P \to X$. 
Thus, a holomorphic $(\fg,K)$-principal bundle with flat connection $(P \to X, \omega)$ is equivalent to 
an ordinary $(\fg,K)$-principal bundle with flat connection (as in Definition \ref{gkbun}) such that 
the underlying $K$-bundle is holomorphic and $\omega$ is a $(1,0)$-form.
\end{rmk}

We now turn to maps between such structures.

\begin{dfn}
A {\em morphism of $(\fg,K)$-bundles} $(P \to X, \omega) \to (P' \to X', \omega')$ is a map of $K$-principal bundles
\ben
\xymatrix{
P \ar[r]^F \ar[d] & P' \ar[d] \\
X \ar[r]^f & X'
}
\een
such that $F^* \omega' = \omega$. 

Denote the category of flat $(\fg,K)$-bundles by $\Loc_{(\fg,K)}$. 
\end{dfn}

Note that there is a forgetful functor from  $\Loc_{(\fg,K)}$ to ${\rm
  Man}$, the category of manifolds which is either (a) smooth manifolds
with smooth maps or (b) complex manifolds with holomorphic maps. As flat bundles pull back along maps of the underlying manifolds,
we have that this functor is a cartesian fibration.

\subsection{Descent}

Recall the associated bundle construction: given a principal $K$-bundle $\pi : P \to X$ and a finite-dimensional $K$-representation $V$, form the vector bundle
\ben
V_X := P \times^K V
\een
over $X$. 
(People often use the notation $P\times_K V$ instead, but we wish to avoid potential confusion with the fibered product.)

One can view this construction as a pairing between the category of principal $K$-bundles and the category of finite-dimensional $K$-modules, i.e., a functor
\[
- \times^K - : {\rm Bun}_K^\op \times \Mod_K^{fin} \to {\rm VB}
\]
where ${\rm Bun}_K$ is the cartesian fibration whose fiber over a manifold $X$ is the category of $K$-principal bundles on $X$,
where ${\rm VB} \to {\rm Man}$ is the cartesian fibration whose fiber over $X$ is the category of vector bundles on $X$, and 
where $\Mod_K$ denotes the category of $K$-modules. This is a functor
between cartesian fibrations over ${\rm Man}$. This functor exhibits how natural the associated bundle construction is, and 
it can be used to produce natural characteristic classes for $K$-bundles.

In this section, we will produce an analogous functor of {\em Harish-Chandra descent}
\[
\desc: \Loc_{(\fg,K)}^\op \times \Mod_{(\fg,K)}^{fin} \to {\rm VB}_{flat},
\]
where ${\rm VB}_{flat}$ denotes the cartesian fibration whose fiber over a manifold $X$ is the category of flat vector bundles on $X$.
It says, in essence, that each $(\fg,K)$-bundle on $X$ produces a family of local systems on $X$, and 
these are natural under pullback of bundles.
Similarly, each $(\fg,K)$-module produces a functor from flat $(\fg,K)$-bundles to local systems over the site of all manifolds.

\begin{rmk}
The construction is often termed \emph{Harish-Chandra localization}
(see \cite{BK} \cite{BL}),
but this terminology occasionally led to possible ambiguities due to other uses of of the word ``localization,''
so we use ``descent.''
\end{rmk}

We will also describe the characteristic map, which is a natural transformation 
\[
{\rm char} : \clie^*(\fg,K; -) \Rightarrow \Omega^*(-,\desc(-)),
\]
where $\clie^*(\fg,K; -)$ denotes the relative Lie algebra cochains functor (it is independent of the bundle variable) and 
where $\Omega^*(-,\desc(-))$ denotes the de Rham complex of the flat bundle produced by $\desc$.
This natural transformation encodes the secondary characteristic classes of these flat bundles.

\subsubsection{Basic forms}

There is a model for the associated bundle construction that is useful for our purposes. 
Let $V$ be a finite-dimensional $K$-representation. 
Denote by $\ul{V}$ the trivial vector bundle on $P$ with fiber $V$. 
Sections of this bundle $\Gamma_P(V)$ have the structure of a $K$-representation by
\ben
A \cdot (f\tensor v) := (A \cdot f) \tensor (A \cdot v) \;\; , \;\; A \in K, \; f \in \cO(P)\; , v \in V .
\een
Every $K$-invariant section $f : P \to \ul{V}$ induces a section $s(f): X \to V_X$,
where the value of $s(f)$ at $x \in X$ is the $K$-equivalence class $[(p,f(p)]$, with $p \in \pi^{-1}(x) \cong K$.
That is, there is a natural map 
\ben
s : \Gamma_P(\ul{V})^K \to \Gamma_X(V_X) 
\een
and it is an isomorphism of $\cO(X)$-modules. A $K$-invariant section $f$ of $\ul{V} \to P$ also satisfies the infinitesimal version of invariance: 
\ben
(Y \cdot f)\tensor v + f \tensor {\rm Lie}(\rho)(Y) \cdot v = 0 
\een
for any $Y \in {\rm Lie}(K)$.

There is a similiar statement for differential forms with values in the bundle $V_X$. Let $\Omega^k(P ; \ul{V}) = \Omega^k(P) \tensor V$ denote the space of $k$-forms on $P$ with values in the trivial bundle $\ul{V}$. Given $\alpha \in \Omega^1(X ; V_X)$, its pull-back along the projection $\pi: P \to X$ is annihilated by any vertical vector field on $P$. In general, if $\alpha \in \Omega^k(X; V_X)$, then $i_Y(\pi^*\alpha) = 0$ for all $Y \in {\rm Lie}(K)$.

\begin{dfn} A $k$-form $\alpha \in \Omega^k(P; \ul{V})$ is called {\em basic} if 
\begin{itemize}
\item[(i)] it is $K$-invariant: $L_Y \alpha + \rho(Y) \cdot \alpha = 0 $ for all $Y \in {\rm Lie}(K)$ and
\item[(ii)] it vanishes on vertical vector fields: $i_Y \alpha = 0$ for all $Y \in {\rm Lie}(K)$. 
\end{itemize}
\end{dfn}

Denote the subspace of basic $k$-forms by $\Omega^k(P; \ul{V})_{bas}$. Just as with sections, there is a natural isomorphism
\ben
s : \Omega^k(P; \ul{V})_{bas} \xto{\cong} \Omega^k(X; V_X) 
\een
between basic $k$-forms and $k$-forms on $X$ with values in the associated bundle.
In fact, $\Omega^{\#}(P; \ul{V})_{bas}$ forms a graded submodule of $\Omega^{\#}(P; \ul{V})$ and the isomorphism $s$ extends to an isomorphism of graded modules $\Omega^{\#}(P; \ul{V})_{bas} \cong \Omega^{\#}(X; V_X)$.

It is manifest that this construction of basic forms is natural in maps of $(\fg,K)$-bundles: basic forms pull back to basic forms along maps of bundles.

\subsubsection{}

Fix a $(\fg,K)$-bundle $P \to X$ with connection one-form $\omega \in \Omega^1(P; \fg)$. Fix a $(\fg,K)$-module $V$ with action maps $\rho_K$ and $\rho_{\fg}$. The subalgebra of basic forms
\ben
\Omega^{\#}(P; \ul{V})_{bas} \subset \Omega^{\#}(P; \ul{V})
\een
only uses the data of the $K$-representation. The $\fg$-module structure induces an operator
\ben
\rho_\fg(\omega) : \Omega^k(P; \ul{V}) \to \Omega^{k+1}(P; \ul{V})
\een
for each $k$. Let $\nabla^{P,V}$ denote the operator
\ben
\nabla^{P,V} := \d_{dR,P} + \rho_\fg(\omega) : \Omega^k(P; \ul{V}) \to \Omega^{k+1}(P; \ul{V})
\een 
for each~$k$. 

A direct calculation verifies the following. 

\begin{lemma} 
The operator $\nabla^{P,V}$ is a differential on the submodule of basic forms.
Under the isomorphism $s: \Omega^{\#}(P; \ul{V})_{bas} \cong \Omega^{\#}(X;
V_X)$, the cochain complex $(\Omega^{\#}(P; \ul{V})_{bas}, \nabla^V)$ is a
dg module over $\Omega^*(X)$. 
\end{lemma}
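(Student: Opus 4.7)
The plan is to verify in sequence three things: $\nabla^{P,V}$ preserves the subspace of basic forms, it squares to zero, and it satisfies the graded Leibniz rule with respect to the pullback action of $\Omega^*(X)$. Each is a direct computation; the content is in organizing the Harish-Chandra compatibility between $\rho_K$, $\rho_\fg$, and the connection $\omega$.

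First I would check that $\nabla^{P,V}\alpha$ is basic whenever $\alpha$ is basic. For the vertical-contraction condition, pick $a \in \Lie(K)$ with induced vector field $\xi_a$. By Cartan's magic formula and the fact that $i_{\xi_a}\alpha = 0$, we have $i_{\xi_a}\d_{dR,P}\alpha = L_{\xi_a}\alpha$. Using the $K$-invariance of $\alpha$ this equals $-\rho_K(a)\cdot\alpha$. On the other hand, $i_{\xi_a}\rho_\fg(\omega)\alpha = \rho_\fg(i_{\xi_a}\omega)\alpha - \rho_\fg(\omega)(i_{\xi_a}\alpha) = \rho_\fg(i(a))\alpha$, since $\omega(\xi_a) = a$ by the connection axiom. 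The Harish-Chandra compatibility $\rho_\fg \circ i = \Lie(\rho_K)$ from Definition \ref{hcmod} then forces the two terms to cancel. The $K$-invariance of $\nabla^{P,V}\alpha$ follows similarly: $\d_{dR,P}$ commutes with $L_{\xi_a}$, and the $K$-equivariance of $\omega$ together with the $K$-equivariance of $\rho_\fg$ imply $L_{\xi_a}$ and $\rho_K(a)$ intertwine through $\rho_\fg(\omega)$ in the required way.

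Next I would verify flatness. Expanding,
\[
(\nabla^{P,V})^2 = \d_{dR,P}^2 + \d_{dR,P}\rho_\fg(\omega) + \rho_\fg(\omega)\d_{dR,P} + \rho_\fg(\omega)\wedge\rho_\fg(\omega) = \rho_\fg(\d_{dR,P}\omega) + \tfrac{1}{2}\rho_\fg([\omega,\omega]),
\]
where the wedge term is simplified using that $\rho_\fg$ is a Lie algebra map (hence its extension to $\fg$-valued forms squares, on a $1$-form, to $\tfrac{1}{2}\rho_\fg([-,-])$). The Maurer-Cartan equation $\d_{dR,P}\omega + \tfrac{1}{2}[\omega,\omega] = 0$ for the flat connection then gives $(\nabla^{P,V})^2 = 0$.

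Finally, for the dg-module structure, note that $\Omega^*(X)$ acts on basic $V$-valued forms via $\beta\cdot\alpha = \pi^*\beta\wedge\alpha$, and one checks directly that $\pi^*\beta$ is basic (it is $K$-invariant and horizontal since $\pi$ is $K$-invariant). Since $\rho_\fg(\omega)$ wedges the $1$-form $\omega$ on the left, we have $\rho_\fg(\omega)(\pi^*\beta\wedge\alpha) = (-1)^{|\beta|}\pi^*\beta\wedge\rho_\fg(\omega)\alpha$, and combined with the usual Leibniz rule for $\d_{dR,P}$ and the fact that $\d_{dR,P}\pi^*\beta = \pi^*\d_{dR,X}\beta$, this yields $\nabla^{P,V}(\pi^*\beta\wedge\alpha) = \pi^*\d_{dR,X}\beta\wedge\alpha + (-1)^{|\beta|}\pi^*\beta\wedge\nabla^{P,V}\alpha$, which is the required graded Leibniz identity. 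Transporting along the isomorphism $s$ gives the claim.

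The main obstacle is bookkeeping: one must keep track consistently of the sign conventions relating $L_{\xi_a}$, $i_{\xi_a}$, and the representation actions $\rho_K$, $\rho_\fg$ on $V$-valued forms, and in particular be careful that the compatibility $\Lie(\rho_K) = \rho_\fg\circ i$ is applied with the correct sign when cancelling $L_{\xi_a}\alpha$ against $\rho_\fg(a)\alpha$. Once these conventions are fixed at the outset, each of the three verifications reduces to a few lines.
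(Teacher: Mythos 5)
Your proof is correct and is precisely the ``direct calculation'' the paper invokes without writing out: the cancellation $i_{\xi_a}\d_{dR,P}\alpha = L_{\xi_a}\alpha = -\rho(a)\alpha$ against $\rho_\fg(\omega(\xi_a))\alpha = \rho_\fg(i(a))\alpha$ via the Harish-Chandra compatibility, flatness from the Maurer-Cartan equation, and the Leibniz rule from $\pi^*\beta$ being basic. No gaps; this matches the intended argument.
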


\begin{dfn}\label{def desc}
The {\em associated flat vector bundle} to the flat $(\fg,K)$-bundle $P \to X$ and the finite-dimensional $(\fg,K)$-representation $V$ is 
\[
\desc((P\to X),V) := (P \times^K V, \nabla^{P,V}),
\]
namely the associated vector bundle on $X$ and its flat connection.
Its {\em de Rham complex} is
\[
\bdesc((P \to X), V) := \left(\Omega^*(P; \ul{V})_{bas}, \nabla^{P,V}\right),
\]
whose zeroeth cohomology is the horizontal sections of the local system.
\end{dfn}

As the construction of the flat connection $\nabla^{P,V}$ intertwines naturally with maps of $(\fg,K)$-bundles, 
we obtain the following functors.

\begin{dfn}
The {\em $(\fg,K)$-descent functor} is the functor 
\[
\desc: \Loc_{(\fg,K)}^\op \times \Mod^{fin}_{(\fg,K)} \to {\rm VB}_{flat}
\]
sending $(P \to X, V)$ to $(V_X,\nabla^{P,V})$.
There is a closely related functor
\[
\ddesc: \Loc_{(\fg,K)}^\op \times \Mod^{fin}_{(\fg,K)} \to \Mod_{\Omega^*}
\]
sending $(P \to X, V)$ to the de Rham complex of $\desc((P \to X,V))$. 
\end{dfn}

To every flat vector bundle we can associate a local system by taking
the horizontal sections. We denote by $\sdesc$ the
composition of the functor $\desc$ with taking horizontal
sections. Explicitly, $\sdesc$ is the zeroeth cohomology of the de
Rham complex of the flat vector bundle given by descent. In other words, it is
the zeroth cohomology of the complex~$\left(\Omega^*(P;  \ul{V})_{bas}, \nabla^{P,V}\right)$.

In the case of a holomorphic $(\fg, K)$-bundle with flat connection $(P \to X, \omega)$,
the $(0,1)$-component of the connection $\nabla^{P,V}$ agrees with the $\dbar$ operator
\ben
(\nabla^{P,V})^{0,1} = \dbar_P : \Omega^{0} (P; \ul{V})_{bas} \to \Omega^{0,1}(P;\ul{V})_{bas} .
\een
Hence the horizontal sections are also holomorphic. 

\begin{ex}
Let $G$ be a Lie group and let $K \subset G$ be a closed Lie subgroup. 
Then the $K$-principal bundle $G \to G/K$ has the natural structure of a $(\fg,K)$-principal bundle where $\fg = {\rm Lie}(G)$. 
When $G$ is compact, there is a quasi-isomorphism $\bdesc(G \to G/K, V) \simeq \clie^*(\fg, K ; V)$ for any $G$-representation $V$. 
This quasi-isomorphism is a relative version of the standard fact that the differential forms on $G$ are quasi-isomorphic to the absolute Chevalley-Eilenberg complex~$\clie^*(\fg)$. 
\end{ex}

\begin{ex} 
Let $K$ be a Lie group and let $\fk$ be its Lie algebra. 
Then $(\fk,K)$ is a HC-pair and we have an equivalence of categories
\ben
\Mod^{fin}_{(\fk,K)} \cong {\rm Rep}^{fin}_K .
\een
Let $P \to X$ be a principal $K$-bundle and $\omega \in \Omega^1(P;\fk)$ a {\em flat} connection (in the traditional sense). 
Then the functor
\ben
\ddesc((P,\omega),-) : {\rm Mod}^{fin}_{(\fk,K)} \to {\rm Mod}_{\Omega^*_X}
\een
is equivalent to the functor ${\rm Rep}_{K}^{fin} \to
\Mod_{\Omega^*_X}$ that sends a $K$-representation $V$ to the de Rham
complex of the associated bundle $V_X = P \times^K V$ equipped with
its induced flat connection, i.e., $V \mapsto \Omega^*(X ; V_X)$.
\end{ex}

\begin{rmk}
\label{descent for infinite dimensional things}
We have described these constructions for finite-dimensional representations,
but they make sense with an infinite-dimensional representation $V$ 
\emph{provided} one knows how smooth manifolds map into $V$.
Given that data, one knows how to write down functions (or differential forms) on $P$ with values in $V$.
In many examples, the vector space $V$ comes equipped with that information.
For instance, every locally convex topological vector space has it,
as do bornological or convenient vector spaces.
A systematic discussion of these issues can be found in \cite{KrieglMichor},
and an overview with close ties to the examples used here can be found in Appendix B of~\cite{CG1}.
\end{rmk}

\subsection{The characteristic map}

Recall that on a principal $K$-bundle $P \to X$ with connection one-form $\omega \in \Omega^1(P,\ul{\Lie(K)})$, 
the one-form provides a linear map $\omega^*: \Lie(K)^* \to \Omega^1(P)$.
If the connection is flat (i.e., satisfies the Maurer-Cartan equation), 
then $\omega^*$ extends to a map of commutative dg algebras
\[
\omega^*: \clie^*(\Lie(K)) \to \Omega^*(P),
\]
which provides some kind of characteristic classes for the flat $K$-bundle $P$.

We now adapt this construction to the Harish-Chandra setting.
In this case, the connection one-form $\omega$ lives in $\Omega^1(P, \ul{\fg})$ and as it is flat,
it provides a map of commutative dg algebras $\omega^*: \clie^*(\fg) \to \Omega^*(P)$.This map admits an important refinement: since $\omega$ is $K$-invariant, it induces a map
\[
\omega^*: \clie^*(\fg,K) \to \Omega^*(P)_{bas}.
\]
This construction extends to associated bundles, so that for $V$ a $(\fg,K)$-module, there is a map
\[
{\rm char}^{P,V} : \clie^*(\fg, K; V) \to \Omega^*(\desc((P,\omega),V)), 
\]
which provides some kind of characteristic classes for flat $(\fg,K)$-bundles.

As these constructions manifestly intertwine with pullbacks of bundles, we have the following.

\begin{dfn}
The {\em characteristic map} is the natural transformation
\[
{\rm char} : \clie^*(\fg,K; -) \Rightarrow \Omega^*(-, \desc(-,-))
\]
between the relative Lie algebra cohomology of a $(\fg,K)$-module and 
the de Rham complex of its associated local system along a flat $(\fg,K)$-bundle .
\end{dfn}

\section{Formal vector bundles and Gelfand-Kazhdan descent}\label{sec
  gk descent}

In this section we review the theory of Gelfand-Kazhdan formal geometry and its use in natural constructions in differential geometry,
organized in a manner somewhat different from the standard
approaches. We emphasize the role of the frame bundle and jet bundles.
We conclude with a treatment of the Atiyah class, which may be our only novel addition (although unsurprising) to the formalism.

We remark that from hereon we will work with complex manifolds and holomorphic vector bundles.
 
\subsection{A Harish-Chandra pair for the formal disk}

Let $\hO_n$ denote the algebra of formal power series 
\ben
\CC \llbracket t_1,\ldots,t_n \rrbracket,
\een 
which we view as ``functions on the formal $n$-disk $\hD^n$.'' 
It is filtered by powers of the maximal ideal $\fm_n = (t_1,\ldots,t_n)$, and it is the limit of the sequence of artinian algebras
\[
\cdots \to \hO_n/(t_1,\ldots,t_n)^k \to \cdots \hO_n/(t_1,\ldots,t_n)^2 \to \hO_n/(t_1,\ldots,t_n) \cong \CC.
\] 
One can use the associated adic topology to interpret many of our constructions, but we will not emphasize that perspective here.

We use $\Vect$ to denote the Lie algebra of derivations of $\hO_n$, which consists of first-order differential operators with formal power series coefficients:
\[
\Vect = \left\{ \sum_{i =1 }^n f_i \frac{\partial}{\partial t_i} \,:\, f_i \in \hO_n\right\}.
\]
The group $\GL_n$ also acts naturally on $\hO_n$: for $M \in \GL_n$ and $f \in \hO_n$,
\[
(M \cdot f)(t) = f (Mt),
\]
where on the right side we view $t$ as an element of $\CC^n$ and let $M$ act linearly.
In other words, we interpret $\GL_n$ as acting ``by diffeomorphisms'' on $\hD^n$ and then use the induced pullback action on functions on $\hD^n$.
The actions of both $\Vect$ and $\GL_n$ intertwine with multiplication of power series, 
since ``the pullback of a product of functions equals the product of the pullbacks.''

\subsubsection{Formal automorphisms}

Let $\Aut_n$ be the group of filtration-preserving automorphisms of the algebra $\hO_n$,
which we will see is a pro-algebraic group.
Explicitly, such an automorphism $\phi$ is a map of algebras that preserves the maximal ideal, 
so $\phi$ is specified by where it sends the generators $t_1$, \dots, $t_n$ of the algebra.
In other words, each $\phi \in \Aut_n$ consists of an $n$-tuple $(\phi_1,\ldots,\phi_n)$ 
such that each $\phi_i$ is in the maximal ideal generated by $(t_1,\ldots,t_n)$ and such that there exists an $n$-tuple $(\psi_1,\ldots,\psi_n)$ 
where the composite
\[
\psi_j(\phi_1(t),\ldots,\phi_n(t)) = t_j
\]
for every $j$ (and likewise with $\psi$ and $\phi$ reversed).
This second condition can be replaced by verifying that the Jacobian matrix
\[
Jac(\phi) = (\partial \phi_i/\partial t_j) \in {\rm Mat}_n(\hO_n)
\]
is invertible over $\hO_n$, by a version of the inverse function theorem.

Note that this group is far from being finite-dimensional, so it does not fit immediately into the setting of HC-pairs described above. 
It is, however, a {\em pro}-Lie group in the following way. 
As each $\phi \in \Aut_n$ preserves the filtration on $\hO_n$, it induces an automorphism of each partial quotient $\hO_n/\fm_n^k$.
Let $\Aut_{n,k}$ denote the image of $\Aut_n$ in $\Aut(\hO_n/\fm_n^{k+1})$; this group $\Aut_{n,k}$ is clearly a quotient of $\Aut_n$.
Note, for instance, that $\Aut_{n,1} = \GL_n$.
Explicitly, an element $\phi$ of ${\rm Aut}_{n,k}$ is the collection of $n$-tuples $(\phi_1,\ldots,\phi_n)$ 
such that each $\phi_i$ is an element of $\fm_n/\fm_n^{k+1}$ and such that the Jacobian matrix $Jac(\phi)$ is invertible in $\hO_n/\fm_n^{k+1}$.
The group ${\rm Aut}_{n,k}$ is manifestly a finite dimensional Lie group, as the quotient algebra is a finite-dimensional vector space. 
 
The group of automorphisms $\Aut_n$ is the pro-Lie group associated with the natural sequence of Lie groups
\ben
\cdots \to \Aut_{n,k} \to \Aut_{n,k-1} \to \cdots \to \Aut_{n,1} = \GL_n.
\een
Let $\Aut_n^+$ denote the kernel of the map $\Aut_n \to \GL_n$ so that we have a short exact sequence
\ben
1 \to \Aut_n^+ \to \Aut_n \to \GL_n \to 1 .
\een
In other words, for an element $\phi$ of $\Aut_n^+$, each component
$\phi_i$ is of the form $t_i + \cO(t^2)$. The group $\Aut_n^+$ is
pro-nilpotent, hence contractible. 

The Lie algebra of $\Aut_n$ is {\em not} the Lie algebra of formal
vector fields $\Vect$. A direct
calculation shows that the Lie algebra of $\Aut_n$ is the Lie algebra $\Vectz \subset \Vect$ of formal vector fields with zero constant coefficient (i.e., that vanish at the origin of $\hD^n$). 

Observe that the group $\GL_n$ acts on the Lie algebra $\Vect$ by the obvious linear ``changes of frame.''
The Lie algebra $\Lie({\GL_n}) = \fgl_n$ sits inside $\Vect$ as the linear vector fields
\ben
\left\{\sum_{i,j} a^j_i t_i \frac{\partial}{\partial t_j} \; : \; a^{i}_j \in \CC \right\}.
\een 
We record these compatibilities in the following statement.

\begin{lemma} 
The pair $(\Vect, \GL_n)$ form a Harish-Chandra pair.
\end{lemma}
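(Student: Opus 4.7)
The plan is to verify the three clauses in the definition of a Harish-Chandra pair, and the key observation is that both the group action $\rho_K$ and the embedding $i$ arise from the same piece of data: the $\GL_n$-action on the algebra $\hO_n$ by ``change of coordinates,'' $(M \cdot f)(t) = f(Mt)$. This builds the compatibility between the two structures in automatically.

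First, I would construct $\rho_K : \GL_n \to \Aut(\Vect)$ by conjugation. Because $\GL_n$ acts on $\hO_n$ by filtration-preserving algebra automorphisms, the formula $(M \cdot \xi)(f) = M \cdot \xi(M^{-1} \cdot f)$ produces a well-defined derivation of $\hO_n$ for each $\xi \in \Vect$, and conjugation by any algebra automorphism preserves the commutator bracket of derivations, so $\rho_K$ is a group homomorphism into $\Aut(\Vect)$. Second, I would check that the explicit formula $i(A) = \sum_{i,j} a^j_i\, t_i\, \partial/\partial t_j$ defines a Lie algebra embedding $\fgl_n \hookrightarrow \Vect$: injectivity is evident from the degree-one coefficients, and a short direct computation gives $[i(A), i(B)] = i([A,B])$, which is the standard identification of linear vector fields on $\CC^n$ with $\fgl_n$ under the commutator bracket. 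Third, for the compatibility axiom, I would differentiate $M(s) = \exp(sA)$ at $s = 0$ in the defining formula for $\rho_K$ to obtain
\[
(\Lie(\rho_K)(A) \cdot \xi)(f) \;=\; i(A)(\xi(f)) - \xi(i(A)(f)) \;=\; [i(A), \xi](f),
\]
using that $\left.\tfrac{d}{ds}\right|_{s=0} M(s) \cdot f = i(A)(f)$, which is precisely how $i$ is recovered as the derivative at the identity of the $\GL_n$-action on $\hO_n$. Hence $\Lie(\rho_K)(A)$ acts on $\Vect$ as the adjoint action of $i(A)$, as required.

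The main obstacle, such as it is, is technical rather than conceptual: $\Vect$ and $\hO_n$ are only pro-finite-dimensional, so one must verify that conjugation is well-defined on the full formal power series completion and not merely on polynomials. This is immediate because $\GL_n$ acts by linear substitutions and thereby preserves every piece $\fm_n^k$ of the adic filtration; one can thus carry out each check at the finite-dimensional truncations $\hO_n/\fm_n^k$ and pass to the inverse limit. All three axioms hold, so $(\Vect, \GL_n)$ is a Harish-Chandra pair in the mildly extended sense (with $K$ finite-dimensional but $\fg$ pro-finite-dimensional) indicated in the discussion preceding the statement.
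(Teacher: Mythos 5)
Your proof is correct and follows essentially the same route as the paper, which simply observes that the only nontrivial point is that the differential of the $\GL_n$-action agrees with the adjoint action of $\fgl_n \subset \Vect$ and notes this holds by construction. You have just spelled out explicitly the conjugation formula and its derivative at the identity, which is exactly the content the paper leaves implicit.
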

\begin{proof} The only thing to check is that the differential of the
  action of $\GL_n$ corresponds with the adjoint action of $\fgl_n
  \subset \Vect$ on formal vector fields. This is by construction. 
\end{proof}

\subsection{The coordinate bundle}

In this section we review the central object in the Gelfand-Kazhdan
picture of formal geometry: the coordinate bundle.

\subsubsection{}

Given a complex manifold, its {\em coordinate space} $X^{coor}$ is the (infinite-dimensional) space parametrizing holomorphic formal coordinate systems of $X$. 
(It is a pro-complex manifold, as we'll see.) 
Explicitly, a point in $X^{coor}$ consists of a point $x \in X$ 
together with an isomorphism of completed commutative algebras 
\[
\phi: \sO_{X,x}^\wedge \to \CC[[t_1,\ldots,t_n]] = \hO_n,
\]
where $\sO_{X,x}^\wedge$ denotes the completion $\lim_{\leftarrow} \sO_{X,x}/\fm_x^k$ of the germ at $x$ of holomorphic functions with respect to powers of the ideal $\fm_x$ of functions vanishing at~$x$.  
Intuitively, $\phi$ corresponds to an embedding of the formal disk into $X$, sending the base point to~$x$.

There is a canonical projection map $\pi^{coor} : X^{coor} \to X$ by remembering only the underlying point in $X$. 
The group $\Aut_n$ acts on $X^{coor}$ by ``change of coordinates," 
i.e., by precomposing a  $\phi$ with an automorphism of the disk around the origin in $\CC^n$.
This action identifies $\pi^{coor}$ as a principal bundle for the pro-Lie group $\Aut_n$. 

One way to formalize these ideas is to realize $X^{coor}$ as a limit of finite-dimensional complex manifolds. 
Let $X_k^{coor}$ be the space consisting of points $(x, [\phi]_k)$, 
where $\phi$ is a formal holomorphic coordinate system, as above, and $[-]_k$ denotes the projection on $\CC[[[t_1,\ldots,t_n]]/(t_1,\ldots,t_n)^{k+1}$. 
Let $\pi_k^{coor} : X^{coor}_k \to X$ be the projection. 
By construction, the finite-dimensional complex Lie group $\Aut_{n,k}$ acts on the fibers of the projection freely and transitively 
so that $\pi_k^{coor}$ is a holomorphic principal $\Aut_{n,k}$-bundle. The bundle $X^{coor} \to X$ is the limit of the sequence of holomorphic principal bundles on X
\ben
\xymatrix{
\cdots \ar[r] & X^{coor}_k \ar[r] \ar[drrrr]_-{\pi_k^{coor}} & X^{coor}_{k-1} \ar[drrr]^-{\pi_{k-1}^{coor}} \ar[r] & \cdots \ar[r] & X_2^{coor} \ar[dr]^{\pi_2^{coor}} \ar[r] & X_1^{coor} \ar[d]^-{\pi_1^{coor}} \\ 
 & & & & & X .
}
\een
In particular, note that the $\GL_n = \Aut_{n,1}$-bundle $\pi_1^{coor} : X^{coor}_1 \to X$ is the frame bundle
\ben
\pi^{fr} : {\rm Fr}_X \to X,
\een
i.e., the principal bundle associated to the holomorphic tangent bundle of $X$.

\subsubsection{The Grothendieck connection} 

We can also realize the Lie algebra $\Vect$ as an inverse limit. 
Recall the filtration on $\Vect$ by powers of the maximal ideal $\fm_n$ of $\hO_n$. 
Let ${\rm W}_{n,k}$ denote the quotient $\Vect / \fm_n^{k+1} \Vect$. 
For instance, ${\rm W}_{n,1} = \mathfrak{aff}_n = \CC^n \ltimes \fgl_n$, the Lie algebra of affine transformations of $\CC^n$. 
We have $\Vect = \lim_{k \to \infty} {\rm W}_{n,k}$. 

The Lie algebra of $\Aut_{n,k}$ is
\[
{\rm W}_{n,k}^0 := \fm_n \cdot \Vectz /\fm_n^{k+1} \Vectz .
\]
That is, the Lie algebra of vector fields vanishing at zero modulo the $(k+1)$th power of the maximal ideal. Thus, the principal $\Aut_{n,k}$-bundle $X_{k}^{coor} \to X$ induces an exact sequence of holomorphic tangent spaces
\ben
{\rm W}_{n,k}^0 \to T_{(x,[\varphi]_k)}X^{coor} \to T_x X;
\een
by using $\varphi$, we obtain a canonical isomorphism of tangent spaces $\CC^n \cong T_0 \CC^n \cong T_x X$. Combining these observations, we obtain an isomorphism
\ben
{\rm W}_{n,k} \cong T_{(x,[\varphi]_k)} X^{coor}_k .
\een
In the limit $k \to \infty$ we obtain an isomorphism $\Vect \cong T_{(x,[\varphi]_\infty)} X^{coor}$ at each point.
These isomorphisms glue together to give the following. 

\begin{dfn}
Let
\ben
\theta : \Vect \to \cX^{hol}(X^{coor}) .
\een
denote the Lie algebra morphism encoding the canonical action of $\Vect$ on $X^{coor}$ by
holomorphic vector fields.
(See Section 5 of \cite{NT} and Section 3 of \cite{CF2} for further discussions of this kind of construction.)
\end{dfn}

The inverse of the map $\theta$ provides a connection one-form
\ben
\omega^{coor} \in \Omega^1_{hol}(X^{coor}; \Vect),
\een
which we call the {\em universal Grothendieck connection} on $X$. 
As $\theta$ is a Lie algebra homomorphism, $\omega^{coor}$ satisfies the Maurer-Cartan equation
\be\label{mc}
\partial \omega^{coor} + \frac{1}{2} [\omega^{coor},\omega^{coor}] = 0 .
\ee
Note that the proposition ensures that this connection is universal on all complex manifolds of dimension $n$ 
and indeed pulls back along local biholomorphisms.

\begin{rmk} 
Both the pair $(\Vect, \Aut_n)$ and the bundle $X^{coor} \to X$ together
with $\omega^{coor}$ do not fit in our model for general
Harish-Chandra descent above. 
They are, however, objects in a larger category of pro-Harish-Chandra pairs and pro-Harish-Chandra bundles, respectively. 
We do not develop this theory here, but it is inherent in the work of~\cite{BK}.  
Indeed, by working with well-behaved representations for the pair $(\Vect,\Aut_n)$, 
Gelfand, Kazhdan, and others use this universal construction to produce many of the natural constructions in differential geometry.
As we remarked earlier, it is a kind of refinement of tensor calculus.
\end{rmk}

\subsubsection{A Harish-Chandra structure on the frame bundle}

\def\Sect{{\rm Sect}}
\def\Fr{{\rm Fr}}
\def\Exp{{\rm Exp}}

Although the existence of the coordinate bundle
$X^{coor}$ is necessary in the remainder of this paper, it is convenient for us to use it in a rather
indirect way. Rather, we will work with the frame bundle ${\rm Fr}_X \to X$ equipped with the structure of a module for the Harish-Chandra pair $(\Vect, \GL_n)$. 
The $\Vect$-valued connection on $\Fr_X$ is induced from the Grothendieck connection above.

\begin{dfn}\label{fmlexp} 
Let $\Exp (X)$ denote the quotient $X^{coor} / \GL_n$. 
A $C^\infty$-section of $\Exp(X)$ over $X$ is called a {\em formal exponential}. 
\end{dfn}

\begin{rmk}
Following Section 4 of \cite{kapranov1999}, 
we can realize $\Exp (X)$ as the inverse limit of finite dimensional manifolds $\Exp_k (X) = X^{coor}_k / \GL_n$.
Moreover, we can equip $\Exp(X)$ with the structure of a principal $\Aut_n^+$-bundle over $X$ in the following way.
Consider the short exact sequence of pro-Lie groups
\ben
1 \to \Aut_n^+ \to \Aut_n \to \GL_n \to 1 .
\een
There is a splitting of this determined by the choice of coordinates on the formal disk which exhibits an isomorphism
\[
\Aut_n = \Aut_{n}^+ \rtimes \GL_n 
\] 
and a bijection of {\em sets} $q : \Aut_n^+ \xto{\cong} \Aut_n / \GL_n$. 

Further, there is an action of $\Aut_n$ on $\Aut_n^+$ defined by
\[
f \cdot p := Jac(f) \circ p \circ Jac(f)^{-1}
\]
which makes $q$ a $\Aut_n$-equivariant isomorphism. 
Using this isomorphism, we equip $X^{coor} / \GL_n$ with the desired $\Aut_n^+$ structure.
\end{rmk}

Note that $\Aut_n^+$ is contractible, and so smooth sections always exist. 
A formal exponential is useful because it equips the frame bundle with a $(\Vect,\GL_n)$-module structure, as follows.

\begin{prop} \label{gauge equiv}
A formal exponential $\sigma$ pulls back to a $\GL_n$-equivariant map $\tilde{\sigma} : \Fr_X \to X^{coor}$,
and hence equips $(\Fr_x, \sigma^* \omega^{coor})$ with the structure
of a principal $(\Vect,\GL_n)$-bundle with flat connection.
Moreover, any two choices of formal exponential determine $(\Vect,\GL_n)$-structures on $X$ that are gauge-equivalent. 
\end{prop}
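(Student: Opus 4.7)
The plan is to construct $\tilde{\sigma}$ by directly unwinding what it means to be a frame versus a jet of coordinates, then verify the three conditions for a flat $(\Vect,\GL_n)$-bundle by pullback of the corresponding properties of $\omega^{coor}$, and finally handle gauge equivalence via contractibility of $\Aut_n^+$.

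First, to define $\tilde{\sigma}: \Fr_X \to X^{coor}$ I recall that a point of $\Fr_X$ over $x \in X$ is a linear isomorphism $f : \CC^n \xrightarrow{\sim} T_x X$, while a point of $\Exp(X) = X^{coor}/\GL_n$ over $x$ is a $\GL_n$-orbit of $\infty$-jets of local biholomorphisms $\phi : (\hD^n, 0) \to (X, x)$, where $\GL_n \subset \Aut_n$ acts by precomposition $\phi \mapsto \phi \circ g$. Within any such $\GL_n$-orbit, the assignment $\phi \mapsto d\phi|_0$ takes values in a $\GL_n$-orbit of isomorphisms $\CC^n \to T_xX$, which is a $\GL_n$-torsor, so there is a unique representative $\phi_f$ of $\sigma(x)$ with $d\phi_f|_0 = f$. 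Set $\tilde{\sigma}(x,f) := (x, \phi_f)$. By construction $\tilde\sigma(x, f \circ g) = \tilde\sigma(x,f) \cdot g$, giving $\GL_n$-equivariance.

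Second, define $\sigma^*\omega^{coor} := \tilde{\sigma}^*\omega^{coor} \in \Omega^1_{hol}(\Fr_X;\Vect)$ and verify the conditions of Definition \ref{holgk}. The $\GL_n$-invariance of $\sigma^*\omega^{coor}$ is immediate from the $\Aut_n$-invariance of $\omega^{coor}$ combined with the equivariance of $\tilde\sigma$. For the connection condition, given $a \in \fgl_n$, the map $\tilde\sigma$ intertwines the fundamental vector fields $\xi_a$ on $\Fr_X$ and on $X^{coor}$ (since both arise from the same $\GL_n$-action), so
\[
(\tilde\sigma^*\omega^{coor})(\xi_a) = \omega^{coor}(d\tilde\sigma(\xi_a)) = \omega^{coor}(\xi_a^{X^{coor}}) = a,
\]
using that $\omega^{coor}$ is the Grothendieck connection restricted to $\fgl_n \subset \Vectz \subset \Vect$. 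The Maurer-Cartan equation is inherited from $\omega^{coor}$, since pullback commutes with $\partial$ and with the bracket in $\Vect$.

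Third, for gauge equivalence, let $\sigma_0, \sigma_1$ be two formal exponentials. Using the splitting $\GL_n \hookrightarrow \Aut_n$ coming from the standard coordinates on $\CC^n$, the fibers of $\Exp(X) \to X$ are $\Aut_n^+$-torsors, so $\sigma_0$ and $\sigma_1$ differ by a holomorphic map $g : X \to \Aut_n^+$. Since $\Aut_n^+$ is pro-nilpotent, hence contractible, we may choose a smooth family $g_t : X \to \Aut_n^+$ with $g_0 = e$ and $g_1 = g$, producing a smooth family $\sigma_t$ of formal exponentials interpolating between $\sigma_0$ and $\sigma_1$. The combined map $\Fr_X \times [0,1] \to X^{coor}$, $(x,f,t) \mapsto \tilde{\sigma}_t(x,f)$, pulls back $\omega^{coor}$ to a flat $(\Vect,\GL_n)$-connection on $\Fr_X \times [0,1]$, and parallel transport in the $[0,1]$ direction yields an isomorphism of flat $(\Vect,\GL_n)$-bundles between $(\Fr_X, \sigma_0^*\omega^{coor})$ and $(\Fr_X, \sigma_1^*\omega^{coor})$. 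The main subtlety is this last step: one must verify that the interpolation can be made globally on $X$ and that parallel transport is well-defined despite the pro-finite nature of $\Vect$, but both follow from the pro-nilpotent filtration on $\Aut_n^+$, which reduces the problem level-by-level to statements about finite-dimensional Lie groups where the classical arguments apply.
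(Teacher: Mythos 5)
Your proof is correct and follows essentially the same route as the paper's own (sketched) argument: the tautological identification of a section of $X^{coor}/\GL_n$ with a $\GL_n$-equivariant lift, pullback of the flat structure from $\omega^{coor}$, and contractibility of $\Aut_n^+$ to produce the gauge equivalence. You fill in more detail than the paper does --- in particular the explicit choice of representative $\phi_f$ with $d\phi_f|_0 = f$ and the observation that the $t$-component of the interpolated connection lands in the pro-nilpotent part, which is what makes the parallel transport well-defined --- and these additions are accurate.
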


For a full proof, see \cite{NT}, \cite{nest1995}, or \cite{khors} but the basic idea is easy to explain.

\begin{proof}[Sketch of proof]
The first assertion is tautological, since the data of a section is equivalent to such an equivariant map, but we explicate the underlying geometry.
A map $\rho : \Fr_X \to X^{coor}$ assigns to each pair  $(x, \mathbf{y}) \in \Fr_X$,
with $x \in X$ and $\mathbf{y} : \CC^n \xto{\cong} T_x X$ a linear frame,
an $\infty$-jet of a biholomorphism $\phi: \CC^n \to X$ such that $\phi(0) = x$ and $D\phi(0) = \mathbf{y}$.
Being $\GL_n$-equivariant ensures that these biholomorphisms are related by linear changes of coordinates on $\CC^n$.
In other words, a $\GL_n$-equivariant map $\tilde{\sigma}$ describes how each frame on $T_x X$ exponentiates to a formal coordinate system around $x$,
and so the associated section $\sigma$ assigns a formal exponential map $\sigma(x) \colon T_x X \to X$ to each point $x$ in $X$.
(Here we see the origin of the name ``formal exponential.'')

The second assertion would be immediate if $X^{coor}$ were a complex manifold, since the flat bundle structure would pull back,
so all issues are about carefully working with pro-manifolds.

The final assertion is also straightforward: the space of sections is contractible since $\Aut_n^+$ is contractible, 
so one can produce an explicit gauge equivalence.
\end{proof}
 
\begin{rmk} 
In \cite{willwacher} Willwacher provides a description of the space $\Exp(X)$ of {\em all} formal exponentials. He shows that it is isomorphic to the space of pairs $(\nabla_0, \Phi)$
where $\nabla_0$ is a torsion-free connection on $X$ for $T_X$ and $\Phi$ is a section of the bundle
\ben
\Fr_X \times_{\GL_n} {\rm W}_n^3
\een
where ${\rm W}_n^3 \subset \Vect$ is the subspace of formal vector fields whose coefficients are at least cubic. 
In particular, every torsion-free affine connection determines a formal exponential. The familiar case above that produces a formal coordinate from a connection corresponds to choosing the zero vector field. 
\end{rmk}

\begin{dfn}
A {\em Gelfand-Kazhdan structure} on the frame bundle $\Fr_X\to X$ of a complex manifold $X$ of dimension $n$ is a formal exponential $\sigma$, 
which makes $\Fr_X$ into a flat $(\Vect,\GL_n)$-bundle with connection one-form $\omega^\sigma$, 
the pullback of $\omega^{coor}$ along the $\GL_n$-equivariant lift $\tilde{\sigma} : \Fr_X \to X^{coor}$.
\end{dfn}

\begin{ex} 
Consider the case of an open subset $U \subset \CC^n$. 
There are thus natural holomorphic coordinates $\{z_1,\ldots,z_n\}$ on $U$. 
These coordinates provides a natural choice of a formal exponential. 
Moreover, with respect to the isomorphism
\ben
\Omega^1_{hol}(\Fr_U ; \Vect)^{\GL_n} \cong \Omega^1_{hol}(U ; \Vect),
\een
we find that the connection 1-form has the form
\ben
\omega^{coor} = \sum_{i=1}^n \d z_i \tensor \frac{\partial}{\partial t_i},
\een 
where the $\{t_i\}$ are the coordinates on the formal disk $\hD^n$.
\end{ex} 

A Gelfand-Kazhdan structure allows us to apply a version of Harish-Chandra descent, which will be a central tool in our work.

Although we developed Harish-Chandra descent on all flat $(\fg,K)$-bundles, 
it is natural here to restrict our attention to manifolds of the same dimension,
as the notions of coordinate and affine bundle are dimension-dependent.
Hence we replace the underlying category of all complex manifolds by a more restrictive setting.

\begin{dfn}
Let $\Hol_n$ denote the category whose objects are complex manifolds of dimension $n$ and whose morphisms are local biholomorphisms.
In other words, a map $f: X \to Y$ in $\Hol_n$ is a map of complex manifolds such that each point $x \in X$ admits a neighborhood $U$ on which $f|_U$ is biholomorphic with $f(U)$.
\end{dfn}

There is a natural inclusion functor $i : \Hol_n \to {\rm CplxMan}$ (not fully faithful) and the frame bundle $\Fr$ defines a section of the fibered category $i^*\VB$,
since the frame bundle pulls back along local biholomorphisms.
For similar reasons, the coordinate bundle is a pro-object in $i^*\VB$.

\begin{dfn}
Let $\GK_n$ denote the category fibered over $\Hol_n$ whose objects are a Gelfand-Kazhdan structure 
--- that is, a pair $(X, \sigma)$ of a complex $n$-manifold and a formal exponential ---
and whose morphisms are simply local biholomorphisms between the underlying manifolds.
\end{dfn}

Note that the projection functor from $\GK_n$ to $\Hol_n$ is an equivalence of categories, since the space of formal exponentials is affine.

\subsection{The category of formal vector bundles}

For most of our purposes, it is convenient and sufficient to work with a small category of $(\Vect,\GL_n)$-modules 
that is manifestly well-behaved and whose localizations appear throughout geometry in other guises, 
notably as $\infty$-jet bundles of vector bundles on complex manifolds.
(Although it would undoubtedly be useful, we will not develop here the general theory of modules for the Harish-Chandra pair $(\Vect,\GL_n)$, 
which would involve subtleties of pro-Lie algebras and their representations.)

We first start by describing the category of $(\Vect, \GL_n)$-modules
that correspond to modules over the structure sheaf of a manifold. Note that $\hO_n$ is the quintessential example of a commutative algebra object in the symmetric monoidal category of $(\Vect,\GL_n)$-modules, 
for any natural version of such a category. We consider modules that
have actions of both the pair and the algebra $\hO_n$ with obvious
compatibility restrictions. 

\begin{dfn} A {\em formal $\hO_n$-module} is a
  vector space $\cV$ equipped with
\begin{itemize}
\item[(i)] the structure of a $(\Vect, \GL_n)$-module;
\item[(ii)] the structure of a $\hO_n$-module;
\end{itemize}
such that 
\begin{itemize}
\item[(1)] for all $X \in \Vect$, $f \in \hO_n$ and $v \in \cV$ we
  have $X(f \cdot v) = X(f) \cdot v + f \cdot (X \cdot v)$;
\item[(2)] for all $A \in \GL_n$ we have $A (f \cdot v) = (A \cdot f) \cdot (A \cdot v)$,  where $A$ acts on $f$ by a linear change of frame.
\end{itemize}
A morphism of formal $\hO_n$-modules is a $\hO_n$-linear map of
$(\Vect, \GL_n)$-modules $f : \cV \to \cV'$. We denote this category
by $\Mod_{(\Vect, \GL_n)}^{\cO_n}$. 
\end{dfn}

It is useful to bear in mind that such an object is much like a vector bundle equipped with a flat connection,
due to the action of vector fields by derivations.
More properly this category behaves much like $D$-modules (i.e., modules over the ring $D$ of differential operators).
For instance, just as the category of $D$-modules is symmetric monoidal via tensor over $\cO$, we have the following result.

\begin{lemma}
The category $\Mod^{\cO_n}_{(\Vect, \GL_n)}$ is symmetric monoidal with respect to tensor over $\hO_n$.
\end{lemma}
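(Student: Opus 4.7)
The plan is to equip the $\hO_n$-module tensor product $\cV \otimes_{\hO_n} \cW$ with the expected ``diagonal'' actions: for $X \in \Vect$ the Leibniz rule
\[
X(v \otimes w) := (Xv) \otimes w + v \otimes (Xw),
\]
for $A \in \GL_n$ the diagonal formula $A(v \otimes w) := (Av) \otimes (Aw)$, and for $f \in \hO_n$ multiplication on either factor (equal by balancing). The monoidal unit will be $\hO_n$ itself, viewed as a formal $\hO_n$-module through its tautological $(\Vect,\GL_n)$-structure. The associator, unitor, and symmetry will be the usual ones for tensor products of $\hO_n$-modules.

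The first step, and the only nontrivial one, is to check that these two actions descend through the $\hO_n$-balancing relation $fv \otimes w = v \otimes fw$. For $\GL_n$ this is immediate from axiom~(2) of a formal $\hO_n$-module, since $A(fv) \otimes Aw = (Af)(Av) \otimes Aw = Av \otimes (Af)(Aw)$. For $X \in \Vect$, applying axiom~(1) gives
\[
X(fv) \otimes w + fv \otimes Xw - Xv \otimes fw - v \otimes X(fw) = X(f)v \otimes w - v \otimes X(f)w,
\]
and the right-hand side vanishes by balancing. Once descent is established, the Lie-algebra relations for $\Vect$, the group relations for $\GL_n$, and the Harish-Chandra compatibility between $\fgl_n \hookrightarrow \Vect$ and $\Lie(\GL_n)$ are all inherited factor by factor; the derivation axiom~(1) and the semilinearity axiom~(2) for the combined $\hO_n$-action on $\cV \otimes_{\hO_n} \cW$ follow by direct computation from the corresponding axioms on $\cV$ and $\cW$.

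The second step is to assemble the symmetric monoidal coherence. Since the underlying category of $\hO_n$-modules is symmetric monoidal under $\otimes_{\hO_n}$ with unit $\hO_n$, one inherits candidate associator, left/right unitor, and symmetry isomorphisms; one need only check that each of these $\hO_n$-linear maps intertwines the diagonal $\Vect$-action (trivial since the Leibniz rule is additive across factors) and the diagonal $\GL_n$-action (trivial since it is multiplicative across factors), and that $\hO_n$ acts as a unit for the $(\Vect,\GL_n)$-structure (immediate from the defining axioms). The pentagon and hexagon then reduce to those already known for $\Mod_{\hO_n}$. The only real obstacle anywhere in the argument is the descent calculation above, which is precisely why axioms~(1) and~(2) of Definition of formal $\hO_n$-module were imposed.
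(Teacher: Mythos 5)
Your proof is correct and takes essentially the same approach as the paper, which simply asserts that the symmetric monoidal structure on $\hO_n$-modules extends with the Harish-Chandra actions given diagonally and declares the verification clear. You have merely spelled out the descent check through the balancing relation $fv \otimes w = v \otimes fw$ (which is exactly where axioms (1) and (2) of a formal $\hO_n$-module are used), so there is nothing to add.
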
 

\begin{proof}
The category of $\hO_n$-modules is clearly symmetric monoidal by tensoring over $\hO_n$. We simply need to verify that the Harish-Chandra module structures extend in a natural way, but this is clear.
\end{proof}

We will often restrict ourselves to considering Harish-Chandra modules as above that are free as underlying $\hO_n$-modules. 
Indeed, let
\ben
\VB_n \subset \Mod_{(\Vect, \GL_n)}^{\cO_n}
\een
be the full subcategory spanned by objects that are free and finitely generated as underlying $\hO_n$-modules,
so we refer to this category as {\em formal vector bundles}.

The category of formal $\hO_n$-modules has a natural symmetric monoidal structure by tensor product over~$\hO$. The Harish-Chandra action is extended by
\[
X \cdot (s \otimes t) = (X s) \otimes t + s \otimes (Xt). 
\]
This should not look surprising; it is the same formula for tensoring
$D$-modules over~$\cO$. 

The internal hom $\Hom_{\hO}(\cV,\cW)$ also provides a vector bundle on the formal disk, 
where the Harish-Chandra action is extended by
\[
(X \cdot \phi)(v) = X \cdot (\phi(v)) - \phi(X\cdot v). 
\]
Observe that for any $D$-module $M$, we have an isomorphism
\[
\Hom_{D}(\hO, M) \cong \Hom_{\Vect}(\CC, M)
\]
since a map of $D$-modules out of $\hO$ is determined by where it sends the constant function~1. 
Hence we find that there is a quasi-isomorphism 
\[
\RR\Hom_{D}(\hO, \cV) \simeq \clie^*(\Vect ; \cV),
\]
or more accurately a zig-zag of quasi-isomorphisms. Here
$\clie^*(\Vect ; \cV)$ is the continuous cohomology of $\Vect$ with
coefficients in $\cV$. This is known as the {\em Gelfand-Fuks}
cohomology of $\cV$ and is what we use for the remainder of the
paper. 

This relationship extends to the $\GL_n$-equivariant setting as well, giving us the following result.

\begin{lemma}
There is a quasi-isomorphism
\[
\clie^*(\Vect,\GL_n; \cV) \simeq \RR \Hom_D(\hO,\cV)^{\GL_n-{\rm eq}},
\]
where the superscript $\GL_n-{\rm eq}$ denotes the $\GL_n$-equivariant maps.
\end{lemma}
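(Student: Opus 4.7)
The plan is to upgrade the preceding non-equivariant quasi-isomorphism $\RR\Hom_D(\hO,\cV) \simeq \clie^*(\Vect;\cV)$ to a $\GL_n$-equivariant statement and then pass to $\GL_n$-invariants. Because $\GL_n$ is a reductive complex algebraic group, its invariants functor is exact on the category of rational representations, so taking invariants preserves quasi-isomorphisms. This reduces the lemma to checking that, on each side, the $\GL_n$-invariants are the objects in question.

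First, I would observe that both $\hO$ and $\cV$ carry compatible $\GL_n$-actions coming from the Harish-Chandra structure, and that all the relevant $D$-module structures are $\GL_n$-equivariant. Consequently, the zig-zag producing the non-equivariant quasi-isomorphism can be chosen $\GL_n$-equivariantly. Concretely, one resolves $\hO$ as a $D$-module by a Chevalley-Eilenberg-type complex of the form $D \otimes_{\cU(\fgl_n)} \Lambda^k(\Vect/\fgl_n)$, whose terms are built out of algebraic $\GL_n$-representations and whose differentials are $\GL_n$-equivariant. Applying $\Hom_D(-,\cV)$ to this resolution recovers the continuous Chevalley-Eilenberg complex $\clie^*(\Vect;\cV)$ along with its natural $\GL_n$-action, refining the known non-equivariant quasi-isomorphism.

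Next, I would apply the $(-)^{\GL_n}$ functor to both sides. By exactness of invariants for the reductive group $\GL_n$ (acting on algebraic representations), this yields
\[
\RR\Hom_D(\hO,\cV)^{\GL_n} \simeq \clie^*(\Vect;\cV)^{\GL_n}.
\]
The left-hand side is by definition $\RR\Hom_D(\hO,\cV)^{\GL_n-{\rm eq}}$. For the right-hand side, the key point is that with the resolution chosen above, the $\GL_n$-invariants of the $\Hom$ complex tautologically compute $\Hom_{\GL_n}(\Lambda^\bullet(\Vect/\fgl_n),\cV)$, which is precisely the basic subcomplex of $\clie^*(\Vect;\cV)$ — that is, the relative cochain complex $\clie^*(\Vect,\GL_n;\cV)$.

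The main obstacle is this last identification: a generic $\GL_n$-equivariant resolution of $\hO$ would only produce $\clie^*(\Vect;\cV)^{\GL_n}$, and reconciling that with the basic subcomplex requires either a Weil-model argument (leveraging reductivity of $\fgl_n$ and acyclicity of the Weil algebra $W(\fgl_n)$) or, as above, a careful choice of resolution adapted to the Harish-Chandra pair $(\Vect,\GL_n)$. The cleanest route is to exhibit the adapted resolution directly so that basic cochains appear without a detour through invariant cochains; the rest of the argument is a formal consequence of exactness of $\GL_n$-invariants.
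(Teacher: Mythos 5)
The paper states this lemma without proof (it simply asserts that the non-equivariant relationship ``extends to the $\GL_n$-equivariant setting''), so there is no argument of the authors' to compare yours against; judged on its own terms, your proposal contains a genuine gap in the middle, even though the mechanism you land on at the end is the right one. The problematic step is the claim that applying $\Hom_D(-,\cV)$ to the resolution $D \otimes_{U(\fgl_n)} \Lambda^\bullet(\Vect/\fgl_n)$ ``recovers the continuous Chevalley--Eilenberg complex $\clie^*(\Vect;\cV)$.'' It does not: one gets $\Hom_{U(\fgl_n)}(\Lambda^\bullet(\Vect/\fgl_n),\cV)$, i.e.\ the $\fgl_n$-\emph{relative} cochain complex, whose cohomology is $H^*(\Vect,\fgl_n;\cV)$ rather than $H^*(\Vect;\cV)$. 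Relatedly, the terms of this resolution are only \emph{relatively} projective (induced from $\fgl_n$-modules), not projective $D$-modules, so this complex cannot be used to compute the absolute $\RR\Hom_D(\hO,-)$ in the first place.

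This matters because your overall strategy --- establish a $\GL_n$-equivariant quasi-isomorphism for the absolute $\RR\Hom$ and then pass to invariants by exactness --- provably yields the wrong object. Computing $\RR\Hom_D(\hO,\cV)$ with a genuinely free equivariant resolution such as $D\otimes\Lambda^\bullet\Vect$ and then taking $\GL_n$-equivariant maps gives $\clie^*(\Vect;\cV)^{\GL_n}$, whose cohomology is $H^*(\Vect;\cV)\cong H^*(\Vect,\fgl_n;\cV)\otimes H^*(\fgl_n)$ by the Hochschild--Serre decomposition for the reductive subalgebra $\fgl_n\subset\Vect$; this differs from $H^*(\Vect,\GL_n;\cV)\cong H^*(\Vect,\fgl_n;\cV)$ by the exterior-algebra factor $H^*(\fgl_n)$. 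The fix is the one you gesture at in your final paragraph, but it must be adopted as the \emph{meaning} of the right-hand side: interpret $\RR\Hom_D(\hO,\cV)^{\GL_n-{\rm eq}}$ as the derived Hom in the category of $\GL_n$-equivariant $D$-modules, i.e.\ the $(\Vect,\GL_n)$-relative derived functor. In that setting the terms $D\otimes_{U(\fgl_n)}\Lambda^k(\Vect/\fgl_n)$ are relatively projective --- the required equivariant splittings exist because $\GL_n$ is reductive --- so the adapted resolution legitimately computes the derived Hom, and applying equivariant $\Hom_D(-,\cV)$ gives $\Hom_{\GL_n}(\Lambda^\bullet(\Vect/\fgl_n),\cV)=\clie^*(\Vect,\GL_n;\cV)$ essentially by construction. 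Once the argument is set up this way, the appeal to exactness of $\GL_n$-invariants does no work and should be dropped.
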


\begin{rmk}
One amusing way to understand this category is as Harish-Chandra descent to the formal $n$-disk itself. 
Consider the frame bundle $\widehat{\Fr} = \hD^n \times \GL_n \to \hD^n$ of the formal $n$-disk itself, 
which possesses a natural flat connection via the Maurer-Cartan form $\omega_{MC}$ on $\GL_n$. 
Let $\rho: \GL_n \to \GL(V)$ be a finite-dimensional representation. 
Then the subcomplex of $\Omega^*(\widehat{\Fr})\otimes V$ given by the basic forms is isomorphic to
\[
\left(\Omega^*(\hD^n) \otimes V, \d_{dR} + \rho(\omega_{MC}) \right).
\]
This equips the associated bundle $\widehat{\Fr}\times^{\GL_n} V$ with a flat connection and 
hence makes its sheaf of sections a $D$-module on the formal disk.
\end{rmk}

Many of the important $\hO_n$-modules we will consider simply come from linear tensor representations of $\GL_n$. 
Given a finite-dimensional $\GL_n$-representation $V$, we construct a $\hO_n$-module $\cV \in \VB_n$ as follows. 

Consider the decreasing filtration of $\Vect$ by vanishing order of jets 
\ben
\cdots \subset \fm^{2}_n \cdot {\rm W}_{n} \subset \fm^1_n \cdot {\rm W}_n \subset {\rm W}_n .
\een 
The induced map $\fm_n^1 \cdot \Vect \to \fm_n^1 \cdot \Vect / \fm_n^2
\cdot \Vect \cong \fgl_n$ allows us to restrict $V$ to a $(\fm_n^1 \cdot
\Vect)$-module. 
We  then coinduce this module along the inclusion $\fm^1 \cdot \Vect
\subset \Vect$ to get a $\Vect$-module $\cV = \Hom_{\fm_n^1 \cdot \Vect}(U(\Vect),V)$. 
There is an induced action of $\GL_n$ on $\cV$. Indeed, as a $\GL_n$-representation one has $\cV \cong \hO_n \tensor_{\CC} V$.
Moreover, this action is compatible with the $\Vect$-module structure, so that $\cV$ is actually a $(\Vect, \GL_n)$-module. 
Thus, the construction provides a functor  from $\Rep_{\GL_n}$ to
$\VB_n$.

\begin{dfn} 
We denote by $\Tens_n$ the image of finite-dimensional $\GL_n$-representations in $\VB_n$ along this functor. 
We call it the category of {\em formal tensor fields}.
\end{dfn}

As mentioned $\hO_n$ is an example, associated to the trivial one-dimensional $\GL_n$ representation.
Another key example is $\hT_n$, the vector fields on the formal disk, which is associated to the defining $\GL_n$ representation $\CC^n$; 
it is simply the adjoint representation of $\Vect$.
Other examples include $\hOmega^1_n$, the 1-forms on the formal disk; it
is the correct version of the coadjoint representation, and more
generally the space of $k$-forms on the formal disk $\hOmega^k_n$. 

The category $\Tens_n$ can be interpreted in two other ways, as we will see in subsequent work.
\begin{enumerate}
\item They are the $\infty$-jet bundles of tensor bundles: for a finite-dimensional $\GL_n$-representation, 
construct its associated vector bundle along the frame bundle and take its $\infty$-jets.
\item They are the flat vector bundles of finite-rank on the formal $n$-disk that are equivariant with respect to automorphisms of the disk. 
In other words, they are $\GL_n$-equivariant $D$-modules whose underlying $\hO$-module is finite-rank and free.
\end{enumerate}
It should be no surprise that given a Gelfand-Kazhdan structure on the frame bundle of a non-formal $n$-manifold $X$, 
a formal tensor field descends to the $\infty$-jet bundle of the corresponding tensor bundle on $X$. 
The flat connection on this descent bundle is, of course, the Grothendieck connection on this $\infty$-jet bundle. 
(For some discussion, see section 1.3, pages 12-14, of \cite{Fuks}.)

Note that the subcategories 
\ben
\Tens_n \hookrightarrow \VB_n
\hookrightarrow \Mod_{(\Vect, \GL_n)}^{\cO_n}
\een
inherit the symmetric monoidal structure constructed above. 

\subsection{Gelfand-Kazhdan descent}

We will focus on defining descent for the category $\VB_n$ of formal vector
bundles. 

Fix an $n$-dimensional manifold $X$.
The main result of this section is that the associated bundle construction along the frame bundle $\Fr_X$,
\[
\begin{array}{cccc}
\Fr_X \times^{\GL_n} - :&  \Rep(\GL_n)^{fin} & \to &\VB(X)\\
& V & \mapsto & \Fr_X \times^{\GL_n} V
\end{array},
\]
which builds a tensor bundle from a $\GL_n$ representation, arises from Harish-Chandra descent for $(\Vect,\GL_n)$. 
This result allows us to equip tensor bundles with interesting structures (e.g., a vertex algebra structure) by working $(\Vect,\GL_n)$-equivariantly on the formal $n$-disk.
In other words, it reduces the problem of making a universal
construction on all $n$-manifolds to the problem of making an
equivariant construction on the formal $n$-disk,
since the descent procedure automates extension from the formal to the global.

Note that every formal vector bundle $\cV \in \WGLCAT$ is naturally filtered via a filtration inherited from $\hO_n$. 
Explicitly, we see that $\cV$ is the limit of the sequence of finite-dimensional vector spaces
\[
\cdots \to \hO_n/\fm_n^k \otimes V \to \cdots \to \hO_n/\fm_n \otimes V \cong V
\]
where $V$ is the underlying $\GL_n$-representation.
Each quotient $\hO_n/\fm_n^k \otimes V$ is a module over $\Aut_{n,k}$, and 
hence determines a vector bundle on $X$ by the associated bundle construction along $X^{coor}_k$.
In this way, $\cV$ produces a natural sequence of vector bundles on $X$ and thus a pro-vector bundle on $X$.

Given a formal exponential $\sigma$ on $X$, we obtain a $\GL_n$-equivariant map from $\Fr_X$ to $X^{coor}_k$ for every $k$,
by composing the projection map $X^{coor} \to X_k^{coor}$ with the $\GL_n$-equivariant map from $\Fr_X$ to $X^{coor}$.

\begin{dfn}
{\em Gelfand-Kazhdan descent} is the functor
\[
\desc_\GK: \GK_n^\op \times \WGLCAT \to \Pro(\VB)_{flat}
\]
sending $(\Fr_X,\sigma)$ --- a frame bundle with formal exponential
--- and a formal vector bundle $\cV$ 
to the pro-vector bundle $\Fr_X \times^{\GL_n} \cV$ with flat connection induced by the Grothendieck connection.
\end{dfn}

This functor is, in essence, Harish-Chandra descent, but in a slightly exotic context.
It has several nice properties.

\begin{lemma}\label{prop lax}
For any choice of Gelfand-Kazhdan structure $(\Fr_X,\sigma)$, the descent functor $\desc_\GK((\Fr_X,\sigma),-)$ is lax symmetric monoidal.
\end{lemma}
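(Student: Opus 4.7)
The plan is to construct the lax structure maps explicitly and to verify their compatibility with the flat connections, so that they assemble into a morphism in $\Pro(\VB)_{flat}$. The key input is the classical fact that, for a principal $K$-bundle $P \to X$, the associated bundle construction $V \mapsto P \times^K V$ converts the tensor product of $K$-representations over $\CC$ into the tensor product of the resulting $\cO_X$-modules over $\cO_X$. I would verify this carefully at the level of basic sections, as in the preceding subsection, applied to the frame bundle $\Fr_X$ and to formal vector bundles viewed as (pro-)$\GL_n$-representations. This yields a canonical isomorphism $\desc_{\GK}(\cV) \otimes_{\cO_X} \desc_{\GK}(\cW) \cong \Fr_X \times^{\GL_n} (\cV \otimes_\CC \cW)$ of pro-$\cO_X$-modules.

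Next, for two formal vector bundles $\cV$ and $\cW$, I would observe that the canonical surjection $\cV \otimes_\CC \cW \twoheadrightarrow \cV \otimes_{\hO_n} \cW$ is a morphism in $\WGLCAT$: it is $\GL_n$-equivariant because the $\hO_n$-module structure on each factor is $\GL_n$-equivariant by axiom (2) in the definition of a formal $\hO_n$-module, and it is $\Vect$-equivariant precisely because the $\Vect$-action on the tensor product over $\hO_n$ was arranged via the Leibniz-type axiom (1). Descending this morphism along the chosen Gelfand-Kazhdan structure and composing with the isomorphism above produces the lax structure map
\[
\mu_{\cV,\cW}\colon \desc_{\GK}(\cV) \otimes_{\cO_X} \desc_{\GK}(\cW) \longrightarrow \desc_{\GK}(\cV \otimes_{\hO_n} \cW).
\]
Compatibility with the Grothendieck connections then follows from the $\Vect$-equivariance just noted, since the flat connection on the descent is built from $\omega^\sigma$ via the $\Vect$-action on the fiber. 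For the unit, I would take the canonical map $\cO_X \to \desc_{\GK}(\hO_n)$ coming from $\CC \hookrightarrow \hO_n$, which identifies $\cO_X$ with the sheaf of degree-zero jets inside the infinite jet bundle of $\cO_X$.

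The coherence axioms (associator, unitors, symmetry) reduce, via naturality of the associated bundle functor and the isomorphism of the first step, to the corresponding coherences for $\otimes_{\hO_n}$ on $\WGLCAT$, which hold by construction. The main bookkeeping issue, rather than a conceptual obstacle, is the pro-structure: formal vector bundles are pro-finite-dimensional via the filtration by powers of $\fm_n$, so $\desc_{\GK}(\cV)$ is only a pro-vector bundle on $X$, and the tensor product in $\Pro(\VB)_{flat}$ must be interpreted as the corresponding inverse limit of tensor products. One needs to check that the canonical surjection onto $\cV \otimes_{\hO_n} \cW$ respects these filtrations so that $\mu_{\cV,\cW}$ does indeed descend to a genuine morphism of pro-objects, but this is immediate because the filtration on $\cV \otimes_{\hO_n} \cW$ is induced from the filtrations on $\cV$ and $\cW$.
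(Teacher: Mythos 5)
Your proposal is correct and follows essentially the same route as the paper: the lax structure map is the composite of the multiplication landing in descent of $\cV \otimes_\CC \cW$ followed by the quotient to $\cV \otimes_{\hO_n} \cW$, which is exactly the two-step composite the paper writes at the level of basic forms on $\Fr_X$ (identified with your associated-bundle description via the isomorphism $s$ from Section \ref{sec hc descent}). Your additional checks of $(\Vect,\GL_n)$-equivariance of the surjection, the unit, and the pro-structure are correct elaborations of what the paper leaves implicit.
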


\begin{proof}
For every $\cV,\cW$ in $\WGLCAT$, we have natural maps
\[
(\Omega^*(\Fr_X) \otimes \cV)_{basic} \otimes (\Omega^*(\Fr_X) \otimes \cW)_{basic} \to (\Omega^*(\Fr_X) \otimes (\cV \otimes \cW))_{basic} \to (\Omega^*(\Fr_X) \otimes (\cV \otimes_{\hO_n} \cW))_{basic}
\]
and the composition provides the natural transformation producing the lax symmetric monoidal structure.
\end{proof}

In particular, we observe that the de Rham complex of $\desc_\GK((\Fr_X,\sigma),\hO_n)$ is a commutative algebra object in $\Omega^*(X)$-modules. 
As every object of $\WGLCAT$ is an $\hO_n$-module and the morphisms are $\hO_n$-linear, 
we find that descent actually factors through the category of $\desc_\GK((\Fr_X,\sigma),\hO_n)$-modules. 
In sum, we have the following.

\begin{lemma}
The descent functor $\desc_\GK((\Fr_X,\sigma),-)$ factors as a composite
\[
\VB_n \xto{\widetilde{\desc}_\GK((\Fr_X,\sigma),-)} \Mod_{\desc_\GK((\Fr_X,\sigma),\hO_n)} \xto{\txt{forget}} \VB_{flat}(X)
\]
and the functor $\widetilde{\desc}_\GK((\Fr_X,\sigma),-)$ is symmetric monoidal.
\end{lemma}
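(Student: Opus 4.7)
The plan is to prove the lemma in two stages. First I will establish the factorization through $\mathcal{A}$-modules for $\mathcal{A} := \desc_\GK((\Fr_X,\sigma),\hO_n)$. Then I will upgrade the lax symmetric monoidal structure of Lemma~\ref{prop lax} to a strong one after passing to the relative tensor product over $\mathcal{A}$.

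For the factorization, note that $\hO_n$ is the unit commutative algebra in $\VB_n$, so by Lemma~\ref{prop lax} the object $\mathcal{A}$ inherits the structure of a commutative algebra in flat pro-vector bundles on $X$; concretely, it is the $\infty$-jet bundle of $\cO^{hol}_X$ equipped with the Grothendieck connection. For any $\cV \in \VB_n$, the $\hO_n$-module structure map $\mu_\cV \colon \hO_n \otimes \cV \to \cV$ is a morphism in $\VB_n$, and composing the lax coherence map with $\desc_\GK(\mu_\cV)$ produces a map $\mathcal{A} \otimes \desc_\GK(\cV) \to \desc_\GK(\cV)$. Associativity and unitality of this action follow routinely from the coherence axioms for lax symmetric monoidal functors. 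Since every morphism in $\VB_n$ is by definition $\hO_n$-linear, its descent is $\mathcal{A}$-linear, yielding the lift $\widetilde{\desc}_\GK$.

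For the monoidality, the lax coherence map $\desc_\GK(\cV) \otimes \desc_\GK(\cW) \to \desc_\GK(\cV \otimes_{\hO_n} \cW)$ coequalizes the two $\mathcal{A}$-actions on its source (by construction of the module structure) and hence factors canonically through a comparison map
\[
\Phi_{\cV,\cW} \colon \desc_\GK(\cV) \otimes_\mathcal{A} \desc_\GK(\cW) \longrightarrow \desc_\GK(\cV \otimes_{\hO_n} \cW).
\]
The heart of the proof is showing $\Phi_{\cV,\cW}$ is an isomorphism. Since $\cV$ is free and finitely generated as an $\hO_n$-module, quotienting by $\fm_n$ produces a finite-dimensional $\GL_n$-representation $V_0 := \cV/\fm_n\cV$ and a $\GL_n$-equivariant isomorphism $\cV \cong \hO_n \otimes V_0$ of $(\hO_n,\GL_n)$-modules; similarly $\cW \cong \hO_n \otimes W_0$. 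I would then identify both sides of $\Phi_{\cV,\cW}$ with the pro-$\mathcal{A}$-module $\mathcal{A} \otimes_{\cO^{hol}_X} \left(\Fr_X \times^{\GL_n} (V_0 \otimes W_0)\right)$, under which $\Phi_{\cV,\cW}$ becomes the evident identity.

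The main obstacle lies in the identification $\desc_\GK(\cV) \cong \mathcal{A} \otimes_{\cO^{hol}_X}(\Fr_X \times^{\GL_n} V_0)$: although the splitting $\cV \cong \hO_n \otimes V_0$ is $(\hO_n,\GL_n)$-equivariant, the $\Vect$-action on $\cV$ does \emph{not} respect it, so the flat connection on $\desc_\GK(\cV)$ mixes the two tensor factors in a genuinely nontrivial way. One must verify that this mixing is entirely absorbed into the Grothendieck connection on $\mathcal{A}$. I would handle this by working level-by-level along the $\fm_n^k$-filtration: at each finite level the Harish-Chandra pair $({\rm W}_{n,k},\Aut_{n,k})$ is finite-dimensional and the associated bundle is an honest $\Aut_{n,k}$-bundle, so ordinary Harish-Chandra descent of Section~\ref{sec hc descent} applies and the claim reduces to a compatibility between the associated bundle construction and tensoring by the quotient algebra $\hO_n/\fm_n^{k+1}$. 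Taking the inverse limit and invoking continuity of the basic-forms functor then recovers the full identification.
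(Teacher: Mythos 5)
Your proposal is correct. The paper offers no written proof of this lemma: it is stated as an immediate consequence of the two observations preceding it (that $\desc_\GK((\Fr_X,\sigma),\hO_n)$ is a commutative algebra object because descent is lax symmetric monoidal, and that all objects and morphisms of $\VB_n$ are $\hO_n$-modules and $\hO_n$-linear maps), and the strong monoidality of $\widetilde{\desc}_\GK$ is simply asserted. Your factorization argument matches the paper's sketch, and your verification that the comparison map $\Phi_{\cV,\cW}$ is an isomorphism supplies exactly the detail the paper omits; the identification $\desc_\GK(\cV)\cong \mathcal{A}\otimes_{\cO_X}(\Fr_X\times^{\GL_n}V_0)$ you invoke is the paper's own jet-bundle proposition. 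One simplification is worth noting: the ``main obstacle'' you flag is not actually an obstacle for the claim at hand. Since $\Phi_{\cV,\cW}$ is by construction a morphism of flat (pro-)vector bundles, it suffices to check that it is an isomorphism of the underlying bundles, and that reduces to the purely $\GL_n$-equivariant statement
\[
(\hO_n\otimes V_0)\otimes_{\hO_n}(\hO_n\otimes W_0)\;\cong\;\hO_n\otimes(V_0\otimes W_0);
\]
how the Grothendieck connection mixes the tensor factors never enters, so the level-by-level analysis along the $\fm_n^k$-filtration can be dispensed with.
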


As before, we let $\sdesc_{\GK}$ denote the associated local system
obtained from $\desc_{\GK}$ by taking horizontal sections. This
functor is well-known: it recovers the tensor bundles on $X$.

If $E \to X$ is a holomorphic vector bundle on $X$ we denote by
$J_{hol}^\infty(E)$ the holomorphic $\infty$-jet bundle of $E$. If
$E_0$ is the fiber of $E$ over a point $x \in X$, then the fiber of
this pro-vector bundle over $x$ can be identified with
\ben
J_{hol}^\infty (E)|_{x} \cong E_0 \times \CC \ll t_1,\ldots,t_n\rr .
\een
This pro-vector bundle has a canonical flat connection.

\begin{prop}
For $\cV \in \VB_n$ corresponding to the $\GL_n$-representation $V$,
there is a natural isomorphism of flat pro-vector bundles
\[
\desc_\GK((\Fr(X),\omega^\sigma),\cV) \cong J_{hol}^\infty(\Fr_X
\times^{\GL_n} V)
\]
In other words, the functor of descent along the frame bundle is
naturally isomorphic to the functor of taking $\infty$-jets of the associated bundle construction.
\end{prop}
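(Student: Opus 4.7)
The plan is to produce an explicit natural isomorphism at the level of fibers and then verify it intertwines the two flat connections. Both sides are pro-vector bundles on $X$, non-canonically equivalent over each point $x$ to $V \otimes \hO_n$, where $V$ is the underlying $\GL_n$-representation of $\cV$. The formal exponential $\sigma$ is exactly the data needed to promote this identification to a natural one.

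At the level of fibers, the $\GL_n$-equivariant lift $\tilde\sigma : \Fr_X \to X^{coor}$ guaranteed by Proposition \ref{gauge equiv} assigns to each frame $(x,\mathbf{y}) \in \Fr_X|_x$ an $\infty$-jet of biholomorphism $\phi_{(x,\mathbf{y})} : \hD^n \to X$ centered at $x$ with derivative $\mathbf{y}$ at the origin. Given $[(x,\mathbf{y}),\, f \otimes v] \in (\Fr_X \times^{\GL_n} \cV)_x$, I interpret $f \otimes v \in \hO_n \otimes V$ through $\phi_{(x,\mathbf{y})}^{-1}$ as a formal section of the tensor bundle $\Fr_X \times^{\GL_n} V$ in a formal neighborhood of $x$, and take its $\infty$-jet. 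This defines a map from descent to jets. Well-definedness on $\GL_n$-equivalence classes follows from condition (2) in the definition of a formal $\hO_n$-module, $A \cdot (f \cdot v) = (A \cdot f) \cdot (A \cdot v)$, together with the $\GL_n$-equivariance of $\tilde\sigma$: changing the frame by $A \in \GL_n$ precomposes the formal coordinate system with the linear automorphism $A$, precisely matching the $\hO_n$-module structure on $\cV$. Bijectivity on fibers then follows because $\infty$-jets of sections of $\Fr_X \times^{\GL_n} V$ at $x$ are, relative to any choice of local frame and holomorphic coordinate system, nothing but $V$-valued formal power series.

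The main work is to verify that the isomorphism intertwines the Grothendieck connection on descent (induced by $\omega^\sigma = \tilde\sigma^* \omega^{coor}$) with the canonical flat connection on $J^\infty_{hol}$. The characterizing property of the jet bundle connection is that its flat sections are exactly the $\infty$-jets $j^\infty s$ of genuine holomorphic sections $s$ of $\Fr_X \times^{\GL_n} V$. I would show that such jets $j^\infty s$ correspond, under the fiberwise isomorphism, to basic horizontal sections of $\Omega^*(\Fr_X;\ul{\cV})_{bas}$: one pulls $s$ back to a $\GL_n$-invariant function $\tilde s$ on $\Fr_X$ with values in $V$, and reconstructs the formal power series at each frame by Taylor-expanding $\tilde s$ along the formal exponential $\tilde\sigma$. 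The Maurer-Cartan equation \eqref{mc} satisfied by $\omega^{coor}$ ensures that infinitesimal change of frame translates into the $\Vect$-action on $\hO_n \otimes V$ (namely, the derivative of precomposing with the flow), which is precisely the operator $\rho_\fg(\omega^\sigma)$ appearing in $\nabla^{\Fr_X,\cV} = \d_{dR} + \rho_\fg(\omega^\sigma)$. Hence flatness on both sides corresponds to the same differential equation on power series, and the map is a map of flat bundles.

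The hardest step is the connection compatibility: it requires tracking how the $\Vect$-action on a formal $\hO_n$-module encodes infinitesimal translation of coordinates in a way compatible with the flatness of $\omega^{coor}$. Once this is verified for the universal bundle $\hO_n$ it extends to arbitrary $\cV \in \VB_n$ by the symmetric monoidal and $\hO_n$-linear structure established in the preceding lemmas, and naturality in $(X,\sigma)$ follows from the naturality of $X^{coor}$ and $\tilde\sigma$ under local biholomorphisms, since both descent and the jet bundle functor pull back along morphisms in $\Hol_n$.
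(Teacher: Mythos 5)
The paper does not actually prove this proposition: it is stated as a standard fact of Gelfand--Kazhdan formal geometry (the surrounding text says ``it should be no surprise\dots'' and points the reader to Section 1.3 of \cite{Fuks}), so there is no argument in the text to compare yours against. Your reconstruction is the standard one and is essentially correct: use the $\GL_n$-equivariant lift $\tilde\sigma$ to attach to each frame a formal coordinate system, identify the fiber of $\Fr_X\times^{\GL_n}\cV$ with $V$-valued power series written in that coordinate system (well-defined on $\GL_n$-orbits by the coinduced module structure), and then match the Grothendieck connection with the canonical jet-bundle connection by comparing horizontal sections with Taylor expansions of genuine holomorphic sections; the reduction to $\hO_n$ via monoidality and the naturality under $\Hol_n$ are also the right closing moves.

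One step deserves a little more care than you give it. You characterize the jet-bundle connection by the property that its flat sections are exactly the $j^\infty s$, and then conclude that agreement of flat sections forces agreement of connections. The difference of two connections is an $\cO_X$-linear one-form valued in endomorphisms, and it vanishes because it kills all $j^\infty s$ --- but for that you need the jets of honest holomorphic sections to \emph{span} the fibers, which fails at the level of $J^\infty$ itself (only convergent series arise as $\infty$-jets of holomorphic sections). The argument is rescued by the pro-structure: the difference of connections factors through some finite-level projection $J^\infty \to \Omega^1 \otimes J^k$, and $k$-jets of holomorphic (indeed polynomial, in a local chart) sections do span each $J^k|_x$, so the difference vanishes level by level. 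This is a refinement rather than a gap, but as written the ``characterizing property'' claim is slightly too quick.
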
 

As a corollary, we see that the associated sheaf of flat sections is
\ben
\sdesc_{\GK}(\omega^\sigma, \cV) \cong \Gamma_{hol}(\Fr_X
\times^{\GL_n} V)
\een
where $\Gamma_{hol}(-)$ denotes the space of holomorphic sections. 

In other words, Gelfand-Kazhdan descent produces every tensor bundle. 
For example, for the defining representation $V = \CC^n$ of $\GL_n$, we have $\cV =\hT_n$, 
i.e., the vector fields on the formal disk viewed as the adjoint representation of  $\Vect$. 
Under Gelfand-Kazhdan descent, it produces the tangent bundle ${\rm T}$ on~$\Hol_n$.

\subsection{Formal characteristic classes}

\subsubsection{Recollection}

In \cite{atiyah}, Atiyah examined the obstruction --- which now bears his name --- to equipping a holomorphic vector bundle with a holomorphic connection from several perspectives. To start, as he does, we take a very structural approach. He begins by constructing the following sequence of vector bundles (see Theorem~1).

\begin{dfn}
Let $G$ be a complex Lie group. Let $E \to X$ be a holomorphic vector
bundle on a complex manifold and $\cE$ its sheaf of sections. The {\em Atiyah sequence} of $E$ is the
exact sequence holomorphic vector bundles given by
\[
0 \to E \tensor T^* X \to J^1(E) \to E \to 0,
\]
where $J^1(E)$ the bundle of {\em first-order} jets of $E$
The {\em Atiyah class} is the element $\At(E) \in {\rm H}^1(X, \Omega^1_X
\tensor \End_{\cO_X} (\cE))$ associated to the extension above. 
\end{dfn}

\begin{rmk}
Taking linear duals we see tha above short exact sequence is
equivalent to one of the form
\ben
0 \to \End (E) \to {\rm A}(E) \to T X \to 0
\een
where ${\rm A}(E)$ is the so-called {\em Atiyah bundle} associated to $E$. 

We should remark that the sheaf $\cA(E)$ of holomorphic sections of the Atiyah bundle ${\rm A}(E)$ is a Lie algebra by borrowing the Lie bracket on vector fields.
By inspection, the Atiyah sequence of sheaves (by taking sections) is a sequence of Lie algebras; 
 in fact, $\cA(E)$ is a central example of a Lie algebroid, as the quotient map to vector fields $\cT_X$ on $X$ is an anchor map.
\end{rmk}

Atiyah also examined how this sequence relates to the Chern theory of connections.

\begin{prop} 
A {\em holomorphic connection} on $E$ is a splitting of the Atiyah sequence (as holomorphic vector bundles).
\end{prop}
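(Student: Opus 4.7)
The plan is to exhibit a natural bijection between holomorphic connections $\nabla: \cE \to \cE \otimes \Omega^1_X$ and $\cO_X$-linear sheaf maps $\sigma: \cE \to J^1(\cE)$ satisfying $p \circ \sigma = \id_\cE$, where $p: J^1(\cE) \to \cE$ is the projection in the Atiyah sequence. The mediator between the two is the universal first-jet map $j^1: \cE \to J^1(\cE)$, which is a $\CC$-linear (but not $\cO_X$-linear) section of $p$.

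First I would recall the key tautology about $J^1(\cE)$: for a local holomorphic function $f$ and a local holomorphic section $s$ of $E$, one has
\[
j^1(f s) \;-\; f \cdot j^1(s) \;=\; s \otimes df \;\in\; \cE \otimes \Omega^1_X \;\subset\; J^1(\cE) ,
\]
which is exactly the statement that $j^1$ encodes the first-order Taylor expansion and that its failure to be $\cO_X$-linear is measured by exterior differentiation into the kernel subbundle $\cE \otimes \Omega^1_X$ of the Atiyah sequence. This identity is the crux of the argument, and I would either take it as the definition of $J^1$ or verify it locally using a trivialization.

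Given this, the forward direction is immediate: if $\sigma$ is an $\cO_X$-linear splitting, define $\nabla := j^1 - \sigma : \cE \to J^1(\cE)$. Since $p \circ \sigma = \id = p \circ j^1$, the image of $\nabla$ lies in $\ker(p) = \cE \otimes \Omega^1_X$, so $\nabla$ takes values in the right place. The Leibniz rule follows directly from the displayed identity together with the $\cO_X$-linearity of $\sigma$:
\[
\nabla(fs) \;=\; j^1(fs) - \sigma(fs) \;=\; f \cdot j^1(s) + s \otimes df - f \cdot \sigma(s) \;=\; f \nabla(s) + s \otimes df .
\]
Conversely, given a holomorphic connection $\nabla$, the map $\sigma := j^1 - \nabla$ satisfies $p \circ \sigma = \id_\cE$ because $\nabla$ lands in $\ker(p)$, and the same computation in reverse shows that the defect of $\cO_X$-linearity of $j^1$ is exactly cancelled by the Leibniz term of $\nabla$, so $\sigma$ is $\cO_X$-linear.

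Finally, I would observe that the two constructions $\nabla \mapsto j^1 - \nabla$ and $\sigma \mapsto j^1 - \sigma$ are manifestly mutually inverse, giving the claimed bijection. The only conceptual step requiring any care is the first jet identity above; once this is recorded, everything else is a direct substitution. There is no genuine obstacle here, as the proposition is essentially a reformulation of the definition of $J^1(\cE)$ as the representing object for differential operators of order $\leq 1$, and the Atiyah sequence is the standard presentation of that representing object.
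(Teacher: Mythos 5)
Your argument is correct and is essentially the standard one: the paper states this proposition as a recollection from Atiyah's paper without supplying a proof, and your identification of connections with $\cO_X$-linear splittings via $\sigma = j^1 - \nabla$, hinging on the identity $j^1(fs) = f\,j^1(s) + s \otimes df$, is exactly the mechanism the paper itself uses later in its formal analogue (the splitting $j^1 = 1 \oplus \nabla$ in the discussion of the formal Atiyah sequence and Proposition on the Gelfand-Fuks-Atiyah class). Nothing is missing.
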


Atiyah's first main result in the paper is the following.

\begin{prop}[Theorem 2, \cite{atiyah}]
A holomorphic connection exists on $E$ if and only if the Atiyah class $\At(E)$ vanishes.
\end{prop}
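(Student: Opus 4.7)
The plan is to reduce the statement to the general principle that an element of $\Ext^1$ vanishes if and only if the corresponding short exact sequence splits, applied to the specific extension provided by the Atiyah sequence. Since a holomorphic connection on $E$ is \emph{defined} as a splitting of the Atiyah sequence as holomorphic vector bundles, the content of the theorem is precisely that the extension class $\At(E) \in H^1(X, \Omega^1_X \otimes \End_{\cO_X}(\cE))$ vanishes exactly when this extension is split.

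To make this rigorous, I would apply $\Hom_{\cO_X}(E, -)$ to the Atiyah sequence
\[
0 \to E \otimes T^*X \to J^1(E) \to E \to 0
\]
and examine the associated long exact sequence in $\Ext^*_{\cO_X}$. Local freeness of $E$ gives an isomorphism $\Ext^1_{\cO_X}(E, E \otimes T^*X) \cong H^1(X, \Omega^1_X \otimes \End_{\cO_X}(\cE))$, and by the standard description of extension classes via connecting homomorphisms, the map
\[
\delta : \Hom_{\cO_X}(E, E) \to H^1(X, \Omega^1_X \otimes \End_{\cO_X}(\cE))
\]
sends $\mathrm{id}_E$ to $\At(E)$. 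A holomorphic splitting of the Atiyah sequence is the same datum as a lift of $\mathrm{id}_E$ through the surjection $J^1(E) \to E$, so by exactness such a lift exists if and only if $\delta(\mathrm{id}_E) = \At(E) = 0$.

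For a more hands-on perspective, one can argue via \v{C}ech cohomology. On a trivializing cover $\{U_\alpha\}$ for $E$, each $U_\alpha$ carries a local holomorphic connection $\nabla_\alpha$ (for instance the trivial one in the chosen frame), so the Atiyah sequence splits locally. On overlaps, the differences $\nabla_\alpha - \nabla_\beta$ are $\cO_X$-linear and hence define sections of $\Omega^1_X \otimes \End_{\cO_X}(\cE)$; they assemble into a \v{C}ech 1-cocycle representing $\At(E)$. A global holomorphic connection then corresponds to modifying the $\nabla_\alpha$ so they agree on overlaps, which is possible exactly when this cocycle is a coboundary.

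The step requiring the most care is the compatibility check that the Yoneda-style extension class obtained from the long exact sequence agrees with the \v{C}ech 1-cocycle above under the isomorphism $\Ext^1_{\cO_X}(E, E \otimes T^*X) \cong H^1(X, \Omega^1_X \otimes \End_{\cO_X}(\cE))$; this is a standard but slightly fiddly comparison between two presentations of the same cohomology group, after which the two formulations of ``$\At(E) = 0$'' become manifestly equivalent.
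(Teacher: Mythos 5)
Your proposal is correct, but note that the paper offers no proof of this statement at all: it is stated as a recollection from Atiyah's paper, cited as Theorem 2 of \cite{atiyah}, and used as a black box. Your argument is the standard one (and essentially Atiyah's own): a holomorphic connection is by definition a splitting of the Atiyah sequence, and the sequence splits if and only if its class $\At(E) \in \Ext^1_{\cO_X}(\cE, \cE \otimes \Omega^1_X)$ vanishes, which you verify correctly via the connecting homomorphism $\delta(\mathrm{id}_E) = \At(E)$ in the long exact sequence obtained from $\Hom_{\cO_X}(\cE,-)$. The \v{C}ech formulation you sketch --- local connections $\nabla_\alpha$ on a trivializing cover, with differences $\nabla_\alpha - \nabla_\beta$ forming a $1$-cocycle valued in $\Omega^1_X \otimes \End_{\cO_X}(\cE)$ --- is the version closest to Atiyah's original presentation and is also the one most directly compatible with the Dolbeault representative $[F^{1,1}_{\nabla'}]$ that the paper uses later, so it is a reasonable choice to emphasize.
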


He observes immediately after this statement that the construction is
functorial in maps of bundles. Later, he finds a direct connection
between the Atiyah class and the curvature of a smooth connection. A
smooth connections always exists (i.e., the sequence splits as smooth
vector bundles, not necessarily holomorphically), and one is free to
choose a connection such that the local 1-form only has
Dolbeault type $(1,0)$, i.e., is an element in $\Omega^{1,0}(X; \End(E))$. In that case, the $(1,1)$-component
$\Theta^{1,1}$ of the curvature $\Theta$ is a 1-cocycle in the
Dolbeault complex $(\Omega^{1,*}(X ; \End(E)), \overline{\partial})$ for $\End(E)$ and its cohomology class $[\Theta^{1,1}]$ is the Atiyah class $\At(E)$. In consequence, Atiyah deduces the following.

\begin{prop}
For $X$ a compact K\"ahler manifold, the $k$th Chern class $c_k(E)$ of $E$ is given by the cohomology class of $(2\pi i)^{-k} S_k(\At(E))$, 
where $S_k$ is the $k$th elementary symmetric polynomial, and hence only depends on the Atiyah class.
\end{prop}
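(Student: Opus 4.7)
The plan is to represent $c_k(E)$ via Chern-Weil theory using a distinguished connection whose curvature is manifestly of Dolbeault type $(1,1)$, and then to identify this representative with $S_k(\At(E))$ using the Hodge-theoretic comparison between de Rham and Dolbeault cohomology afforded by the K\"ahler hypothesis.

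First I would fix a Hermitian metric $h$ on $E$ (such a metric exists by a partition of unity argument) and take $\nabla$ to be the associated Chern connection, the unique connection compatible with both $h$ and the holomorphic structure. In a local holomorphic frame the connection one-form $\omega = h^{-1}\partial h$ is manifestly of type $(1,0)$, so $\nabla$ falls among the $(1,0)$-connections already considered in the discussion preceding the proposition. A short local computation then shows that its curvature
\[
\Theta \;=\; \dbar(h^{-1}\partial h)
\]
is a global section of $\Omega^{1,1}(X;\End(E))$, i.e.\ of pure bidegree $(1,1)$.

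Next I would apply Chern-Weil theory to conclude that $(2\pi i)^{-k} S_k(\Theta)$ represents $c_k(E)$ in $H^{2k}_{dR}(X;\CC)$, and invoke the statement immediately preceding the proposition to identify the Dolbeault cohomology class $[\Theta]_{\dbar}$ with the Atiyah class $\At(E) \in H^1(X, \Omega^1_{hol}\otimes \End(E))$. Since $S_k$ is invariant under conjugation on $\End(E)$, it descends to a well-defined operation on cohomology, giving
\[
[S_k(\Theta)]_{\dbar} \;=\; S_k(\At(E)) \;\in\; H^k(X, \Omega^k_{hol}) \;\cong\; H^{k,k}_{\dbar}(X).
\]

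The most delicate step, and the one that genuinely requires the compact K\"ahler hypothesis, is reconciling these two cohomology theories. Here I would invoke the $\partial\dbar$-lemma (equivalently the degeneration at $E_1$ of the Fr\"olicher spectral sequence), which implies that a $d$-closed form of pure type $(p,q)$ on a compact K\"ahler manifold represents matching classes in $H^{p+q}_{dR}(X;\CC)$ and $H^{p,q}_{\dbar}(X)$ under the Hodge decomposition. Applied to the $d$-closed, pure-type $(k,k)$ form $S_k(\Theta)$, this comparison yields the desired equality $c_k(E) = (2\pi i)^{-k}[S_k(\At(E))]$ in $H^{2k}(X;\CC)$. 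This last identification is the main obstacle: without the K\"ahler assumption the class $S_k(\At(E))$ lives naturally in Dolbeault cohomology and has no canonical avatar in de Rham cohomology, so the very statement of the proposition would require reformulation.
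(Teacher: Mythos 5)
Your proof is correct and follows essentially the same route the paper intends: Chern--Weil theory with a connection of type $(1,0)$, identification of the $(1,1)$-curvature with the Atiyah class, and the compact K\"ahler hypothesis (degeneration of the Hodge-to-de Rham spectral sequence, equivalently the $\partial\dbar$-lemma) to compare the de Rham and Dolbeault classes. Your specialization to the Chern connection, whose curvature is pure $(1,1)$, is a clean choice within the class of $(1,0)$-connections the paper allows.
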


This assertion follows from the degeneracy of the Hodge-to-de Rham
spectral sequence. More generally, the term $(2\pi i)^{-k}
S_k(\At(E))$ agrees with the image of the $k$th Chern class in the
Hodge cohomology $H^k(X ; \Omega^k_{hol})$.

The functoriality of the Atiyah class means that it makes sense not just on a fixed complex manifold, but also on the larger sites $\Hol_n$ and $\GK_n$. 
We thus immediately obtain from Atiyah the following notion.

\begin{dfn}
For each $V \in \vb(\Hol_n)$, the {\em Atiyah class} $\At(V)$ is the equivalence class of the extension of the tangent bundle $T$ by $\End(V)$ given by the Atiyah sequence.
\end{dfn}

Moreover, we have the following.

\begin{lemma}
The cohomology class of $(2\pi i)^{-k} S_k(\At(V))$ provides a section
of the sheaf $H^k(X ; \Omega^k_{hol})$. On any compact K\"ahler manifold, it agrees with $c_k(V)$.
\end{lemma}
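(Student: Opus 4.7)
The plan is to construct the class $(2\pi i)^{-k}S_k(\At(V))$ by assembling the Atiyah class with the cup product and natural trace operations, and then to derive the identification with Chern classes directly from the preceding proposition of Atiyah.

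First I would make precise what $S_k(\At(V))$ means. Starting from $\At(V) \in H^1(X,\Omega^1_{hol}\otimes \End(V))$, form the $k$-fold cup product
\[
\At(V)^{\cup k} \in H^k\!\left(X,\, (\Omega^1_{hol})^{\otimes k}\otimes \End(V)^{\otimes k}\right)
\]
using the cup product structure on Dolbeault (equivalently Hodge) cohomology. Composing with the antisymmetrization map $(\Omega^1_{hol})^{\otimes k}\to \Omega^k_{hol}$ in the first factor and the $k$-th elementary symmetric invariant $\End(V)^{\otimes k}\to \mathcal O_X$ (regarded as the $\GL$-invariant polynomial $S_k$ on $\fgl_V$ extended $\mathcal O_X$-linearly) in the second, we obtain a well-defined class $S_k(\At(V))\in H^k(X,\Omega^k_{hol})$.

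Next I would verify functoriality in the category $\Hol_n$. The Atiyah sequence is defined via the first jet bundle $J^1(E)$, and both $J^1$ and $\End$ are natural under local biholomorphisms and pullbacks of bundles, so $\At(V)$ pulls back as a class in $H^1(-,\Omega^1_{hol}\otimes \End(V))$. Cup product and the trace/symmetric-polynomial maps are themselves natural operations on sheaf cohomology, so the construction $V\mapsto S_k(\At(V))$ commutes with pullback along local biholomorphisms. Hence $(2\pi i)^{-k}S_k(\At(V))$ indeed defines a section of the presheaf $X\mapsto H^k(X,\Omega^k_{hol})$, which is a sheaf since Dolbeault cohomology localizes well.

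For the second assertion I would invoke the preceding proposition of Atiyah directly: on a compact K\"ahler manifold, $(2\pi i)^{-k}S_k(\At(V))$ represents the image of $c_k(V)$ in Hodge cohomology $H^k(X,\Omega^k_{hol})$, and by the degeneration of the Hodge-to-de Rham spectral sequence for compact K\"ahler $X$, this image uniquely determines (and is canonically identified with) the $(k,k)$-component of $c_k(V)$ in de Rham cohomology. Since $c_k(V)$ is of pure type $(k,k)$ (as it can be represented by the Chern-Weil form of the Chern connection), this identification is an equality $c_k(V) = (2\pi i)^{-k}S_k(\At(V))$.

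The only genuine obstacle is the bookkeeping in the first step: one must check that the two operations (antisymmetrization of the $\Omega^1$-factors and the symmetric invariant on $\End$) combine to give precisely the $k$-th elementary symmetric polynomial of $\At(V)$ in the sense compatible with Atiyah's curvature-theoretic formula. This is the usual Chern-Weil bookkeeping and follows from the fact that $S_k$ can be written as a trace of wedge powers, together with the sign conventions implicit in the cup product on Dolbeault cohomology.
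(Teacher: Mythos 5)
Your proposal is correct and follows essentially the same route as the paper, which offers no explicit proof but treats the lemma as an immediate consequence of the functoriality of the Atiyah class together with Atiyah's proposition identifying $(2\pi i)^{-k}S_k(\At(E))$ with $c_k(E)$ on compact K\"ahler manifolds via degeneration of the Hodge-to-de Rham spectral sequence. Your write-up simply makes explicit the cup-product/invariant-polynomial construction and the naturality check that the paper leaves implicit.
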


\subsubsection{The formal Atiyah class}

We now wish to show that Gelfand-Kazhdan descent sends an exact sequence in $\VB_{\hc}$ to an exact sequence in $\vb(\GK_n)$ (and hence in $\vb(\Hol_n)$). 
It will then remain to verify that for each tensor bundle on $\Hol_n$, 
there is an exact sequence over the formal $n$-disk that descends to the Atiyah sequence for that tensor bundle.

We will use the notation $\desc_\GK(\cV)$ to denote the functor $\desc_\GK(-,\cV): \GK_n^\op \to \Pro(\vb)_{flat}$, 
since we want to focus on the sheaf on $\GK_n$ (or $\Hol_n$) defined
by each formal vector bundle~$\cV$. Taking flat sections we get an
$\cO$-module $\sdesc_{\GK}(\cV)$ which is locally free of finite
rank and so determines an object in $\vb(\GK_n)$. 

\begin{lemma}
If $$\cA \to \cB \to \cC$$ is an exact sequence in $\vb_{\hc}$, then 
$$\sdesc_\GK(\cA) \to \sdesc_\GK(\cB) \to \sdesc_\GK(\cC)$$ 
is exact in $\vb(\GK_n)$.
\end{lemma}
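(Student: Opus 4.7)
My strategy is to exploit the preceding proposition, which identifies $\sdesc_\GK((\Fr_X,\sigma),\cV)$ with the sheaf of holomorphic sections $\Gamma_{hol}(\Fr_X \times^{\GL_n} V)$, where $V$ denotes the underlying finite-dimensional $\GL_n$-representation extracted from $\cV$ (namely the fiber $\cV/\fm_n\cdot\cV$). This factors $\sdesc_\GK$ as the composite of three functors: the fiber functor $\cV \mapsto V$, the associated-bundle functor $V \mapsto \Fr_X \times^{\GL_n} V$, and the sheaf of holomorphic sections.

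First I would check that each factor preserves middle exactness. The fiber functor amounts to tensoring with $\CC = \hO_n/\fm_n$ over $\hO_n$; since every object of $\vb_\hc$ is free of finite rank as an $\hO_n$-module and hence flat, this tensor preserves middle exactness of the input sequence. The associated-bundle functor on finite-dimensional $\GL_n$-representations is exact, as one sees locally on any trivializing neighborhood $U \subset X$ where it is simply $V \mapsto U \times V$, and the Grothendieck connections lift naturally so that morphisms descend to morphisms of flat bundles. Finally, holomorphic sections is left exact (being the $\dbar$-kernel inside smooth sections), and left-exact functors preserve middle exactness by a standard diagram chase: setting $\cK = \im(\cA \to \cB) = \ker(\cB \to \cC)$, the sequence $0 \to K_X \to B_X \to C_X$ of descended holomorphic bundles is left-exact, so applying $\Gamma_{hol}$ preserves this left-exactness and yields exactness at $\sdesc_\GK(\cB)$.

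The main obstacle I anticipate is verifying that the intermediate subobject $\cK$ itself lies in $\vb_\hc$, so that the factorization $\cA \twoheadrightarrow \cK \hookrightarrow \cB$ can be run through the descent machinery. This amounts to checking that images of morphisms in $\vb_\hc$ remain free and finitely generated as $\hO_n$-modules with compatible Harish-Chandra structure; it follows from flatness of the ambient modules combined with the $(\Vect,\GL_n)$-equivariance of the structure maps. Once that point is dispatched, the remaining verifications are local and can be carried out on any trivializing chart.
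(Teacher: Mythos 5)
There are two genuine gaps here, and both are avoided by the paper's own (much more direct) argument.

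First, the factorization of $\sdesc_\GK$ through the fiber functor $\cV \mapsto V = \cV/\fm_n\cV$ and the associated-bundle functor is only valid on the subcategory $\Tens_n$ of formal tensor fields, i.e., objects \emph{coinduced} from $\GL_n$-representations. The proposition you invoke is stated for exactly those objects. A general object of $\VB_n$ admits a $\GL_n$-equivariant isomorphism $\cV \cong \hO_n \otimes V$, but this isomorphism is not $\Vect$-equivariant and hence not natural, so the descended flat bundle need not be the associated bundle of the fiber. The case that matters — the formal Atiyah sequence $\cV \otimes \hOmega^1_n \to J^1(\cV) \to \cV$ — has middle term $J^1(\cV)$, which the paper explicitly notes is \emph{not} a formal tensor field; its descent is $J^1$ of the tensor bundle, and $J^1(E)$ is isomorphic to $E \oplus E \otimes \Omega^1_{hol}$ as an $\cO_X$-module only when $\At(E) = 0$. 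So the identification your strategy rests on fails globally in precisely the intended application.

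Second, even where the factorization holds, the claim that $-\otimes_{\hO_n}\CC$ preserves middle exactness "because the objects are free, hence flat" is a non sequitur: flatness of $A$, $B$, $C$ says that $A\otimes -$ etc.\ are exact, not that the fixed functor $-\otimes_{\hO_n}\hO_n/\fm_n$ is exact on complexes of free modules. The Koszul complex $\hO_2 \to \hO_2^{\oplus 2} \to \hO_2$ is exact in the middle, but reducing mod $\fm_2$ kills both maps and destroys exactness. (If one restricts to short exact sequences the quotient being free forces a splitting and the step goes through, but that is a different argument and the lemma is stated for general exact sequences.) The paper sidesteps both problems by never passing to the fiber: exactness of the descended sequence of sheaves is checked on stalks, the stalk of an analytic $\cO$-module injects into (indeed, completion is faithfully flat for) its $\infty$-jet, and the $\infty$-jet of the descended sequence at a point is literally $\cA \to \cB \to \cC$, which is exact by hypothesis.
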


\begin{proof}
A sequence of vector bundles is exact if and only if the associated
sequence of $\cO$-modules is exact (i.e., the sheaves of sections of
the vector bundles). But a sequence of sheaves is exact if and only if
it is exact stalkwise. Observe that there is only one point at which
to compute a stalk in the site $\Hol_n$, since every point $x \in X$
has a small neighborhood isomorphic to a small neighborhood of $0 \in
\CC^n$. As we are working in an analytic setting, the stalk of a
$\cO$-module at a point $x$ injects into the $\infty$-jet at
$x$. Hence, it suffices to verifying the exactness of the sequence of
$\infty$-jets. Hence, we consider the $\infty$-jet at $0 \in \CC^n$ of
the sequence $\desc_\GK(A) \to \desc_\GK(B) \to \desc_\GK(C)$. But
this sequence is simply $A \to B \to C$, which is exact by
hypothesis.
\end{proof}

\begin{cor}
There is a canonical map from $\Ext^1_{\hc}(\cB,\cA)$ to $\Ext^1_{\GK_n}(\sdesc_\GK(\cB), \sdesc_\GK(\cA))$.
\end{cor}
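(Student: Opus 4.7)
The plan is to use the standard interpretation of $\Ext^1$ as isomorphism classes of extensions and to invoke the previous lemma (exactness of $\sdesc_\GK$ on short exact sequences) to transport extensions along $\sdesc_\GK$. Concretely, given a class $[\xi] \in \Ext^1_{\hc}(\cB,\cA)$ represented by a short exact sequence
\[
\xi \colon \quad 0 \to \cA \to \cE \to \cB \to 0
\]
in $\VB_{\hc}$, I would apply $\sdesc_\GK$ termwise to produce the sequence
\[
\sdesc_\GK(\xi) \colon \quad 0 \to \sdesc_\GK(\cA) \to \sdesc_\GK(\cE) \to \sdesc_\GK(\cB) \to 0
\]
in $\vb(\GK_n)$, which is exact by the preceding lemma. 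This defines a candidate assignment $[\xi] \mapsto [\sdesc_\GK(\xi)]$.

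Next I would verify that this assignment is well-defined on equivalence classes. If two extensions $\xi$ and $\xi'$ of $\cB$ by $\cA$ are equivalent via an isomorphism $\cE \xrightarrow{\sim} \cE'$ of formal vector bundles compatible with the inclusions and projections to $\cA$ and $\cB$, then applying the functor $\sdesc_\GK$ gives an isomorphism $\sdesc_\GK(\cE) \xrightarrow{\sim} \sdesc_\GK(\cE')$ with the analogous compatibilities, since $\sdesc_\GK$ is functorial. Hence equivalent extensions descend to equivalent extensions.

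Finally, one should observe that this assignment is natural with respect to Baer sum (so that the resulting map is a homomorphism of abelian groups). This is routine: the Baer sum is constructed by pullback/pushout along the diagonal and codiagonal maps, and these universal constructions are preserved by $\sdesc_\GK$ because it is, on the underlying categories, an exact additive functor (by the previous lemma). Thus the map
\[
\sdesc_\GK^1 \colon \Ext^1_{\hc}(\cB,\cA) \longrightarrow \Ext^1_{\GK_n}(\sdesc_\GK(\cB),\sdesc_\GK(\cA))
\]
is canonical and additive.

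The only real obstacle, and it is mild, is bookkeeping: one must be careful that $\sdesc_\GK$ really does land in the abelian category $\vb(\GK_n)$ with a sensible notion of $\Ext^1$ for which ``extension classes'' computes the correct group. Since the previous lemma already checks exactness stalk-by-stalk via $\infty$-jets, and equivalences and Baer sums are characterized by universal diagrams that are preserved by any additive exact functor, everything fits together formally. No nontrivial calculation is needed beyond invoking the previous lemma.
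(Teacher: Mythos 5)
Your proposal is correct and is essentially the argument the paper intends: the corollary is stated as an immediate consequence of the preceding exactness lemma, and your spelling-out (apply $\sdesc_\GK$ termwise to an extension, use functoriality for well-definedness, and exactness/additivity for compatibility with Baer sum) is exactly that consequence made explicit. The paper gives no further proof, so there is nothing you have missed.
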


In particular, once we produce the $\hc$-Atiyah sequence for a formal tensor field $\cV$, 
we will have a very local model for the Atiyah class living in $\clie^*(\Vect,\GL_n; \hOmega^1_n \otimes_{\hO_n} \End_{\hO_n}(\cV))$.

\subsubsection{The formal Atiyah sequence} \label{sec gk cc}

Let $\cV$ be a formal vector bundle. 
We will now construct the ``formal'' Atiyah sequence associated to $\cV$.  
First, we need to define the $(\Vect, \GL_n)$-module of {\em first order jets} of $\cV$. 
Let's begin by recalling the construction of jets in ordinary geometry.

If $X$ is a manifold, we have the diagonal embedding $\Delta : X \hookrightarrow X \times X$. 
Correspondingly, there is the ideal sheaf $\cI_\Delta$ on $X \times X$ of functions vanishing along the diagonal. 
Let $X^{(k)}$ be the ringed space $(X, \cO_{X \times X}/\cI_\Delta^k)$ 
describing the $k$th order neighborhood of the diagonal in $X \times X$. 
Let $\Delta^{(k)} : X^{(k)} \to X \times X$ denote the natural map of ringed spaces.
The projections $\pi_1, \pi_2 : X \times X \to X$ compose with $\Delta^{(k)}$ 
to define maps $\pi^{(k)}_1, \pi_2^{(k)} : X^{(k)} \to X$. 
Given an $\cO_X$-module $\cV$, 
``push-and-pull'' along these projections,
\ben
J^k_X(\cV) = (\pi_1^{(k)})_* (\pi_2^{(k)})^* \cV,
\een
defines the $\cO_X$-module of $k$th order jets of~$\cV$.

There is a natural adaptation in the formal case. 
The diagonal map corresponds to an algebra map $\Delta^* : \hO_{2n} \to \hO_n$.
Fix coordinatizations $\hO_n = \CC \ll t_1,\ldots,t_n \rr$ and $\hO_{2n} = \CC \ll t'_1,\ldots,t_n', t_1'', \ldots,t_n'' \rr.$ 
Then the map is given by $\Delta^*(t'_i) = \Delta^*(t_i'') = t_i$. 

Let $\hI_n = \ker(\Delta^*) \subset \hO_{2n}$ be the ideal given by the kernel of $\Delta^*$. 
For each $k$ there is a quotient map
\ben
\Delta^{(k)*}: \hO_{2n} \to \hO_{2n} / \hI_n^{k+1} ,
\een
The projection maps have the form
\ben
\pi_1^{(k)*}, \pi_2^{(k)*}  : \hO_n \to \hO_{2n} / \hI_n^{k+1},
\een
which in coordinates are $\pi_1^*(t_i) = t'_i$ and $\pi_2^*(t_i)
=~t''_i$. 

\begin{dfn} 
Let $\cV$ be a formal vector bundle on $\hD^n$.
Consider the $\hO_{2n} / \hI_n^{k+1}$-module
$\cV \tensor_{\hO_n} \left(\hO_{2n} / \hI_n^{k+1}\right)$,
where the tensor product uses the $\hO_n$-module structure on the
quotient $\hO_{2n} / \hI_n^{k+1}$ coming from the map $\pi_2^{(k)*}$. 
We define the {\em $k$th order formal jets of $\cV$}, denoted $J^k(\cV)$, 
as the restriction of this $\hO_{2n}/\hI_n^{k+1}$-module 
to a $\hO_n$-module using the map $\pi_1^{(k)*} : \hO_n \to \hO_{2n} / \hI_n^{k+1}$. 
\end{dfn}

\begin{lemma} For any $\cV \in \VB_n$ the $k$th order formal jets
  $J^k(\cV)$ is an element of $\VB_n$. 
\end{lemma}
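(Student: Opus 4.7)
The plan is to verify the three defining properties of $\VB_n$ for $J^k(\cV)$: that it is free and finitely generated as an $\hO_n$-module, that it carries a $(\Vect,\GL_n)$-module structure, and that these structures satisfy the Leibniz compatibility required of a formal $\hO_n$-module.

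First I would unpack the quotient ring $\hO_{2n}/\hI_n^{k+1}$ by introducing the change of coordinates $s_i := t_i'' - t_i'$, so that $\hO_{2n} \cong \CC\llbracket t_1',\ldots,t_n',s_1,\ldots,s_n\rrbracket$ and $\hI_n = (s_1,\ldots,s_n)$. Under this presentation, the map $\pi_1^{(k)*} : \hO_n \to \hO_{2n}/\hI_n^{k+1}$ sends $t_i \mapsto t_i'$, so the quotient becomes a free $\hO_n$-module of rank $\binom{n+k}{n}$ with basis $\{s^{\alpha}\}_{|\alpha|\leq k}$. Since $\cV$ is free of finite rank over $\hO_n$ (using the $\pi_2^{(k)*}$ action on the quotient to form the tensor product), the composite $\hO_n$-module $J^k(\cV) = \cV \tensor_{\hO_n}(\hO_{2n}/\hI_n^{k+1})$ viewed via $\pi_1^{(k)*}$ is free of finite rank. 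Concretely, writing $\cV \cong \hO_n \otimes_\CC V$ with $V$ a $\GL_n$-representation, one obtains an $\hO_n$-linear isomorphism
\[
J^k(\cV) \;\cong\; \hO_n \tensor_\CC V \tensor_\CC \CC[s_1,\ldots,s_n]/(s_1,\ldots,s_n)^{k+1}.
\]

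Next I would construct the Harish-Chandra structure via the \emph{diagonal} actions of $\Vect$ and $\GL_n$ on $\hO_{2n}$. Explicitly, a formal vector field $X = \sum f_i(t)\partial_{t_i}$ acts as $\sum f_i(t')\partial_{t_i'} + \sum f_i(t'')\partial_{t_i''}$, and $A \in \GL_n$ acts by the linear change of frame simultaneously in both sets of variables. Both actions preserve the diagonal ideal $\hI_n$ (this is the infinitesimal/group-theoretic counterpart to $\Delta^*$ being equivariant), and therefore descend to $\hO_{2n}/\hI_n^{k+1}$. Combining with the $(\Vect,\GL_n)$-action on $\cV$ through the diagonal yields a $(\Vect,\GL_n)$-action on $J^k(\cV)$. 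The Leibniz and covariance conditions (1) and (2) of the definition of a formal $\hO_n$-module for $J^k(\cV)$ follow directly from the corresponding conditions for $\cV$ and from the fact that $\pi_1^{(k)*}$ intertwines the diagonal $(\Vect,\GL_n)$-action on $\hO_{2n}/\hI_n^{k+1}$ with the standard action on $\hO_n$.

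The main subtlety, and the step I expect to be the main obstacle, is keeping careful track of the asymmetric role of the two projections: the $\hO_n$-module structure used to form the tensor product is the one via $\pi_2^{(k)*}$, whereas the $\hO_n$-module structure \emph{defining} $J^k(\cV)$ as an object of $\VB_n$ is the one via $\pi_1^{(k)*}$. Freeness requires the second, while the Leibniz compatibility requires compatibility with both. The change of coordinates $s_i = t_i'' - t_i'$ resolves this cleanly, since it simultaneously exhibits the freeness over $\pi_1^{(k)*}(\hO_n)$ and makes the diagonal $\Vect$- and $\GL_n$-actions manifestly compatible with both projections (which only differ in the $s_i$ direction, which is preserved).
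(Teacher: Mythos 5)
Your proof is correct and follows essentially the same route as the paper, which simply asserts that the diagonal $(\Vect,\GL_n)$-action induces one on the tensor product and that $\hO_{2n}/\hI_n^{k+1}$ is finite rank over $\hO_n$, declaring the rest ``immediate.'' Your version merely supplies the details the paper leaves implicit --- the coordinates $s_i = t_i'' - t_i'$ exhibiting freeness and the verification that the diagonal action preserves $\hI_n$ and is compatible with both projections.
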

\begin{proof}
For $\cV$ in $\VB_n$ there is an induced action of $(\Vect, \GL_n)$ on
the tensor product $\cV \tensor_{\hO_n} \hO_{2n} /
\hI_{n}^{k+1}$. For fixed $k$ we see that $\hO_{2n} / \hI_n^{k+1}$ is
finite rank as a $\hO_n$ module. Thus it
is immediate that this module satisfies the conditions of a formal
vector bundle.
\end{proof}

As a $\CC$-linear vector space we have $J^1(\cV) = \cV \oplus (\cV \tensor_{\hO_n} \hOmega^1_n)$. 
For $f \in \hO_n$ and $(v, \beta) \in \cV \oplus (\cV \tensor \hOmega^1_n)$, 
the $\hO_n$-module structure is given by
\ben
f \cdot (v, \beta) = (f v, (f \beta + v \tensor \d f)).
\een 
(This formula is the formal version of Atiyah's description in Section 4 of \cite{atiyah},
where he uses the notation~$\mathcal{D}$.) The following is proved in
exact analogy as in the non-formal case which can also be found in
Section 4 of \cite{atiyah}, for instance. 

\begin{prop}\label{1jet2} 
For any $\cV \in \VB_{\hc}$, the $\hO_n$-module $J^1(\cV)$ has a compatible action of the pair $(\Vect, \GL_n)$ and hence determines an object in $\VB_{\hc}$. 
Moreover, it sits in a short exact sequence of formal vector bundles 
\be\label{formalatiyah1}
\cV \tensor \hOmega^1_n \to J^1 (\cV) \to \cV .
\ee
Finally, the Gelfand-Kazhdan descent of this short exact sequence is isomorphic to the Atiyah sequence
\ben
\sdesc_{\GK}(\cV) \tensor \Omega^1_{hol} \to J^1 \sdesc_{\GK} (\cV) \to \sdesc_{\GK}(\cV) .
\een
In particular, $J^1 \desc_{\GK}(\cV) = \desc_{\GK}(J^1 \cV)$.
\end{prop}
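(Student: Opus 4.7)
My plan is to handle the three claims in sequence: first the Harish-Chandra structure and short exact sequence on $J^1(\cV)$, then the identification of its descent with the classical Atiyah sequence.

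For the $(\Vect,\GL_n)$-module structure, the point is that the diagonal action of $(\Vect, \GL_n)$ on $\hO_{2n} \cong \hO_n \widehat{\tensor} \hO_n$ preserves the ideal $\hI_n = \ker(\Delta^*)$ and hence descends to each quotient $\hO_{2n}/\hI_n^{k+1}$. Both projection maps $\pi_1^{(k)*}, \pi_2^{(k)*}$ are then equivariant, so the tensor product $\cV \tensor_{\hO_n,\pi_2^{(k)*}} \hO_{2n}/\hI_n^{k+1}$ inherits a $(\Vect,\GL_n)$-action compatible with the $\hO_n$-module structure coming from $\pi_1^{(k)*}$. The previous lemma already records that $J^k(\cV) \in \VB_n$. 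For the exact sequence, I would invoke the conormal exact sequence
\ben
0 \to \hI_n/\hI_n^2 \to \hO_{2n}/\hI_n^2 \to \hO_n \to 0
\een
and the $(\Vect,\GL_n)$-equivariant isomorphism $\hI_n/\hI_n^2 \cong \hOmega^1_n$ sending $t'_i - t''_i \mapsto \d t_i$. Tensoring with $\cV$ over $\hO_n$ via $\pi_2^{(k)*}$ gives the claimed sequence, and the asymmetric $\hO_n$-module formula $f \cdot (v,\beta) = (fv, f\beta + v \tensor \d f)$ is precisely the discrepancy between the $\pi_1^{(k)*}$ and $\pi_2^{(k)*}$ actions of $\hO_n$, which is measured (modulo $\hI_n^2$) by the universal derivation $\d$.

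For the descent statement, by the preceding lemma it suffices to show the natural isomorphism of flat pro-vector bundles $\desc_\GK(J^1\cV) \cong J^1(\desc_\GK(\cV))$; exactness of the descended sequence then follows from exactness preservation, and the resulting sequence is the Atiyah sequence by its very definition. The isomorphism itself is an instance of the general principle that both constructions arise from the same universal diagram on the formal neighborhood of the diagonal. Concretely, a Gelfand-Kazhdan structure $(\Fr_X, \sigma)$ produces a $\GL_n$-equivariant map $\tilde\sigma \colon \Fr_X \to X^{coor}$, and for each $x \in X$ the formal exponential $\sigma(x)$ identifies $\hD^n$ with the formal neighborhood of $x$ in $X$; this identification matches $\hO_{2n}/\hI_n^{k+1}$ with the sheaf of $k$-th order jets of $\cO_X$ along the diagonal. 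Naturality of $\desc_\GK$ in $\cV$ and the lax symmetric monoidal structure (Lemma \ref{prop lax}) convert tensor products over $\hO_n$ on the formal side into tensor products over $\cO_X$ on the geometric side, giving the isomorphism.

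The main technical obstacle I anticipate is establishing this last isomorphism cleanly, because it mixes three layers of ``neighborhoods'': the infinitesimal neighborhood of the diagonal in $X \times X$, the coordinate bundle $X^{coor}$ parametrizing jets of biholomorphisms, and the pro-structure on $\hO_{2n}/\hI_n^{k+1}$. The cleanest bookkeeping is to argue at the level of sections: a flat section of $\desc_\GK(J^1\cV)$ over a small open $U \subset X$ is a $\GL_n$-invariant, basic, $\nabla^{P,V}$-flat function $\Fr_U \to J^1\cV$, and unwinding using $\tilde\sigma$ and the Grothendieck connection shows that such data are the same as a section of $J^1(\desc_\GK(\cV))$. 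The key compatibility is that the action of $\omega^{coor}$ on $J^1\cV$ is the one induced from its action on $\cV$ together with the Lie derivative on $\hOmega^1_n$, which is exactly what one needs for the splitting as $\hO_n$-modules not to be a splitting of $(\Vect, \GL_n)$-modules, thereby producing the nonzero extension class (the formal Atiyah class) that globalizes to $\At(\sdesc_\GK(\cV))$.
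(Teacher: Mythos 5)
Your proposal is correct and follows exactly the route the paper intends: the paper itself gives no argument beyond remarking that the statement "is proved in exact analogy as in the non-formal case" in Section 4 of Atiyah, and your proof is precisely that analogy carried out formally (diagonal equivariance of the $(\Vect,\GL_n)$-action on $\hO_{2n}/\hI_n^2$, the conormal sequence $\hI_n/\hI_n^2 \cong \hOmega^1_n$ tensored against the free module $\cV$, and the identification of descent with jets via the formal exponential). You supply details the paper omits, but there is no divergence in method.
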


We henceforth call the sequence (\ref{formalatiyah1}) {\em the formal Atiyah sequence} for $\cV$. 

\begin{rmk} 
Note that $J^1(\cV)$ is an element of the category $\VB_n$ but it is {\em not} a formal tensor field. 
That is, it does not come from a linear representation of $\GL_n$ via coinduction. 
\end{rmk}

\begin{rmk} 
A choice of a formal coordinate defines a splitting of the first-order jet sequence as $\hO_n$-modules. 
If we write $\cV = \hO_n \tensor_\CC \cV$, then one defines 
\ben
j^1 : \cV \to J^1 \cV \;\; , \;\; f \tensor_\CC v \mapsto (f \tensor_\CC v, (1 \tensor_\CC v) \tensor_{\cO} \d f) .
\een
It is a map of $\hO_n$-modules, and it splits the obvious projection $J^1(\cV) \to \cV$. 
We stress, however, that it is {\em not} a splitting of $\Vect$-modules. 
We will soon see that this is reflected by the existence of a certain characteristic class in Gelfand-Fuks cohomology. 
\end{rmk}

Note the following corollary, which follows from the identification 
$$\Ext^1(\cV \tensor_{\hO_n} \hOmega_{n}^1, \cV) \cong \clie^1(\Vect,\GL_n; \hOmega^1_n \otimes_{\hO_n} \End_{\hO_n}(\cV))$$ 
and from the observation that an exact sequence in $\vb(\hD^n)$ maps to an exact sequence in $\vb(\GK_n)$.

\begin{cor}
There is a cocycle $\At^\GF(\cV) \in \clie^1(\Vect,\GL_n; \hOmega^1_n \otimes_{\hO_n} \End_{\hO_n}(\cV))$ representing the Atiyah class $\At(\desc_\GK(\cV))$. 
\end{cor}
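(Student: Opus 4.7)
The plan is to manufacture the cocycle directly from the formal Atiyah sequence via the standard ``extension-to-cocycle'' construction, and then invoke the descent map on $\Ext$ groups furnished by the preceding corollary in order to identify its class with the ordinary Atiyah class of $\desc_\GK(\cV)$.

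First, I would observe that the formal Atiyah sequence
\[
0 \to \cV \otimes_{\hO_n} \hOmega^1_n \to J^1(\cV) \to \cV \to 0
\]
splits as a sequence of $\hO_n$-modules: for any choice of coordinates on $\hD^n$ the map $j^1$ recalled in the remark after Proposition~\ref{1jet2} furnishes such a splitting, and since the construction of $j^1$ from coordinates is $\GL_n$-equivariant, this splitting can be chosen to be $\GL_n$-equivariant as well. The obstruction to its being $\Vect$-equivariant will be the desired cocycle.

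Second, I would make the cocycle explicit. Given such a $\GL_n$-equivariant, $\hO_n$-linear splitting $s \colon \cV \to J^1(\cV)$, set
\[
\alpha(X)(v) \;:=\; X \cdot s(v) - s(X \cdot v), \qquad X \in \Vect,\; v \in \cV.
\]
Both summands project to $X \cdot v$ in $\cV$, so $\alpha(X)(v)$ lies in the kernel $\cV \otimes_{\hO_n} \hOmega^1_n$. A short computation using the Leibniz axiom for the $\Vect$-action on $J^1(\cV)$ and the $\hO_n$-linearity of $s$ shows that $\alpha(X)$ is itself $\hO_n$-linear in $v$, hence defines an element of $\End_{\hO_n}(\cV) \otimes_{\hO_n} \hOmega^1_n \cong \hOmega^1_n \otimes_{\hO_n} \End_{\hO_n}(\cV)$. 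The cocycle identity for $\alpha$ follows by writing out $\alpha([X,Y]) - X\cdot\alpha(Y) + Y\cdot\alpha(X)$ and using that $J^1(\cV)$ is a genuine $\Vect$-module, so that the associator terms cancel. The $\GL_n$-equivariance of $s$ forces $\alpha$ to take values in the $\GL_n$-invariants and to vanish on $\fgl_n \subset \Vect$, so $\alpha$ in fact lies in the relative complex $\clie^1(\Vect,\GL_n;\, \hOmega^1_n \otimes_{\hO_n} \End_{\hO_n}(\cV))$. A different $\hO_n$-linear splitting $s'$ differs from $s$ by a $\GL_n$-equivariant $\hO_n$-linear map $\cV \to \cV \otimes_{\hO_n} \hOmega^1_n$, which produces exactly the coboundary change needed to make the class independent of the splitting. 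Set $\At^\GF(\cV) := [\alpha]$.

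Third, I would conclude. This construction is the canonical realization of the isomorphism
\[
\Ext^1_{\hc}\bigl(\cV,\, \cV \otimes_{\hO_n}\hOmega^1_n\bigr) \;\cong\; H^1\bigl(\clie^*(\Vect,\GL_n;\, \hOmega^1_n \otimes_{\hO_n} \End_{\hO_n}(\cV))\bigr)
\]
between $(\Vect,\GL_n)$-equivariant $\hO_n$-linear extensions and relative Lie algebra cohomology with coefficients in the internal Hom. Combined with the descent map on $\Ext$ groups given by the preceding corollary, together with Proposition~\ref{1jet2} --- which identifies the descent of the formal Atiyah sequence with the Atiyah sequence of $\sdesc_\GK(\cV)$ --- this yields that $\At^\GF(\cV)$ maps to $\At(\desc_\GK(\cV))$.

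The main obstacle is the clean identification of $\Ext^1$ in this category of formal vector bundles with relative Chevalley-Eilenberg cohomology valued in the internal $\hO_n$-linear Hom. The subtle point is the interplay between the $\hO_n$-module structure and the Harish-Chandra action: one must verify that restricting to $\hO_n$-linear (not arbitrary $\CC$-linear) splittings yields cocycles landing in $\End_{\hO_n}(\cV) \otimes \hOmega^1_n$, rather than in the much larger $\End_\CC(\cV) \otimes \hOmega^1_n$. This compatibility is ensured by the Leibniz axiom in the definition of a formal $\hO_n$-module, and it is precisely this compatibility that is responsible for the appearance of $\hOmega^1_n$ in the coefficient module.
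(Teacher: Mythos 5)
Your proposal is correct and follows essentially the same route as the paper: the paper deduces the corollary from the identification $\Ext^1 \cong \clie^1(\Vect,\GL_n;\hOmega^1_n\otimes_{\hO_n}\End_{\hO_n}(\cV))$ together with the exactness of descent and Proposition~\ref{1jet2}, and then realizes the cocycle explicitly (in Proposition~\ref{atiyahprop1}) as the failure of the coordinate-induced $\hO_n$-linear splitting $j^1 = 1\oplus\nabla$ to be $\Vect$-equivariant, which is exactly your $\alpha(X)(v) = X\cdot s(v) - s(X\cdot v)$.
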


We call this cocycle the Gelfand-Fuks-Atiyah class of $\cV$ since it
descends to the ordinary Atiyah class for $\desc(\cV)$ as a sheaf of
$\cO$-modules. 

\begin{dfn}
The {\em Gelfand-Fuks-Chern character} is the formal sum $\ch^\GF(\cV) = \sum_{k \geq 0} \ch^\GF_k(\cV)$, 
where the $k$th component
\ben
{\rm ch}_k^\GF(\cV) := \frac{1}{(-2 \pi i)^k k!} {\rm Tr}({\At}^\GF(\cV)^k)
\een
lives in $\clie^k(\Vect,\GL_n; \hOmega^k_n)$.
\end{dfn}

It is a direct calculation to see that $\ch^{\GF}_k(\cV)$ is closed for
the differential on formal differential forms, 
i.e., it lifts to an element in $\clie^k(\Vect,\GL_n; \hOmega^k_{n,cl})$.

\subsubsection{An explicit formula}

In this section we provide an explicit description of the Gelfand-Fuks-Atiyah class  
\ben
\At^{\rm GF}(\cV) \in \clie^1(\Vect, \GL_n ; \hOmega^1_n
\tensor_{\hO_n} \End_{\hO}(\cV)) .
\een 
of a formal vector bundle $\cV$. 

By definition, any formal vector bundle has the form $\cV = \hO_n \tensor V$, 
with $V$ a finite-dimensional vector space.
We view $V$ as the ``constant sections'' in $\cV$ by the inclusion $i: v \mapsto 1 \otimes v$.
This map then determines a connection on $\cV$:
we define a $\CC$-linear map $\nabla: \cV \to \hOmega^1_n \otimes_{\hO_n} \cV$
by saying that for any $f \in \hO_n$ and $v \in V$,
\[
\nabla(f v) = \d_{dR}(f) v,
\]
where $\d_{dR} : \hO_n \to \hOmega^1_n$ denote the de Rham
differential on functions. This connection appeared earlier when we
defined the splitting of the jet sequence $j^1 = 1 \oplus \nabla$. 

The connection $\nabla$ determines an element in $\clie^1(\Vect ;
\hOmega^1_n \tensor_{\hO} \End_{\hO}(\cV))$, as follows. Let 
\ben
\rho_{\cV} : \Vect \tensor \cV \to \cV
\een
denote the action of formal vector fields and consider the composition
\ben
\Vect \tensor V \xto{\id \tensor i} \Vect \tensor \cV \xto{\rho_{\cV}} \cV \xto{\nabla} \hOmega^1_n \tensor_{\hO} \cV .
\een 
Since $V$ spans $\cV$ over $\hO_n$, this composite map determines a $\CC$-linear map
\[
\alpha_{\cV,\nabla}: \Vect \to \hOmega^1_n \tensor_{\hO} \End_{\hO}(\cV)
\]
by
\[
\alpha_{\cV,\nabla}(X)(fv) = f \nabla( \rho_\cV(X)(i(v))),
\]
with $f \in \hO_n$ and $v \in V$.

\begin{prop} \label{atiyahprop1} 
Let $\cV$ be a formal vector bundle. 
Then $\alpha_{\cV,\nabla}$ is a representative for the Gelfand-Fuks-Atiyah class~$\At^{\rm GF}(\cV)$. 
\end{prop}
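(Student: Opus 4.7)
The plan is to invoke the standard dictionary between extensions in $\VB_n$ and Gelfand-Fuks 1-cocycles, and then to compute explicitly the cocycle associated to the natural $\hO_n$-linear splitting of the formal Atiyah sequence.

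For any short exact sequence $0 \to A \to B \xrightarrow{p} C \to 0$ of $(\Vect, \GL_n)$-modules in $\VB_n$ that admits an $\hO_n$-linear section $s: C \to B$ of $p$, one extracts a 1-cocycle in $\clie^1(\Vect, \GL_n; \Hom_{\hO_n}(C, A))$ by the formula
\[
\sigma_s(X)(c) \;=\; X \cdot s(c) \;-\; s(X \cdot c),
\]
which lies in $A$ (viewed as a submodule of $B$) because $p$ is $\Vect$-equivariant. A routine check shows $\sigma_s$ is $\hO_n$-linear in $c$, that $\sigma$ satisfies the Chevalley-Eilenberg cocycle identity, and that the class $[\sigma_s]$ is independent of the choice of $s$; under the isomorphism $\Ext^1_{\hc}(C, A) \cong H^1(\Vect, \GL_n; \Hom_{\hO_n}(C, A))$ this class agrees with the extension class. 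The identification $\Hom_{\hO_n}(\cV, \cV \otimes_{\hO_n} \hOmega^1_n) \cong \hOmega^1_n \otimes_{\hO_n} \End_{\hO_n}(\cV)$ is then the source of the stated cohomology group.

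I would then apply this recipe to the formal Atiyah sequence of $\cV$ with the splitting $j^1: \cV \to J^1(\cV)$, $w \mapsto (w, \nabla(w))$, which is $\hO_n$-linear by Proposition \ref{1jet2} and splits the obvious projection. For a constant section $v \in V \subset \cV$ one has $\nabla(v) = 0$ and hence $j^1(v) = (v, 0)$ in the decomposition $\cV \oplus (\cV \otimes_{\hO_n} \hOmega^1_n)$. So $\sigma_{j^1}(X)(v) = X \cdot (v, 0) - j^1(\rho_\cV(X)(v))$, and the essential computation is to evaluate $X \cdot (v, 0)$ by unwinding the definition $J^1(\cV) = \cV \otimes_{\hO_n, \pi_2^*}(\hO_{2n}/\hI_n^2)$. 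The $\Vect$-action on this tensor product is induced from the $\Vect$-action on $\cV$ together with the diagonal lift $\Vect \hookrightarrow \Vect_{2n}$, $X \mapsto X \otimes 1 + 1 \otimes X$, which annihilates the unit of $\hO_{2n}/\hI_n^2$. Hence $X \cdot (v \otimes 1) = \rho_\cV(X)(v) \otimes 1$, which in the $\cV \oplus (\cV \otimes \hOmega^1_n)$ decomposition corresponds to $(\rho_\cV(X)(v), 0)$. Subtracting $j^1(\rho_\cV(X)(v)) = (\rho_\cV(X)(v), \nabla(\rho_\cV(X)(v)))$ leaves, up to an overall sign, the element $\nabla(\rho_\cV(X)(v)) \in \cV \otimes \hOmega^1_n$, which is precisely $\alpha_{\cV, \nabla}(X)(v)$ by definition. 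By $\hO_n$-linearity of $\sigma_{j^1}(X)$ in the second argument, this determines $\sigma_{j^1}(X)$ on all of $\cV$ by the formula $\alpha_{\cV, \nabla}(X)(fv) = f\,\nabla(\rho_\cV(X)(v))$.

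The main obstacle is the careful bookkeeping of the two $\hO_n$-actions on $\hO_{2n}/\hI_n^2$ (via $\pi_1^*$ and $\pi_2^*$) relative to the chosen splitting $\hO_{2n}/\hI_n^2 \cong \pi_2^* \hO_n \oplus \hOmega^1_n$, so that the displayed decomposition $J^1(\cV) = \cV \oplus (\cV \otimes_{\hO_n} \hOmega^1_n)$ intertwines consistently with the $\Vect$-action and reproduces the $\hO_n$-module law $f \cdot (v,\beta) = (fv, f\beta + v \otimes df)$ stated earlier. Once the sign conventions for $d_{dR}$ and for the extraction of the cocycle from $s$ are aligned with those implicit in Definition of $\At^{\rm GF}(\cV)$, the computation identifies $\sigma_{j^1}$ with $\alpha_{\cV, \nabla}$ on the nose.
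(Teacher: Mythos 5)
Your proposal is correct and follows essentially the same route as the paper: both identify $\At^{\rm GF}(\cV)$ as the failure of the coordinate-induced $\hO_n$-linear splitting of the formal Atiyah sequence to be $\Vect$-equivariant, and both reduce the computation to constant sections $v \in V$ (where $\nabla(v)=0$) and extend by $\hO_n$-linearity, arriving at $X \mapsto \nabla(\rho_\cV(X)(v))$ up to sign. The only difference is packaging — you phrase the cocycle as $X\cdot j^1(v) - j^1(X\cdot v)$ and evaluate $X\cdot(v\otimes 1)$ directly from the tensor-product definition of $J^1(\cV)$, whereas the paper expresses the same quantity as the commutator of $1\otimes\nabla$ with the Chevalley-Eilenberg differentials — and your flagged bookkeeping of the two $\hO_n$-structures does work out with the convention that reproduces the stated module law.
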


\begin{proof}
We begin by recalling some general facts about the Gelfand-Fuks-Atiyah class as an
extension class of an exact sequence of modules. Viewing $\hO_n$ as functions on the formal $n$-disk, we can ask about the jets of such functions.
A choice of formal coordinates corresponds to an identification $\hO_n \cong \CC[[t_1,\ldots,t_n]]$,
and that choice provides a trivialization of the jet bundles by providing a preferred frame.
This frame identifies, for instance, $J^1$ with $\hO_n \oplus \hOmega^1_n$,
and the1-jet of a formal function $f$ can be understood as~$(f, \d_{dR}f)$.

For a formal vector bundle $\cV = \hO_n \otimes V$, something similar happens after choosing coordinates.
We have $J^1(\cV) \cong \cV \oplus \hOmega^1_n \otimes_{\hO_n} \cV$ and
the 1-jet of an element of $\cV$ can be written as
\ben
\begin{array}{cccc}
j^1 : & \cV & \to & J^1(\cV)   \\
& f v & \mapsto & (f  v, \d_{dR}(f) v ) .
\end{array}
\een 
where $f \in \hO_n$ and $v \in V$. 
The projection onto the second summand is precisely the connection $\nabla$ on $\cV$ 
determined by $\cV = \hO_n \otimes V$, the defining decomposition.

The Gelfand-Fuks-Atiyah class is the failure for this map $\nabla$ to be a map of $\Vect$-modules. 
Indeed, $\nabla$ determines a map of graded vector spaces
\ben
1 \tensor \nabla : \clie^\#(\Vect ; \cV) \to \clie^\#(\Vect ;\hOmega^1_n
\tensor_{\hO} \cV) .
\een
Let $\d_{\cV}$ denote the differential on $\clie^*(\Vect; \cV)$ and
$\d_{\Omega^1 \tensor \cV}$ denote the differential on $\clie^*(\Vect
; \hOmega^1_n \tensor_{\hOmega} \cV)$. The failure for $1 \tensor \nabla$ is precisely the difference
\be\label{difference}
(1 \tensor \nabla) \circ \d_{\cV} - \d_{\Omega^1 \tensor \cV} \circ (1 \tensor
\nabla).
\ee
This difference is $\clie^\#(\Vect)$ linear and can hence be
thought of as a cocycle of degree one in $\clie^*(\Vect ; \hOmega^1
\tensor_{\hO} \End_{\hO} (\cV))$. This is the representative for the Atiyah
class. 

We proceed to compute this difference. The differential $\d_{\cV}$ splits as $\d_{\Vect}
\tensor 1_\cV
+ \d'$ where $\d_{\Vect}$ is the differential on the complex
$\clie^*(\Vect)$ and $\d'$ encodes the action of $\Vect$ on
$\cV$. Likewise, the differential $\d_{\Omega^1 \tensor \cV}$ splits
as $\d_{\Vect} \tensor 1_{\Omega^1 \tensor \cV} + \d_{\Omega^1}
\tensor 1_V + 1_{\Omega^1} \tensor \d '$ where $\d_{\Omega^1}$ is the differential on the complex $\clie^*(\Vect ;
\hOmega^1_n)$. 

The de Rham differential clearly commutes with the
action of vector fields so that $(1 \tensor \d_{dR}) \circ
(\d_{\cO}\tensor 1) = (\d_\Vect + \d_{\Omega^1})\circ(1 \tensor
\d_{dR})$ so that the the difference in (\ref{difference}) reduces to 
\ben
(1 \tensor \nabla) \circ \d' - (1_{\Omega^1} \tensor \d') \circ (1
\tensor \nabla) .
\een
By definition $\d'$ is the piece of the Chevalley-Eilenberg
differential that encodes the action of $\Vect$ on $\cV$, so if we
evaluate on an element of the form $1 \in v \in \clie^0(\Vect ; V)
\subset \clie^0(\Vect ; \cV)$ the only term that survives is the GF 1-cocycle
\ben
X \mapsto \nabla \d'(1 \tensor v)(X) = \nabla (\rho_\cV(X) (v)) .
\een
as desired. 
\end{proof}

\begin{cor} 
On the formal vector bundle $\hT_n$ encoding formal vector fields, 
fix the $\hO_n$-basis by $\{\partial_j\}$ and the $\hO_n$-dual basis of one-forms by $\{\d t^j\}$. 
The explicit representative for the Atiyah class is given by the Gelfand-Fuks 1-cocycle 
\ben
f^i \partial_i\mapsto - \d_{dR} (\partial_j f^i) (\d t^j
\tensor \partial_i)
\een
taking values in $\hOmega^1_n \tensor_{\hO_n} \End_{\hO}(\hT_n)$.
\end{cor}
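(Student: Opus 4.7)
The plan is to apply Proposition \ref{atiyahprop1} directly to the formal vector bundle $\hT_n$, with the specific decomposition $\hT_n = \hO_n \otimes_\CC V$ where $V = \CC\{\partial_1,\ldots,\partial_n\}$ is the defining $\GL_n$-representation realized as the span of the constant vector fields. Under this identification, the connection $\nabla$ from the preceding discussion is characterized by $\nabla(\partial_k) = 0$ for each basis element $\partial_k \in V$, together with the Leibniz rule $\nabla(f\partial_k) = \d_{dR}(f)\,\partial_k$.

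First, I would compute $\rho_{\hT_n}(X)(\partial_k)$ for $X = f^i\partial_i \in \Vect$. Since $\hT_n$ is the adjoint representation of $\Vect$, this action is given by the Lie bracket of formal vector fields. A direct application of the standard bracket formula gives
\ben
[f^i\partial_i,\partial_k] \;=\; -(\partial_k f^i)\,\partial_i,
\een
since $\partial_k$ annihilates its own (constant) coefficient while picking up the derivative of $f^i$ with a sign.

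Next, I would apply $\nabla$ to this element. Since the $\partial_i$ are constant (horizontal) for $\nabla$, only the coefficient $-\partial_k f^i$ contributes to the de Rham differential, giving
\ben
\nabla\!\bigl(\rho_{\hT_n}(X)(\partial_k)\bigr) \;=\; -\d_{dR}(\partial_k f^i)\,\partial_i \;\in\; \hOmega^1_n \tensor_{\hO_n} \hT_n.
\een
By Proposition \ref{atiyahprop1}, the Gelfand-Fuks-Atiyah cocycle is the $\hO_n$-linear extension of the $\CC$-linear map $\partial_k \mapsto \nabla(\rho_{\hT_n}(X)(\partial_k))$. Repackaging via the canonical identification $\End_{\hO_n}(\hT_n) \cong \hOmega^1_n \tensor_{\hO_n} \hT_n$ with dual basis $\{\d t^j \tensor \partial_i\}$ recognizes this data as
\ben
\alpha_{\hT_n,\nabla}(X) \;=\; -\d_{dR}(\partial_j f^i)\,(\d t^j \tensor \partial_i),
\een
which is exactly the claimed formula.

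The computation is essentially bookkeeping once the conventions are pinned down, so there is no serious obstacle here. The only subtlety I would be careful about is the sign in the bracket $[X,\partial_k]$ and, relatedly, verifying that the $\End_{\hO_n}(\hT_n)$-valued expression indeed recovers the correct $\hOmega^1_n \tensor_{\hO_n} \hT_n$-valued map on each basis element $\partial_k$, which amounts to checking that the endomorphism $\d t^j \tensor \partial_i$ acts by $\partial_k \mapsto \delta^j_k \partial_i$.
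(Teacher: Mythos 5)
Your proposal is correct and takes essentially the same route as the paper: compute the adjoint action $[f^i\partial_i,\partial_k]=-(\partial_k f^i)\partial_i$ on the constant frame, apply $\nabla$ to the coefficients, and invoke Proposition \ref{atiyahprop1}. The extra care you take with the sign of the bracket and with the identification $\End_{\hO_n}(\hT_n)\cong\hOmega^1_n\tensor_{\hO_n}\hT_n$ only makes explicit what the paper leaves implicit.
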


\begin{proof} 
We must compute the action of vector fields on $\hO_n$-basis elements of $\hT_n$. 
We fix formal coordinates $\{t_j\}$ and let $\{\partial_j\}$ be the associated constant formal vector fields. 
Then the structure map is given by the Lie derivative $\rho_{\hT} (f^i \partial_i , \partial_j ) = - \partial_j f^i$. 
The formula for the cocycle follows from the Proposition. 
\end{proof}
 
We can use this result to explicitly compute the cocycles representing the Gelfand-Kazhdan Chern characters. 
For instance, we have the following formulas that will be useful in later sections.

\begin{cor}
The second component $\ch_2^{\rm GF}(\hT_n)$ of the universal Chern character is the cocycle
\ben
{\rm Tr}({\rm At}^\GF(\hT_n)^{\wedge 2}): (f^i \partial_i, g^j \partial_j) \mapsto - \d_{dR}(\partial_j f^i) \wedge \d_{dR}(\partial_i g^j)
\een 
in $\clie^2(\Vect,\GL_n; \hOmega_n^2)$. 
As the de Rham differential $\d_{dR} : \hOmega^1_n \to \hOmega^2_n$ is $\Vect$-equivariant, 
there is an element $\alpha$ in $\clie^2(\Vect,\GL_n; \hOmega_n^1)$ such that
\[
\ch_2^{\rm GF}(\hT_n) = \d_{dR} \alpha
\]
where 
\[
\alpha: (f^i \partial_i, g^j \partial_j) \mapsto \partial_j f^i \wedge \d_{dR}(\partial_i g^j).
\]
Moreover, as $\ch_2$ is closed for the differential $\partial$, 
it lifts to a cocycle in $\clie^2(\Vect, \GL_n ; \hOmega^2_{n,cl})$.
\end{cor}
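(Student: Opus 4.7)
\emph{Proof plan.} The argument breaks naturally into three steps corresponding to the three assertions in the statement.

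First I will compute ${\rm Tr}(\At^\GF(\hT_n)^{\wedge 2})$ directly, starting from the explicit formula $\At^\GF(f^i\partial_i) = -\d_{dR}(\partial_j f^i)\,(\d t^j \otimes \partial_i)$ supplied by the preceding corollary. Forming the wedge-square inside the graded algebra $\hOmega^*_n \otimes_{\hO_n} \End_{\hO_n}(\hT_n)$, where one wedges on the form factor and composes on the endomorphism factor, and then tracing over $\End_{\hO_n}(\hT_n)$, the essential local identities are
\[
(\d t^j \otimes \partial_i) \circ (\d t^\ell \otimes \partial_k) = \delta^j_k\, \d t^\ell \otimes \partial_i \quad\text{and}\quad {\rm Tr}(\d t^\ell \otimes \partial_i) = \delta^\ell_i.
\]
Contracting the indices in the evaluation at the pair $(f^i \partial_i, g^j\partial_j)$, with signs and normalizations fixed by the standard Chern character conventions, yields the stated formula $-\d_{dR}(\partial_j f^i) \wedge \d_{dR}(\partial_i g^j)$ in $\hOmega^2_n$.

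For the coboundary assertion $\ch_2^{\rm GF}(\hT_n) = \d_{dR}\alpha$, I will first observe that $\d_{dR} \colon \hOmega^1_n \to \hOmega^2_n$ is $\Vect$-equivariant (it commutes with Lie derivatives), so the induced coefficient map commutes with the Chevalley-Eilenberg differential and $\alpha$ is a well-defined cochain in $\clie^2(\Vect,\GL_n;\hOmega^1_n)$. Applying $\d_{dR}$ to the defining formula $\alpha(X,Y) = -\partial_j f^i \wedge \d_{dR}(\partial_i g^j)$, the Leibniz rule produces two terms; the second vanishes because $\d_{dR}^2 = 0$, and the first reproduces exactly the expression from the previous step. (Since $\partial_j f^i$ is a $0$-form, no Koszul sign enters the Leibniz rule.)

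Finally, for the lift to closed $2$-forms: since $\ch_2^{\rm GF}(\hT_n) = \d_{dR}\alpha$ is already exact on the coefficient side, a further application of $\d_{dR}$ vanishes automatically, $\d_{dR}\ch_2^{\rm GF}(\hT_n) = \d_{dR}^2 \alpha = 0$. Hence the cochain factors through the subspace $\hOmega^2_{n,cl} \subset \hOmega^2_n$ of closed $2$-forms, providing the desired lift to $\clie^2(\Vect,\GL_n; \hOmega^2_{n,cl})$. The only substantive calculation in the whole argument is the index contraction in step one; steps two and three are one-line consequences of Leibniz and $\d_{dR}^2 = 0$.
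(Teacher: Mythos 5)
Your proposal is correct and follows essentially the same route the paper intends: the paper states this as an unproved corollary of the explicit representative $f^i\partial_i \mapsto -\d_{dR}(\partial_j f^i)(\d t^j \otimes \partial_i)$, and your direct index contraction using $(\d t^j\otimes\partial_i)\circ(\d t^\ell\otimes\partial_k)=\delta^j_k\,\d t^\ell\otimes\partial_i$ and ${\rm Tr}(\d t^\ell\otimes\partial_i)=\delta^\ell_i$, followed by the Leibniz-rule and $\d_{dR}^2=0$ observations, is exactly the computation being left to the reader. Your derivation of closedness from the exactness $\ch_2^{\rm GF}=\d_{dR}\alpha$ is a clean way to justify the lift to $\hOmega^2_{n,cl}$.
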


\subsubsection{Extended pair}
The $2$-cocycle $\ch_2^{\GF}(\hT_n)$ determines an extension Lie algebras of $\Vect$ by the abelian Lie algebra $\hOmega^2_{n,cl}$
\ben
0 \to \hOmega^2_{n,cl} \to \TVect \to \Vect \to 0 .
\een

We have already discussed the pair $(\Vect, \GL_n)$. We will need that
the above extension of Lie algebras fits in to a Harish-Chandra pair
as well. The action of $\GL_n$ extends to an action on $\TVect$ where
we declare the action of $\GL_n$ on closed two-forms to be the natural
one via linear formal automorphisms.

\begin{lemma} 
The pair $(\TVect, \GL_n)$ form a Harish-Chandra pair and fits into an extension of pairs
\ben
0 \to \hOmega^2_{n,cl} \to (\TVect, \GL_n) \to (\Vect, \GL_n) \to 0
\een
which is determined by the cocycle $\ch_2^{\GF}(\hT_n)$. 
\end{lemma}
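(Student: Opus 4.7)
The plan is to exploit the fact that $\ch_2^{\GF}(\hT_n)$ is a \emph{relative} cocycle, i.e., it lives in $\clie^2(\Vect,\GL_n;\hOmega^2_{n,cl})$ rather than just in $\clie^2(\Vect;\hOmega^2_{n,cl})$. Concretely, membership in the relative complex encodes two facts about the cocycle $c := \ch_2^{\GF}(\hT_n)$: first, $c$ is $\GL_n$-invariant under the simultaneous action on $\Vect$ and on $\hOmega^2_{n,cl}$; second, $c$ vanishes whenever one of its two Lie algebra inputs lies in the image of $\fgl_n \hookrightarrow \Vect$ (i.e., $c$ is ``basic'' along $\fgl_n$). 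These two properties are exactly what is needed to promote the Lie algebra central extension $\TVect$ to a Harish-Chandra extension.

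First, I would use $\GL_n$-invariance of $c$ to construct the action $\rho_{\GL_n}$ on $\TVect$. Writing the extension as the vector space $\Vect \oplus \hOmega^2_{n,cl}$ with bracket $[(X,\alpha),(Y,\beta)] = ([X,Y], c(X,Y))$, one defines $g \cdot (X,\alpha) := (g\cdot X, g\cdot \alpha)$. Invariance of $c$ immediately implies this is a Lie algebra automorphism, and functoriality shows $\rho_{\GL_n}$ is a map of Lie groups to $\Aut(\TVect)$ fitting into a commuting square with the given action on $\Vect$ and the natural action on $\hOmega^2_{n,cl}$.

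Second, I would use the vanishing of $c$ on $\fgl_n$-inputs to construct the Lie algebra embedding $\tilde{i}: \fgl_n \hookrightarrow \TVect$ as $A \mapsto (i(A), 0)$, where $i:\fgl_n \hookrightarrow \Vect$ is the standard embedding as linear vector fields. The bracket identity $[\tilde{i}(A),\tilde{i}(B)] = ([i(A),i(B)], c(i(A),i(B))) = (i([A,B]), 0) = \tilde{i}([A,B])$ holds precisely because $c(i(A),i(B)) = 0$, and injectivity is inherited from $i$.

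Finally, I would verify the Harish-Chandra compatibility, namely that $\Lie(\rho_{\GL_n}) = \ad_{\tilde{i}(-)}$. This is straightforward once one decomposes everything on the vector space $\Vect \oplus \hOmega^2_{n,cl}$: on the $\Vect$ summand the condition reduces to the known compatibility for $(\Vect,\GL_n)$, while on the $\hOmega^2_{n,cl}$ summand the adjoint action of $(i(A),0)$ produces $c(i(A),-)$, which vanishes by the relative condition, matching the abelian action coming from $\Lie(\rho_{\GL_n})$ acting on the central summand through its natural action on closed two-forms, which likewise agrees with the adjoint by construction of $\hOmega^2_{n,cl}$ as a $\Vect$-module quotient. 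The identification of this extension with the one determined by $c$ is tautological from the chosen vector-space splitting. The only substantive step is translating ``relative cocycle'' into the pair of conditions above; once that dictionary is in hand, every other check is immediate.
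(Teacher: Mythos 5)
Your proposal is correct and is essentially the argument the paper intends: the paper offers no written proof, only the surrounding remarks that the $\GL_n$-action extends to $\TVect$ via the natural action on closed two-forms and that the extension does not affect the group because it is trivial over $\GL_n$; your translation of ``relative cocycle'' into $\GL_n$-invariance plus vanishing on $\fgl_n$-inputs (the latter visible directly from the explicit formula, since $\d_{dR}(\partial_j f^i)=0$ when $f^i$ is linear) is exactly the content being invoked. One small correction: the extension is abelian but not central, since $\hOmega^2_{n,cl}$ is a nontrivial $\Vect$-module, so the bracket should read $[(X,\alpha),(Y,\beta)] = ([X,Y],\, L_X\beta - L_Y\alpha + c(X,Y))$ rather than $([X,Y], c(X,Y))$; your step 3 implicitly uses these Lie-derivative terms (they are what match $\Lie(\rho_{\GL_n})$ on the two-form summand), and your step 2 is unaffected because they vanish when the form components are zero, but the displayed formula and the word ``central'' should be amended accordingly.
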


One might be worried as to why there is only a non-trivial extension
of the Lie algebra in the pair. The choice of a coordinate determines
an embedding of linear automorphisms $\GL_n$ into formal automorphisms
$\Aut_n$. The extension of formal automorphisms $\Aut_n$ defined by
the group two-cocycle $\ch_2^\GF(\hT_n)$ is trivial when restricted to
$\GL_n$ so that it does not get extended.

\section{Harish-Chandra structure on CDOs} \label{sec vertex alg}

In this section, we first recall the definition of chiral differential
operators on affine space $\CC^n$; this object always exists and there
is no obstruction to defining it. Then we formulate a construction of chiral differential
operators on more general complex manifolds based on the theory of
Gelfand-Kazhdan descent developed in the previous section. The key
element of this formulation is the Harish-Chandra module structure for
formal vector fields and automorphisms, much of which has been studied
in the literature on vertex algebras. The two main results we extract is
Theorem \ref{GMS1} which shows how formal automorphisms act, and Theorem
\ref{MSV1} which shows how formal vector fields act. We find these
actions to be compatible and deduce the structure of a module 

\subsection{Recollections on vertex algebras}

\subsubsection{Recollections}

We briefly recall the definition of a vertex algebra and some other notions associated to vertex algebras. 
Our main references are \cite{BZF} and \cite{Kac}.

\begin{dfn}
A {\em vertex algebra} is the following data:
\begin{itemize}
\item[(i)] a vector space $V$ over $\CC$ (the \emph{state space}); 
\item[(ii)] a nonzero vector $\left|0 \> \in V$ (the \emph{vacuum vector}); 
\item[(iii)] a linear map $T : V \to V$ (the \emph{translation operator});
\item[(iv)] a linear map $Y (-; z) : V \to \End(V) \ll z^{\pm} \rr$ (the \emph{vertex operator});
\end{itemize}
subject to the following conditions:
\begin{itemize}
\item[(1)] For $v \in V$, let 
\[
Y(v;z) = \sum_{n \in \ZZ} v_{(n)} z^{-n-1}
\]
in $\End (V) \ll z^{\pm} \rr$. 
(We call the endomorphisms $v_{(n)}$ the \emph{Fourier modes} of $Y(v;z)$.)
Then for each $w \in V$ there exists some $N \in \ZZ$ such that $v_{(j)} w = 0$ for all $j > N$.
\item[(2)] $Y(\left|0\>; z) = \id_V$ and $Y(v;z) \left|0\> \in v + z V \ll z
  \rr$ for all $v \in V$. 
\item[(3)] For every $v$, $[T, Y(v;z)] = \partial_z Y(v; z)$, and $T\left|0\> = 0$.
\item[(4)] For any pair $v,v'\in V$, there exists $N \in \ZZ_{\geq 0}$ such that $(z-w)^N [Y(v;z), Y(v';w)] = 0$.
\end{itemize}
\end{dfn}

\begin{rmk}
Alternatively, one can  formulate the definition of a vertex algebra in terms of the Fourier modes $v_{(n)}$. 
Indeed, our definition above provides a family of bilinear operations
\[
\begin{array}{cccc}
(-)_{(n)} (-) : & V \times V & \to & V \\
& (v,w) & \mapsto & v_{(n)}w
\end{array}
\]
These operations satisfy algebraic conditions coming from conditions (1)-(4) above. 
For instance, see~\cite{Kac}. 
\end{rmk}

We will be interested in, and take advantange of, vertex algebras with the additional structure of a $\ZZ_{\geq 0}$-grading. 
This grading is not cohomological in nature and does not follow the Koszul sign rule.
We call it the \emph{conformal dimension grading}.

\begin{dfn}
A vertex algebra as above is {\em $\ZZ_{\geq 0}$-graded} if 
the underlying state space $V$ is a $\ZZ_{\geq 0}$-graded vector space $V = \bigoplus_{N \in \ZZ_{\geq 0}} V^{(N)}$
such that 
\begin{enumerate}
\item[(1)] the vacuum $\left|0\>$ has dimension zero, 
\item[(2)] the translation operator $T$ is a dimension $1$ map, and 
\item[(3)] for $v \in V^{(N)}$ the dimension of the endomorphism $v_{(m)}$ is $-m + N -1$. 
\end{enumerate}
\end{dfn}

Condition (3) ensures that  if $v \in V^{(N)}$ and $w \in V^{(M)}$, 
then $v_{(m)} w \in V_{-m + N + M-1}$.

\subsubsection{Actions on vertex algebras}

We now discuss what it means for a Harish-Chandra pair to act on a
vertex algebra. 

It is clear how to define an action of a Lie group on a vertex
algebra $V$. Indeed, if $K$ is a Lie group then by an action of $K$ on
$V$ is a group homomorphism
\ben
\rho_K : K \to \Aut_{{\rm VA}}(V)
\een
where $\Aut_{{\rm VA}}(V)$ are the vertex algebra automorphisms. That is,
maps of vertex algebras $V \to V$ whose underlying $\CC$-linear map is
invertible.  

To define the action of a Lie algebra on $V$ we first recall what a
vertex algebra derivation is. It is the data of a linear map $D : V
\to V$ such that for all $v \in V$ one has
\ben
Y(D v ; z) = [D, Y(v ; z)] .
\een 
The set of all derivations forms a Lie algebra which we denote
$\Der_{{\rm VA}}(V)$. An action of a Lie algebra $\fg$ on $V$ is the data
of a homomorphism
\ben
\rho_\fg : \fg \to \Der_{{\rm VA}}(V) .
\een 

It also makes sense to talk about vertex algebras that have actions by
apair $(\fg,K)$. Indeed, a $(\fg,K)$-action on a vertex algebra $V$ is
a $(\fg,K)$-action is given by actions of $\fg$ and $K$ as above such
that we have the obvious compatibility. 

The underlying vector space of a vertex algebra is almost always infinite dimensional, however,
and so does not immediately fit into our definition of a module of a Harish-Chandra pair from Section 1. 
We sidestep this issue by focusing on vertex algebras are graded by conformal dimension where the conformal dimension $N$ space $V^{(N)}$ is finite dimensional for each $N$
(in our case, finite rank over $\hO_n$) 
so that we have a well-behaved category of modules. 
From here on, we will assume the following definition of an action on
a vertex algebra. 

\begin{dfn}\label{graction} 
An \emph{action of a Harish-Chandra pair $(\fg,K)$ on a $\ZZ_{\geq 0}$-graded vertex algebra $V = \bigoplus_{N} V^{(N)}$} 
is a collection of $(\fg,K)$-actions $(\rho^{(N)}_{\fg}, \rho^{(N)}_{K})$ on the underlying fixed conformal dimension spaces $V^{(N)}$ such that:
\begin{itemize}
\item[(1)] for each $x \in \fg$ the induced map $\oplus_N \rho^{(N)}_\fg(x)$ is a vertex algebra derivation for $V$, and
\item[(2)] for each $A \in K$ the induced map $\oplus_{N} \rho^{(N)}_K(A)$ is a vertex algebra automorphism for $V$.
\end{itemize}
\end{dfn}

\subsection{The $\beta\gamma$-vertex algebra}

One of the main objects that we will focus on is the vertex algebra of
chiral differential operators on $\CC^n$. In the physics literature
\cite{WittenCDO}, \cite{Nek}, \cite{polchinski} it is typically called the $n$-dimensional $\beta\gamma$ vertex algebra. 

\begin{dfn}
Let $\CDO_n$ denote the vertex algebra of {\em chiral differential operators} for $\CC^n$. The underlying vector space is
\[
\CC [b_l^j,c_m^j]_{1 \leq j \leq n, l < 0, m \leq 0},
\]
the translation operator $T$ is
\begin{align*} 
 b_m^j &\mapsto -m b_{m-1}^j, \\
 c_m^j &\mapsto -(m-1) c_{m-1}^j,
\end{align*}
and the vertex operator is
\[
Y(b^j_{-1},z) = \sum_{m < 0} b_m^j z^{-1-m} + \sum_{m \ge 0} \frac{\partial}{\partial c_{-m}^j} z^{-1-m} 
\]
and 
\[
Y(c_{0}^j,z) = \sum_{m \le 0} c^j_m z^{-m} - \sum_{m > 0}  \frac{\partial}{\partial b_{-m}^j} z^{-m}.
\]
These determine a vertex algebra by a reconstruction theorem (see, e.g., Theorem 2.3.11 of \cite{BZF}).
\end{dfn}

This vertex algebra $\CDO_n$ is a $\ZZ_{\geq 0}$-graded. We specify this by
saying that $c_0^j$ has conformal dimension $0$ and $b_{-1}^j$ has
conformal dimension $1$. Denote by $\CDO_n^{(N)}$ the conformal dimension
$N$ subspace. 

\begin{rmk}
\label{rmk:notations}
The generators of this vertex algebra are given a variety of symbols:
\begin{enumerate}
\item in \cite{MSV}, they use $a^j_{m}$ for what we call $b^j_m$ and $b^j_{m}$ for our~$c^j_m$;
\item in Chapter 11 Section 3.6 of \cite{BZF}, they use $a^j_{-m}$ for what we call $b^j_m$ and ${a^{\ast}}^j_{-m}$ for our~$c^j_m$.
\end{enumerate}
\end{rmk}

It is important to note how $\CDO_n$ relates to the geometry of $\CC^n$, and some subtleties thereof.
The underlying vector space of $\CDO_n$ is isomorphic with the underlying vector space of the commutative algebra $\CC [b_{l}^i, c_{m}^j]$, 
where $l < 0$, $m \leq 0$, and $i,j = 1,\ldots,d$. 
The map
\[
\begin{array}{cccc}
\label{poly}
\tau : &\cO_n = \CC [t^1,\ldots, t^n] &\to& \CDO^0_n\\ 
& t_i & \mapsto & c_0^i .
\end{array}
\]
identifies the commutative algebra $\cO_n$ of functions on $\CC^n$ with the conformal dimension zero subspace $\CDO_n^{(0)} \subset \CDO_n$, 
which consists of polynomials in the variables $c_0^1,\ldots, c_0^{n}$. 
In other words, for any polynomial function $f$ on $\CC^n$, we substitute $c^i_0$ for $t_i$ in the polynomial $f$.
We will express this, somewhat abusively, as $\tau(f) = f(c)$.
This relationship shows that one can view the underlying vector space of $\CDO_n$ as an $\cO_n$-module.

One might hope that this map $\tau$ is a map of commutative algebras,
by using the $(-1)$-Fourier mode as a bilinear operation
\ben
(-)_{(-1)} (-) : \CDO_n^{(0)} \times \CDO_n^{(0)} \to \CDO_n^{(0)} 
\een
to equip $\CDO_n^{(0)}$ with a commutative algebra structure.
The issue is that this product is not associative!
Hence, we emphasize that $\CDO_n$ {\em as a vertex algebra} is not $\cO_n$-linear. 

\begin{rmk}
In \cite{BorVAMS} Borisov develops a clean formalization of this situation by introducing the notion of an $R$-loop module over a commutative algebra $R$,
clarifying how, in our example above, the $\cO_n$-module structure interacts with the vertex algebra structure on $\CDO_n$.
This notion of loop module also leads to quasi-loop-coherent sheaves and then to sheaves of vertex algebras.
Borisov used it to globalize the chiral de Rham complex to complex manifolds, 
and suitably modified, it should apply to chiral differential operators too.
\end{rmk}

\subsubsection{Completion}

We use this module structure to complete along powers of $\{t^1,\ldots,t^n\}$. 
That is, we base change $\CDO_n$ to a module for $\hO_n$:
\ben
\hCDO_n := \hO_n \tensor_{\cO_n} \CDO_n.
\een
In Theorem 3.1 of \cite{MSV} it is shown that this module obtains a
vertex algebra structure by extending that on $\CDO_n$. The critical
step is showing that the vertex operator
\ben
Y(-,z) : \hCDO_n \to \End(\hCDO_n)\llbracket z,z^{-1}\rrbracket 
\een
is well defined. Every power series $f \in \hO_n$ is a limit of polynomials $\{f_k\} \subset \cO_n$. According to the inclusion (\ref{poly}), every $f_k \in \cO_n$ determines a field
\ben
f_k(z) := f_k(c_0^1(z),\ldots, c_0^n(z)) \in \End(\CDO_n)\llbracket z, z^{-1} \rrbracket .
\een 
The result of \cite{MSV} is that the limit of $\{f_k(z)\}$ determines a field
\ben
f(z) := f(c_0^1(z),\ldots,c_0^n(z)) \in \End(\hCDO_n)\llbracket z, z^{-1} \rrbracket .
\een
Note that $\hCDO_n$ is still a $\ZZ$-graded vertex algebra, inherited from the conformal dimension $\ZZ$-grading on $\CDO_n$. 
Indeed, for each $N \in \ZZ$ we have $\hCDO_n^{(N)} = \hO_n \tensor_{\cO_n} \CDO_n^{(N)}$.

\subsubsection{}

The primary complication in gluing chiral differential operators $\CDO_n$ to a sheaf on a general manifold 
is that the group of automorphisms of the disk do {\em not} act as automorphisms of the vertex algebra. 
This problem appears for the formal disk as well. 
The group of formal automorphisms $\Aut_n$ do not act on $\hCDO_n$, 
as we will see explicitly at the level of formal vector fields, in way preserving the vertex operator. 

If we restrict ourselves to {\em linear} automorphisms of the disk, however, we find that there is no such problem. 
Indeed we can explicitly describe the action of the Lie group $\GL_n$ 
by vertex algebra automorphisms on $\CDO_n$ and $\hCDO_n$  as follows. 
Denote by $\mathbf{b}_m$ the $n$-tuple $(b_m^1,\ldots,b_m^n)$ considered as a vector in $\CC^n$ and
$\mathbf{c}_m$ as the vector $(c_m^1,\ldots, c_m^n)$. 
Given $A \in \GL_n$, the action of $A$ is specified by 
\begin{align}
A \cdot {\bf c}_{0} & = A {\bf c}_0 \label{GLact1} \\
A \cdot {\bf b}_{-1} & = (A^T)^{-1} {\bf b}_{-1} \label{GLact2}
\end{align} 
where on the right-hand side we understand matrix
multiplication. 
Clearly this action preserves the $\ZZ_{\geq 0}$-grading.

\subsection{The classical limit}
While the $\beta\gamma$ vertex algebra does not carry an action of
formal automorphisms or formal vector fields, its ``classical limit''
does. For this reason, descending the classical vertex algebra is much
simpler and the formalism of Gelfand-Kazhdan descent from Section
\ref{sec gk descent} directly applies. (For an alternative approach to
this see \cite{Malikov2008}) First, we discuss what we mean by the
classical limit of $\CDO_n$.

For each fixed conformal dimension $N \in \ZZ$, there is a filtration on the subspace $\CDO_n^{(N)}$ that we now describe. First we set up some notation.

Let $L = \left((i_1,l_1),\ldots,(i_p,l_p)\right) \in
\left(\{1,\ldots,n\} \times \ZZ_{<0}\right)^p$, $M =
\left((j_1,m_1),\ldots,m_q)\right) \in \left(\{1,\ldots,n\} \times
  \ZZ_{\leq 0}\right)^q$ be multi-indices of length $p$ and $q$ respectively. Define
\bestar
b_L &:= & b_{l_1}^{i_1} \cdots b_{l_p}^{i_p} \\
c_M & := & c_{m_1}^{j_1} \cdots c_{m_q}^{j_q} .
\eestar
Then as a vector space, the subspace $\CDO_n^{(N)}$ of conformal dimension $N$ is generated by monomials of the form $c_M b_L$ where
\ben
|L| +|M| = (l_1 + \cdots l_p)  + (m_1 + \cdots m_q) = - N .
\een
Define the subspace $F^k \CDO_n^{(N)}$ as the $\CC$-linear span of all
elements of the form $c_M b_L$ such that $p \leq k$ where $p$ is the
length of the multi-index $L$ as above. This construction also
provides a filtration on $\hCDO_n^{(N)}$.

\def\Gr{{\rm Gr}}

\begin{prop} 
The associated graded 
\ben
{\rm Gr} \, \CDO_n := \bigoplus_{N \in \ZZ} \; {\rm Gr}(\CDO_n^{(N)}) = \bigoplus_N \left(\bigoplus_k F^k \CDO_n^{(N)}/ F^{k-1} \CDO_n^{(N)} \right)
\een
has the structure of a $\ZZ_{\geq 0}$-graded Poisson vertex algebra, as does ${\rm Gr} \, \hCDO_n$. 
\end{prop}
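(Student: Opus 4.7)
The plan is to verify that the filtration $F^\bullet$ is compatible with the vertex algebra structure in the appropriate sense, and then extract the Poisson vertex algebra structure on $\Gr \CDO_n$ by a formal argument. The key compatibility is that $(-1)$-products are additive in the filtration while all higher Fourier modes $v_{(n)}w$ with $n \geq 0$ strictly decrease the total filtration by one. Stated precisely: for $v \in F^{k_1}\CDO_n$ and $w \in F^{k_2}\CDO_n$, one has $v_{(n)}w \in F^{k_1+k_2}\CDO_n$ for $n < 0$ and $v_{(n)}w \in F^{k_1+k_2-1}\CDO_n$ for $n \geq 0$; moreover $|0\rangle \in F^0$ and $T$ preserves each $F^k$.

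First I would verify these bounds on the generators. The only nontrivial operator product expansion in $\CDO_n$ is
\ben
b^j(z) c^k(w) \sim \frac{\delta^{jk}}{z-w},
\een
and all other OPEs among the generators $b^j_{-1}, c^k_0$ are regular. In particular, $(b^j_{-1})_{(0)}(c^k_0) = \delta^{jk} |0\rangle \in F^0 = F^{1+0-1}$, as required, while all other $(n)$-products of pairs of generators with $n \geq 0$ vanish. The $(-1)$-products visibly add filtration degrees, since they correspond to normal-ordered products that concatenate the multi-indices~$L$.

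Second, I would extend these bounds to arbitrary elements by induction on the length of a decomposition into iterated $(-1)$-products of generators and their $T$-translates, using the Borcherds identity to reduce $(u_{(-1)}v)_{(n)}w$ to combinations of $u_{(n')}(v_{(n'')}w)$ and $v_{(n'')}(u_{(n')}w)$. This is the main technical step. A clean organization is to appeal to Haisheng Li's general theory of good filtrations for vertex algebras: the filtration $F^\bullet$ is manifestly of this form on generators, and Li's framework then guarantees that the associated graded acquires the structure of a Poisson vertex algebra. Alternatively one can verify the bounds directly, tracking filtration degrees through the inductive application of Borcherds.

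Third, once filtration compatibility is in place, the Poisson vertex algebra structure on $\Gr \CDO_n$ follows formally. The induced $(-1)$-product descends to $\Gr^{k_1} \tensor \Gr^{k_2} \to \Gr^{k_1+k_2}$ and is commutative: the vertex algebra skew-symmetry expresses $u_{(-1)}v - v_{(-1)}u$ as a sum of $T$-derivatives of terms $u_{(n)}v$ with $n \geq 0$, all of which lie in $F^{k_1+k_2-1}$ and so vanish in the associated graded. The $\lambda$-bracket
\ben
\{u_\lambda v\} := \sum_{n \geq 0} \frac{\lambda^n}{n!}\, u_{(n)} v
\een
descends to a map $\Gr^{k_1} \tensor \Gr^{k_2} \to \Gr^{k_1+k_2-1}[\lambda]$, and the Poisson vertex algebra axioms (sesquilinearity, skew-symmetry, Jacobi, and the Leibniz rule with respect to the commutative product) follow directly from the corresponding vertex algebra axioms reduced modulo the next filtration step. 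The $\ZZ_{\geq 0}$-grading by conformal dimension is preserved throughout since $F^\bullet$ respects it. The extension to $\hCDO_n = \hO_n \tensor_{\cO_n} \CDO_n$ is then immediate: elements pulled in from $\hO_n$ all sit in filtration zero, so the entire analysis passes through base change without modification. The main obstacle is the second step --- establishing the filtration inequalities for general elements --- which is the standard but nontrivial OPE computation underlying Li's theorem.
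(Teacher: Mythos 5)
Your proof is correct and follows essentially the same route as the paper: establish that the filtration is compatible with the vertex algebra structure so that the associated graded is commutative, then invoke the standard fact (the paper cites Chapter 16 of \cite{BZF}; you cite Li's good-filtration framework) that such an associated graded canonically carries a Poisson vertex algebra structure. The paper simply leaves the filtration-compatibility verification implicit, whereas you spell it out.
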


Roughly, a vertex algebra can be thought of as an integer family of
products. A Poisson vertex algebra is essentially a commutative vertex algebra
together with an integer family of Lie brackets that act on the underlying
commutative vertex algebra by derivations. For the precise
definition of a Poisson vertex algebra see Chapter 16 Section 2 of \cite{BZF}. The fact that $\Gr \, \CDO_n$ is a Poisson vertex algebra follows from the well-known fact. 

\begin{prop}[Chapter 16 of \cite{BZF} ]
If $V$ is a filtered vertex algebra such that $\Gr \; V$ is a commutative vertex algebra,
then $\Gr \; V$ carries a canonical structure of a Poisson vertex algebra.
\end{prop}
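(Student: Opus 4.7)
The plan is to construct the Poisson vertex algebra structure on $\Gr \, V$ explicitly and verify the axioms, following the standard argument reviewed in Chapter 16 of \cite{BZF}. The commutative vertex algebra structure on $\Gr \, V$ is already assumed: the product is induced by the $(-1)$-st Fourier mode $v \cdot w = v_{(-1)} w$, together with the translation $T$ and the fact that $v_{(n)} w = 0$ on $\Gr \, V$ for $n \geq 0$. What needs to be produced is the Poisson $\lambda$-bracket (equivalently, the family of $n$-th brackets for $n \geq 0$).

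The central observation is the following. Assume the filtration is increasing and compatible with the vertex operators, so that $v_{(n)} w \in F^{p+q} V$ whenever $v \in F^p V$, $w \in F^q V$, and $n \in \ZZ$. The hypothesis that $\Gr \, V$ is commutative means that for $n \geq 0$, the class of $v_{(n)} w$ in $F^{p+q} V / F^{p+q-1} V$ vanishes, i.e., $v_{(n)} w$ actually lies in $F^{p+q-1} V$. This filtration-dropping phenomenon is exactly what we need. For each $n \geq 0$, I would define
\[
\{-\}_{(n)} \{-\} : \Gr^p V \otimes \Gr^q V \to \Gr^{p+q-1} V, \qquad (\bar v, \bar w) \mapsto \overline{v_{(n)} w},
\]
where bars denote passage to the appropriate associated graded piece. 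Well-definedness is straightforward: if $v' - v \in F^{p-1} V$, then the commutativity hypothesis applied at filtration level $(p-1, q)$ forces $(v'-v)_{(n)} w \in F^{p+q-2} V$, so the class in $\Gr^{p+q-1} V$ is unchanged; symmetrically in the second slot.

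Once the brackets are defined, the Poisson vertex algebra axioms --- $T$-sesquilinearity, skew-symmetry, the Leibniz rule expressing that each $(n)$-bracket is a derivation of the commutative product, and the Borcherds/Jacobi identity --- all follow by reducing the corresponding vertex algebra identities in $V$ modulo the appropriate filtration piece. For example, the Leibniz rule for the Poisson bracket comes directly from the standard noncommutative Wick formula in $V$, after noting that the ``correction'' terms all lie in a lower filtration degree by the commutativity assumption; the Jacobi identity similarly drops out of the Borcherds commutator identity $[v_{(m)}, w_{(n)}] = \sum_{j \geq 0} \binom{m}{j} (v_{(j)} w)_{(m+n-j)}$ once one passes to $\Gr \, V$. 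The main bookkeeping obstacle is carefully tracking filtration degrees in these multilinear identities to confirm that each term lands in the expected graded piece; beyond this, the verification is a routine exercise. The same argument applies verbatim to the $\ZZ_{\geq 0}$-graded setting, since the conformal dimension grading is preserved by the filtration and all structure maps, giving a $\ZZ_{\geq 0}$-graded Poisson vertex algebra.
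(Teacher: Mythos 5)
Your proposal is correct and is essentially the standard argument: the paper itself gives no proof of this proposition, deferring entirely to Chapter~16 of \cite{BZF}, and what you have written is precisely that standard construction (define the $n$-th brackets on $\Gr\,V$ by $(\bar v,\bar w)\mapsto \overline{v_{(n)}w}$ for $n\geq 0$, use commutativity of $\Gr\,V$ to see these drop filtration degree and are well defined, and deduce the Poisson vertex algebra axioms by reducing the Borcherds identities modulo the filtration). No gaps; this matches the cited reference's approach.
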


\begin{rmk} 
The associated graded $\Gr\, \CDO_n$ can be thought of as a {\em classical limit} of the vertex algebra $\CDO_n$. 
We can introduce a deformation parameter $\hbar$ by modifying the definition of the vertex operator to
\[
Y(b^j_{-1},z) = \sum_{n < 0} b_n^j z^{-1-n} + \hbar \sum_{n \ge 0} \frac{\partial}{\partial c_{-n}^j} z^{-1-n} 
\]
and 
\[
Y(c_{0}^j,z) = \sum_{n \le 0} c^j_n z^{-n} - \hbar \sum_{n > 0}  \frac{\partial}{\partial b_{-n}^j} z^{-n}.
\]
These formulas define a vertex algebra $\CDO_{n,\hbar}$ over the ring $\CC_\hbar = \CC [ \hbar]$ whose specialization $\hbar = 1$ agrees with $\CDO_n$. 
Moreover, when we specialize $\hbar = 0$ we get the Poisson vertex algebra above. 
It is called the ``classical'' $\beta\gamma$ vertex algebra.
\end{rmk}

For each conformal dimension $N$, we can thus identify the associated graded ${\rm Gr}(\CDO_n^{(N)})$ with a direct sum of symmetric powers of tensor modules on $\CC^n$. Under this identification, the Lie algebra of polynomial vector fields 
\ben
{\rm W}^{\rm poly}_n = \CC[t^1,\ldots,t^n]\{\partial_1,\ldots,\partial_n\}
\een
acts via Lie derivative on ${\rm Gr} \; \CDO_n$. Expressing the Lie algebra of
{\em formal} vector fields as $\Vect = \hO_n \tensor_{\cO_n} {\rm
  W}_n^{\rm poly}$, we find that $\Vect$ acts on the vertex algebra
${\rm Gr} \, \hCDO_n$ via derivations.

\begin{prop} \label{grcdo}
The construction in the preceding paragraph defines the structure of a $(\Vect, \GL_n)$-module on the $\ZZ_{\geq 0}$-graded vertex algebra ${\rm Gr} \; \hCDO_n$ preserving the family of brackets definining the Poisson vertex algebra structure. Moreover, this action is compatible with the $\hO_n$-module structure. 
\end{prop}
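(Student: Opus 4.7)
The plan is to verify each part of the statement in turn, unpacking what the construction described in the preceding paragraph actually assigns to $\Vect$ and $\GL_n$, and then checking the compatibility and derivation conditions at the level of generators.

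First I would make the identification of $\Gr\, \hCDO_n$ with tensor modules fully explicit. Under the isomorphism $\tau : \hO_n \xrightarrow{\sim} \hCDO_n^{(0)}$ of \eqref{poly}, the generator $c_0^j$ corresponds to the formal coordinate $t^j$, and the generator $b_{-1}^i$ naturally corresponds to $\partial_i \in \hT_n$ via the Poisson bracket (since in the associated graded $\{b_{-1}^i, c_0^j\}_{(0)} = \delta_i^j$). More generally, one identifies a monomial $c_M b_L$ of conformal dimension $N$ with an element of a symmetric power of the tensor module built from $\hO_n$ and its derivatives $c_{-m}^j \sim \partial^m t^j / m!$ and $b_{-k}^i \sim \partial^{k-1} \partial_i/(k-1)!$. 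This identifies each $\Gr\, \hCDO_n^{(N)}$ with a specific direct sum of symmetric powers of tensor modules from $\Tens_n$, each of which already carries a canonical $(\Vect,\GL_n)$-action by Lie derivative.

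Next I would verify that $\GL_n$ acts on $\Gr\,\hCDO_n$ by automorphisms of the Poisson vertex algebra structure. Since the $\GL_n$-action \eqref{GLact1}--\eqref{GLact2} on $\CDO_n$ is already an action by vertex algebra automorphisms (it is linear in the generators and preserves the OPE), it passes to $\Gr\,\hCDO_n$ and preserves the Poisson vertex bracket. It also manifestly preserves the $\ZZ_{\geq 0}$-grading and the $\hO_n$-module structure via linear change of coordinates $t^j \mapsto (A^{-1}t)^j$ on $\hO_n$.

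Then I would turn to the $\Vect$-action. The Lie derivative on tensor modules extends uniquely as a derivation of the symmetric algebra (and hence of each $\Gr\,\hCDO_n^{(N)}$), and its action on the generating tensors $\hO_n, \hT_n$, etc., is determined by the formulas one obtains from substituting $c_0^j \leftrightarrow t^j$ and $b_{-1}^i \leftrightarrow \partial_i$, then transporting to higher modes by the translation operator $T$ (which acts as $\partial_{dR}$ on the underlying tensor side). To check that this is a derivation of the Poisson vertex structure, it suffices by the reconstruction-style argument to verify the compatibility on the generating fields $c_0^j(z), b_{-1}^i(z)$; this reduces to the statement that the Lie derivative satisfies the Leibniz rule for the natural pairing $\hT_n \otimes_{\hO_n} \hOmega^1_n \to \hO_n$, which is immediate.

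Finally, the Harish-Chandra compatibility is the assertion that the differential of the $\GL_n$-action equals the restriction of the $\Vect$-action along $\fgl_n \hookrightarrow \Vect$; on the generators $c_0^j$ and $b_{-1}^i$ this follows by direct inspection of \eqref{GLact1}--\eqref{GLact2} and their infinitesimal version $t_j \partial_i \in \fgl_n \subset \Vect$, and extends to all of $\Gr\,\hCDO_n$ because both actions are derivations generated by their values on $c_0^j, b_{-1}^i$. Compatibility with the $\hO_n$-module structure reduces to the Leibniz rule $X(f \cdot v) = X(f) \cdot v + f \cdot X(v)$ for $X \in \Vect$, $f \in \hO_n$, $v \in \Gr\,\hCDO_n$, which holds because the action was built from Lie derivatives. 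The main subtlety, and the step that needs the most care, is making precise the identification of the higher modes $c_{-m}^j$ and $b_{-k}^i$ with derivatives of the generating tensors so that the derivation property of $\Vect$ with respect to all Poisson vertex brackets (not just the $(0)$-bracket) is automatic from its compatibility with $T = \partial_{dR}$; once that identification is secured the remaining verifications are formal.
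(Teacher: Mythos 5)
Your proposal is correct and follows essentially the same route as the paper's (very terse) proof: identify $\Gr\,\hCDO_n$ with the completed tensor product $\widehat{\bigotimes}_{0<k}\cSym_{\hO_n}(\hOmega^1_n)\,\widehat{\otimes}\,\widehat{\bigotimes}_{0\leq l}\cSym_{\hO_n}(\hT_n)$ and let $(\Vect,\GL_n)$ act by Lie derivative and linear change of frame on these tensor modules. You supply the verifications (derivation property for the Poisson vertex brackets, the Harish-Chandra compatibility on generators, the $\hO_n$-Leibniz rule, and the treatment of higher modes via $T$) that the paper leaves implicit, so your write-up is if anything more detailed than the original.
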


\begin{proof} 
The associated graded can be written as
\[
\Gr \; \hCDO_n \cong
\underset{{0\leq k}}{\bigotimes} \,\Sym_{\hO_n}(\hOmega^1_n)\,
\otimes \, \underset{{0<
    l}}{\bigotimes} \,\Sym_{\hO_n}(\hT_n) .
\]
The action by $\Vect$ and $\GL_n$ is by Lie derivative and changes of
linear
frame on the respective tensor
bundles appearing in the large decomposition above. 
\end{proof}

\subsubsection{Conformal structure}\label{sec conformal structure}

The vertex algebra $\CDO_n$, and its completion $\hCDO_n$, has the additional structure of a {\em
  conformal vertex algebra} of central charge equal to twice the dimension
$2n$. This means that $\hCDO_n$ receives a map
from the Virasoro vertex algebra, ${\rm Vir}_{c=2n}$, of central
charge $c = 2n$. The Virasoro vertex algebra is the $\ZZ_{\geq 0}$-graded with underlying vector space ${\bf
  Vir}_c = \CC [L_k, C]$ where $k \leq -2$ and generating field given
by 
\ben
Y(L_{-2}, z) = \sum_{k \in \ZZ} L_k z^{-k-2} .
\een
where the conformal dimension of $L_{-2}$ is $2$. A conformal vector for $\hCDO_n$ is defined by
\ben
L_{-2} := \sum_{i=1}^n b^i_{-1} T c^i_0 = \sum_{i=1}^n b^i_{-1}
c^i_{-1} \in \hCDO_n^{(2)}.
\een

\begin{rmk} 
If $V$ is a conformal vertex algebra and $L_0 \in V$ is the zero Fourier mode of the Virasoro field, 
then for every $v \in V$, one has $L_0 v = N(v) v$ for some $N(v) \in \ZZ$. 
If $N(v) \in \ZZ_{\geq 0}$ for all $v$, 
we see that $L_0$ determines the structure of a $\ZZ_{\geq 0}$-graded vertex algebra on $V$, 
where $V^{(N)}$ denotes the $N$-eigenspace of the $L_0$ operator. 
This fact motivates the use of the term ``conformal dimension'' for a $\ZZ_{\geq 0}$-graded vertex algebra.
\end{rmk}

\subsection{Harish-Chandra structure on $\CDO_n$}\label{sec hc cdo}

As opposed to the classical limit the vertex algebra $\hCDO_n$ is {\em not} a module for the pair $(\Vect, \GL_n)$. The main result of this section is to show that there is an extension of this Harish-Chandra pair that does act on the vertex algebra. 
This is largely based on the work of \cite{MSV} and \cite{GMS}, as
well as \cite{GMS2}, and we summarize their results below.
 
\subsubsection{Extension of vector fields}

On the Lie algebra side, the extension of Lie algebras that acts on
$\hCDO_n$ is precisely the extension $\TVect$ of formal vector fields
by $\hOmega^2_{n,cl}$ defined by the Gelfand-Fucks second component of
the Chern character defined in Section \ref{sec gk descent}. We now recall the
construction in \cite{MSV} that describes how this extension acts. We can state the main result as follows. 

\begin{thm}\label{MSV1} [Section 5.1 of \cite{MSV}]
There is map of Lie algebras
\ben
\rho : \TVect \hookrightarrow {\rm Der}_{\rm VA}(\hCDO_n) 
\een
of the extended Lie algebra $\TVect$ into derivations of vertex algebra of chiral differential operators on the formal $n$-disk. 
In particular, $\hCDO_n$ is a $\TVect$-module.
\end{thm}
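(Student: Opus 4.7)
The plan is to build $\rho$ by first lifting the classical $\Vect$-action from Proposition \ref{grcdo} to a $\CC$-linear map $\tilde\rho : \Vect \to \Der_{\rm VA}(\hCDO_n)$ via normal-ordered field expressions, then to measure its failure to respect Lie brackets, and finally to identify that failure with the Gelfand-Fuks cocycle $\ch_2^{\rm GF}(\hT_n)$ computed in Section \ref{sec gk cc}; this identification is exactly what is needed to promote $\tilde\rho$ to a genuine Lie algebra map out of the extension $\TVect$.

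First, for each $X = f^i(t)\partial_i \in \Vect$, I would define the ``Lie derivative'' field
\[
J_X(z) \;=\; {:}f^i(c(z))\, b^i(z){:}
\]
via normal-ordered products in $\hCDO_n$, where the substitution $t^i \mapsto c^i(z)$ is well defined after completion thanks to \cite{MSV}. Let $\tilde\rho(X) := J_{X,(0)}$ be its zeroth Fourier mode. A direct OPE computation of $J_X(z)$ with the generators $b^j(w)$ and $c^j(w)$ shows that $\tilde\rho(X)$ is a derivation of the vertex algebra (via the Borcherds identity), preserves the filtration introduced before Proposition \ref{grcdo}, and induces the classical Lie-derivative action of $X$ on the associated graded. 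The definition extends $\hO_n$-linearly, and is manifestly $\GL_n$-equivariant.

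Next I would compute the commutator of two such operators. Since $\tilde\rho$ is a Lie algebra map on the classical limit, the anomaly $A(X,Y) := [\tilde\rho(X),\tilde\rho(Y)] - \tilde\rho([X,Y])$ vanishes on $\Gr\,\hCDO_n$, so it must act by operators that drop filtration degree and lie in the ``center'' visible in the OPE. Applying Wick's theorem to the product $J_X(z)J_Y(w)$ produces a double-contraction term whose regularised zeroth mode is a scalar-valued 2-cochain $\omega(X,Y) \in \hOmega^2_{n,cl}$ on $\Vect$. Comparing with the explicit representative
\[
\ch_2^{\rm GF}(\hT_n)(X,Y) = -\d_{dR}(\partial_j f^i) \wedge \d_{dR}(\partial_i g^j)
\]
from the corollary at the end of Section \ref{sec gk cc}, one verifies that $\omega$ coincides with $\ch_2^{\rm GF}(\hT_n)$ on the nose (up to a choice of normal-ordering convention).

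Finally I would assemble the action. The abelian Lie algebra $\hOmega^2_{n,cl}$ acts on $\hCDO_n$ by central vertex algebra derivations $\rho_\Omega : \hOmega^2_{n,cl} \to \Der_{\rm VA}(\hCDO_n)$ built from the natural currents associated to 1-form primitives of exact representatives and, more generally, from the zero-mode of the corresponding $\beta\gamma$ current; this $\rho_\Omega$ is injective and commutes with $\tilde\rho$. Setting $\rho(X + \alpha) := \tilde\rho(X) + \rho_\Omega(\alpha)$, the identity
\[
[\rho(X),\rho(Y)] = \rho([X,Y]) + \rho_\Omega\!\left(\ch_2^{\rm GF}(\hT_n)(X,Y)\right)
\]
reproduces the defining relations of the extension $\TVect$, so $\rho$ is a well-defined Lie algebra homomorphism. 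Injectivity follows because the composition $\TVect \xrightarrow{\rho} \Der_{\rm VA}(\hCDO_n) \to \Der(\Gr\,\hCDO_n)$ already separates the $\Vect$-summand by Proposition \ref{grcdo}, while $\rho_\Omega$ is injective on the $\hOmega^2_{n,cl}$-summand by a direct check. The main obstacle is the cocycle identification in step three: it is a concrete but delicate normal-ordering calculation that must match the tensorial representative of the formal first Pontryagin class coming from the Atiyah class. Once this identification is in hand, the extension $\TVect$ is forced on us and the rest of the theorem follows formally.
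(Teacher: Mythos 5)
Your overall route is the same as the paper's: define $\rho_{\rm W}(X)$ as the zeroth Fourier mode of the dimension-one state $f^i(c)\,b^i_{-1}$ (equivalently of the field ${:}f^i(c(z))b^i(z){:}$), define the action of closed two-forms via the zero mode of $g(c)\,T(c^k_0)$ for a one-form primitive $g\,\d t_k$ (well defined since exact one-forms act by zero), compute the bracket anomaly of $\rho_{\rm W}$, and identify it with the explicit representative of $\ch_2^{\rm GF}(\hT_n)$ from Section \ref{sec gk cc}. Up to that point the proposal matches the paper's construction of $\Bar{\rm W}_n = {\rm Im}(\rho_{\rm W}) \oplus {\rm Im}(\rho_{\Omega^2_{cl}})$.

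There is, however, a genuine error in your assembly step: you assert that $\rho_\Omega$ lands in \emph{central} derivations and \emph{commutes} with $\tilde\rho$. It does not, and it cannot, because $\TVect$ is not a central extension of $\Vect$: it is the extension determined by a cocycle in $\clie^2(\Vect;\hOmega^2_{n,cl})$ for the \emph{nontrivial} $\Vect$-module structure on closed two-forms (already $\GL_n \subset \Vect$ acts nontrivially on $\hOmega^2_{n,cl}$). Correspondingly, on the vertex algebra side one has
\[
[\rho_{\rm W}(X),\,\rho_\Omega(\omega)] \;=\; \rho_\Omega\!\left(L_X\,\omega\right),
\]
which one sees directly from $[a_{(0)},b_{(0)}]=(a_{(0)}b)_{(0)}$ with $a=f(c)b^j_{-1}$ and $b=g(c)Tc^k_0$: the zero mode $a_{(0)}$ implements the Lie derivative on expressions built from the $c$'s, so it sends the current of $\theta=g\,\d t_k$ to the current of $L_X\theta$. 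This equivariance is exactly what makes ${\rm Im}(\rho_{\Omega^2_{cl}})$ an \emph{ideal} (rather than a central subalgebra) of $\Bar{\rm W}_n$ with quotient $\Vect$, and it is one of the relations defining $\TVect$; if $\rho_\Omega$ really commuted with $\tilde\rho$, your map $\rho$ would represent a different (central) extension and would fail to be a Lie algebra homomorphism out of $\TVect$. So you must verify $[\rho(X),\rho_\Omega(\alpha)]=\rho_\Omega(L_X\alpha)$ as a separate computation rather than asserting it vanishes; with that correction the rest of your argument goes through and agrees with the paper.
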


First, we describe how one embeds the vector space of formal vector fields inside of chiral differential operators on $\hD^n$. We have already described how to map a formal power series $f(t_1,\ldots,t_n)$ to an element $f(c_0^{1},\ldots, c_0^{n}) \in \hCDO_n^{(0)}$. This puts the structure of an $\hO_n$-module on $\hCDO_n^{(N)}$ for each $N$. Note that as $\hO_n$-modules we have a splitting $\Vect = \hO_n \tensor \CC \{\partial_1,\ldots,\partial_n\}$. We define 
\[
\begin{array}{cccc}
\tau_{\rm W} : & \Vect &\to& \hCDO_n^{(1)}, \\
& f(t) \partial_j & \mapsto & \tau(f) b_{-1}^j = f(c) b_{-1}^j.
\end{array}
\]
In other words, we substitute $c^i_0$ for $t_i$ in the power series $f$ and replace $\partial_j$ by~$b_{-1}^j$.

The subspace of vectors of conformal dimension one $\hCDO_n^{(1)}$ acts on the vertex algebra through left multiplication by its zero Fourier mode
\ben
(-)_{(0)}(-) : \CDO_n^{(1)} \times \CDO_n \to \CDO_n
\een
In fact, for a fixed $a \in \hCDO_n^{(1)}$ the endomorphism $a_{(0)}$ is a derivation of the vertex algebra. The composite map of taking the zero mode after $\tau_{\rm W}$ thus produces a linear map
\[
\begin{array}{cccc}
\rho_{\rm W} :  &\Vect &\to& {\rm Der}_{{\rm VA}} (\hCDO_n),\\ 
&f(t) \partial_j& \mapsto & (\tau(f) b_{-1}^j)_{(0)}=(f(c) b_{-1}^j)_{(0)} .
\end{array}
\]
Moreover, for any $a \in \hCDO_n^{(1)}$, 
the derivation $a_{(0)} : \hCDO_n \to \hCDO_n$ preserves the $\ZZ$-grading 
and so defines a map $a_{(0)} : \hCDO^{(N)}_n \to \hCDO^{(N)}_n$ for each $N \in \ZZ$. 
A quick calculation verifies that this map is {\em not} a map of Lie algebras. 
This issue is remedied this by introducing an extension of Lie algebras, as we will see shortly.

\subsubsection{}
We introduce the space of $1$-forms $\hOmega^1_n$ on the formal disk. Considered as an abelian Lie algebra this acts on the vertex algebra $\hCDO_n$ as we now describe.

The de Rham differential $\d_{dR} : \hO_n \to \hOmega_n^1$ has an
interpretation in the vertex algebra $\hCDO_n$ as the translation operator $T$ defining the vertex algebra structure. 
Indeed, we define
\[
\begin{array}{cccc}
\tau_{\Omega^1} : &\hOmega_n^1 &\to &\hCDO_n^{(1)},\\
& f(t) \d t_j & \mapsto & \tau(f) T(c_0^j) = f(c) T(c_0^j).
\end{array}
\]
As $\hOmega^1_n$ is abelian, the map $\tau_{\Omega^1}$ automatically determines a Lie algebra representation of $\Omega^1_n$ on $\CDO_n$ via the Lie algebra homomorphism
\[
\begin{array}{cccc}
\rho_{\Omega^1} : &\hOmega_n^1 &\to& {\rm Der}_{{\rm VA}}(\hCDO_n), \\ 
&\omega &\mapsto& \tau_{\Omega^1}(\omega)_{(0)} .
\end{array}
\]
It is clear that the action by an exact one-form is zero, so $\rho_{\Omega^1}$ factors as
\ben
\hOmega_n^1 \to \hOmega_n^1 / \d \hO_n \cong \hOmega^2_{n,cl} \xto{\rho_{\Omega^2_{cl}}}  {\rm Der}_{{\rm VA}}(\hCDO_n),
\een
where we have identified $\hOmega_n^1 / \d \hO_n \cong \hOmega^2_{n,cl}$ via the de Rham differential. 
The map $\rho_{\Omega^2_{cl}}$ is the desired action by closed two-forms. 

We can explicitly describe the action by a closed two-form $\omega$ as follows.
Let $\alpha = \alpha_i (t) \d t^i$ be a one-form such that $\d \alpha =  \omega$.
Then
\ben
\rho_{\Omega^2_{cl}} (\omega) = - \left(\partial_j \alpha_i (c) T(c_0^j) T (c_0^{i}) + \alpha_i(c) T (c_0^{i})^2 \right)_{(1)} .
\een

\subsubsection{}

Consider the linear subspace 
\[
\Bar{\rm W}_n := {\rm Im}(\rho_{\rm W}) \oplus {\rm Im}(\pi_{\Omega^2_{cl}}) \subset {\rm Der}_{{\rm VA}}(\hCDO_n).
\] 
A direct calculation shows that $\Bar{\rm W}_n$ is actually a sub-Lie algebra of the vertex algebra derivations. 
It is immediate that ${\rm Im}(\rho_{\Omega^2_{cl}})$ is an ideal in $\Bar{\rm W}_n$ and the quotient is isomorphic to $\Vect$.
Thus, $\Bar{\rm W}_n$ sits in a short exact sequence
\ben
0 \to \hOmega^2_{n,cl} \to \Bar{\rm W}_n \to \Vect \to 0 .
\een
The 2-cocycle determining this extension is
\ben
\alpha_{MSV}(f^i \partial_i, g^j \partial_j) = - \d_{dR}(\partial_j f^i) \wedge \d_{dR}(\partial_i g^j) .
\een 
This cocycle is precisely the cocycle $\ch_2^{\GF}(\hT_n)$ determining the extension $\TVect$, so that we have $\TVect \cong \Bar{\rm W}_n$. 

\begin{rmk} 
In \cite{MSV} the connection to the Gelfand-Fuks Chern character is not present, 
though our cocycle agrees on the nose with the vertex algebra calculation. 
\end{rmk}

We have thus constructed a map of Lie algebras 
\[
\Tilde{\rho}_{\rm W} = (\rho_{\rm W}, \rho_{\Omega^2_{cl}}) : \TVect \cong \Bar{\rm W}_n \to {\rm Der}_{{\rm VA}}(\hCDO_n),
\]
as desired.

\subsubsection{}

We have already described the action of $\GL_n$ on $\hCDO_n$ in
(\ref{GLact1}) and (\ref{GLact2}). Combining the results highlighted
in the section above we obtain the following. This action is
compatible with the action of $\Vect$ just constructed. In conclusion,
we can summarize the above as follows. 

\begin{prop}
\label{hc str on cdo}
The pair $(\TVect, \GL_n)$ acts on the $\ZZ_{\geq 0}$-graded vertex algebra $\hCDO_n$. Moreover, this action is compatible with the $\hO_n$-module structure. 
\end{prop}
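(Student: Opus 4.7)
The strategy is to assemble the pieces already in hand: Theorem \ref{MSV1} supplies the Lie algebra map $\tilde{\rho}_{\rm W}: \TVect \to \Der_{\rm VA}(\hCDO_n)$, while equations (\ref{GLact1})--(\ref{GLact2}) define a group homomorphism $\rho_{\GL_n}: \GL_n \to \Aut(\hCDO_n)$ (as $\CC$-linear automorphisms, preserving conformal dimension). What remains is to check that $\rho_{\GL_n}$ factors through vertex algebra automorphisms, that it is compatible with $\tilde{\rho}_{\rm W}$ in the sense of Definition \ref{graction}, and that both actions commute appropriately with the $\hO_n$-module structure coming from $\tau: \hO_n \xto{\cong} \hCDO_n^{(0)}$.

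First, I would verify that $\rho_{\GL_n}(A)$ is a vertex algebra automorphism for each $A \in \GL_n$. Since $\hCDO_n$ is generated by the fields associated to $c^j_0$ and $b^j_{-1}$, it suffices by a reconstruction argument (e.g., Theorem 2.3.11 of \cite{BZF}) to check that the prescribed action on these generators is compatible with the OPE. This reduces to the observation that the pairing $\mathbf{b}\cdot \mathbf{c}$ is $\GL_n$-invariant under the dual actions $(A^T)^{-1}$ and $A$, which preserves the singular part of $Y(b^i_{-1},z)Y(c^j_0,w)$. The vacuum and translation operator are fixed structurally, completing this step.

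Second, I would check Harish-Chandra compatibility: the differential $\Lie(\rho_{\GL_n}): \fgl_n \to \Der_{\rm VA}(\hCDO_n)$ must agree with the composite
\[
\fgl_n \hookrightarrow \Vect \hookrightarrow \TVect \xto{\tilde{\rho}_{\rm W}} \Der_{\rm VA}(\hCDO_n).
\]
This is a concrete calculation. For a linear vector field $X = \sum_{i,j} a^i_j t_j \partial_i$, the map $\tau_{\rm W}$ sends $X$ to $\sum_{i,j} a^i_j c^j_0 b^i_{-1}$, whose zero Fourier mode acts on $c^k_0$ by $\sum_j a^k_j c^j_0$ and on $b^k_{-1}$ by $-\sum_i a^i_k b^i_{-1}$; this matches the infinitesimal versions of (\ref{GLact1}) and (\ref{GLact2}). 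Since the action is by derivations and the generators determine everything, agreement on generators gives agreement on all of $\hCDO_n$. The extension cocycle $\ch_2^{\GF}$ is supported on nonlinear vector fields and hence vanishes on $\fgl_n$, so no cocycle term obstructs this compatibility; this also explains why only the Lie algebra, and not the group $\GL_n$, receives an extension.

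The final step is to verify compatibility with the $\hO_n$-module structure on each $\hCDO_n^{(N)}$, in the sense required by the definition of a formal vector bundle. The $\hO_n$-action is multiplication by $\tau(f) = f(\mathbf{c}_0)$ via the $(-1)$-Fourier mode pairing $\hCDO_n^{(0)} \otimes \hCDO_n^{(N)} \to \hCDO_n^{(N)}$. For $X \in \Vect$ and $f \in \hO_n$, the Leibniz identity $X(f \cdot v) = X(f)\cdot v + f\cdot X(v)$ follows because $\tilde{\rho}_{\rm W}(X)$ is a derivation of the vertex algebra and restricts to the standard $\Vect$-action on $\hO_n \cong \hCDO_n^{(0)}$; the analogous check for $A \in \GL_n$ reduces to the identity $A\cdot f(\mathbf{c}_0) = f(A\mathbf{c}_0)$, which is exactly the compatibility we need. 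The main technical wrinkle is keeping track of analytic/formal completions when writing the $(-1)$-mode action of $\hO_n$ on $\hCDO_n^{(N)}$, but this was already handled in the completion argument of Theorem 3.1 of \cite{MSV}, which I would invoke.
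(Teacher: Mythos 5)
Your proposal follows the same route as the paper, which presents this proposition as a summary of the preceding constructions: Theorem \ref{MSV1} supplies the $\TVect$-action by vertex algebra derivations, equations (\ref{GLact1})--(\ref{GLact2}) supply the $\GL_n$-action, and the Harish-Chandra compatibility is asserted by matching, on the generators $c^j_0$ and $b^j_{-1}$, the derivative of the $\GL_n$-action with the restriction of $\Tilde{\rho}_{\rm W}$ to $\fgl_n \subset \Vect \subset \TVect$ (the extension cocycle $\ch_2^{\GF}(\hT_n)$ vanishing on linear vector fields, so no central term intervenes). Your first two steps carry out exactly these checks, in more detail than the paper does, and they are correct.

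The step to repair is the third one. You take the $\hO_n$-module structure on $\hCDO_n^{(N)}$ to be the $(-1)$-Fourier-mode pairing with $\hCDO_n^{(0)} \cong \hO_n$ and deduce the Leibniz rule from the fact that zero modes are vertex algebra derivations. But the paper itself points out (in the remark following the definition of extended Gelfand-Kazhdan vertex algebras) that the $(-1)$-mode pairing is \emph{not} associative, i.e.\ $a_{(-1)}(b_{(-1)}v) \ne (a_{(-1)}b)_{(-1)}v$ in general, so it does not define an $\hO_n$-module structure at all; moreover it differs from naive left multiplication by quantum corrections, e.g.\ $(c^i_0 c^j_0)_{(-1)}b^k_{-1} = c^i_0 c^j_0 b^k_{-1} - \delta^{jk}c^i_{-1} - \delta^{ik}c^j_{-1}$. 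The module structure the proposition refers to --- and the one actually needed to regard each $\hCDO_n^{(N)}$ as a formal vector bundle for descent --- is left multiplication under the $\GL_n$-equivariant identification $\hCDO_n^{(N)} \cong \hO_n \tensor_\CC V^{(N)}$. The Leibniz identity $X(f\cdot v) = X(f)\cdot v + f\cdot(X\cdot v)$ for \emph{that} product does not follow formally from the derivation property of $(f(c)b^j_{-1})_{(0)}$, since left multiplication is not one of the $(n)$-products; it requires a short direct computation on monomials (which does go through, because the action of $\Tilde{\rho}_{\rm W}(X)$ on a monomial in the $b$'s and $c$'s is first order in the $b$-variables and acts on the $c_0$-coefficients by the standard $\Vect$-action on $\hO_n$). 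As written, your step 3 verifies a statement different from the one the proposition asserts.
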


\subsection{Formal automorphisms}

The above construction of the action of the pair $(\TVect, \GL_n)$ on
the vertex algebra of CDOs will be sufficient for our purposes. In
this section we review the main result of \cite{GMS} which constructs
an action of an extension of {\em all} formal automorphsims $\Tilde{\Aut}_n \to
\Aut_n$ on CDOs. This action is compatible with our construction
above.   

\begin{thm} 
\label{GMS1}
Let $\Aut_{VA}(\hCDO_n)$ denote the group of automorphisms of the
vertex algebra $\hCDO_n$. There is a subgroup 
\ben
\Tilde{\rm Aut}_n \hookrightarrow {\rm Aut}_{\rm VA}(\hCDO_n)
\een
that fits in a short exact sequence of groups
\be\label{grpses}
0 \to \Hat{\Omega}^2_{n,cl} \to \Tilde{\rm Aut}_n \to {\rm Aut}_n \to 1 .
\ee
\end{thm}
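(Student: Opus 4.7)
The plan is to construct $\Tilde{\rm Aut}_n$ by exponentiating the Lie algebra action of $(\TVect, \GL_n)$ on $\hCDO_n$ established in Proposition \ref{hc str on cdo}, exploiting the fact that $\Aut_n^+$ is pro-unipotent (so elements of its Lie algebra exponentiate to well-defined group elements) and that $\hOmega^2_{n,cl}$ is abelian (so it exponentiates trivially to itself).

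First I would fix a coordinate splitting $\Aut_n \cong \Aut_n^+ \rtimes \GL_n$ using the standard coordinates on $\hD^n$. The $\GL_n$-factor already acts on $\hCDO_n$ by vertex algebra automorphisms via formulas (\ref{GLact1})--(\ref{GLact2}). For any element $X$ in the Lie algebra of $\Aut_n^+$ (formal vector fields vanishing to second order at the origin), the derivation $\Tilde{\rho}_{\rm W}(X)$ on $\hCDO_n$ preserves the conformal weight decomposition and acts pro-nilpotently on each weight space $\hCDO_n^{(N)}$ with respect to the $\hO_n$-adic filtration, since $X$ strictly raises the order of vanishing. Hence $\exp(\Tilde{\rho}_{\rm W}(X))$ defines a vertex algebra automorphism of $\hCDO_n$, and together with the $\GL_n$-action this produces a set-theoretic section $\sigma: \Aut_n \to {\rm Aut}_{\rm VA}(\hCDO_n)$ by $\sigma(g \cdot \exp X) := \rho_{\GL}(g) \circ \exp(\Tilde{\rho}_{\rm W}(X))$.

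The section $\sigma$ need not be a group homomorphism; its failure is measured by a $2$-cocycle whose infinitesimal version is precisely $\ch_2^{\GF}(\hT_n)$, valued in $\hOmega^2_{n,cl}$. Applying the Baker-Campbell-Hausdorff formula to products of exponentials in $\Tilde{\rho}_{\rm W}(\TVect)$ shows that the correction $\sigma(\phi)\sigma(\psi)\sigma(\phi\psi)^{-1}$ lies in the image of $\rho_{\Omega^2_{cl}}$, because closed $2$-forms form an ideal in $\TVect$ and BCH expresses corrections as iterated brackets. I would then define $\Tilde{\rm Aut}_n$ as the subgroup of ${\rm Aut}_{\rm VA}(\hCDO_n)$ generated by $\sigma(\Aut_n)$ together with $\rho_{\Omega^2_{cl}}(\hOmega^2_{n,cl})$. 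The short exact sequence (\ref{grpses}) then follows: surjectivity of $\Tilde{\rm Aut}_n \to \Aut_n$ is built into the construction of $\sigma$, while injectivity of the inclusion $\hOmega^2_{n,cl} \hookrightarrow \Tilde{\rm Aut}_n$ reduces to injectivity of $\rho_{\Omega^2_{cl}}$ on the abelian kernel, inherited from the Lie algebra computation of Section \ref{sec hc cdo}.

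The main obstacle is controlling the BCH expansion rigorously in this pro-infinite-dimensional setting: one must justify that iterated brackets of exponentials of $\Tilde{\rho}_{\rm W}(X)$-type operators converge on each conformal weight space and land in the abelian image of $\hOmega^2_{n,cl}$, leveraging the pro-nilpotency of $\Aut_n^+$. A secondary technical point is verifying that exponentials of vertex algebra derivations act as vertex algebra automorphisms, which reduces to checking that $\exp(\Tilde{\rho}_{\rm W}(X))$ intertwines the state-field correspondence $Y(-,z)$ --- a formal consequence of the derivation property together with term-by-term convergence on each weight space. An alternative, more hands-on route (closer to the original approach of \cite{GMS}) would be to write down explicit transformation formulas for the generators $c_0^i$ and $b_{-1}^j$ under a formal automorphism $\phi$, observe that the candidate formula for $\phi \cdot b_{-1}^j$ requires a quadratic correction to respect the OPE, and identify the ambiguity in this correction with $\hOmega^2_{n,cl}$ directly.
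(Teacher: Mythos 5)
Your construction is correct in outline, but it takes a genuinely different route from the one the paper follows. The paper (after \cite{GMS}) works ``from above'': it restricts to conformal-dimension-preserving automorphisms, observes that these are determined by their effect on the generators in dimensions $0$ and $1$, singles out the ``triangular'' ones --- an element $\phi_0 \in \Aut_n$ together with an $\hO_n$-linear map $\phi_1 : \hT_n \to \hOmega^1_n$ acting on $\hO_n \oplus (\hOmega^1_n \oplus \hT_n)$ --- and defines $\Tilde{\Aut}_n$ as the subgroup of these that intertwine $Y(-,z)$, citing \cite{GMS} for surjectivity onto $\Aut_n$ and for the identification of the kernel with $\hOmega^2_{n,cl}$. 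Your ``alternative, more hands-on route'' at the end is in fact essentially this argument. Your primary route instead builds the group ``from below'' by exponentiating the infinitesimal action of Proposition \ref{hc str on cdo}, using pro-unipotence of $\Aut_n^+$. What the paper's approach buys is an intrinsic characterization of $\Tilde{\Aut}_n$ as \emph{all} natural vertex algebra automorphisms, not merely some subgroup fitting the sequence; what yours buys is that compatibility with the $(\TVect,\GL_n)$-action (which the paper must verify separately via the proposition comparing $D_1\Tilde{\alpha}_{GMS}$ with $\ch_2^{\GF}(\hT_n)$) is automatic, and the theorem as literally stated only asserts existence, so your construction does suffice for it.

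One step in your justification is wrong as stated, though the conclusion survives. You claim $\Tilde{\rho}_{\rm W}(X)$ acts pro-nilpotently on each $\hCDO_n^{(N)}$ with respect to the $\hO_n$-adic filtration ``since $X$ strictly raises the order of vanishing.'' The classical (Lie-derivative) part of the action does raise adic order when $X \in \fm_n^2\cdot\hT_n$, but the quantum correction terms --- precisely the double-contraction terms responsible for the anomaly, which map the $\hT_n$-summand of $\hCDO_n^{(1)}$ into the $\hOmega^1_n$-summand --- need not. For example, with $n=1$ and $X = t^2\partial_t$, the operator $(c_0^2 b_{-1})_{(0)}$ applied to $b_{-1}$ produces, besides $-2c_0b_{-1}$, a nonzero constant multiple of $Tc_0$, which has adic order $0$. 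So the operator is not adically pro-nilpotent; it is only ``upper triangular'' with respect to the combined filtration by adic order and by $b$-degree. Convergence of $\exp(\Tilde{\rho}_{\rm W}(X))$ and of the BCH corrections must therefore be argued using the fact that the $b$-degree is bounded (by $N$) on each conformal weight space, so that any sufficiently long word in the two pieces of the operator necessarily raises adic order. This repairs the argument, but the filtration you invoke is not the one doing the work.
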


In \cite{GMS}, this subgroup $\Tilde{\rm Aut}_n \subset {\rm Aut}_{{\rm VA}}(\hCDO_n)$ is characterized as the ``natural'' vertex algebra automorphisms.
We will outline their argument and attempt to explain the sense of ``natural'' here.

First, as $\hCDO_n$ is $\ZZ$-graded by conformal dimension, it is reasonable to restrict to dimension-preserving automorphisms,
which will be determined by where they send the generators.
As discussed above, the generators are in dimensions 0 and 1:
the dimension 0 component can be identified with $\hO_n$ --- functions on the formal $n$-disk --- 
and the dimension 1 component with $\hOmega^1 \oplus \hT$ --- one-forms and vector fields on the formal $n$-disk.
We view the dimension 1 component as 2-step filtered, with  $\hOmega^1$ as the submodule.

Before worrying about the vertex algebra structure, let us consider dimension-preserving maps of the space of generators.
This group is $\Aut(\hO_n) \times \Aut(\hOmega^1 \oplus \hT)$.
Following \cite{GMS}, we restrict our attention to an important subgroup.
On the dimension 0 component, they only consider the subgroup $\Aut_n$.
Note that every element of $\Aut_n$ acts on the dimension 1 component, since they are tensor fields,
so there is a natural map $\Aut_n \to \Aut(\hOmega^1 \oplus \hT)$.
On the dimension 1 component, they restrict to automorphisms whose action respects the filtration 
and whose associated graded action on $\Gr\, \hCDO_n$ is simply the action induced by the underlying automorphism on the dimension 0 component.
In other words, such an automorphism $\phi$ is ``triangular'':
it consists of a term $\phi_0 \in \Aut_n$ and of an $\hO_n$-linear map $\phi_1: \hT \to \hOmega^1$,
and 
\[
\phi(f, \omega, X) = (\phi_0 \cdot f, \phi_0 \cdot \omega + \phi_1(X), \phi_0\cdot X) \in \hO_n \oplus \hOmega^1 \oplus \hT.
\]
Let us use $\Aut^{GMS}_n$ to denote this group considered in \cite{GMS}.
The underlying set is isomorphic to the product 
\ben
{\rm Aut}_n \times {\rm Mat}_n(\ccO_n),
\een
by using the natural isomorphism 
\[
\hOmega^1_n \widehat{\otimes}_{\hO_n} \hOmega^1_n \cong {\rm Mat}_n(\ccO_n).
\]
But this group $\Aut^{GMS}_n$ has an interesting group structure because of how $\Aut_n$ acts on the dimension 1 component.
In fact, it has the structure of a semi-direct product ${\rm Aut}_n \ltimes_{pb} (\hOmega^1_n)^{\tensor 2}$, where the pull-back action is as above.

By definition, the group $\Tilde{\rm Aut}_n$ is the subgroup of $\Aut^{GMS}_n$ consisting of vertex algebra automorphisms. 
In other words, we pick out the dimension-preserving automorphisms of generators that intertwine with the vertex operator and so on.

In \cite{GMS} it is shown that the composition 
\[
\widetilde{\Aut}_n \to\Aut^{GMS}_n \to \Aut_n
\]
is surjective and that its kernel is isomorphic to closed 2-forms. That is, one has a map of extensions
\ben
\xymatrix{
\Hat{\Omega}^2_{n,cl} \ar[r] \ar[d] & \widetilde{\Aut}_n \ar[r] \ar[d] & \Aut_n \ar[d] \\
\Hat{\Omega}_n^1 \tensor \Hat{\Omega}^1_n \ar[r] & \Aut^{GMS}_n \ar[r] & {\rm Aut}_n.
}
\een 
This identifies the relevant short exact sequence (\ref{grpses}). 

\subsubsection{An explicit formula for the cocycle} 
\label{sec GMScocycle}

In this section we describe an explicit group 2-cocycle
\ben
\Tilde{\alpha}_{GMS} \in {\rm C}_{\rm Grp}^2(\Aut_n ; \hOmega^2_{n,cl}) .
\een
describing the extension (\ref{grpses}). First, we elaborate on what
we mean by a group 2-cocycle. 

We use the van Est model for smooth group cohomology and denote the cochains by ${\rm C}_{\rm Grp}^*$.
(See Chapter 3 of \cite{Fuks} for more discussion.) 
Given a Lie group $G$ and $M$ a representation,
let ${\rm C}_{\rm Grp}^k(G ; M)$ denote the space of smooth functions $C^\infty(G^k,M)$.
(Typically we have in mind a finite-dimensional representation,
but it is well-defined for any vector space $M$ such that smooth maps $C^\infty(G^k,M)$ is defined.)
The differential $\d_{Grp}$ is defined by
\ben
(\d_{\rm Grp} \alpha)(g_1,\ldots, g_{k+1}) = 
g_1 \alpha(g_2, \ldots, g_{k+1}) + 
\sum_{i = 1}^{k} (-1)^i \alpha(g_1,\ldots,g_{i}g_{i+1}, \ldots, g_{k+1}) +
(-1)^{k+1} \alpha(g_1,\ldots,g_k) .
\een 
When $M$ is itself a cochain complex with differential $\d_M$, 
we naturally obtain a double complex.
Let ${\rm C}_{\rm Grp}^*(G; M)$ denote the associated total complex, combining the differential $\d_{\rm Grp}$ and~$\d_M$. 

Note that  a 2-cocycle $\alpha$ of ${\rm C}_{\rm Grp}^*(G; M)$ determines an extension
\ben
0 \to M \to \Tilde{G}_\alpha \to G \to 0
\een
where the group structure on $\Tilde{G}_\alpha$ is defined by
\[
(g_1, m_1) \cdot (g_2, m_2) = (g_1 g_2, m_1 + g_1m_2 + \alpha(g_1,g_2)),
\]
in the standard way.

We now proceed to write down a formula for the cocycle associated to the extension (\ref{grpses}). 
Much of the argumentation below is implicit in Section 6 of \cite{GMS2} (in the context of the closely related
\v{C}ech approach to CDOs) and also in \cite{GMS}, and we refer the reader to these sources for more details.  

Let $\hOmega^{\geq 2}_n$ denote a truncation of the de Rham complex: it is the total complex of the double complex
\[
\hOmega^{2}_n \xto{\d_{dR}} \hOmega^3_n \xto{\d_{dR}}\cdots \xto{\d_{dR}} \hOmega^n_n.
\]
There is a natural action of $\Aut_n$ on this complex, as Cartan's
formula for the action of vector fields on differential forms
intertwines with the de Rham differential. 

\def\D{{\rm D}}

First, we write down a 2-cocycle $\alpha_{GMS} \in {\rm C}_{\rm Grp}^*(\Aut_n ; \hOmega^{\geq 2}_n)$,
following~\cite{GMS}.
Given an element $f \in \Aut_n$, we will use $\D f$ to denote its Jacobian.
We give an explicit formula for $\alpha_{GMS}$ via a pair of maps $(\alpha_2, \alpha_3)$, where 
\ben
\begin{array}{cccc}
\alpha_2 : & \Aut_n \times \Aut_n & \to & \hOmega^{2}_n \\ 
& (f_1,f_2) & \mapsto&\tr\left((\D f_1)^{-1} \d_{dR} (\D f_1) (\d_{dR}(\D f_2) (\D f_2)^{-1})\right)
\end{array}
\een
and
\ben
\begin{array}{cccc}
\alpha_3 : & \Aut_n & \to & \hOmega^3_n \\ 
& f & \mapsto & \frac{1}{3} \tr\left( ((\D f)^{-1}\d_{dR} \D f)^3\right) .
\end{array}
\een
This cochain is of degree 2 and has no terms of type $\Omega^k$ for $k \geq 4$. 
One
immediately checks that this is a cocycle. That is, 
\begin{align*}
\d_{dR} \alpha_2 (f_1,f_2) & =  (\d_{\rm Grp} \alpha_3) (f_1,f_2),  \\
\d_{dR} \alpha_3 (f_1) & = 0 , \\
(\d_{\rm Grp} \alpha_2)(f_1,f_2,f_3) & = 0 ,
\end{align*}
for all $f_1,f_2,f_3 \in \Aut_n$. 
The last two equations are immediate by computation. 
The first equation follows from the relation
\be\label{PW1}
\alpha_3(f_2 \circ f_1) = \alpha_3(f_1) + f_1^* \alpha_3(f_2) - \d_{dR} \alpha_2(f_1,f_2), 
\ee
which is an instance of the Polyakov-Wiegmann identity. The Jacobian of the composition $f_2
\circ f_1$ is given by $\D(f_2 \circ f_1) = f_1^* (\D f_2) \D f_1$ as matrix-valued formal power series. 
Thus, for instance, we have
\ben
\d_{dR} \left(\D(f_2 \circ f_1)\right) = f_1^*(\d_{dR} \D f_2) \D f_1 + f_1^* \D f_2
\d_{dR} \D f_1 . 
\een 
Let $\D f = Jac(f_2 \circ f_1)$ so that $\alpha_3(f_2 \circ f_1) =
\frac{1}{3} \Tr\left(((\D f)^{-1} \d_{dR} \D f)^3\right)$. Plugging in the formula for the
Jacobian we compute
\bestar
\frac{1}{3} \left((\D f)^{-1} \d_{dR} \D f\right)^3 & = & \frac{1}{3} \left((\D f_1)^{-1} f_1^*((\D f_2)^{-1} \d_{dR}
\D f_2) \D f_1\right)^3 + \frac{1}{3} \left((\D f_1)^{-1} \d_{dR} \D f_1\right)^3 +
\{{\rm cross \; terms}\} .
\eestar
Taking the trace of both sides we see that the first two terms return
the first two terms of Equation (\ref{PW1}). In a similar way, a
direct (albeit tedious) calculation shows that the cross terms agree with $\d_{dR}
\alpha(f_1,f_2)$. 

By the formal Poincar\'{e} lemma we know that the inclusion 
$\hOmega^{2}_{n,cl} \hookrightarrow \hOmega^{\geq 2}_n$ is a quasi-isomorphism.
Moreover, this quasi-isomorphism is clearly $\Aut_n$-equivariant 
so that we have a resulting quasi-isomorphism of complexes 
\ben
{\rm C}_{\rm Grp}^*(\Aut_n ; \hOmega^2_{n,cl}) \to {\rm C}_{\rm Grp}^*(\Aut_n ; \hOmega^{\geq 2}_n) .
\een 
A lift $\Tilde{\alpha}_{GMS} \in \clie^2(\Aut_n ; \hOmega^2_{n,cl})$
of the cocycle $\alpha_{GMS} = (\alpha_2,\alpha_3)$ under this
quasi-isomorphism is a representative for the group extension~(\ref{grpses}). 

We can obtain an explicit formula as follows. 
Since $\d_{dR} \alpha_3(f) = 0$ for all $f$, 
the formal Poincar\'e lemma assures the existence of a map
$\mu : \Aut_n \to \hOmega^2_n$
such that $\d_{dR} \mu  = \alpha_3$. 
We define the 2-cocycle
\ben
\Tilde{\alpha}_{GMS}(f_1,f_2) = \alpha_2(f_1,f_2) + \mu(f_1) +
f_1^*\mu(f_2) - \mu (f_2 \circ f_1) .
\een 
Via the Polyakov-Wiegmann identity (\ref{PW1}), this element is closed and determines a
2-cocycle in ${\rm C}_{\rm Grp}^2(\Aut_n ; \hOmega^2_{n,cl})$. 

\subsubsection{} 

We discuss how the construction of $\Tilde{\Aut}_n$ and its action on $\hCDO_n$
from Proposition \ref{GMS1} is
compatible with the action of $(\TVect,\GL_n)$ on $\hCDO_n$ that we constructed in
Proposition \ref{hc str on cdo}. First, we see that the group cocycle $\alpha_{GMS}$ is
compatible with the cocycle $\ch_2(\hT_n)$ defining $\TVect$. 

Given any Lie group and $G$-representation $M$,
the derivative at the identity of $G$ (and its products $G^k$) determines a cochain map
\ben
D_1 : {\rm C}_{\rm Grp}^*(G ; M) \to \clie^*(\fg ; M),
\een 
where we view $M$ as a $\fg = \Lie(G)$-module on the right hand side. 
Explicitly, given a $k$-cochain $\alpha$ of $G$ we define
\ben
(D_1 \alpha)(x_1,\ldots,x_k) = \frac{\d}{\d t}  \left. \left(
  \alpha(x_1(t), \ldots,x_k(t)\right) \right|_{t= 0}
\een
where $x_i(t)$ is the flow on $G$ determined by $x_i \in \fg$. 

The Lie algebra of formal automorphisms of the $n$-disk is identified
with the subalgebra $\Vectz \subset \Vect$ consisting of formal vector
fields that vanish at the origin. Thus, there is a map of vector
spaces 
\be\label{lie1}
D_1 : {\rm C}_{\rm Grp}^2({\rm Aut}_n; \Hat{\Omega}^2_{n,cl}) \to \clie^2(\Vectz ; \Hat{\Omega}^2_{n,cl}) 
\ee
induced by taking the tangent space at the identity.

\begin{prop} 
The image of $\Tilde{\alpha}_{GMS}$ under the map (\ref{lie1}) is equal to 
the restriction of $\ch_2^{\rm GF}(\hT_n)$ to formal vector fields that vanish at the origin. 
\end{prop}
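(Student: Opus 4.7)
The plan is to compute $D_1(\widetilde{\alpha}_{GMS})$ directly on a pair $X = f^i \partial_i, Y = g^j \partial_j \in \Vectz$ of formal vector fields vanishing at the origin, and match the answer against the explicit representative for $\ch_2^{\rm GF}(\hT_n)$ established in the corollary above. Three pieces of $\widetilde{\alpha}_{GMS}$ will need to be handled: the leading term $\alpha_2$, the primitive term $\mu(f_1) + f_1^*\mu(f_2) - \mu(f_2\circ f_1)$, and the verification that the form-degree-3 component $\alpha_3$ plays no role after projecting onto $\hOmega^2_{n,cl}$.

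First I would address the $\mu$-correction. By construction, $\mu(f_1) + f_1^*\mu(f_2) - \mu(f_2\circ f_1)$ is the value of the group coboundary $\d_{\rm Grp}\mu$ on $(f_1, f_2)$. Since the van Est map is a cochain map, its image under $D_1$ is the Chevalley--Eilenberg coboundary $\d_{\rm CE}(D_1\mu)$, which contributes only trivially in cohomology. Both $\widetilde{\alpha}_{GMS}$ and $\ch_2^{\rm GF}(\hT_n)$ represent extension classes for central extensions by $\hOmega^2_{n,cl}$, and by naturality of van Est with respect to central extensions, the class of $D_1(\widetilde{\alpha}_{GMS})$ classifies the Lie-algebra extension obtained by differentiating $\widetilde{\Aut}_n \to \Aut_n$ at the identity. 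One then identifies this with the class of $\ch_2^{\rm GF}(\hT_n)|_{\Vectz}$ classifying the restriction of $\TVect$ to $\Vectz$. This reduces the proposition to an equality of cohomology classes, which is then pinned down on representatives by the computation below.

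Second I would carry out the key computation at the cocycle level. If $\phi^X_t, \phi^Y_s \in \Aut_n$ are the flows of $X, Y$, their Jacobians satisfy $g^X(t) = I + t\,Df + O(t^2)$ and $g^Y(s) = I + s\,Dg + O(s^2)$, where $Df$ denotes the matrix $(\partial_j f^i)$. Substituting into
\[
\alpha_2(\phi^X_t,\phi^Y_s) = \tr\!\bigl((g^X(t))^{-1} \d_{dR} g^X(t)\cdot \d_{dR} g^Y(s)\cdot (g^Y(s))^{-1}\bigr)
\]
and expanding to bilinear order in $(t,s)$ yields $t s \cdot \tr\bigl(\d_{dR}(Df) \wedge \d_{dR}(Dg)\bigr) + O(t^2 s, t s^2)$. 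Applying the antisymmetrized van Est differentiation together with the identity $\tr(A\wedge B) = -\tr(B\wedge A)$ for matrix-valued 1-forms produces, up to the overall normalization in the definition of $\ch_2^{\rm GF}$, the expression
\[
-\d_{dR}(\partial_j f^i) \wedge \d_{dR}(\partial_i g^j),
\]
which is precisely $\ch_2^{\rm GF}(\hT_n)(X,Y)$. The $\alpha_3$ part of $\widetilde{\alpha}_{GMS}$ takes values in $\hOmega^3_n$ and does not contribute to the 2-form component.

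The main obstacle I anticipate is the bookkeeping of the $\mu$-dependence. Because $\widetilde{\alpha}_{GMS}$ depends on a choice of primitive $\mu$ for $\alpha_3$, one must either (i) show that every choice of $\mu$ yields the same $D_1$-image up to a controlled coboundary and then fix a canonical normalization (for instance one vanishing on constant-Jacobian flows) so that the agreement becomes exact on the nose, or (ii) content oneself with equality of cohomology classes, following directly from van Est naturality. I expect the cleanest exposition combines both: use van Est naturality to identify the extension classes, then match cocycle representatives via the leading-term calculation above.
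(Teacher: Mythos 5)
The paper states this proposition without proof, so there is no argument of record to compare against; your computation is the natural one and is essentially correct. The core step checks out: with $g^X(t) = I + t\,Df + O(t^2)$ one gets $(g^X(t))^{-1}\d_{dR}g^X(t) = t\,\d_{dR}(Df) + O(t^2)$ and likewise for $Y$, so the bilinear term of $\alpha_2$ is $ts\,\tr(\d_{dR}(Df)\wedge\d_{dR}(Dg))$, which after antisymmetrization matches the representative $-\d_{dR}(\partial_j f^i)\wedge\d_{dR}(\partial_i g^j)$ of $\ch_2^{\rm GF}(\hT_n)$ computed earlier in the paper, up to the normalization constants that the paper itself suppresses when identifying $\alpha_{MSV}$ with $\ch_2^{\GF}(\hT_n)$.

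Your anticipated obstacle — the dependence on the primitive $\mu$ — dissolves more cleanly than either of your proposed remedies. The paper fixes $\mu_f = h(\hat\chi^{WZW}(f))$ with $\hat\chi^{WZW}(f) = \tfrac{1}{3}\Tr\bigl((g^{-1}\partial g)^3\bigr)$ and $h$ the coordinate-determined homotopy. Along a flow $\phi^X_t$ one has $g^{-1}\partial g = O(t)$, so $\hat\chi^{WZW}(\phi^X_t) = O(t^3)$ and hence $D_1\mu = 0$ identically; the coboundary $\d_{\rm CE}(D_1\mu)$ vanishes and the cocycle-level equality holds on the nose, with no need to argue only up to cohomology or to renormalize $\mu$. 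Two smaller points: the phrase ``the $\alpha_3$ part of $\widetilde{\alpha}_{GMS}$'' is slightly misleading, since $\widetilde{\alpha}_{GMS}$ is by construction a 2-form-valued cochain and contains $\alpha_3$ only through $\mu$ (the correct statement is the one you implicitly make, namely that only $\alpha_2$ and the $\mu$-coboundary need tracking); and the identification of $\mu(f_1) + f_1^*\mu(f_2) - \mu(f_2\circ f_1)$ with $(\d_{\rm Grp}\mu)(f_1,f_2)$ requires noting that the group law implicit in the Polyakov--Wiegmann identity is $f_1\cdot f_2 = f_2\circ f_1$, which you use correctly but should state.
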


This proposition shows that $(\TVect, \Tilde{\rm Aut}_n)$ is a Harish-Chandra
pair extending the pair $(\Vect, \Aut_n)$. Combined with Theorem
\ref{GMS1} of \cite{GMS} we see that $(\TVect, \Tilde{\Aut}_n)$ acts on the vertex algebra
$\hCDO_n$. This action is compatible with the action of the pair
$(\TVect, \GL_n)$ we have constructed from Proposition \ref{hc str on cdo} in the following way. 

There is a natural map $p : \Tilde{\Aut}_n \to \GL_n$ that takes a formal automorphism together with a closed two-form and
maps it to the linear piece of the 1-jet of the automorphism. This is
clearly equivariant for the action of vector fields so that we have an
induced map of pairs $p : (\TVect, \TAut_n) \to (\TVect, \GL_n)$. 
The choice of a formal coordinate determines a splitting $s : \GL_n \to \Aut_n$ and hence a map of pairs 
$s : (\TVect, \GL_n)\to (\TVect, \TAut_n)$.
The action of $(\TVect, \GL_n)$ on $\hCDO_n$ constructed in
Proposition \ref{hc str on cdo} is the restriction along the map $s$ of the action by
$(\TVect, \TAut_n)$ constructed in~\cite{GMS}. 

\subsection{The conformal structure for the equivariant vertex algebra}

We have already seen that the $\beta\gamma$ vertex algebra is
conformal so that there is a map of vertex algebras $\Phi_{\rm Vir} :
{\rm Vir}_{c=n} \to \hCDO_n$. This map is {\em not} equivariant for
the action of the extended Lie algebra $\TVect$ (where we equip ${\bf
  Vir}_{c=n}$ with a trivial $\TVect$ action). We will see that the
failure for this map to be a map of $\TVect$-modules is measured by a
certain Gelfand-Kazhdan characteristic class. 

The map of vertex algebras $\Phi_{\rm Vir}$ is completely determined
by where it sends the Virasoro generator, which we called $L_{-2} \in
\hCDO_n^{(2)}$. Since ${\rm Vir}_{c=n}$ has the trivial $\TVect$
module structure, we see we see that $\Phi_{\rm Vir}$ is map of $\TVect$-modules if and only if $\Tilde{X} \cdot L_{-2}$ is zero for all $\Tilde{X}$ in $\TVect$. An immediate calculation shows that closed two-forms act on $L_{-2}$ by zero, thus it suffices to look at $X \cdot L_{-2}$ for $X \in \Vect$. 

Given any element $a \in \hCDO_n^{(2)}$ we obtain a linear map given
by the second Fourier mode
\ben
a_{(2)} : \hCDO_n^{(1)} \to \hCDO_n^{(0)}. 
\een 
For $X \in \Vect$, the element $X \cdot L_{-2} \in \hCDO_n^{(2)}$ thus determines a map $(X \cdot L_{-2})_{(2)} : \hCDO_n^{(1)} \to \hCDO_n^{(0)}$. 

Finally, recall that we have described a map of $\Vect$-modules $\tau_{\Omega^1} : \hOmega^1_n \to \hCDO_n^{(1)}$. It's cokernel is identified with $\hT_n$. That is, there is a short exact sequence of $\Vect$-modules
\ben
0 \to \hOmega^1_n \to \hCDO_n^{(1)} \to \hT_n \to 0 .
\een 

\begin{prop}\label{prop c1 conformal} For each $X \in \Vect$ the linear map $(X \cdot L_{-2})_{(2)} : \hCDO_n^{(1)} \to \hO_n$ factors through the quotient $\hT_n$ 
\ben
\xymatrix{
\hCDO_n^{(1)} \ar[d] \ar[r] & \hO_n \\
\hT_n \ar@{.>}[ur]_{\alpha(X)} &  .
}
\een 
and hence determines an $\hO_n$-linear map $\alpha(X) : \hT_n \to \hO_n$ as in the diagram. Moreover, the assignment $X \mapsto \alpha({X})$ defines a cocycle in $\clie^1(\Vect ; \hOmega^1_n)$ and is cohomologous to the Gelfand-Fuks-Chern class $c^\GF_1(\hT_n) \in \clie^1(\Vect ; \hOmega^1_n)$. 
\end{prop}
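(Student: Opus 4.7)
The plan is to evaluate $(X \cdot L_{-2})_{(2)}$ directly on two distinguished classes of generators of $\hCDO_n^{(1)}$ coming from the short exact sequence $0 \to \hOmega^1_n \to \hCDO_n^{(1)} \to \hT_n \to 0$, namely $\tau_{\Omega^1}(g\, \d t_j) = g(c)\, c^j_{-1}$ and $\tau_W(g \partial_j) = g(c)\, b^j_{-1}$. Vanishing on the first class delivers the factorization through $\hT_n$, while the value on the second class produces an explicit formula for $\alpha(X)$ that can then be compared, up to coboundary, with a known representative of $c^\GF_1(\hT_n)$.

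First I would compute $X \cdot L_{-2}$ for $X = f^j \partial_j$. Since $\rho_W(X) = (f^j(c) b^j_{-1})_{(0)}$ acts as a derivation of the vertex product on $L_{-2} = \sum_i b^i_{-1} c^i_{-1}$, the essential input is its action on each generator: $\rho_W(X)\cdot c^i_0 = f^i(c)$, whence $\rho_W(X) \cdot c^i_{-1} = T(f^i(c)) = \sum_k (\partial_k f^i)(c)\, c^k_{-1}$; and a short computation using the OPE $c^k(z) b^j(w) \sim \delta^{kj}/(z-w)$ presents $\rho_W(X) \cdot b^i_{-1}$ as the expected Lie-derivative term $-(\partial_i f^k)(c)\, b^k_{-1}$ plus an anomalous contribution involving second derivatives of $f$ arising from the double contraction inside the normal-ordered product. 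Collecting these yields $X \cdot L_{-2}$ as a finite sum of normal-ordered quadratic monomials whose $(2)$-Fourier modes can then be isolated.

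Second, I would apply $(X \cdot L_{-2})_{(2)}$ to each class of generators. On $g(c)\, c^l_{-1}$, the OPEs of the relevant monomials with $c^l$ have pole order at most one, so the $(2)$-residue vanishes; the same dimensional argument shows the induced map on $\hT_n$ is $\hO_n$-linear, since multiplication by $g(c)$ commutes with the leading singular terms responsible for the residue. On $g(c)\, b^l_{-1}$, only the anomalous second-derivative piece of $X \cdot L_{-2}$ can contribute at order $(z-w)^{-3}$, producing an element of $\hO_n$ of the shape $\lambda\, (\partial_l \partial_j f^j)(c)\, g(c)$ for some nonzero universal constant $\lambda$. Under the isomorphism $\Hom_{\hO_n}(\hT_n, \hO_n) \cong \hOmega^1_n$, this exhibits $\alpha(f^j \partial_j)$ as a scalar multiple of $\d_{dR}(\partial_j f^j) \in \hOmega^1_n$, which agrees with the cocycle representative of $c^\GF_1(\hT_n) = \ch^\GF_1(\hT_n)$ extracted from the trace of the Gelfand-Fuks--Atiyah class in the corollary to Proposition \ref{atiyahprop1}. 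The 1-cocycle identity for $\alpha$ follows at once from $[X,Y] \cdot L_{-2} = X \cdot (Y \cdot L_{-2}) - Y \cdot (X \cdot L_{-2})$ after taking $(2)$-modes and using $\CC$-linearity of the Fourier mode map.

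The main obstacle is the OPE bookkeeping in the first step: one must verify that the classical Lie-derivative pieces of $X \cdot L_{-2}$ contribute only to OPE singularities of order strictly lower than what the $(2)$-residue detects, while the anomalous second-derivative pieces contribute exactly one nonzero $(z-w)^{-3}$ coefficient. The calculation is finite but requires careful tracking of normal-ordering corrections and sign conventions; up to the precise value of the constant $\lambda$, the result is essentially forced by the universal nature of the obstruction, since a nonzero class in $\clie^1(\Vect; \hOmega^1_n)$ supported in the correct degree is unique up to scaling by the structure of Gelfand-Fuks cohomology.
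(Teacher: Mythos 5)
Your strategy is essentially the one the paper follows: act with $\rho_{\rm W}(X)$ on $L_{-2} = \sum_i b^i_{-1} c^i_{-1}$, evaluate the $(2)$-mode on generators of $\hCDO_n^{(1)}$, show vanishing on the image of $\tau_{\Omega^1}$ to get the factorization through $\hT_n$, and read off $\alpha(X)$ from the value on the $b^j_{-1}$. The only presentational differences are (i) the paper proves the factorization by a short formal lemma --- $(X\cdot L_{-2})_{(2)}(Tc) = L_{(2)}(T(X\cdot c))$ together with $L_{(2)}c = 0$ for dimension reasons and $L_{(1)} = T$ on dimension-zero vectors --- rather than your pole-order count, and (ii) the paper carries the Borcherds-identity computation to completion, obtaining $X\cdot L_{-2} = -(\partial_k f_i)(c)\,Tc_0^k\, b^i_{-1}$ and then $(X\cdot L_{-2})_{(2)}(b^j_{-1}) = -\partial_j\partial_k f_k$, which is literally $c_1^{\GF}(\hT_n)(X) = \d_{dR}(\partial_i f_i)$ evaluated on $\partial_j$, so the identification holds on the nose and not merely up to coboundary.

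The genuine gap is your treatment of the normalization. You leave the value of $(X\cdot L_{-2})_{(2)}$ on $g(c)b^l_{-1}$ as $\lambda\,(\partial_l\partial_j f^j)(c)\,g(c)$ for an undetermined nonzero constant $\lambda$ and then argue that the answer is ``forced'' because a nonzero class in $\clie^1(\Vect;\hOmega^1_n)$ of the right weight is unique up to scaling. That argument cannot close the proof: the proposition asserts that $\alpha$ is \emph{cohomologous to} $c_1^{\GF}(\hT_n)$, and since $c_1^{\GF}(\hT_n)$ is a nontrivial class (it descends to the ordinary first Chern class of $T_X$, which is nonzero for, e.g., projective space), the classes $[\lambda\, c_1^{\GF}]$ and $[c_1^{\GF}]$ coincide only when $\lambda = 1$. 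Uniqueness up to scale therefore gives $[\alpha] = \lambda[c_1^{\GF}]$ but says nothing about $\lambda$ itself; you must actually finish the OPE/Borcherds computation you sketch and verify that the coefficient is $1$. (The same computation, once done, also renders the uniqueness appeal unnecessary.)
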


\begin{proof}
The fact that $(X \cdot L_{-2})_{(2)}$ factors through $\hT_n$ follows from the following short calculation. 
\begin{lemma} For any $c \in \hCDO_n^{(0)}$ we have $(L_{-2})_{(2)} (T c)$. Similarly, for $X \in \Vect$ one has $(X \cdot L_{-2})_{(2)}(T c) = 0$. 
\end{lemma}
\begin{proof} Set $L = L_{-2}$. Since $T$ is a derivation we have $T(L_{(2)}c) = L_{(2)}(T a) + (T L)_{(2)} a$. Thus $L_{(2)}(T c) = T(L_{(2)} c) - (T L)_{(2)} c$. For conformal dimension reasons we have $L_{(2)} c = 0$, thus $L_{(2)}(Tc) = -(TL)_{(2)} c = 2 L_{(1)} c$, again since $T$ is a derivation. The element $L$ is a Virasoro vector, thus $L_{(1)} = T$, so that $L_{(1)} c = Tc = 0$, since $c$ is of degree zero. Similarly, for $X \in \Vect$, we have $(X \cdot L)_{(2)} (Tc) = L_{(2)}(X \cdot Tc) = L_{(2)}(T X \cdot c))$ as $X$ is a derivation of the vertex algebra.
\end{proof}

We thus obtain a linear map $\alpha : \Vect \to \hOmega^1_n$. We verify that this is equal to $c_1^{\rm GF}(\hT_n)$. The formula for this Chern class is given by
\ben
c_1^{\rm GF}(\hT_n) (X) = \d_{dR}(\partial_i f_i)
\een
where $X = f_i \partial_i \in \Vect$. We utilize the following Borcherds identity for how Fourier modes compose
\ben
(a_{(l)} b)_{(m)} c = \sum_j (-1)^j \begin{pmatrix} l \\ j \end{pmatrix} \left(a_{(l-j)} b_{(m+j)} c - (-1)^l b_{(l+m-j)} a_{(j)} c \right) .
\een
First, we simplify $X \cdot L_{-2} = (f_i(c) b_{-1}^i)_{(0)} (b^k_{-1} T c_0^k)$. Since $x_{(0)} (T c_0^k) = 0$ for any $x$, we see 
\ben
X \cdot L_{-2} = \left((f_i (c) b_{-1}^j)_{(0)} (b_{-1}^k) \right) T c_0^k .
\een
By the Borcherd's formula this simplifies to $(- (b_{-1}^k)_{(0)} (f_i(c))  b_{-1}^j) T c_0^k = - (\partial_k f_i)(c)T c_0^k  b_{-1}^i$. We compute the value of $(X \cdot L_{-2})_{(2)}$ on the generators $b_{-1}^j$. There is only one term in the Borcherd's expansion and it is of the form
\begin{align*}
(X \cdot L_{-2})_{(2)} (b_{-1}^j) & = \left(b_{-1}^i\right)_{(1)} \left( (\partial_k f_i (c) T c_0^k)_{(0)} b_{-1}^j \right) \\ & = - (b_{-1}^i)_{(1)} \left((b_{-1}^j)_{(0)} (\partial_k f_i (c)) T c_0^k \right) \\ & = - \delta^{ki} \partial_j \partial_k f_i .
\end{align*}
Thus $\alpha(X) = c_1^{\GF}(\hT_n)(X)$ and the proof is complete. 
\end{proof}

\subsection{The character of a graded vertex algebra} \label{sec vert character}

In this section we define and compute the ``local character" of the vertex algebra $\hCDO_n$. It will globalize, under Gelfand-Kazhdan descent, to the character of the sheaf of chiral differential operators on a complex manifold $X$. 

\begin{dfn} Let $V$ be a $\ZZ_{\geq 0}$-graded vertex algebra.
The {\em graded character} of $V$ is the following $q$-expansion
\be\label{char1}
\chi (V) := \sum_{N} q^{N} \left(\dim V^{(N)}\right) \in \CC[[q]] .
\ee
\end{dfn}

\begin{rmk} 
When $V$ is a {\em conformal} vertex algebra, there is a slight variant of the graded character that involves the central charge $c$ of $V$. 
If $L_0$ is the zero mode of the Virasoro vector in the conformal vertex algebra, 
the character is defined by ${\rm char}(V) := {\rm Tr}_{V} q^{L_0 - c/24}$. 
The relationship to the graded character we defined in Equation~(\ref{char1}) is given by $q^{-c/24} \chi(V) =  {\rm char}(V) \in q^{-c/24} \CC[[q]]$. 
The reason for this extra factor of $q$ is that $\chi(V)$ has nicer modular properties. 
For more about this modularity, and motivation for the the definition of the character, see~\cite{Zhu}. 
\end{rmk}

We wish to define the graded character of a vertex algebra with an action of a Harish-Chandra pair $(\fg, K)$. 
Suppose $\fg$ acts on a  $\ZZ_{\geq 0}$-graded vertex algebra $V$ by grading-preserving derivations. 
Then, each weight space $V^{(N)}$ is a module for $\fg$. 
The character of the vertex algebra will be a $q$-expansion of equivariant characters of the individual spaces of fixed conformal dimension $V^{(N)}$. 
Thus, it suffices to define what we mean by the character of a $(\fg,K)$-module (in vector spaces). 

For simplicity we work just with the Lie algebra $\fg$. The generalization to a module for the pair $(\fg, K)$ is a small extension of this. For any $\fg$-module $W$, with action $\rho : \fg \to \End(W)$, its Chern character is given by $\ch^\fg(W) = {\rm Tr}\left(\exp(\rho(X)) \right) \in \Sym(\fg^\vee)$. 
Since the trace is conjugation invariant the character determines an element in the Hochschild homology of the algebra $\clie^*(\fg)$:
\[
{\rm ch}^\fg(W) \in {\rm HH}_0\left( \clie^*(\fg) \right) \cong
\cSym (\fg^\vee)^\fg .
\]
There is a way to express this character at the cochain level. For this, it is useful to have an interpretation of the character in terms of Lie algebra cohomology, which will coincide with the Gelfand-Fuks-Chern characters in the case of $(\fg, K) = (\Vect, \GL_n)$. 

Let ${\rm Hoch}_*(-)$ denote the complex of Hochschild chains, computing Hochschild homology. The Hochschild-Rosenberg-Kostant theorem for the commutative ring $R$ posits a quasi-isomorphism of cochain complexes 
\ben
{\rm Hoch}_*\left(R\right) \simeq \Omega^{-*}_{R}
\een
where $\Omega^{-*}_{R}$ is the regraded de Rham complex of the commutative ring $R$. In the case that $R = \clie^*(\fg)$ this quasi-isomorphism takes the form
\ben
{\rm Hoch}_*\left(\clie^*(\fg)\right) \simeq \clie^*\left(\fg ; \bigoplus_{k \geq 0} \Sym^k (\fg^\vee)[k] \right) . 
\een
The definition of the Atiyah class of a $\fg$-module $W$ can be found in \cite{GG1}. This class is an element $\At^\fg(W) \in \Omega^1_{B \fg} \tensor \End(W)$ gives a Chern-Weil description in Lie algebra cohomology of the Chern character above: 
\ben
\ch^\fg(W) = {\rm Tr}\left(\exp \left(\frac{1}{2\pi i} \At^{\fg}(W) \right) \right) .
\een 
We will encounter the Atiyah class later, in Part II. Other characteristic classes also admit a description in terms of this Atiyah class. For instance, the Todd class of the $\fg$-module $W$ is defined to be the determinant of a certain formal series involving the Atiyah class: 
\ben
{\rm Td}^\fg (W) = {\rm det} \left(\frac{1 - e^{- {\rm At}(W)}}{{\rm At}(W)} \right) .
\een 

The {\em Euler class} of the $\fg$-module $W$ is defined to be
\ben
\chi^{\fg} (W) := {\rm Td}^\fg(\fg[1]) \cdot \ch^\fg(W) \in \clie^*\left(\fg ; \bigoplus_{k \geq 0} \Sym^k (\fg^\vee)[k] \right) .
\een
If $W$ is a module for the Harish-Chandra pair $(\fg,K)$ the same construction defines the Euler class in relative Lie algebra cochains $\chi^{(\fg,K)}(W) \in \clie^*\left(\fg,K ; \bigoplus_{k \geq 0} \Sym^k (\fg^\vee)[k]\right)$. 

We now return to the case of a vertex algebra. 

\begin{dfn} 
If a pair $(\fg,K)$ acts on a $\ZZ_{\geq 0}$-graded vertex algebra $V$, 
the $(\fg,K)$-{\em equivariant graded character} of $V$ is the $q$-expansion
\ben
{\rm char}^{(\fg,K)}(V) := \sum_{N \geq 0} q^N \chi^{(\fg,K)} (V^{(N)})
\een 
in~$\clie^*\left(\fg,K ; \bigoplus_{k \geq 0} \Sym^k (\fg^\vee)[k]\right)[[q]]$.
\end{dfn}

\subsubsection{} 

We now turn to computing the character for the main example, 
the $(\TVect,\GL_n)$-equivariant vertex algebra $\hCDO_n$. 

There is one subtlety: the conformal weight spaces of $\hCDO_n$ (and $\CDO_n$) are not finite dimensional. 
They are, however, finite rank over the ring $\hO_n$, 
and so, whenever we count dimensions or take duals, we will do so in a $\hO_n$-linear way.

With this modification, the equivariant graded character of $\hCDO_n$ as a module for $(\TVect, \GL_n)$ will be an element 
\[
\chi^{(\Vect, \GL_n)} (\hCDO_n) \in \clie^*(\TVect , \GL_n ; \hOmega^{-*}_n)[[q]]. 
\]
Here, $\hOmega^{-*}_n$ is the regraded de Rham complex $\bigoplus_{k \geq 0} \hOmega^k_n [k]$ on the formal disk.  

Something important happens here: the tilde has vanished on $\Vect$ so that the character is the image of an element from~$\clie^*(\Vect , \GL_n ; \hOmega^{-*}_n)$.
To justify this location for the character, we use the following argument.

Let $p : \Tilde{\fg} \to \fg$ be a morphism of Lie algebras, and let $\mathfrak{k}$ be its kernel. 
We say that a finite-dimensional $\Tilde{\fg}$-module $V$ is {\em off-diagonal} for $p$ if there is a filtration 
\ben
0 = F^{-1} V \subset F^0 V \subset F^1 V \subset \cdots \subset F^N V = V
\een
such that for all $i$, $\mathfrak{k} \cdot F^i V \subset F^{j} V$ for some $j < i$. 
There is an elementary fact about traces of such modules.

\begin{lemma}
If $V$ is an off-diagonal module for the Lie algebra map $p : \Tilde{\fg} \to \fg$ and $V$ has finite dimension, 
then ${\rm tr}(\exp(x)) = \tr(\exp(p(x)))$ for all $x \in \Tilde{\fg}$. 
\end{lemma}

To see this, choose a filtration for $V$ exhibiting the off-diagonal action, and a pick a basis for $V$ compatible with this filtration. 
In terms of this basis, each element of $\frak{k}$ acts by a matrix that is strictly upper triangular (i.e., off-diagonally), and hence does not contribute to the trace. 

Similarly, we have the following.

\begin{lemma}\label{lem: offdiag}
Let $(\fg, K)$ and $(\Tilde{\fg}, K)$ be Harish-Chandra pairs with $(p, \id) : (\fg, K) \to (\Tilde{\fg}, K)$ a morphism of pairs. 
If $V$ is a finite-dimensional off-diagonal $(\Tilde{\fg},K)$-module,
then $\ch^\fg(V)$ is in the image of the map
\[
p^* : \clie^*\left(\fg,K ; \bigoplus_{k \geq 0} \Sym^k (\fg^\vee)[k]\right) \to  \clie^*\left(\Tilde{\fg},K ; \bigoplus_{k \geq 0} \Sym^k (\fg^\vee)[k]\right) .
\]
\end{lemma}

In our case, we take the map of Lie algebras $p : \TVect \to \Vect$ and the $\TVect$-module $\hCDO_n$.
Note that we will exhibit a filtration of infinite length.
 
Consider the basis of $\hCDO_n$ given by products of elements $c^i_m, b^j_l$. 
Define the filtered subspace $F^q \hCDO_n$ to be the subspace spanned by elements of the form
\ben
c_{m_1}^{i_1} \cdots c_{m_k}^{i_k} b_{l_1}^{j_1} \cdots b_{l_q}^{j_q} .
\een 
It is a quick computation to verify that the action of $\ker(p)=\hOmega^2_{n,cl}$ on the CDOs is off-diagonal for this filtration.

A slight modification of Lemma \ref{lem: offdiag}, which applies to the $q$-graded situation, implies the following. 

\begin{cor}\label{cor formal char cdo} 
The $(\TVect, \GL_n)$-equivariant graded character of the vertex algebra $\hCDO_n$ is the image of an element 
\ben
\chi^{(\Vect, \GL_n)}(\hCDO_n) \in \clie^*(\Vect , \GL_n ; \hOmega^{-*}_n) [[q]]
\een
along the pull-back $\clie^*(\Vect , \GL_n ; \hOmega^{-*}_n) \to \clie^*(\TVect , \GL_n ; \hOmega^{-*}_n).$
\end{cor}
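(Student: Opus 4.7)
The plan is to apply the preceding off-diagonal lemma to each fixed conformal-weight subspace of $\hCDO_n$ separately and then assemble the resulting identities into the desired $q$-expansion statement. The structural observation, already recorded just above, is that the filtration $F^\bullet \hCDO_n$ by the number of $b$-factors makes $\hCDO_n$ an off-diagonal module for $p:\TVect \to \Vect$, since the action of $\hOmega^2_{n,cl}$ strictly decreases the $b$-degree.

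First, I would verify that each $\hCDO_n^{(N)}$ inherits a bounded filtration from $F^\bullet\hCDO_n$. This is immediate from the explicit monomial basis $c_M b_L$ with $|L|+|M|=-N$: the number of $b$-factors in such a monomial is bounded by $N$, so $F^q\hCDO_n^{(N)} = \hCDO_n^{(N)}$ for $q \geq N$, and the lemma applies to give
\[
\tr_{\hCDO_n^{(N)}}\bigl(\exp(x)\bigr) \;=\; \tr_{\hCDO_n^{(N)}}\bigl(\exp(p(x))\bigr)
\]
for every $x \in \TVect$. Hence the ordinary $\TVect$-equivariant Chern character $\ch^{\TVect}(\hCDO_n^{(N)})$ is the image, under the pull-back $\clie^*(\Vect,\GL_n;\hOmega^{-*}_n) \to \clie^*(\TVect,\GL_n;\hOmega^{-*}_n)$, of a canonical element built from the induced $\Vect$-action on the associated graded $\Gr\,\hCDO_n^{(N)}$ (which is where the off-diagonal trace computation naturally lives).

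Next, I would extend the factorization to the full Euler class $\chi^{(\TVect,\GL_n)}(\hCDO_n^{(N)}) = \Td^{\TVect}(\TVect[1])\cdot \ch^{\TVect}(\hCDO_n^{(N)})$. The Chern character factor is handled by the previous paragraph, so the only new point is the Todd factor. For this I observe that the adjoint representation of $\TVect$ on $\TVect[1]$ is itself off-diagonal for $p$: the two-step filtration $0 \subset \hOmega^2_{n,cl} \subset \TVect$ is $\TVect$-stable, and since $\hOmega^2_{n,cl}$ is an abelian ideal of $\TVect$, its adjoint action lands inside $\hOmega^2_{n,cl}$, i.e., strictly lowers the filtration. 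Applying the same trace lemma, $\Td^{\TVect}(\TVect[1])$ is the pull-back of $\Td^{\Vect}(\Vect[1])$ (formed from the adjoint representation of $\Vect$ on $\TVect/\hOmega^2_{n,cl}\cong\Vect$). Since this Todd factor is independent of $N$, it comes out of the $q$-expansion as a universal prefactor.

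Finally, I would assemble the per-weight identities: summing $q^N\chi^{(\TVect,\GL_n)}(\hCDO_n^{(N)})$ over $N \geq 0$ yields the image under pull-back of the element
\[
\chi^{(\Vect,\GL_n)}(\hCDO_n) \;:=\; \Td^{\Vect}(\Vect[1])\cdot \sum_{N \geq 0} q^N\, \ch^{\Vect}\bigl(\hCDO_n^{(N)}\bigr) \;\in\; \clie^*(\Vect,\GL_n;\hOmega^{-*}_n)[[q]],
\]
which is exactly the desired lift. The main conceptual obstacle is the second step — one must ensure the Chern--Weil machinery (Atiyah class, Todd class) genuinely only feels traces, so the lemma suffices — but because every ingredient in $\chi^{(\fg,K)}$ is ultimately extracted from $\tr\circ\exp$ of a representation, off-diagonality of both $\hCDO_n^{(N)}$ and of $\TVect$ itself is enough.
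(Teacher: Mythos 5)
Your proof is correct and is essentially the paper's own argument: the paper establishes this corollary precisely by introducing the filtration $F^q\hCDO_n$ by the number of $b$-factors, observing that $\hOmega^2_{n,cl}\cdot F^{q}\hCDO_n \subset F^{q-1}\hCDO_n$ so that $\hCDO_n$ is off-diagonal for $p$, and invoking the preceding trace lemma (your boundedness check on each $\hCDO_n^{(N)}$ is a worthwhile detail the paper leaves implicit). Your treatment of the Todd factor goes beyond what the paper records (the paper silently takes $\Td^{\GF}$ to be defined over $\Vect$ from the start); the one imprecision is that the associated graded of the adjoint representation is $\Vect[1]\oplus\hOmega^2_{n,cl}[1]$, so the off-diagonal lemma yields $\Td^{\TVect}(\TVect[1]) = p^*\bigl(\Td^{\Vect}(\Vect[1])\cdot\Td^{\Vect}(\hOmega^2_{n,cl}[1])\bigr)$ rather than $p^*\Td^{\Vect}(\Vect[1])$ alone --- still a pullback along $p$, so the corollary as stated is unaffected.
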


\def\Td{{\rm Td}}

For the pair $(\fg,K) = (\Vect, \GL_n)$ the Chern character in the previous section coincides with the Gelfand-Fuks-Chern character $\ch^{\GF}(W)$ for any formal vector bundle $W$. 
Set $\Td^{\GF} := \Td^{(\Vect, \GL_n)}$. 

\begin{prop} \label{prop local character} 
The $(\Vect, \GL_n)$-equivariant graded character of $\hCDO_n$ is given by
\ben
\chi^{(\Vect, \GL_n)}(\hCDO_n) = \Td^{\GF} \cdot \ch^{\rm GF} \left(\bigotimes_{l \geq 1} \Sym_{q^l} (\hOmega^1_n \oplus \hT_n) \right)
\een
as an cocycle in $\clie^*\left(\Vect , \GL_n ; \hOmega^{-*}_n\right)[[q]]$.
\end{prop}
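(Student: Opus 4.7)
The plan is to reduce the calculation to the associated graded $\Gr\, \hCDO_n$ and then compute its Chern character by multiplicativity. By the off-diagonal lemma just preceding Corollary \ref{cor formal char cdo}, the action of $\hOmega^2_{n,cl}\subset\TVect$ lowers the filtration on $\hCDO_n$, so it does not contribute to the trace of any element of $\TVect$. Consequently, the $(\TVect,\GL_n)$-equivariant character of $\hCDO_n$ is pulled back along $\TVect\to\Vect$ from a well-defined element $\chi^{(\Vect,\GL_n)}(\hCDO_n)$, and this element coincides with $\chi^{(\Vect,\GL_n)}(\Gr\, \hCDO_n)$, where $\Gr\, \hCDO_n$ genuinely carries a $(\Vect,\GL_n)$-module structure as established in Proposition \ref{grcdo}. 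It therefore suffices to compute the Chern character of $\Gr\, \hCDO_n$ and multiply by the universal Todd factor.

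Next I would unpack the description of $\Gr\, \hCDO_n$ recorded in the proof of Proposition \ref{grcdo}. The degree-$0$ generators $c^j_0$ assemble into $\hO_n$; the remaining generators $c^j_{-l}$ (for $l\ge 1$) contribute a copy of $\hT_n^* \cong \hOmega^1_n$ at conformal dimension $l$, while the generators $b^j_{-l}$ (for $l\ge 1$) contribute a copy of $\hT_n$ at conformal dimension $l$. Tracking the conformal grading with a formal variable $q$ yields a $(\Vect,\GL_n)$-module isomorphism
\[
\Gr\, \hCDO_n \;\cong\; \hO_n \;\widehat{\otimes}\; \bigotimes_{l\ge 1} \Sym_{\hO_n,\, q^l}\!\left(\hOmega^1_n \oplus \hT_n\right),
\]
where $\Sym_{\hO_n,q^l}$ denotes the (completed) symmetric algebra over $\hO_n$ with each generator weighted by $q^l$.

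Now I would invoke the two standard properties of the Gelfand-Fuks Chern character: multiplicativity on tensor products of formal vector bundles, and the vanishing $\ch^{\GF}(\hO_n)=1$ (because the Atiyah sequence of $\hO_n$ splits tautologically, so $\At^{\GF}(\hO_n)=0$). Applying multiplicativity to the displayed decomposition and discarding the trivial $\hO_n$ factor gives
\[
\ch^{\GF}(\Gr\, \hCDO_n) \;=\; \prod_{l\ge 1} \ch^{\GF}\!\left(\Sym_{q^l}(\hOmega^1_n \oplus \hT_n)\right) \;=\; \ch\!\left(\bigotimes_{l\ge 1} \Sym_{q^l}(\hOmega^1_n \oplus \hT_n)\right).
\]
Finally, since $\chi^{(\Vect,\GL_n)}(W) = \Td^{\GF}\cdot \ch^{\GF}(W)$ with $\Td^{\GF} := \Td^{(\Vect,\GL_n)}(\Vect[1])$ a fixed class independent of $W$, the factor $\Td^{\GF}$ factors out of the $q$-series $\sum_N q^N\chi^{(\Vect,\GL_n)}(\Gr\, \hCDO_n^{(N)})$, yielding the claimed formula.

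The main technical point to be careful about is bookkeeping in the infinite-rank setting: each fixed conformal dimension $\Gr\, \hCDO_n^{(N)}$ is a finite-rank free $\hO_n$-module, so the Chern character is defined level-by-level, but the formula above packages all levels into a completed tensor product, and one must verify that multiplicativity of $\ch$ commutes with this completion (which it does because the multiplicativity is already a level-wise identity after expanding the $q$-power series). Off-diagonality and the identification of $\Gr\, \hCDO_n$ have both been set up earlier in the paper, so no new input is needed beyond assembling these ingredients.
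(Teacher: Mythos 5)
Your proposal is correct and follows essentially the same route as the paper: reduce to the associated graded via the off-diagonal property (Corollary \ref{cor formal char cdo}), use the decomposition $\Gr\, \hCDO_n \cong \widehat{\bigotimes}_{k}\cSym_{\hO_n}(\hOmega^1_n)\,\widehat{\otimes}\,\widehat{\bigotimes}_{l}\cSym_{\hO_n}(\hT_n)$ from Proposition \ref{grcdo}, and assemble the $q$-graded Chern characters multiplicatively with the Todd factor pulled out. You spell out the multiplicativity of $\ch^{\GF}$ and the triviality of $\ch^{\GF}(\hO_n)$ more explicitly than the paper does, but no new ideas are involved.
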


\begin{rmk}
Here we use the notation 
\[
\Sym_{q^l}(V) = \bigoplus_{k \geq 0} q^{kl} \Sym^k(V),
\] 
so that
\[
\ch\left(\Sym_{q^l}(V)\right) = \sum_{k \geq 0} q^{kl} \ch\left(\Sym^k(V)\right).
\]
\end{rmk}

\begin{proof}
The conformal dimension zero subspace of $\hCDO_n$ is identified with $\hO_n$ and the conformal dimension one subspace is identified with $\hOmega^1_n \oplus \hT_n$ (all as $\TVect$-modules). The full associated graded of CDOs is given by
\[
\Gr \; \hCDO_n \cong
\underset{{0\leq k}}{\bigotimes} \,\Sym_{\hO_n}(\hOmega^1_n)\,
\otimes \, \underset{{0<
    l}}{\bigotimes} \,\Sym_{\hO_n}(\hT_n) .
\]
Putting this all together we find
\begin{align*}
{\rm char}^{(\TVect, \GL_n)} (\hCDO_n) & = \sum_{N \geq 0} q^N {\rm
                                         ch}^{\GF}\left(\hCDO_n^{(N)}\right)
  \\ & = \ch^{\rm GF} \left(\bigotimes_{N \geq 1} \Sym_{q^N} (\hT_n \oplus
       \hOmega^1_n) \right) 
\end{align*}
as desired.
\end{proof}

\section{Extended Gelfand-Kazhdan descent} \label{sec ext desc}

Our construction of descent in Section \ref{sec gk descent} uses the Harish-Chandra pair $(\Vect, \GL_n)$. 
We have seen, however, that this pair is not appropriate 
if we wish to describe descent for the vertex algebra of chiral differential operators. 
In this section we develop the theory of descent for the pair $(\TVect, \GL_n)$, 
which does act on the vertex algebra, as we saw in the preceding section. 

\subsection{The extended bundle}
The central object in the construction of Gelfand-Kazhdan descent is the coordinated bundle $X^{coor}$. 
This space is a principal bundle for the group of formal automorphisms. 
Using a Gelfand-Kazhdan structure, we obtain from $X^{coor}$ a $\Vect$-valued flat connection on the frame bundle $\Fr_X$. 
In this section, we construct and classify lifts of the bundle $X^{coor}$ 
to an ``extended'' coordinate bundle $\Tilde{X}^{coor}$ on which the extension $\TVect$ acts transitively. 
Together with the choice of an {\em extended} Gelfand-Kazhdan structure (defined in Section \ref{sec ext descent2}), 
this extended bundle will give us the data of a holomorphic $(\TVect, \GL_n)$-bundle with flat connection on the frame bundle of $X$.

\subsubsection{} \label{otherway}
 
The data of a flat $\TVect$-valued connection on $\Fr_X$ is a 1-form
\ben
\Tilde{\omega} \in \Omega^{1,0} (\Fr_X ; \TVect)
\een
satisfying the Maurer-Cartan equation
\ben
\d_{dR} \Tilde{\omega} + \frac{1}{2} [\Tilde{\omega}, \Tilde{\omega}]
= 0
\een 
where $[-,-]$ is the Lie bracket for $\TVect$ extended to the de Rham complex. 
A crucial issue here is that such a structure on the frame bundle does not always exist.

We have already seen that the Gelfand-Fuks-Chern character
$\ch^\GF (\hT_n)$ maps to the ordinary Chern character of a complex
$n$-manifold under the characteristic map 
\ben
{\rm char}_\sigma : \bigoplus_k {\rm H}^k(\Vect , \GL_n ; \hOmega^k_{n,cl}) \to
\bigoplus_k {\rm H}^k(X ; \Omega^k_{X, cl}) 
\een  
associated to a Gelfand-Kazhdan structure~$(X, \sigma)$.
Assuming we have an extension  $\Tilde{X}^{coor}$,
the image of $\Tilde{\omega}$ under the quotient map 
$\Omega^1(\Fr_X ; \TVect) \to \Omega^1(\Fr_X ; \Vect)$ 
is the connection one-form $\omega_\sigma$ defined by the Gelfand-Kazhdan structure. 
Thus, the restriction of the second component of the Chern character $\ch_2^{\GF}(\hT_n)$ to
an element in $\clie^2(\TVect ; \hOmega^2_{n,cl})$ still maps to the
ordinary Chern character $\ch_2(T_X)$ using the characteristic map for
the flat connection~$\Tilde{\omega}$. 

The point here is that in $\clie^2(\TVect ; \hOmega^2_{n,cl})$, 
the element $\ch_2(\hT_n)$ is cohomologically {\em trivial}. 
That is, there
is an element $\alpha_n$ such that $\d_{\rm Lie} \alpha_n = \ch_2(\hT_n)$ where
$\d_{\rm Lie}$ is the differential on $\clie^*(\TVect ;
\hOmega^2_{n,cl})$. By naturality of descent, we see that the image of
$\alpha_n$ under the characteristic map is a trivialization for
$\ch_2(T_X)$. We conclude that lifts exists only if the second
component of the Chern character of
the manifold is trivial. Moreover, we wish to classify such lifts.

\begin{thm}\label{extbundle} 
Fix a Gelfand-Kazhdan structure $\sigma$ on X. 
Then there is a bijection between  lifts of the $(\Vect, \GL_n)$-bundle $(\Fr_X, \omega_\sigma)$ to a $(\TVect, \GL_n)$-bundle 
and trivializations of $\ch_2(T_X) \in H^2(X ; \Omega^{2}_{cl,X})$.
Moreover, if $\ch_2(T_X) = 0$, such lifts are a torsor for $H^1(X; \Omega^{2}_{cl,X})$. 
\end{thm}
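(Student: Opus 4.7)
The plan is to unravel what a lift of $(\Fr_X, \omega_\sigma)$ to a $(\TVect, \GL_n)$-bundle concretely means, then use naturality of Gelfand-Kazhdan descent to match the resulting data with trivializations of $\ch_2(T_X)$. Since the short exact sequence $0 \to \hOmega^2_{n,cl} \to \TVect \to \Vect \to 0$ splits as $\GL_n$-modules (but not as Lie algebras), any such lift amounts to choosing a one-form $\tilde\omega = \omega_\sigma + \eta$ with $\eta \in \Omega^{1}(\Fr_X; \hOmega^2_{n,cl})^{\GL_n\text{-bas}}$ that satisfies the Maurer-Cartan equation for $\TVect$. Expanding this equation and using that $\omega_\sigma$ is already flat as a $\Vect$-valued form, the defect lies in the $\hOmega^2_{n,cl}$-component and reads
\[
\d_{dR} \eta + [\omega_\sigma, \eta]_\Vect + \tfrac{1}{2}\,\ch_2^{\GF}(\hT_n)(\omega_\sigma \wedge \omega_\sigma) = 0,
\]
where the bracket uses the Lie-derivative action of $\Vect$ on $\hOmega^2_{n,cl}$ and the last term is the cocycle of the extension pulled back along $\omega_\sigma$. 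In other words, $\eta$ is a cochain-level primitive for the pullback of $\ch_2^{\GF}(\hT_n)$ along $\omega_\sigma$.

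The next step is to translate this equation through descent. The characteristic map associated to the Gelfand-Kazhdan structure $\sigma$ sends $\ch_2^{\GF}(\hT_n) \in \clie^2(\Vect, \GL_n; \hOmega^2_{n,cl})$ to a cocycle in $\Omega^*(X; \Omega^2_{cl,X})$ whose cohomology class is, by the formal Atiyah computation of Section~\ref{sec gk cc}, the image of $\ch_2(T_X)$ in $H^2(X; \Omega^2_{cl,X})$. Because $\eta$ is $\GL_n$-basic and satisfies the displayed MC-relation, the isomorphism $\Omega^*(\Fr_X; \hOmega^2_{n,cl})_{bas} \cong \Omega^*(X; \Omega^2_{cl,X})$ (recording that the right-hand side arises from the descent of $\hOmega^2_{n,cl}$) shows that $\eta$ descends to an element $\bar\eta \in \Omega^1(X; \Omega^2_{cl,X})$ whose total differential is the descent of $\ch_2^{\GF}(\hT_n)$. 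Thus each lift produces a trivialization of the Chern class; conversely, by naturality of descent every such trivialization can be pulled back to the frame bundle to produce a solution $\eta$, establishing the bijection.

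The torsor statement follows from linearity of the obstruction. If $\tilde\omega_1 = \omega_\sigma + \eta_1$ and $\tilde\omega_2 = \omega_\sigma + \eta_2$ are two lifts, then the difference $\eta := \eta_1 - \eta_2$ satisfies the linearized equation $\d_{dR}\eta + [\omega_\sigma,\eta]_\Vect = 0$ on $\Fr_X$. Applying the same descent isomorphism, $\eta$ corresponds to a closed element of $\Omega^1(X; \Omega^2_{cl,X})$, whose cohomology class lies in $H^1(X; \Omega^2_{cl,X})$; the assignment is clearly additive and free, and surjectivity onto the torsor structure follows by adding to any fixed lift a closed basic one-form produced from any given class. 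The main obstacle is bookkeeping: one must verify that the $\GL_n$-equivariance and basic conditions are preserved at each step and that gauge-equivalent $\eta$'s (those differing by an exact basic form) induce the same trivialization of $\ch_2(T_X)$ in cohomology, so that the bijection is well-defined at the level stated in the theorem.
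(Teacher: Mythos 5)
Your argument is essentially correct, but it takes a genuinely different route from the paper for the existence direction. You work entirely at the Lie-algebra level on the frame bundle: you split $\TVect \cong \Vect \oplus \hOmega^2_{n,cl}$ as $\GL_n$-modules, write a lift as $\tilde\omega = \omega_\sigma + \eta$ with $\eta$ basic, and observe that the $\hOmega^2_{n,cl}$-component of the Maurer--Cartan equation says exactly that $\nabla^{P,V}\eta = -\,\omega_\sigma^*\bigl(\ch_2^{\GF}(\hT_n)\bigr)$, i.e.\ that $\eta$ descends to a primitive of the characteristic-map image of $\ch_2^{\GF}(\hT_n)$. The paper instead constructs an explicit extended \emph{coordinate} bundle $\tilde X^{coor}_\alpha$ out of pairs $(\varphi,\beta_\varphi)$, where $\beta_\varphi$ trivializes the difference between the pulled-back global trivialization and a formal Chern--Simons form, proves via the GMS group $2$-cocycle and the Polyakov--Wiegmann identity that this is a principal $\TAut_n$-bundle lifting $X^{coor}$, and only then pulls the resulting $\TVect$-valued connection back along sections to $\Fr_X$; its converse direction (lift $\Rightarrow$ trivialization) is essentially your forward direction, since the image of $\id_{\hOmega^2}$ under the characteristic map for $\tilde\omega$ is precisely your $\eta$. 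Your approach is shorter and suffices for the theorem as stated, because the group parts of the two pairs agree and only the connection needs lifting; what the paper's detour buys is the group-level object $\tilde X^{coor}_\alpha$ with its $\TAut_n$-action, which is needed later to descend $\hCDO_n$ (a $\TAut_n$-module, not merely a $\TVect$-module). Two bookkeeping points you should make explicit: the basic forms $\Omega^*(\Fr_X;\hOmega^2_{n,cl})_{bas}$ descend to the de Rham complex of the jet bundle of closed $2$-forms, which is a \emph{resolution} of the sheaf $\Omega^2_{cl,X}$ rather than that sheaf itself, so the comparison with a trivialization given in another model (e.g.\ the Dolbeault model the paper uses) is only up to quasi-isomorphism; and the stated bijection and $H^1$-torsor structure hold after passing to gauge-equivalence classes of lifts and homotopy classes of trivializations, since at the strict cochain level the lifts form a torsor over closed basic $1$-forms. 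Both of these imprecisions are shared with the paper's own treatment, so they do not count against your argument.
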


Our proof is based on the Dolbeault model for the Chern character, and
throughout this section we will work with Dolbeault representatives for the Atiyah class. 
This approach is well studied and an overview can be found in \cite{atiyah} and \cite{kapranov1999}, 
but we will briefly review the requisite background.

Fix a complex K\"ahler manifold $X$ and a holomorphic vector bundle $E$. 
Also, let $\nabla$ be a smooth connection of type $(1,0)$ on $X$ for a
holomorphic vector bundle $E$. That is, an operator
\ben
\nabla : \sE \to \Omega^{1,0}(X) \tensor \sE .
\een
Let $\nabla ' = \nabla + \dbar$, then $\nabla'$ is an ordinary
connection for $E$. The curvature of $\nabla '$ splits as
\ben
F_{\nabla'} = F_{\nabla '}^{2,0} + F_{\nabla '}^{1,1} \in \Omega^{2,0}(X
; \End(E)) \oplus \Omega^{1,1}(X ; \End(E)) .
\een 
According to the Dolbeault isomorphism 
$H^{p,q}_{\dbar} (X ; E) \cong H^q(X ;  \Omega^p_X \tensor \sE)$, 
one has the following fact about the $(1,1)$-component of the curvature. 

\begin{prop} (Proposition 4 in \cite{atiyah})
The $(1,1)$-form $F_{\nabla'}^{1,1}$ is $\dbar$-closed  and is independent, in Dolbeault cohomology, of the choice of $\nabla$. 
Moreover, the cohomology class $[F_{\nabla'}^{(1,1)}]_{\dbar} \in H^{1,1}(X ; \End(E))$ is a Dolbeault representative for the Atiyah class ${\rm At}(E) \in H^1(X ; \Omega^{1,hol}_X \tensor_{\cO} \End(\sE))$.
\end{prop}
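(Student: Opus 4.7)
The plan is to verify three claims in turn: $\dbar$-closedness of $F_{\nabla'}^{1,1}$, independence of its Dolbeault class from the choice of smooth $(1,0)$-connection $\nabla$, and the identification $[F_{\nabla'}^{1,1}]_{\dbar} = \At(E)$ under the Dolbeault isomorphism $H^{1,1}(X;\End(E)) \cong H^1(X;\Omega^{1,hol}\tensor_{\cO}\End(\sE))$. The first two follow from short Hodge-type computations. Writing $\nabla' = \nabla + \dbar$ and squaring gives $F_{\nabla'} = \nabla^2 + (\nabla\dbar + \dbar\nabla)$, with the two summands of pure types $(2,0)$ and $(1,1)$, so that $F_{\nabla'}^{1,1} = \nabla\dbar + \dbar\nabla$. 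Since the induced connection $d^{\nabla'}$ on $\End(E)$-valued forms has $(0,1)$-part equal to $\dbar$, the type-$(1,2)$ component of the Bianchi identity $d^{\nabla'}F_{\nabla'} = 0$ reads $\dbar F_{\nabla'}^{1,1} = 0$. For independence, any two smooth $(1,0)$-connections differ by a global $A \in \Omega^{1,0}(X;\End(E))$, and the standard gauge formula $F_{\nabla_2'} = F_{\nabla_1'} + d^{\nabla_1'}A + A \wedge A$ produces $F_{\nabla_2'}^{1,1} - F_{\nabla_1'}^{1,1} = \dbar A$, since $A \wedge A$ is of pure type $(2,0)$.

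The substantive step is matching this class with $\At(E)$, which I will do via a \v{C}ech--Dolbeault comparison. Fix an open cover $\{U_\alpha\}$ on which $E$ is holomorphically trivial, and on each patch choose a holomorphic $(1,0)$-connection $\nabla_\alpha$; such a choice is equivalent to a local holomorphic splitting of the Atiyah sequence $0 \to \Omega^{1,hol}\tensor\sE \to J^1(\sE) \to \sE \to 0$, and is characterized by $F_{\nabla_\alpha'}^{1,1} = 0$. Their pairwise differences $c_{\alpha\beta} := \nabla_\alpha - \nabla_\beta \in \Gamma(U_{\alpha\beta};\Omega^{1,hol}\tensor_{\cO}\End(\sE))$ form a \v{C}ech 1-cocycle whose class is $\At(E)$, by the definition of the connecting morphism associated to the Atiyah sequence. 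Now introduce the global smooth $(1,0)$-connection $\nabla$ and set $\xi_\alpha := \nabla|_{U_\alpha} - \nabla_\alpha \in \Omega^{1,0}(U_\alpha;\End(E))$. One checks directly that $(\delta\xi)_{\alpha\beta} = \xi_\beta - \xi_\alpha = c_{\alpha\beta}$ on overlaps and, using $F_{\nabla_\alpha'}^{1,1} = 0$, that $\dbar\xi_\alpha = F_{\nabla'}^{1,1}|_{U_\alpha}$ on each patch. Thus $\{\xi_\alpha\}$ is precisely the zig-zag in the \v{C}ech--Dolbeault double complex that implements the Dolbeault isomorphism, and it carries $[c_{\alpha\beta}]$ to $[F_{\nabla'}^{1,1}]_{\dbar}$, yielding the desired identification.

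The hard part will be conceptual bookkeeping rather than computation: one must pin down sign and orientation conventions so that the \v{C}ech cocycle $\{c_{\alpha\beta}\}$ manufactured from local holomorphic $(1,0)$-connections is genuinely the one that represents $\At(E)$ as the extension class of the Atiyah sequence, rather than its negative or a coboundary-shifted variant. Once this alignment is fixed, the proof consists entirely of the zig-zag of linear-algebraic identities above and the standard type-decomposition arguments, with no further subtleties expected.
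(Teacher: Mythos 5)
The paper does not actually prove this statement: it is quoted as Proposition~4 of Atiyah's paper, with only the surrounding setup supplied (the decomposition $\nabla' = \nabla + \dbar$ and the type splitting of $F_{\nabla'}$). So there is no internal proof to compare yours against, and your argument must be judged on its own merits. It is correct, and it is the standard (essentially Atiyah's) argument. The three steps all check out: (i) writing $F_{\nabla'} = \nabla^2 + (\nabla\dbar + \dbar\nabla)$ identifies $F_{\nabla'}^{1,1}$ with the graded commutator $[\dbar,\nabla]$, and the $(1,2)$-component of the Bianchi identity (or, even more simply, $F^{1,1}_{\nabla'} = \dbar A$ in a local holomorphic frame, so $\dbar F^{1,1}_{\nabla'} = 0$) gives $\dbar$-closedness; (ii) two $(1,0)$-connections differ by $A \in \Omega^{1,0}(X;\End(E))$, and since $A \wedge A$ and $\nabla_1 A$ are of type $(2,0)$, the $(1,1)$-curvatures differ by the exact term $\dbar A$; (iii) the \v{C}ech--Dolbeault zig-zag is the right mechanism: local holomorphic connections $\nabla_\alpha$ exist on a holomorphically trivializing cover, are exactly the local splittings of the Atiyah sequence (a fact the paper itself records), are characterized by $F^{1,1}_{\nabla_\alpha'} = 0$, and the cochain $\xi_\alpha = \nabla|_{U_\alpha} - \nabla_\alpha$ satisfies both $\delta\xi = c$ and $\dbar \xi_\alpha = F^{1,1}_{\nabla'}|_{U_\alpha}$, which is precisely the comparison map implementing the Dolbeault isomorphism $H^1(X;\Omega^{1,hol}_X \tensor_{\cO}\End(\sE)) \cong H^{1,1}_{\dbar}(X;\End(E))$. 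The only loose end is the one you flag yourself, the sign/convention alignment between the extension class of the jet sequence and the cocycle $\{\nabla_\alpha - \nabla_\beta\}$; this is genuinely just bookkeeping. One small remark: the paper states the proposition after fixing a K\"ahler metric, but as your proof shows, K\"ahlerness is irrelevant here (it enters only in the subsequent statement relating $S_k(\At(E))$ to honest Chern classes via Hodge-to-de Rham degeneration), so your argument is if anything slightly more general than the context in which the paper invokes the result.
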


As a corollary, we see that $\Tr \left((F_{\nabla'}^{(1,1)})^k\right)$ is closed for both $\partial$ and $\dbar$. 
Moreover, this $(k,k)$-form is a Dolbeault representative for the
$k$th component of the Chern character $\ch_k(E)$. In particular, trivializations for $\ch_2(T_X)$, as in the theorem, 
are equivalent to $\dbar$-trivializations of the element
$\Tr\left( (F_{\nabla '}^{(1,1)})^2 \right) \in \Omega^{2,2}(X)$. 

\subsubsection{Warm-up: Chern-Simons forms on $\CC^n$}

\def\CS{{\rm CS}}
\def\Tr{{\rm Tr}}

Let us consider an open subset $U \subset \CC^n$ and a hermitian vector
bundle $E$ on $U$. We fix a trivialization $E = U \times E_0$ with $E_0$
equipped with a hermitian inner product. 

In this situation, there is a unique connection on $E$ 
that preserves the hermitian inner product compatible with the complex structure. 
With respect to the trivialization, it takes the form
\ben
\d_{dR} + A 
\een
where $A \in \Omega^{1,0}(U ; \End(E_0))$. 
(This connection is usually called the Chern connection.)  
The curvature of the connection is of type $(1,1)$, and it has the form
\ben
F_A = F^{(1,1)}_A = \dbar A 
\een
and lives in $\Omega^{1,1}(U ; \End(E_0))$.

Consider the $(k,k)$-form $\Theta^{(k)}_A := \Tr (F_A^k)$. 
This form is a local representative for the $k$th Chern character. 
For the following calculations, it is convenient to introduce the following complex. 
Define $\Omega^{\geq 2, *} (U)$ to be the complex
\ben
\Omega^{2,*}_{\dbar} (U) \xto{\partial} \Omega^{3,*}_{\dbar}(U) \xto{\partial} \cdots
\een
where $\Omega^{p,*}_{\dbar} (U)$ is the Dolbeault complex of
$(p,*)$-forms with differential $\dbar$. In this complex the degree of a form of type
$(k,l)$ is $k+l - 2$. Equivalently, $\Omega^{\geq 2, *}$ is the total complex of the
double complex $(\Omega^{\geq 2, *}, \partial, \dbar)$. 

There is an obvious embedding
\ben
\Omega^{2,hol}_{cl}(U) \hookrightarrow \Omega^{\geq 2, *} (U)
\een 
where $\Omega^{2,hol}_{cl}(U)$ is concentrated in degree zero. This is a quasi-isomorphism by using Poincar\'e lemma for the operators
$\partial$ and $\dbar$ for the open set
$U$ together with the obvious spectral sequence. (Note that the left hand side is concentrated in cohomological
degree zero)

A direct calculation shows that $\Theta^{(k)}_A$ is both $\partial$ and
$\dbar$-closed. 
In fact, we will use a
preferred one given by the Chern-Simons functional. 

In the case $k = 2$ we evaluate the usual Chern-Simons functional on the Chern connection $A$: consider the $3$-form
\begin{align*}
\CS (A) & = {\rm Tr}\left(A \wedge \d A + \frac{2}{3} A \wedge A \wedge A \right) \\ & = {\rm Tr}\left(A \wedge \dbar A + A \wedge \partial A + \frac{2}{3} A \wedge A \wedge A \right) \\
& = {\rm Tr}\left(A \wedge \dbar A - \frac{1}{3} A\wedge A \wedge A\right)
\end{align*}
using $\partial A + A \wedge A = 0$. 
Note that $\CS (A)$ is an element in $\Omega^{\geq 2, *} (U)$ of cohomological degree one. 
By construction $\d\CS(A) =\Theta^{(2)}_A$, 
where $\d$ is the total differential on the complex. 

We are interested in how the Chern-Simons form interacts with other trivializations of $\Theta^{(2)}_A$. 
Let us fix another trivialization $\alpha \in \Omega^{\geq 2, *} (U)$ of $\Theta^{(2)}_A$ 
such that $\d \alpha = \Theta^{(2)}_A$. 
Notice that the element $\alpha - \CS(A)$ is a closed element of degree one in the complex $\Omega^{\geq 2, *} (U)$. 
Thus, there exists an element $\beta \in \Omega^{\geq 2, *} (U)$ of cohomological degree zero, i.e., a $(2,0)$-form
such that
\ben
\d \beta = \alpha - \CS (A) .
\een
The ambiguity in choosing such a $\beta$ is precisely the cohomology
of the complex which we already determined to be
$\Omega^{2,hol}_{cl}(U)$. That is, if $\omega$ is a closed holomorphic
two-form then $\beta + \omega$ satisfies 
\ben
\d (\beta + \omega) = \d \beta = \alpha - \CS (A) .
\een
More precisely, given a trivialization $\alpha$ the space of all such $\beta$ is a torsor for
$\Omega^{2,hol}_{cl}(U)$. 

Before we proceed to the formal situation, and the construction of the extended coordinated bundle, 
we need to understand how all of the trivializations above change as we make a gauge transformation. 

Suppose that our holomorphic vector bundle $E$ is $TU$, the holomorphic tangent bundle. 
Given a biholomorphism $f : U \to U$, we obtain a gauge transformation of $A$ to the new connection
\ben
f \cdot A := g^{-1} A g + g^{-1} \partial g,
\een
where $g = Jac(f)$ is the Jacobian of $f$. 

\begin{lemma}\label{cs formula} 
There is a $(2,0)$-form $\rho$ depending on $f$ and $A$ such that
\ben
\CS(A) - \CS(f \cdot A) = \d \rho.
\een 
\end{lemma}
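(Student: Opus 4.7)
The plan is to prove this in two stages: first verify that $\CS(A) - \CS(f\cdot A)$ is closed in the total complex $\Omega^{\geq 2,*}(U)$, and then invoke the quasi-isomorphism established just before the lemma to conclude that any closed element of positive degree is exact.

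First I would compute how the curvature transforms. Writing $A' = f\cdot A = g^{-1}Ag + g^{-1}\partial g$ with $g = \mathrm{Jac}(f)$, a direct calculation (identical to the standard gauge-transformation formula, since $\partial$ plays the role of the holomorphic part of the exterior derivative) yields $F_{A'} = \dbar A' + \partial A' + A'\wedge A' = g^{-1}F_A g$, where I use that $\dbar g = 0$ because $g$ is holomorphic. Consequently $\Theta^{(2)}_{A'} = \Tr(F_{A'}^2) = \Tr(F_A^2) = \Theta^{(2)}_A$ by cyclicity of the trace. Since $\d\CS(B) = \Theta^{(2)}_B$ for any $B$ of type $(1,0)$, this gives
\[
\d\bigl(\CS(A) - \CS(f\cdot A)\bigr) = \Theta^{(2)}_A - \Theta^{(2)}_{f\cdot A} = 0.
\]
Thus $\CS(A) - \CS(f\cdot A)$ is a closed element of cohomological degree $1$ in $\Omega^{\geq 2,*}(U)$.

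Next I would apply the fact, established just before this lemma, that the natural inclusion $\Omega^{2,hol}_{cl}(U) \hookrightarrow \Omega^{\geq 2,*}(U)$ is a quasi-isomorphism, with $\Omega^{2,hol}_{cl}(U)$ concentrated in cohomological degree $0$. In particular, $H^1\bigl(\Omega^{\geq 2,*}(U)\bigr) = 0$, so the closed degree-$1$ element $\CS(A) - \CS(f\cdot A)$ must be a total coboundary: there exists a degree-$0$ cochain $\rho$ with $\d\rho = \CS(A) - \CS(f\cdot A)$. By the grading convention $(k+l-2)$, degree $0$ forces $\rho$ to be a $(2,0)$-form, which is exactly the conclusion.

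The main obstacle, such as it is, is bookkeeping: one must check that the gauge-invariance of the curvature genuinely holds in this Dolbeault-truncated setting, where $\d = \partial + \dbar$ acts on $\Omega^{\geq 2,*}$, and that the formulas for $F_A$ and the Chern–Simons form really are compatible with replacing the full de Rham differential by $\partial$ (which is fine because $A$ has type $(1,0)$ and $g$ is holomorphic, so the ``missing'' $\dbar$-contributions vanish identically). Everything else is a formal consequence of the quasi-isomorphism. If one wanted an explicit $\rho$ rather than just existence, a Polyakov–Wiegmann-type manipulation analogous to the one used in Section \ref{sec GMScocycle} for $\alpha_3$ would give a closed-form expression in $A$ and $g$, but the statement only requires existence and is cleanest to prove cohomologically.
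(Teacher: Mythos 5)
Your proof is correct, and its overall skeleton is the one the paper uses: reduce the lemma to showing that $\CS(A) - \CS(f\cdot A)$ is closed in $\Omega^{\geq 2,*}(U)$, then invoke the quasi-isomorphism $\Omega^{2,hol}_{cl}(U) \hookrightarrow \Omega^{\geq 2,*}(U)$ (concentrated in degree $0$) to conclude that a closed degree-one element is exact, with the degree count forcing $\rho$ to be of type $(2,0)$. Where you differ is in how closedness is verified. You deduce it abstractly from $\d\,\CS(B) = \Theta^{(2)}_B$ together with the gauge-invariance $\Theta^{(2)}_{f\cdot A} = \Theta^{(2)}_A$, which only needs $\dbar A' = g^{-1}(\dbar A)g$, i.e.\ the holomorphy of $g = Jac(f)$; this is clean and short. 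The paper instead computes the explicit transformation law
\ben
\CS(f\cdot A) = \CS(A) + \d\,\Tr\left(g^{-1}\partial g\wedge A\right) + \tfrac{1}{3}\Tr\left((g^{-1}\partial g)^3\right),
\een
so that the difference is visibly an exact term plus the $\partial$- and $\dbar$-closed WZW $3$-form. The two routes prove the same statement, but the paper's explicit computation is not idle: the formula above is exactly what is needed in the next subsection to write down $\Hat{\rho}_{f,\varphi} = \Tr(g^{-1}\partial g\wedge A_\varphi) + \mu_f$ and to match the resulting cocycle with $\Tilde{\alpha}_{GMS}$, whereas your argument only yields existence of some $\rho$ (unique up to a closed holomorphic $2$-form, as the subsequent remark notes). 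You acknowledge this at the end, correctly pointing to a Polyakov--Wiegmann-type manipulation as the way to extract the explicit representative if needed.
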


\begin{proof} 
For the existence of such a $\rho$, it suffices
to show that the difference $\CS(A) - \CS(f \cdot A)$ is closed. Indeed, under a gauge
transformation the Chern-Simons
functional becomes
\ben
\CS(f \cdot A) = \CS(A) + \d \Tr(g^{-1} \partial g \wedge A) + \frac{1}{3}
\Tr\left( (g^{-1} \partial g)^3\right) .
\een
Now $\Tr((g^{-1} \partial g)^3)$ is both $\partial$ and $\dbar$
closed, so the result follows.
\end{proof}

\begin{rmk} 
The $2$-form $\rho$ is only unique up to a holomorphic closed
$2$-form. We will need to fix one in the next section when we define
the extended bundle.
\end{rmk}

\subsubsection{Formal coordinates}

There is a completely formal version of the above trivializations, and
we will use it to construct the bundle $X^{coor}_\alpha$ extending the
ordinary coordinate bundle.

Let $\varphi$ be a formal holomorphic coordinate around a point $x \in X$. 
In the construction of the coordinate bundle, 
we viewed a formal holomorphic coordinate as a map $\varphi : \hD^n \to X$ where $\hD^n$ is the holomorphic formal disk. In this section we view this coordinate as a ``holomorphic" map $\varphi : \hD^n_{\CC} \to X$ 
where $\hD^n_{\CC}$ denotes the {\em complex} formal disk in the sense that its ring of functions is
\ben
\cO(\hD^n_{\CC}) = \CC \ll t_1,\ldots, t_n, \Bar{t}_1,\ldots, \Bar{t}_n \rr  .
\een
Similarly to the non-formal case, we denote the full de Rham complex by
\ben
\hOmega^{*,*}_{n} := \left(\CC \ll t_1,\ldots, t_n, \Bar{t}_1,\ldots, \Bar{t}_n \rr \tensor \CC[\d t_i, \d \Bar{t}_j], \d_{dR} \right)
\een
where $\d t_i$, $\d \Bar{t}_j$ are placed in cohomological degree one. 
In this section, to stress holomorphic dependence, we denote by $\hOmega^{k,hol}_{n,cl}$ 
the space of holomorphic closed $k$-forms on $\hD^n$, 
i.e., $\partial$-closed $k$-forms depending only on the formal variables~$\{t_i\}$.  

\begin{notation} 
In this section we will denote the full de Rham differential by
\ben
\d_{dR} : \hOmega^{*,*}_n \to \hOmega^{*,*}_n
\een
and write $\d_{dR} = \partial + \dbar$ where $\partial$, $\dbar$ are
the formal Dolbeault operators. 
\end{notation}

We define the truncated de Rham complex $\hOmega^{\geq 2, *}_n$ to be
\ben
\hOmega^{2, *} \xto{\partial} \hOmega^{3,*} \xto{\partial} \cdots  .
\een 
Its differential will be
denoted by $\d$. Note that we still have a quasi-isomorphism at the
formal level
\ben
\hOmega^{2,hol}_{n,cl} \xto{\simeq} \hOmega^{\geq 2, *}_n
\een 
by the formal Poincar\'{e} lemma. 

Fix a K\"ahler manifold $X$ and equip the holomorphic tangent bundle
with the associated Chern connection $\nabla$. Let us also fix a global trivialization
$\alpha$ of the second component of the Chern character of $T_X$. 

Pulling back to the formal disk via the coordinate $\varphi : \hD^n_{\CC} \to X$, 
we can write the connection in the form $\d_{dR} + A_\varphi$,
where $A _\varphi \in \hOmega^{1,0}_{n} \tensor \End(\CC^n)$ is the formal connection one-form. 
Just as above, the degree two element
\ben
\Hat{\Theta}^{(2)}_{A_\varphi} = \Tr((\dbar A_\varphi )^2) \in
\hOmega^{\geq 2, *}_n 
\een 
is a representing form for $\Theta_{\nabla}^{(2)}$ on the formal disk.
Note that this element is both $\partial$ and $\dbar$-closed. 
Let $\Hat{\CS}(A) \in \hOmega^{\geq 2, *}_n$ be the corresponding Chern-Simons form on the formal disk. 

\begin{rmk}
\label{rmk: closed2}
It is here that we see the explicit appearance of closed $2$-forms, or really its natural resolution. 
\end{rmk}

For each formal coordinate $\varphi$, 
the trivialization $\alpha$ of $\Theta^{(2)}_X$ determines a formal trivialization 
$\Hat{\alpha}_\varphi \in \hOmega^{\geq 2, *}_n$ satisfying  $\d \Hat{\alpha}_\varphi =~\Hat{\Theta}^{(2)}_{A_\varphi}$. 
Just as above, the difference $\Hat{\alpha}_\varphi - \Hat{\CS} (A_\varphi)$ is $\d$-closed and hence
there exists a $\beta_\varphi \in \hOmega^{2,0}_n$ such that $\d \beta = \Hat{\alpha}_\varphi -~\Hat{\CS} (A_\varphi)$. 

\begin{dfn} 
The \emph{extended coordinate bundle} $X^{coor}_\alpha$ is the set of pairs
\ben
(\varphi, \beta_\varphi)
\een
where $\varphi : \hD^n_{\CC} \to X$ is a formal coordinate
and $\beta_\varphi \in \hOmega^{2,0}_{n}$ satisfies 
\ben
\d \beta_\varphi = \Hat{\alpha}_{\varphi} - \Hat{\CS}(A_\varphi) 
\een
in the cochain complex $\hOmega^{\geq 2, *}_n$.
\end{dfn}

\subsubsection{Defining the bundle}

We have just defined the {\em set} corresponding to the extended
bundle. We now show that it is a principal bundle on $X$ for the group
$\Tilde{\Aut}_n$ lifting the coordinate bundle $X^{coor}$. 

Before we define the action of $\TAut_n$ we make the following
observations. Given a formal coordinate $\varphi$ and an automorphism
$f \in \Aut_n$ we obtain a new formal coordiante $f^* \varphi =
\varphi \circ f$. If $A_\varphi$ is the connection one-form
corresponding to $\varphi$ then $A_{f^* \varphi}$ is given by the
gauge transformation
\ben
A_{f^*\varphi} = g^{-1} A_\varphi g + g^{-1} \partial g .
\een
where $g = Jac(f) \in \GL_n(\hO_n)$ is the Jacobian. Just as in the
proof of Lemma \ref{cs formula} we have
\ben
\Hat{\CS}(A_\varphi) - \Hat{\CS}(A_{f^*\varphi}) = \d \Tr(g^{-1} \partial g \wedge A) + \frac{1}{3}
\Tr\left( (g^{-1} \partial g)^3\right)  .
\een
The $3$-form $\Hat{\chi}^{WZW}(f) := \frac{1}{3}
\Tr\left( (g^{-1} \partial g)^3\right)$ is $\partial$-closed, and
hence we may choose a non-unique cobounding two-form. Explicitly, the
choice of a formal coordinate determines a homotopy
\ben
h : \hOmega^{k,hol}_n \to \hOmega^{k-1,hol}_n
\een
and we define $\Hat{\mu}_f := h (\Hat{\chi}^{WZW}(f))$. Note that $\mu_f$
does {\em not} depend on the coordinate $\varphi$. Finally, let 
\ben
\Hat{\rho}_{f,\varphi} := \Tr(g^{-1} \partial g \wedge A_\varphi) + \mu_f,
\een 
which lies in $\hOmega^{2,0}_n$.

Recall that the group $\Tilde{\Aut}_n$ consist of pairs $(f, \omega)$
with $f \in \Aut_n$ an automorphism of the holomorphic formal disk
and with $\omega \in \hOmega^2_{n,cl}$. For a pair $(\varphi,
\beta_\varphi)$ as in the definition above, define
\be\label{extended coords1}
f \cdot (\varphi, \beta_\varphi) := (f^* \varphi, f^* \beta_\varphi + \Hat{\rho}_{f,\varphi}) .
\ee
and
\be\label{extended coords2}
\omega \cdot (\varphi, \beta_\varphi) := (\varphi , \beta_\varphi + \omega) .
\ee
Here $f^* \varphi = \varphi \circ f$ is precomposition with the
automorphism $f$, i.e., change of coordinates, and $f^* \beta_\varphi$ is
the pull-back of forms. 

\begin{prop}\label{lift1} 
Equations (\ref{extended coords1}) and (\ref{extended coords2}) define an action of $\Tilde{\Aut}_n$ on $X^{coor}_\alpha$. Moreover, it induces the structure of a $\Tilde{\Aut}_n$-principal bundle $\pi^{coor}_\alpha :X^{coor}_\alpha \to X$ lifting the $\Aut_n$-principal bundle $\pi^{coor}:~X^{coor}~\to~X$.
\end{prop}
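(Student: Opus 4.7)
The plan is to verify three things in order: (a) the formulas (\ref{extended coords1}) and (\ref{extended coords2}) land in $X^{coor}_\alpha$; (b) they compose consistently with the group law of $\Tilde{\Aut}_n$ determined by the GMS cocycle of Section \ref{sec GMScocycle}; and (c) the resulting action makes $X^{coor}_\alpha \to X$ a $\Tilde{\Aut}_n$-principal bundle whose quotient by $\hOmega^2_{n, cl} \subset \Tilde{\Aut}_n$ recovers the coordinate bundle $X^{coor} \to X$. Since the latter is already known to be a principal $\Aut_n$-bundle, and (\ref{extended coords1}) projects to the usual $\Aut_n$-action on $X^{coor}$ upon forgetting $\beta_\varphi$, the content reduces to (a), (b), and the fiberwise freeness and transitivity of the $\hOmega^2_{n, cl}$-action.

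For (a), apply $\d$ to $f^* \beta_\varphi + \Hat{\rho}_{f, \varphi}$. Because $\d$ commutes with $f^*$ and $\Hat{\alpha}_\varphi = \varphi^* \alpha$ is functorial in $\varphi$, pulling back the defining relation for $\beta_\varphi$ gives $\d(f^* \beta_\varphi) = \Hat{\alpha}_{f^*\varphi} - f^* \Hat{\CS}(A_\varphi)$. Combining this with the gauge-transformation identity of Lemma \ref{cs formula} and with $\d \mu_f = \Hat{\chi}^{WZW}(f)$ (valid since $\mu_f$ is $\dbar$-closed holomorphic, so $\d \mu_f = \partial \mu_f$) yields $\d(f^* \beta_\varphi + \Hat{\rho}_{f, \varphi}) = \Hat{\alpha}_{f^*\varphi} - \Hat{\CS}(A_{f^*\varphi})$, as required. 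Well-definedness of (\ref{extended coords2}) is immediate since any $\omega \in \hOmega^2_{n, cl}$ is $\d$-closed.

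For (b), iterating (\ref{extended coords1}) gives
$$f_1 \cdot \bigl(f_2 \cdot (\varphi, \beta_\varphi)\bigr) = \bigl(\varphi \circ f_2 \circ f_1,\; (f_2 \circ f_1)^* \beta_\varphi + f_1^* \Hat{\rho}_{f_2, \varphi} + \Hat{\rho}_{f_1, f_2^*\varphi}\bigr),$$
while the action of the product $(f_1, 0)(f_2, 0) \in \Tilde{\Aut}_n$, computed via the extension cocycle, gives
$$\bigl(\varphi \circ f_2 \circ f_1,\; (f_2 \circ f_1)^* \beta_\varphi + \Hat{\rho}_{f_2 \circ f_1, \varphi} + \Tilde{\alpha}_{GMS}(f_1, f_2)\bigr).$$
Equality reduces to the identity
$$f_1^* \Hat{\rho}_{f_2, \varphi} + \Hat{\rho}_{f_1, f_2^*\varphi} - \Hat{\rho}_{f_2 \circ f_1, \varphi} = \Tilde{\alpha}_{GMS}(f_1, f_2),$$
which, after unpacking $\Hat{\rho}_{f,\varphi} = \Tr(g^{-1} \partial g \wedge A_\varphi) + \mu_f$ and the definition $\Tilde{\alpha}_{GMS} = \alpha_2 + (\mu(f_1) + f_1^*\mu(f_2) - \mu(f_2 \circ f_1))$, and using the chain rule $\mathrm{Jac}(f_2 \circ f_1) = f_1^*(g_2) \, g_1$, becomes precisely the Polyakov-Wiegmann identity (\ref{PW1}). \emph{This is the main obstacle:} one must carefully track how the $A_\varphi$-dependent pieces on the left reassemble into $\alpha_2(f_1, f_2) = \Tr\!\bigl(g_1^{-1} \d_{dR} g_1 \, (\d_{dR} g_2)\, g_2^{-1}\bigr)$, and verify that the $\mu$-contributions combine as prescribed by the lift of $(\alpha_2, \alpha_3)$ to $\Tilde{\alpha}_{GMS}$.

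Finally, for the principal bundle structure, freeness is immediate: $\varphi \circ f = \varphi$ forces $f = \id$ by freeness of the $\Aut_n$-action on $X^{coor}$, and then $\beta + \omega = \beta$ forces $\omega = 0$. For fiberwise transitivity, given $(\varphi_1, \beta_1), (\varphi_2, \beta_2)$ lying over $x \in X$, pick the unique $f \in \Aut_n$ with $\varphi_2 = \varphi_1 \circ f$; then both $f^* \beta_1 + \Hat{\rho}_{f, \varphi_1}$ and $\beta_2$ are elements of $\hOmega^{2,0}_n$ satisfying $\d(-) = \Hat{\alpha}_{\varphi_2} - \Hat{\CS}(A_{\varphi_2})$, so their difference is a $\d$-closed $(2,0)$-form, which is necessarily holomorphic and $\partial$-closed, i.e., lies in $\hOmega^2_{n,cl}$, and is absorbed by a unique $\omega \in \hOmega^2_{n,cl}$ via (\ref{extended coords2}). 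Combined with the manifest equivariance of the forgetful map $(\varphi, \beta) \mapsto \varphi$ along $\Tilde{\Aut}_n \twoheadrightarrow \Aut_n$, this exhibits $X^{coor}_\alpha \to X$ as the claimed $\Tilde{\Aut}_n$-principal bundle lifting $X^{coor} \to X$.
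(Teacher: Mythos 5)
Your proof is correct and follows essentially the same route as the paper's: the well-definedness computation of $\d(f^*\beta_\varphi + \Hat{\rho}_{f,\varphi})$ and the reduction of the composition law to the identity $\Tilde{\alpha}_{GMS}(f_1,f_2) = \Hat{\rho}_{f_1,f_2^*\varphi} + f_1^*\Hat{\rho}_{f_2,\varphi} - \Hat{\rho}_{f_2\circ f_1,\varphi}$ (unwound via the chain rule for Jacobians) are exactly the two steps the paper carries out. Your added explicit verification of fiberwise freeness and transitivity, via the torsor structure of trivializations over $\hOmega^2_{n,cl}$, is left implicit in the paper but is the right argument.
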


\begin{rmk} 
Note that the choice of $\Hat{\rho}_{f, \varphi}$ is only unique up to a closed holomorphic $2$-form on the formal disk. That is, for each $\eta \in \hOmega^2_{n,cl}$ we get a different action of $\Tilde{\Aut}_n$ defined by
\ben
f \cdot (\varphi, \beta_\varphi) := (f^*\varphi, f^* \beta_\varphi + \Hat{\rho}_{f,\varphi} + \eta) .
\een
This action is equivalent to the original action. Indeed, denote $\Tilde{X}^{coor}_\alpha$ with this new action determined by $\eta$ by $\Tilde{X}^{coor}_{\alpha,\eta}$. For any two closed $2$-forms $\eta,\eta'$ we define
\[
\begin{array}{cccc}
\Phi_{\eta,\eta'} : & \Tilde{X}^{coor}_{\alpha,\eta} & \to& \Tilde{X}^{coor}_{\alpha,\eta'}\\
&(\varphi, \beta_\varphi)& \mapsto &(\varphi, \beta_\varphi + \eta - \eta')
\end{array}.
\]
Then $\Phi_{\eta,\eta'}$ is a map of $\Tilde{\Aut}_n$-spaces. In fact, it is an isomorphism with inverse given by $\Phi_{\eta',\eta}$. Hence we have an isomorphism of principal $\Tilde{\Aut}_{n}$-bundles.
\end{rmk}

The proof of the proposition is a direct calculation. First, we show that the map is well defined at the level of sets. That is, for any $f$ we must show that $f \cdot (\varphi, \beta_\varphi) \in \Tilde{X}^{coor}_\alpha$. We have 
\begin{align*}
\d(f^* \beta_\varphi + \Hat{\rho}_{f, A_\varphi}) 
& =  f^* \d \beta_\varphi + \d\Hat{\rho}_{f, A_\varphi} \\ 
& =  f^*(\varphi^* \alpha - \Hat{CS}(A_{\varphi})) + (f^* \Hat{\CS}(A_\varphi) - \Hat{\CS}(f^* A_{\varphi})) \\ 
& =  f^*\varphi^* \alpha - \Hat{\CS}(f^*A_{\varphi}) .
\end{align*}
Thus $f^* \beta_\varphi + \Hat{\rho}_{f,\varphi}$ trivializes the difference of the Chern-Simons functional associated to $f^*A_{\varphi}$ and the original trivialization as desired. 

It remains to see that we have an action by $\Tilde{\Aut}_n$. 
It suffices, in fact, to show that for any $f_1,f_2 \in \Aut_n \subset~\Tilde{\Aut}_n$,
\be\label{action1}
f_1 \cdot \left(f_2 \cdot(\varphi, \beta_\varphi)\right) = (f_2 \circ f_1) \cdot \left(f_2 \cdot(\varphi, \beta_\varphi)\right) + (\varphi, \beta_\varphi + \alpha_{GMS}(f,g)) ,
\ee
where $\Tilde{\alpha}_{GMS}$ is the defining cocycle for the extension
(\ref{grpses}) defined in Section \ref{sec GMScocycle}. 

Expanding the left-hand side, we have
\ben
\left(f_1^* f_2^*\varphi, f_1^*f_2^* \beta_\varphi + f_1^* \Hat{\rho}_{f_2,\varphi} + \Hat{\rho}_{f_1, f_2^* \varphi}\right) .
\een
The last term $\Hat{\rho}_{f_1, f_2^* \varphi}$ has the following meaning. 
Choose any (macroscopic) automorphism $\Tilde{f}_2 : \CC^n \to \CC^n$ whose $\infty$-jet class is $f_2$, 
and look at the element $\Hat{\rho}_{f_1, \Tilde{f}_2^* \varphi}$. 
Since $\Hat{\rho}_{f, \psi}$ only depends on the power series expansion of $\psi$, 
this element is well defined and does not depend on the lift~$\Tilde{f}_2$.

Now the right-hand side of (\ref{action1}) is
\ben
\left((f_2 \circ f_1)^* \varphi, (f_2 \circ f_1)^* \beta_\varphi + \rho_{f_2 \circ f_1, A_{\varphi}} + \Tilde{\alpha}_{GMS} (f_1,f_2)\right) .
\een 
Thus, to verify we have an action and finish the proof of Proposition \ref{lift1}, 
it suffices to prove the following.

\begin{lemma}
The cocycle $\Tilde{\alpha}_{GMS}$ satisfies
\be\label{rhoeqn}
\Tilde{\alpha}_{GMS}(f_1,f_2) = \Hat{\rho}_{f_1,f_2^* \varphi} + f_1^*
\Hat{\rho}_{f_2, A_\varphi} - \Hat{\rho}_{f_2 \circ f_1, A_\varphi}  .
\ee
for any $f,g$ in $\Aut_n$. 
\end{lemma}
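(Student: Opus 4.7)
The plan is to substitute the explicit definitions of $\Tilde{\alpha}_{GMS}$ and $\Hat{\rho}_{f,\varphi}$ and to reduce \eqref{rhoeqn} to a purely algebraic identity between traces of products of matrix-valued formal one-forms. Recall from Section \ref{sec GMScocycle} that
\[
\Tilde{\alpha}_{GMS}(f_1,f_2) = \alpha_2(f_1,f_2) + \mu(f_1) + f_1^*\mu(f_2) - \mu(f_2\circ f_1),
\]
while $\Hat{\rho}_{f,\varphi} = \Tr\!\left(g^{-1}\partial g \wedge A_\varphi\right) + \mu_f$. Since $\mu_f = \mu(f)$, the $\mu$-terms on the two sides of \eqref{rhoeqn} match immediately. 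Hence the content of \eqref{rhoeqn} is reduced to the identity
\begin{equation}\label{plan-reduced}
\alpha_2(f_1,f_2) \;=\; \Tr\!\left(g_1^{-1}\partial g_1 \wedge A_{f_2^*\varphi}\right) + f_1^*\Tr\!\left(g_2^{-1}\partial g_2 \wedge A_\varphi\right) - \Tr\!\left(g_{21}^{-1}\partial g_{21}\wedge A_\varphi\right),
\end{equation}
where $g_{21} := \mathrm{Jac}(f_2 \circ f_1)$.

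Next I will exploit the chain rule $g_{21} = f_1^*(g_2)\cdot g_1$, from which a short calculation gives
\[
g_{21}^{-1}\partial g_{21} \;=\; g_1^{-1} \cdot f_1^*\!\left(g_2^{-1}\partial g_2\right) \cdot g_1 \;+\; g_1^{-1}\partial g_1.
\]
Combined with the gauge-transformation law $A_{f_2^*\varphi} = g_2^{-1} f_2^*(A_\varphi) g_2 + g_2^{-1}\partial g_2$ already used in Lemma \ref{cs formula}, one can expand every trace appearing on the right-hand side of \eqref{plan-reduced}. Using cyclicity of the trace, the pieces involving $A_\varphi$ regroup into two conjugate expressions that cancel against each other, and the residual $\partial$-terms recombine into the Maurer-Cartan-like quantity
\[
\Tr\!\left(g_1^{-1}\partial g_1 \wedge f_1^*(g_2^{-1}\partial g_2)\right),
\]
which, after unfolding the $f_1^*$ and using the identification of $\alpha_2$ in Section \ref{sec GMScocycle}, is precisely $\alpha_2(f_1,f_2)$.

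Conceptually, this is exactly the ``one level down'' shadow of the Polyakov--Wiegmann identity \eqref{PW1}: the PW identity at level three fixes $\mu$ and pinpoints $\alpha_3$, and differentiating its proof with respect to the Chern connection forces the matching identity at level two that we need here. I therefore expect the main obstacle to be purely bookkeeping: keeping signs for wedges of matrix-valued $(1,0)$-forms straight, correctly commuting $f_1^*$ past traces and Jacobians, and confirming that the ambiguity in the choice of $\mu$ (a closed holomorphic two-form) is absorbed harmlessly when passing from \eqref{plan-reduced} back to \eqref{rhoeqn}. Once these cancellations are carried out, \eqref{rhoeqn} holds, completing the verification that the action in \eqref{extended coords1}--\eqref{extended coords2} is a genuine action of $\Tilde{\Aut}_n$ and hence finishing Proposition \ref{lift1}.
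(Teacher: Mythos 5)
Your proposal is correct and follows essentially the same route as the paper's proof: expand $\Hat{\rho}$, observe that the $\mu$-terms cancel against those in $\Tilde{\alpha}_{GMS}$, and reduce to the trace identity via the chain rule $Jac(f_2\circ f_1)=f_1^*(g_2)\,g_1$ and the gauge-transformation law, after which the surviving terms assemble into $\alpha_2(f_1,f_2)$. The only detail to watch is that $\alpha_2$ is defined with the right-invariant form $\partial g_2\, g_2^{-1}$ rather than the $g_2^{-1}\partial g_2$ appearing in your final display, but this is precisely the sign/convention bookkeeping you already flag as the remaining work.
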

\begin{proof}
We recall the formula for the GMS 2-cocycle from Section \ref{sec GMScocycle}. 
In the notation from that section it reads
\ben
\Tilde{\alpha}_{GMS}(f_1,f_2) = \alpha_2(f_1,f_2) + \mu_{f_1} + f_1^* \mu_{f_2} -
\mu_{f_2 \circ f_1} .
\een
(We use $\partial$ this time and not $\d_{dR}$ to stress that it is
the holomorphic differential.) We expand the right-hand side of
Equation~(\ref{rhoeqn}):
\begin{align*}
\tr((\D f_1)^{-1} \partial \D f_2 A_{f_2^* \varphi}) 
+ f_1^*\tr((\D f_2)^{-1} \partial \D f_2 A_\varphi) &
- \tr( (f_1^*\D f_2 \D f_1)^{-1} \partial(f_1^*\D f_2 \D f_1) A_\varphi) 
\\+ \mu_{f_1} 
+ f_1^*\mu_{f_2} 
- \mu_{f_2 \circ f_1} .
\end{align*} 
We have used the fact that the Jacobian of $f_2 \circ f_1$ is given by
the product $f_1^*\D f_2 \D f_1$. Finally, to complete the proof we notice
that the first three terms in the above formula simplify to 
\[
\alpha_2(f_1,f_2) = \tr \left(g_1^{-1} \partial \D f_1 \wedge
  f_1^*(\partial \D f_2 (\D f_2)^{-1}) \right)
\]
and so we are done.
\end{proof}

\subsubsection{Proof of Theorem \ref{extbundle}} \label{sec ext descent2}
 
In this section we prove the theorem. 
We will use the data of an extended coordinate bundle to construct a Gelfand-Kazhdan structure for the frame bundle
$\Fr_X \to X$, with a connection one-form valued in the
extension~$\TVect$. 

Clearly, the action of $\TAut_n$ on the set of pairs $(\varphi,
\beta_\varphi)$ lifts the action of $\Aut_n$ on formal
coordinates $\varphi : \hD^n \to X$. This observation, together with the compatibility of the cocycle $\Tilde{\alpha}_{GMS}$ and the Gelfand-Fuks-Atiyah
cocycle $\ch^{\GF}_2(\hT_n)$ defining the extension $\TVect \to
\Vect$, allows us to
summarize the construction of previous section as follows.

\begin{prop} 
For each trivialization $\alpha$ of $\Theta^{(2)}_X$
there exists a transitive action of $\TVect$ on $\Tilde{X}^{coor}_\alpha$ that lifts the action of $\Vect$ on $X^{coor}$. 
That is, there is a map of Lie algebras
\ben
\Tilde{\theta}_\alpha : \TVect \to \cX(\Tilde{X}^{coor}_\alpha)
\een
such that for each $(x, \varphi, \beta_\varphi) \in \Tilde{X}^{coor}_\alpha$, 
the induced map $\Tilde{\theta}(x) : \TVect \to T_{(x, \varphi, \beta_\varphi)} \Tilde{X}^{coor}_\alpha$ is an isomorphism and the diagram
\ben
\xymatrix{
\TVect \ar[d] \ar[r]^-{\Tilde{\theta}_\alpha (x)} & T_{ (x, \varphi, \beta_\varphi)} \Tilde{X}^{coor}_\alpha \ar[d] \\ 
\Vect \ar[r]^-{\theta(x)} & T_{(x, \varphi)} X^{coor} .
}
\een
commutes.
\end{prop}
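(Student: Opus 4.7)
The plan is to mimic for the extended bundle $\Tilde{X}^{coor}_\alpha$ the construction that produced $\theta : \Vect \to \cX(X^{coor})$ for the ordinary coordinate bundle. The two key facts we will exploit are that $\Tilde{X}^{coor}_\alpha \to X^{coor}$ is a principal bundle for the abelian group $\hOmega^2_{n,cl}$ (the kernel of $\Tilde{\Aut}_n \to \Aut_n$), and that the Lie algebra extension $\TVect \to \Vect$ matches the group extension $\Tilde{\Aut}_n \to \Aut_n$ under van Est differentiation, by the compatibility identity established in the preceding subsection between $\Tilde{\alpha}_{GMS}$ and $\ch_2^{\GF}(\hT_n)$.

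Concretely, decompose $\TVect = \Vect \oplus \hOmega^2_{n,cl}$ as vector spaces and build $\Tilde{\theta}_\alpha$ summand-wise. On $\hOmega^2_{n,cl}$ we differentiate the fiberwise translation action $\omega \cdot (\varphi,\beta_\varphi) = (\varphi,\beta_\varphi+\omega)$ from Proposition \ref{lift1} to obtain an abelian Lie algebra map $\hOmega^2_{n,cl} \to \cX(\Tilde{X}^{coor}_\alpha)$ into the vertical vector fields along $\Tilde{X}^{coor}_\alpha \to X^{coor}$. On $\Vect$ we lift $\theta$ in two pieces. For $X$ in the subalgebra $\Vectz \subset \Vect$ we differentiate the $\Tilde{\Aut}_n$-action directly, since $\Lie(\Tilde{\Aut}_n)$ is an extension of $\Vectz = \Lie(\Aut_n)$ by $\hOmega^2_{n,cl}$. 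For a constant formal vector field $X$ we mimic the infinitesimal version of equation (\ref{extended coords1}): the flow is $(\varphi,\beta_\varphi)\mapsto(\varphi\circ f_t, f_t^*\beta_\varphi + \Hat{\rho}_{f_t,\varphi})$ where $f_t$ is the translation generated by $X$, and we take the $t$-derivative at $t=0$. The result is a $\CC$-linear map $\Tilde{\theta}_\alpha$ each of whose values projects along $\Tilde{X}^{coor}_\alpha \to X^{coor}$ to the corresponding value of $\theta$.

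The substantive step is verifying that this map is a Lie algebra homomorphism when the domain is equipped with the extension bracket of $\TVect$ rather than the direct-sum bracket. The obstruction $[\Tilde{\theta}_\alpha(X), \Tilde{\theta}_\alpha(Y)] - \Tilde{\theta}_\alpha([X,Y]_\Vect)$ projects to zero in $\cX(X^{coor})$, so it is a vertical vector field and hence an element of $\hOmega^2_{n,cl}$. This defines a 2-cocycle $c \in \clie^2(\Vect ; \hOmega^2_{n,cl})$. On $\Vectz$, the identification $c|_{\Vectz} = \ch_2^{\GF}(\hT_n)|_{\Vectz}$ is exactly the van Est differentiation of the identity between $\Tilde{\alpha}_{GMS}$ and $\ch_2^{\GF}(\hT_n)$ that was the engine of the earlier Lie-algebra extension identification; on constant vector fields one checks the equality directly from the explicit formula for $\Hat{\rho}$ (and the independence of $\Hat{\mu}$ on $\varphi$). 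Consequently $c = \ch_2^{\GF}(\hT_n)$, and $\Tilde{\theta}_\alpha$ is a Lie algebra map out of the extension $\TVect = \Vect \oplus_{\ch_2^{\GF}(\hT_n)} \hOmega^2_{n,cl}$.

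Commutativity of the diagram in the statement holds by construction, since every $\Tilde{\theta}_\alpha(X)$ was chosen to lift $\theta(X)$. Pointwise bijectivity follows from the five lemma applied to the map of short exact sequences
\ben
\xymatrix{
0 \ar[r] & \hOmega^2_{n,cl} \ar[r] \ar[d]^{\cong} & \TVect \ar[r] \ar[d]^{\Tilde{\theta}_\alpha(x)} & \Vect \ar[r] \ar[d]^{\theta(x)}_{\cong} & 0 \\
0 \ar[r] & \hOmega^2_{n,cl} \ar[r] & T_{(x,\varphi,\beta_\varphi)} \Tilde{X}^{coor}_\alpha \ar[r] & T_{(x,\varphi)} X^{coor} \ar[r] & 0,
}
\een
where the left vertical map is an isomorphism by Step 1 and the right is an isomorphism by the earlier proposition giving $\theta$. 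The main obstacle throughout is the cocycle identity in the third step, but essentially all of the work has already been carried out: Proposition \ref{lift1} is the integrated (group-level) form of the statement, and our Lie-algebra version is obtained from it by differentiation.
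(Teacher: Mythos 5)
Your proposal is correct and follows essentially the same route as the paper: the paper justifies this proposition in a single sentence, as a consequence of the $\TAut_n$-action on $\Tilde{X}^{coor}_\alpha$ lifting the $\Aut_n$-action on $X^{coor}$ (Proposition \ref{lift1}) together with the van Est compatibility of $\Tilde{\alpha}_{GMS}$ with $\ch_2^{\GF}(\hT_n)$, which is exactly the engine of your argument. Your write-up simply supplies the details the paper leaves implicit (the summand-wise construction, the identification of the bracket-failure cocycle, and the five-lemma argument for pointwise bijectivity), with the only point deserving extra care being the extension of $\Hat{\rho}_{f,\varphi}$ to flows generated by constant vector fields, which do not lie in $\Lie(\Aut_n)=\Vectz$.
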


The inverse of $\Tilde{\theta}_\alpha$ defines a connection one-form
$\Tilde{\omega}_\alpha \in \Omega^1(\Tilde{X}^{coor}_\alpha ; \TVect) .$
Now, $\Tilde{X}_\alpha^{coor}$ is an $\hOmega^2_{cl,n}$-torsor over
$X^{coor}$ and so there exists a $\Aut_n$-equivariant smooth section
$\sigma_{\Omega^2} : X^{coor} \to \Tilde{X}^{coor}_\alpha.$
Note that this section is {\em not} unique, but its choice will not matter in the end 
(much as in the case of an ordinary Gelfand-Kazhdan structure). 
Given such a section we have an induced map
\ben
\Omega^1(\Tilde{X}^{coor}_\alpha ; \TVect) \xto{\sigma_{\Omega^2}^*}
\Omega^1(X^{coor} ; \TVect) \xto{p} \Omega^1(X^{coor} ; \Vect) ,
\een
where $p : \TVect \to \Vect$ is the projection. Under this composition, the 1-form $\Tilde{\omega}_\alpha^{coor}$ maps
to the Grothendieck connection 1-form $\omega^{coor} \in
\Omega^1(X^{coor} ; \Vect)$. 

Now, we would like to apply the theory of Gelfand-Kazhdan descent to this
situation. Recall that in the case of the pair $(\Vect, \GL_n)$, a
Gelfand-Kazhdan structure amounted to choosing a formal
exponential. That is, a $\GL_n$-equivariant splitting $\sigma : \Fr_X \to X^{coor}$
of the projection $\pi^{coor} : X^{coor} \to~\Fr_X$. 

Fixing a section $\sigma_{\Omega^2}$ of the $\Omega^2_{n,cl}$-torsor
over $X^{coor}$ as above,  we can compose with the canonical section
$\sigma_{\Omega^2} : X^{coor} \to \Tilde{X}_\alpha^{coor}$ of
$\Tilde{X}^{coor}_\alpha$ over $X^{coor}$ to get the section
$\sigma_{\Omega^2} \circ \sigma$. This composite defines the connection one-form
\ben
\Tilde{\omega}_{\sigma, \sigma_{\Omega^2}}^\alpha = (\sigma_{\Omega^2} \circ \sigma)^*
\omega^{coor} = \sigma^* \sigma_{\Omega^2}^* \omega^{coor}
\een
living in~$\Omega^1(\Fr_X; \TVect)$.

\begin{dfn} 
An {\em extended Gelfand-Kazhdan structure} on $X$ is a triple $(\alpha, \sigma, \sigma_{\Omega^2})$ where
\begin{itemize}
\item[(i)] $\alpha$ is a trivialization for the second component of the
Chern character of $X$;
\item[(ii)] $\sigma$ is a Gelfand-Kazhdan structure on $X$; and
\item[(iii)] $\sigma_{\Omega^2}$ is an $\Aut_n$-equivariant smooth splitting of
  $\Tilde{X}^{coor}_\alpha \to X^{coor}$ .
\end{itemize}
\end{dfn}

The construction in the above paragraph shows that the data of an
extended Gelfand-Kazhdan structure on $X$ determines a holomorphic
$(\TVect, \GL_n)$-bundle on $\Fr_X \to X$ with flat connection
one-form given by~$\Tilde{\omega}_{\sigma, \sigma_{\Omega^2}}^\alpha$.

The same argument as in the non-extended case (see Section \ref{gauge equiv}) gives the following.

\begin{lemma} 
Fix a Gelfand-Kazhdan structure $\sigma$. 
Let $\sigma^1_{\Omega^2}$ and $\sigma^1_{\Omega^2}$ be two smooth splittings of $\Tilde{X}^{coor}_\alpha \to X^{coor}$. 
Then the induced connection one-forms $\omega^{\alpha}_{\sigma, \sigma_{\Omega^2}^1}$ and $\omega^\alpha_{\sigma,\sigma_{\Omega^2}^2}$ are gauge equivalent.
\end{lemma}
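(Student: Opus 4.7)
The plan is to adapt the non-extended proof of gauge equivalence (Proposition \ref{gauge equiv}) to the abelian extension at hand. First I would note that because both $\Tilde{\omega}^\alpha_{\sigma,\sigma^1_{\Omega^2}}$ and $\Tilde{\omega}^\alpha_{\sigma,\sigma^2_{\Omega^2}}$ project under $p: \TVect \to \Vect$ to the same Grothendieck connection one-form $\omega^\sigma \in \Omega^1(\Fr_X;\Vect)$, their difference lies in the kernel subspace $\Omega^1(\Fr_X; \hOmega^2_{n,cl})$. Since $\Tilde{X}^{coor}_\alpha \to X^{coor}$ is a principal $\hOmega^2_{n,cl}$-bundle (as an abelian group under addition), the two $\Aut_n$-equivariant splittings $\sigma^1_{\Omega^2}$ and $\sigma^2_{\Omega^2}$ differ by a $\Aut_n$-equivariant map $\eta : X^{coor} \to \hOmega^2_{n,cl}$; pulling back along the formal exponential $\sigma$ produces a $\GL_n$-equivariant function $\tilde\eta : \Fr_X \to \hOmega^2_{n,cl}$.

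Next I would exhibit $\tilde\eta$ as an infinitesimal gauge transformation. Consider the straight-line interpolation
\[
\sigma^t_{\Omega^2} := (1-t)\,\sigma^1_{\Omega^2} + t\,\sigma^2_{\Omega^2}, \qquad t \in [0,1],
\]
which makes sense because the space of splittings is an affine space modeled on $\Aut_n$-equivariant maps to $\hOmega^2_{n,cl}$. This yields a smooth family of extended Gelfand-Kazhdan structures and hence a path $\Tilde{\omega}^\alpha_t := \Tilde{\omega}^\alpha_{\sigma,\sigma^t_{\Omega^2}}$ in $\Omega^1(\Fr_X;\TVect)$ joining the two connection one-forms. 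Differentiating in $t$ produces a family of $\hOmega^2_{n,cl}$-valued functions $\chi_t \in C^\infty(\Fr_X;\hOmega^2_{n,cl})$ satisfying $\frac{d}{dt}\Tilde{\omega}^\alpha_t = \d_{dR}\chi_t + [\omega^\sigma,\chi_t]$, where the bracket uses the $\Vect$-action on $\hOmega^2_{n,cl}$.

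Finally, I would exponentiate to obtain the finite gauge transformation. Under the abelian extension of groups $0 \to \hOmega^2_{n,cl} \to \widetilde{\Aut}_n^+ \to \Aut_n^+ \to 1$, the abelian group $\hOmega^2_{n,cl}$ sits inside $\widetilde{\Aut}_n^+$ and so acts on any holomorphic $(\TVect,\GL_n)$-bundle. Because the kernel $\hOmega^2_{n,cl}$ is abelian and the family $\chi_t$ is pro-nilpotent-valued, the path-ordered exponential degenerates and the element
\[
g := \int_0^1 \chi_t\,dt \;\in\; C^\infty(\Fr_X;\hOmega^2_{n,cl}) \;\subset\; C^\infty(\Fr_X;\widetilde{\Aut}_n^+)
\]
defines a well-defined $\GL_n$-equivariant gauge transformation taking $\Tilde{\omega}^\alpha_{\sigma,\sigma^1_{\Omega^2}}$ to $\Tilde{\omega}^\alpha_{\sigma,\sigma^2_{\Omega^2}}$.

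The main obstacle will be bookkeeping: one must carefully verify that the infinitesimal formula for $\frac{d}{dt}\Tilde{\omega}^\alpha_t$ really takes values in the abelian subalgebra $\hOmega^2_{n,cl}$ (not merely in $\TVect$), and that integrating it produces a genuine gauge transformation in the unipotent part of the extended structure group. Both are consequences of the abelianness of the kernel and the compatibility of $\sigma_{\Omega^2}$ with the $\Aut_n$-action, but writing them out cleanly in the pro-setting requires some care.
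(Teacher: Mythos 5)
Your proof is correct and follows essentially the same route as the paper, which simply invokes ``the same argument as in the non-extended case'' --- namely that the space of splittings is contractible (here, affine over the $\Aut_n$-equivariant maps to $\hOmega^2_{n,cl}$), so an explicit gauge equivalence can be produced. Your write-up just makes that one-line argument explicit; note only that with a linear interpolation the generator $\chi_t = \tilde\eta$ is constant in $t$, so the final integration step is immediate.
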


To finish the proof of Theorem \ref{extbundle}, we must go the other way: 
given a lift $(\Fr_X, \Tilde{\omega})$ of the $(\Vect, \GL_n)$-bundle $(\Fr_X, \omega_\sigma)$, 
we must produce a trivialization. 
This construction is outlined above in Section \ref{otherway}. 
It is a direct calculation to show that these two constructions are
inverse to each other. 

Before we define extended descent, we discuss the class of modules that we wish to consider.

\subsection{Extended modules}

We have defined the category of ``vector bundles" on the formal disk $\VB_n$. 
These Harish-Chandra modules were especially well behaved from the point of view of Gelfand-Kazhdan descent. 
In this section we consider an analogue of this category of modules for the pair $(\TVect, \GL_n)$. 
These modules will be objects that descend along the extended bundle~$(\Fr_X, \Tilde{\omega}^\alpha_{\sigma})$. 

Since $\TVect$ is an extension of Lie algebras, 
it has a two-step filtration 
\ben
F^1 \TVect = \TVect \supset F^0 \TVect = \hOmega^2_{cl} .
\een 
The associated graded of this filtration is the Lie algebra $\Vect
\oplus \hOmega^2_{cl}$. 

Let $\Mod_{(\TVect, \GL_n)}^{\rm fil}$ denote the category of filtered modules for the pair $(\TVect,\GL_n)$, 
using the filtration above. 
Given any such module $\cV$, we can consider its associated graded ${\rm Gr} \; \cV$. 
This associated graded forgets down to a graded module for the Lie algebra $\Vect$. 
Since $\cV$ also has a compatible $\GL_n$-action,
the associated graded has the structure of a graded $(\Vect, \GL_n)$-module. 
There is thus a functor 
\ben
\Gr: \Mod_{(\TVect, \GL_n)}^{\rm fil} \to \Mod_{(\Vect, \GL_n)}^{\ZZ/2}
\een
given by taking the associated graded for the two-step filtration. 
Here, $\Mod_{(\Vect, \GL_n)}^{\ZZ/2}$ is the category of $\ZZ/2$-graded vector spaces 
together with a grading-preserving action of the pair~$(\Vect, \GL_n)$. 

Similarly, there is a full sub-category $\VB_n^{\ZZ/2} \subset \Mod_{(\Vect, \GL_n)}^{\ZZ/2}$ 
consisting of those $(\Vect,\GL_n)$-modules that are also elements in $\VB_n$ by forgetting the grading. 

\begin{dfn} 
The category $\Tilde{\VB}_n$ of {\em filtered $(\TVect, \GL_n)$-vector bundles} 
is the pull-back
\ben
\xymatrix{
\Tilde{\VB}_n\ar[r] \ar[d] & \VB^{\ZZ/2}_n \ar[d] \\
\Mod_{(\TVect, \GL_n)}^{\rm fil} \ar[r] & \Mod^{\ZZ/2}_{(\Vect, \GL_n)} 
}
\een 
of categories.
\end{dfn}

Explicitly, an object of $\Tilde{\VB}_n$ is a $\ZZ/2$-graded $\hO_n$-module 
that is free and finite rank together with a compatible action of $(\TVect, \GL_n)$ that respects the two-step filtration of~$\TVect$.

\subsection{Extended descent}

We are now in a place to define the extended Gelfand-Kazhdan descent
functor for modules as in the previous section. 

Define the category $\Tilde{\Hol}_n$ to have objects consisting of pairs $(X, \alpha)$,
where $X$ is a complex manifold of dimension $n$ and $\alpha$ is a
trivialization of its second component of the Chern character $\ch_2(T_X)$. Morphisms
are defined to be local biholomorphisms that pull-back
trivializations. For instance, if $(X,\alpha_X)$ and $(Y,\alpha_Y)$
are objects and $f : X \to Y$ is a local biholmorphism, we require $f^*
\alpha_Y = \alpha_X$. We let $\Tilde{\GK}_n$ denote the category fibered over $\Tilde{\Hol}_n$ whose objects over $(X,\alpha)$ are extended Gelfand-Kazhdan structures $(X,\alpha, \sigma, \sigma_{\Omega^2})$.

\begin{dfn} The {\em extended Gelfand-Kazhdan descent} is the functor
\ben
\Tilde{\desc}_{\GK} : \Tilde{\GK}_n^{op} \times \Tilde{\VB}_n
\to {\rm Pro}(\VB(X)_{flat})
\een
sending an extended Gelfand-Kazhdan structure $(X, \alpha, \sigma, \sigma_{\Omega^2})$ and an extended module $\cV \in \Tilde{\VB}_n$ to the pro-vector bundle $\Fr_X \times^{\GL_n} \cV$ with flat connection induced from $\omega_{\alpha, \sigma, \sigma_\Omega^2}$.
\end{dfn} 

Let $\Tilde{\bdesc}_{\GK}(X, \sigma, \sigma_{\Omega^2},\alpha; \cV)$ denote
the corresponding $\Omega^*(X)$-module. Since different
choices of sections $\sigma$ and $\sigma_{\Omega^2}$ determine gauge
equivalent connections the resulting sheaf of flat sections is independent of such choices and
we will denote the sheaf by $\sdesc_{\GK}(X, \alpha; \cV)$. 

\section{Descent for vertex algebras} \label{sec vertex desc}

In this section we discuss Gelfand-Kazhdan descent for vertex
algebras. Namely, we show how the structure of a vertex algebra that
has a compatible action of a pair $(\fg, K)$ descends to a sheaf of
vertex algebras on complex manifolds via the functors we have already
constructed. Of course, the most important cases will be the pairs
$(\Vect, \GL_n)$ and its extension $(\TVect, \GL_n)$. 

For another approach for constructing sheaves of vertex algebras on manifolds, 
see \cite{Malikov2008},  although the case of extended descent is not covered there. 

For a \v{C}ech style approach to constructing the sheaf of vertex
algebras given by CDOs, see~\cite{GMS2}. 

\subsection{General descent} 
\label{gendescent}

We will define descent for vertex algebras in a
similar way as in the general setting of Harish-Chandra descent. For this
to make sense, we need to first say what we mean by a vertex algebra
in the differentially graded setting. 

\begin{dfn} 
A {\em dg vertex algebra} is a $\ZZ$-graded vertex algebra $V$ 
together with a vertex algebra derivation $\d : V \to V$ of degree 1 such that
\begin{itemize}
\item[(i)] $\d^2 = 0$ and
\item[(ii)] the structure maps $Y(-; z) : V \to \End(V) \ll z^\pm \rr$ have cohomological degree zero.
\end{itemize}
Moreover, if $V$ has the additional structure of a $\ZZ_{\geq 0}$-graded vertex algebra 
(by what we call the dimension grading), 
a {\em dg $\ZZ_{\geq 0}$-graded vertex algebra} is a dg vertex algebra such that
$\d$ preserves the dimension grading. 
\end{dfn}

Consider now a torsor $P \to X$ for a Harish-Chandra pair $(\fg, K)$. 
If $V$ is a vertex algebra on which $K$ acts via vertex algebra automorphisms, 
then invariants for this group action will be a sub-vertex algebra. 
Likewise, if $\fg$ acts on $V$ via vertex algebra derivations,
then the induced connection $\nabla^{P,V} = \d_{dR} + \rho_\fg(\omega)$ also acts by vertex algebra derivations on $\Omega^*(P) \tensor V$. 
If we choose actions that are compatible (as well as $\ZZ_{\geq 0}$-graded) we
obtain the following. 

\begin{prop} 
If $(\fg,K)$ acts on the vertex algebra then $\bdesc((P, \omega, V))$
has the structure of a dg vertex algebra in $\Omega^*(X)$-modules. 
If $(\fg, K)$ acts on the $\ZZ_{\geq 0}$-graded vertex algebra $V$,
then $\bdesc((P, \omega), V)$ has the structure of a dg $\ZZ_{\geq 0}$-graded vertex algebra in dg $\Omega^*(X)$-modules.
\end{prop}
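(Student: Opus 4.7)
The plan is to lift the vertex algebra structure on $V$ to the basic forms $\Omega^*(P;\ul{V})_{bas}$ by combining it with the wedge product on $\Omega^*(P)$, and then to verify that the connection $\nabla^{P,V}$ is a vertex algebra derivation of the resulting structure. Throughout, I use that the isomorphism $s: \Omega^*(P;\ul{V})_{bas} \cong \Omega^*(X; V_X)$ is an isomorphism of graded $\Omega^*(X)$-modules, so everything I construct on the basic side automatically yields a structure in $\Omega^*(X)$-modules.

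First, I would define the structure maps on $\Omega^*(P) \otimes V$ by tensoring: take the vacuum to be $1 \otimes \left| 0 \right\>$, the translation to be $1 \otimes T$, and the vertex operator
\[
Y(\alpha \otimes v; z)(\beta \otimes w) := (-1)^{|v|\,|\beta|} (\alpha \wedge \beta) \otimes Y(v;z)w,
\]
extended $\CC$-linearly with appropriate Koszul signs. The truncation condition on Fourier modes and locality both reduce immediately to the corresponding statements for $V$, since the wedge-product factor is independent of $z$. I would then restrict to the basic subcomplex: $K$-invariance is preserved because $K$ acts on $V$ by vertex algebra automorphisms (so $Y$ is $K$-equivariant), and horizontality is preserved because contraction with a vertical vector field is a graded derivation of $\wedge$ and hence vanishes on a product whenever it vanishes on one factor.

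Next, I would verify that $\nabla^{P,V} = \d_{dR,P} + \rho_\fg(\omega)$ acts as a vertex algebra derivation on the basic subcomplex. The de Rham part $\d_{dR,P}$ is a graded derivation of $\wedge$ and commutes with $Y(v;z)$ on the $V$-factor, so it is a derivation of $Y(-;z)$ in the vertex algebra sense. For the second piece, expanding $\omega$ locally as $\sum_a \omega^a \otimes X_a$ with $X_a \in \fg$, the hypothesis that each $\rho_\fg(X_a)$ is a vertex algebra derivation of $V$ together with the fact that $\omega^a$ is a $1$-form on $P$ gives that $\rho_\fg(\omega)$ is a degree-$1$ vertex algebra derivation of $\Omega^*(P) \otimes V$. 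That $(\nabla^{P,V})^2 = 0$ is recorded just before Definition \ref{def desc}; it combines the Maurer-Cartan equation for $\omega$ with the fact that $\rho_\fg$ is a Lie algebra map, and so yields a dg vertex algebra structure on $\bdesc((P,\omega), V)$.

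For the second clause, the $\ZZ_{\geq 0}$-grading on $\bdesc((P,\omega),V)$ is inherited from $V$ by declaring $\alpha \otimes v$ to have conformal dimension equal to that of $v$, and then it is immediate from Definition \ref{graction} that $Y(-;z)$, $T$, and $\nabla^{P,V}$ all preserve this grading with the required Fourier-mode degree shifts. The main obstacle in carrying this out is bookkeeping: the cohomological grading from forms, the conformal dimension grading from $V$, and the Koszul signs in the vertex operator on $\Omega^*(P) \otimes V$ must all be tracked carefully --- especially when verifying that $\rho_\fg(\omega)$ is a derivation of $Y(-;z)$ with the correct sign --- but no new conceptual input is required beyond what already underlies Harish-Chandra descent of $\Omega^*(X)$-modules.
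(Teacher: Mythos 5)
Your proposal is correct and follows essentially the same route as the paper, which leaves the proof as an immediate consequence of the preceding remarks: $K$-invariants of a vertex algebra under vertex algebra automorphisms form a sub-vertex algebra, and $\nabla^{P,V} = \d_{dR} + \rho_\fg(\omega)$ acts by vertex algebra derivations on $\Omega^*(P)\otimes V$ because $\fg$ acts on $V$ by vertex algebra derivations. You have simply filled in the details (the tensored vertex operator, closure of basic forms under $Y$, and the grading bookkeeping) that the paper takes for granted.
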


This result implies that the sheaf of flat sections $\sdesc((P, \omega), V)$
has the structure of a sheaf of $\ZZ_{\geq 0}$-graded vertex
algebras. 

\subsection{Formal vertex algebras}

We now develop what we mean by vertex algebras in the category of
formal vector bundles. The vertex algebras we are interested in are not finite dimensional,
so are ill-behaved in the context of doing ordinary Harish-Chandra
descent. The graded pieces, however, are finite dimensional over $\hO_n$, so we are in a similar context of Gelfand-Kazhdan descent as in Section 2.

Recall, that the category of formal vector bundles (or formal vector bundles on the formal $n$-disk) $\VB_n$ consists of
$\hO_n$-modules together with a compatible structure of a $(\Vect,
\GL_n)$-module. The category of formal vertex algebras we consider is a modest generalization of the category of formal vector bundles
$\VB_n$. 

\begin{dfn}
A {\em Gelfand-Kazhdan vertex algebra} is a $\ZZ_{\geq 0}$-graded vertex
algebra $\cV$ together with an
action of $(\Vect, \GL_n)$ as in Definition \ref{graction} such that
for each $N \geq 0$ one has a $\GL_n$-equivariant identification
\be\label{GK vert}
\cV^{(N)} = \hO_n \tensor_\CC V^{(N)}
\ee
where $V^{(N)}$ is a finite dimensional $\GL_n$-representation. A {\em morphism} of Gelfand-Kazhdan vertex algebras is a $(\Vect, \GL_n)$-equivariant morphism of $\ZZ_{\geq 0}$-graded vertex algebras.
We denote this category by~${\rm Vert}_n$. 
\end{dfn}

Thus, a Gelfand-Kazhdan vertex algebra is a vertex algebra in the category of
Harish-Chandra modules $\Mod_{(\Vect, \GL_n)}$ together with some
finiteness property. 

\begin{lemma}\label{lemma GK vert gr} The vertex algebra $\Gr \; \hCDO_n$ has the structure of a formal vertex
  algebra. 
\end{lemma}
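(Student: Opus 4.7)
The plan is to verify directly the three conditions in the definition of a Gelfand-Kazhdan vertex algebra. Proposition \ref{grcdo} already supplies the first two: it shows that $\Gr\,\hCDO_n$ is a $\ZZ_{\geq 0}$-graded (Poisson) vertex algebra carrying a compatible action of the pair $(\Vect,\GL_n)$ that preserves the $\hO_n$-module structure. Along the way one checks that both actions preserve the conformal dimension grading: the $\Vect$-action is implemented by zero Fourier modes of elements of conformal dimension one (which are dimension-preserving by the axioms of a $\ZZ_{\geq 0}$-graded vertex algebra), while the $\GL_n$-action preserves dimension by its defining formulas (\ref{GLact1}) and (\ref{GLact2}) together with the $\GL_n$-equivariance of the translation operator $T$.

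The remaining condition to verify is the finiteness statement (\ref{GK vert}). The plan is to exploit the explicit decomposition used in the proof of Proposition \ref{grcdo} (equivalent to the character computation in Proposition \ref{prop local character}): after completing in the $c_0^j$, the associated graded decomposes as
\[
\Gr\,\hCDO_n \;\cong\; \hO_n \otimes_\CC \Bigl(\bigotimes_{k \geq 1} \Sym\bigl(\langle b^j_{-k}\rangle_j\bigr) \otimes_\CC \bigotimes_{l \geq 1} \Sym\bigl(\langle c^j_{-l}\rangle_j\bigr)\Bigr),
\]
where the $\hO_n$ factor comes from the dimension zero subspace $\hCDO_n^{(0)}$ generated multiplicatively by the $c_0^j$. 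Each mode $b^j_{-k}$ sits in conformal dimension $k$ and transforms in the $\GL_n$-representation $(\CC^n)^*$, while each $c^j_{-l} = \frac{1}{(l-1)!}T^{l-1}c^j_{-1}$ sits in conformal dimension $l$ and transforms in $\CC^n$.

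For fixed $N \geq 0$, only those monomials in the oscillator modes whose total weighted conformal degree equals $N$ contribute, and this is a finite-dimensional $\GL_n$-representation $V^{(N)}$ expressible as a finite sum of symmetric products of $\CC^n$ and $(\CC^n)^*$ indexed by bipartitions of $N$. One concludes
\[
(\Gr\,\hCDO_n)^{(N)} \;\cong\; \hO_n \otimes_\CC V^{(N)},
\]
as required. The only real issue to be careful about is to keep straight the two distinct gradings at play --- the filtration by number of $b$-modes (which defines $\Gr$) versus the conformal dimension grading (which indexes the pieces $V^{(N)}$) --- and to confirm that the completion along $\fm_n \subset \hO_n$ produces exactly the $\hO_n$ factor, rather than creating additional infinite-dimensional pieces inside a single conformal weight; once the explicit monomial basis above is in hand the verification is combinatorial.
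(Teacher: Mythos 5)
Your proof is correct and follows essentially the same route as the paper: cite Proposition \ref{grcdo} for the $(\Vect,\GL_n)$-action and then read off the finiteness condition (\ref{GK vert}) from the explicit decomposition of $\Gr\,\hCDO_n$ as a completed tensor product $\widehat{\bigotimes}_{k}\cSym_{\hO_n}(\hOmega^1_n)\,\widehat{\otimes}\,\widehat{\bigotimes}_{l}\cSym_{\hO_n}(\hT_n)$, which is exactly your oscillator-mode expression after identifying $\cSym_{\hO_n}(\hOmega^1_n)\cong\hO_n\otimes_\CC\Sym((\CC^n)^*)$ and likewise for $\hT_n$. The only blemish is an immaterial normalization: since $Tc^j_{m}=-(m-1)c^j_{m-1}$ one has $c^j_{-l}=\tfrac{1}{l!}T^{l-1}c^j_{-1}$ rather than $\tfrac{1}{(l-1)!}T^{l-1}c^j_{-1}$, which does not affect the argument.
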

\begin{proof}
We have already seen that $\Gr \; \hCDO_n$ has an action of the pair $(\Vect, \GL_n)$. Moreover, from the explicit formula $\Gr \; \hCDO_n = \Hat{\tensor}_{0 < k} \Hat{\Sym}_{\hO_n} (\hOmega^1_n) \Hat{\tensor} \Hat{\tensor}_{0 \leq l} \Hat{\Sym}_{\hO_n} (\hT_n)$ shows that the spaces of fixed conformal dimension are finite sum of tensor products of the $(\Vect,\GL_n)$ modules $\hOmega^1_n$ and $\hT_n$. Thus, we can write each space of conformal dimension $N$ in the presentation of Equation (\ref{GK vert}). 
\end{proof}

\begin{dfn} 
An {\em extended Gelfand-Kazhdan vertex algebra} is a $\ZZ_{\geq 0}$-graded vertex algebra together with an action of $(\TVect, \GL_n)$ as in Definition \ref{graction} such that for each $N \geq 0$ one has a $\GL_n$-equivariant identification
\be\label{GK vert 2}
\cV^{(N)} = \hO_n \tensor_\CC V^{(N)}
\ee 
where $V^{(N)}$ is a finite dimensional $\GL_n$-representation. A {\em morphism} of a Gelfand-Kazhdan vertex algebra is a $(\TVect, \GL_n)$-equivariant morphisms of $\ZZ_{\geq 0}$-graded vertex algebras. We denote this category by $\Tilde{\rm Vert}_{n}$. 
\end{dfn}

The category ${\rm Vert}_n$ is a full subcategory of vertex algebras in $\Mod^{\ZZ/2}_{(\TVect, \GL_n)}$ consisting of those objects that satisfy the finiteness constraint above. 

\begin{lemma} 
The vertex algebra $\hCDO_n$ is an extended Gelfand-Kazhdan vertex algebra.
\end{lemma}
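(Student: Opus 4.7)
The plan is to reduce the verification to structure already established in the paper. By Proposition \ref{hc str on cdo} the pair $(\TVect, \GL_n)$ acts on the $\ZZ_{\geq 0}$-graded vertex algebra $\hCDO_n$ with $\TVect$ acting by vertex algebra derivations, $\GL_n$ by vertex algebra automorphisms, and both preserving the conformal-dimension grading. This supplies exactly the action data required by Definition \ref{graction}, so only the $\GL_n$-equivariant identification $\hCDO_n^{(N)} \cong \hO_n \otimes_\CC V^{(N)}$ for a finite-dimensional $\GL_n$-representation $V^{(N)}$ remains to be proved.

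For each $N\geq 0$, let $V^{(N)}\subset \CDO_n^{(N)}$ be the subspace spanned by monomials $c_M b_L$ in which every Fourier index appearing in $M$ and $L$ is strictly negative. Since each such index contributes at least $1$ in absolute value to the conformal dimension, the constraint $|L|+|M|=-N$ forces at most $N$ factors, so $V^{(N)}$ is finite-dimensional. Any basis monomial of $\CDO_n^{(N)}$ factors uniquely as a polynomial in the generators $c_0^j$ times an element of $V^{(N)}$; since this polynomial lies in $\cO_n\cong\CDO_n^{(0)}$ and acts on $\CDO_n^{(N)}$ via the $(-1)$-Fourier mode product, we obtain a natural $\cO_n$-linear isomorphism $\CDO_n^{(N)}\cong \cO_n\otimes_\CC V^{(N)}$, and base change along $\cO_n\hookrightarrow \hO_n$ then yields the desired identification $\hCDO_n^{(N)}\cong \hO_n\otimes_\CC V^{(N)}$.

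The subspace $V^{(N)}$ is $\GL_n$-stable because $\GL_n$ acts by vertex algebra automorphisms and hence commutes with the translation operator $T$; combined with the identities $T c_m^j=-(m-1)c_{m-1}^j$ and $T b_l^j=-l\,b_{l-1}^j$, this propagates the action (\ref{GLact1})--(\ref{GLact2}) on $c_0^j$ and $b_{-1}^j$ to all $c_m^j$ with $m<0$ and $b_l^j$ with $l<-1$ by the same linear formulas on the index $j$. Equivariance of the isomorphism itself reduces to the identity $(A\cdot f)(c_0)=A\cdot f(c_0)$ for $f\in\hO_n$ and $A\in\GL_n$, which follows directly from the pullback definition of the $\GL_n$-action on $\hO_n$ combined with (\ref{GLact1}). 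No step here presents a genuine obstacle once Proposition \ref{hc str on cdo} is in hand; the bookkeeping is essentially identical to that used for the associated graded in Lemma \ref{lemma GK vert gr}, and indeed the argument could be shortcut by observing that the filtration defining $\Gr\,\hCDO_n$ does not alter the underlying $\hO_n$-module or $\GL_n$-representation structure on each conformal-dimension piece, so the identification transfers immediately from that lemma.
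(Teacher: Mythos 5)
Your proposal is correct and follows essentially the same route as the paper: the action data is supplied by Proposition \ref{hc str on cdo}, and the finiteness presentation $\hCDO_n^{(N)} \cong \hO_n \otimes_\CC V^{(N)}$ is obtained by the same bookkeeping used for $\Gr\,\hCDO_n$ in Lemma \ref{lemma GK vert gr}, which is exactly the shortcut you note at the end. Your explicit construction of $V^{(N)}$ as the span of monomials with strictly negative Fourier indices just spells out what the paper leaves implicit, and the identification you need is only one of $\GL_n$-representations (as Definition (\ref{GK vert 2}) requires), so the non-associativity of the $(-1)$-product action flagged in the paper's subsequent remark does not affect your argument.
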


\begin{proof} We have already seen that $\hCDO_n$ has the structure of a $(\TVect, \GL_n)$-module. The 
same argument as in Lemma \ref{lemma GK vert gr} shows that the spaces of fixed conformal dimension can be expressed as in Equation (\ref{GK vert 2}). 
\end{proof}

\begin{rmk} 
As noted following Remark \ref{rmk:notations}, we do {\em not} require that a Gelfand-Kazhdan vertex algebra $\cV$
be $\hO_n$-linear.
The underlying vector space of $\cV$ is an $\hO_n$-module but  vertex algebra operations do not preserve that action.
\end{rmk}

\subsection{Descending Gelfand-Kazhdan vertex algebras} 

We show how Section \ref{gendescent} carries over to
Gelfand-Kazhdan descent for the categories of equivariant vertex
algebras defined in the previous section. We will perform both an
extended and non extended version of descent.

For a Gelfand-Kazhdan vertex algebra $\cV$ we define the sheaf $\sdesc_{\GK} (\cV)$ of
vertex algebras on the category $\GK_n$, and hence on the category
${\rm Hol}_n$. For now, let's fix a Gelfand-Kazhdan structure $\sigma$ on $X$. It will be
evident that all constructions are still functorial in this parameter.  

For each $N \geq 0$ we have a decomposition $\cV^{(N)} = \hO_n
\tensor_\CC V^{(N)}$ and hence a filtration on $\cV^{(N)}$ coming
from the vanishing order of jets. Thus, applying the same
construction as in Section~\ref{sec gk descent}, 
we obtain the pro-vector
bundle $\Fr_X \times^{\GL_n} \cV^{(N)}$. 
Since the action of $(\Vect,
\GL_n)$ preserves the $\ZZ_{\geq 0}$ grading, 
it is a pro-vector bundle equipped with a flat connection and hence we can define the
$\Omega^*(X)$-module 
\ben 
\bdesc(\sigma, \cV^{(N)}) = \left( \left(\Omega^*(\Fr_X) \tensor
    \cV^{(N)}\right)_{bas}, \d_{dR} + \omega_\sigma\right)  .
\een 
We now sum over all dimension spaces to obtain the $\Omega^*(X)$-module
\ben
\bdesc(\sigma, \cV) := \bigoplus_{N \geq 0} \bdesc(\sigma, \cV^{(N)})
\een

\begin{lemma} 
For any Gelfand-Kazhdan vertex algebra $\cV$ and Gelfand-Kazhdan structure
$(X,\sigma)$, the $\Omega^*(X)$-module $\bdesc(\sigma, \cV)$ has the
structure of a $\ZZ_{\geq 0}$-graded dg vertex algebra over
$\Omega^*(X)$. 
\end{lemma}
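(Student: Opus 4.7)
The plan is to reduce the statement to the general descent principle established earlier in this section for $(\fg,K)$-equivariant vertex algebras, and then verify that the finiteness hypothesis on a Gelfand-Kazhdan vertex algebra makes that machine applicable even though each $\cV^{(N)}$ is infinite-dimensional. Recall from the definition that $\cV$ carries a compatible action of the pair $(\Vect,\GL_n)$ such that $\cV^{(N)} \cong \hO_n \otimes_\CC V^{(N)}$ with $V^{(N)}$ finite-dimensional. The filtration of $\hO_n$ by powers of the maximal ideal $\fm_n$ identifies each $\cV^{(N)}$ with an inverse limit of finite-dimensional $(\Vect_{n,k}, \Aut_{n,k})$-modules, so Gelfand-Kazhdan descent (Section \ref{sec gk descent}) gives a pro-vector bundle $\bdesc(\sigma,\cV^{(N)})$ on $X$ with flat connection $\nabla^{\Fr_X,\cV^{(N)}} = \d_{dR} + \rho_{\Vect}(\omega_\sigma)$. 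Summing over $N$ and using the graded character of $\cV$ gives a $\ZZ_{\geq 0}$-graded $\Omega^*(X)$-module.

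Next I would transport the vertex algebra structure maps to the descent complex. The vacuum $|0\rangle \in \cV^{(0)}$ is $(\Vect,\GL_n)$-invariant (from Definition \ref{graction}), hence yields a basic $0$-form and thus a canonical element of $\bdesc(\sigma,\cV)^{(0)}$. The translation operator $T$ is $(\Vect,\GL_n)$-equivariant, so $1 \otimes T$ restricts to a degree $1$ map on basic forms, preserving the conformal grading. For the vertex operator, equivariance of $Y(-;z)$ under the pair, together with $\Omega^*(X)$-bilinearity, yields
\[
Y(-;z) : \bdesc(\sigma,\cV) \otimes_{\Omega^*(X)} \bdesc(\sigma,\cV) \to \bdesc(\sigma,\cV)((z)),
\]
and the mode $v_{(n)}$ shifts conformal weight by $N - n - 1$ as required. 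The conformal grading ensures that only finitely many modes contribute to any fixed weight, so all sums are well-defined in the pro-category.

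The heart of the argument is that the connection $\nabla^{\Fr_X,\cV}$ is a vertex algebra derivation of $\bdesc(\sigma,\cV)$. For this one decomposes $\nabla^{\Fr_X,\cV} = \d_{dR} + \rho_{\Vect}(\omega_\sigma)$: the de Rham piece is manifestly a derivation of $\Omega^*(\Fr_X) \otimes \cV$ and commutes with the internal vertex operations on $\cV$; the term $\rho_{\Vect}(\omega_\sigma)$ is a $\cV$-derivation-valued $1$-form on $\Fr_X$, acting as a derivation because $\Vect$ acts on $\cV$ by vertex algebra derivations (by hypothesis). That $(\nabla^{\Fr_X,\cV})^2 = 0$ follows from the Maurer-Cartan equation satisfied by $\omega_\sigma$, exactly as in the scalar case. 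The main obstacle here is simply bookkeeping: verifying that the sum of a de Rham derivation and a $\omega_\sigma$-twisted derivation remains a genuine vertex algebra derivation at each Fourier mode. This reduces to the identity $[\d_{dR} + \rho(\omega_\sigma), Y(v;z)] = Y((\d_{dR} + \rho(\omega_\sigma)) v; z)$, which holds mode-by-mode from the equivariance hypothesis. Having extended the vacuum, translation, vertex operator, and differential to basic forms, one verifies the vertex algebra axioms hold essentially by $\Omega^*(X)$-linearity, completing the proof.
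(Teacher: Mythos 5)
Your proposal is correct and follows essentially the same route as the paper: the paper proves this by invoking the general descent principle from the subsection on general descent (where it is observed that $K$-invariants of a vertex algebra with $K$ acting by automorphisms form a sub vertex algebra, and that $\nabla^{P,V} = \d_{dR} + \rho_\fg(\omega)$ acts by vertex algebra derivations when $\fg$ acts by derivations), applied weight-space by weight-space using the finiteness condition $\cV^{(N)} \cong \hO_n \otimes_\CC V^{(N)}$ to make the pro-vector-bundle descent well-defined. Your additional bookkeeping on the vacuum, translation operator, and mode-by-mode derivation identity fills in details the paper leaves implicit, but the underlying argument is the same.
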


Thus, we obtain a sheaf of $\ZZ_{\geq 0}$-graded vertex algebras
$\sdesc(\sigma, \cV)$ by
taking flat sections. It is clear the construction is natural in the choice of a GK
structure so that we obtain a sheaf $\sdesc(\cV)$ of vertex algebras
on the category $\Hol_n$. 

\subsubsection{Classical limit of the sheaf of CDOs} 

In the example of the Gelfand-Kazhdan vertex algebra $\Gr \; \hCDO_n$ we denote the descent object by
\ben
\Gr \; \CDO_X := \sdesc((X,\sigma) ; \Gr \; \hCDO_n) .
\een
This is a sheaf of vertex algebras defined on any complex
manifold. As we remarked above, functoriality of the construction
implies that we have a sheaf of vertex algebra $\Gr \; \CDO$ defined
on the {\em category} ${\rm Hol}_n$.

Moreover, as the action of the pair $(\Vect, \GL_n)$
preserves the Poisson structure. This shows that $\Gr \; \CDO$ is actually a sheaf of Poisson
vertex algebras. 

\subsubsection{Extended descent for vertex algebras}

The construction for extended formal vertex algebras is similar, this
time we use the bundle of extended coordinates constructed in Section
\ref{sec ext desc}. 

Let us fix an extended Gelfand-Kazhdan structure $(X, \alpha,
\sigma,\sigma_{\Omega^2})$, that we simply denote by $\Tilde{\sigma}$,
and an extended Gelfand-Kazhdan vertex algebra
$\cV$. 

By construction, each dimension space $\cV^{(N)} = \hO_n \tensor_\CC
V^{(N)}$ has an action of $(\TVect, \GL_n)$ and hence we can form the
pro-vector bundle $\Fr_X \times^{\GL_n} \cV^{(N)}$ that is equipped
with a flat connection. The de Rham complex is the
$\Omega^*(X)$-module
\ben
\Tilde{\bdesc}(\Tilde{\sigma}, \cV^{(N)}) =
\left(\left(\Omega^*(\Fr_X) \tensor \cV^{(N)}\right)_{bas}, \d_{dR} +
  \Tilde{\omega}^\alpha_{\sigma, \sigma_{\Omega^2}}\right) .
\een 
Again, by summing over spaces of fixed conformal dimension we obtain the
$\Omega^*(X)$-module
\ben
\Tilde{\bdesc}(\Tilde{\sigma}, \cV) = \bigoplus_{N \geq 0}
\Tilde{\bdesc}(\Tilde{\sigma}, \cV^{(N)}) .
\een

The same proof as above carries over with minor modifications to show.

\begin{lemma} For any extended Gelfand-Kazhdan structure
  $\Tilde{\sigma}$ and extended Gelfand-Kazhdan vertex algebra $\cV$ the
  $\Omega^*(X)$-module $\bdesc(\Tilde{\sigma}, \cV)$ is a $\ZZ_{\geq
    0}$-graded dg vertex algebra over $\Omega^*(X)$. 
\end{lemma}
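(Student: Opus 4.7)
The plan is to adapt verbatim the proof of the preceding non-extended lemma, since the only conceptual change is the replacement of the pair $(\Vect,\GL_n)$ by the extension $(\TVect,\GL_n)$ and the connection one-form $\omega_\sigma$ by $\Tilde{\omega}^{\alpha}_{\sigma,\sigma_{\Omega^2}}$. Concretely, I would first put a $\ZZ_{\geq 0}$-graded vertex algebra structure on the graded $\Omega^*(X)$-module
\[
\Omega^*(\Fr_X) \widehat{\tensor} \cV = \bigoplus_{N\geq 0} \Omega^*(\Fr_X) \widehat{\tensor} \cV^{(N)}
\]
by extending the vertex operator on $\cV$ $\Omega^*(\Fr_X)$-linearly in the form-slot, i.e.\ $Y(\alpha\tensor v;z)(\beta\tensor w) := (-1)^{|\beta||v|}(\alpha\wedge\beta)\tensor Y(v;z)w$, with vacuum $1\tensor|0\rangle$ and translation operator $1 \otimes T$. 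The vertex algebra axioms are preserved termwise because wedging of forms is graded commutative and associative, and fixed conformal dimension pieces on the right-hand side match those on the left.

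Next I would show the subspace $\left(\Omega^*(\Fr_X) \widehat{\tensor} \cV^{(N)}\right)_{bas}$ of basic forms is closed under all vertex operations. This uses that $\GL_n$ acts by $\ZZ_{\geq 0}$-preserving vertex algebra \emph{automorphisms} on $\cV$ (Definition \ref{graction}(2) applied to the extended pair): a product of $\GL_n$-invariant elements is $\GL_n$-invariant, and contracting with a vertical vector field vanishes on $\alpha\wedge\beta$ as soon as it vanishes on either factor. This yields a sub $\ZZ_{\geq 0}$-graded vertex algebra
\[
\Tilde{\bdesc}(\Tilde\sigma,\cV) \subset \Omega^*(\Fr_X) \widehat{\tensor} \cV
\]
in the symmetric monoidal category of graded $\Omega^*(X)$-modules.

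The remaining point is that the connection differential
\[
\nabla = \d_{dR} + \rho_{\TVect}\!\left(\Tilde{\omega}^{\alpha}_{\sigma,\sigma_{\Omega^2}}\right)
\]
is a vertex algebra derivation of cohomological degree $1$ that preserves conformal dimension and squares to zero. The de Rham piece $\d_{dR}$ only touches the form slot, so trivially satisfies the Leibniz rule with respect to $Y$. For the second summand, by Definition \ref{graction}(1) each element of $\TVect$ acts on each $\cV^{(N)}$ as a vertex algebra derivation preserving $N$; tensoring this action with wedging by the $\TVect$-valued one-form $\Tilde{\omega}^{\alpha}_{\sigma,\sigma_{\Omega^2}}$ gives a derivation of cohomological degree $1$ on $\Omega^*(\Fr_X)\widehat{\tensor}\cV$ that is again dimension-preserving. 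The square $\nabla^2$ equals the action of $\d_{dR}\Tilde{\omega}^{\alpha}_{\sigma,\sigma_{\Omega^2}} + \tfrac{1}{2}[\Tilde{\omega}^{\alpha}_{\sigma,\sigma_{\Omega^2}},\Tilde{\omega}^{\alpha}_{\sigma,\sigma_{\Omega^2}}]$, which vanishes by the Maurer-Cartan equation built into Definition \ref{holgk}. Combining these observations, $\nabla$ endows $\Tilde{\bdesc}(\Tilde\sigma,\cV)$ with the structure of a dg $\ZZ_{\geq 0}$-graded vertex algebra over $\Omega^*(X)$.

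The only step requiring care --- and the main (mild) obstacle --- is verifying that the $\TVect$-action genuinely extends to a derivation of the \emph{completed} tensor $Y$-operations; this is where the finiteness hypothesis $\cV^{(N)} = \hO_n\tensor_\CC V^{(N)}$ in the definition of an extended Gelfand-Kazhdan vertex algebra enters, since it guarantees that each conformal dimension piece is locally finitely generated so that Fourier modes of basic sections remain basic and the derivation property passes through the completion in the form and module directions separately.
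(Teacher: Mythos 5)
Your proposal is correct and follows essentially the same route as the paper: the paper's own justification is simply that ``the same proof as above carries over with minor modifications,'' where the general mechanism (basic forms closed under the vertex operations because $\GL_n$ acts by vertex algebra automorphisms, and $\nabla = \d_{dR} + \rho(\Tilde{\omega}^\alpha_{\sigma,\sigma_{\Omega^2}})$ a square-zero vertex algebra derivation because the extended pair acts by derivations and the connection is flat) is exactly the content of the descent proposition in Section \ref{gendescent}. Your write-up just makes that argument explicit for the extended pair, so there is nothing to add.
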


We obtain a sheaf of $\ZZ_{\geq 0}$-graded vertex algebras by taking
flat sections that we denote $\Tilde{\sdesc}(\Tilde{\sigma}, \cV)$. Again,
the construction is natural in the extended Gelfand-Kazhdan structure
so we obtain a sheaf of $\ZZ_{\geq 0}$-graded vertex algebras
$\Tilde{\sdesc}(\cV)$ on the category $\Tilde{\Hol}_n$. 

\subsubsection{The sheaf of CDOs}

We are finally able to to define the central object of study in this
work. 

\begin{dfn} Let $X$ be a complex manifold together with a
  trivialization $\alpha$ of $\ch_2(T_X)$. The {\em sheaf of chiral
    differential operators} on $X$ is the sheaf of vertex algebras
\ben
\CDO_{X,\alpha} := \Tilde{\sdesc}_{\GK} (X, \alpha ; \hCDO_n) .
\een
\end{dfn} 

\begin{rmk} 
The descent functor $\Tilde{\desc}_{\GK}$ depends on the
  choice of an extended Gelfand-Kazhdan structure and not just a
  trivialization of $\ch_2$. But, as we have already mentioned, the
  sheaf of flat sections does not depend on such a choice so we omit
  it from the notation. 
\end{rmk}

This definition of chiral differential operators via Gelfand-Kazhdan
formal geometry is similar in spirit to the formulation of the chiral
de Rham complex in \cite{MSV}. There, one defines the sheaf in a
similar way as above though with the non-extended pair $(\Vect,
\GL_n)$ (this pair indeed acts on the affine chiral de Rham vertex
algebra). We hope that the above constructions reflect systematically
how one can handle descent for objects that require extending the
usual action of formal automorphisms and derivations on the formal
disk. 

The compatibility of the Gelfand-Fuks-Chern class $\ch_2(\hT_n)$ and
the group cocycle $\alpha_{GMS}$ 
shows how our definition of chiral differential operators is related
to the original definition given in~\cite{GMS}.

\subsubsection{The conformal structure}

We address the conformal structure for the sheaf of chiral differential operators. We have already notes that the vertex algebra $\hCDO_n$ has the structure of a conformal vertex algebra of charge $c = 2n$. This conformal structure, however, is not compatible with the action of $\TVect$ on CDO's on the formal disk. Indeed, Proposition \ref{prop c1 conformal} implies that the obstruction for these structures to be compatible is the first Gelfand-Fuks-Chern class~$c_1^{\GF}(\hT_n)$. 

The conformal vector $L_{-2} \in \hCDO_n^{(2)}$ is preserved, however, by the action of $\GL_n$. Thus, the map of vertex algebras $\Phi : {\rm Vir}_{c=2n} \to \hCDO_n$, encoding the conformal structure, determines a map of graded $\Omega^\#_X$-modules 
\ben
\Phi : \Omega^\#(X) \tensor {\rm Vir}_{c=2n} \to \left(\Omega^\#(\Fr_X) \tensor \hCDO_n\right)_{bas} .
\een 
Now, the action of $\TVect$ on ${\rm Vir}_{c=2n}$ is trivial. So, when we equip the left-hand side with the differential $\d_{dR} + \Tilde{\omega}_{\sigma, \sigma_{\Omega^2}}$ coming from a fixed extended Gelfand-Kazhdan structure we obtain the constant $\Omega^*_X$-module $\Omega^*_X \tensor {\rm Vir}_{c=n}$. Thus, the sheaf obtained via descent $\sdesc(X ; {\rm Vir}_{c=2n}) = \ul{\rm Vir}_{c=2n}$ is just the constant sheaf. 

The right hand side also has a natural differential $\d_{dR} + \Tilde{\omega}_{\sigma, \sigma_{\Omega^2}}$ coming from a fixed extended Gelfand-Kazhdan structure making it a $\Omega^*_X$-module. The calculation of Proposition \ref{prop c1 conformal} implies that the failure for $\Phi$ to be a map of $\Omega^*_X$-modules is the image of the cocycle $c^\GF_1(\hT_n)$ under the characteristic map of the Gelfand-Kazhdan structure. This is precisely the usual first Chern class $c_1(T_X)$. We have arrived at the following. 

\begin{prop}\label{prop conformal cdo} 
Let $\alpha$ be a trivialization of $\ch_2(T_X)$ and suppose that $c_1(T_X) = 0$ in $H^1(X ; \Omega^1_X)$. Then there exists a map of sheaves of vertex algebras on $X$
\ben
\Phi : \ul{\rm Vir}_{c=2n} \to \CDO_{X,\alpha} . 
\een
In other words, in the case that $c_1(T_X) = 0$ the sheaf of chiral differential operators has a global Virasoro vector. 
\end{prop}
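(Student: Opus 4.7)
The plan is to apply extended Gelfand-Kazhdan descent to the local conformal map $\Phi_{\rm Vir} : {\rm Vir}_{c=2n} \to \hCDO_n$ from Section \ref{sec conformal structure} and then correct the failure of equivariance using the hypothesized trivialization of $c_1(T_X)$. First I would equip ${\rm Vir}_{c=2n}$ with the trivial $(\TVect,\GL_n)$-action; under extended descent this produces the constant sheaf $\ul{{\rm Vir}_{c=2n}}$, as noted just before the statement. Since $L_{-2} = \sum_i b^i_{-1} c^i_{-1}$ is manifestly $\GL_n$-invariant (the $\GL_n$-actions \eqref{GLact1} and \eqref{GLact2} on $\mathbf{b}_{-1}$ and $\mathbf{c}_0$ are mutually contragredient), the map $\Phi_{\rm Vir}$ is $\GL_n$-equivariant. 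Hence passing to basic forms gives a map of graded $\Omega^\#_X$-modules
\[
\widetilde{\Phi} \colon \Omega^\#_X \otimes {\rm Vir}_{c=2n} \;\longrightarrow\; \Tilde{\bdesc}_{\GK}\bigl(\Tilde{\sigma},\hCDO_n\bigr),
\]
which intertwines the vertex algebra structure maps, because those maps only use $\GL_n$-equivariant operations on the target.

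Next I would analyze the failure of $\widetilde{\Phi}$ to intertwine differentials, where the domain carries the trivial connection and the target carries $\d_{dR} + \Tilde{\omega}^\alpha_{\sigma,\sigma_{\Omega^2}}$. Because closed two-forms act trivially on $L_{-2}$, this failure is determined entirely by the action of $\Vect$ on $L_{-2}$, and Proposition \ref{prop c1 conformal} identifies the resulting obstruction cocycle with $c_1^{\GF}(\hT_n) \in \clie^1(\Vect,\GL_n; \hOmega^1_n)$. Under the characteristic map associated to the Gelfand-Kazhdan structure, this descends to $c_1(T_X) \in H^1(X; \Omega^1_X)$. The hypothesis $c_1(T_X) = 0$ therefore provides a global $(1,0)$-form $\eta$ on $X$ whose class represents the descent of the obstruction at the cochain level; equivalently, pulled back to $\Fr_X$ it yields a basic one-form whose $\bar\partial$ equals the image of $c_1^{\GF}(\hT_n)$.

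I would then use $\eta$ to manufacture a corrected conformal vector. Via the embedding $\tau_{\Omega^1}\colon \hOmega^1_n \hookrightarrow \hCDO_n^{(1)}$ of Section \ref{sec hc cdo}, $\eta$ pulls back to a dimension-one element $\delta \in \Tilde{\bdesc}_{\GK}(\Tilde\sigma, \hCDO_n)^{(1)}$, and I would set
\[
L'_{-2} := \widetilde{\Phi}(L_{-2}) + T(\delta).
\]
By construction, the defect $(\d_{dR} + \Tilde\omega)L'_{-2}$ vanishes, so the assignment $L_{-2} \mapsto L'_{-2}$ determines an honest cochain map $\Omega^\#_X\otimes {\rm Vir}_{c=2n} \to \Tilde{\bdesc}_{\GK}(\Tilde\sigma,\hCDO_n)$. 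Taking zeroth cohomology (horizontal sections) yields the desired morphism of sheaves of vertex algebras $\Phi\colon \ul{{\rm Vir}_{c=2n}} \to \CDO_{X,\alpha}$.

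The main obstacle is the verification that $L'_{-2}$ still generates a Virasoro subalgebra of central charge $2n$, i.e.\ that the correction $T(\delta)$ does not spoil the OPE of $L$ with itself. This should reduce to the standard vertex-algebra observation that adding $T(a)$ for a primary dimension-one vector $a$ alters the stress-energy tensor by an ``improvement term'' which preserves the Virasoro relations and the central charge; one must check that $\delta$ is primary as a section of the descended vertex algebra, which follows from the fact that the one-forms $\tau_{\Omega^1}(\hOmega^1_n) \subset \hCDO_n^{(1)}$ are annihilated by the positive modes of $L_{-2}$. A secondary subtlety is the dependence of $L'_{-2}$ on the choice of primitive $\eta$: different primitives differ by a closed $(1,0)$-form, and the resulting $L'_{-2}$'s differ by $T$ of a closed element, hence define the same map on $H^0$. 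Once these compatibilities are confirmed, the conclusion of the proposition follows.
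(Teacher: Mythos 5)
Your overall strategy is the same as the paper's: descend the $\GL_n$-equivariant map $\Phi_{\rm Vir}$ to a map of graded $\Omega^\#_X$-modules, identify the failure to intertwine the differentials with the descent of $c_1^{\GF}(\hT_n)$ via Proposition \ref{prop c1 conformal}, and conclude from $c_1(T_X)=0$. The paper essentially stops there; you go further by writing an explicit corrected conformal vector $L'_{-2} = \widetilde{\Phi}(L_{-2}) + T(\delta)$ with $\delta = \tau_{\Omega^1}(\eta)$, and by addressing whether $L'_{-2}$ still generates a Virasoro of charge $2n$. That last discussion is a genuine addition and is essentially sound: elements of $\tau_{\Omega^1}(\hOmega^1_n)$ are primary of conformal dimension one with regular self-OPE (they involve only the $c$'s), and for such $a$ the cubic poles in the $(L+Ta)$ self-OPE coming from $L_{(2)}(Ta)=2a$ and $(Ta)_{(2)}L=-2a$ cancel, while $(Ta)_{(3)}L$ and $(Ta)_{(3)}(Ta)$ vanish, so the Virasoro relations and the central charge are preserved.

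The gap is the sentence ``By construction, the defect $(\d_{dR}+\Tilde{\omega})L'_{-2}$ vanishes.'' This is not by construction. Proposition \ref{prop c1 conformal} only identifies the image of the defect under the second Fourier mode $(-)_{(2)} : \hCDO^{(2)}_n \to \Hom_{\hO_n}(\hT_n,\hO_n)\cong \hOmega^1_n$ with $c_1^{\GF}(\hT_n)$; it does not show that the full element $X\cdot L_{-2}\in\hCDO^{(2)}_n$ lies in the image of $T\circ\tau_{\Omega^1}$, nor that it equals $T\tau_{\Omega^1}(\alpha(X))$ on the nose. Indeed, the formula displayed in the paper's proof of Proposition \ref{prop c1 conformal}, $X\cdot L_{-2} = -(\partial_k f_i)(c)\,Tc^k_0\,b^i_{-1}$, contains a factor of $b^i_{-1}$, whereas $T(\delta)$, its image under $\Tilde{\omega}$, and its $\d_{dR}$ involve only the $c$'s; these sit in different steps of the filtration by $b$-degree and cannot cancel. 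To close the gap you must first show that the $b$-linear (``classical Lie derivative'') part of $X\cdot L_{-2}$ cancels --- which should follow from the coordinate invariance of the tautological pairing $\langle\beta,\partial\gamma\rangle$ --- so that the surviving defect is the purely normal-ordering term valued in the $c$-only part of $\hCDO^{(2)}_n$, and then match that term explicitly with $(\d_{dR}+\Tilde{\omega})\bigl(T\tau_{\Omega^1}(\eta)\bigr)$. Without this computation the specific form of the correction term is unjustified, even though the existence statement it is meant to establish is the one the paper asserts.
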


\subsubsection{The Witten genus}

It is well-known \cite{BorLib, Cheung} that the character of the sheaf of chiral differential operators equals, up to a factor, the Witten genus of the complex manifold. In this section, we remark on how to recover this fact using the construction of chiral differential operators via Gelfand-Kazhdan descent.

Recall, in Section \ref{sec vert character} we have defined the formal graded character of a vertex algebra. For $\cV = \oplus_{N \geq 0} \cV^{(N)}$ a Gelfand-Kazhdan vertex algebra, it is the element 
\ben
\chi^{(\Vect, \GL_n)} (\cV) = \sum_{N \geq 0} q^N \left(\Td^\GF \cdot \ch^{\GF}(\cV^{(N)})\right) \in \clie(\Vect ; \hOmega_n^{-*})[[q]] .
\een 

Given any sheaf of $\ZZ_{\geq 0}$-graded vertex algebras $\cV_X$ on a manifold $X$, one defines the character as follows. Note that the sheaf cohomology $H^*(X ; \cV_X)$ has the structure of a graded vertex algebra (that is, a differential graded vertex algebra with zero differential). In particular, it is a $\ZZ/2$-graded vertex algebra, with even part equal to $H^{ev}(X ; \cV_X)$ and odd part equal to $H^{odd}(X; \cV_X)$. The character of $\cV_X$ is the super character of $H^*(X ; \cV_X)$. That is,
\ben
\chi (\cV_X) := \sum_{N \geq 0} q^N \left(\dim(H^{ev}(X ; \cV_X^{(N)})) - \dim(H^{odd}(X ; \cV_X^{(N)})) \right) .
\een

\begin{lemma}\label{lemma local to global char} Fix a Gelfand-Kazhdan structure $(X,\sigma)$ and let $\cV$ be a Gelfand-Kazhdan vertex algebra one has
\ben
\chi\left(\sdesc(X ; \cV)\right) = \int_X {\rm char}_\sigma \left( \chi^{(\Vect, \GL_n)} (\cV)\right)  \in \CC [[q]]
\een
where ${\rm char}_\sigma : H^*_{\rm Lie} (\Vect, \GL_n ; \hOmega^{-*}_n) \to H^*(\Omega^{-*}_X)$ is the characteristic map associated to the Gelfand-Kazhdan structure extended $q$-linearly. 
\end{lemma}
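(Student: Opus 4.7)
The plan is to reduce the identity to a conformal-dimension-by-conformal-dimension computation in which Hirzebruch-Riemann-Roch matches the Euler characteristic side with the integral of a characteristic class, and then to use the naturality of Gelfand-Kazhdan descent (in the form already established for the Gelfand-Fuks Chern character, Proposition \ref{atiyahprop1}) to identify that characteristic class with the image of $\chi^{(\Vect,\GL_n)}(\cV)$ under $\mathrm{char}_\sigma$.

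First I would extract the coefficient of $q^N$ on each side. On the left, by definition of the graded character and the fact that $\bdesc(\sigma, \cV^{(N)})$ is the Dolbeault resolution of the holomorphic vector bundle $\cV^{(N)}_X := \sdesc(X;\cV^{(N)})$, the coefficient equals the holomorphic Euler characteristic $\chi(X,\cV^{(N)}_X)$. The finiteness condition $\cV^{(N)} = \hO_n \otimes_\CC V^{(N)}$ with $V^{(N)}$ finite-dimensional guarantees that $\cV^{(N)}_X$ is a holomorphic vector bundle of finite rank, so Hirzebruch-Riemann-Roch applies and yields
\[
\chi(X,\cV^{(N)}_X) = \int_X \ch(\cV^{(N)}_X)\cdot\Td(T_X).
\]

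Second, I would identify the HRR integrand with $\mathrm{char}_\sigma\bigl(\chi^{(\Vect,\GL_n)}(\cV^{(N)})\bigr)$. The naturality of Gelfand-Kazhdan descent, together with the cocycle-level description of the Atiyah class from Proposition \ref{atiyahprop1}, gives $\ch(\cV^{(N)}_X) = \mathrm{char}_\sigma(\ch^{\GF}(\cV^{(N)}))$ (this is the reason $\ch^{\GF}$ was named as it was). Applying the same reasoning to the adjoint formal vector bundle $\hT_n$, whose descent is the holomorphic tangent bundle $T_X$, identifies $\Td(T_X)$ with $\mathrm{char}_\sigma(\Td^{\GF}(\Vect[1]))$ under the standard Koszul-sign convention relating the Todd class of $\Vect[1]$ as a $(\Vect,\GL_n)$-module to that of $\hT_n$. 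Because $\mathrm{char}_\sigma$ is a morphism of commutative dg algebras and hence multiplicative, we get
\[
\mathrm{char}_\sigma(\chi^{(\Vect,\GL_n)}(\cV^{(N)})) = \mathrm{char}_\sigma(\Td^{\GF}(\Vect[1]))\cdot\mathrm{char}_\sigma(\ch^{\GF}(\cV^{(N)})) = \Td(T_X)\cdot\ch(\cV^{(N)}_X).
\]

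Third, integrating both sides of this equality over $X$ and combining with HRR produces the coefficient equality for each $N$, and summing against $q^N$ delivers the stated identity in $\CC[[q]]$.

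The main obstacle is the verification that, under the characteristic map of a Gelfand-Kazhdan structure, $\Td^{\GF}(\Vect[1])$ maps to the genuine Todd class $\Td(T_X)$. This is really a statement that the abstract Atiyah class $\At^{(\Vect,\GL_n)}$ of $\Vect[1]$ in relative Lie algebra cohomology is sent by $\mathrm{char}_\sigma$ to the Dolbeault representative of the Atiyah class of $T_X$; once this compatibility is established (and it is essentially Proposition \ref{atiyahprop1} applied to $\hT_n$, with the shift matching the convention that the ``tangent bundle of $B\fg$'' is $\fg[1]$), every Chern-Weil polynomial (Chern character, Todd class, and hence the Euler class $\chi^{(\Vect,\GL_n)}$) descends on the nose, and the rest of the proof is formal.
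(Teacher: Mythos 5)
Your proof is correct and follows essentially the same route as the paper's: apply (Hirzebruch/Grothendieck-)Riemann-Roch to each conformal-dimension summand and then recognize the integrand $\Td_X\cdot\ch(\cV^{(N)}_X)$ as the image of the Gelfand-Fuks Euler class under $\mathrm{char}_\sigma$. The paper's proof is a two-line version of exactly this argument; your additional care in justifying that $\ch^{\GF}$ and $\Td^{\GF}$ descend to the ordinary Chern character and Todd class via Proposition \ref{atiyahprop1} and the multiplicativity of $\mathrm{char}_\sigma$ is a reasonable expansion of what the paper leaves implicit.
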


\begin{proof} As a consequence of Grothendieck-Riemann-Roch for sheaves on $X$, we have
\ben
\chi\left(\sdesc(X ; \cV)\right) =  \int_X \sum_{N \geq 0} q^N \Td_X \cdot \ch(\desc(X ; \cV^{(N)}))  .
\een 
The integrand on the right-hand side is precisely the image of the class $\chi^{(\Vect, \GL_n)}(\cV)$ under the characteristic map associated to the Gelfand-Kazhdan structure. 
\end{proof}

Similarly, if $\cV$ is an extended Gelfand-Kazhdan vertex algebra and $\Tilde{\sigma} = (X,\alpha, \sigma, \sigma_{\Omega^2})$ is an extended Gelfand-Kazhdan structure then one has
\ben
\chi\left(\Tilde{\sdesc}(X ; \cV)\right) = \int_X \Tilde{\rm char}_{\Tilde{\sigma}} \left( \chi^{(\TVect, \GL_n)} (\cV)\right)  \in \CC [[q]]
\een
where $\Tilde{\rm char}(-)$ is the extended characteristic map $H_{\rm Lie}^*(\TVect, \GL_n ; \hOmega^{-*}_n) \to H^*(\Omega^{-*}_X)$ extended $q$-linearly.  

As a corollary, we recover the appearance of the Witten genus as the character of chiral differential operators on $X$. Recall, the {\em Witten class} of a manifold $X$ with ${\rm ch}_2(T_X) = 0$, is defined (see \cite{WittenGenus1, WittenGenus2}) as the following $q$-series valued in differential forms
\ben
{\rm Wit}(X,q) = \Hat{\rm A}_X \cdot \ch\left(\bigotimes_{l \geq 1} \Sym_{q^l} (\Omega^1_X \oplus T_X) \right) \left(\prod_{k \geq 1} (1-q^k)\right)^{2n} \in \Omega^{-*}_X [[q]]
\een
where $\Hat{\rm A}_X$ is the A-hat class of the tangent bundle of $X$. The {\em Witten genus} is obtained as the integral $\int_X {\rm Wit}(X,q)$ and is the $q$-expansion of a modular form. As an immediate consequence of our calculation in Proposition \ref{prop local character}, we obtain the well-known relation of the character and the Witten genus. 

\begin{prop}\label{prop char cdo} Let $\alpha$ be a trivialization of $\ch_2(T_X)$. The graded character of $\CDO_{X,\alpha}$ satisfies
\ben
\chi(\CDO_{X,\alpha}) = \left(\prod_{k \geq 1} (1-q^k) \right)^{-2n} \int_X e^{c_1(T_X)/2} {\rm Wit}(X, q) .
\een
\end{prop}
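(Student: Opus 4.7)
The plan is to combine the computation of the local equivariant character (Proposition \ref{prop local character}), the reduction of extended descent to ordinary descent (Corollary \ref{cor formal char cdo}), and the local-to-global character formula (Lemma \ref{lemma local to global char}), and then match the resulting geometric integrand with the Witten class by an elementary manipulation of characteristic classes.

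First, I would apply the extended version of Lemma \ref{lemma local to global char} to the vertex algebra $\hCDO_n$ and an extended Gelfand-Kazhdan structure $\tilde{\sigma} = (X, \alpha, \sigma, \sigma_{\Omega^2})$ witnessing $\CDO_{X,\alpha} = \tilde{\sdesc}_{\GK}(X,\alpha; \hCDO_n)$. This identifies $\chi(\CDO_{X,\alpha})$ with $\int_X \tilde{\mathrm{char}}_{\tilde\sigma}\bigl(\chi^{(\TVect,\GL_n)}(\hCDO_n)\bigr)$. Next, Corollary \ref{cor formal char cdo} says that $\chi^{(\TVect,\GL_n)}(\hCDO_n)$ is the pullback of a class $\chi^{(\Vect,\GL_n)}(\hCDO_n) \in \clie^*(\Vect,\GL_n;\hOmega^{-*}_n)[[q]]$, and the characteristic map factors through the non-extended one, so we may replace $\tilde{\mathrm{char}}_{\tilde\sigma}$ by $\mathrm{char}_\sigma$ applied to the non-extended class.

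Second, I would substitute Proposition \ref{prop local character}, giving
\[
\chi(\CDO_{X,\alpha}) \;=\; \int_X \mathrm{char}_\sigma\!\left(\Td^{\GF}\cdot\ch^{\GF}\!\left(\bigotimes_{l\geq 1}\Sym_{q^l}(\hOmega^1_n\oplus \hT_n)\right)\right).
\]
Since the characteristic map is a ring homomorphism sending $\hT_n \mapsto T_X$, $\hOmega^1_n \mapsto \Omega^1_X$, $\Td^{\GF}\mapsto \Td_X$, and $\ch^{\GF}\mapsto \ch$, and since it commutes with the formal operations $\Sym_{q^l}$ and tensor products on $q$-graded tensor constructions, the integrand becomes $\Td_X \cdot \ch\!\left(\bigotimes_{l\geq 1}\Sym_{q^l}(\Omega^1_X\oplus T_X)\right)$.

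Finally, I would use the standard identities $\Td_X = e^{c_1(T_X)/2}\hat{A}_X$ and the definition of the Witten class,
\[
\mathrm{Wit}(X,q) \;=\; \hat{A}_X\cdot\ch\!\left(\bigotimes_{l\geq 1}\Sym_{q^l}(\Omega^1_X\oplus T_X)\right)\cdot\left(\prod_{k\geq 1}(1-q^k)\right)^{2n},
\]
to rewrite
\[
\Td_X\cdot\ch\!\left(\bigotimes_{l\geq 1}\Sym_{q^l}(\Omega^1_X\oplus T_X)\right) \;=\; \left(\prod_{k\geq 1}(1-q^k)\right)^{-2n} e^{c_1(T_X)/2}\,\mathrm{Wit}(X,q),
\]
and the proposition follows after integrating over $X$. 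The one nontrivial check is the compatibility of the characteristic map with the infinite $q$-graded $\Sym$ construction, which follows conformal-weight-by-conformal-weight since each graded piece $\hCDO_n^{(N)}$ is finite-rank over $\hO_n$ and the descent construction respects this decomposition; the remaining factor $\prod(1-q^k)^{-2n}$ arises purely from the arithmetic rearrangement above, with no geometric content.
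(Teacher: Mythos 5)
Your proof is correct and follows essentially the same route as the paper: it combines Corollary \ref{cor formal char cdo}, Proposition \ref{prop local character}, and Lemma \ref{lemma local to global char} to reduce to $\int_X \Td_X \cdot \ch\bigl(\bigotimes_{l\geq 1}\Sym_{q^l}(\Omega^1_X\oplus T_X)\bigr)$, and then invokes $\Td_X = e^{c_1(T_X)/2}\Hat{\rm A}_X$ together with the definition of the Witten class. The only difference is that you make explicit the conformal-weight-by-conformal-weight compatibility check for the characteristic map, which the paper leaves implicit.
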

\begin{proof}
We have identified, in Corollary \ref{cor formal char cdo}, the $(\TVect, \GL_n)$-equivariant graded character of $\hCDO_n$ with the image of the class
\ben
\Td^{\GF} \cdot \ch^\GF \left( \bigotimes_{l \geq 1} \Sym_{q^l} (\hOmega^1_n \oplus \hT_n) \right) \in\clie^*(\Vect , \GL_n ; \hOmega^{-*}_n)[[q]]
\een
under the map $\clie^*(\Vect , \GL_n ; \hOmega^{-*}_n)[[q]] \to \clie^*(\TVect , \GL_n ; \hOmega^{-*}_n)[[q]]$. The image of this class under the characteristic map of the extended Gelfand-Kazhdan structure is $\Td_X \cdot \ch \left( \bigotimes_{l \geq 1} \Sym_{q^l} (\Omega^1_X \oplus T_X) \right)$. Thus, by Lemma \ref{lemma local to global char} we see that the graded character of $\CDO_{X,\alpha}$ is 
\ben
\chi(\CDO_{X,\alpha}) = \int_X \Td_X \cdot \ch \left( \bigotimes_{l \geq 1} \Sym_{q^l} (\Omega^1_X \oplus T_X) \right) .
\een
Finally, note that $\Td_X = e^{c_1(T_X)/2} \Hat{\rm A}_X$. 
\end{proof} 

\begin{rmk} 
We have already pointed out that in the case that $c_1(T_X) = 0$, 
the sheaf $\CDO_{X,\alpha}$ is a sheaf of conformal vertex algebras. 
Recall that the honest character of a vertex algebra is related to the graded character via ${\rm char}(V) = q^{-c/24} \chi(V)$, 
where $c$ is the central charge. 
Thus, in this case we have the following expression for the character of chiral differential operators:
\ben
{\rm char} (\CDO_{X,\alpha}) = \eta(q)^{-2n} \int_X {\rm Wit}(X, q) 
\een
where $\eta(q) = q^{1/24} \prod_{k \geq 1} (1-q^k)$ is the Dedekind $\eta$-function.
\end{rmk}

\newpage

\part*{Part II: The curved $\beta\gamma$ system and its factorization algebra}

\section{Overview}

The curved $\beta\gamma$ system is an elegant nonlinear $\sigma$-model,
attractive for both mathematical and physical reasons.
The source is a Riemann surface $S$ and the target is any complex manifold $X$.
The fields are $\gamma: S \to X$ a smooth function and 
$\beta \in \Omega^{1,0}(S,\gamma^*T^*_X)$ a $(1,0)$-form on $S$ 
with values in the pullback along $\gamma$ of the holomorphic cotangent bundle of $X$.
The action functional is 
\[\int_S \langle \beta \wedge \dbar \gamma \rangle, \]
where the brackets indicate that one uses the fiberwise evaluation pairing between the tangent bundle $T_X$ and the cotangent bundle $T^*_X$.
The equations of motion for this action are then
\[ 
\dbar \gamma = 0 = \dbar \beta. 
\]
In other words, a solution is a holomorphic map $\gamma$ from $S$ to $X$ along with a holomorphic 1-form $\beta$ on $S$ with values in the pullback along $\gamma$ of the cotangent bundle. 
(When $S$ admits a nowhere-vanishing holomorphic 1-form, such as when $S$ is an elliptic curve,
one can view the classical $\beta\gamma$ system as picking out holomorphic maps from $S$ to~$T^*X$.)

The quantization of the classical $\beta\gamma$ system is a chiral conformal field theory whose chiral algebra is the CDOs of $X$,
as explained by Witten \cite{WittenCDO} and Nekrasov \cite{Nek}.
To be more precise, they explain that the ``perturbative'' sector of the 
theory --- i.e., working around the constant solutions --- admits a quantization for $S = \CC$ only if $\ch_2(T_X)$ vanishes and that each choice of trivialization produces a quantization.
(To extend to arbitrary Riemann surfaces, one needs $c_1(T_X) = 0$ as well.)
They also interpret this theory as the half-twisted form of a $(0,2)$-supersymmetric $\sigma$-model.

Our goal in this part is to describe and quantize the curved $\beta\gamma$ system 
using the renormalization and BV machinery of \cite{CosBook} in combination with Gelfand-Kazhdan descent.
Applying the main theorem of \cite{CG2}, we then obtain a factorization algebra of quantum observables 
and extract from it a vertex algebra, which is the CDOs.
In other words, we develop mathematically the physical arguments and results in \cite{WittenCDO} and \cite{Nek}.
Along the way, we will see how aspects of those physical arguments, such as the anomalies,
appear in this BV formalism.

\begin{rmk}
In \cite{WG1,WG2} Costello already used this machinery to quantize the curved $\beta\gamma$ system,
but he uses a formalism of $L_\infty$ spaces rather than Gelfand-Kazhdan descent.
This approach does not lend itself as easily to direct comparison with CDOs, so far as we can tell,
and so we pursued another approach to encoding the target space, 
which is more explicitly analogous to techniques used by Kontsevich, Cattaneo-Felder, and others.
Strictly speaking, we do not rely upon Costello's results --- notably the $L_\infty$ space formalism --- and we show that our approach recovers his results when the target spaces (e.g., complex manifolds) 
are treatable with Gelfand-Kazhdan descent.
In practice, though, we borrow and re-purpose several lemmas, and 
we clearly take our inspiration from his work.
\end{rmk}

A key idea in our approach, which we learned from Costello's work, is to encode the $\sigma$-model as a gauge theory. 
This alternative presentation of the $\beta\gamma$ system, with the formal $n$-disk $\hD^n$ as the target,
makes it amenable to Gelfand-Kazhdan descent.
Our approach thus breaks into the following steps:
\begin{enumerate}
\item write the classical BV theory of the $\beta\gamma$ system as a gauge theory with a natural action of the Harish-Chandra pair $(\Vect,\GL_n)$,
\item analyze the obstruction ({\em aka} anomaly) to quantizing this gauge theory equivariantly with respect to the $(\Vect,\GL_n)$-action,
\item construct an Harish-Chandra extension of $(\Vect,\GL_n)$ via the obstruction and show that there is an equivariant quantization for this extended pair, and
\item describe the bundle of factorization algebras obtained by
  Gelfand-Kazhdan descent, for this extended pair, applied to the
  factorization algebra of quantum observables with target the formal $n$-disk.
\end{enumerate}
The strong parallels with the CDO story, as articulated in Part I, should be apparent here: 
in both cases, the classical situation works nicely with usual Gelfand-Kazhdan descent, but the quantum situation requires an extended version.
Indeed, the primary changes are that we replace vertex algebras with factorization algebras and that we use the BV formalism to produce the quantization, rather than a vertex algebra version of canonical quantization.
Both changes require a heavy use of homological machinery, and so it should be no surprise that we must allow homotopical actions of the Harish-Chandra pair $(\Vect,\GL_n)$ on cochain complexes and thus develop a homotopical version of Gelfand-Kazhdan descent.

Throughout this part, we work in the formalism developed in \cite{CosBook,CG1, CG2}
and refer to them liberally, not aiming to be self-contained here.
Nonetheless, we recall essential ideas and notations along the way and give detailed citations.

\section{A brief overview of derived deformation theory and $\L8$ algebras}
\label{sec DDT}

Throughout this part, we will use some homological constructions 
that are not wholly standard knowledge and 
can seem rather sophisticated upon first acquaintance.
The actual manipulations are straightforward and 
amount to exploiting several ways of writing maps between completed symmetric algebras, 
these ways being equivalent but distinct in flavor.
The reader familiar with $\L8$ algebras, Maurer-Cartan elements, twisting cocycles,
and so on, can safely skip this section.
For others, it will at least identify the tricks outside the complicated context in which we use them.
Our treatment is succinct and casual, and we cite \cite{LV, Hinich, LurieSAG, CG2}
for detailed treatments.

There is one important idea, and not just manipulation, connected with these constructions:
every dg Lie algebra describes a ``formal space'' (in some sense a moduli space parametrizing deformations of something), 
and conversely every formal space is described by some dg Lie algebra.
This idea is attributed to Deligne, Drinfeld, Quillen, Schlessinger-Stasheff, and others, 
and thanks to Lurie \cite{LurieSAG} and Pridham \cite{Pridham}, 
it has a precise incarnation in derived algebraic geometry, 
which provides a suitably sophisticated notion of ``space.''

Here we only need the following dictionary between a formal moduli space $\frak{X}$ and its associated dg Lie algebra $\fg_{\frak{X}}$:
\begin{itemize}
\item the dg algebra of functions $\cO(\frak{X})$ on $\frak{X}$ corresponds to $\clies(\fg_{\frak{X}})$, the Chevalley-Eilenberg cochains,
\item the dg coalgebra of distributions on $\frak{X}$ corresponds to ${\rm C}^{\Lie}_*(\fg_{\frak{X}})$, the Chevalley-Eilenberg chains,~and
\item the dg Lie algebra of vector fields on $\frak{X}$ corresponds to $\clies(\fg_{\frak{X}},\fg_{\frak{X}}[-1])$.
\end{itemize}
For us, the Chevalley-Eilenberg chains $\cliel_*(\fg)$ has underlying graded vector space $\Sym(\fg[1])$,
equipped with the standard coproduct where $\Delta(x) = x \otimes 1 + 1 \otimes x$ for $x \in \fg[1]$, 
and the differential $\d_{\cliel_*}$ is a degree one coderivation determined by  
\[
\d_{\cliel_*}(xy) = (\d_\fg x) y \pm x(\d_\fg y) + [x,y]
\]
for any $x,y \in \fg[1]$.
The Chevalley-Eilenberg cochains $\clies(\fg)$ is the linear dual, so the underlying graded algebra is the {\em completed}
symmetric algebra $\cSym(\fg^\vee[-1])$.
(One must be careful about duals with infinite-dimensional vector spaces. 
In practice our examples will be tamed by a topology and will mean the continuous linear dual.)
The last identification, for vector fields, might seem strange until one computes that 
the cochain complex of derivations of the algebra $\clies(\fg_{\frak{X}})$ has underlying graded Lie algebra 
$\cSym(\fg^\vee[-1]) \otimes \fg[1]$ with the bracket the usual Lie bracket for vector fields with power series coefficients.

Let us introduce a toy example that plays an important role for us. 

\begin{dfn}\label{dfn fgn}
Let $\fgn$ denote the dg Lie algebra $\CC^n[-1]$,
which consists of a copy of $\CC^n$ in cohomological degree 1
and hence has zero differential and zero bracket.
\end{dfn}

Under the dictionary we find
$$\clies(\fg_n) = \cSym(\fg_n^*[-1]) \cong \CC[[t_1,\ldots,t_n]] = \hO_n,$$
so that $\fg_n$ should encode the formal $n$-disk $\hD^n$.
Under the dictionary, we also find an isomorphism of vector fields,
$$\clies(\fg_n,\fg_n[-1]) = \cSym(\fg_n^*[-1]) \otimes \CC^n \cong \bigoplus_{j=1}^n \CC[[t_1,\ldots,t_n]] \frac{\partial}{\partial t_j} = \Vect,$$
which will be useful for us.

Given the dictionary, it is not unreasonable to imagine enlarging both sides a bit, 
by allowing $n$-ary brackets (not just binary brackets) on the Lie side
and by allowing arbitrary (co)derivations on the (co)commutative (co)algebra side.
On the Lie side, such objects are called $\L8$ algebras,
but we use the following definition, which has the dictionary built into it.

\begin{dfn}
An {\em $\L8$ algebra} $\fg$ is a graded vector space $V$ along with
a degree 1 coderivation $Q$ on the coaugmented cocommutative coalgebra
$\Sym(V[1])$ that preserves the coaugmentation and squares to zero.
Its {\em Chevalley-Eilenberg chains} $\clls(\fg)$ is the dg cocommutative
coalgebra $(\Sym(V[1]),Q)$.
\end{dfn}

A coderivation $Q$ is determined by how it maps to cogenerators, 
so in this case it is determined by the ``Taylor components''
\[
Q_n: \Sym^n(V[1]) \to V[1],
\]
which encode the $n$-ary brackets
\[
\ell_n^\fg: (\Lambda^n V)[n-1] \cong \Sym^n(V[1])[-1] \xto{Q_n[-1]} V
\]
after shifting.
A dg Lie algebra gives an $\L8$ algebra in which $\ell_n = 0$ for $n > 2$.
Thus, $\clls$ is a direct generalization of $\cliels$, recovering it on dg Lie algebras.

\begin{dfn}
Let $\fg$ and $\fg'$ be $\L8$ algebras.
A {\em map of $\L8$ algebras} $f : \fg \l8to \fg'$ means a map of coaugmented dg cocommutative algebras
\[
f: \clls(\fg) \to \clls(\fg').
\]
Note that every strict map of dg Lie algebra yields an $\L8$ map by applying the functor $\cliel_*$.
\end{dfn}

We use $\l8to$ to emphasize that $f$ is {\em not} a cochain map from $\fg$ to $\fg'$.
This notion of $\L8$ map allows for a succinct way of describing a map between dg Lie algebras up to coherent homotopy.

This notion also leads to a homotopy coherent version of a representation.

\begin{dfn}
For $M$ a dg vector space, an {\em $\L8$ action of $\fg$ on $M$} means a map of $\L8$ algebras
$\rho: \fg \l8to \End(M)$, where $\End(M)$ denotes the dg vector space of graded endomorphisms of $M$
with the commutator bracket. We also say $\rho$ makes $M$ a {\em $\L8$-representation} or {\em $\L8$-module}
for~$\fg$.
\end{dfn}

We unravel this definition as follows.
A map of $\L8$ algebras $\rho: \fg \l8to \End(M)$ is determined by the composite 
\[
\pi_{\Sym^1} \circ \rho: \Sym(\fg[1]) \to \End(M)[1] = \Sym^1(\End(M)[1]),
\]
as any coalgebra map is determined by how it maps to cogenerators.
Thus, we obtain a sequence of maps
\[
\rho_n: \Sym^n(\fg[1]) \otimes M \to M
\]
of degree $2-n$, which describe more concretely how $\fg$ acts on elements of~$M$.
This version of the data makes it manifest how to define the {\em Chevalley-Eilenberg chains of $M$},
$\clls(\fg,M)$, which generalizes the Lie algebra homology of a representation of a Lie algebra
and which thus encodes the coinvariants of the representation~$M$.

An $\L8$ algebra $\fg$ also possesses a {\em Chevalley-Eilenberg cochains} $\clus(\fg)$, 
which is the dg completed commutative algebra $(\cSym(V^\vee[-1]),Q^\vee)$.
When $\fg$ is a dg Lie algebra, this definition $\clus(\fg)$ recovers the usual cochains $\clies(\fg)$.
(One must be careful about what one means by the graded linear dual $V^\vee$
if $V$ is not finite-dimensional in each cohomological degree.
In practice our infinite-dimensional vector spaces are tamed by a topology.)
For each representation $M$, we also have the {\em Chevalley-Eilenberg cochains of $M$},
$\clus(\fg,M)$, which generalizes the Lie algebra cohomology of a representation of a Lie algebra
and which thus encodes the invariants of the representation~$M$.
We will use $\Der(\fg)$ to denote the $\clus(\fg,\fg[1])$, 
as it encodes the vector fields (or derivations) of $\fg$ viewed as a formal space.

It is often convenient to describe a map of $\L8$ algebras $f: \fg \l8to \fg'$ in two other ways:
\begin{enumerate}
\item a map of augmented dg commutative algebras $f^*: \clus(\fg') \to \clus(\fg)$~or
\item a Maurer-Cartan element $\alpha_f$ in the $\L8$ algebra $\clus(\fg) \otimes \fg'$.
\end{enumerate}
Let us explain what we mean in the second case.

First, observe that the tensor product $A \otimes \fg$ of a dg commutative algebra $A$ 
and $\L8$ algebra $\fg$ obtains a natural $\L8$ structure where
\[
\ell_n^{A \otimes \fg}(a_1 \otimes x_1, \ldots, a_n \otimes x_n) = \pm (a_1 \cdots a_n) \otimes \ell_n^{\fg}(x_1,\ldots,x_n).
\]
In other words, we use the commutative product of $A$ to multiply the $A$-components 
and we use the $\L8$ structure on $\fg$ to bracket the $\fg$-components.
This definition is just the extension to $\L8$ algebras of the familiar construction 
with commutative algebras and Lie algebras 
(e.g., recall why the sections of the adjoint bundle of a principal $G$-bundle form a Lie algebra).
Second, a Maurer-Cartan element of an $\L8$ algebra $\fg$ is a degree one element $\alpha$ such that
\[
\sum_{n \geq 1} \frac{1}{n!} \ell^{\fg}_n(\alpha, \cdots,\alpha) = 0.
\]
When $\fg$ is a dg Lie algebra, this recovers the standard definition.
(In principle, this infinite sum is ill-defined, but we always work in situations where only a finite sum appears.
Alternatively, one needs to introduce some mechanism to make the infinite sum well-defined, such as with a topology.)
Finally, a map of $\L8$ algebras $f$ is determined by the composite 
\[
\pi_{\Sym^1} \circ f: \Sym(\fg[1]) \to \fg'[1] = \Sym^1(\fg'[1]),
\]
as any coalgebra map is determined by how it maps to cogenerators.
This composite provides an element $\alpha_f \in \cSym(\fg^\vee[-1]) \otimes \fg'[1]$,
and the condition that $f$ intertwines the differentials is equivalent 
to the Maurer-Cartan equation on~$\alpha_f$.

We introduce one final bit of notation, since we use it repeatedly below.
Let ${\rm C}^*_{\rm Lie, red}(\fg)$ denote the {\em reduced} cochains:
we remove the constant terms (i.e., the span of the unit element).
That is, the underlying graded vector space~is
\[
{\rm C}^\sharp_{\rm Lie,red}(\fg) = \cSym^{>0}(\fg^*[-1]),
\]
namely, the functions that vanish at the base point of the formal moduli space encoded by~$\fg$.

\section{The formal $\beta\gamma$ system}
\label{sec formal}

We now turn to the case where the target is the formal $n$-disk $\hD^n$,
which was formulate as a classical BV theory in the style of a gauge theory.
This encoding allows a concise description of how diffeomorphisms on the target act on the theory,
and thence a description as a $\Vect$-equivariant classical BV theory.

\subsection{The free $\beta\gamma$ system as a BV theory}

We briefly recall how to encode the free $\beta\gamma$ system --- where the target $X$ is the affine space $\AA^n$ --- as a BV theory, following \cite{GwThesis,CG1}.
(Note that the name is due to the traditional choice of letters to denote the fields.)

\begin{dfn} 
The rank $n$ {\em free $\beta\gamma$ system} on a Riemann surface $S$ has fields 
\ben
\Omega^{0,*}(S, \CC^n) \oplus \Omega^{1,*}(S, \CC^n),
\een
concentrated in cohomological degrees 0 and 1.  
We denote by $\gamma = (\gamma_1,\ldots,\gamma_n)$ a section of $\Omega^{0,*}(S, \CC^n)$, and 
we denote by $\beta = (\beta_1,\ldots,\beta_n)$ a section of $\Omega^{1,*}(S, \CC^n)$.
The shifted pairing is ``wedge and integrate'':
\be\label{pairing}
\langle \gamma + \beta,\gamma'+\beta'\rangle = \sum_{i=1}^n \int_S \gamma_i \wedge \beta_i' + \beta_i \wedge \gamma_i'.
\ee
The action functional is
\ben
S_{\text{free}}(\gamma, \beta) = \langle \beta, \dbar \gamma \rangle = \sum_{i=1}^n \int_S  \beta_i \wedge \dbar \gamma_i .
\een
The equations of motion are thus 
\[
\dbar \gamma_i = 0 = \dbar \beta_i
\]
for $i = 1,\ldots,n$.
\end{dfn}

There is an action of the general linear group $\GL_n$ on the space of
fields of the free $\beta\gamma$ system. Explicitly, for a field of
the form $(f \tensor v, g \tensor \lambda) \in \Omega^{0,*}(S ;
\CC^n) \tensor \Omega^{1,*}(S ; \CC^n)$ we define for $A \in \GL_n$
\ben
A \cdot (f \tensor v, g \tensor \lambda) = (f \tensor A v, g \tensor
(A^{-1})^T \lambda) .
\een
That is, we view $\GL_n$ acting on $\Omega^{0,*}(S ; \CC^n)$ through
the defining representation on $\CC^n$ and on $\Omega^{1,*}(S,
\CC^n)$ through the coadjoint representation on $\CC^n$. By
construction this action preserves $S_{free}$. 

\subsection{The formal $\beta\gamma$ system}

We now turn to the case where the target is the formal $n$-disk $\hD^n$,
which is closely related to the free case we just described.

Let $S$ denote a Riemann surface, and let $\fg_n$ be the abelian Lie algebra from Definition~\ref{dfn fgn}.
The dg Lie algebra $$\fg_n^S := \Omega^{0,*}(S,\fg_n)$$ plays a central role for us.
It is abelian but has a nontrivial differential via $\dbar$.
The Maurer-Cartan equation of this dg Lie algebra is $\dbar(\gamma) = 0$, where $\gamma: S \to \CC^n$ is a smooth function;
in other words, a solution is simply a holomorphic map from $S$ to $\CC^n$.
Under the dictionary, this dg Lie algebra $\fg^S_n$ encodes a formal moduli space that describes how to deform the constant function with value $0$ to a holomorphic functions.
Note that this Maurer-Cartan equation is precisely the Euler-Lagrange equation for $\gamma$ in the free $\beta\gamma$ system,
and the deformations describe the formal neighborhood of the constant zero map among all holomorphic functions.

To describe the $\beta$ fields as well, we simply enlarge the dg Lie algebra to its ``double''
\[
\DD \fg_n^S = \Omega^{0,*}(S, \fg_n) \oplus \Omega^{1,*}(S, \fg_n^\vee[-2]).
\]
Note that the shifts mean that in cohomological degree one, we have $\Omega^{0,0} \otimes \CC^n \oplus \Omega^{1,0} \otimes (\CC^n)^*$, 
and in cohomological degree two, we have $\Omega^{0,1} \otimes \CC^n \oplus \Omega^{1,1} \otimes (\CC^n)^*$.
The Lie bracket is still trivial, and the differential is $\dbar$ in both complexes. 
If $\beta$ denotes an element in $\Omega^{1,0} \otimes (\CC^n)^*$,
then the Maurer-Cartan equation is $\dbar(\beta) = 0$, which is precisely the Euler-Lagrange equation in the $\beta\gamma$ system.
Hence the dg Lie algebra $\DD \fg_n^S$ encodes, in some sense, the free $\beta\gamma$ system.
To be more precise, it encodes the $\beta\gamma$ system with the formal $n$-disk $\hD^n$ as the target,
since this dg Lie algebra describes deformations of the constant map to the origin.

\begin{rmk}
This holomorphic abelian gauge theory is simply a holomorphic version of BF theory, where the Lie algebra is now in a shifted degree.
\end{rmk}

Note that under this correspondence, the BV bracket for the BV theory encoding the $\beta\gamma$ system 
corresponds to the linear pairing on $\DD \fg^S_n$ arising from the evaluation pairing on $\fgn$. 
Explicitly, for $\gamma, \gamma' \in \Omega^{0,*}_c(S, \fg_n)$ and $\beta, \beta' \in \Omega^{1,*}_c(S, \fg_n^\vee[-2])$, 
consider the pairing
\be\label{pairing2}
\langle \gamma + \beta, \gamma' + \beta' \rangle = \int_S \ev_{\fgn}(\gamma \wedge \beta') + \ev_{\fgn}(\beta \wedge \gamma'),
\ee
where $\ev$ denotes the evaluation pairing between $\fgn$ and $\fgn^\vee$ and 
where $\ev_{\fgn}(\gamma \wedge \beta')$ denotes the composite of taking the wedge product of the Dolbeault components 
and the evaluation pairing of the Lie algebra components.
This pairing is invariant under the Lie bracket and has cohomological degree $-3$.
(This shift, in conjunction with the shift in Chevalley-Eilenberg cochains, ensures that one obtains a shifted Poisson bracket of degree 1,
as needed for a classical BV theory.)

Just as in the non-formal case the group $\GL_n$ acts on $\DD
\fg_n^S$. 

\begin{lemma}\label{GLaction} The group $\GL_n$ acts on the dg Lie algebra $\DD
  \fg_n^S$ in a way that preserves the pairing $\<-,-\>$. 
\end{lemma}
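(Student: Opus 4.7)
The plan is to verify this directly by unpacking the definitions. Since $\fg_n = \CC^n[-1]$ is abelian with zero differential, the group $\GL_n$ acts on $\fg_n$ by its defining representation on $\CC^n$, and this action trivially preserves both the (zero) bracket and (zero) differential. Similarly, $\fg_n^\vee[-2]$ carries the contragredient action of $\GL_n$ (i.e., by $A \mapsto (A^{-1})^T$ on $(\CC^n)^*$), which again preserves its dg Lie structure vacuously. First I would promote these to actions on $\DD \fg_n^S$ by acting only on the Lie-algebra tensor factor while leaving the Dolbeault forms fixed. Because $\dbar$ acts on the Dolbeault factor and commutes with $\GL_n$-action on the Lie factor, the resulting operator on $\DD \fg_n^S$ is a $\GL_n$-equivariant cochain map; since brackets on $\DD \fg_n^S$ are zero, the action is manifestly by dg Lie automorphisms.

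Next I would check invariance of the pairing \eqref{pairing2}. Writing a pair of elements as $\gamma + \beta$ with $\gamma \in \Omega^{0,*}_c(S,\fg_n)$ and $\beta \in \Omega^{1,*}_c(S,\fg_n^\vee[-2])$, each term in the pairing is of the form $\int_S \ev_{\fgn}(\eta \wedge \eta')$, where the Dolbeault components are paired by wedge-and-integrate and the Lie-algebra components are paired by the evaluation pairing $\ev \colon \CC^n \otimes (\CC^n)^* \to \CC$. Under $A \in \GL_n$ the Dolbeault pieces are untouched, and on the Lie factors we find $\ev(A v, (A^{-1})^T \lambda) = \langle (A^{-1})^T \lambda, Av\rangle = \langle \lambda, v\rangle = \ev(v,\lambda)$. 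Hence $\ev_{\fgn}$ is $\GL_n$-invariant, so the integrand, and thus the pairing, is preserved term-by-term.

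The content is essentially the standard fact that the defining and contragredient representations of $\GL_n$ are canonically dual, so there is no real obstacle here; the only point requiring care is bookkeeping of degree shifts, ensuring that the shift $[-2]$ on $\fg_n^\vee$ and the shift by one on $\Omega^{1,*}$ relative to $\Omega^{0,*}$ combine to give a pairing of the correct degree $-3$ on $\DD \fg_n^S$. Since $\GL_n$ acts by cohomological-degree zero operators, these shifts are irrelevant for invariance and need only be tracked to see that the statement makes sense. This also sets up the passage to a Harish-Chandra action of $(\Vect,\GL_n)$ in the following sections, since the action will be extended to formal vector fields acting on $\fg_n$, compatibly with the $\GL_n$-action constructed here.
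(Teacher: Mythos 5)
Your proof is correct and follows the same route as the paper, which simply notes that the action is the defining representation on $\fg_n[1] = \CC^n$ and the contragredient (coadjoint) action on $\fg_n^\vee[-2] = (\CC^n)^*$. You have merely spelled out in full the duality computation $\ev(Av,(A^{-1})^T\lambda)=\ev(v,\lambda)$ and the compatibility with $\dbar$ that the paper leaves implicit.
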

\begin{proof} 
The action of $\GL_n$ is induced by the defining representation on
$\fg_n [1] = \CC^n$ and the coadjoint action on $\fg_n^\vee [-2] =
(\CC^n)^*$. 
\end{proof}

\subsection{The $\Vect$ action on $\fgn^S$ and on $\DD\fgn^S$}\label{classicalvectaction}

We have just seen that the formal $\beta\gamma$ system is equivariant
for the group $\GL_n$. There is a richer equivariance coming from
non-linear automorphisms of the formal disk that we now wish to
describe. 

First, consider the global curved $\beta \gamma$-system with source
$S$ and target $X$. Explicitly, the fields consist of pairs of a map $\gamma : S
\to X$ together with a section $\beta \in \Gamma(K_S \tensor
\gamma^*(T^*X))$. The action is, as in the flat case, $\int_S \beta
\wedge \dbar \gamma$. 

Biholomorphisms act on the $\gamma$ fields in the
obvious way: given a biholomorphism $\phi : X \to X$ we obtain a new
field via composition $\phi \circ \gamma : S \to
X$. Now, a biholomorphism induces an action of sections on any tensor
bundle. In particular, on sections of $T^*X$ the biholomorphism $\phi$
acts by the inverse Jacobian $Jac(\phi)^{-1}$. Thus, we have an action
of the biholomorphism $\phi$ on the $\beta$ fields given by
$Jac(\gamma^*\phi)^{-1}$. Thus, the action on the pair $(\gamma,
\beta)$ is given by
\ben
\phi \cdot (\gamma, \beta) = (\phi \circ \gamma, Jac(\phi)^{-1} \beta) 
\een
where $Jac(\phi)^{-1} \beta$ is a section of $(\phi \circ
\gamma)^*T^*X \tensor K_S$. Since $\phi$ is holomorphic we have $$\dbar
(\gamma \circ \phi) = Jac(\phi) \cdot \dbar \gamma .$$ It follows that biholomorphisms
are a symmetry of the classical theory. 

The dg Lie algebras we introduced above describing the formal
$\beta\gamma$ system arise via a general method for producing dg Lie algebras:
given a dg Lie algebra $\fg$ and a commutative dg algebra $A$, the tensor product $A \otimes \fg$ has a natural 
dg Lie algebra structure where the differential is
\[
\d( a \otimes X) = (\d_A a) \otimes X + (-1)^{a} a \otimes \d_\fg X
\]
and the bracket is
\[
[a \otimes X, a' \otimes X'] = (-1)^{Xa'} (aa') \otimes [X,X'].
\]
Above, we took $A$ to be the Dolbeault complex $\Omega^{0,*}(S)$.

Now, if another Lie algebra $\fh$ acts on $\fg$, there is a natural extension to an action of $\fh$ on $A \otimes \fg$ by simply leaving the $A$-term alone.
We want to use an $\L8$ version of this procedure to equip $\fg_n^S$ and $\DD \fg_n^S$ with an $\L8$ action of $\Vect$,
extending the $\L8$ action of $\Vect$ on $\fg_n$.
This $\L8$ action is something familiar in physics, just expressed compactly via our dictionary.
For a $\sigma$-model with target $X$, a diffeomorphism of $X$ acts on the space of maps into $X$. 
If the diffeomorphism preserves structure relevant to the $\sigma$-model (e.g., a metric or complex structure), 
then the diffeomorphism acts on the space of solutions to the Euler-Lagrange equations of the theory.
This $\L8$ action encodes how formal diffeomorphisms of the target formal disk $\hD^n$ act on the formal moduli space of solutions to the equations of motion for the $\beta\gamma$ system. 

Let us provide an explicit description of this $\L8$ action in order to make the extension manifest.
Denote the generators of $\fg_n$ by $\{\xi_1,\ldots,\xi_n\}$ and the dual generators of $\fg_n^\vee$ by $\{t_1, \ldots,t_n\}$. 
Hence we have
\[
\clies(\fg_n) = \cSym(\fg^\vee_n[-1]) = \CC[[t_1,\ldots,t_n]],
\]
as already mentioned. Moreover we have a natural map
\be\label{algact}
\rho_W : \Vect \to \Der(\clies(\fg_n)), \;\; f(t_i) \partial_j \mapsto f(t_i) \xi_j .
\ee
Expressed as an $\L8$ action of $\Vect$ on $\fg_n$, it is given by a sequence of maps
\ben
\ell^{\rm W}_m : \Vect \otimes \fgn^{\otimes m} \to \fgn
\een
of cohomological degree $1-m$, where $m$ ranges over all non-negative integers.
These maps are simply the ``Taylor components'' of $\rho_W$.
For instance, the vector field $X = t_1^{m_1} \cdots t_n^{m_n} \partial_j \in \Vect$ acts by zero for any $m \neq m_1 + \cdots + m_n$, 
and for $m = m_1 + \cdots + m_n$, 
\ben
\ell^{\rm W}_m\left(X, (\xi_1^{\otimes m_1} \otimes \cdots \otimes \xi_n^{\otimes m_n}) \right) 
= \ell_m^{\rm W} \left((t_1^{m_1} \cdots t_n^{m_n} \partial_j) \otimes \xi_1^{\otimes m_1}\otimes \cdots \otimes \xi_n^{\otimes m_n} \right) 
= \xi_j 
\een 
and vanishes on any other basis element $\fg_n^{\otimes m}$.

With these formulas in hand, we can equip $A \otimes \fgn$ with an $\L8$ action of $\Vect$.
Here the sequence of maps is
\ben
\ell^{W,A}_m : \Vect \otimes (A \otimes \fgn)^{\otimes m} \to A \otimes \fgn
\een
with
\[
\ell^{W,A}_m(X, (a_1 \otimes x_1)\otimes \cdots \otimes (a_m \otimes x_m)) = \pm (a_1\cdots a_m) \otimes \ell^{\rm W}_m(X,x_1 \otimes \cdots \otimes x_m),
\]
where the sign is determined by Koszul's rule.
Equivalently, we can encode the $\L8$ action in a Lie algebra map
\[
\rho_{W,A}: \Vect \to \clies(A \otimes \fgn, A \otimes \fgn[-1]),
\]
which assembles the $\ell^A_m$ maps into a ``Taylor series.''
If we set $A$ to be $\Omega^{0,*}(S)$, then we obtain an $\L8$ action of $\Vect$ on $\fgn^S$.
A lift of this action to an $\L8$ action of $\Vect$ on $\DD\fgn^S$ is uniquely determined by the requirement that the action preserve the degree -3 pairing.

Diffeomorphisms of a manifold $X$ naturally determine symplectomorphisms of the cotangent bundle $T^*X$,
given simply by the associated map of vector bundles.
Thus diffeomorphisms also act naturally on the space of maps from $S$ into $T^*X$.
We can use the $\L8$ language to provide a concise description of 
this action of formal diffeomorphisms of the disk $\hD^n$ on the fields of the formal $\beta\gamma$ system.

The action of $\Vect$ on the Lie algebra $\fgn$ induces an action of $\Vect$ on the dual space $\fg_n^\vee$ via the evaluation pairing:
\ben
\<X \cdot v, w\>_{\fgn} = \<v, X \cdot w\>_{\fgn}
\een
for all $v + w \in \fgn \oplus \fgn^\vee$. 
This action is linear in the sense that brackets
\ben
\Vect \otimes (\fg_n^\vee)^{\otimes m} \to \fg_n^\vee
\een
are zero for $m > 1$. 
We can extend this action to the dg vector space $\Omega^{1,*}(S, \fg_n^\vee)$ and hence to $\DD \fg_n^S$;
we denote this $\L8$-action by
\ben
\DD \rho_W : \Vect \to \Der(\DD \fg_n^S).
\een
Since $\Vect$ preserves the dual pairing on $\fg_n$ and $\fg_n^\vee$, 
it is immediate that it preserves the invariant pairing of degree $-3$ on $\DD \fg_n^S$. 
We summarize these observations in the following.

\begin{lemma}
The classical BV theory of the formal $\beta\gamma$ system is equivariant with respect to $\Vect$:
the action of $\Vect$ on the fields preserves the shifted pairing on the fields and the action functional.
In other words, the $\L8$ action of $\Vect$ on $\DD\fgn^S$ determined by the canonical action of $\Vect$ on $\fgn$ preserves the shifted pairing and the differential.
\end{lemma}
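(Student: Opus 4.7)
The plan is to reduce the two invariance claims on $\DD\fgn^S$ to simpler statements at the level of $\fgn$ itself, and then to propagate them across the Dolbeault factor by the tensor-product recipe from Section \ref{sec DDT}. The point is that the action of $\Vect$ is built purely on the Lie-algebra factors while both the differential $\dbar$ and the form-factor of the pairing live purely on the Dolbeault side, so the two are ``orthogonal'' and almost tautologically compatible; what needs argument is only the pairing-invariance at the $\fgn$ level.

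First, I would spell out the $\L8$ action on $\fgn\oplus\fgn^{\vee}[-2]$. The canonical action $\rho_W$ has Taylor components $\ell^W_m: \Vect\otimes\fgn^{\otimes m}\to\fgn$ of degree $1-m$. Requiring the evaluation pairing to be $\Vect$-invariant forces, for each $m\geq 1$, a mixed Taylor component
\[
\Vect\otimes \fgn^{\otimes(m-1)}\otimes\fgn^{\vee}[-2]\longrightarrow \fgn^{\vee}[-2]
\]
defined, up to Koszul signs, by $\langle \ell^W_m(X, x_1,\ldots,x_{m-1},-),-\rangle$. Invariance of the evaluation pairing is then built in by construction, and the $L_\infty$ relations on the extended action follow from those of $\rho_W$ by the standard transposition argument. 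This is precisely the $\L8$ incarnation of the classical fact that any action on $V$ extends to $V\oplus V^{\vee}[-k]$ preserving the duality pairing of degree $-k$.

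Second, I would apply the tensor-product construction of Section \ref{sec DDT} with $A=\Omega^{0,*}(S)$ (respectively $\Omega^{1,*}(S)$) to define the $\L8$ brackets on $\DD\fgn^S$: each bracket wedges the Dolbeault factors and applies the bracket from step one on the Lie-algebra factors. Since the $\Vect$ action is inert on the Dolbeault factors, it commutes strictly with $\dbar$, giving compatibility with the differential. Invariance of the shifted pairing follows because this pairing is the composite ``wedge forms, integrate over $S$, evaluate on the $\fgn$-factors'': the first two operations are $\Vect$-equivariant trivially, and the third is $\Vect$-invariant by step one. Invariance of the action functional $S_{\text{free}}(\gamma,\beta)=\langle \beta,\dbar\gamma\rangle$ then follows immediately from the invariance of the pairing together with the $\Vect$-equivariance of $\dbar$.

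The only real obstacle is bookkeeping: writing out the mixed Taylor components of the extended $\L8$ action with correct Koszul signs and checking that the $\L8$ relations for these mixed components follow from those of $\rho_W$. Because $\fgn$ is abelian with zero differential and $\Vect$ acts strictly linearly (by derivations) on $\fgn^{\vee}$, there are no higher structural identities beyond the ones inherited from $\rho_W$, so the verification is mechanical rather than conceptual.
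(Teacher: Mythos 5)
Your proposal is correct and follows essentially the same route as the paper: the paper likewise defines the action on $\fgn^\vee[-2]$ by transposing $\rho_W$ through the evaluation pairing (so that invariance of the pairing holds by construction), extends across the Dolbeault factor by the tensor-product recipe of Section \ref{sec DDT}, and observes that compatibility with $\dbar$ is automatic because the $\Vect$ action is inert on the source. The only cosmetic difference is that you phrase the dual action via explicit mixed Taylor components, whereas the paper states it as the unique lift preserving the degree $-3$ pairing; these are the same construction.
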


\subsection{A Noether current and the obstruction-deformation complex}
\label{sec obsdef}

It thus should be no surprise that we can also use a local functional to express this action of infinitesimal diffeomorphisms. The explicit formula is quite simple and is just the natural formula from physics written in terms of formal power series. (See equation (\ref{eqn noether}) below.) To formulate this result, we recall now some useful notation. We will also see how we can obtain the usual Noether current for the symmetry by vector fields from this local functional.  

\subsubsection{Recollections on local functionals}

A systematic exposition of local functionals and deformation complexes can be found in \cite{CosBook}, particularly in chapter 5,
but here we provide a brief summary with our theory as a running example. 

Let $\cE$ be a dg Lie (or $L_\infty$) algebra associated to a classical BV field theory on $S$. 
The underlying graded vector space of $\cE$ consists of the smooth sections of a certain graded vector bundle $E$ on $S$;
we call such sections the {\em fields} of the field theory.
The key example here is the dg Lie algebra $\DD \fg_n^S$ for the formal $\beta\gamma$ system,
whose fields are $\gamma$ and~$\beta$.

A {\em Lagrangian density} is a functional on $\cE$ that takes values in smooth densities on $S$ and depends polynomially (or as a power series) on the fields and their derivatives.
This dependence is local: if $\cL(\gamma)$ is a Lagrangian density evaluated on the field $\gamma$, 
its value at a point $x \in S$ only depends on the $\infty$-jet ({\em aka} Taylor expansion) of $\gamma$ at $x$.
A Lagrangian density $\cL$ then determines a {\em local} functional on fields by integrating over $S$.
More precisely, one obtains a functional on compactly-supported fields, since integration is always well-defined so long as the domain is compact.
As an example, let  $\cE = \DD\fg_1^\CC$, the rank 1 formal $\beta\gamma$ system on $S = \CC$.
The Lagrangian density $\cL(\gamma,\beta) = \beta \wedge \dbar \gamma$
has local functional given by the action functional.

Note that since total derivatives with compact support have trivial integral,
two Lagrangian densities that differ by total derivatives determine the same local functional.
Thus, we  define the dg vector space of local functionals on the classical field theory $\cE$ by
\ben
\cloc^*(\cE) = \Dens_S \tensor_{D_S} {\rm C}_{\rm Lie,red}^*(J^\infty(E)) . 
\een
Here $D_S$ denotes the ring of differential operators on $S$, 
$\Dens_S$ denotes the smooth densities on $S$ equipped with its natural right action by $D_S$, 
and $J^\infty E$ denotes the sheaf of $\infty$-jets of smooth sections of the bundle $E$. 
Since $\cE$ is a sheaf of Lie algebras, $J^\infty(E)$ is a sheaf of Lie algebras in $D_S$-modules,
and we are computing the Lie algebra cochains in the category of $D_S$-modules.
Such cochains should be viewed as functions on the $\infty$-jets of fields with values in functions on $S$.
Hence, this tensor product produces {\em densities} on $S$ that are power series in the jets of fields.
Moreover, taking this tensor product over $D_S$ encodes the relation that total derivatives vanish.
Note that we've eliminated the constant functions on jets of sections.

This description of $\cloc^*(\cE)$ is quite abstract, but 
by restricting to compactly supported fields, we can provide a more concrete description of the situation. 
A local functional is a sum of functionals of the form
\[
\int_{x \in S} D_1\gamma(x) \wedge \cdots \wedge D_j\gamma(x) \wedge D_{j+1}\beta(x) \wedge\cdots \wedge D_k\beta(x)\wedge \d \mu,
\]
where the $D_i$ are differential operators and where $\d\mu$ is a smooth form.
This functional is {\em homogeneous of order $k$}.
(Such a functional can have any cohomological degree.)
As we saw, a prototypical example of a local functional is the action functional itself, which is quadratic and degree~zero.

\begin{dfn}
Let $\Def_n = \cloc^*(\DD \fg_n^S)$ denote the cochain complex of local functionals on $\DD \fg_n^S$.
Elements consist of formal sums $I = \sum_{k > 0} I_k$ where each $I_k$ is a local functional that is homogeneous of order $k$. We call $\Def_n$ the {\em obstruction-deformation complex} for the formal $\beta\gamma$ system.
\end{dfn}

The deformation complex is, in fact, a subcomplex of the Chevalley-Eilenberg continuous cochains on the dg Lie algebra $\DD \fg_n^S$. 
(Essentially, we mean the cochains as a dg Lie algebra in topological vector spaces, but 
see Section \ref{sec functional analysis} for more thorough discussion of this point.)
A cochain in the deformation complex of homogeneous degree $k$ is a distribution supported, by definition, along the small diagonal $X \hookrightarrow X^k$.

The complex $\Def_n$ has a shifted Lie algebra structure arising from the BV bracket $\{-,-\}$,
which is determined by the shifted pairing between the fields $\gamma$ and $\beta$ of Equation (\ref{pairing2}).
In essence, we view the pairing on fields as encoding a shifted symplectic structure on the space of fields,
and hence functionals should have a shifted Poisson bracket.
From this perspective $\gamma$ and $\beta$ are conjugate variables (like $q$ and $p$ in traditional, unshifted symplectic geometry).
Given two local functionals $F$ and $G$, one computes $\{F,G\}$ by pairing a $\gamma$ input from $F$ with a $\beta$ input from $G$, and {\it vice versa} (much as $\{q,p\} = 1$).
With respect to this bracket, the differential on $\Def_n$ is then $\{S_{\text{free}},-\}$.
(Note that local functionals do not form a commutative algebra, however, since the product of two such is no longer local!
The bracket does provide a shifted Lie bracket, just not a Poisson bracket.)

\subsubsection{The action of vector fields}

We now describe the action of formal vector fields on the classical theory using local functionals.
Verifying the lemma is a direct computation using the definitions.

\begin{lemma}\label{Noether}
The map $\DD \rho_W : \Vect \to \Der (\DD \fg_n^S)$ describing the $\L8$-action of $\Vect$ on the dg Lie algebra $\DD \fg_n^S$ has a lift
\ben
\xymatrix{
&\Def_n [-1] \ar[d]^-{\{-,-\}} \\
\Vect \ar[r]_-{\DD \rho_W} \ar[ur]^{I^{\rm W}} & \Der(\DD \fg_n^S)
}
\een
with $I^{\rm W}$ a local functional.
Explicitly, given a formal vector field 
\[
X = \sum_{j = 1}^n \sum_{\bm = (m_1,\ldots,m_n) \in \NN^n} a_{j, \bm} t_1^{m_1} \cdots t_n^{m_n} \partial_j,
\] 
the local functional 
\be\label{eqn noether}
I^{\rm W}_X(\gamma, \beta) = \sum_{j = 1}^n \sum_{\bm \in \NN^n} a_{j, \bm} \int_S  \gamma^{\wedge m_1}_1 \wedge \cdots \wedge \gamma^{\wedge m_n}_n \wedge \beta_j
\ee
satisfies $\{I^{\rm W}_X,-\} = \DD\rho_W(X)$.
\end{lemma}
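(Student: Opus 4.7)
The plan is to verify both claims by a direct computation, reducing to monomial vector fields via linearity in $X$. First, for each monomial $X = t_1^{m_1} \cdots t_n^{m_n} \partial_j \in \Vect$, the expression
\[
I^{\rm W}_X(\gamma,\beta) = \int_S \gamma_1^{\wedge m_1} \wedge \cdots \wedge \gamma_n^{\wedge m_n} \wedge \beta_j
\]
is patently a local functional on $\DD\fg_n^S$: its integrand depends polynomially on the values of the fields at each point of $S$, and the wedge product of one $\Omega^{1,*}$-factor with $\Omega^{0,*}$-factors lies in $\Omega^{1,*}$, so projection to top form degree on $S$ makes the integral well-defined. The cohomological degree of $I^{\rm W}_X$ can be read off from the degrees of $\gamma_i$ and $\beta_j$ and lies in the correct shift. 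Hence the assignment $X \mapsto I^{\rm W}_X$ is $\CC$-linear and lands in $\Def_n[-1]$.

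Next I would compute the Hamiltonian derivation $\{I^{\rm W}_X, -\}$ on $\DD\fg_n^S$ determined by the shifted pairing~(\ref{pairing2}). Since that pairing couples $\gamma$ with $\beta$ and vice versa, the BV bracket with $I^{\rm W}_X$ yields a derivation whose action on $\gamma$-fields is dictated by $\delta I^{\rm W}_X/\delta\beta$ and whose action on $\beta$-fields is dictated by $\delta I^{\rm W}_X/\delta\gamma$. For $X = t^{\bm}\partial_j$, these functional derivatives are respectively the monomial $\gamma_1^{\wedge m_1}\cdots\gamma_n^{\wedge m_n}$ (in the $j$-th component of the $\gamma$-direction) and the sum $\sum_i \pm\, m_i\, \gamma_1^{\wedge m_1}\cdots\gamma_i^{\wedge m_i-1}\cdots\gamma_n^{\wedge m_n}\wedge \beta_j$ (in the $i$-th component of the $\beta$-direction), with Koszul signs determined by the cohomological shifts in $\DD\fg_n^S$.

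To conclude, I would match this Hamiltonian derivation against $\DD\rho_W(X)$. On the $\gamma$-side, the Taylor components $\ell_m^{\rm W}$ recalled in Section~\ref{classicalvectaction} give $\ell_m^{\rm W}(X,\xi_1^{\otimes m_1}\otimes\cdots\otimes\xi_n^{\otimes m_n}) = \xi_j$ and vanish on all other basis elements, which, when extended tensorially along $\Omega^{0,*}(S)$, reproduces exactly $\delta I^{\rm W}_X/\delta\beta_j$. On the $\beta$-side, $\DD\rho_W$ was defined precisely so as to preserve the degree $-3$ pairing, which is equivalent to saying that $\DD\rho_W(X)$ is a Hamiltonian vector field on $\DD\fg_n^S$; the dual action on $\Omega^{1,*}(S,\fg_n^\vee[-2])$ is then forced to equal $\delta I^{\rm W}_X/\delta\gamma$. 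Extending back to arbitrary $X \in \Vect$ by linearity finishes the verification.

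The only real obstacle is bookkeeping of Koszul signs arising from the cohomological shifts of $\gamma$ (in $\fg_n = \CC^n[-1]$) and $\beta$ (in $\fg_n^\vee[-2]$), combined with the degree $+1$ BV bracket. No conceptual difficulty beyond this careful accounting of signs and degrees should appear; the geometric content, that $I^{\rm W}_X$ is the Noether current for the Hamiltonian $\L8$-action of $\Vect$, is precisely what we expect from the general principle that a symmetry preserving a symplectic pairing should be generated by a moment map.
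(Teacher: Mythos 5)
Your proposal is correct and follows essentially the same route as the paper, which simply asserts that ``verifying the lemma is a direct computation using the definitions'' and omits the details; your computation of $\delta I^{\rm W}_X/\delta\beta_j$ and $\delta I^{\rm W}_X/\delta\gamma_i$, matched against the Taylor components $\ell_m^{\rm W}$ on the $\gamma$-side and the pairing-preservation characterization of the lift on the $\beta$-side, is exactly that direct verification.
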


\begin{rmk}
When restricted to linear vector fields, the action of $\Vect$ on $\beta\gamma$ system with target $\hD^n$ 
agrees with the action of $\GL_n$ described in Lemma \ref{GLaction}. In this sense, we have described an action of the Harish-Chandra pair $(\Vect, \GL_n)$ on the classical $\beta\gamma$ system. 
This theory can thus be treated by Gelfand-Kazhdan formal geometry.
We develop this reasoning more fully in Section \ref{sec comparison}. 
In particular, in the next section we will show that this theory descends to the classical curved $\beta\gamma$ system where the target is a complex manifold $X$; more precisely, we will identify this theory with the theory defined by Costello in \cite{WG2}.
\end{rmk}

As explained in Section \ref{sec DDT}, we can identify this map
$I^{\rm W}$ with a Maurer-Cartan element $\clies(\Vect,
{\rm C}^*_{\rm loc}(\DD\fgn^S))$. Explicitly, this identification means that
\be\label{MC1}
(\d_{\Vect} \tensor 1 + 1 \tensor \dbar)I^{\rm W} + \frac{1}{2} \{I^{\rm W}, I^{\rm W}\} =
0 .
\ee
In fact, given our formula for the local functional, it is natural to view $I^{\rm W}$ as a function of $X \in \Vect$ and the fields $\gamma$ and $\beta$.
We thus have the following cochain complex, which plays a crucial role in studying the formal $\beta\gamma$ system as a $\Vect$-equivariant BV theory.

\subsubsection{The relation to the formal Atiyah class} \label{sec atiyah 2}

In this section we describe how the local functional $I^{\rm W}$, which encodes the action of formal vector fields on the classical theory, is related to the Gelfand-Fuks-Atiyah class from Section~\ref{sec gk descent}. 

We have already discussed the action of $\Vect$ on the dg Lie algebra $\fg_n$ and its
dual $\fg_n^\vee$ and how this determines an action on the dg Lie
algebra $\DD \fg_n^S$ which is encoded by the Maurer-Cartan element $I^{\rm W} \in \clie^*(\Vect) \tensor \cloc^*(\DD \fg_n^S)[-1]$. 

Fix $S = \CC$ and use the natural framing of the tangent bundle by $\partial_z$ to write
$\Omega^{0,*}(S) = C^\infty(\CC) \tensor \CC[\d \zbar]$. Similarly,
$\Omega^{1,*}(\CC) = C^\infty(\CC)[\d \zbar] \d z$. Using this
notation, we find a decomposition
\be\label{decomposition}
\DD \fg_n^S = C^\infty(\CC) \tensor \left((\fg_n \oplus \fg_n \,\d \zbar) \oplus (\fg_n^\vee[-2] \oplus \fg_n^\vee[-2] \, \d \zbar)
  \, \d z\right)
\ee
as the tensor product of a commutative algebra $C^\infty(\CC)$ and a graded Lie algebra.
(The differential on the dg Lie algebra does not respect this decomposition.)

It will be convenient to analyze $I^{\rm W}$ in terms of this decomposition.
To be more precise, we consider the local functional $I^{\rm W}_X$ for each formal vector field $X$ of the form $a^i \partial_i$, where the coefficient $a^i \in \hO_n$ is a homogeneous polynomial. 
Observe then that $I^{{\rm W}}_X$ is itself a homogeneous local functional of the form
\ben
I^{{\rm W}}_X : \Sym^{k+1}(\DD \fg_n^S) \to \CC 
\een 
where $a^i$ has polynomial degree~$k$.
Using the decomposition (\ref{decomposition}), 
we can write $I^{{\rm W}}_X$ as $I^{{\rm W}, an}_X I^{{\rm W},alg}_X$, 
a product of an analytic factor times a algebraic factor with $I^{{\rm W},an}_X \in \Sym^{k+1}(C^\infty(\CC)^\vee)$ and with
\ben
I^{{\rm W}, alg}_X \in \clie^*\left((\fg_n \oplus \fg_n \,\d \zbar) \oplus (\fg_n^\vee[-2] \oplus \fg_n^\vee[-2] \, \d \zbar) \, \d z\right)
\een 
Moreover, $I^{\rm W}$ is linear in the inputs $\fg_n \, \d \zbar$ and $(\fg_n^\vee[-2] \oplus \fg_n^\vee[-2] \, \d \zbar)\,\d z$,
as there must be precisely one $\d z$ and $\d \zbar$ for the integral (\ref{eqn noether}) to be nonzero.
Thus, we see that the algebraic factor is an element in
\ben
I^{{\rm W},alg}_X \in \clie^*\left(\fg_n ; (\fg_n \, \d \zbar)^\vee[-1] \tensor \fg_n[1]\right) .
\een 
For the rest of this section, we suppress $\d \zbar$ from the notation and identify the right-hand side with $\hO_n \tensor \End(T_0)$ where $T_0 = \fg_n[1] = \CC^n$ is the space of constant vector fields.

The formal de Rham differential $\d_{dR} : \hO_n \to \hOmega^1_n$ determines a map
\ben
\d_{dR} \tensor 1 : \hO_n \tensor_\CC \End(T_0) 
\to \hOmega^1_n \tensor_\CC \End(T_0) .
\een
which is reminiscent of equipping a vector bundle with a connection after specifying a global frame.

We have, as a corollary of Proposition \ref{atiyahprop1}, the
following relationship of the functional $I^{{\rm W}, alg}$ to the
Gelfand-Fuks-Atiyah class. 

\begin{cor} \label{atiyah and IW}
For each $X \in \Vect$ of homogenous degree $k$,
\ben
(\d_{dR} \tensor 1) I^{{\rm W}, alg}_X =  \At^{\rm GF}(\hT_n)(X) \in \hOmega^1_n \tensor_{\hO} \End(\hT_n)
\een 
where  $\At^{\rm GF}(\hT_n)$ is the Gelfand-Fuks-Atiyah class of the formal
vector bundle $\hT_n$.
\end{cor}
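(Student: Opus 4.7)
The approach is a direct computation matching $(\d_{dR}\otimes 1) I^{{\rm W},alg}_X$ term by term with the explicit cocycle representative of $\At^{\rm GF}(\hT_n)$ produced by Proposition~\ref{atiyahprop1}. I will sketch the three main steps.

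First, for a homogeneous $X = f^i \partial_i \in \Vect$ with $f^i$ a monomial of degree $k$, the local functional $I^{\rm W}_X(\gamma,\beta) = \int_S f^i(\gamma)\beta_i$ is $(k+1)$-linear in the fields. Using the decomposition (\ref{decomposition}), the integrand must be a $(1,1)$-form, so exactly one of the $k+1$ inputs supplies the $\d\bar z$-factor (since $\beta_i$ already carries the $\d z$). Extracting the analytic $C^\infty(\CC)$-factor from the integral, the algebraic factor $I^{{\rm W},alg}_X$ is the assembly, over both possible positions of the $\d\bar z$-insertion, of a symmetric $(k+1)$-linear pairing on the linear-algebraic data of the inputs. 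As noted in the excerpt, this lives in $\clie^*(\fg_n;(\fg_n\,\d\bar z)^\vee[-1]\otimes \fg_n[1])\cong \hO_n\otimes \End(T_0)$, with $T_0=\fg_n[1]=\CC^n$.

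Second, I would evaluate this cochain on a basis of symmetrized monomials $\xi^{\bm}\in\Sym^{|\bm|}(\fg_n)$ and on a dual generator $\eta\in\fg_n^\vee$. Using the explicit formula (\ref{algact}) for $\rho_{\rm W}$ and its dualization to $\fg_n^\vee$ (which is precisely the action appearing in Lemma~\ref{Noether}), the resulting pairing reproduces the coefficient that records the adjoint action of $X$ on constant vector fields via the Lie derivative $[X,\partial_j]=-(\partial_j f^i)\partial_i$. In coordinates, this identifies
\[
I^{{\rm W},alg}_X \;=\; -(\partial_j f^i)\otimes (\d t^j\otimes \partial_i)\;\in\;\hO_n\otimes \End(T_0),
\]
where $\d t^j\otimes \partial_i$ denotes the elementary endomorphism $\partial_k\mapsto \delta_k^j\partial_i$, and the minus sign comes from the Lie-derivative bracket on vector fields.

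Third, applying $\d_{dR}\otimes 1$ produces $-\d_{dR}(\partial_j f^i)(\d t^j\otimes \partial_i)$, which is exactly the cocycle representative of $\At^{\rm GF}(\hT_n)(X)$ produced by Proposition~\ref{atiyahprop1} and recorded in the corollary at the end of Section~\ref{sec gk cc}. The main obstacle here is not analytic but notational: one must reconcile the $L_\infty$-action conventions for $\rho_{\rm W}$, the shifts on $\fg_n$ and $\fg_n^\vee$, the Koszul signs in the symmetric-algebra conventions for Chevalley-Eilenberg cochains, and the sign in the Lie derivative, and verify that the two contributions to $I^{{\rm W},alg}_X$ (from the $\d\bar z$ appearing on a $\gamma$-input versus on the $\beta$-input) combine into a single endomorphism of $T_0$ with the correct overall sign. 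Once these conventions are aligned, no computation beyond a finite-dimensional linear-algebra identification is required.
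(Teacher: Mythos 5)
Your proposal is correct and follows essentially the same route as the paper: the paper's proof also identifies $I^{{\rm W},alg}$ with the restriction of the $\Vect$-action $\rho_{\hT_n}$ to the constant sections $T_0=\CC^n$ (your coordinate formula $-(\partial_j f^i)\otimes(\d t^j\otimes\partial_i)$ is exactly this Lie-derivative action) and then cites Proposition~\ref{atiyahprop1}. The paper simply states the identification abstractly rather than tracking the $\d\bar z$-placements and Koszul signs explicitly, but the content is the same.
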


\begin{proof}
We can think of $X \mapsto I^{{\rm W}, alg}_X$ as a linear map
\ben
I^{{\rm W},alg} : \Vect \to \hO_n \tensor_\CC \End(T_0) ,
\een
or equivalently as a linear map $I^{{\rm W}, alg} : \Vect \tensor T_0 \to \hT_n$. 
This map is, in fact, the restriction of the action $\rho_{\hT_n}$ of $\Vect$ on the formal tangent bundle $\hT_n$ to the space $T_0 = \CC^n$, the space of constant sections of $\hT_n$.
That is, $\rho_{\hT_n}|_{\Vect \tensor T_0} = I^{{\rm W}, alg}$. Proposition \ref{atiyahprop1} then implies that $(\d_{dR} \tensor 1) I^{{\rm W},alg}$ is a representative for the Gelfand-Fuks-Atiyah class.
\end{proof}

\subsubsection{Equivariant deformation complex} \label{sec eq def cplx}

We can now make the following deformation complex that controls $\Vect$-equivariant deformations of the classical theory. 

\begin{dfn}
The {\em $\Vect$-equivariant obstruction-deformation complex} is 
the graded vector space $\cSym(\Vect^\vee[-1]) \otimes {\rm C}^\sharp_{\rm loc}(\DD\fgn^S)$ 
equipped with the differential $\d_{\Vect} + \dbar + \{I^{\rm W},-\}$, 
where $\d_{\Vect}$ denotes the differential from $\clies(\Vect)$
and $\dbar$ denotes the differential from ${\rm C}^*_{\rm loc}(\DD\fgn^S)$.
We use $\Def^{\rm W}_n$ to denote this complex.
\end{dfn}

In other words, this complex is the tensor product $\clies(\Vect) \otimes {\rm C}^*_{\rm loc}(\DD\fgn^S)$
twisted by $I^{\rm W}$ as the twisting cochain.
It encodes succinctly how the formal vector fields $\Vect$ act on the local functionals of the field theory.
Its role in the equivariant BV formalism is analogous to the role of the non-equivariant obstruction-deformation complex in the BV formalism:
\begin{itemize}
\item first-order deformations of the formal $\beta\gamma$ system as a $\Vect$-equivariant classical BV theory live in the zeroth cohomology~and
\item the obstruction to equivariant BV quantization modulo $\hbar^2$ lives in the first cohomology.
\end{itemize}
Hence it behooves us to compute its cohomology.
We will find a particularly nice answer after further constraining the problem.

There are two further symmetries of this theory that we will exploit.
First, there is a natural scaling action of $\CC^\times$ on the fibers of the cotangent bundle (as on any vector bundle)
that scales the $\beta$ fields of the $\beta\gamma$ system.
The action functional has ``weight one'' with respect to this scaling action.
In our setting there is thus an action of $\CC^\times$ on $\DD \fgn^S$ given by scaling the $\beta$ fields. 
Second, we restrict now to the Riemann surface is $S = \CC$ and 
note that affine linear automorphisms ${\rm Aff}(\CC) = \CC \ltimes \CC^\times$ preserve the action functional
of the $\beta\gamma$ system.
We are only interested in the subcomplex of $\Def^{W}_n$ consisting of local functionals that are weight zero under the scaling action and invariant under the $\Aff(\CC)$ action.
Then we have the subcomplex
\ben
(\Def_n)^{\CC^\times \times {\rm Aff}(\CC)} \subset \Def_n
\een
and its equivariant version $(\Def^{\rm W}_n)^{\CC^\times \times {\rm
    Aff}(\CC)} \subset \Def_n^{\rm W}$.

\begin{prop}\label{eqdef} 
There is a quasi-isomorphism of $\Vect$-modules
\ben
J : \hOmega^2_{n,cl}[1]  \xto{\simeq} (\Def_n)^{\CC^\times \times {\rm Aff}(\CC)} .
\een
Applying the functor $\clie^*(\Vect ; -)$, we obtain a quasi-isomorphism
\be\label{trans}
J^{\rm W} : \clie^*(\Vect, \hOmega^2_{n,cl}[1]  ) \xto{\simeq} (\Def_n^{\rm W})^{\CC^\times \times {\rm Aff}(\CC)}.
\ee
\end{prop}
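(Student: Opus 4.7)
The plan is to first construct $J$ explicitly and verify its $\Vect$-equivariance, then establish the quasi-isomorphism by an explicit analysis of the invariant deformation complex. The second assertion concerning $J^{\rm W}$ is a formal consequence of the first: since $J$ is a $\Vect$-equivariant quasi-isomorphism of $\Vect$-modules, the induced map between Chevalley-Eilenberg cochain complexes is itself a quasi-isomorphism by a standard spectral sequence filtering by exterior degree in $\cSym(\Vect^\vee[-1])$, together with the observation that the Maurer-Cartan element $I^{\rm W}$ encodes exactly the $\Vect$-action on $\Def_n$ that $J$ intertwines.

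For the construction of $J$, I would start from a closed $2$-form $\omega = \sum_{i<j}\omega_{ij}(t)\,dt_i\wedge dt_j \in \hOmega^2_{n,cl}$ and produce a local functional on $\DD\fgn^{\CC}$ by substitution, along the lines of
\[
J(\omega)(\gamma,\beta) \;=\; \sum_{i<j} \int_{\CC} \omega_{ij}(\gamma)\,\bigl(\partial \gamma_i \wedge \beta_j - \partial \gamma_j \wedge \beta_i\bigr),
\]
with $\omega_{ij}(\gamma)$ meaning substitution of the field $\gamma_k$ for the formal variable $t_k$. By construction this is linear in $\beta$ (hence of $\CC^\times$-weight one), translation-invariant on $\CC$, and of the correct holomorphic weight to be rotation-invariant. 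A direct calculation using the shifted pairing \eqref{pairing2} shows that $\{S_{\text{free}},J(\omega)\}$ is proportional to $d\omega$, so closedness of $\omega$ is equivalent to $J(\omega)$ being a cocycle in $\Def_n$. Equivariance under $\Vect$ is checked against Lemma~\ref{Noether}: a short Cartan-calculus computation identifies $\{I^{\rm W}_X, J(\omega)\}$ with $J(\mathcal L_X \omega)$, where $\mathcal L_X$ is the Lie derivative on formal forms.

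To prove $J$ is a quasi-isomorphism, I would decompose $\Def_n$ by polynomial bidegree in $(\gamma,\beta)$ coming from the symmetric algebra structure on local functionals. The $\CC^\times$-weight one condition forces the $\beta$-degree to be exactly one, so we are reduced to functionals linear in $\beta$ and formal-power-series in the jets of $\gamma$. Translation invariance in $z$ lets us evaluate all jets at a single point of $\CC$, identifying $(\Def_n)^{\CC^\times \times \Aff(\CC)}$ with an explicit algebraic complex built from $\hO_n$, $\hOmega^1_n$, and the shift data governing $\beta$. Rotation invariance further restricts which holomorphic and antiholomorphic derivatives may appear: only one $\dbar$ can survive (consumed to produce a $(1,1)$-form suitable for integration), and the $\partial$'s must balance holomorphic weights. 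A filtration by polynomial degree in $\gamma$ produces a convergent spectral sequence whose $E_1$-page is a truncated de Rham complex of the formal disk. The formal Poincar\'e lemma then collapses this to $\hOmega^2_{n,cl}$ placed in the degree predicted by the shift $[1]$.

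The main obstacle is the careful bookkeeping of cohomological degrees and derivative weights under the combined $\Aff(\CC)$-invariance and Dolbeault structure: one must verify that no spurious classes arise from functionals involving more than one $\dbar$ or from cross-terms between $\gamma$-jets of differing weights, and that the resulting identification places $\hOmega^2_{n,cl}$ in cohomological degree $-1$ so that its first Chevalley-Eilenberg cohomology recovers the obstruction class $\ch_2^{\rm GF}(\hT_n) \in \clie^2(\Vect ; \hOmega^2_{n,cl})$ predicted by Section~\ref{sec gk cc}. This invariant-theoretic computation runs parallel to analogous deformation-complex calculations in \cite{WG2} and \cite{CosBook}, from which the technical tools can be adapted.
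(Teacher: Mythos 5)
Your reduction of the second quasi-isomorphism to the first is reasonable, and your outline of the invariant-theoretic computation (restrict to functionals linear in $\beta$, use translation invariance to evaluate jets at a point, filter by polynomial degree in $\gamma$, apply the formal Poincar\'e lemma) runs parallel in spirit to the computation the paper imports from Costello's calculation of the deformation complex via the $D$-module description of translation-invariant local functionals. But your construction of $J$ itself does not work, and this is the heart of the proposition. The proposed functional
\[
J(\omega)(\gamma,\beta) \;=\; \sum_{i<j}\int_{\CC}\omega_{ij}(\gamma)\,\bigl(\partial\gamma_i\wedge\beta_j-\partial\gamma_j\wedge\beta_i\bigr)
\]
vanishes identically: on a Riemann surface both $\partial\gamma_i$ and $\beta_j$ lie in $\Omega^{1,*}$, so their wedge product sits in $\Omega^{2,*}=0$. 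For type reasons there is no way to pair a two-form on the target directly against one holomorphic derivative of $\gamma$ and one copy of $\beta$.

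The correct construction must pass through a primitive. On the formal disk every closed two-form is exact, $\omega=\d_{dR}\theta$, and one assigns to the one-form $\theta$ the functional $\Tilde{J}_\theta(\gamma)=\int_S\langle\theta^S(\gamma),\partial\gamma\rangle$ --- a functional of $\gamma$ alone, containing a single $\partial\gamma$, whose integrand is a $(1,*)$-form and hence integrable. One then checks (i) that $\Tilde{J}_\theta$ is a cocycle for \emph{every} $\theta$ (your claim that $\{S_{\text{free}},J(\omega)\}$ is proportional to $\d_{dR}\omega$ is not the mechanism by which closedness enters); (ii) that $\Tilde{J}_{\d_{dR}f}=0$, so the assignment descends along the isomorphism $\hOmega^1_n/\d_{dR}\hO_n\cong\hOmega^2_{n,cl}$ --- this is where the two-form genuinely appears, as the class of $\theta$ modulo exact one-forms rather than through a cocycle condition; and (iii) the equivariance $\Tilde{J}_{L_X\theta}=\{I^{\rm W}_X,\Tilde{J}_\theta\}$, verified by an explicit integration-by-parts computation against the Noether functional. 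Without this detour through primitives the map $J$, and with it the whole proposition, is not constructed.
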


The proof of this result is in Section \ref{seceqdef}, but first we will have to describe the map $J$ in the above proposition,
which is the subject of the next section.

This quasi-isomorphism $J$ is, in fact, $(\Vect, \GL_n)$-equivariant.
Let us note an important consequence of this proposition.

\begin{cor}\label{gerbe of obsdef}
The Gelfand-Kazhdan descent along a complex manifold $X$ 
of the $(\Vect, \GL_n)$-module $(\Def_n)^{\CC^\times \times {\rm Aff}(\CC)}$ 
returns a sheaf of dg vector spaces that is quasi-isomorphic to the sheaf~$\Omega^2_{X,cl}[1]$. 
\end{cor}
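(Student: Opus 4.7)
The plan is to derive the corollary essentially as a formal consequence of Proposition \ref{eqdef} together with the general properties of Gelfand-Kazhdan descent established in Section \ref{sec gk descent}.

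First, I would invoke Proposition \ref{eqdef}, which provides the quasi-isomorphism
\[
J : \hOmega^2_{n,cl}[1] \xto{\simeq} (\Def_n)^{\CC^\times \times \Aff(\CC)}
\]
of dg vector spaces. The key point to extract from the proof of that proposition (and the discussion following it) is that $J$ is not merely a quasi-isomorphism of cochain complexes but in fact a quasi-isomorphism of $(\Vect,\GL_n)$-modules. Both sides inherit a natural $(\Vect,\GL_n)$-action: on the left, the tautological action on closed formal 2-forms via the Lie derivative and linear change of frame; on the right, the action of $\Vect$ comes from the Noether functional $I^{\rm W}$ constructed in Lemma \ref{Noether}, and the $\GL_n$-action is the restriction of the $\Vect$-action to linear vector fields together with the action described in Lemma \ref{GLaction}. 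A routine check, which I would spell out, verifies that $J$ intertwines these actions.

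Next, I would apply the Gelfand-Kazhdan descent functor $\sdesc_{\GK}$ to the quasi-isomorphism $J$. Here I need the fact that $\sdesc_{\GK}$ is an exact functor: this is the lemma of Section \ref{sec gk cc} asserting that $\sdesc_\GK$ sends exact sequences of $(\Vect,\GL_n)$-modules to exact sequences of sheaves on $\GK_n$. Applied componentwise to a quasi-isomorphism (viewed as a filtered system of exact squares via mapping cones), this ensures that $\sdesc_\GK(J)$ is a quasi-isomorphism of sheaves of dg vector spaces on $X$.

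Finally, I would identify $\sdesc_\GK(\hOmega^2_{n,cl})$ with the sheaf of holomorphic closed 2-forms $\Omega^2_{X,cl}$. This is the standard computation of Gelfand-Kazhdan descent on the formal tensor field $\hOmega^2_{n,cl}$: by the paragraph following the definition of $\Tens_n$, the descent of the formal $k$-forms $\hOmega^k_n$ agrees with the $\infty$-jet bundle of $\Omega^k_X$, and its sheaf of horizontal sections is the sheaf $\Omega^k_{X,hol}$; imposing $\partial$-closure descends to the sub-sheaf of holomorphic closed $k$-forms. Combining these three steps, and remembering the cohomological shift by $[1]$ which is preserved by any additive functor, yields the desired quasi-isomorphism
\[
\Omega^2_{X,cl}[1] \;\simeq\; \sdesc_\GK\bigl((\Def_n)^{\CC^\times \times \Aff(\CC)}\bigr).
\]
The only non-routine step is the verification that $J$ is $(\Vect,\GL_n)$-equivariant; I expect this to be the main technical point, and it will be handled most cleanly by appealing directly to the explicit formula for $J$ produced in the proof of Proposition \ref{eqdef} (in particular, to the fact that it is built from the formal Atiyah representative identified in Corollary \ref{atiyah and IW}, which is manifestly equivariant).
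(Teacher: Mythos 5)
Your proposal is correct and follows essentially the same route as the paper: the corollary is stated there as an immediate consequence of the $(\Vect,\GL_n)$-equivariant quasi-isomorphism $J$ from Proposition \ref{eqdef} (whose equivariance is already established in Proposition \ref{jmap}), combined with exactness of descent and the standard identification of the descent of $\hOmega^2_{n,cl}$ with $\Omega^2_{X,cl}$. The only caveat worth noting is that the exactness lemma you cite is stated for formal vector bundles in $\VB_n$, whereas $\hOmega^2_{n,cl}$ is not free over $\hO_n$, so strictly one should argue via the jet-level description as the paper implicitly does; this does not affect the conclusion.
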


In particular we have the following description over a general manifold:
\begin{itemize}
\item[(1)] the space of anomalies of the theory over $X$ is
$H^2(X, \Omega^2_{X,cl})$,
\item[(2)] the space of deformations over $X$ is $H^1(X, \Omega^2_{X,cl})$ and
\item[(3)] the space of automorphisms over $X$ is $H^0(X, \Omega^2_{n,cl})$.
\end{itemize}
This description matches precisely with the study of deformations of the curved
$\beta\gamma$ system as in~\cite{WittenCDO,Nek}. 

\subsection{Closed two-forms as local functionals} \label{sec j functional}

We have already seen how vector fields yield local functionals of the formal $\beta\gamma$-system 
and thus give it the structure of an equivariant BV theory. 
In this section we will show how closed two-forms yield
local functionals of $\gamma$, i.e., only of the subspace of fields $\Omega^{0,*}(S ;\fg_n[1])$. 
That is, we define a linear map
\ben
J : \hOmega^2_{n,cl} \to \Cloc^*(\fg_n^S)
\een 
and use $J_\omega$ to denote the image of $\omega$. 
This map will exhibit the quasi-isomorphism of Proposition \ref{eqdef}.

\begin{rmk}
This map has the following geometric interpretation. 
On the formal disk, every closed two-form $\omega$ is exact, 
so that $\omega = \d \theta$ for some $\theta \in \hOmega^1_n$. 
Use the field $\gamma : S \to \hD^n$ to pull back this one-form to the one-form $\gamma^* \theta$ on $S$.
We interpret this one-form as a {\em current}; 
we can integrate it around any closed one-cycle in $S$ to get a function of $\gamma$. 
We denote this current by $\Tilde{J}_\theta$, where $\Tilde{J}_\theta(\gamma)= \gamma^* \theta.$
By Stokes theorem, this current vanishes if $\theta$ is exact, 
so the local functional only depends, in fact, on the corresponding closed two-form $\d \theta$.
Hence we write $J_{\d\theta} = \Tilde{J}_\theta$. 
\end{rmk}

\subsubsection{Defining $J$}
\label{defining J}

Although pulling back forms is easy, 
we wish to rewrite this construction in terms of $\fg_n$
and hence we need to describe pullback under Koszul duality.
Thus, to define $J$, we need to introduce a few constructions. 

First, there is an assignment  
\ben
(-)^S : \hO_n \to {\rm Hom}_{\CC} \left( \Sym(\Omega^{0,*}_S \tensor \fg_n[1]), \Omega^{0,*}_S\right) 
\een
that promotes a function on the formal $n$-disk to a function on the formal moduli space $\fg_n^S$ with values in holomorphic functions on $S$.
It goes as follows.
Given an input $f \in \hO_n$, let $f_k$ denote its homogeneous component of degree $k$. 
View $f_k$ as a linear map $f_k : \Sym^k(\fg_n[1]) \to \CC$. 
We then define 
\ben
\begin{array}{cccc}
f^S_k : & \Sym^k(\Omega^{0,*}_S \tensor \fg_n[1]) & \to & \Omega^{0,*}_S \\ 
&(\gamma_1 \tensor \xi_1) \cdots (\gamma_k \tensor \xi_k) & \mapsto &
(\gamma_1 \wedge \cdots \wedge \gamma_k) f(\xi_1, \cdots, \xi_k) 
\end{array}
\een
Extend to non-homogenous elements by linearity so that $f^S = \sum_k f^S_k$. 

Similarly, a one-form on the formal disk $\theta \in \hOmega^1_n = \clie^*(\fg_n ;\fg_n^\vee[-1])$ 
encodes a linear map $\theta : \Sym(\fg_n[1]) \to \fg_n^\vee[-1]$. 
Let $\theta_k : \Sym^k(\fg_n[1]) \to \fg_n^\vee[-1]$ be its homogenous component of degree $k$. 
As above, there is a natural linear map
\ben
\begin{array}{cccc}
\theta^S_k :  & \Sym^k(\Omega^{0,*}_S \tensor \fg_n[1]) & \to & \Omega^{0,*}_S \tensor
\fg_n^\vee[-1] \\ 
&(\gamma_1 \tensor \xi_1) \cdots (\gamma_k \tensor \xi_k) & \mapsto &
(\gamma_1 \wedge \cdots \wedge \gamma_k) \tensor \theta_k(\xi_1, \cdots, \xi_k) 
\end{array}
\een
Let $\theta^S = \sum_k \theta^S_k$, as above.

Each one-form $\theta$ thus determines a local function $\Tilde{J}_\theta \in \cloc^*(\fg_n^S)$ by the formula
\ben
\Tilde{J}_\theta (\gamma) = \sum_k \int_S \<\theta^S_k \left(\gamma^{\tensor k}\right), \partial \gamma \>_{\fgn}.
\een 
Explicitly, if $\theta = t_1^{m_1} \cdots t_n^{m_n} \d t_j$ is
monomial one-form, then we have
\ben
\Tilde{J}_\theta (\gamma) = \int_S \gamma_1^{m_1} \wedge \cdots \wedge
\gamma_n^{m_n} \wedge \partial \gamma_j .
\een 
For shorthand notation, we will write $\Tilde{J}_{\theta} = \int_S
\<\theta^S(\gamma), \partial \gamma\>_{\fg_n}$ where the sum over
homogenous components is implicit. 

We tie up the properties of the functional $\Tilde{J}$ in the following proposition, proved below. 

\begin{prop} \label{jmap} 
The assignment $\theta \mapsto \Tilde{J}_\theta$ satisfies:  
\begin{itemize}
\item[(1)] For all $\theta$, the local functional $\Tilde{J}_\theta$ is $\dbar$-closed
  inside $\Def_n$ and lies in the subcomplex $(\Def_n)^{\CC^\times \times {\rm
    Aff}(\CC)}$.
\item[(2)] The assignment $\theta \mapsto \Tilde{J}_\theta$ is
  $\Vect$-equivariant. That is, $\Tilde{J}_{L_X \theta} = X \cdot
  J_\theta$ where $X \cdot (-)$ denotes the action of vector fields on
  functionals and $L_X$ is the Lie derivative.
\item[(3)] The functional $\Tilde{J}_\theta$ is identically zero if $\theta$
  is an exact one-form. 
\end{itemize}
Thus, $\Tilde{J}$ descends to a $\Vect$-equivariant map
\ben
J : \hOmega^2_{n,cl} [1] \to (\Def_n)^{\CC^\times \times {\rm Aff}(\CC)} 
\een
that we denote $\omega \mapsto J_\omega$. 
Here $J_\omega =\Tilde{J}_{\theta}$, where $\theta$ is any one-form satisfying $\d \theta = \omega$. 
\end{prop}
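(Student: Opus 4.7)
The plan is to reinterpret $\Tilde{J}_\theta$ geometrically as $\int_S \gamma^*\theta$, where the Koszul-dual pullback sends a monomial form $f \, \d t_{i_1} \wedge \cdots \wedge \d t_{i_k} \in \hOmega^k_n$ to $f^S(\gamma)\, \partial \gamma_{i_1} \wedge \cdots \wedge \partial \gamma_{i_k}$ on $S$. Under this interpretation, the Koszul map $\theta \mapsto \theta^S$ is a morphism of dg-algebras (up to appropriate Dolbeault-degree extractions) that intertwines the de Rham differential on $\hOmega^*_n$ with the de Rham differential on~$S$. Each of the three statements then follows from standard geometric manipulations combined with Stokes' theorem.

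First I would prove~(3). For $\theta = \d f$ with $f \in \hO_n$, the naturality of pullback gives $\gamma^*(\d f) = \d( f^S(\gamma))$, an exact form on~$S$. After matching Dolbeault degrees so that the relevant component is a top $(1,1)$-form, Stokes' theorem for compactly supported inputs forces $\Tilde{J}_{\d f} = 0$. Next, for~(1), I would verify closedness by writing the differential on $\Def_n$ as the BV bracket $\{S_{\text{free}}, -\}$, together with the internal Dolbeault differential on $\DD \fg_n^S$ inherited from the dg Lie algebra structure. Since $\Tilde{J}_\theta$ has no $\beta$-dependence, only the $\dbar\gamma \cdot \delta/\delta\gamma$ term of the bracket contributes. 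Leibniz-expanding $\delta \Tilde{J}_\theta/\delta\gamma$ and adding the $\dbar$ acting on the $\partial\gamma$ factor produces (up to signs) the integral $\int_S \d(\gamma^*\theta)$, which vanishes by Stokes. The $\CC^\times \times \Aff(\CC)$-invariance is then immediate: $\beta$-scaling acts trivially on a $\beta$-independent functional, translations on $S = \CC$ preserve $(1,1)$-forms trivially, and dilations are absorbed by the weights of $\partial\gamma$ and the integration form.

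For~(2), I would use that the $\L8$-action of $\Vect$ on $\fg_n^S$ constructed in Section~\ref{classicalvectaction} is the infinitesimal analogue of post-composition with diffeomorphisms of the target disk. Since pullback of forms intertwines diffeomorphism-action with Lie derivative (naturality), the identity $X \cdot \Tilde{J}_\theta = \Tilde{J}_{L_X \theta}$ follows from the Koszul-dual analogue of Cartan's formula. Combining~(2) and~(3), together with the formal Poincar\'e lemma on $\hD^n$, gives the desired descent: writing $\omega = \d\theta$ for $\omega \in \hOmega^2_{n,cl}$, we set $J_\omega := \Tilde{J}_\theta$, which is well-defined by~(3) and $\Vect$-equivariant by~(2) combined with the equivariance of the formal Poincar\'e lemma.

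The main obstacle will be the calculation in~(1), where one must precisely match the Koszul-dual variational calculus with the geometric pullback of differential forms. Specifically, one needs to check that the variational derivative $\delta \Tilde{J}_\theta/\delta \gamma$, when paired with $\dbar \gamma$, reproduces the $\dbar$-piece of the de Rham differential acting on $\gamma^*\theta$, and that the $\partial$-piece is taken care of by the internal $\dbar$ on $\DD\fg_n^S$. A secondary (but routine) subtlety is ensuring that the power-series expansion of $\theta \in \hOmega^1_n$ produces a well-defined local functional; this is handled because each homogeneous component of $\Tilde{J}_\theta$ is a bona fide local functional of matching polynomial degree in the fields, and the sum converges in the natural topology on $\cloc^*(\DD\fg_n^S)$.
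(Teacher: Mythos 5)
Your parts (1) and (3) are fine: (3) is essentially the paper's argument (the paper phrases it as $\mathbf{J}_{\d_{dR}f} = \partial\circ f^S$ together with the vanishing of integrals of $\partial$-exact forms, which is your Stokes argument), and for (1) your Stokes-based route to closedness is a valid alternative to the paper's more direct observation that $\mathbf{J}_\theta$ is built from constant-coefficient holomorphic differential operators applied to $\gamma$, hence is annihilated by $\dbar$ and manifestly $\Aff(\CC)$-invariant.

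The gap is in (2), which is where essentially all of the content of the proposition lives (you have instead flagged (1) as the main obstacle). The $\Vect$-action on $\Def_n$ is, by Lemma \ref{Noether}, the BV bracket $\{I^{\rm W}_X,-\}$ with a specific local functional, so what must be proved is the identity $\{I^{\rm W}_X,\Tilde{J}_\theta\} = \Tilde{J}_{L_X\theta}$. Your appeal to ``naturality of pullback'' and a ``Koszul-dual Cartan formula'' asserts exactly this bridge between the geometric action (post-composition with flows on the target) and the algebraic one (contraction of the $\beta$-leg of $I^{\rm W}_X$ against the $\gamma$-legs of $\Tilde{J}_\theta$ via the shifted pairing) without establishing it. The point that naturality alone cannot see is that the two sides do \emph{not} agree as Lagrangian densities: for $X = t^{k_1}\cdots t^{k_n}\partial_i$ and $\theta = t^{m_1}\cdots t^{m_n}\d t_j$, the bracket produces, in addition to the expected term, a contraction of $\beta_i$ with the distinguished $\partial\gamma_j$ factor of $\Tilde{J}_\theta$, yielding a term of the form $\delta_{ij}\int_S \partial(\gamma_1^{m_1}\wedge\cdots\wedge\gamma_n^{m_n})\wedge\gamma_1^{k_1}\wedge\cdots\wedge\gamma_n^{k_n}$, while $L_X\theta$ contributes the corresponding term $\delta_{ij}\,t^{m}\,\d(t^{k})$; these match only after an integration by parts, i.e., only modulo total derivatives, which is exactly the equivalence relation built into $\cloc^*$. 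The paper's proof consists precisely of this monomial computation. Your argument would become a proof if you verified that your Koszul-dual pullback $\theta\mapsto\int_S\langle\theta^S(\gamma),\partial\gamma\rangle$ (note: $\partial\gamma$, not $\d\gamma$) intertwines $\{I^{\rm W}_X,-\}$ with $L_X$ up to $\partial$-exact integrands, and then invoked Stokes as in your part (3); as written, that verification is missing.
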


\subsubsection{Understanding $J$}

The formula for the functional $\Tilde{J}_\theta$ is best understood 
as integration over $S$ after applying an operator $\mathbf{J}$ valued in densities. 
We continue to describe everything via the homogeneous components $\theta_k$ of $\theta$.

First, for each homogeneous degree $k$, consider the composition
\ben
\begin{array}{ccc}
\Sym^k(\Omega^{0,*}_S \tensor \fgn[1] ) \tensor (\Omega^{0,*}_S \tensor
\fg[1]) & \xto{1 \tensor \partial} & \Sym(\Omega^{0,*}_S \tensor \fgn[1])
\tensor (\Omega^{1,*}_S \tensor \fg) \\ & \xto{\theta_k^S \tensor 1} &
(\Omega^{0,*}_S \tensor \fg_n^\vee[-1]) \tensor (\Omega^{1,*}_S
\tensor \fg_n[1]) \\ & \xto{\<-,-\>_\fg} & \Omega^{1,*} .
\end{array}
\een 
Here, $\<-,-\>_\fg$ is the evaluation pairing between $\fg_n[1]$ and $\fg_n^\vee[-1]$. 
We then symmetrize the composite to obtain the $(k+1)$th homogenous component of $\mathbf{J}_\theta$:
\[
(\mathbf{J}_\theta)_{k+1} : \Sym^{k+1}(\Omega^{0,*}_S \tensor \fg_n) \to \Omega^{1,*}_S. 
\]
In this notation, we have $\Tilde{J}_\theta = \int_S \mathbf{J}_\theta$. 

Before proving the main result, we make the following simple observations about the functional $\mathbf{J}$. 

\begin{lemma} \label{easy} 
For $f \in \hO_n$ and $\theta \in \hOmega^1_n$,
\begin{itemize}
\item[(1)] $\mathbf{J}_{f \theta} = f^S \wedge
  \mathbf{J}_\theta$ and
\item[(2)] $\mathbf{J}_{\d_{dR} f} = \partial \circ f^S$. 
\end{itemize}
\end{lemma}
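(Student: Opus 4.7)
The plan is to reduce both assertions to direct computations on monomials, exploiting the multiplicativity of the $(-)^S$ construction and the fact that $\partial$ is a derivation on the Dolbeault complex. First, I would observe that $\hO_n = \cSym(\fgn^\vee[-1])$, so every element is a (possibly infinite) sum of monomials $t^I := t_1^{i_1}\cdots t_n^{i_n}$, and similarly $\hOmega^1_n$ is spanned over $\hO_n$ by the basis elements $\d t_j$. By continuity and linearity, it suffices to prove (1) for $f = t^I$ and $\theta = t^J \d t_j$, and (2) for $f = t^I$.

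A preliminary step, of independent use, is the multiplicativity identity $(fg)^S = f^S \wedge g^S$ for $f,g \in \hO_n$, where the wedge is understood after applying the cosymmetrization that distributes the inputs $\gamma^{\otimes(|I|+|J|)}$ between the two factors. On monomials this is immediate: the coefficient of $\xi^{I+J}$ extracted by $(t^{I+J})^S$ from $\gamma^{\otimes(|I|+|J|)}$ is exactly $\gamma_1^{i_1+j_1}\wedge\cdots\wedge\gamma_n^{i_n+j_n}$, which equals the wedge of the outputs of $(t^I)^S$ and $(t^J)^S$ on the two groups of inputs.

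For (1), unraveling the definition of $\mathbf{J}_\theta$ on $\theta = t^J \d t_j$ yields
\[
\mathbf{J}_{t^J \d t_j}(\gamma^{\otimes(|J|+1)}) \;=\; \gamma_1^{j_1}\wedge\cdots\wedge\gamma_n^{j_n}\wedge\partial\gamma_j,
\]
since the one-form evaluates $\xi^J$ to the generator dual to $\d t_j$, and the evaluation pairing with $\partial\gamma$ then selects $\partial\gamma_j$. Writing $f\theta = t^{I+J}\d t_j$ and applying the same formula gives $\gamma_1^{i_1+j_1}\wedge\cdots\wedge\gamma_n^{i_n+j_n}\wedge \partial\gamma_j$, which factors as $f^S(\gamma^{\otimes|I|})\wedge\mathbf{J}_\theta(\gamma^{\otimes(|J|+1)})$ by the multiplicativity identity above. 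This is exactly (1).

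For (2), compute $\d_{dR}t^I = \sum_i i_i\, t^{I-e_i}\,\d t_i$, so by (1) (or directly from the monomial formula)
\[
\mathbf{J}_{\d_{dR} t^I}(\gamma^{\otimes|I|}) \;=\; \sum_i i_i \,\gamma_1^{i_1}\wedge\cdots\wedge\gamma_i^{i_i - 1}\wedge\cdots\wedge\gamma_n^{i_n}\wedge\partial\gamma_i.
\]
Since $\partial$ is a graded derivation on $\Omega^{*,*}_S$, the right-hand side equals $\partial\bigl(\gamma_1^{i_1}\wedge\cdots\wedge\gamma_n^{i_n}\bigr) = \partial\bigl((t^I)^S(\gamma^{\otimes|I|})\bigr)$, which is $(\partial\circ f^S)(\gamma^{\otimes|I|})$, establishing (2). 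The only mild subtlety is keeping track of Koszul signs coming from $\fgn$ being concentrated in degree $1$ and $\fgn^\vee$ in degree $-1$; these work out because $(-)^S$ symmetrizes the $\gamma$ inputs in the same way the Chevalley-Eilenberg identification $\hO_n = \cSym(\fgn^\vee[-1])$ symmetrizes dual generators, so the two sign conventions cancel. No step is a genuine obstacle; the proof is essentially a bookkeeping exercise once one chooses monomial bases.
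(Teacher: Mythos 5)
Your proof is correct and follows essentially the same route as the paper: both arguments rest on the multiplicativity $(fg)^S = f^S g^S$ and the fact that $\partial$ is a derivation. The only cosmetic difference is that you verify everything directly on monomials, whereas the paper phrases (1) via the coproduct on homogeneous components and proves (2) by checking the linear case and then invoking that both sides are derivations in $f$ — the same two facts, organized in the opposite order.
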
 

\begin{proof}
For simplicity, suppose $f$ is of homogenous degree $k$ and $\theta$ of homogenous degree $l$.  
Then $f \theta$ defines a linear map 
\[
\begin{array}{ccccc}
\Sym^{k+l}(\fg_n[1]) & \to & \Sym^{k}(\fg_n[1]) \tensor \Sym^l(\fg_n[1]) & \to & \fg_n^\vee[-1]\\
\xi_1,\ldots,\xi_k \xi_1',\ldots,\xi_l' & \mapsto & (\xi_1,\ldots,\xi_k) \otimes (\xi_1',\ldots,\xi_l') & \mapsto & f(\xi_1,\ldots,\xi_k) \theta(\xi_1',\ldots,\xi_l')
\end{array}.
\]
Thus, $(f \theta)^S = f^S \theta^S$, from which (1) follows.

We now show (2). Consider the special case of a linear functional $\tau : \fgn[1] \to \CC$, viewed as linear element of $\hO_n$.
The one-form $\d_{dR}(\tau)$ corresponds to the very simple functional $\Sym^0(\fgn[1]) \to \fg_n^\vee[-1]$ sending $1 \mapsto \tau$. 
Thus, $\mathbf{J}_{\d_{dR} \tau} = \partial(\tau^S)$. 
To see (2) in general, we note that both the left and right hand sides are derivations with respect to the product of functions. 
Indeed, if $f,g \in \clie^*(\fgn)$, then $\partial((fg)^S)) = \partial(f^S g^S) = \partial(f^S)\wedge g^S + f^S \partial(g^S)$. 
\end{proof}

\begin{proof}[Proof of Proposition \ref{jmap}]
Observe that the functional $\mathbf{J}_\theta$ is described by
applying a constant coefficient holomorphic differential operator to
the fields $\gamma$. Thus $\mathbf{J}_\theta$ is clearly holomorphic
and invariant under affine linear transformations. It follows that
$\Tilde{J}_\theta$ is holomorphic, that is $\dbar \Tilde{J}_\theta = 0$, and hence it is
closed in $\Def_n$. This proves (1). 
8

The formula in (2) in Lemma \ref{easy} implies (3) since the integral
of a $\partial$-exact form is zero. Hence $\Tilde{J}$ defines a map
$J : \hOmega^2_{n,cl} \to \cloc^*(\fg_n^S)$. Explicitly, given a
closed two-form $\omega$ with $\d_{dR} \theta = \omega$ we have $J_\omega =
\Tilde{J}_\theta$. This proves (3). 

Finally, we show (2). We have seen in our discussion of the Noether
current that the action of a formal vector field $X$ on the
deformation complex is through the BV bracket with $I^{\rm
  W}_X$. Thus, we must show for all one-forms $\theta$ that
$\Tilde{J}_{L_X \theta} = \{I^{\rm W}_X, \Tilde{J}_\theta\}$. For
simplicity, suppose $X = \partial_i$, a constant vector field. Then, if we choose a homogenous one-form
$\theta = t_1^{m_1} \ldots t_n^{m_n} \d t_j$ then
\ben
L_X \theta =  m_i t_1^{m_1} \cdots t_k^{m_j -1} \cdots
t_n^{m_n} \d t_j . 
\een 
Now, to compute $\{I_X, \Tilde{\theta}\}$. The functional $I_X$ has a
single $\beta_i$ input that pairs with a single $\gamma_i$ input from the
functional $\Tilde{J}_\theta$. There are $m_i + \delta_{ij}$ such
$\gamma_i$ inputs, the $\delta_{ij}$ coming from the factor $\partial
\gamma_j$ in the definition of $\Tilde{J}_\theta$. So,
we obtain
\ben
\{I_X, \Tilde{J}_\theta\} (\gamma) = m_i \int_S \gamma_1^{m_1} \wedge
\cdots \wedge \gamma_j^{m_i-1} \wedge \cdots \gamma^{m_n}_n \partial \gamma_j
+ \delta_{ij}\int_S \partial (\gamma_1^{m_1} \wedge \cdots \wedge \gamma_n^{m_n}) .
\een
The first term is $\Tilde{J}_{L_X\theta}$. Being the integral of a
total derivative the second term vanishes, so we are done. The case of
a general formal vector field $X$ is similar. Indeed, suppose $X$ is
homogeneous of the form $X= t_1^{k_1} \cdots
t_n^{k_n} \partial_i$. Then for $\theta$ as above we have
\ben
L_X \theta = m_i t_1^{k_1 + m_1} \cdots t_i^{k_i + m_i - 1}
  \cdots t_n^{k_n + m_n} \d t_j + \delta_{ij} t_1^{m_1} \cdots t_n^{m_n} \d(t_1^{k_1} \cdots
t_n^{k_n}) .
\een
On the other hand, we compute directly
\bestar
\{I_X, \Tilde{J}_\theta\} & = & m_i \int_S \gamma_1^{k_1+m_1} \wedge \cdots
\wedge \gamma_i^{k_i + m_i - 1} \wedge\cdots \wedge \gamma_{n}^{k_n +
  m_n} \partial \gamma_j \\ & - & \delta_{ij} \int_S\partial(\gamma_1^{m_1}
\wedge \cdots \wedge \gamma_n^{m_n})\gamma_1^{k_1} \wedge
\cdots \wedge \gamma_n^{k_n}  .
\eestar
The first line comes from pairing the $\beta_j$ input from the
functional $I_X^{\rm W}$
with the $\gamma_i^{m_i}$ input from $\Tilde{J}_\theta$. The next term
comes from pairing the $\beta_i$ input with the $\partial \gamma_j$
input from $\Tilde{J}_\theta$ (there is a sign from integrating by
parts). Integration by parts again returns $\Tilde{J}_{L_X \theta}$ as
desired.
\end{proof}

\begin{proof}[Proof of Proposition \ref{eqdef}]
\label{seceqdef}
We have just seen that $J : \hOmega^2_{n,cl} \to \Def_n$ is
$\Vect$-equivariant. To complete the proof it suffices to show that we have a $\Vect$-equivariant equivalence $(\Def_n)^{\CC^\times \times {\rm
    Aff}(\CC)} \simeq \hOmega^2_{n,cl}[1]$. With $\fgn$ as the choice of the $\L8$ algebra $\fg$, 
this equivalence appears as Proposition 15.1.1 in \cite{WG2},
whose proof we will review in order to keep track of the $\Vect$-action. 

First, observe that by restricting to weight zero local functionals under the scaling action,  
we only consider functionals that are independent of $\beta$. 
This constraint implies that 
\[
(\Def_n)^{\CC^\times \times {\rm Aff}(\CC)} \cong {\rm C}^*_{\rm loc}(\Omega^{0,*}(\CC, \fgn))^{{\rm Aff}(\CC)},
\] 
since we can write any such functional as a wedge product of $\beta$ with a nontrivial Lagrangian in $\gamma$.
Following Chapter 5, Section 6 of \cite{CosBook}, we exploit a
description of translation invariant local functionals via $D$-modules:
\be\label{eqn jet}
\Cloc^*\left(\Omega^{0,*}(\CC, \fg_{\hD^n})\right)^\CC 
\cong \CC \, \d z \, \d \zbar \otimes_{D_{\CC}} \Cred^*\left({\rm Jet}_0(\Omega^{0,*}(\CC ; \fgn))\right),
\ee
where $\text{Jet}_0$ denotes jets of sections at zero of~$\Omega^{0,*}(\CC ; \fg_n)$. 

Using $z$ for the holomorphic coordinate on $\CC$, we have
\ben
{\rm Jet}_0(\Omega^{0,*}(\CC, \fgn)) \cong \fgn \llbracket z, \zbar, \d \zbar \rrbracket.
\een 
and thus the identification (\ref{eqn jet}) is manifestly $\Vect$-equivariant. 
It follows that we have a $\Vect$-equivariant quasi-isomorphism
\ben
(\Def)^{\CC^\times \times \CC} \simeq \CC\, \d z \wedge \d \zbar \otimes_{\CC[\partial_z, \partial_{\zbar}]} \Cred^*(\fgn\llbracket z , \zbar, \d \zbar \rrbracket)
\een
where on the left-hand side we are taking ivariants with respect to $\CC^\times \times \CC \subset \CC^\times \times (\CC \ltimes \CC^\times) = \CC^\times \times {\rm Aff}(\CC)$. So, we only need to compute the $\CC^\times$-invariants of the right-hand side. Here $\CC^\times$ acts by scaling space-time.

The quasi-isomorphism of dg Lie algebras
\ben
(\fgn \llbracket z
\rrbracket,0) \xto{\simeq} \left(\fgn \llbracket z,
  \zbar, \d \zbar \rrbracket, \dbar\right)
\een
is obviously $\Vect$-equivariant. Finally, Costello's calculation implies that (in the case that $\fg = \fgn$) we have
\ben
\left(\CC\, \d z \wedge \d \zbar \otimes^{\mathbb{L}}_{\CC[\partial_z, \partial_{\zbar}]} \Cred^*(\fgn\llbracket z \rrbracket)\right) \simeq (\CC \to\hOmega^0_n \to \hOmega^1_n)[3].
\een
(That means the cochain complex on the right hand side starts with $\CC$ in degree $-3$.)
Moreover, the right-hand side is quasi-isomorphic via the de Rham differential to
\ben
\left(\hOmega^2_n [1] \to \hOmega^3_n [0] \to \cdots\right) \simeq \hOmega^2_{n,cl} [1].
\een
This identification is clearly $\Vect$-equivariant.
\end{proof}

\subsection{Holomorphic vector fields on the source} \label{sec hol vf}

We digress momentarily to describe another important symmetry present in the $\beta\gamma$ system:
the holomorphic $\sigma$-model possesses a natural symmetry of the Lie algebra of holomorphic vector fields $T_S = T^{1,0}_S$, 
much as the usual $\sigma$-model is conformal as a classical field theory.  
We will formulate this symmetry on the formal $\beta\gamma$ system. 

It is convenient for us to work with the Dolbeault resolution of holomorphic vector fields: 
define the dg Lie algebra 
\ben
\cT_S = \Omega^{0,*}(S ; T_S) 
\een
with differential given by $\dbar$ and Lie bracket given by the
extension of the Lie bracket of vector fields to $(0,*)$-forms. 
There is an action of $\cT_S$ on the Dolbeault complex $\Omega^{0,*}(S; \CC)^{\oplus n}$ given by Lie derivative of
$(0,*)$-forms:
\[
\xi \cdot (\gamma \tensor v) = (\cL_\xi \gamma) \tensor v.
\]
(We use the script $\cL$ to denote the Lie derivative with respect to
vector fields on the source, to avoid confusing it with $L_X$, 
the Lie derivative of vector fields on the target.) 
This action extends to an action of $\cT_S$ on the ``double'' dg Lie algebra $\DD \fg^S_n$
so that it preserves the shifted pairing between $\gamma$ and $\beta$ fields.

This action can be encoded by a local functional, and hence we obtain a $\hT_S$-equivariant field theory.

\begin{lemma} The map of dg Lie algebras $\cL : \cT_S \to \Der(\DD
  \fg_n^S)$, sending a holomorphic vector field $\xi$ on $S$ to the
  derivation $\cL_\xi$, describes an
  action of holomorphic vector fields on the rank $n$ free
  $\beta\gamma$ system. Moreover, it has a lift to a map of dg Lie algebras
\[
\begin{array}{cccc}
I^{\cT} :& \cT_S & \to & \Def_n[-1] \\
& \xi & \mapsto & \<\beta, \cL_\xi \gamma\> 
\end{array}
\]
along the map determined by the BV bracket $\{-,-\} : \Def_n[-1] \to
\Der(\DD \fg_n^S)$. 
\end{lemma}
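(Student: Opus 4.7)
The proof breaks into two halves: first, that $\cL$ genuinely defines a dg Lie algebra map into $\Der(\DD\fgn^S)$ preserving the pairing, and second, that the local functional $I^\cT$ lifts $\cL$ through the Hamiltonian map $\{-,-\} : \Def_n[-1] \to \Der(\DD\fgn^S)$.

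First I would verify the claims about $\cL$ itself. Since $\fgn$ is abelian and the action of $\cT_S$ is through the Dolbeault factor only, $\cL_\xi$ is manifestly a derivation of the (trivial) Lie bracket on $\DD\fgn^S$. That $\cL$ intertwines differentials and respects Lie brackets reduces to the standard Cartan calculus identities $[\dbar, \cL_\xi] = \cL_{\dbar\xi}$ and $[\cL_\xi, \cL_\eta] = \cL_{[\xi,\eta]}$, both of which hold on the Dolbeault complex of any tensor bundle and extend $\Omega^{0,*}(S)$-linearly. Preservation of the degree $-3$ pairing follows from Cartan's magic formula: for compactly supported fields the pairing is the integral over $S$ of a top form $\alpha$, and $\int_S \cL_\xi \alpha = \int_S d(\iota_\xi \alpha) = 0$ by Stokes.

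Next I would analyze $I^\cT$. The key calculation is that $\{I^\cT_\xi, -\} = \cL_\xi$ as a derivation of the fields. By definition of the BV bracket induced by the pairing \eqref{pairing2}, the Hamiltonian vector field of $I^\cT_\xi(\gamma,\beta) = \int_S \langle \beta, \cL_\xi \gamma\rangle$ acts on $\gamma$ by the $\beta$-variation, which is visibly $\cL_\xi \gamma$, and acts on $\beta$ by the $\gamma$-variation, which --- after integrating by parts via Stokes as above, $\int_S \langle \beta, \cL_\xi \gamma\rangle = -\int_S \langle \cL_\xi \beta, \gamma\rangle$ --- is $\cL_\xi \beta$. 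Hence $\{I^\cT_\xi,-\}$ restricts on the generators to the same derivation as $\cL_\xi$.

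With that identification in hand, the remaining two conditions (compatibility with differentials and with brackets) come for free from the corresponding identities for $\cL$. The differential on $\Def_n[-1]$ applied to $I^\cT_\xi$ is obtained by inserting $\dbar$-variations of the fields; expanding $\dbar \langle \beta, \cL_\xi \gamma\rangle$ and using $[\dbar,\cL_\xi] = \cL_{\dbar \xi}$ together with the vanishing of total $\dbar$-derivatives under $\int_S$ gives $I^\cT_{\dbar\xi}$, so $I^\cT$ is a cochain map. For brackets, since the BV Poisson bracket of local functionals represents the commutator of their Hamiltonian vector fields on the symplectic formal space of fields, one has $\{\{I^\cT_\xi, I^\cT_\eta\},-\} = [\cL_\xi,\cL_\eta] = \cL_{[\xi,\eta]} = \{I^\cT_{[\xi,\eta]},-\}$; because the Hamiltonian map has trivial kernel on the class of functionals at issue (linear in $\beta$ and nondegenerate in $\gamma$-dependence), one concludes $\{I^\cT_\xi, I^\cT_\eta\} = I^\cT_{[\xi,\eta]}$.

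The only real subtlety --- and what I expect to be the main obstacle --- is bookkeeping of Koszul signs arising from the cohomological degrees of $\xi \in \Omega^{0,*}(S; T_S)$, from the shift in the pairing, and from the shift $\Def_n[-1]$ in which $I^\cT$ lives; once these are fixed consistently, each of the four identities above is a one-line Stokes/Cartan manipulation.
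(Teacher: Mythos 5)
Your proposal is correct, and it supplies exactly the direct computation that the paper leaves implicit: the lemma is stated without proof there (in parallel with Lemma \ref{Noether}, which the paper dismisses as ``a direct computation using the definitions''), and your verification via the Cartan identities, Stokes' theorem for the pairing, and the identification $\{I^\cT_\xi,-\}=\cL_\xi$ is the intended argument. The only mildly informal step is your appeal to injectivity of the Hamiltonian map for the bracket identity --- the correct justification is that a local functional with vanishing Euler--Lagrange derivative is a total derivative and hence zero in $\Cloc^*$ (which also excludes constants), though one can avoid this entirely by computing $\{I^\cT_\xi,I^\cT_\eta\}$ directly, since both functionals are linear in $\beta$ and the bracket simply contracts the $\beta$-leg of one with the $\gamma$-dependence of the other, yielding $\langle\beta,[\cL_\xi,\cL_\eta]\gamma\rangle=I^\cT_{[\xi,\eta]}$.
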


\subsubsection{}

We wish to describe the equivariant obstruction-deformation complex
for the action of $\cT_S$. The functional $I^{\cT}$ endows the direct
sum $\cT_S \oplus \DD\fg_n^S$ with the structure of a local Lie
algebra. 
By definition, this equivariant obstruction-deformation complex
is given by the local cochains of this local Lie algebra
\ben
\Def_n^\cT = \cloc^*\left(\cT_S \ltimes \DD \fg_n^S\right) .
\een
where $\Def_n$ is the deformation complex for the formal $\beta\gamma$
system defined earlier. 
As $\cloc^*$ always involves taking the reduced Lie algebra cochains, 
there is a useful splitting of the deformation complex
\ben
\Def_n^{\cT} \cong \cloc^*(\cT_\CC) \oplus \cloc^*(\cT_\CC ; \Def_n),
\een
where $\Def_n$ is the deformation complex for the free $\beta\gamma$
system. 

For any complex manifold $Y$, the complex $\cloc^*(\cT_Y)$ has an
interpretation in terms of the diagonal cohomology of $Y$, studied by
\cite{Losik}. In the case of $Y = \CC$ it has a simple interpretation
in terms of Gelfand-Fuks cohomology.

\begin{prop}[Proposition 5.3 of \cite{bw_vir}] 
The cohomology of $\clie^*(\cT_\CC)$ is concentrated in degree one and is isomorphic to $H^3_{\rm Lie}({\rm W}_1)$. 
Hence, $H^*(\cloc^*(\cT_\CC)) \cong \CC [-1].$
\end{prop}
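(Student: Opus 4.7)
The plan is to reduce the computation of $H^*(\cloc^*(\cT_\CC))$ to Gelfand-Fuks cohomology of formal holomorphic vector fields in one variable, then to invoke the classical result that $H^3_{\rm Lie}({\rm W}_1)\cong\CC$.

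First, I would exploit the translation and scaling symmetries of $\cT_\CC = \Omega^{0,*}(\CC, T^{1,0}_\CC)$ via the standard $D$-module description of translation-invariant local cochains used in the proof of Proposition~\ref{eqdef}: there is a quasi-isomorphism
\[
\cloc^*(\cT_\CC) \simeq \CC\,\d z\wedge\d\zbar \otimes^{\mathbb{L}}_{D_\CC} \Cred^*\bigl(\mathrm{Jet}_0(\cT_\CC)\bigr),
\]
where $D_\CC = \CC[\partial_z,\partial_\zbar]$. Second, I would apply the formal Dolbeault Poincar\'e lemma at the level of jets to produce a $D_\CC$-equivariant quasi-isomorphism of dg Lie algebras
\[
\mathrm{Jet}_0(\cT_\CC) \;\simeq\; \CC\llbracket z\rrbracket\,\partial_z \;=\; {\rm W}_1,
\]
since $\dbar$-closed sections of the Dolbeault resolution correspond to holomorphic ones and $T^{1,0}_\CC$ is trivialized by $\partial_z$. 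This reduces the problem to computing the derived tensor product of a Koszul-type resolution of $\CC\,\d z\wedge\d\zbar$ against $\Cred^*({\rm W}_1)$.

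Third, I would invoke the classical Gelfand-Fuks theorem that $H^*_{\rm Lie}({\rm W}_1)$ is concentrated in degrees $0$ and $3$ with each summand one-dimensional, the degree three class being represented by the well-known cocycle
\[
(f\,\partial_z,\; g\,\partial_z,\; h\,\partial_z) \;\longmapsto\; \det\!\begin{pmatrix} f & f' & f'' \\ g & g' & g'' \\ h & h' & h'' \end{pmatrix}\!\bigg|_{z=0}.
\]
Finally, I would unwind the $D_\CC$-module tensor product: a Koszul resolution of $\CC\,\d z\wedge\d\zbar$ over $D_\CC = \CC[\partial_z,\partial_\zbar]$ has length two (reflecting the two real dimensions of $\CC$), so that tensoring with $\Cred^*({\rm W}_1)$ yields the coinvariants for the $\partial_z,\partial_\zbar$-action together with a cohomological shift by $-2$. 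This shift moves the unique nontrivial class of $\Cred^*({\rm W}_1)$ from degree $3$ to degree $1$, giving $H^*(\cloc^*(\cT_\CC))\cong\CC[-1]$.

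The main obstacle I anticipate is verifying that the Gelfand-Fuks class in degree three indeed survives the derived tensor product rather than being killed by the $\partial_z$ or $\partial_\zbar$ action on coefficients. To handle this, I would choose a cocycle representative independent of $\zbar$ (as is possible since the class comes from the purely holomorphic sub-dg-Lie-algebra ${\rm W}_1 \hookrightarrow \mathrm{Jet}_0(\cT_\CC)$), so that $\partial_\zbar$ acts trivially on it, while the $\partial_z$-action amounts to a reparametrization that does not change the cohomology class. Tracking signs and shifts carefully in the bicomplex arising from the Koszul resolution is the only place where errors are likely, but the structure of the answer (concentration in a single degree) makes spectral-sequence arguments especially clean.
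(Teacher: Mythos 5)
The paper does not actually prove this proposition; it is quoted from \cite{bw_vir}, so there is no internal proof to compare against. That said, your argument reconstructs what is essentially the standard (and, as far as I can tell, the cited) proof: reduce local cochains to reduced Gelfand--Fuks cochains of the jets, use the formal Dolbeault Poincar\'e lemma to collapse ${\rm Jet}_0(\cT_\CC)$ onto ${\rm W}_1$, invoke Goncharova's theorem that $H^*_{\rm Lie}({\rm W}_1)\cong H^*(S^3)$ with the degree-three class given by the Wronskian cocycle, and account for a shift by the real dimension $2$ of the source. The ingredients and the final degree count $3-2=1$ are all correct.

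Two small wrinkles are worth fixing. First, the presentation $\CC\,\d z\wedge \d\zbar\otimes^{\mathbb{L}}_{\CC[\partial_z,\partial_{\zbar}]}\Cred^*({\rm Jet}_0(-))$ that you borrow from the proof of Proposition~\ref{eqdef} computes the \emph{translation-invariant} subcomplex of local cochains, whereas the statement concerns all of $\cloc^*(\cT_\CC)$. For the full complex one should instead use $\Dens_\CC\otimes^{\mathbb{L}}_{D_\CC}\Cred^*(J^\infty\cT_\CC)\simeq \Omega^*_{dR}\bigl(\CC;\Cred^*(J^\infty\cT_\CC)\bigr)[2]$ and then observe that contractibility of $\CC$ collapses the de Rham direction onto the fiber; this is also why the two computations agree here and why the translation-invariant cocycle $\omega^{\GF}$ generates the full local cohomology. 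Second, your phrase ``coinvariants together with a shift by $-2$'' is internally inconsistent: the coinvariants are ${\rm Tor}_0$ and receive no shift, so a class surviving there would stay in degree $3$. The class that produces $H^1$ sits in the top Tor group, i.e.\ in the $\Omega^0$-term of the shifted de Rham complex (the flat sections), which is exactly where the shift by $-2$ lands it in degree $1$. Your survival argument --- choosing a $\zbar$-independent representative killed by $\partial_{\zbar}$ and using that translation acts trivially on the cohomology of $\Cred^*({\rm W}_1)$ --- is the right mechanism for showing the class lifts to a flat section, so the conclusion stands once the Tor bookkeeping is stated correctly.
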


An explicit generator for the cohomology is given by the local cocycle 
\[
\begin{array}{lccc}
\omega^{\GF} : & \cT_{\CC} \times \cT_{\CC} & \to & \Omega^{1,1}(\CC) \\
& (\alpha \tensor \partial_z, \beta \tensor \partial_z) & \mapsto & \frac{1}{2\pi}\frac{1}{12} \left(
  \partial_z^3 \alpha^0 \beta^{0,1} +
  \partial_z^3\alpha^{0,1} \beta^0
\right) \d^2 z 
\end{array}
\]
where $\alpha = \alpha^0 + \alpha^{0,1} \d \zbar$ and similarly for $\beta$. 
(Note that we can integrate the density on the right whenever $\alpha$ or $\beta$ is compactly-supported,
but otherwise it is not integrable.  
This situation is just like that with action functionals, where the Lagrangian density is the important information rather than the purported function.)

\begin{prop} 
There exists a map of dg Lie algebras
\ben
(\omega^{\GF}, J,K) : \CC [-2] \oplus \hOmega^1_{n}[-1] \oplus
\hOmega^2_{n,cl} \to \Def_n^\cT [-1]
\een
sending $(1, \omega, \eta)$ to $(\omega^{\GF}, J_\omega, K_\eta)$ where
\begin{itemize}
\item $\omega^{\GF} \in \cloc^*(\cT_\CC) \subset \Def_n^{\cT}$
  represents the generator of the Gelfand-Fuks cohomology $H^3_{\rm
    Lie}({\rm W}_1) \cong \CC$; 
\item for every $\mu \in \hOmega^2_{n,cl}$, the functional $J_\mu
  \in \Def_n \subset \Def_n^\cT$ is the one defined in Section~\ref{sec j functional}; and
\item for $\eta \in \hOmega^1_n$, $K_\eta$ is the cocycle in $\cloc^1(\cT_\CC ; \Def_n)$ defined by  
\ben
K_\eta(\xi, \gamma) = \int_\CC \partial_z \xi^0
\<\eta^S(\gamma), \partial \gamma\>_{\fg_n} + \int_\CC \partial_z
\xi^{0,1} \d \zbar \<\eta^S(\gamma), \partial \gamma\>_{\fg_n} 
\een
where $\xi = \xi^0 \partial_z + \xi^{0,1} \d
  \zbar \,\partial_z$ is an element of $\cT_\CC$.
\end{itemize}
Moreover, this map is equivariant for the action of formal vector
fields~$\Vect$. 
\end{prop}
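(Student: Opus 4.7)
The plan is to treat the domain as an abelian graded vector space, so that being a map of dg Lie algebras into $\Def_n^{\cT}[-1]$ reduces to three assertions: each of the three prescribed elements is a cocycle of the claimed degree; all pairwise BV brackets of the images vanish in $\Def_n^{\cT}[-1]$; and the assignment is $\Vect$-equivariant. Throughout the verification I will exploit the direct-sum splitting
\[
\Def_n^{\cT} = \cloc^*(\cT_\CC) \oplus \cloc^*(\cT_\CC; \Def_n)
\]
and treat one component of the image at a time.

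For $\omega^{\GF}$ the cocycle property is the content of the explicit representative given before the statement; $\Vect$-invariance is automatic since $\omega^{\GF}$ depends only on source data; and the brackets $\{\omega^{\GF}, J_\omega\}$ and $\{\omega^{\GF}, K_\eta\}$ vanish because $\omega^{\GF}$ has no dependence on the fields $\beta$ or $\gamma$. For $J_\omega$, Proposition \ref{jmap} already establishes cocycle-ness in $\Def_n$ as well as $\Vect$-equivariance; what remains is that $J_\omega$ is closed in the $\cT_\CC$-direction, which holds trivially because $J_\omega$ has no $\cT_\CC$-input, and that $\{J_\omega, K_\eta\} = 0$, which follows by inspection because both $J_\omega$ and each $K_\eta(\xi,-)$ depend polynomially on $\gamma$ only, with no $\beta$ factors.

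The substantive part of the argument is for $K_\eta$. The relevant differential acting on $\cloc^1(\cT_\CC;\Def_n)$ is the sum of the Chevalley-Eilenberg coboundary for $\cT_\CC$ --- which encodes both the Lie bracket on $\cT_\CC$ and the Lie-derivative action of $\cT_\CC$ on the fields of $\DD\fgn^S$ --- and the internal differential on $\Def_n$, which reduces on $K_\eta$ to applying $\dbar$ to the integrand. I would first rewrite $K_\eta(\xi,\gamma) = \int_\CC \partial_z\xi \wedge \mathbf{J}_\eta(\gamma)$ in the notation of Section \ref{defining J}. The Chevalley-Eilenberg contribution then vanishes after integration by parts in $\partial_z$, using the compatibility $[\cL_{\xi_1},\cL_{\xi_2}] = \cL_{[\xi_1,\xi_2]}$ and the Leibniz rule for the Lie derivative acting on $\mathbf{J}_\eta(\gamma)$; the $\dbar$-contribution vanishes because Lemma \ref{easy}(2) rewrites it as an integral of a total derivative. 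The $\Vect$-equivariance statement for $K$ is inherited from that for $J$: the algebraic $\fgn^\vee[-1]$-structure of $K_\eta$ is identical to that of $\Tilde{J}_\eta$, so the calculation in Proposition \ref{jmap} applies here with the $\partial_z\xi$-weight merely carried along.

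The step I expect to be the principal obstacle is the cocycle verification for $K_\eta$: the three differentials (the Chevalley-Eilenberg coboundary on $\cT_\CC$, the Lie-derivative action of $\cT_\CC$ on the fields, and the BV differential $\{S_{\rm free},-\}$) interact non-trivially, and the vanishing of their sum turns on a careful separation of contributions that become total derivatives on $\CC$ (and so integrate to zero) from contributions that should be organized as genuine coboundaries in the Chevalley-Eilenberg direction. Systematic bookkeeping of Dolbeault bidegrees and Koszul signs will be required to make this separation clean.
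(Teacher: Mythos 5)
Your proposal is correct, and where it overlaps with the paper's argument it is the same argument: the paper's entire proof is three sentences, reducing everything to $\Vect$-equivariance (tautological for $\omega^{\GF}$, already established for $J$ in Proposition \ref{jmap}, and for $K$ via the identity $K_{L_X\eta} = \{I^{\rm W}_X, K_\eta\}$, which it declares ``parallel to the calculation in the proof of Proposition \ref{jmap}'' --- exactly your closing observation that the $\partial_z\xi$-weight is merely carried along). The material you add --- the cocycle verification for $K_\eta$ and the vanishing of all pairwise brackets --- is not in the paper's proof at all; the paper leaves the dg-Lie-algebra-map structure implicit. Your justifications for these extra checks are sound: the shifted bracket on $\Def_n^{\cT}$ only contracts $\gamma$-inputs against $\beta$-inputs (there is no pairing in the $\cT_\CC$ background-field direction), so all brackets among $\omega^{\GF}$, $J_\omega$, $K_\eta$ vanish for the degree-counting reason you give, and the closedness of $K_\eta$ follows from the integration-by-parts and total-derivative arguments you sketch. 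So your write-up is a strictly more complete version of the paper's proof rather than a different route; the only difference in emphasis is that you locate the ``hard part'' in the cocycle check for $K_\eta$, whereas the paper treats the equivariance of $K$ as the one thing requiring comment.
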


\begin{proof}
The assignment $\CC \to \cloc^*(\cT_\CC)$ sending $1 \mapsto \omega^{\GF}$ is tautologically
$\Vect$-equivariant. Moreover, we have already shown that the assignment
$J : \hOmega^2_{n,cl}[1] \to \Def_n$ is $\Vect$-equivariant. 

Thus, it suffices to show that $K : \hOmega^1 \to \cloc^*(\cT_\CC ;
\Def_n)$ is $\Vect$-equivariant. It suffices to check that
for all $\eta \in \hOmega^1_n$ and $X \in \Vect$, 
\ben
K_{L_X \eta} = \{I_X^{\rm W}, K_\eta\} .
\een  
This computation is parallel to the calculation in the proof of Proposition \ref{jmap}. 
\end{proof}

As a corollary we obtain a map of cochain complexes upon
applying the functor $\clie^*(\Vect ; -)$:
\ben
(\omega^{GF}, K, J) : \CC[-1] \oplus \clie^*\left( \Vect ; \Omega^1_{n} \oplus
\Omega^2_{n,cl}[1] \right) \to \clie^*(\Vect ; \Def_n^\cT) .
\een 
The complex $\clie^*(\Vect ; \Def_n^\cT)$ controls equivariant
deformations for both the Lie algebra $\Vect$ {\em and} $\cT_\CC$. The
map $(\omega^{\GF},K,J)$ will allow us to identify elements of the
deformation complex with ordinary characteristic classes. 

\section{Equivariant BV quantization of the formal $\beta\gamma$
  system}\label{sec equiv bv}

The free $\beta\gamma$ system is a free BV theory and hence admits a natural quantization.
(See Chapter 6 of \cite{GwThesis} for an extensive development.)  
This quantization is easily modified to encompass the formal $\beta\gamma$ system,
but here we want to quantize \emph{equivariantly} with respect to the action of $\Vect$.
We will find that there is an obstruction to quantizing equivariantly, 
given by the Gelfand-Fuks Chern class $\ch_2^{GF}(\hT_n)$ defined in Section \ref{sec gk cc}. 
This obstruction is a very local avatar of the anomaly described by Witten and Nekrasov \cite{WittenCDO,Nek},
and we will see in Part III that it corresponds in a very precise way to the obstruction to constructing CDOs
found by \cite{MSV, GMS} as described in Section \ref{sec vertex alg}.

There is, however, an equivariant quantization for a natural action of $\TVect$, 
the extension of formal vector fields $\Vect$ by closed two-forms
$\hOmega^2_{cl}$ introduced in Section \ref{sec gk descent}. 
In fact, we will see that the space of closed two-forms is precisely the space of deformations for the $\beta\gamma$ system. 
We construct this quantization explicitly using Feynman diagrams and, in later sections, explain when and how it descends to complex manifolds.

Most of this section is devoted to formulating precisely what equivariant BV quantization means and then proving the following result.

\begin{thm} 
\label{bvq} 
There is a unique (up to contractible choice) $\TVect$-equivariant and $\CC^\times \times \Aff(\CC)$-invariant quantization 
of the $\beta\gamma$ system on $\CC$ with target $\hD^n$.
\end{thm}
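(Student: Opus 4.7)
My proof proposal:

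The plan is to apply Costello's renormalization machinery to the classical $\Vect$-equivariant theory constructed in Section \ref{sec formal}, then identify the one-loop anomaly with the Gelfand-Fuks cocycle $\ch_2^{\GF}(\hT_n)$, and finally invoke the extension $\TVect$ to kill this anomaly. More precisely, the classical interaction is the Noether functional $I^{\rm W}$ of Lemma \ref{Noether}, viewed as a Maurer-Cartan element in $\clie^*(\Vect) \otimes \cloc^*(\DD\fgn^S)[-1]$. I would first construct a scale-$L$ effective family of functionals $I[L] = \sum_{i \geq 0} \hbar^i I^{(i)}[L]$ starting from $I^{(0)}[L]$ obtained by flowing $I^{\rm W}$ with the heat-kernel propagator for the $\dbar$-operator on $\CC$, using the standard Gaussian-integral Feynman expansion organized by graph genus. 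The $\CC^\times \times \Aff(\CC)$ symmetry is manifestly preserved at every stage since the propagator and the interaction are both translation-invariant and scale in compatible ways.

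The main technical step is to solve the quantum master equation $(Q + \hbar \Delta_L) e^{I[L]/\hbar} = 0$ equivariantly, order by order in $\hbar$. The obstruction at each order lives in the first cohomology of the equivariant obstruction-deformation complex $(\Def^{\rm W}_n)^{\CC^\times \times \Aff(\CC)}$, which by Proposition \ref{eqdef} is quasi-isomorphic to $\clie^*(\Vect; \hOmega^2_{n,cl}[1])$. Since the formal $\beta\gamma$ theory is free (no bulk interaction beyond the equivariance coupling), all quantum corrections come from ``wheel'' diagrams that are one-loop: higher-loop diagrams necessarily produce non-local vertices or vanish by the dimension-counting for $\Aff(\CC)$-invariance. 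Hence only an $\hbar^1$-obstruction can appear, and all higher orders vanish.

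The heart of the argument is to compute this one-loop anomaly and identify it with $\ch_2^{\GF}(\hT_n)$ under the isomorphism $J^{\rm W}$ of equation \eqref{trans}. Concretely, the single relevant wheel diagram has two external $\gamma$-legs (the interaction $I^{\rm W}$ is linear in $\beta$, so a wheel must close the $\beta$-legs through the propagator) and contracts two copies of $I^{\rm W}$, inserting the Jacobian of a pair of formal vector fields. Using Corollary \ref{atiyah and IW}, which relates the ``algebraic part'' of $I^{\rm W}_X$ to the Gelfand-Fuks-Atiyah class, this diagrammatic output precisely matches the cocycle $\tr(\At^{\GF}(\hT_n) \wedge \At^{\GF}(\hT_n))$ representing $\ch_2^{\GF}(\hT_n)$ in $\clie^2(\Vect, \GL_n; \hOmega^2_{n,cl})$. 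I expect this identification --- matching an analytic Feynman computation to an algebraic Chevalley-Eilenberg cocycle --- to be the main obstacle, requiring careful bookkeeping of combinatorial factors, Wick contractions and an application of the formal Poincar\'{e} lemma (as in Section \ref{sec gk cc}) to land in closed two-forms rather than merely in $\hOmega^{\geq 2}_n$.

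With the anomaly identified, the extension step is formal: by Lemma at the end of Section \ref{sec gk descent}, the cocycle $\ch_2^{\GF}(\hT_n)$ is trivial in $\clie^*(\TVect; \hOmega^2_{n,cl})$ by construction of $\TVect$, so a trivializing element in $\clie^1(\TVect; \hOmega^2_{n,cl})$ provides the counterterm needed to cancel the anomaly and produce a $\TVect$-equivariant solution $I[L]$ of the QME. Uniqueness up to contractible choice then follows from Proposition \ref{eqdef}: the space of equivalences between two such quantizations is controlled by $H^0$ of the $\TVect$-equivariant deformation complex, which by the formal Poincar\'{e} lemma (again as used in Section \ref{sec gk cc}) reduces to the cohomology of a contractible complex, yielding contractibility of the moduli of quantizations as claimed.
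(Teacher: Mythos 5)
Your proposal follows essentially the same route as the paper's proof: construct the pre-quantization by RG flow from $I^{\rm W}$, observe that only trees and one-loop wheels contribute, identify the one-loop anomaly as a two-vertex wheel whose weight matches $J^{\rm W}(\ch_2^{\GF}(\hT_n))$ via the relation between $I^{{\rm W},alg}$ and the Gelfand-Fuks-Atiyah class, pull back along $p:\TVect\to\Vect$, trivialize the pulled-back cocycle by the functional $J$, and deduce uniqueness from the vanishing of $H^1(\TVect;\hOmega^2_{n,cl})$. Two points deserve correction. First, the restriction to genus $\leq 1$ graphs is purely combinatorial --- each vertex of $I^{\rm W}$ has exactly one $\beta$-leg and the propagator is directed from $\gamma$ to $\beta$, so any connected graph has first Betti number at most one --- and has nothing to do with $\Aff(\CC)$-invariance or non-locality of higher-loop vertices. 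Second, you elide the analytic step showing that the wheel weights $W_\Gamma(P_{\epsilon<L},I^{\rm W})$ have finite $\epsilon\to 0$ limits, i.e.\ that no counterterms are needed; the paper establishes this by a nontrivial integration-by-parts argument, and it is exactly this fact that lets one identify the anomaly on the nose with a local Gelfand-Fuks cocycle rather than merely knowing abstractly that some pre-theory exists.
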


By an $\Aff(\CC)$-invariant quantization, we mean one that is invariant with respect to the action of affine symmetries of the complex line (i.e., translation and dilation by complex numbers). 
The $\CC^\times$-symmetry condition says that the quantization has weight one with respect to scaling the $\beta$ fields, which can viewed as scaling the cotangent fibers of $T^*\hD^n$.
(See the discussion preceding Proposition~\ref{eqdef}.)

\begin{rmk}
\label{rmk on si's work}
Subsequent to the writing of this paper, 
Si Li developed general technology that should imply this theorem~\cite{LiQME}.
(His results, as stated, do not explicitly encompass equivariant quantization but it is clear that they must  extend to our situation.)
In brief, he exhibits a map from holomorphic ({\em aka} chiral) deformations of a free holomorphic field theory of $\beta\gamma$-type to the modes dg Lie algebra of the associated dg vertex algebra,
and he shows that a deformation satisfies the quantum master equation if and only if it satisfies the Maurer-Cartan equation in the modes Lie algebra.
Hence, one can take our interaction term $I$ and check if its associated Lie algebra element $\oint \d z\, I$ satisfies the Maurer-Cartan equation.
This approach is easier in the sense that computing in the modes Lie algebra is purely algebraic in nature,
and does not involve Feynman diagram computations,
precisely because all the Feynman diagrams are hidden in Li's proof, where he does that analysis once and for all.

In this paper, we wish to be self-contained and hence explicitly solve the equivariant quantum master equation and explicitly describe the Feynman diagrammatics.
As a matter of taste, we also wanted to show clearly that the vertex algebra constructions are completely separate from quantum field theoretic constructions --- hence, the division between Parts I and II --- and then to exhibit  in Part III that these independent approaches produce equivalent answers.
To use Li's theorem would intertwine the QFT construction with the vertex algebra construction,
and it might misleadingly suggest we need results from the theory of CDOs to construct our factorization algebra.

On the other hand, Li's work gives a systematic explanation for {\em why} such identifications hold between vertex algebra and QFT constructions.
It sets the stage for proving theorems like ours in a very broad range of examples.

Finally, we remark that Li's theorem essentially reverses the logic of this paper.
He relies on essentially the same logic as Part III but oriented to state a different result.
As we explain in Part III, a free $\beta\gamma$-type field theory yields a factorization algebra that recovers a dg vertex algebra, which is the one ``expected'' from physics.
Any holomorphic deformation of the theory must yield a deformation of the dg vertex algebra, 
and so there must be a map of dg Lie algebras.
We examine what deformation of the vertex algebra arises from our preferred deformation of the theory,
but we exhibit general results that would let one describe other deformations as well.
By contrast, Li shows that this map is injective at the level of sets of solutions (from QME to Maurer-Cartan equation) and hence can use the modes Lie algebra to identify deformations of the BV theory.
\end{rmk}

\subsection{Recollections on equivariant BV quantization} \label{sec equiv bv 1}

In this section we discuss what it means for a Lie algebra $\fh$ to act on a quantum field theory.
To be more precise, we review the formalism developed in \cite{CG2}, notably for the factorization Noether theorems  (see chapters 11 and 12).
A key idea is to make $\clies(\fh)$ the base ring over which the field theory is defined, rather than the complex numbers $\CC$.
Under the dictionary discussed in Section \ref{sec DDT}, this approach should encode how the Lie algebra $\fh$ acts on the theory.
We have already seen this idea deployed for the classical field theory, 
by interpreting the local functional $I^{\rm W}$ of Lemma \ref{Noether} as a Maurer-Cartan element.

Recall that in the BV formalism, as developed in \cite{CosBook,CG2}, 
a quantum BV theory consists of a space of fields and an effective action functional $\{S[L]\}_{L \in (0,\infty)}$,
which is a family of non-local functionals on the fields that are parametrized by a length scale $L$ 
and satisfy
\begin{enumerate}[(a)]
\item an exact renormalization group (RG) flow equation,
\item the scale $L$ quantum master equation (QME) at every length scale $L$,~and
\item as $L \to 0$, the functional $S[L]$ has an asymptotic expansion that is local.
\end{enumerate}
The first condition ensures that the scale $L$ action functional $S[L]$ determines the functional at every other scale.
The second can be interpreted as saying that we have a proper path integral measure at scale $L$ 
(i.e., the QME can be seen as a definition of the measure).
The third condition implies that the effective action is a quantization of a classical field theory,
since a defining property of a classical theory is that its action functional is local.
(A full definition is available in Section 8.2 of~\cite{CG2}.)

\begin{rmk}
The length scale is associated with a choice of Riemannian metric on the underlying manifold,
but the formalism of \cite{CosBook} keeps track of how the space of quantum BV theories depends upon such a choice 
(and other choices that might go into issues like renormalization).
Hence, when the choices should not be essential --- such as with a topological field theory --- one can typically show rigorously that different choices give equivalent answers.
The length scale is also connected with the use of heat kernels in \cite{CosBook},
but one can work with more general parametrices (and hence more general notions of ``scale''),
as explained in Chapter 8 of \cite{CG2}.
We use a natural length scale in this section; 
when it becomes relevant, in the context of factorization algebras, we switch to general parametrices.
\end{rmk}

If we start with an $\fh$-equivariant classical BV theory $\cE$ with action functional $S$ --- so that $\fh$ has an $\L8$ action on the fields that preserves the pairing and the action functional $S$ --- then we can encode the action of $\fh$ as a Maurer-Cartan element $I^\fh$ in $\clies(\fh) \otimes {\rm C}^*_{\rm loc}(\cE)$.
(For the formal $\beta\gamma$ system, we did this in Lemma \ref{Noether}.)
We then view the sum $S + I^\fh$ as the \emph{equivariant} action functional:
the operator $\d_{\clies(\fh)} + \{S+ I^\fh,-\}$ is the twisted differential on $\clies(\fh) \otimes {\rm C}^*_{\rm loc}(\cE)$ with $I^\fh$ as the twisting cocycle,
and this operator is square-zero because $\d_{\clies(\fh)}(S + I^\fh) + \{S + I^\fh, S+ I^\fh\}$ is a ``constant'' (i.e., lives in $\clies(\fh)$ and hence is annihilated by the BV bracket).

This perspective suggests the following definition of an equivariant quantum BV theory.
The starting data is two-fold:
an $\fh$-equivariant classical BV theory with equivariant action functional $S+I^\fh$, 
and a BV quantization $\{S[L]\}$ of the non-equivariant action functional $S$.
Following Costello, it is convenient to write $S$ as $S_{\text{free}} + I$, 
where the first ``free'' term is a quadratic functional and the second ``interaction'' term is cubic and higher.
In this situation, the effective action $S[L] = S_{\text{free}} + I[L]$, 
i.e., only the interaction changes with the length scale.

\begin{dfn} \label{eqQFT} 
An {\em $\fh$-equivariant BV quantization} is a collection of effective interactions $\{I^\fh[L]\}_{L \in (0\infty)}$
satisfying
\begin{enumerate}[(a)]
\item the RG flow equation
\[
W(P_\epsilon^L, I[\epsilon]+I^\fh[\epsilon]) = I[L] + I^\fh[L]
\]
for all $0 < \epsilon < L$,
\item the equivariant scale $L$ quantum master equation, which is that
\[
Q(I[L]+I^\fh[L]) + \d_{\clies(\fh)} I^\fh[L] + \frac{1}{2}\{I[L]+I^\fh[L],I[L]+I^\fh[L]\}_{L} + \hbar \Delta_L(I[L]+I^\fh[L])
\]
lives in $\clies(\fh)$ for every scale $L$, and
\item the locality axiom, with the additional condition that as $L \to 0$, we recover the equivariant classical action functional $S+ I^\fh$ modulo $\hbar$.
\end{enumerate}
\end{dfn}

In other words, we simply follow the constructions of \cite{CosBook,CG2} working over the base ring $\clies(\fh)$.
A careful reading of those texts shows that the freedom to work over interesting dg commutative algebras is built into the formalism.
Note that our situation is particularly simple since the non-equivariant classical field theory is free and hence admits a very simple quantization,
with $I[L] = 0$ for all $L$.

\begin{rmk}
Equivariant quantization is essentially a version of the background field method in QFT.
One treats elements of $\fh$ as background fields and 
the interaction terms $I^\fh[L]$ encode the variation of the path integral measure with respect to these background fields.
(Solving the QME is our definition of well-posedness of the measure.)
\end{rmk}

\subsection{The pre-theory}
\label{sec prequant}

We will follow an approach directly parallel to the non-equivariant construction of a BV quantization of the curved $\beta\gamma$ system in \cite{WG2}.
Our first step is to try to construct an equivariant effective pre-theory 
(i.e., effective actions satisfying the locality and RG flow conditions but not necessarily the QME condition)
for the $\Vect$-equivariant formal $\beta\gamma$ system.
Essentially, we try to run the RG flow from the classical theory by naively guessing
\[
I^\fh[L] = \lim_{\epsilon \to 0} W(P^L_\epsilon,I^\fh)
\]
and then adding counterterms to deal with singularities that prevent this limit from existing.
(One of the main theorems of \cite{CosBook} guarantees that we can construct such a pre-theory.)
In the next subsection, we will examine the failure of this action to satisfy the equivariant QME.

To construct the pre-quantization explicitly, we need to specify certain data, such as the heat kernels and propagators with which we will work.
As we are working on the Riemann surface $S = \CC$, it is natural to work with the standard Euclidean metric
and to take advantage of the compatibility between its Laplacian and the operators $\partial$ and $\dbar$.
The {\em analytic heat kernel} we use is 
\ben
K_t(z,w) = \frac{1}{4 \pi t} e^{-|z-w|/ 4 t} \cdot (\d z - \d w) \wedge (\d \Bar{z} - \d \Bar{w}) .
\een
Thanks to the decomposition $\fgn^\CC = \Omega^{0,*}(\CC) \otimes \fgn$ (i.e., the vector bundle is trivialized),
the heat kernel for $\DD\fgn^\CC$ factors into an analytic part and an algebraic part
\ben
K_t = K^{an}_t \otimes ({\rm Id}_{\fg} + {\rm Id}_{\fg^\vee}) .
\een 
The \emph{propagator} $P_{\epsilon<L}$ likewise factors as $P^{an}_{\epsilon<L} \otimes ({\rm Id}_{\fg} + {\rm Id}_{\fg^\vee})$ 
where
\ben
P^{an}_{\epsilon<L} = \int_{t=\epsilon}^L (\dbar^* \otimes 1) K^{an}_t\, \d t .
\een
The analytic part of the propagator is only nontrivial on ``mixed inputs,'' 
i.e., where one side of the edge is labeled by a $\gamma$ and the other side is a $\beta$.
(This property is, of course, a direct consequence of the shifted pairing on fields.)
Thus, one can view the propagator as ``directed'' from $\gamma$ to $\beta$.
Figure \ref{fig:prop} shows how we draw the edge labeled by a propagator.

The vertices of Feynman diagrams are also highly constrained, 
since every term in the interaction $I^{\rm W}$ is linear in $\beta$.
Figure \ref{fig:vertex} shows the vertex where the target is $\hD^1$ 
and the formal vector field is $t^n \partial_t$. 
As with the propagator, we view $\gamma$ and $\beta$ legs as oriented, 
and there is only ever one $\beta$ leg.

There are strong consequences for Feynman diagrams
due to this directedness and the linearity in $\beta$:
the only nontrivial connected Feynman diagrams that can appear have zero or one loops.
A connected graph of genus zero will be a tree with one leaf labeled by $\beta$ and 
the other leaves labeled by $\gamma$ or a formal vector field $X \in \Vect$.
(and hence will encode a functional that has weight one for the scaling action).
Now consider the simplest kind of one-loop graph: a wheel with $k$ vertices.
Since the edges of the loop are labeled by the propagator --- and 
so the $\beta$ legs of the vertices are used up on the loop --- the leaves can only take $\gamma$ or $X$ as input.
A general one-loop graph will be a wheel with trees attached.
See Figure \ref{fig:wheel} for a simple example.

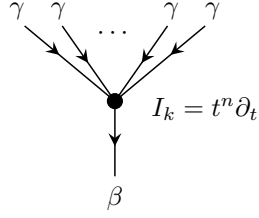
\begin{figure}
\begin{tikzpicture}[decoration={markings,mark=at position 0.6cm with {\arrow[black,line width=.4mm]{stealth}}}];
\draw[postaction=decorate, line width=.2mm] (-1.2,1) -- (0,0);
\draw[postaction=decorate, line width=.2mm] (-0.7, 1) -- (0,0);
\draw[postaction=decorate, line width=.2mm] (0.7,1) -- (0,0);
\draw[postaction=decorate, line width=.2mm] (1.2,1) -- (0,0);
\draw[postaction=decorate, line width=.2mm] (0,0) -- (0,-1);
\filldraw[color=black]  (0,0) circle (.1);
\draw (1.2,-0.1) node {$I_k = t^n \partial_t$};
\draw (-1.3,1.2) node {$\gamma$};
\draw (-0.75,1.2) node {$\gamma$};
\draw (0.75,1.2) node {$\gamma$};
\draw (1.3,1.2) node {$\gamma$};
\draw (0,0.9) node {$\cdots$};
\draw (0,-1.3) node {$\beta$};
\end{tikzpicture}
\caption{The vertex with $n$ incoming $\gamma$ legs and one outgoing $\beta$ leg}
\label{fig:vertex}
\end{figure}

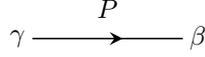
\begin{figure}
\begin{tikzpicture}[decoration={markings,
   mark=at position 1.2cm with {\arrow[black,line width=.4mm]{stealth}}}];
\draw[postaction=decorate, line width=.2mm] (-1,0) -- (1,0);
\draw (-1.2,0) node {$\gamma$};
\draw (1.2,0) node {$\beta$};
\draw (0, 0.4) node {$P$};
\end{tikzpicture}
\caption{The propagator as directed from $\gamma$ to $\beta$}
\label{fig:prop}
\end{figure}

\begin{figure}
\begin{center}
\begin{tikzpicture}[scale=0.8,decoration={markings,mark=at position 0.9cm with {\arrow[black,line width=.4mm]{stealth}}}];
\draw[postaction=decorate, line width=.2mm] (2,0) -- (0,0);
\draw (1, -0.4) node {$P$};
\draw[postaction=decorate, line width=.2mm] (0,0) -- (0,2);
\draw (-0.4, 1) node {$P$};
\draw[postaction=decorate, line width=.2mm] (0,2) -- (2,2);
\draw (1, 2.4) node {$P$};
\draw[postaction=decorate, line width=.2mm] (2,2) -- (2,0);
\draw (2.4, 1) node {$P$};
\draw[postaction=decorate, line width=.2mm] (-1.4,0) -- (0,0);
\draw (-1.6,0) node {$\gamma$};
\draw[postaction=decorate, line width=.2mm] (0,-1.4) -- (0,0);
\draw (0,-1.6) node {$\gamma$};
\draw[postaction=decorate, line width=.2mm] (-1.12,-1.12) -- (0,0);
\draw (-1.3,-1.3) node {$\gamma$};
\draw[postaction=decorate, line width=.2mm] (-1.12,3.12) -- (0,2);
\draw (-1.3,3.3) node {$\gamma$};
\draw[postaction=decorate, line width=.2mm] (3.4,2.5) -- (2,2);
\draw (3.6,2.6) node {$\gamma$};
\draw[postaction=decorate, line width=.2mm] (2.5,3.4) -- (2,2);
\draw (2.6,3.6) node {$\gamma$};
\draw[postaction=decorate, line width=.2mm] (3.4,-0.5) -- (2,0);
\draw (3.6,-.6) node {$\gamma$};
\draw[postaction=decorate, line width=.2mm] (2.5,-1.4) -- (2,0);
\draw (2.6,-1.6) node {$\gamma$};
\filldraw[color=black]  (0,0) circle (.1);
\filldraw[color=black]  (0,2) circle (.1);
\filldraw[color=black]  (2,0) circle (.1);
\filldraw[color=black]  (2,2) circle (.1);
\end{tikzpicture}
\caption{A wheel with four vertices}
\label{fig:wheel}
\end{center}
\end{figure}
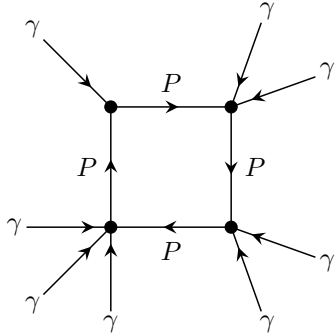

\begin{prop} 
For a connected genus one graph $\Gamma$, the limit $\lim_{\epsilon \to 0} W_{\Gamma}(P_{\epsilon<L}, I^{\rm W})$ exists. 
\end{prop}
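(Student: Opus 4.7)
I would prove the proposition by reducing to the analysis of pure wheel subdiagrams and then using the explicit form of the propagator to show uniform integrability.

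Any connected graph $\Gamma$ of first Betti number one is a wheel with $k$ vertices for some $k \geq 1$, together with trees attached at each of these wheel vertices. The tree amplitudes involve only finitely many propagator evaluations at generically distinct arguments, so their $\epsilon \to 0$ limits exist and produce smooth, Schwartz-type functions of the wheel-vertex positions. These tree factors can be absorbed into a smooth form $\Phi(z_1,\ldots,z_k)$ on $\CC^k$, and the problem reduces to showing that the pure wheel weight
\[
W_\Gamma(P_{\epsilon<L}, I^{\rm W}) = \int_{\CC^k} \Phi(z_1,\ldots,z_k) \cdot \prod_{i=1}^{k} P^{an}_{\epsilon<L}(z_i, z_{i+1})
\]
admits a limit as $\epsilon \to 0$.

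Next I would carry out the $t$-parameter integration in each propagator explicitly. Substituting $s = |z-w|^2/4t$ in the defining formula
\[
P^{an}_{\epsilon<L}(z,w) = \int_\epsilon^L \frac{(\bar z - \bar w)}{16\pi t^2}\, e^{-|z-w|^2/4t}\, \d t \wedge (\d z - \d w),
\]
one finds that $P^{an}_{0<L}(z,w) = \frac{1}{4\pi(z-w)} e^{-|z-w|^2/4L} (\d z - \d w)$ pointwise for $z \neq w$, along with a uniform bound $|P^{an}_{\epsilon<L}(z,w)| \leq C/|z-w|$ near the diagonal with rapid decay at infinity. Hence the wheel integrand is absolutely bounded, uniformly in $\epsilon$, by an integrand featuring a product of simple poles $1/(z_i - z_{i+1})$ against the Schwartz form $\Phi$.

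The remaining step is to verify that this majorant lies in $L^1(\CC^k)$, from which the conclusion follows by dominated convergence. The key analytic feature is that the propagator singularities are \emph{holomorphic} poles rather than bi-holomorphic ones: by induction on $k$, using translation invariance to fix one vertex and then integrating out the others one at a time, the product $\prod 1/(z_i - z_{i+1})$ paired with $\Phi$ yields a finite integral, since at each step one faces only integrals of the form $\int_{\CC} \varphi(z)/((z-a)(\bar z - \bar b))\, \d^2 z$ for rapidly-decreasing $\varphi$, which converge absolutely. The main obstacle is this uniform integrability estimate for the product of $k$ propagators, which is a standard but delicate analytic exercise in the theory of holomorphic field theories; I would adapt the treatment from \cite{WG2}, where exactly this kind of estimate is carried out for closely related $\beta\gamma$-type theories on complex manifolds.
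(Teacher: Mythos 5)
Your strategy is genuinely different from the paper's: you integrate out the heat-kernel parameter first to get the closed form $P^{an}_{0<L}(z,w)=\tfrac{1}{4\pi(z-w)}e^{-|z-w|^2/4L}(\d z-\d w)$ together with the uniform bound $|P^{an}_{\epsilon<L}(z,w)|\le C/|z-w|$, and then try to conclude by dominated convergence; the paper instead keeps the $t$-integrals, changes variables to $w_i=z_{i+1}-z_i$, and uses the operators $D_m(t)=\partial_{w_m}-\sigma(t)$ to integrate by parts, trading every factor $\bar w_i/t_i$ for a derivative of the test function before doing the Gaussian integral and bounding $\int (\sum_i t_i)^{-1}$. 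Your reduction to pure wheels and your pointwise estimates on the propagator are correct.

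The gap is in the $L^1$ claim for the majorant. In the coordinates $w_i=z_{i+1}-z_i$ the proposed majorant is $\bigl(\prod_{i=1}^{k-1}|w_i|^{-1}\bigr)\,|w_1+\cdots+w_{k-1}|^{-1}$, which by scaling ($\text{degree }-k$ against $\text{dimension }2(k-1)$) is locally integrable exactly when $k\ge 3$. For the two-vertex wheel the majorant is $|w_1|^{-2}$ on $\CC$, which is \emph{not} locally integrable, so dominated convergence fails — and $k=2$ is not a case you can discard, since it is precisely the diagram whose weight produces the anomaly $\ch_2^{\GF}(\hT_n)$. Your appeal to the holomorphy of the poles cannot rescue this as stated: holomorphy is a statement about cancellation in the signed integrand (the $k=2$ kernel is $1/(z_1-z_2)^2$, a conditionally convergent distributional pairing), whereas a dominated-convergence majorant must control absolute values, for which the phase is irrelevant. (Relatedly, the intermediate integrals in your induction have two holomorphic poles, $\int\varphi(z)/((z-a)(z-b))\,\d^2z$, not one holomorphic and one antiholomorphic.) To close the $k=2$ case you must exploit the cancellation explicitly — e.g.\ one integration by parts in $w_1$ to lower the pole order, or a Taylor expansion of $\phi$ at $w_1=0$ combined with the vanishing of the angular average of $\bar w^2/|w|^4$ — which is in effect the $k=2$ instance of the paper's systematic integration-by-parts scheme. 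With that repair (and softening ``Schwartz-type'' for the tree factors to ``smooth with $O(1/|z|)$ decay,'' since Cauchy transforms of compactly supported functions are not rapidly decreasing), your route gives a valid and arguably more transparent proof for $k\ge 3$.
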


In this proposition we remove the factors of $2$ and $\pi$ in the
definition of the heat kernel for shortness of exposition. These
factors will clearly not affect the existence of the limit. 

\begin{proof}
The graph weight $W_\Gamma$ will be a function of $\Vect$ and $\Omega^{0,*}(\CC) \otimes \fg_n$. 
The graph weight, like the propagator, factors as
\ben
W^{an}_\Gamma(P_{\epsilon<L}, I^{\rm W}) W^{\fg}_\Gamma
\een
where the analytic factor $W^{an}_\Gamma$ is a functional on the space $\Omega^{0,*}_c \oplus \Vect [1]$ and 
the algebraic factor $W^{\fg}_\Gamma$ is a functional on the space $\fg [1] \oplus \Vect [1]$. The algebraic factor does not depend on the regularization; it is independent of $\epsilon$ and $L$. 
Thus, to show that the limit exists it suffices to consider the analytic weight.
It also suffices to assume that $\Gamma$ is a wheel,
since the singularities arise from the wheel and not from any trees attached to the wheel. 

Suppose $\Gamma$ has $k$ vertices and choose a labeling of the vertices $v = (v_1,\ldots, v_k)$.
Let vertex $v_i$ correspond to the functional
\[
(\gamma,\beta,X) \mapsto a_{j, \bm} \int_S  \gamma^{\wedge m_1}_1 \wedge \cdots \wedge \gamma^{\wedge m_n}_n \wedge \beta_j
\]
where 
\[
X = \sum_{j = 1}^n \sum_{\bm = (m_1,\ldots,m_n) \in \NN^n} a_{j, \bm} t_1^{m_1} \cdots t_n^{m_n} \partial_j.
\] 
In other words, this functional only cares about the coefficient of $t_1^{m_1}\cdots t_n^{m_n} \partial_j$ in the vector field $X$
and uses it to produce a functional on $\beta$ and $\gamma$ of polynomial degree $m_1 + \cdots + m_n +1$.
The vertex $v_i$ thus has valence $1 + \nu_i = m_1 + \cdots + m_n +2$, 
where $\beta$ and $X$ each contribute one leg and the remaining legs are $\gamma$.
From hereon we will ignore the coefficient from $X$, as it does not affect convergence (only changing an overall constant)
and cease to discuss the leg associated to $X$.
Hence we will view $v_i$ as having valence $\nu_i$.

When we form the wheel, the $\beta$ leg of $v_i$ is paired with a $\gamma$ leg of $v_{i+1}$ by a propagator.
Thus there are $N = \sum_i (\nu_i - 1)$ external legs.
We now view the functional as a function of $N$ distinct inputs $\gamma_1,\ldots,\gamma_N$ of $\Omega^{0,*}(\CC)$,
which makes it easier to examine convergence.

Fix functions $\{f_{i,j_i}\} \in \Omega^{0,0}_c(\CC) = C^\infty_c(\CC)$ 
where $i = 1, \ldots k$ and $j_i = 1,\ldots,\nu_i - 1$. 
The analytic weight is 
\ben
W_\Gamma^{an}(P^{an}_{\epsilon<L}, I^{\rm W}(\{f_{i,i_j}\})) = 
\int_{{\bf z} \in \CC^k} \left(\prod_{i=1}^k \d^2 z_i \right)  \prod_{i = 1}^k \left(\left( \prod_{j_i =1}^{\nu_i - 1} f_{i,j_i} (z,\zbar) \right) P^{an}_{\epsilon<L}(z_j,z_{j+1}) \right). 
\een
In the product $z_{N+1}$ is identified with $z_1$, and $\d^2 z$ denotes $\d z \,\d \zbar$.
When $k =1$ this integral vanishes because the propagator vanishes
along the diagonal. Hence consider $k \geq 2$.
We want to show that the $\epsilon \to 0$ limit of the above integral exists
for any choices of $f_{i,j_i}$.

Before delving into analysis, we make some remarks that simplify notation.
First, we replace the product function $\prod_{i = 1}^k \prod_{j_i =1}^{\nu_i - 1} f_{i,j_i} (z,\zbar)$ 
by an arbitrary smooth function $\phi$ on $\CC^k$ with compact support,
as the functional above defines a distribution on $\CC^k$.
We thus need to show the integral vanishes for all such $\phi$.
Second, we repress from our notation obvious factors such as $\d^2 z_i$,
which can be reinserted by looking at the domain of integration (which is always a vector space).
Finally, we make a linear change of coordinates: $w_i := z_{i+1} - z_i$ for $1 \leq i < k$ and $w_k = z_k$. 
Note then that
\[
z_k - z_1 = \sum_{j = 1}^{k-1} w_j.
\]
Up to a constant factor independent of $\epsilon$ (i.e., the Jacobian of this change of coordinates),
the weight is
\bestar
\int_{w \in \CC^{k}} 
\phi(w,\Bar{w})
\left(
\int_{(t_1,\ldots,t_k) \in [\epsilon, L]^k} 
\frac{1}{t_1 \cdots t_k}
\left( \prod_{i=1}^{k-1} \frac{\Bar{w}_i}{t_i}   e^{-|w_i|^2/t_i} \right) 
\left(\sum_{j =1}^{k-1} \frac{\Bar{w}_j}{t_k} \right) e^{- |\sum_j w_j|^2/t_k }
\right) 
\eestar
The factor in parentheses is an explicit expression for the analytic propagators. 
We rewrite this expression as 
\begin{equation}\label{integral before}
\int_{w \in \CC^{k}} 
\phi(w,\Bar{w})
\left(
\int_{(t_1,\ldots,t_k) \in [\epsilon, L]^k} 
\frac{1}{t_1 \cdots t_k}
\left(\prod_{i=1}^{k-1} \frac{\Bar{w}_i}{t_i} \right)
\left(\sum_{j =1}^{k-1}\frac{\Bar{w}_j}{t_k} \right) 
e^{- \sum_i |w_i|^2 / t_i - |\sum_j w_j|^2/t_k }
\right). 
\end{equation}
Because
\[
\frac{\partial}{\partial w_i} e^{-|w_i|^2/t_i} = -\frac{\Bar{w}_i}{t_i }e^{-|w_i|^2/t_i},
\]
one can use integration by parts  to trade powers of $t_i^{-1} \Bar{w}_i$ for derivatives of $\phi$.
This is our next step in proving convergence.

Define the differential operator 
\[
\sigma(t) = \frac{1}{t_1 + \cdots + t_k}\sum_{j=1}^{k-1} t_j \partial_{w_j},
\]
which is a differential operator on functions on 
$\CC^{k-1} = \CC_{w_1} \times \cdots \times \CC_{w_{k-1}}$ (i.e., functions of the variables $w_1,\ldots,w_{k-1}$) 
whose coefficients are functions of the variables $(t_1,\ldots,t_k) \in [\epsilon,L]^k$.
Define the first-order differential operator  
\ben
D_m(t) :=\partial_{w_m} - \frac{1}{t_1 + \cdots + t_k}\sum_{j=1}^{k-1} t_j \partial_{w_j} = \partial_{w_m} - \sigma(t),
\een
with $1 \leq m < k$.
We now explain the utility of these operators.

Set
\[
E = e^{- \sum_i |w_i|^2 / t_i - |\sum_i w_i|^2/t_k }.
\]
Then
\begin{align*}
\sigma(t)E 
&= -\left( 
\frac{1}{t_1 + \cdots + t_k} \sum_{j =1}^{k-1}\left( \Bar{w_j} + \frac{t_j}{t_k}\sum_{i =1}^{k-1} \Bar{w_i} \right) 
\right) 
E \\
&= -\left(\frac{1}{t_1 + \cdots + t_k} \left(1 + \frac{\sum_{j=1}^{k-1} t_j}{t_k}\right)\left(\sum_{j =1}^{k-1} \Bar{w_j}\right) 
\right) 
E \\
&= -\frac{1}{t_k}\left(\sum_{j =1}^{k-1} \Bar{w_j}\right) E,
\end{align*}
and so we find
\[
D_m(t) E = - \frac{\Bar{w_m}}{t_m} E
\]
for any $m$.
In consequence, for example,
\begin{align*}
D_1 (t) &
\left(
\phi(w,\Bar{w}) 
\prod_{i=2}^{k-1} \frac{\Bar{w}_i}{t_i}
\left(\sum_{j =1}^{k-1} \frac{\Bar{w}_j}{t_k} \right) 
e^{- \sum_i |w_i|^2 / t_i - |\sum_i w_i|^2/t_k }
\right) \\
 & = 
 \left( 
 - \phi(w,\Bar{w}) \frac{\Bar{w}_1}{t_1} + (D_1(t) \phi)(w, \Bar{w}) 
 \right) 
\prod_{i=2}^{k-1} \frac{\Bar{w}_i}{t_i}
\left(\sum_{j =1}^{k-1} \frac{\Bar{w}_j}{t_k} \right) e^{- \sum_i |w_i|^2 / t_i - |\sum_i w_i|^2/t_k }.
\end{align*}
Note that the left hand side is a total derivative and hence integrates over $w \in \CC^k$ to zero.
The first summand on the right hand side is our integrand from the integral (\ref{integral before}), 
up to a sign and the factor $(t_1\cdots t_{k-1})^{-1}$.
Hence, we find that the integral (\ref{integral before}) equals
\[
\int_{w \in \CC^{k}} 
\int_{(t_1,\ldots,t_k) \in [\epsilon, L]^k}
\frac{1}{t_1 \cdots t_k}
(D_1(t) \phi)(w, \Bar{w}) 
\prod_{i=2}^{k-1} \frac{\Bar{w}_i}{t_i}
\left(\sum_{j =1}^{k-1} \frac{\Bar{w}_j}{t_k} \right) e^{- \sum_i |w_i|^2 / t_i - |\sum_i w_i|^2/t_k }.
\]
Analogous arguments apply, of course, for any $D_m$, due to the symmetry of the integrand.

Hence, applying the $D_m(t)$ in order and using a variant of the preceding argument, 
we find that the integral (\ref{integral before}) equals
\[
\int_{w \in \CC^{k}} 
\int_{(t_1,\ldots,t_k) \in [\epsilon, L]^k}
\frac{1}{t_1 \cdots t_k}
(D_{k-1}(t) \cdots D_1(t) \phi)(w, \Bar{w}) 
\left(\sum_{j =1}^{k-1} \frac{\Bar{w}_j}{t_k} \right) e^{- \sum_i |w_i|^2 / t_i - |\sum_i w_i|^2/t_k }.
\]
We apply the same argument with $\sigma(t)$ to show that the integral (\ref{integral before}) equals
\[
\int_{w \in \CC^{k}} 
\int_{(t_1,\ldots,t_k) \in [\epsilon, L]^k}
\frac{1}{t_1 \cdots t_k}
(\sigma(t)D_{k-1}(t) \cdots D_1(t) \phi)(w, \Bar{w}) 
e^{- \sum_i |w_i|^2 / t_i - |\sum_i w_i|^2/t_k }.
\]
This integral depends on $\epsilon$ through both the domain of integration
and the dependence of the operators $D_m(t)$ and $\sigma(t)$ on $t$.
We first eliminate the second kind of dependence.

Observe that for any choice of allowed $t$, we have 
\[
|\sigma(t) f| \leq \sum_{j =1}^{k-1} |\partial_{w_j} f|,
\]
since $t_j/\sum_i t_i < 1$ for every $j$.
Hence, we may replace $\sigma(t)D_{k-1}(t) \cdots D_1(t) \phi$ in the integrand
by a compactly supported function $\psi(w,\Bar{w})$.
That is, to show convergence of integral (\ref{integral before}) as $\epsilon \to 0$,
it suffices to show convergence of
\[
\int_{w \in \CC^{k}} 
\int_{(t_1,\ldots,t_k) \in [\epsilon, L]^k}
\frac{1}{t_1 \cdots t_k}
\psi(w, \Bar{w})e^{- \sum_i |w_i|^2 / t_i - |\sum_i w_i|^2/t_k }
\]
for any compactly supported~$\psi(w,\Bar{w})$.
We may suppose that $\psi$ factors as $f(w_1,\ldots,w_{k-1})g(w_k)$ 
and focus only on integrating over the variables $w_1,\ldots,w_{k-1}$.

In this integral, there is no problem with integrating over the $w$ variables,
since the integrand is compactly supported in $w$. 
The possible problems arise from the factor $(t_1 \cdots t_k)^{-1}$,
which is not integrable over the domain $[0,L]^k$.
We need to show that the integral over $w$ contributes positive powers of the $t_i$
so that the integral over $t$ has an $\epsilon \to 0$ limit.

Note that, due to our arguments above, 
integration by parts allows us to trade a power of $\Bar{w}_j$ for a $1/t_j$.
Hence if we give a partial Taylor expansion of $\psi$ around the origin,
the integrals against nonconstant terms (which possess powers of $\Bar{w}_j$) 
are more convergent than the constant term of $\psi$.
In other words, it suffices to show that there exists the $\epsilon \to 0$ limit of
\begin{equation}
\label{core comp}
\int_{(w_1,\ldots,w_{k-1}) \in \CC^{k-1}} 
\int_{(t_1,\ldots,t_k) \in [\epsilon, L]^k}
\frac{1}{t_1 \cdots t_k}
e^{- \sum_i |w_i|^2 / t_i - |\sum_i w_i|^2/t_k }.
\end{equation}
Performing a Gaussian integral on the variables $w_1,\ldots,w_{k-1}$, 
we see that expression (\ref{core comp}) is proportional to 
\ben
\int_{(t_1,\ldots,t_k) \in [\epsilon,L]^k}
\left(\sum_{i=1}^k t_i\right)^{-1} \leq C \cdot \prod_{i=1}^k \int_{t_i=\epsilon}^L \frac{1}{t_i^{1/k}}  
= C' \prod_{i=1}^k (L^{(k-1)/k} - \epsilon^{(k-1)/k}),
\een
with $C$ and $C'$ constants.
For $k \geq 2$ the right hand side is finite as $\epsilon \to 0$. 
\end{proof}

Thanks to this proposition we have a well-defined equivariant prequantization.

\begin{dfn}
For $L > 0$, let
\ben
I^{\rm W}[L] := \lim_{\epsilon \to 0} W(P_{\epsilon < L}, I^{\rm W}) 
= \lim_{\epsilon \to 0} \sum_{\Gamma } \frac{\hbar^{g(\Gamma)}}{|{\rm Aut}(\Gamma)|} W_\Gamma(P_\epsilon^L, I^{\rm W}) . 
\een 
Here the sum is over all isomorphism classes of stabled connected graphs, but only graphs of genus $\leq 1$ contribute nontrivially. 
By construction, the collection satisfies the RG flow equation and its tree-level $L \to 0$ limit is manifestly $I^{\rm W}$.
Hence $\{I^{\rm W}[L]\}_{L \in (0,\infty)}$ is a \emph{$\Vect$-equivariant prequantization} of the $\Vect$-equivariant classical formal $\beta\gamma$ system.
\end{dfn}

Organizing the sums by genus of the graphs, we write the interaction as a sum $I^{\rm W}[L] = I^{{\rm W},0}[L] + \hbar I^{{\rm W},1}[L]$ where 
\bestar
I^{{\rm W},0}[L] & = & \sum_{\Gamma \in \; {\rm Trees}} \frac{1}{|{\rm Aut}(\Gamma)|} W_{\Gamma}(P_{\epsilon < L}, I^{\rm W}),\\
I^{{\rm W},1}[L] & = & \sum_{\Gamma \in \; {\rm 1-loop}} \frac{1}{|{\rm Aut}(\Gamma)|} W_{\Gamma}(P_{\epsilon < L}, I^{\rm W}).
\eestar
We now turn to studying the obstruction to satisfying the equivariant quantum master equation. 

\subsection{The obstruction}

With the pre-theory in hand, we ask whether it satisfies the QME.
The main result of this subsection provides a direct link between the topology of manifolds and the analysis of Feynman diagrams, where a characteristic class yields a local functional via the map~$J^{\rm W}$.

\begin{prop}\label{obsprop} 
There is an obstruction to a $\Vect$-equivariant quantization of the formal $\beta\gamma$ system 
that preserves the $\CC^\times \times \Aff(\CC)$ action by scaling and affine transformations. 
It is represented by a non-trivial cocycle of degree one
\ben
\Theta \in (\Def^{\rm W}_n)^{\CC^\times \times {\rm Aff}(\CC)} 
\een
such that 
\[
\Theta  = a J^{\rm W}(\ch^{\GF}_2(\hT_n))
\]
for some non-zero number $a$, where $J^{\rm W}$ is the quasi-isomorphism of Proposition \ref{eqdef}
and $\ch^{\GF}_2(\hT_n)$ is the component of the Gelfand-Fuks Chern character living in~$\clie^2(\Vect; \hOmega_{n,cl}^2).$ 
\end{prop}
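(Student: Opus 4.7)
The plan is to extract the anomaly as an explicit local functional from the Feynman diagram expansion and then match it, on the nose, with the image under $J^{\rm W}$ of the Gelfand-Fuks representative of $\ch_2^{\rm GF}(\hT_n)$ given in Section \ref{sec gk cc}. First, following Chapter 5 of \cite{CosBook}, define
\[
\Theta[L] \;=\; (Q + \d_{\Vect})\,I^{\rm W}[L] \;+\; \tfrac{1}{2}\{I^{\rm W}[L],I^{\rm W}[L]\}_L \;+\; \hbar\,\Delta_L I^{\rm W}[L],
\]
which measures the failure of the equivariant scale-$L$ QME. A standard argument (the BV version of the fact that RG flow preserves the QME up to a BV-closed term) shows that $\Theta[L]$ is independent of $L$ modulo exact terms, and that the $L \to 0$ limit $\Theta := \lim_{L\to 0}\Theta[L]$ exists as an element of $\Def^{\rm W}_n$. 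Because $I^{\rm W}[L]$ is built from one-loop diagrams (wheels, possibly decorated with trees, as forced by the directedness of the propagator and the fact that each vertex has exactly one $\beta$-leg), and because the free part of the action is already quantized trivially, $\Theta$ is of order exactly $\hbar$. Invariance of the heat kernel, the propagator, and the vertices under $\CC^\times\times\Aff(\CC)$ confines $\Theta$ to the invariant subcomplex $(\Def^{\rm W}_n)^{\CC^\times\times\Aff(\CC)}$.

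Second, Proposition \ref{eqdef} gives a $\Vect$-equivariant quasi-isomorphism $J^{\rm W}\colon \clies(\Vect,\hOmega^2_{n,cl}[1]) \xto{\simeq} (\Def^{\rm W}_n)^{\CC^\times\times\Aff(\CC)}$, so it suffices to produce the preimage of $\Theta$. Writing the anomaly by loop-order and by number of $\Vect$ legs, the wheel with $k$ vertices contributes a functional that is homogeneous of polynomial degree $k$ in the $\Vect$ input; this is exactly the bi-degree of $\clie^k(\Vect,\hOmega^2_{n,cl})$ sitting inside $\Def^{\rm W}_n$ under $J^{\rm W}$. The cohomological degree of $\Theta$ is $+1$ in $\Def^{\rm W}_n$, which matches $\clie^2(\Vect,\hOmega^2_{n,cl})$ after the shift $[1]$, suggesting that the nontrivial cohomological information is carried by the two-vertex wheel (the ``bubble'').

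Third, compute the two-vertex wheel explicitly. The algebraic factor factors out of the diagram integral in the form predicted by Corollary \ref{atiyah and IW}: at each of the two vertices, the insertion of $I^{\rm W}_X$ with $X\in\Vect$ contributes, after applying $\d_{dR}$ to the external $\gamma$ legs (which is what the map $J^{\rm W}$ effectively implements when converting the local functional back into a two-form-valued cochain), a copy of the Gelfand-Fuks-Atiyah cocycle $\At^{\rm GF}(\hT_n)(X)$; the trace around the loop produces $\mathrm{Tr}\bigl(\At^{\rm GF}(\hT_n)\wedge\At^{\rm GF}(\hT_n)\bigr)$, which is precisely (up to the universal normalization $\frac{1}{(-2\pi i)^2 2!}$) the cocycle representative of $\ch_2^{\rm GF}(\hT_n)$ constructed in Section \ref{sec gk cc}. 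The analytic factor is the $k=2$ case of the convergent loop integral computed in the proof of the preceding proposition; carrying out the residual Gaussian integral yields a nonzero constant, which will be the constant $a$ of the statement. Wheels with $k\geq 3$ vertices contribute higher Chern-Weil-type cocycles that, after restriction by the $\CC^\times\times\Aff(\CC)$-invariance and by the bidegree constraint imposed by $J^{\rm W}$ (which forces $\hOmega^2$-valued inputs only), lie in the image of the BV differential and therefore represent trivial classes; showing this cleanly is the main obstacle to the proof.

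The hardest step will be the bookkeeping of constants and signs in the bubble calculation: keeping track of (i) the symmetry factor $1/|\mathrm{Aut}(\Gamma)|$ from the graph, (ii) the conventions in the definition of $\ch_2^{\rm GF}$, (iii) the identification of $(\d_{dR}\otimes 1) I^{\rm W,alg}$ with $\At^{\rm GF}(\hT_n)$ from Corollary \ref{atiyah and IW}, and (iv) the analytic propagator integral, all simultaneously, so that one gets the equality $\Theta = a\, J^{\rm W}(\ch_2^{\rm GF}(\hT_n))$ with $a\neq 0$ rather than merely a cohomologous relation. A secondary but non-trivial point is that the BV bracket term $\tfrac12\{I^{\rm W}[L],I^{\rm W}[L]\}_L$ at one loop (from a tree times a tree glued along a propagator) and the BV Laplacian term $\hbar\Delta_L I^{\rm W}[L]$ must be shown to assemble with $(Q+\d_{\Vect})I^{\rm W}[L]$ into a sum that, in the $L\to 0$ limit, reduces to the wheel contribution alone; this is the usual ``the anomaly is local'' statement, and follows by Wick's lemma and the structure of the propagator, but must be verified in the $\Vect$-equivariant extension.
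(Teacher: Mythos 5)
Your overall strategy---define the scale-$L$ anomaly as the failure of the equivariant QME, pass to the local $L\to 0$ limit, reduce to the two-vertex wheel, match its algebraic factor with ${\rm Tr}\bigl(\At^{\GF}(\hT_n)^{\wedge 2}\bigr)$ via Corollary \ref{atiyah and IW}, and compare with the explicit formula for $J^{\rm W}(\ch_2^{\GF}(\hT_n))$---is the same as the paper's. However, the step you yourself flag as ``the main obstacle,'' namely disposing of the wheels with $k\geq 3$ vertices, is where your route diverges and is left with a genuine gap. The paper never shows that higher wheels contribute exact terms; it invokes Lemma \ref{obslemma} (Corollary 16.0.5 of \cite{WG2}), a structural fact of Costello's formalism asserting that $\lim_{L\to 0}\Theta[L]$ is \emph{equal} to the sum over two-vertex wheels whose vertices carry the effective interaction $I^{\rm W}[\epsilon]$, with one internal edge labeled by the heat kernel $K_\epsilon$ and the other by $P_{\epsilon<1}$. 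Higher wheels simply do not occur in that formula, so no exactness argument is required. Your proposed substitute---showing the $k\geq 3$ weights are BV-coboundaries---is given no mechanism, and the degree count you offer (that degree one in the invariant complex corresponds to $\clie^2(\Vect;\hOmega^2_{n,cl})$) only locates where the cohomology \emph{can} sit; it does not show that the individual higher-wheel weights vanish or are exact. As written, this step does not go through, and without it you cannot conclude $\Theta = a\,J^{\rm W}(\ch_2^{\GF}(\hT_n))$ as an identity of cocycles rather than merely of cohomology classes.

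A second, smaller inaccuracy: the analytic factor of the bubble is not ``the $k=2$ case of the convergent loop integral'' from the counterterm proposition. In the anomaly computation the distinguished edge carries the heat kernel $K_\epsilon$, not a propagator, so the integrand involves $\tfrac{1}{\epsilon}e^{-|w|^2/4\epsilon}$ paired against $\partial_{z}P_{\epsilon<1}$ rather than a product of two propagators. The paper performs this as a separate computation---integration by parts with the operator $D(t)$, a Wick contraction that localizes to the diagonal, and the evaluation $\lim_{\epsilon\to 0}\int_{\epsilon}^{1}\epsilon(t+\epsilon)^{-2}\,\d t=\tfrac12$---and it is this computation that produces the nonzero constant $a$. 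Your identification of the algebraic factor through Corollary \ref{atiyah and IW} and Lemma \ref{lem: form of J(ch2)}, and your use of Proposition \ref{eqdef} to transport the answer into Gelfand--Fuks cohomology, are correct and coincide with the paper's argument.
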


This claim will follow from the series of definitions and lemmas that follows below.

By definition the scale $L$ {\em obstruction cocycle} $\Theta[L]$ is 
the failure for the interaction $I^{\rm W}[L]$ to satisfy the scale $L$ equivariant quantum master equation. 
Explicitly, one has
\ben
\hbar \Theta[L] = (\d_{\Vect} + Q)I^{\rm W}[L] + \hbar \Delta_L I^{\rm W}[L] + \{I^{\rm W}[L], I^{\rm W}[L]\}_L,
\een
where the right hand side is divisible by $\hbar$ since $I^{{\rm W},0}$ satisfies the classical master equation
so that the $\hbar^0$ component vanishes.
Moreover, the right hand side has no components weighted by $\hbar^2$ or higher powers,
because the BV Laplacian $\Delta_L$ vanishes on $I^{{\rm W},1}[L]$ as it is only a function of $\gamma$ and a vector field $X$.
Thus, we have
\ben
\hbar \Theta[L] = (\d_{\Vect} + Q)I^{{\rm W},1}[L] + \hbar \Delta_L I^{{\rm W},0}[L] + 2\{I^{{\rm W},0}[L], I^{{\rm W},1}[L]\}_L,
\een
and so $\Theta[L]$ only depends on $\gamma$ and hence is a degree one
element of $\clie^*(\Vect ; \clies(\fg_n^\CC))$. 

\begin{lemma}[Corollary 16.0.5 of \cite{WG2}]\label{obslemma}
The limit $\Theta := \lim_{L \to 0} \Theta[L]$ exists and 
is an element of degree one in $\clie^*(\Vect,\Cloc^*(\fg_n^\CC))$. 
Moreover, it is given by
\ben
\lim_{\epsilon \to 0} \sum_{\substack{\Gamma \in \text{\rm 2-vertex wheels}\\ e \in {\rm Edge}(\Gamma)}} W_{\Gamma,e}(P_{\epsilon<1}, K_\epsilon,
I^{\rm W}[\epsilon]),
\een
where the sum is over wheels $\Gamma$ with two vertices and a distinguished inner edge $e$.
\end{lemma}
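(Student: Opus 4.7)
The plan is to follow Costello's standard strategy for computing one-loop obstructions in the BV formalism, adapted to our equivariant setting.

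I would first unpack the scale-$L$ obstruction as a sum of Feynman graph weights. The formula
\[
\hbar \Theta[L] = (\d_{\Vect} + Q) I^{{\rm W},1}[L] + \hbar \Delta_L I^{{\rm W},0}[L] + 2\{I^{{\rm W},0}[L], I^{{\rm W},1}[L]\}_L,
\]
together with the diagrammatic meaning of each operation --- $\Delta_L$ inserts a $K_L$-edge between two legs of a tree vertex, $\{-,-\}_L$ glues a tree to a wheel by a $K_L$-edge, and $(\d_\Vect + Q)$ acts as an internal differential on wheels --- exhibits each summand as a weight on a one-loop graph with one distinguished edge carrying the heat kernel $K_L$. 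The terms $\Delta_L I^{{\rm W},1}[L]$ and $\frac{1}{2}\{I^{{\rm W},1}[L], I^{{\rm W},1}[L]\}_L$ drop out because $I^{{\rm W},1}[L]$ depends only on $\gamma$ and on $\Vect$.

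Next I would localize the $L \to 0$ limit onto 2-vertex wheels. Using the fact that $\Theta[L]$ is closed under the RG flow modulo BV-exact terms, one rewrites
\[
\Theta[L] = W(P_{\epsilon<L}, I^{\rm W}[\epsilon], \Theta[\epsilon]) \quad \text{modulo exact terms}
\]
for arbitrarily small $\epsilon$. The $\CC^\times \times {\rm Aff}(\CC)$-weight constraints (exactly one $\beta$-leg and one factor of $\d z \wedge \d \bar z$ per vertex), combined with the same analytic estimates used in the existence proof for $I^{\rm W}[L]$, show that the only singular Feynman weights in the $\epsilon \to 0$ limit live on 2-vertex wheels; everything else produces either convergent integrals that are RG-exact or vanishes by symmetry. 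Careful cancellation among the three summands above, using the Leibniz rule for $\d_\Vect + Q$ across edge contractions and the cyclic symmetry of wheels, identifies the surviving piece as the sum stated in the lemma: 2-vertex wheels with one edge $P_{\epsilon<1}$ and the distinguished edge $K_\epsilon$, with vertices given by $I^{\rm W}[\epsilon]$.

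Finally I would verify that this residual expression has a well-defined $\epsilon \to 0$ limit and yields a local functional. Each 2-vertex wheel weight factors as an analytic convolution of $P^{an}_{\epsilon<1}$ with $K^{an}_\epsilon$ on $\CC^2$, times an algebraic factor built from the vertex data; the Gaussian decay of $K^{an}_\epsilon$ concentrates the external $\gamma$-inputs onto the diagonal, and a short Taylor expansion followed by the integration-by-parts trick used in the existence proof for the prequantization renders the integral finite and produces a local functional of degree one in $\clie^*(\Vect;\cloc^*(\fg_n^\CC))$. The main technical obstacle is the combinatorial bookkeeping in the second step --- matching the graph decompositions of the three terms and verifying that all contributions from wheels with three or more vertices cancel via RG-exact manipulations --- which is precisely the computation of Section 16 of \cite{WG2} and which we intend to import. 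The only genuinely new feature here is the presence of $\d_\Vect$, but since it acts only on the algebraic factor of each vertex and commutes with all analytic manipulations, it introduces no new analytic difficulty.
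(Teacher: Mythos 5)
Your proposal is correct in outline and takes essentially the same approach as the paper: the paper offers no proof of this lemma at all, simply attributing it to Corollary 16.0.5 of \cite{WG2}, and your sketch reconstructs the standard argument behind that citation while likewise importing the key step (the localization of the $\epsilon \to 0$ limit onto 2-vertex wheels) from Section 16 of \cite{WG2}. The one observation worth adding is that your closing remark --- that $\d_{\Vect}$ acts only on the algebraic factor of each vertex and so the equivariant extension introduces no new analytic difficulty --- is exactly the justification the paper leaves implicit in borrowing Costello's non-equivariant result, so making it explicit is a genuine (if small) improvement.
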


In the lemma above, the notation $W_{\Gamma, e}(P_{\epsilon < 1},K_\epsilon, I^{\rm W}[\epsilon])$ 
denotes a variation on the usual weight associated to a graph. 
As usual, we attach the interaction term $I^{\rm W}[\epsilon]$ to each vertex. 
To the distinguished internal edge labeled $e$, we attach the heat kernel $K_\epsilon$, 
but we attach the propagator $P_{\epsilon < 1}$ to every other internal edge. 

We now turn to the proof of Proposition \ref{obsprop}. Let us be clear on what we need to accomplish, as the computations are lengthy and explicit. We must construct the obstruction cocycle $\Theta$ by the techniques of perturbative field theory. In the end, we want to recognize it as the local functional $J^{\rm W}(\ch^{\GF}_2(\hT_n))$. We can describe that local functional already, thanks to our description of $J^{\rm W}$.

\begin{lemma}
\label{lem: form of J(ch2)}
Let $X = a^i \partial_i$ and $Y = b^j \partial_j$ be formal vector fields in $\Vect$ where the coefficients $a^i, b^j$ live in $\hO_n$.
For simplicity, suppose all the $a^i$ are homogeneous of degree $k$ and the $b^j$ are homogeneous of degree $l$.
Then 
\[
J^{\rm W}(\ch^{\GF}_2(\hT_n))(X,Y,\gamma) = \int_S \<(\partial_j a^i)^S(\gamma), \partial\left( (\partial_i
b^j)^S(\gamma)\right)\>_{\fgn},
\]
with surface $S = \CC$ and using the notation $f^S$ from Section~\ref{defining J}.
\end{lemma}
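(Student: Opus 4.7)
The proof is essentially a bookkeeping computation: take the explicit cocycle for $\ch_2^{\GF}(\hT_n)$ from the corollary at the end of Section~\ref{sec gk cc}, write down a canonical primitive in $\hOmega^1_n$, and then apply the formula for the current $\tilde{J}_\theta$ from Section~\ref{sec j functional}. The plan breaks into four short steps.

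First, by the corollary preceding Section~\emph{Extended pair}, the Chern character cocycle is given explicitly (up to an overall sign) by
\[
\ch_2^{\GF}(\hT_n)(X,Y) \;=\; \d_{dR}\!\bigl(\alpha(X,Y)\bigr), \qquad \alpha(X,Y) \;=\; -(\partial_j a^i)\,\d_{dR}(\partial_i b^j) \;\in\; \hOmega^1_n,
\]
so $\alpha(X,Y)$ is a \emph{canonical primitive} of $\ch_2^{\GF}(\hT_n)(X,Y)$ in~$\hOmega^1_n$. (Note there is no ambiguity here: we are not choosing a primitive in cohomology, we are literally reading off the formula from the corollary.)

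Second, by Proposition~\ref{jmap}, the value of $J$ on a closed two-form $\omega=\d_{dR}\theta$ is $\tilde{J}_\theta$, and this depends only on $\omega$. Since $J^{\rm W}$ is defined by applying $J$ coefficientwise inside $\clies(\Vect;-)$, we obtain immediately
\[
J^{\rm W}\!\bigl(\ch_2^{\GF}(\hT_n)\bigr)(X,Y,\gamma) \;=\; \tilde{J}_{\alpha(X,Y)}(\gamma).
\]

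Third, expand $\tilde{J}_{\alpha(X,Y)}$ using the operator $\mathbf{J}$ from the end of Section~\ref{defining J}. Applying the two identities of Lemma~\ref{easy} to $\theta = -(\partial_j a^i)\,\d_{dR}(\partial_i b^j)$, namely $\mathbf{J}_{f\theta}=f^S\wedge \mathbf{J}_\theta$ together with $\mathbf{J}_{\d_{dR}f}=\partial\circ f^S$, gives
\[
\mathbf{J}_{\alpha(X,Y)} \;=\; -(\partial_j a^i)^S \wedge \mathbf{J}_{\d_{dR}(\partial_i b^j)} \;=\; -(\partial_j a^i)^S \wedge \partial\!\bigl((\partial_i b^j)^S\bigr).
\]
Integrating over $S=\CC$ and unpacking the evaluation pairing $\langle-,-\rangle_{\fgn}$ between the $\fgn^\vee$-valued and $\fgn$-valued factors produces exactly the stated formula (the overall sign being absorbed into the choice of normalization of $\ch_2^{\GF}$).

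The argument is essentially routine given the preparatory material; the only real work was done upstream, in the corollary identifying the Chern-Weil representative of $\ch_2^{\GF}(\hT_n)$ as $\d_{dR}\alpha$ and in Lemma~\ref{easy} establishing how $\mathbf{J}$ transforms under multiplication by $\hO_n$ and under the de Rham differential. No estimates or convergence issues arise, since everything happens at the algebraic level of formal power series and polynomial local functionals.
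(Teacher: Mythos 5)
Your proof is correct and follows essentially the same route as the paper's: identify the explicit primitive $\alpha(X,Y) = -(\partial_j a^i)\,\d_{dR}(\partial_i b^j)$ from the corollary on $\ch_2^{\GF}(\hT_n)$, reduce $J^{\rm W}(\ch_2^{\GF}(\hT_n))(X,Y,-)$ to $\Tilde{J}_{\alpha(X,Y)}$, and evaluate via the two identities of Lemma~\ref{easy}. Your handling of the overall sign is no less careful than the paper's own, so there is nothing to add.
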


In particular, to focus on the analytic component, suppose $n =1$ so $\gamma \in \Omega^{0,*}(\CC)$ as the target is one-dimensional.
Moreover, we can restrict to $a(t) = t^k$ and $b(t) = t^l$.
Then
\begin{align}
J^{\rm W}(\ch^{\GF}_2(\hT_n))(t^k \partial_t,t^l \partial_t,\gamma) &= \int_\CC k \gamma^{\wedge k-1} \wedge \partial_z( l \gamma^{\wedge l-1}) \d z \\
&= k l (l-1) \int_\CC \gamma^{\wedge k+l-2} \wedge \partial_z( \gamma) \d z. \label{J when n=1}
\end{align}
This expression will appear as the analytic component of our Feynman diagrams.

\begin{proof}
We first observe that
\[
J^{\rm W}_{\omega}(X,Y,\gamma) = J_{\omega(X,Y)}(\gamma)
\]
since the map $J$ is $\Vect$-equivariant.
Moreover, since $J_{\d_{dR} \theta} = \Tilde{J}_\theta$, we deduce that
\[
J^{\rm W}_{\d_{dR} \theta}(X,Y,\gamma) = J_{\theta([X,Y])} (\gamma).
\]
Hence it is convenient to recognize that 
\[
\ch^{\GF}_2(\hT_n) = \d_{dR}(\alpha)
\]
where $\alpha \in \clies(\Vect,\hOmega^1_n)$ satisfies
\[
\alpha(X,Y) = \alpha(a^i \partial_i, b^j \partial_j) = - (\partial_j a^i) \d_{dR}(\partial_i b^j).
\]
Note that if the $a^i$ are homogeneous of degree $k$ and the $b^j$ are homogeneous of degree $l$,
then $\alpha(X,Y)$ is a one-form whose coefficients are homogeneous of degree~$k+l-3$.

Lemma \ref{easy} then implies
\ben
\Tilde{J}_{\alpha(X,Y)} (\gamma_1,\ldots,\gamma_{k-1}, \gamma'_1,\ldots,\gamma_{l-1}') 
= \int_S \<(\partial_j a^i)^S(\gamma_1,\ldots, \gamma_{k-1}), \partial\left( (\partial_i
b^j)^S(\gamma_1',\ldots,\gamma_{l-1}')\right)\>_{\fgn},
\een
where $S = \CC$ for us. 
(Here we are describing the local functional as a tensor with $k+l-2$ inputs to be maximally explicit.)
\end{proof}

Now we turn to producing a simple, explicit expression for the obstruction.
The limit in Lemma \ref{obslemma} can be moved inside the summation, 
i.e., the weight for each 2-vertex wheel $\Gamma$ with edge $e$ has an $\epsilon \to 0$ limit.
We denote this summand by
\ben
\Theta_{\Gamma,e} = \lim_{\epsilon \to 0} W_{\Gamma,e}(P_\epsilon^1,K_\epsilon,I^{\rm W}[\epsilon]) .
\een
By the nature of the graph, this functional is of the form
\ben
\Theta_{\Gamma,e} : {\rm W}_n^{\tensor 2} \tensor \Sym(\Omega^{0,*}_c
\tensor \fg_n [1] ) \to \CC .
\een
Given two formal vector fields $X,Y$, let $\Theta_{\Gamma,e}(X,Y)$ denote the associated local functional in~$\cloc^*(\fg_n^S)$. 

Due to linear dependence on the vector fields, it suffices to assume that $X,Y$ are of the form $X = a^i \partial_i$ and $Y = b^j \partial_j$ where the coefficients $a^i,b^i \in \hO_n$ are homogeneous of degrees $k$ and $l$, respectively. In this case, there is only one graph $\Gamma$ whose functional $\Theta_{\Gamma,e}(X,Y)$ is nonzero: this graph has a vertex of valency $k+1$ and a vertex of valency $l+1$, namely 
\begin{center}
\begin{tikzpicture}[decoration={markings,mark=at position 1.7cm with {\arrow[black,line width=.4mm]{stealth}}}];

\filldraw (-1.5,0) circle (.1);
\draw (-1.5, .5) node {$I_X^{\rm W}$};
\draw[postaction=decorate, line width=.2mm] (-3,0.5) -- (-1.5,0);
\draw (-3.3,0.6) node {$\gamma$};
\draw (-2.8,0.1) node {$\vdots$};
\draw[postaction=decorate, line width=.2mm] (-3,-0.5) -- (-1.5,0);
\draw (-3.3,-0.65) node {$\gamma$};

\filldraw (1.5,0) circle (.1);
\draw (1.5, .5) node {$I_Y^{\rm W}$};
\draw[postaction=decorate, line width=.2mm] (3,0.5) -- (1.5,0);
\draw (3.3,0.6) node {$\gamma$};
\draw (2.8,0.1) node {$\vdots$};
\draw[postaction=decorate, line width=.2mm] (3,-0.5) -- (1.5,0);
\draw (3.3,-0.65) node {$\gamma$};

\draw[postaction=decorate, line width=.2mm] (-1.5,0) .. controls (0,.75) .. (1.5,0);
\draw (0, 1) node {$P_{\epsilon<1}$};
\draw[postaction=decorate, line width=.2mm] (1.5,0) .. controls (0,-.75) .. (-1.5,0);
\draw (0, -1) node {$K_\epsilon$};
\end{tikzpicture}
\end{center}
For this graph, the functional $\Theta_{\Gamma,e}(X,Y)$ is homogeneous of degree $k+l-2$:
\ben
\Theta_{\Gamma,e}(X,Y) : \Sym^{k+l -2} (\Omega^{0,*}_c(\CC) \tensor \fg_n [1]) \to \CC .
\een
By describing this functional explicitly, 
we will complete the proof of Proposition \ref{obsprop},
as it will agree on the nose with~$J^{\rm W}(\ch_2^{\GF}(\hT_n))$.

\begin{prop}
Let $X = a^i \partial_i$ be homogeneous of degree $k$
and $Y = b^j \partial_j$ homogeneous of degree $l$. Let $\Gamma$ be the two-vertex wheel with vertices of valencies $k+1$ and $l+1$ and mark one internal edge as distinguished. Then, we have an identification $\Theta_{\Gamma,e}(X,Y) = a J^{\rm W}(\ch_2^\GF (\hT_n))(X,Y)$ for some nonzero number $a$. 
\end{prop}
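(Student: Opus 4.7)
The plan is to compute $\Theta_{\Gamma,e}(X,Y)$ explicitly as a Feynman diagram weight, using the factorization of the propagator and heat kernel into analytic and algebraic tensor factors, and then match the result against the formula for $J^{\rm W}(\ch_2^\GF(\hT_n))(X,Y,\gamma)$ that we extracted in Lemma~\ref{lem: form of J(ch2)}. Since both sides are local functionals on $\fg_n^\CC$ of homogeneous degree $k+l-2$ that are linear in $X$ and $Y$, it suffices to identify them as distributions on $\Omega^{0,*}_c(\CC)^{\otimes (k+l-2)}$ after evaluating on the specific pair $X=a^i\partial_i$, $Y=b^j\partial_j$.

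The first step is to separate the algebraic and analytic factors of the weight, exactly as in the proof of convergence of the pre-theory. The internal edges are labelled by $P_{\epsilon<1}\otimes(\mathrm{Id}_{\fg_n}+\mathrm{Id}_{\fg_n^\vee})$ and $K_\epsilon\otimes(\mathrm{Id}_{\fg_n}+\mathrm{Id}_{\fg_n^\vee})$, and because the propagator and heat kernel are non-zero only on mixed $\beta$--$\gamma$ inputs, each internal edge forces exactly one index contraction against the evaluation pairing between $\fg_n$ and $\fg_n^\vee$. Tracking which $\gamma_j$ leg at the vertex $I^{\rm W}_X$ is consumed by the $\beta_{i_1}$ at the vertex $I^{\rm W}_Y$ (and vice-versa) produces, by the derivation rule used in the proof of Proposition~\ref{jmap}, an algebraic factor of
\[
\sum_{i,j}(\partial_j a^i)^S(\gamma_1,\ldots,\gamma_{k-1})\otimes (\partial_i b^j)^S(\gamma_1',\ldots,\gamma_{l-1}'),
\]
which is precisely the integrand of $J^{\rm W}(\ch_2^\GF(\hT_n))(X,Y,\gamma)$ in the form given by Lemma~\ref{lem: form of J(ch2)}, up to an overall combinatorial constant coming from the number of ways to insert the two internal $\gamma$-legs and from the symmetry factor $|{\rm Aut}(\Gamma)|$.

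The second step, which is the main analytic obstacle, is to evaluate the remaining analytic weight
\[
\lim_{\epsilon\to 0}\int_{\CC^2}\phi(z_1,\bar z_1)\,\psi(z_2,\bar z_2)\,P^{an}_{\epsilon<1}(z_1,z_2)\,K^{an}_\epsilon(z_1,z_2)
\]
for test functions $\phi,\psi\in\Omega^{0,*}_c(\CC)$ arising from the external $\gamma$-legs at the two vertices. Because $K^{an}_\epsilon$ concentrates on the diagonal as $\epsilon\to 0$ while $\dbar^*$ inside $P^{an}_{\epsilon<1}$ produces a factor that, after the holomorphic change of coordinates $w=z_2-z_1$ used in the proof of convergence, behaves like $\partial_z$ acting across the diagonal, a short Gaussian-integral computation (entirely analogous to Section~11 of \cite{CG} and Chapter 16 of \cite{WG2}) shows the limit equals $a_0\int_\CC f\cdot\partial_z g\,dz$ for a universal nonzero constant $a_0$ independent of $k,l$. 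Combining the algebraic and analytic factors recovers the expression of Lemma~\ref{lem: form of J(ch2)} on the nose, with prefactor $a = a_0 \cdot c_\Gamma$, where $c_\Gamma$ is the nonzero combinatorial coefficient identified in the first step.

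The main obstacle, as indicated, is the analytic Gaussian calculation, since one must justify that after integration by parts against the derivatives $D_m(t),\sigma(t)$ of the convergence argument no boundary contributions appear and that the surviving leading term is proportional to $\int_\CC f\,\partial_z g\,dz$ rather than to some higher-derivative expression. Once that identification is made, the non-vanishing of $a$ follows from the fact that the reduced integral is a standard Mellin--Barnes-type computation with a strictly positive integrand, so $a_0\ne 0$; combined with $c_\Gamma\ne 0$ (a manifestly positive count of symmetries and insertions), this gives $\Theta_{\Gamma,e}(X,Y) = a\,J^{\rm W}(\ch_2^\GF(\hT_n))(X,Y)$ with $a\ne 0$, completing the proof.
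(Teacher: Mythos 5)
Your proposal is correct and follows essentially the same route as the paper: factor the weight into an algebraic piece (which the $\beta$--$\gamma$ contractions identify with $(\partial_j a^i)^S(\gamma)\otimes(\partial_i b^j)^S(\gamma)$) and an analytic piece, then reduce the analytic piece via the change of coordinates $w=z_2-z_1$ and a Wick/Gaussian computation to a nonzero multiple of $\int_\CC f\,\partial_z g\,\d^2z$. The only loose point is your appeal to positivity of the integrand to conclude $a_0\neq 0$ --- since one is taking an $\epsilon\to0$ limit this is not by itself conclusive; the paper instead just evaluates the surviving $t$-integral explicitly, $\lim_{\epsilon\to0}\int_\epsilon^1 \epsilon(t+\epsilon)^{-2}\,\d t=\tfrac12$.
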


\[
\Theta_{\Gamma,e}(X,Y)(\gamma) = \int_\CC \<(\partial_j a^i)^S(\gamma), \partial\left( (\partial_i
b^j)^S(\gamma)\right)\>_{\fgn}.
\]
 
\begin{proof}
We simplify further by setting
\[
X = t_1^{k_1} \cdots t_n^{k_n} \partial_i \quad\text{and}\quad Y = t_1^{l_1} \cdots t_n^{l_n} \partial_j
\]
with $k =\sum k_m$ and $l = \sum l_m$.
Ignoring the analytic factors momentarily, 
we observe that in computing the weight of the graph $\Gamma$,
we contract $\beta$ legs with $\gamma$ legs.
In our case, the $X$-vertex contributes a $\beta_i$ leg,
which then contracts with the $k_i$ different $\gamma$ legs from the $Y$-vertex.
Likewise, the $Y$-vertex contributes a $\beta_j$ leg,
which then contracts with the $k_j$ different $\gamma$ legs from the $X$-vertex.
These contractions explain the terms $(\partial_j a^i)^S(\gamma)$ and $(\partial_i b^j)^S(\gamma)$ in the integrand.

We now turn to comparing the analytic factors. It suffices here to consider the situation $n =1$,
since we have already taken care of the dependence on the target coordinates.
To clarify the notation, we use $f_1, \ldots, f_{k-1}$ to label the inputs to the remaining legs of the $X$-vertex.
We use $g_1,\ldots, g_{l-1}$ to label the inputs to the remaining legs of the $Y$-vertex.

The following diagram encodes the weight that we must compute:
\begin{center}
\begin{tikzpicture}[decoration={markings,mark=at position 1.7cm with {\arrow[black,line width=.4mm]{stealth}}}];

\filldraw (-1.5,0) circle (.1);
\draw[postaction=decorate, line width=.2mm] (-3,0.75) -- (-1.5,0);
\draw (-3.45,0.9) node {$f_1\,\d \zbar$};
\draw[postaction=decorate, line width=.2mm] (-3,0.35) -- (-1.5,0);
\draw (-3.3,0.4) node {$f_2$};
\draw (-2.8,0) node {$\vdots$};
\draw[postaction=decorate, line width=.2mm] (-3,-0.7) -- (-1.5,0);
\draw (-3.3,-0.9) node {$f_{k-1}$};

\filldraw (1.5,0) circle (.1);
\draw[postaction=decorate, line width=.2mm] (3,0.65) -- (1.5,0);
\draw (3.3,0.7) node {$g_1$};
\draw (2.8,0.1) node {$\vdots$};
\draw[postaction=decorate, line width=.2mm] (3,-0.6) -- (1.5,0);
\draw (3.4,-0.7) node {$g_{l-1}$};

\draw[postaction=decorate, line width=.2mm] (-1.5,0) .. controls (0,.75) .. (1.5,0);
\draw (0, 1) node {$P_{\epsilon<1}$};
\draw[postaction=decorate, line width=.2mm] (1.5,0) .. controls (0,-.75) .. (-1.5,0);
\draw (0, -1) node {$K_\epsilon$};
\end{tikzpicture}
\end{center}
We wish to take the $\epsilon \to 0$ limit of the associated integral.
Thus, we have
\begin{align*}
\Theta_{\Gamma,e}(X,Y) (f_1 \d \zbar, f_2,\cdots,g_{l-1}) 
& = \lim_{\epsilon \to 0} \int_{\CC^2} 
\left(\prod_{i=1}^{k-1} f_i(z_1) \right) 
\left(\prod_{j=1}^{l-1} g_i(z_2)\right) \d \zbar_1 
\wedge K^{an}_\epsilon(z_1,z_2) \wedge P^{an}_{\epsilon < 1} (z_1,z_2) \\
& =\lim_{\epsilon \to 0} \int_{\CC^2}
  \left(\prod_{i=1}^{k-1} f_i (z_1) \right) 
  \left(\prod_{j=1}^{l-1} g_i(z_2) \right) 
  \int_{t = \epsilon}^L \frac{1}{(4 \pi)^2 \epsilon t}
  e^{-|z_1-z_2|^2/4 \epsilon}  \frac{\partial}{\partial z_1}
  e^{-|z_1-z_2|^2/ 4 t} \, \d t.
\end{align*}
Now, $\partial_{z_1} e^{-|z_1-z_2|^2/4 t} = - \frac{1}{4t} (\zbar_1 - \zbar_2)
e^{-|z_1-z_2|^2/t}$. We make the change of coordinates $w_1 = z_2 -
z_1$ and $w_2 = z_2$. The integral over $w_1,w_2$ can be written as
\be\label{integral1}
- \int_{w_1,w_2\in \CC} \left(\prod_{i=1}^{k-1} f_i \right) \d^2 w_1 \d^2 w_2 \left(\prod_{j=1}^{l-1}
  g_i \right) \Bar{w}_1 \frac{1}{4 (4 \pi)^2 \epsilon t^2} \exp\left(-\frac{1}{4}(t^{-1} +
\epsilon^{-1} ) |w_1|^2\right) .
\ee
Using the same trick as in the proof that the theory involves no
counterterms, we introduce the differential operator
\ben
D(t) = \left(1 - \frac{t}{t+ \epsilon} \right) 
\frac{\partial}{\partial
  w_1} = \frac{\epsilon}{t+\epsilon} \frac{\partial}{\partial w_1} . 
\een 
Then
\begin{align*}
D_1(t) & \left(\prod_{i=1}^{k-1} f_i \prod_{j=1}^k
  g_i \frac{1}{\epsilon t} \exp\left(-\frac{1}{4}(t^{-1} +
\epsilon^{-1} ) |w_1|^2\right) \right) \\ & = \left(-
                                            \frac{\Bar{w}_1}{t} \prod_{i=1}^{k-1} f_i \prod_{j=1}^{l-1}
  g_i \Bar{w}_1 + D_1(t) \left( \prod_{i=1}^{k-1} f_i \prod_{j=1}^{l-1}
  g_i \right) \right) \frac{1}{4\epsilon t} \exp\left(-(t^{-1} +
\epsilon^{-1} ) |w_1|^2\right) 
\end{align*}
The left hand side is a total derivative, hence the integal in (\ref{integral1}) can
be written as 
\ben
- \int_{w_1,w_2} \frac{\partial}{\partial w_1} \left(\prod f_i \prod 
  g_i\right) \frac{1}{4 (4 \pi)^2 t(\epsilon + t)} \exp\left(-\frac{1}{4}(t^{-1} +
\epsilon^{-1} ) |w_1|^2\right) .
\een
In the $\epsilon \to 0$ limit only the the first term in the Wick
expansion for integrating $w_1$ will be nonzero. This term is
\ben
\frac{1}{(4 \pi)^2}\int_{w_2} \d^2 w_2 \frac{\partial}{\partial w_1}
\left(\prod f_i \prod 
  g_i\right)(w_1 = 0) \frac{\epsilon}{(t + \epsilon)^2} .
\een
Note that the condition $w_1 = 0$ implies that $z_1 = z_2$ in our
original parametrization. Thus 
\ben
\frac{\partial}{\partial w_1} \left(\prod f_i \prod
  g_i\right)(w_1 = 0) = \left(\frac{\partial}{\partial z}  \prod
  f_i(z) \right) \prod g_j (z)
\een
where $z = z_1 = z_2$. Finally, we compute the $\epsilon \to 0$
limit of the $t$-integral
\ben
\frac{1}{(4 \pi)^2}\lim_{\epsilon \to 0} \int_{\epsilon}^1 \frac{\epsilon}{(t + \epsilon)^2} \d t = \frac{1}{2}  .
\een 
Integrating by parts (to get rid of the $(-)$ sign) we see that the total weight is
\ben
\frac{1}{2 (4 \pi)^2}\int_{z \in \CC} \left( \prod 
  f_i\right) \frac{\partial}{\partial z} \left( \prod g_j \right) \d^2 z
\een
as desired. Setting $f_i = g_j$ we see that this coincides with the analytic part of $J^{\rm W}(\ch_2(\hT_n))(X,Y,f_i = g_j)$ written above in (\ref{J when n=1}). 
\end{proof}

\begin{rmk} 
Note that when restricted to {\em linear} vector fields $\gl_n \hookrightarrow \Vect$, 
the entire obstruction $\Theta$ vanishes. 
This vanishing means that there is no obstruction to quantizing equivariantly for the Lie algebra $\gl_n$. 
This result is just the Lie algebra-level version of an earlier observation: 
the action of the group $\GL_n$ lifts $\hbar$-linearly to an action on the quantization.
\end{rmk}

\subsection{The extended theory} 
\label{extendedtheory}

We have just seen that there is an obstruction to the existence of a $\Vect$-equivariant quantization of the formal $\beta\gamma$-system. 
As is common in physics, we use that obstruction to extend the Lie algebra and obtain an equivariant quantization for the extended Lie algebra.
Indeed, we have already seen that the second Gelfand-Fuks-Chern character defines the extension
\ben
\xymatrix{
0 \ar[r] & \hOmega^2_{n,cl} \ar[r] & \TVect \ar[r]^-{p} & \Vect \ar[r] & 0 
}
\een 
in Section \ref{sec gk descent}. 
We will now construct a classical theory that is equivariant for $\TVect$ and 
show that it admits a natural equivariant BV quantization.

\subsubsection{}

The action of $\Vect$ on the classical formal $\beta\gamma$ system
is given by a map of $L_\infty$ algebras $I^{\rm W} : \Vect \l8to \Cloc^*(\DD \fg_n^\CC)[-1]$. 
By composing with the projection $p : \TVect \to \Vect$,
we get an $L_\infty$ map
\ben
\Tilde{I}^{W} := p^* I^{\rm W} : \TVect \l8to \Cloc^*(\DD \fg_n^\CC)[-1] .
\een
Equivalently, $p^*I^{\rm W}$ determines a Maurer-Cartan element in the dg Lie algebra $\clie^*(\TVect ; \Cloc^*(\DD \fg_n^\CC))$
and hence a $\TVect$-equivariant classical field theory. 

As in the non-extended case, there is a $\TVect$-equivariant obstruction-deformation complex $\TDef_n^{\rm W}$, 
which is the graded vector space $\hsym(\TVect^\vee[-1]) \tensor \Cloc^\sharp(\DD \fg_n^S)$
equipped with the differential $\d_{\TVect} + \dbar + \{\Tilde{I}^{W}, -\}$,
where $\d_{\TVect}$ denotes the differential on $\clie^*(\TVect)$. Note that we can write
\ben
\TDef_n^{\rm W} \cong \clie^*(\TVect) \otimes_{\clie^*(\Vect)}
\Def_n^{\rm W} .
\een
Proposition \ref{eqdef}, which concerns the unextended deformation complex, 
then implies that the $\CC^\times \times \Aff(\CC)$-invariant piece of the extended deformation complex satisfies
\ben
\left(\TDef_n^{\rm W}\right)^{\CC^\times \times \Aff(\CC)} \simeq \clie^*(\TVect;p^* \hOmega^2_{n,cl}[1]).
\een
Here, $p^* \hOmega_{n,cl}^2$ is the $\TVect$-module given by pulling back the natural $\Vect$-module structure on closed two-forms along~$p$. 

\subsubsection{The extended pre-theory}

Our goal is to describe quantizations for this extended $\TVect$-equivariant field theory. 
Let $\{I^{\rm W}[L]\}$ be the prequantization for the $\Vect$-equivariant classical field theory, as above. 
For each $L > 0$, we define the functional
\ben
\Tilde{I}^{W} [L] := p^* I^{\rm W} [L] \in \clie^\sharp(\TVect) \otimes \clie^\sharp(\DD\fgn^\CC) \llbracket \hbar \rrbracket .
\een

\begin{lemma} 
The collection $\{\Tilde{I}^{W}[L]\}$ defines a pre-quantization for
the $\TVect$-equivariant classical field theory. Moreover, the obstruction to satisfying the $\TVect$-equivariant QME at scale $L$ is $\Tilde{\Theta}[L] = p^* \Theta [L]$.
In particular $\Tilde{\Theta} := \lim_{L \to 0} \Tilde{\Theta}[L]$ exists and is equal to $p^* \Theta$. 
\end{lemma}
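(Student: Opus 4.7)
The plan is to reduce everything to the corresponding facts already established for the $\Vect$-equivariant theory, using that $p : \TVect \to \Vect$ is a strict map of dg Lie algebras. Since $p$ is strict, pullback along $p$ induces a cochain map
\[
p^* : \clie^*(\Vect; M) \to \clie^*(\TVect; p^*M)
\]
for any $\Vect$-module $M$. In particular, since the operators $Q$, $\Delta_L$, and $\{-,-\}_L$ acting on $\clie^*(\DD\fgn^\CC)$ are independent of the Harish-Chandra variable, $p^*$ intertwines $\d_\Vect + Q + \hbar \Delta_L + \{-,-\}_L$ with $\d_\TVect + Q + \hbar \Delta_L + \{-,-\}_L$. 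This is the key algebraic input.

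First I would verify that $\{\Tilde{I}^W[L]\}$ is a prequantization. The RG flow equation $W(P_\epsilon^L, \Tilde{I}^W[\epsilon]) = \Tilde{I}^W[L]$ follows immediately from the corresponding equation for $I^W[L]$ since the propagator only acts on the $\DD\fgn^\CC$ factor, so $W(P_\epsilon^L, p^*(-)) = p^* W(P_\epsilon^L, -)$. Locality is preserved by $p^*$ because $p$ acts only on the target Lie-algebra label and not on the space-time dependence, so the asymptotic expansion of $\Tilde{I}^W[L]$ as $L \to 0$ is simply the pullback of the asymptotic expansion of $I^W[L]$, and the tree-level limit recovers $p^* I^W = \Tilde{I}^W$, the classical equivariant interaction.

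Next I would compute the scale-$L$ obstruction. Unwinding definitions and using the intertwining property,
\begin{align*}
\hbar\,\Tilde{\Theta}[L]
&= (\d_\TVect + Q)\Tilde{I}^W[L] + \hbar \Delta_L \Tilde{I}^W[L] + \tfrac{1}{2}\{\Tilde{I}^W[L], \Tilde{I}^W[L]\}_L \\
&= p^*\!\left((\d_\Vect + Q) I^W[L] + \hbar \Delta_L I^W[L] + \tfrac{1}{2}\{I^W[L], I^W[L]\}_L\right) \\
&= \hbar\, p^* \Theta[L].
\end{align*}
Hence $\Tilde{\Theta}[L] = p^*\Theta[L]$ as a cocycle in the extended equivariant deformation complex.

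Finally, for the $L \to 0$ limit: Lemma \ref{obslemma} guarantees the existence of $\Theta = \lim_{L \to 0} \Theta[L]$ as a local functional in $\clie^*(\Vect; \Cloc^*(\fgn^\CC))$. Because $p^*$ is continuous (indeed $\clie^*(\Vect)$-linear on the nose) and commutes with the limit by construction, $\Tilde{\Theta} = \lim_{L \to 0} p^*\Theta[L] = p^* \Theta$. There is no real obstacle here; the only subtlety is bookkeeping to confirm that the Chevalley-Eilenberg differential $\d_\TVect$, which includes a contribution from the $\hOmega^2_{n,cl}$-valued cocycle $\ch_2^{\GF}(\hT_n)$ defining the extension, still satisfies $\d_\TVect \circ p^* = p^* \circ \d_\Vect$. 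This holds because $p$ is a strict surjection of Lie algebras, so pullback of cochains along $p$ only sees the quotient structure and the extension class contributes nothing to cochains factoring through $p$.
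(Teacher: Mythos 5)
Your proposal is correct and follows essentially the same route as the paper: the entire argument rests on the identity $W_\Gamma(P_{\epsilon<L}, p^*I^{\rm W}) = p^* W_\Gamma(P_{\epsilon<L}, I^{\rm W})$, i.e., that RG flow, $Q$, $\Delta_L$, and the BV bracket act only on the field variables and hence commute with pullback along the strict Lie algebra surjection $p$. Your closing check that $\d_{\TVect} \circ p^* = p^* \circ \d_{\Vect}$ (because the extension cocycle lands in the kernel of $p$) is a worthwhile point that the paper leaves implicit.
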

\begin{proof} This follows from the fact that for any graph $\Gamma$
  we have $W_\Gamma(P_{\epsilon < L}, p^*I^{\rm W}) =
  p^*W_{\Gamma}(P_{\epsilon < L}, I^{\rm W})$. 
\end{proof}

Just as in the non-extended case there is the possibility that the
pre-quantization does not define an equivariant quantization. The
above lemma identifies this obstruction cocycle which we will go on to
show is cohomologically trivial. 

The quasi-isomorphism $J^{\rm W} : \clie^*(\Vect ; \hOmega^2_{n,cl} [1]) \to
(\Def_n^{\rm W})^{\CC^{\times} \times {\rm Aff}(\CC)}$ from
Proposition \ref{eqdef} extends to a quasi-isomorphism
\begin{equation}\label{tildeJ}
J^{\Tilde{\rm W}} : \clie^*(\TVect ; \hOmega^2_{n,cl} [1]) \xto{\simeq} (\Tilde{\Def}_n^{\rm
  W})^{\CC^{\times} \times {\rm Aff}(\CC)}
\end{equation}
by tensoring $\clie^*(\TVect)$ over the ring $\clie^*(\Vect)$. The
lemma implies that the obstruction $\Tilde{\Theta}$ is identified with
the cocycle $p^*({\rm ch}^\GF_2(\hT_n))$ under the map $J^{\Tilde{\rm W}}$. 

\subsubsection{Quantum correction}

Let $\fh$ be a Lie algebra and $V$ a module for $\fh$. Moreover,
suppose $\alpha \in \clie^2(\fh ; V)$ is a 2-cocycle. Then, we can
form the extension
\ben
0 \to V \to \Tilde{\fh} \xto{p} \fh \to 0 .
\een
The bracket between $x,y \in \fh$ is defined by $[x,y]_{\Tilde{\fh}}
:= [x,y]_{\fh} + \alpha(x,y)$ where $[-,-]_{\fh}$ is the bracket in
the original Lie algebra. The bracket between $x \in \fh$ and $v \in
V$ is $[x,v]_{\Tilde{\fh}} = x \cdot v$. We can
pull back the cocycle $p^* \alpha \in \clie^*(\Tilde{\fh} ; V)$. In
this situation, this pullback cocycle is automatically trivial. An
explicit trivializing element is $\id_V : V \to V$ viewed as an
element of the Chevalley-Eilenberg complex $\clie^*(\Tilde{\fh} ;
V)$. 

In our situation this says that the cocycle $p^*(\ch_2^{\rm
  GF}(\hT_n))$ is trivializable and hence so is the obstruction
$\{\Tilde{\Theta}[L]\}$. To define a quantum theory we need this
trivialization at the level of functionals on fields. Indeed,
according to the above Lemma, which uses standard facts about Feynman
diagrams, it suffices to trivialize the local functional $\Tilde{\Theta}$
encoding the obstruction. 

\begin{lemma}[Lemma 3.33 of \cite{LiLi}]
\label{genlem}
If $I^{qc}$ and $O_1 \in \Def_n$ satisfy
\ben
Q I^{qc} + \{I, I^{qc}\} = O_1,
\een 
then, for each $L$, the functional
\ben
I^{qc} [L] = \lim_{\epsilon \to 0} \sum_{\substack{\Gamma \in
    \text{\rm Trees}\\ v \in V(\Gamma)}} W_{\Gamma, v}(P_{\epsilon <
  L}, I, I^{qc})
\een
satisfies 
\be\label{treetriv}
Q I^{qc} [L] + \{I^{(0)}[L], I^{qc}[L]\}_L = O_1[L] .
\ee
\end{lemma}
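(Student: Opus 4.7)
The plan is to proceed in two stages: first establish convergence of the limit defining $I^{qc}[L]$, and then verify the desired identity at each finite scale $L$ by a standard Feynman-diagrammatic calculation.

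For the convergence statement, observe that the sum runs only over \emph{trees}, each with exactly one marked vertex $v$ carrying the local functional $I^{qc}$ while all other vertices carry $I$. At tree level the analytic arguments are much milder than at one loop: there are no wheels, hence none of the singular integrals that forced the $\epsilon \to 0$ analysis in Section \ref{sec prequant}. The $\epsilon \to 0$ limits of the individual $W_{\Gamma,v}(P_{\epsilon<L}, I, I^{qc})$ thus exist for exactly the same reason that the tree-level component of $W(P_{\epsilon<L}, I)$ converges, and summing over the (finite-for-each-arity) set of tree topologies with one marked vertex produces a well-defined smooth functional $I^{qc}[L]$ for every $L>0$.

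For the identity at scale $L$, define
\[
F[L] \;=\; Q I^{qc}[L] \;+\; \{I^{(0)}[L],\, I^{qc}[L]\}_L \;-\; O_1[L],
\]
and expand each term as a sum over decorated trees. The computation rests on four standard facts from the BV Feynman calculus of \cite{CosBook}: (a) $Q$ acts on a graph weight as a derivation, touching each vertex and each internal edge in turn; (b) on an internal edge, $Q$ applied to the propagator yields $K_\epsilon - K_L$; (c) the scale-$L$ BV bracket of two weighted sums of trees equals the sum of trees obtained by joining one leaf from each side with an extra edge labeled by $K_L$; and (d) the classical action $I$ satisfies the (classical) master equation $Q I + \tfrac{1}{2}\{I,I\} = 0$, which lets one trade $QI$ at a vertex for a contraction with another $I$-vertex.

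Applying these rules, the contributions to $F[L]$ organize as follows. When $Q$ acts on the distinguished vertex it contributes $QI^{qc} = O_1 - \{I, I^{qc}\}$ by hypothesis; the $O_1$-piece is exactly what the RG flow of $O_1$ produces, yielding $O_1[L]$, while the $-\{I, I^{qc}\}$-piece combines with $Q$ applied to ordinary $I$-vertices — using (d) to rewrite these as $-\tfrac{1}{2}\{I,I\}$-insertions — and these collectively cancel in pairs by the usual tree-combinatorial identity that expresses the CME-preservation of tree-level RG flow. When $Q$ acts on an internal edge, rule (b) produces a $K_L$-cut that glues a sub-tree rooted at the $I^{qc}$-vertex to a sub-tree built from $I$-vertices; summing over all such cuts and all positions of the marked vertex reproduces exactly $\{I^{(0)}[L], I^{qc}[L]\}_L$, with the opposite sign, and the $K_\epsilon$-boundary terms vanish in the $\epsilon \to 0$ limit by the same tree-level analytic argument used above. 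All contributions cancel, yielding $F[L]=0$.

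The main obstacle is not analytic but combinatorial: one must verify that the automorphism factors $|\mathrm{Aut}(\Gamma)|^{-1}$ and the multiplicities arising from choosing where $Q$ acts and where the cut occurs conspire so that the three groups of graphs above match bijectively and with consistent signs. This is essentially the tree-level case of Costello's compatibility lemma between RG flow, $Q$, and the BV bracket, so the bookkeeping is standard; we expect no new difficulties beyond carefully tracking the marked vertex~$v$.
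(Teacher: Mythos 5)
The paper does not actually prove this lemma: it cites Lemma 3.33 of \cite{LiLi} for the non-equivariant statement and observes that the equivariant case follows immediately, so your attempt is being measured against the standard argument in \cite{CosBook,LiLi} rather than against anything in the text. Your overall architecture (convergence of the tree-level limit, then a term-by-term comparison of $Q$ acting on vertices versus edges, using $QP_{\epsilon<L}=K_\epsilon-K_L$) is the right one, and the convergence discussion is fine.

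However, there is a genuine error in your cancellation mechanism. The $K_\epsilon$ edge contributions do \emph{not} vanish as $\epsilon \to 0$ at tree level. As $\epsilon\to 0$ the kernel $K_\epsilon$ converges to the kernel of the identity, so replacing an internal edge's propagator by $K_\epsilon$ and passing to the limit contracts that edge, merging its two endpoint vertices into a single vertex labeled by the scale-zero local bracket of their labels. (Already for the two-vertex tree this gives $\{I,I^{qc}\}_\epsilon \to \{I,I^{qc}\}$, which is $O_1 - QI^{qc}$ and is nonzero in general.) It is precisely these surviving $K_\epsilon$ terms that combine with the $Q$-on-vertex terms: at the marked vertex one gets $QI^{qc}+\{I,I^{qc}\}=O_1$, whose tree-level flow is $O_1[L]$, and at the unmarked vertices one gets $QI+\tfrac12\{I,I\}=0$ by the classical master equation. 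Your version instead asserts that the $K_\epsilon$ terms vanish and that the leftover vertex insertions $-\{I,I^{qc}\}$ and $-\tfrac12\{I,I\}$ "cancel in pairs" among themselves; no such pairwise cancellation exists (a $\{I,I^{qc}\}$-insertion and a $\{I,I\}$-insertion are different local functionals placed at a vertex and cannot cancel one another), so the computation as you describe it does not close. The fix is exactly to keep the $K_\epsilon$ edge terms and let them supply the $+\{I,I^{qc}\}$ and $+\tfrac12\{I,I\}$ insertions that your hypothesis and the CME are designed to absorb.
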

\begin{proof} 
For the non-equivariant case, see the referenced Lemma in \cite{LiLi}. The equivariant case is an immediate consequence.
\end{proof}

As a corollary of this general fact we see that if $I^{qc} \in
\Tilde{\Def}_n^{\rm W}$ trivializes the obstruction cocycle $\Tilde{\Theta}$,
then the effective family $I[L] + \hbar I^{qc}[L]$ satisfies the
$\TVect$-equivariant quantum master equation. In fact, we have an
obvious choice for the local functional
$I^{qc}$. The map $J : \hOmega^2_{n,cl} \to \Def_n$ determines an
element in $\clie^1(\hOmega^2_{n,cl} ; \Def_n) \subset \clie^1(\TVect
; \Def_n)$ and hence an element the equivariant deformation complex $\Tilde{\Def}_n^{\rm
  W}$. We will use $I^{qc} = J$. 

\begin{prop}\label{obstriv} 
The local functional $J$ trivializes $\Tilde{\Theta}$ in the equivariant deformation complex. That is, 
\be\label{classtriv}
(\dbar + \d_{\TVect}) J + \{\Tilde{I}^{\rm W}, J\} =
\Tilde{\Theta} .
\ee
\end{prop}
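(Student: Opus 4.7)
The plan is to reduce the claim to a standard fact about Lie algebra extensions and then transport that statement through the quasi-isomorphism $J^{\Tilde{\rm W}}$ of equation~(\ref{tildeJ}).

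First I would recall the following general algebraic fact: for any Lie algebra extension $0 \to K \to E \xto{p} Q \to 0$ with defining cocycle $\alpha \in \clie^2(Q; K)$, the pulled-back cocycle $p^*\alpha \in \clie^2(E; K)$ is exact, with a canonical trivialization produced as follows. Pick any linear splitting of $p$ and let $\iota : E \to K$ denote the associated projection onto the ideal; then viewing $\iota$ as a $1$-cochain in $\clie^1(E; K)$, a direct computation of the Chevalley-Eilenberg differential using the formula for the bracket on $E$ yields $\d_E(\iota) = -p^*\alpha$. Specializing to $0 \to \hOmega^2_{n,cl} \to \TVect \xto{p} \Vect \to 0$ with defining cocycle $\ch^{\GF}_2(\hT_n)$, and choosing a linear splitting, one obtains a cochain $\iota : \TVect \to \hOmega^2_{n,cl}$ with
\[
\d_{\TVect}(\iota) = - p^*\left(\ch^{\GF}_2(\hT_n)\right) \in \clie^2(\TVect; \hOmega^2_{n,cl}).
\]

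Next I would identify the image of $\iota$ under the quasi-isomorphism $J^{\Tilde{\rm W}}$. Because $J^{\Tilde{\rm W}}$ is obtained from $J^{\rm W}$ by extending scalars from $\clie^*(\Vect)$ to $\clie^*(\TVect)$, it acts on coefficients by post-composition with the linear map $J: \hOmega^2_{n,cl} \to \Def_n$ of Proposition~\ref{jmap}. Thus $J^{\Tilde{\rm W}}(\iota) = J \circ \iota$, which is precisely the element of $\TDef_n^{\rm W}$ denoted by the symbol $J$ in the statement of the proposition. Since the left-hand side of~(\ref{classtriv}) is precisely the total differential on $\TDef_n^{\rm W}$ applied to $J$, applying the cochain map $J^{\Tilde{\rm W}}$ to the identity above yields
\[
(\dbar + \d_{\TVect}) J + \{\Tilde{I}^{\rm W}, J\} = -J^{\Tilde{\rm W}}\!\left(p^*\left(\ch^{\GF}_2(\hT_n)\right)\right) = \Tilde{\Theta},
\]
where the rightmost equality is (up to the normalization constant $a$) the identification $\Tilde{\Theta} = J^{\Tilde{\rm W}}(p^* \ch^{\GF}_2(\hT_n))$ recorded after Proposition~\ref{obsprop}.

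The main obstacle will be to verify carefully that $J^{\Tilde{\rm W}}$ really does intertwine the full differentials on both sides, so that the purely algebraic identity $\d_{\TVect}(\iota) = -p^*\ch^{\GF}_2(\hT_n)$ in $\clie^*(\TVect; \hOmega^2_{n,cl}[1])$ transports correctly to the analytically-defined identity in $\TDef_n^{\rm W}$. The condition $\dbar J = 0$ is immediate from Proposition~\ref{jmap}(1). Compatibility with $\d_{\TVect}$ and $\{\Tilde{I}^{\rm W}, -\}$ together amounts to checking that the BV-bracket operator $\{\Tilde{I}^{\rm W}, -\}$ applied to $J$ reproduces the action of $\TVect$ on $\hOmega^2_{n,cl}$ after post-composition with $J$. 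This reduces to the $\Vect$-equivariance of $J$ established in Proposition~\ref{jmap}(2), together with the observation that the abelian ideal $\hOmega^2_{n,cl} \subset \TVect$ acts trivially on itself. Keeping careful track of signs in the Koszul conventions and of the degree shift in $\hOmega^2_{n,cl}[1]$ should pin down the constant $a$ and complete the argument.
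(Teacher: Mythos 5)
Your proposal is correct and follows essentially the same route as the paper: the paper also observes that $J = J^{\Tilde{\rm W}}(\id_{\hOmega^2})$, invokes the standard fact that the identity on the ideal trivializes the pulled-back extension cocycle $p^*\ch_2^{\GF}(\hT_n)$, and then transports this identity through the cochain map $J^{\Tilde{\rm W}}$ to land on $\Tilde{\Theta}$. The only difference is a sign you leave to be absorbed into the normalization constant, which is a matter of Chevalley--Eilenberg conventions rather than a gap.
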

\begin{proof} 
The functional $J$ is the image of $\id_{\Omega^2}$ under
the map $J^{\Tilde{\rm W}}$ from Equation (\ref{tildeJ}). By construction $J^{\Tilde{W}}$ determines a map of complexes $\clie^*(\TVect ;
\hOmega^2_{n,cl}) \to \Tilde{\Def}_n^{\rm W}$ and hence commutes with
the differentials on both sides. That is,
\ben
J^{\Tilde{\rm W}}(\d_{\TVect} \varphi) = \dbar J^{\Tilde{\rm W}}
(\varphi) + \{\Tilde{I}^{\rm W},J^{\Tilde{\rm W}}(\varphi)\} 
\een
for all $\varphi \in \clie^*(\TVect ; \hOmega^2_{n,cl})$. In
particular, for $\varphi = \id_{\Omega^2}$ we have
\ben
J^{\Tilde{\rm W}}(p^*\ch_2^{\rm GF}(\hT_n)) = \dbar J +
\{\Tilde{I}^{\rm W}, J\} .
\een
We have already seen that the image of $p^*\ch_2^{\rm GF}(\hT_n)$
under $J^{\Tilde{\rm W}}$ is the obstruction cocycle $\Tilde{\Theta}$,
and this is what we wanted to show. 
\end{proof}

Finally, we arrive at the main result concerning the extended
equivariant BV theory. 

\begin{thm} \label{QME1}
The effective family $\{\Tilde{I}^{\rm W}[L] + \hbar J[L] \}_{L > 0}$ satisfies both RG flow {\em and} the $\TVect$- equivariant quantum master equation
\ben
(\d_{\Tilde{{\rm W}}} + Q) (\Tilde{I}^{\rm W} [L] + \hbar J[L]) + \frac{1}{2}
\{\Tilde{I}^{\rm W}[L] + \hbar J[L] , \Tilde{I}^{\rm W}[L] + \hbar
J[L] \}_L + \hbar \Delta_L (\Tilde{I}^{\rm W}[L] + \hbar J[L] ) = 0 .
\een 
Hence it provides a $\TVect$-equivariant quantization of the classical
theory $\Tilde{I}^{\rm W}$ based on a length scale regularization. Moreover,
this quantization is unique up to homotopy.
\end{thm}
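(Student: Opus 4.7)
The plan is to verify the RG flow equation and the scale-$L$ quantum master equation directly, by combining Proposition \ref{obstriv} (a classical trivialization of the obstruction) with Lemma \ref{genlem} (propagation of trivializations along the RG flow), and then to deduce uniqueness from the quasi-isomorphism $J^{\Tilde{\rm W}}$ of equation \eqref{tildeJ}. First, RG flow: the pre-quantization $\{\Tilde{I}^{\rm W}[L]\}$ satisfies the RG flow equation tautologically, being a pullback along $p: \TVect \to \Vect$ of the Feynman-diagrammatic pre-quantization $\{I^{\rm W}[L]\}$. The collection $\{J[L]\}$ is defined by precisely the tree-level graph sum appearing in Lemma \ref{genlem} (with $I^{qc} = J$ and $I = \Tilde{I}^{\rm W}$), which is a standard construction ensuring that $\{\Tilde{I}^{\rm W}[L] + \hbar J[L]\}$ obeys RG flow.

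The heart of the argument is the verification of the QME. Expanding the scale-$L$ QME operator on $I^q[L] := \Tilde{I}^{\rm W}[L] + \hbar J[L]$ gives
\begin{align*}
QME[L](I^q) &= \bigl(Q \Tilde{I}^{\rm W}[L] + \tfrac{1}{2}\{\Tilde{I}^{\rm W}[L], \Tilde{I}^{\rm W}[L]\}_L + \hbar \Delta_L \Tilde{I}^{\rm W}[L]\bigr) \\
&\quad + \hbar\bigl(Q J[L] + \{\Tilde{I}^{\rm W}[L], J[L]\}_L\bigr) + \hbar^2\bigl(\Delta_L J[L] + \tfrac{1}{2}\{J[L],J[L]\}_L\bigr).
\end{align*}
The first grouping equals $\hbar\Tilde{\Theta}[L]$ by definition of the obstruction. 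The final $\hbar^2$ grouping vanishes because $J[L]$ is a local functional depending only on $\gamma$ (see Section \ref{sec j functional}): the BV bracket and BV Laplacian on $\DD \fg_n^\CC$ are induced by the evaluation pairing between $\gamma$ and $\beta$, so they annihilate any pair of functionals depending only on $\gamma$. The same observation, combined with the fact that $\Tilde{I}^{\rm W,(1)}[L]$ depends only on $\gamma$ and vector field inputs, simplifies the middle term to $\hbar\bigl(Q J[L] + \{\Tilde{I}^{\rm W,(0)}[L], J[L]\}_L\bigr)$. Proposition \ref{obstriv} furnishes the classical identity $Q J + \{\Tilde{I}^{\rm W}, J\} = \Tilde{\Theta}$, and Lemma \ref{genlem} transports it to scale $L$, yielding $Q J[L] + \{\Tilde{I}^{\rm W,(0)}[L], J[L]\}_L = \Tilde{\Theta}[L]$ (with appropriate sign convention for the quantum correction). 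Hence $QME[L](I^q) = 0$.

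For uniqueness, I would argue as follows. The space of $\CC^\times \times \Aff(\CC)$-invariant $\TVect$-equivariant quantizations is a torsor over the degree-zero cocycles of the equivariant deformation complex $(\Tilde{\Def}_n^{\rm W})^{\CC^\times \times \Aff(\CC)}$, with homotopies given by degree $-1$ cochains bounding differences. The quasi-isomorphism $J^{\Tilde{\rm W}}: \clie^*(\TVect; \hOmega^2_{n,cl}[1]) \xto{\simeq} (\Tilde{\Def}_n^{\rm W})^{\CC^\times \times \Aff(\CC)}$ reduces the problem to Lie algebra cochains of $\TVect$. Two quantum corrections $J_1[L]$ and $J_2[L]$ both trivializing $\Tilde{\Theta}[L]$ differ at lowest $\hbar$-order by a class in this Lie algebra cohomology; since the extension $\TVect \to \Vect$ is constructed precisely so that the pullback $p^* \ch_2^{\GF}(\hT_n)$ is trivialized tautologically by $\id_V$ (as discussed after Proposition \ref{obstriv}), any two such trivializations differ by an exact cochain, giving the desired homotopy. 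The main obstacle in making this step rigorous is controlling the higher-order-in-$\hbar$ corrections; the standard inductive obstruction theory of \cite{CosBook} handles this by showing that at each order the relevant obstruction lies in the same cohomology group, which has been trivialized, so a homotopy extends order by order in~$\hbar$.
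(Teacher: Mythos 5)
The verification of RG flow and of the quantum master equation is essentially the paper's argument: the paper's proof of Theorem \ref{QME1} simply cites Proposition \ref{obstriv} and Lemma \ref{genlem}, and your expansion of the scale-$L$ QME into the obstruction term, the cross term, and the $\hbar^2$ term (which vanishes because $J[L]$ depends only on $\gamma$) is exactly the computation those two results are designed to package. That part is fine.

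The uniqueness argument, however, has a genuine gap. You claim that because the extension $\TVect \to \Vect$ is built so that $p^*\ch_2^{\GF}(\hT_n)$ is tautologically trivialized by $\id_{\hOmega^2_{n,cl}}$, ``any two such trivializations differ by an exact cochain.'' That inference is not valid: the existence of one canonical trivialization says nothing about whether two trivializations of the same cocycle are homotopic. Two trivializations of $\Tilde{\Theta}$ differ by a \emph{closed} degree-zero element of the deformation complex $(\Tilde{\Def}_n^{\rm W})^{\CC^\times\times\Aff(\CC)}$, and they are homotopic precisely when that element is exact, i.e., when $H^0$ of the deformation complex vanishes. Under the quasi-isomorphism $J^{\Tilde{\rm W}}$ of equation \eqref{tildeJ}, this is the statement that $H^1(\TVect;\hOmega^2_{n,cl}) = 0$, which is exactly what the paper invokes and which you never state or establish. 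Your closing appeal to the inductive obstruction theory of \cite{CosBook} compounds the confusion: you say the higher-order obstructions lie in ``the same cohomology group, which has been trivialized,'' but trivializability of the \emph{obstruction} group (degree one of the deformation complex, i.e., $H^2(\TVect;\hOmega^2_{n,cl})$) is irrelevant to uniqueness; what controls uniqueness order by order in $\hbar$ is the vanishing of the \emph{deformation} group $H^1(\TVect;\hOmega^2_{n,cl})$. To repair the proof, replace the tautological-trivialization argument with the cohomological vanishing $H^1(\TVect;\hOmega^2_{n,cl}) = 0$ and then run the order-by-order induction against that group.
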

\begin{proof} The first part follows from Proposition \ref{obstriv} and Lemma
  \ref{genlem} above. Uniqueness follows from the fact
  that $H^1(\TVect ; \hOmega^2_{n,cl}) = 0$.
\end{proof}

\begin{rmk}
Here, as in Lemma \ref{genlem}, the term $I^{qc}[L]$ arises naturally by naively applying RG flow to $\Tilde{I}^{\rm W} + \hbar I^{qc}$ and 
asking only for the sum of the terms in which at least one vertex is labeled by $I^{qc}$.
(The terms involving just $\Tilde{I}^{\rm W}$ have singularities, but we've already resolved them.)
Note that a stable connected graph containing $I^{qc}$ as a vertex has nonzero weight only if it is a tree, 
because $I^{qc}$ only has inputs from $\gamma$.
Moreover, only one copy of $I^{qc}$ can appear.
\end{rmk}

\subsection{The conformal anomaly} \label{sec conformal anomaly}

In Section \ref{sec hol vf} we discussed how the classical theory of the formal $\beta\gamma$ system is equivariant for the action of holomorphic vector fields on the source $\cT^S$.
Indeed, we have described the local functional $I^\cT \in \Def_n^{\cT}$ that encodes this action. 
In this section we address the problem of the quantizing this symmetry compatibly with the action of formal vector fields $\Vect$ on the target $n$-disk. 

\begin{prop} There is an obstruction to a $\cT^\CC \times \Vect$-equivariant
  quantization of the formal $\beta\gamma$ system. It is represented
  by a non-trivial cocycle 
\ben
2n \omega^\GF + \Theta^{\rm W} + \Theta^\cT \in \clie^*\left(\Vect ; \Def_n^{\cT}\right).
\een
Here, $\omega^\GF \in \Def_n^\cT$ is the local Gelfand-Fuks cocycle representing the generator of $H^3_{\rm Lie}({\rm W}_1)$. Moreover, 
\ben
J^{\rm W} (\ch_2^\GF(\hT_n)) = a \Theta^{\rm W} \;\; , \;\; K^{\rm W}(c_1^{\GF}(\hT_n)) = b \Theta^{\cT} .
\een
for some constants $a,b$.
\end{prop}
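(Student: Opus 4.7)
The plan is to imitate the strategy used in Proposition \ref{obsprop} for the $\Vect$-equivariant obstruction, but now also tracking the $\cT_\CC$-legs coming from the Noether functional $I^\cT$ constructed in Section \ref{sec hol vf}. First I would form the combined classical interaction $I^{\rm W} + I^{\cT}$, build the prequantization $\{(I^{\rm W} + I^{\cT})[L]\}$ via RG flow from the existing diagrammatic analysis (the convergence argument of Section \ref{sec prequant} still applies because $I^{\cT}$ is at most linear in $\beta$, so the directedness and genus $\leq 1$ restriction on Feynman graphs carries over unchanged), and extract the $L \to 0$ limit of the failure of the QME as in Lemma \ref{obslemma}. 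This produces a local cocycle
\[
\Xi \in \clie^*(\Vect;\Def_n^{\cT})
\]
that is the sum of the weights of two-vertex wheels with one distinguished internal edge, where each vertex is decorated by either $I^{\rm W}_X$ (with $X \in \Vect$) or $I^{\cT}_\xi$ (with $\xi \in \cT_\CC$).

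Next I would organize $\Xi$ by the number of $I^{\cT}$-vertices that appear on the wheel. Three cases arise. When both vertices are $I^{\rm W}$, one recovers exactly $\Theta^{\rm W} = a\,J^{\rm W}(\ch_2^\GF(\hT_n))$ by Proposition \ref{obsprop}. When one vertex is $I^{\rm W}_X$ and the other $I^{\cT}_\xi$, the algebraic factor of the wheel weight pairs a $\gamma$-leg of the $I^\cT$ vertex with a $\beta$-leg of the $I^{\rm W}$ vertex and vice versa; the $\fg_n$-trace extracts $\partial_i a^i$, which by Proposition \ref{atiyahprop1} and Corollary \ref{atiyah and IW} is precisely $c_1^\GF(\hT_n)(X)$. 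The analytic weight is then a single-loop integral involving the propagator, the heat kernel, and the holomorphic vector field $\xi$ on the source, and a direct computation parallel to the one in the proof of Proposition \ref{obsprop} shows it reproduces the $K_\eta$ local functional of Section \ref{sec hol vf} for $\eta = c_1^\GF(\hT_n)$. This yields a term $\Theta^\cT = b\,K^{\rm W}(c_1^\GF(\hT_n))$ for a nonzero constant $b$. When both vertices are $I^{\cT}_{\xi_1}$ and $I^{\cT}_{\xi_2}$, the algebraic factor is $\dim \fg_n \cdot \mathrm{id}_{\fg_n} = n \cdot \mathrm{id}$ once from the $\fg_n$-trace and once from the $\fg_n^\vee$-trace (since $\cT_\CC$ acts on both $\gamma$ and $\beta$ by the same Lie derivative), producing the factor $2n$; the analytic part is the universal local Gelfand-Fuks cocycle $\omega^\GF$ computed, for example, in \cite{bw_vir}.

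Finally I would argue nontriviality: by Proposition \ref{eqdef} and its $\cT_\CC$-extension via the map $(\omega^\GF, K, J)$ described just before this proposition, the classes $\omega^\GF$, $J^{\rm W}(\ch_2^\GF(\hT_n))$, and $K^{\rm W}(c_1^\GF(\hT_n))$ are linearly independent in the cohomology of the deformation complex $\clie^*(\Vect;\Def_n^{\cT})$, since they are the images under an injection on cohomology of three independent classes in $\clie^*(\Vect,\hOmega^{\ast}_n)$ of the requisite form-degree. Hence $2n\omega^\GF + \Theta^{\rm W} + \Theta^\cT$ is nontrivial. The main obstacle I expect is the analytic extraction of the $\Theta^\cT$ piece: unlike the $\Theta^{\rm W}$ computation, the integrand contains derivatives $\partial_z \xi^0$ coming from $\cL_\xi$ acting on Dolbeault forms, and one has to integrate by parts carefully — using the differential operator trick $D(t) = \frac{\epsilon}{t+\epsilon}\partial_w$ from the proof of Proposition \ref{obsprop} — to extract the correct finite $\epsilon \to 0$ limit and to match it with the normalization of the cocycle $K_{c_1^\GF(\hT_n)}$ rather than picking up an unwanted $\partial$-exact term.
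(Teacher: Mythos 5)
Your proposal follows essentially the same route as the paper: the combined interaction $I^{\rm W}+I^{\cT}$, the two-vertex-wheel expression for the anomaly, the decomposition into three pieces according to how many $\cT_\CC$-vertices appear, the identification of the pure-$\cT_\CC$ term with $2n\,\omega^{\GF}$ via \cite{bw_vir}, the pure-$\Vect$ term with $\Theta^{\rm W}$ via the earlier obstruction computation, and the mixed term with $K^{\rm W}(c_1^{\GF}(\hT_n))$ by tracing out $\partial_i a^i$ algebraically and evaluating the one-loop analytic weight. The only (harmless) addition is your explicit linear-independence argument for nontriviality, which the paper leaves implicit.
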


\begin{rmk} 
This proposition says that there are {\em three} independent obstructions to finding a $\cT^\CC \times \Vect$-equivariant quantization of the formal $\beta\gamma$ system. The obstruction $\Theta^{\rm W}$ coincides with the $\Vect$-equivariant obstruction computed in the previous sections and is independent of $\cT^\CC$. The obstruction $\Theta^\cT$ is new, and we will show that it reflects the fact that chiral differential operators on a complex manifold $X$ admit a global conformal structure if and only if $c_1(T_X) = 0$. The obstruction $\omega^{\GF}$ only depends on the background fields $\cT^\CC$ and hence is independent of the fields of the $\beta\gamma$ system. It reflects that fact that even when $c_1(T_X) = 0$, one needs to centrally extend holomorphic vector fields to get a global action. We will see that this obstruction constitutes the central charge of resulting Virasoro symmetry.
\end{rmk}

\begin{proof}
The obstruction is computed in a manner similar to the obstruction just for $\Vect$. Indeed, a version of Lemma \ref{obslemma} still holds, with the interaction $I^{\rm W}$ replaced by $I^{\rm W} + I^\cT$. That is, the obstruction to a $\cT^\CC \times \Vect$ equivariant quantization can be written as a graph expansion
\ben
\lim_{\epsilon \to 0} \sum_{\substack{\Gamma \in \text{\rm 2-vertex wheels}\\ e \in {\rm Edge}(\Gamma)}} W_{\Gamma,e}(P_{\epsilon<1}, K_\epsilon,
I^{\rm W}[\epsilon] + I^\cT[\epsilon]).
\een
This obstruction is an element of $\clie^*(\Vect ; \Def_n^\cT)$ and splits up into a sum of three linear pieces: 
\begin{enumerate}
\item a factor that does not depend on $\Vect$, i.e., lives in $\Def_n^\cT \subset \clie^*(\Vect ; \Def_n^\cT)$ ; 
\item a factor $\Theta^{\rm W}$ that does not depend on $\cT^\CC$ and is an cocycle in $\clie^*(\Vect ; \Def_n)$; and 
\item a factor $\Theta^{\cT}$ that is linear in both $\cT_S$ and $\Vect$ and is a cocycle in $\clie^1(\Vect ; \Def_n^\cT)$.
\end{enumerate}
We now describe these terms explicitly.\\

\noindent{\em The first term.} 
The term in $\Def_n^\cT$ has the form
\ben
\lim_{\epsilon \to 0} \sum_{\substack{\Gamma \in \text{\rm 2-vertex wheels}\\ e \in {\rm Edge}(\Gamma)}} W_{\Gamma,e}(P_{\epsilon<1}, K_\epsilon, I^\cT[\epsilon]) .
\een
The calculation of this obstruction was performed in Section 7 of \cite{bw_vir} and was shown to be equal to the local functional $2n \omega^\GF \in \Def_n^\cT$ where we have defined $\omega^{\GF}$ in Section \ref{sec hol vf}.\\ 

\noindent{\em The second term.} 
The term independent of $\cT^\CC$ has a graph expansion of the form
\ben
\Theta^{\rm W} = \lim_{\epsilon \to 0} \sum_{\substack{\Gamma \in \text{\rm 2-vertex wheels}\\ e \in {\rm Edge}(\Gamma)}} W_{\Gamma,e}(P_{\epsilon<1}, K_\epsilon,
I^{\rm W}[\epsilon]).
\een
This term is precisely the local functional $\Theta \in \clie^*(\Vect ; \Def_n)$ representing the obstruction to a $\Vect$-equivariant quantization. Thus $J^{\rm W} (\ch^\GF_2(\hT_n)) = \Theta^{\rm W}$, as desired.\\ 

\noindent {\em The third term.} 
We aim to show that there is an identification $K^{\rm W} (c_1^\GF(\hT_n)) = b \Theta^\cT$. Since we only consider the graph expansion over two-vertex wheels, the cocycle representing the third piece of the obstruction $\Theta^\cT$ is given by the weight of the $\epsilon \to 0$ limit of the following diagram
\begin{center}
\begin{tikzpicture}[decoration={markings,mark=at position 1.7cm with {\arrow[black,line width=.4mm]{stealth}}}];

\filldraw (-1.5,0) circle (.1);
\draw (-1.5,.5) node {$I^{\cT}$};
\draw[postaction=decorate, line width=.2mm] (-3,0) -- (-1.5,0);
\draw (-3.15,0) node {$\xi$};

\filldraw (1.5,0) circle (.1);
\draw (1.5,.5) node {$I^{\rm W}$};
\draw[postaction=decorate, line width=.2mm] (3,0.5) -- (1.5,0);
\draw (3.3,0.5) node {$\gamma$};
\draw (2.8,0.1) node {$\vdots$};
\draw[postaction=decorate, line width=.2mm] (3,-0.5) -- (1.5,0);
\draw (3.3,-0.5) node {$\gamma$};

\draw[postaction=decorate, line width=.2mm] (-1.5,0) .. controls (0,.75) .. (1.5,0);
\draw (0, 1) node {$P_{\epsilon<1}$};
\draw[postaction=decorate, line width=.2mm] (1.5,0) .. controls (0,-.75) .. (-1.5,0);
\draw (0, -1) node {$K_\epsilon$};
\end{tikzpicture}
\end{center}
where $\xi$ labels a holomorphic vector field in $\cT^\CC$ and $\gamma \in \Omega^{0,*}(\CC ; \fg_n)$.\\

For fixed $\xi \in \cT^\CC$ we have the functional $K_\xi := K^{\rm W}(c_1^{\GF}(\hT_n))(\xi, -)$, which is an element of the $\Vect$-equivariant deformation complex $\Def_n^{\rm W}$. For simplicity, we consider the case that $\xi = \xi^0 \partial_z \in \Omega^{0}(\CC ; T \CC)$. The Gelfand-Fuks-Chern character evaluated on a vector field $X = a^i \partial_i$ is $c_1^{\GF}(\hT_n)(a^i \partial_i) = \frac{1}{2\pi i} \partial_i a^i$. Thus, we have the explicit formula for $K_\xi$
\ben
K_\xi(X, \gamma) = \int_S \partial_z \xi^0 \<(\partial_i a^i)^S(\gamma), \partial \gamma\>_{\fg_n} .
\een
It suffices to show that for each $X \in \Vect$ the obstruction satisfies $\Theta^{\cT}(\xi, X, -) = b K_\xi(X)$, for some nonzero constant $b$, as elements of $\Def_n$. 

As we did in the calculation of the obstruction in the previous sections, it suffices to assume that the formal vector field is homogeneous of the form $X = t_1^{k_1} \cdots t_n^{k_n} \partial_i$ where $k_1 + \cdots + k_n = k$. Then, both $\Theta^{\cT}(\xi, X, -)$ and $K_\xi(X)$ are of homogeneous degree $k-1$:
\ben
\Sym^{k-1}(\Omega^{0,*}(\CC) \tensor \fg_n) \to \CC .
\een 
Ignoring the analytic factors momentarily, we observe that in computing the weight of the graph $\Gamma$,
we contract $\beta$ legs with $\gamma$ legs. In our case, the $X$-vertex contributes a $\beta_i$ leg,
which then contracts with the $k_i$ different $\gamma$ legs from the vertex labeled by the holomorphic vector field $\xi$. These contractions explain the term $(\partial_i a^i)^S(\gamma)$.

We now compare the analytic factors. Since the dimension of the target formal disk was only relavent for the algebraic piece, it suffices to set $n = 1$. The analytic weight we must compute is represented by the $\epsilon \to 0$ limit of the diagram
\begin{center}
\begin{tikzpicture}[decoration={markings,mark=at position 1.7cm with {\arrow[black,line width=.4mm]{stealth}}}];

\filldraw (-1.5,0) circle (.1);
\draw[postaction=decorate, line width=.2mm] (-3,0) -- (-1.5,0);
\draw (-3.3,0) node {$\xi^0 \partial_z$};

\filldraw (1.5,0) circle (.1);
\draw[line width=.2mm] (3,1) -- (1.5,0);
\draw[postaction=decorate, line width=.2mm] (3,0.5) -- (1.5,0);
\draw (3.3, 1) node {$f_1 \d \zbar$};
\draw (3.3,0.5) node {$f_2 $};
\draw (2.8,0.1) node {$\vdots$};
\draw[postaction=decorate, line width=.2mm] (3,-0.5) -- (1.5,0);
\draw (3.3,-0.5) node {$f_{k-1}$};

\draw[postaction=decorate, line width=.2mm] (-1.5,0) .. controls (0,.75) .. (1.5,0);
\draw (0, 1) node {$P_{\epsilon<1}$};
\draw[postaction=decorate, line width=.2mm] (1.5,0) .. controls (0,-.75) .. (-1.5,0);
\draw (0, -1) node {$K_\epsilon$};
\end{tikzpicture}
\end{center}
The weight of this diagram is given by
\ben
\int_{\CC^2} \left(\xi^0 \partial_{z_1} P_{\epsilon < 1}(z_1,z_2) \right) \wedge \left(\prod_{i=1}^{k-1} f_i(z_2) \right) \d \zbar_2 \wedge K_\epsilon(z_1,z_2) .
\een 
We compute the $z_1$-derivative of the propagator as
\ben
\frac{\partial}{\partial z_1} P_{\epsilon < 1} (z_1,z_2) = \int_{t = \epsilon}^L \frac{1}{16 (4 \pi) t^3} (\zbar_1 - \zbar_2)^2 e^{-|z_1-z_2|^2/4t} \d t \left(\d z_1 - \d z_2\right).
\een
Making the standard change of coordinates $w_1 = z_2 - z_1$ and $w_2 = z_2$ we find that the weight can be expressed as 
\ben
\int_{w_1,w_2} \xi^0 \Bar{w}_1^2 \left(\prod_{i=1}^{k-1} f_i \right) \d^2 w_1 \d^2 w_2 \int_{t=\epsilon}^1 \frac{1}{16 (4 \pi)^2 \epsilon t^3} \exp\left(-\frac{1}{4} (t^{-1} + \epsilon^{-1}) |w_1|^2\right) .
\een 
The only term in the Wick expansion of the integral above that contributes is a nonzero multiple of
\ben
\int_{z} \left(\partial_z^2 \xi^0\right)(z) \left(\prod_{i=1}^{k-1} f_i (z) \right) \d^2 z \int_{t=\epsilon}^1 \frac{\epsilon^3}{(\epsilon + t)^3} \d t .
\een
A simple evaluation of the $t$-integral yields a finite limit as $\epsilon \to 0$. Furthermore, we can integrate the above $z$-integral by parts to put the analytic part of the obstruction $\Theta^{\cT}(\xi, X, f_1 \d \zbar, f_2,\ldots,f_{k-1})$ in the form that is proportional to
\ben
\int_z \left(\partial_z \xi^0\right) \partial_z \left(\prod_{i=1}^{k-1} f_i \right) \d^2 z .
\een
This is precisely the analytic form of the functional $K_\xi(X)$, as desired.
\end{proof}

\section{The partition function of the equivariant theory}

In this section we analyze the scale $\infty$ effective interaction on an elliptic curve coming from the quantization constructed above. 
It defines a natural element of the Gelfand-Fuks cohomology $\clie^*(\Vect ; \Omega^{-*}_n)$ 
that deserves to be called the $n$-dimensional {\it formal Witten class}. 
We show that under Gelfand-Kazhdan descent, this formal cocycle maps to the Witten class of the complex manifold. 

\begin{rmk}
The arguments here are borrowed from \cite{WG2}, notably Section 17,
where Costello identifies the Witten class of the target $X$ 
as part of the quantized action functional of the curved $\beta\gamma$ system.
We simply observe that his approach applies equally well with the formal disk as target,
so long as one uses Gelfand-Fuks cohomology.
In \cite{WG2} Costello also provides an interpretation of the Witten class as a kind of ``projective volume form''
on the derived mapping space from the universal elliptic curve to $X$.
We do not discuss that here, but his interpretation applies to our approach as well.
\end{rmk}

\subsection{The formal Witten class}

Let $\cV$ be a formal vector bundle, i.e., an object of the category $\VB_{(\Vect, \GL_n)}$. 
We have constructed the Gelfand-Fuks-Chern characters
\ben
\ch^{\rm GF}_k(\cV) \in \clie^k(\Vect, \GL_n ; \hOmega^k_{n,cl}) .
\een
Let $\hOmega^{-*}_n = \bigoplus_{k} \hOmega^k_n [k]$ denote the formal de Rham forms 
arranged in opposite degrees from usual (i.e., with $k$-forms beginning in degree $-k$ rather than $k$).
Note that we do {\em not} include the exterior derivative as part of the total differential
(for degree reasons this is not possible, but it is not relevant to our setting either).
Each cocycle $\ch^{\rm GF}_k(\cV)$ then provides a cocycle of degree \emph{zero} in $\hOmega^{-*}_n$.
Thus, any interesting formal combination of such characters --- like the Witten class defined below --- naturally sits in degree zero.

\begin{dfn} 
Let $E$ be an elliptic curve equipped with a holomorphic volume form $\omega \in \Omega^{1,0}(E)$. 
The $n$-dimensional {\em logarithmic formal Witten class} evaluated at $(E,\omega)$ is the formal sum
\ben
\log \Wit_n (E, \omega) := \sum_{k \geq 2} \frac{(2k-1)!}{(2\pi i)^{2k}}\, E_{2k}(E,\omega)\, \ch_{2k}^{\rm GF} (\hT_n) .
\een 
\end{dfn}

The Eisenstein series $E_{2k}$ is given by the formula
\ben
E_{2k}(\CC/\Lambda,\d z) = \sum_{\lambda \in \Lambda - \{0\}} \lambda^{-2k}
\een
where we witness the elliptic curve given as a quotient of the complex plane by the lattice $\Lambda$.
For an arbitrary $E$, we find a lattice $\Lambda$ such that 
$\d z$ identifies with $\omega$ under the isomorphism $\CC / \Lambda \cong E$. 
If the lattice is spanned by two elements
\ben
a + i b, c + i d \in \CC .
\een
Then the sum can be written as
\ben
\sum_{\lambda \neq \{0\}} \lambda^{-2k} = \sum_{(m,n) \in \ZZ^2 - \{(0,0)\}} (m a + i m b + n c + i n d)^{-2k} .
\een

Our main result in this section is that the scale $\infty$ effective interaction of the formal $\beta\gamma$ system, in the presence of the background $\TVect$ fields, is equivalent to the formal Witten class $\Wit_n(E,\omega)$ plus a term proportional to $\ch_2^{\rm GF}(\hT_n)$. 
To state the result, recall the extension $p : \TVect \to \Vect$ of Lie algebras determined by the formal second Chern character.

\begin{prop} \label{prop: formalwitten}
As a function on the harmonic forms $\cH(E)$,
the one-loop part of the scale $\infty$ effective quantization
$\Tilde{I}^{{\rm W}, (1)}[\infty]$ is
\ben
\frac{1}{32 \pi^4} \Bar{E}_2(E,\omega)\, p^*\ch_2^{\rm GF}(\hT_n) + 
\sum_{k \geq 2} \frac{(2k-1)!}{(4 \pi^2)^{2k}} E_{2k}(E, \omega)\, p^*\ch_{2k}^{\rm GF} (\hT_n),
\een
as a cocycle in $\clie^*(\TVect ; \hOmega^{-*}_{n}) $.
In particular, the one-loop effective quantization is cohomologous to $p^*\log \Wit_n(E, \omega)$.
\end{prop}

\begin{rmk}
The series $\Bar{E}_2(E)$ is the ``modular completion" of the second Eisenstein series.
In terms of the modular parameter $\tau$, it is defined by
\[
\Bar{E}_2 (\tau,\Bar{\tau}) = 1 - 24 \sum_{n=1}^\infty \frac{n q^n}{1-q^n} - \frac{3}{\pi {\rm Im}(\tau)}
\]
where $q = e^{2\pi i \tau}$. 
It has the property that it is modular, but not holomorphic. 
\end{rmk}

The term proportional to $\ch_2^{\rm GF}(\hT_n)$ arises from the term in the effective interaction on $E$ given by the weight of a wheel with two vertices. 
There is some delicate analysis involved in computing the precise contribution of this weight, but we see that when restricted to the extended Lie algebra $\TVect$ it is cohomologous to zero, by construction so we may disregard it. 

\subsection{The theory on an elliptic curve}

The quantization we have constructed above is invariant 
for the group $\Aff(\CC)$ of affine symmetries of the complex plane. 
Thus, for any elliptic curve $E = \CC/\Lambda$,
we can descend the quantization on $\CC$ along the quotient map $\CC \to E = \CC / \Lambda$.
The dg Lie algebra describing the theory on the elliptic curve $E$ is
\ben
\DD \fg_n^E = \Omega^{0,*}(E ; \fg_n) \oplus \Omega^{1,*}(E ; \fg_n^\vee[-2]) .
\een 
There is a simplification we can make in this setting. 
The choice of a holomorphic volume form $\omega$ determines an isomorphism of dg Lie algebras
\[
\begin{array}{ccc}
\Omega^{0,*}(E; \fg_n \oplus \fg_n^\vee[-2]) &\cong & \DD \fg_n^E\\
\gamma \tensor (\xi, \tau) & \leftrightarrow & (\gamma \tensor \xi,(\gamma \wedge \omega) \tensor \tau)
\end{array}.
\]
This isomorphism is naturally $(\Vect, \GL_n)$-equivariant. 

Note that there is an element $\omega^\vee \in \Omega^{0,1}(E)$  such that $\int \omega \wedge \omega^\vee = 1$.
At the level of cohomology, $[\omega^\vee]$ spans $H^1(E,\cO)$, by Serre duality.
We are free to choose $\omega^\vee$ to be {\em harmonic}, 
meaning it is annihilated by both $\partial$ and $\dbar$.
If $E = \CC/\Lambda$, then there is a constant
\[
v(E) = \int_{E} \d z \, \d\zbar
\]
and $\omega^\vee = v(E)^{-1} \d\zbar$.
In general, let $\delta$ denote~$(i \pi)^{-1}\omega^\vee$.

Let $\cH(E) \subset \DD \fg_n^E$ denote the sub dg Lie algebra of harmonic forms 
(that is, those forms that are in the kernel of $\partial$ and $\dbar$).
We have an isomorphism 
\ben
\cH(E) \cong \CC [\delta] \tensor (\fg_n \oplus \fg_n^\vee[-2])
\een
of dg Lie algebras, thanks to our choices above.

In anticipation of this section's main result, note that 
\[
\CC[\delta] \tensor \fg_n[-1] \cong \fgn \ltimes \fgn, 
\]
the natural extension of $\fgn$ by the shifted adjoint representation $\fgn[-1]$.
Hence,
\ben
\clie^*\left(\Vect ; {\rm C}_{\rm Lie,red}^*(\CC[\delta] \tensor \fg_n)\right)
\cong \clie^*(\Vect ; \hOmega^{-*}_n),
\een
where $\hOmega^{-*}_n$ is the regraded formal de Rham complex. 
We now explain why the scale $\infty$ effective action for the equivariant BV theory
produces a cocycle in this cochain complex.

The harmonic subspace $\cH(E)$ describes the solutions on $E$ to the equations of motion for the formal $\beta\gamma$ system.
If we restrict the scale $\infty$ effective interaction to this subspace, 
it provides an $\hbar$-dependent cocycle in the Lie algebra cochains:
\ben
\left(\Tilde{I}^{\rm W}[\infty] + \hbar J[\infty]\right) \Big|_{\cH(E)} \in
\clie^*\left(\TVect ; {\rm C}_{\rm Lie,red}^*(\CC[\delta] \tensor (\fg_n \oplus \fg_n^\vee[-2])) \right)  [\hbar].
\een
Note that the one-loop term of the effective interaction $\Tilde{I}^{\rm W,(1)}[\infty] + \hbar J [\infty]$ 
is only a functional of $\CC [\delta] \tensor \fg_n$ and
does not depend on $\CC [\delta] \tensor \fg_n^\vee[-2]$. 

In fact, at scale $\infty$, things become even simpler. 

\begin{lemma} 
The functional $J[\infty]$ vanishes on the subspace of harmonic forms:
\ben
J[\infty] \big|_{\cH(E)} = 0 .
\een
Thus, the scale $\infty$ effective interaction lies in the image of
$I^{\rm W}[\infty] |_{\cH}$ under the pullback map
\ben
p^* : \clie^*\left(\Vect ; {\rm C}_{\rm Lie,red}^*(\CC[\delta] \tensor \fg_n)\right) 
\to \clie^*(\TVect ; {\rm C}_{\rm Lie,red}^*\left(\CC[\delta] \tensor \fg_n)\right),
\een
where $p : \TVect \to \Vect$ is the extension of $\Vect$ by closed two-forms 
determined by~$\ch_2^{\rm GF}(\cT_n)$.
\end{lemma}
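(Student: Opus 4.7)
My plan is to argue via the Feynman diagram expansion of $J[\infty]$. By Lemma \ref{genlem}, we have
\[
J[L] = \lim_{\epsilon \to 0} \sum_{\Gamma, v} \frac{1}{|\mathrm{Aut}(\Gamma)|}\, W_{\Gamma,v}\bigl(P_{\epsilon<L},\, \Tilde{I}^{\rm W}[\epsilon],\, J\bigr),
\]
where the sum is over connected graphs with one distinguished vertex $v$ labeled $J$ and the remaining vertices labeled by the classical interaction. Since $J$ depends only on $\gamma$ and has no $\beta$-leg, any edge incident to the $J$-vertex must terminate at a $\beta$-output of an adjacent $\Tilde{I}^{\rm W}$-vertex; in particular, no cycle can pass through the $J$-vertex, forcing all contributing diagrams to be \emph{trees}. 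Thus $J[\infty]$ is a tree-level sum.

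The first observation is that $J$ itself vanishes on harmonic inputs: for $\gamma = a + b\delta \in \cH(E)$ with $a,b \in \fg_n$ constant, we have $\partial\gamma = 0$, so directly $J_\omega(\gamma) = \int_E \langle \theta^S(\gamma),\partial\gamma\rangle_{\fg_n} = 0$. In a general tree diagram, the $J$-vertex carries a distinguished leg on which $\partial$ acts. If that leg is external, it is fed by a harmonic $\gamma \in \cH(E)$ and the same argument kills the diagram. The nontrivial case is when the distinguished leg is internal, i.e., attached to the $\beta$-output of some $\Tilde{I}^{\rm W}$-vertex via the scale-$\infty$ propagator $P_{0<\infty}$. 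Here one must show that $\partial$ acting on $P_{0<\infty}$ still produces zero against harmonic external data.

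To handle the internal case, the plan is to exploit Hodge theory on the elliptic curve $E$ equipped with its flat metric and the trivialization provided by $\omega$. The scale-$\infty$ propagator inverts $\dbar$ on the orthogonal complement of $\cH(E)$, i.e., $[\dbar, P_{0<\infty}] = \mathrm{Id} - \Pi_{\cH}$, and the Kähler identities give a compatible formula for $\partial P_{0<\infty}$. Using integration by parts along the tree, the $\partial$ on the distinguished leg can be transported across adjacent internal edges, picking up a harmonic projection $\Pi_\cH$ at each step. The propagated $\partial$ eventually must terminate either on an external leg (harmonic, hence annihilated) or on a $\Pi_\cH$-projected piece that re-feeds harmonic data into a $\partial$-slot elsewhere. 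In both cases the contribution vanishes, so the sum of all trees evaluates to zero on $\cH(E)$.

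The main obstacle will be Step~3: organizing the integration-by-parts bookkeeping over arbitrary tree topologies and verifying that the analytic manipulations are compatible with the $\epsilon \to 0$ and $L \to \infty$ limits. A cleaner but less explicit alternative, following Section~17 of \cite{WG2}, is to reinterpret the restriction of $J[\infty]$ to $\cH(E)$ as the homotopy-transferred operation under the deformation retract from $\DD\fg_n^E$ onto its harmonic subspace $\cH(E)$; since $J$ is a cocycle that vanishes on the image of the retract map classically, the transferred cocycle also vanishes. Either route yields $J[\infty]|_{\cH(E)} = 0$, and the second assertion (that the effective interaction lies in the image of $p^*$) then follows because $J$ was the only term in the quantization involving the extension class $\hOmega^2_{n,cl} \subset \TVect$.
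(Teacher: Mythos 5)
Your core mechanism is the same as the paper's: $J_\omega$ applies a single holomorphic derivative $\partial$ to one of its $\gamma$-inputs, harmonic forms on $E$ are $\partial$-closed, and every diagram contributing to $J[\infty]$ is a tree containing exactly one $J$-vertex. The paper's entire proof consists of exactly this observation --- it deduces $J|_{\cH}=0$ and then asserts that $J[\infty]|_{\cH}=0$ because every contributing weight has a $J$-vertex --- together with the previously established fact that the weights of $p^*I^{\rm W}$ are the pullbacks of the weights of $I^{\rm W}$, which yields the second assertion in the same way you indicate.

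Where you diverge is in isolating the case where the $\partial$-bearing slot of the $J$-vertex is filled by the scale-$\infty$ propagator rather than by an external harmonic input. That is a reasonable case to flag --- the paper's one-line conclusion does not separately address it --- but your treatment of it is only a plan, not a proof: you yourself identify the integration-by-parts bookkeeping over arbitrary tree topologies, and its compatibility with the $\epsilon\to 0$ limit, as ``the main obstacle,'' and you do not carry it out. The alternative homotopy-transfer route is likewise only sketched; the fact that $J$ vanishes on the image of the harmonic retraction does not by itself kill the transferred operation, since the higher transfer corrections are precisely the propagator-decorated trees you are trying to control, so this route collapses back into the same bookkeeping. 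Note also that transporting the $\partial$ off the propagator leg by Stokes' theorem can land it on \emph{another} propagator leg at the same vertex, so the argument must use the symmetrization built into $J$ (the sum over which input receives $\partial$) and not just a single integration by parts. In short: the part of your argument that matches the paper is correct and sufficient by the paper's own standard, but the part where you attempt to exceed that standard is the part you have not actually completed.
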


\begin{proof}
Recall, for fixed closed two-form $\omega$ the local functional $J_\omega$ is defined to be a functional on the
space $\Omega^{0,*}(E ; \fg_n)$. The definition of $J_\omega$ invovles a
single holomorphic
derivative acting on one of the input fields. When we restrict
to harmonic forms $\CC[\delta] \tensor \fg_n \hookrightarrow
\Omega^{0,*}(E ; \fg_n)$ the holomorphic derivative acts by zero and
hence $J_\omega|_\cH$ vanishes for all $\omega$. Thus $J|_{\cH}$ is
identically zero. Since the scale $\infty$
action $J[\infty]$ involves at least one vertex labeled by $J$ we see
that its restriction also vanishes. 
\end{proof}

In particular, the one-loop scale $\infty$ interaction comes as an element in  
\[
\clie^*(\Vect ; {\rm C}_{\rm Lie, red}^*(\CC [\delta] \tensor \fg_n)) = \clie^*(\Vect ; \hOmega^{-*}_n). 
\]
We wish to explicitly compute this element.

First, we make a remark about where the functional $I^{\rm W}$ lives
when our spacetime is an elliptic curve and we restrict to harmonic
forms. This restriction can be viewed as a functional
\ben
I^{\rm W} \big|_{\cH(E)} : \Vect \oplus \CC [\delta] \tensor (\fg_n[1]
\oplus \fg_n^\vee[-1]) \to \CC .
\een
Since $I^{\rm W}$ is linear in the $\fg_n^\vee$ and $\delta \fg_n$
component, we can view this restriction as an element in space
\ben
 \clie^*\left(\Vect ; {\rm C}_{\rm Lie,red}^*(\fg_n) \tensor \delta
 \fg^\vee \tensor \fg_n \right) .
\een 
Let
\ben
\d_{dR} : {\rm C}_{\rm Lie,red}^*(\fg_n)) \to \clie^*(\fg_n,\fg^\vee) \cong \hOmega^1_n
\een
be the de Rham differential. Then the element 
\ben
(\d_{dR} \tensor 1)I^{\rm W} \in  \clie^*(\Vect ; {\rm C}_{\rm
  Lie,red}^*(\fg_n) ) \tensor \delta \fg^\vee \tensor \End(\fg_n)
\cong \clie^*(\Vect ; \hOmega^1_n \tensor \End(\fg_n))
\een
is precisely the Atiyah class $\At^{\rm GF}(\hT_n)$ as shown in
Section \ref{sec atiyah 2}. 

Now we can move on to the main result of this section.

\subsection{Proof of Proposition \ref{prop: formalwitten}}

We recall the general approach for computing the renormalized effective action on the elliptic curve $E$. 
The procedure splits into the following steps:
\begin{itemize}
\item[(1)] truncate the propagator $P_{\epsilon < L}$ at both the lower and upper bounds;
\item[(2)] compute the Feynman graph weight $W(P_{\epsilon<L}, I) = \sum_{\Gamma} W_{\Gamma} (P_{\epsilon<L}, I)$ as integrals over the elliptic curve $W_{\Gamma}(P_{\epsilon<L} , I) = \int_{E} w_{\Gamma}(P_{\epsilon < L}, I)$;
\item[(3)] take the limit $\lim_{\substack{\epsilon \to 0 \\ L \to 0}} W_{\Gamma}(P_{\epsilon<L} , I)$.
\end{itemize}
Our analysis so far has shown that the limit in step (3) exists.
However, in the proof of the proposition we will compute the limit in a direct way by exchanging the limit with the integration over $E$:
\[
\lim_{\substack{\epsilon \to 0 \\ L \to 0}}\int_{E} w_{\Gamma}(P_{\epsilon < L}, I) \stackrel{?}{=} \int_E \lim_{\substack{\epsilon \to 0 \\ L \to 0}}w_{\Gamma}(P_{\epsilon < L}, I) .
\]
The issue is that even though the limit in (3) exists, the expression above is not valid in general. 
Feynman integrals of this type were studied extensively in the work \cite{LiFeynman}.
A similar analysis as done there, and one we can check by direct calculation, implies that the limits above can be interchanged when the graph $\Gamma$ has no wheels with fewer than $3$ vertices.
The term proportional to $\ch_2^{\rm GF}(\hT_n)$ in the proposition corresponds to the wheel with two vertices, and so the proper regularization scheme outlined above must be performed. 

The weight expression for $\Tilde{I}^{{\rm W},(1)}$ is given by
\[
\sum_{\Gamma \in \text{\rm Wheels}} \frac{1}{|\Aut(\Gamma)|} \lim_{\substack{\epsilon \to 0 \\ L \to 0}} W_\Gamma(P_{\epsilon < L}, \Tilde{I}^{\rm W}) .
\]
Just as in the effective action on $\CC$, the tadpole diagram is identically zero.
When the number of vertices of the wheel is two, a similar calculation as in Lemma 2.2 of \cite{LiFeynman} shows that the weight of the graph is $\frac{1}{32 \pi^4} \Bar{E}_2(E,\omega)\, p^*\ch_2^{\rm GF}(\hT_n)$.
This is the first term in the expression of the effective action. 

It remains to compute the weights of wheels with number of vertices strictly bigger than two. 
By the remarks above, we can interchange the limits to obtain
\ben
\sum_{\Gamma \in \text{\rm Wheels}_{>2}} \frac{1}{|\Aut(\Gamma)|} \lim_{\substack{\epsilon \to 0 \\ L \to 0}} W_\Gamma(P_{\epsilon < L}, \Tilde{I}^{\rm W})  = \sum_{\Gamma \in \text{\rm Wheels}_{>2}} \frac{1}{|\Aut(\Gamma)|} W_\Gamma(P_{0 < \infty}, \Tilde{I}^{\rm W}) 
\een
where the sum is over $\Gamma \in \text{\rm Wheels}_{>2}$ all wheels with number of vertices bigger than two. 

We are computing the restriction of this to the subspace
\ben
\cSym(\Vect[1]^\vee) \tensor \cSym\left((\left(\CC[\delta] \tensor
  \fg_n[1]\right)^\vee)\right) .
\een

Each vertex of the wheel is labeled by the interaction $I^{\rm W}$. 
We now write down the propagators for which we are contracting.

We identify
\ben
\begin{array}{ccc}
\Omega^{0,*}(E) & \cong & C^\infty(E) \tensor \CC[\delta] \\
\mu^{-1} \d \zbar & \leftrightarrow & \delta
\end{array}
\een
where $\mu = i \pi \int_E \d z \,\d \zbar$. 

The scale $\infty$ propagator is
\ben
P_{0 < \infty}(z,w) = \int_{0}^\infty (\dbar^* \tensor 1) K_t (z,w) \d t .
\een 
When we descend to the elliptic curve, the heat kernel is 
\ben
K_t (z,w) = \left(\frac{1}{4 \pi t} e^{-|z-w|^2/4t}\right) (\delta \tensor 1 - 1
  \tensor \delta) \tensor (\id_{\fg_n} + \id_{\fg_n^\vee})
\een 
where $(4 \pi t)^{-1} e^{-|z-w|^2/4t} \in C^\infty(E \times E)$ is the
scalar heat kernel for the Laplacian on functions. The adjoint $\dbar^*$ satisfies
\ben
\dbar^*(f(z,\zbar) \d \zbar) = \frac{\partial f}{\partial z} 
\een
and hence we identify
\ben
P_{0<\infty}(z,w) = \int_{0}^\infty \mu^{-1} \frac{\partial}{\partial z}
\left(\frac{1}{4 \pi t} e^{-|z-w|^2/4t} \right) \tensor (\id_{\fg_n} + \id_{\fg_n^\vee})\, \d t .
\een 
In turn, we can think of this formula as the integral kernel for the operator
\ben
\mu^{-1} \frac{\partial}{\partial z} (2 \dbar \dbar^*)^{-1} \tensor \id : C^\infty(E) \tensor \fg_n \to C^\infty(E) \tensor \fg_n
\een
where $2 \dbar \dbar^*$ is the scalar Laplacian acting on functions. 

Using the identity $\d_{dR} I^{\rm W} = \At(\hT_n)$, 
we see that the sum of weights for diagrams of exactly $k$ vertices is
\ben
\frac{1}{k} \Tr \left( \left(p^*\At^{\rm GF} (\hT_n) \tensor
    \mu^{-1} \frac{\partial}{\partial z} (2\dbar \dbar^*)^{-1}
  \right)^{k} \right) \in \clie^*\left(\TVect ; \clie^*(\CC[\delta]
\tensor \fg_n ) \right) \cong \clie^*(\TVect ; \hOmega^{-*}_n) .
\een 
(This calculation recapitulates that of the obstruction.)
We know that the algebraic piece simplifies to $\Tr(\At^{\rm GF}(\hT_n)) = k! (2\pi i)^k \ch_{2k}^{\rm GF}(\hT_n)$. 
For odd $k$, the analytic factor vanishes. 
Finally, a direct computation shows that
\ben
\Tr\left(\left(\mu^{-1} \frac{\partial}{\partial z} (2\dbar \dbar^*)^{-1} \right)^{2k} \right) = \frac{1}{(4 \pi^2)^{2k}} E_{2k} 
\een
for $k > 2$.
One simply picks a natural Fourier basis for smooth functions on an elliptic curve $E = \CC/\Lambda$,
on which basis the operator  $\frac{\partial}{\partial z} (2\dbar \dbar^*)^{-1}$ is easy to describe.
(See, for instance, section 17.8 in \cite{WG2}.) This fact completes the proof. 

\section{The factorization algebras of equivariant
  observables}\label{sec equiv obs}

So far we have constructed classical and quantum BV theories for the formal $\beta\gamma$ system.
We now turn to analyzing the observables of these theories,
using the machinery of \cite{CG2}, which intertwines the BV formalism with factorization algebras.
As we show in Part 3, the factorization algebras that we construct here 
provide a refinement of the vertex algebras $\Gr\,\hCDO_n$ and $\hCDO_n$ from Part 1.

In brief a factorization algebra is a local-to-global object on a manifold --- in that sense, it is like a sheaf --- that encodes how to combine sections living on disjoint opens --- and hence, like an algebra.
In \cite{CG2} it is shown that every field theory in the BV formalism has an associated factorization algebra of observables.
For a classical field theory, the observables $\Obs^\cl$ assign to an open $U$, 
the commutative dg algebra of functions on the space of fields on $U$.
Thus classical observables form a commutative factorization algebra.
A BV quantization amounts, in essence, to deform the differential from $\{S^\cl,-\}$ to $\{S^\q,-\} + \hbar \Delta$, 
where $S^\cl$ is the classical action functional and $S^\q$ is its quantization.
The quantum observables are thus a deformation of the commutative factorization algebra $\obs^\cl$ to a factorization algebra (which has no commutative structure).

Our work in Section \ref{sec equiv bv} thus provides factorization algebras 
for the equivariant and non-equivariant formal $\beta\gamma$ system.
Before we spell out those objects in detail, though, 
we must give the definition of a factorization algebra and discuss the relevant functional analysis.

\begin{rmk}
Although we attempt to describe all the relevant ideas and definitions here,
we rely extensively on results and arguments in \cite{CG2},
which contains a lengthy treatment of the formalism we deploy.
For further details, motivation, and context, we refer the reader there.
\end{rmk}

\subsection{An overview of factorization algebras}

Let $X$ be a topological space and $\cC^\otimes$ a symmetric monoidal category.
For us $X$ will be a Riemann surface, typically $\CC$, and 
$\cC^\otimes$ will be a category of cochain complexes of vector spaces with $\otimes$ the tensor product.
(In Section \ref{sec functional analysis} we discuss the type of vector spaces and tensor product that we use,
since issues of functional analysis appear.)
Here we give the general definition and refer to \cite{CG1,CG2} for more detail and motivation.

\begin{dfn}
A \emph{prefactorization algebra} $\cF$ on $X$ with values in $\cC^\otimes$ 
assigns to each open $U$ in $X$, an object $\cF(U)$ in $\cC$, and 
assigns to each finite collection $\{U_1,\ldots,U_n\}$ of pairwise disjoint opens in $X$
where each $U_i \subset V$, a morphism
\[
\cF_V^{U_1,\ldots,U_n}: \cF(U_1) \otimes \cdots \otimes \cF(U_n) \to \cF(V).
\]
These assignments satisfy
\begin{enumerate}
\item the morphisms compose, so that
\[
\cF_V^{U_1,\ldots,U_n} \circ \bigotimes_{i = 1}^n \cF_{U_i}^{T_{i,1},\ldots,T_{i,{m_i}}} = \cF_V^{T_{1,1},\ldots,T_{n,m_n}}
\]
for any choice of pairwise disjoints open $\{T_{i_1},\ldots,T_{i_{m_i}}\}$ inside $U_i$ for each $i$, and
\item the morphisms are equivariant under rearrangement of labels, 
so that for any permutation $\sigma \in S_n$, the composite
\[
\cF(U_{\sigma(1)}) \otimes \cdots \otimes \cF(U_{\sigma(n)}) \xto{\cong} \cF(U_1) \otimes \cdots \otimes \cF(U_n) \xto{\cF_V^{U_1,\ldots,U_n}} \cF(V)\to \cF(V)
\] 
equals $\cF_V^{U_{\sigma(1)}) \otimes \cdots \otimes \cF(U_{\sigma(n)}}$.
\end{enumerate}
\end{dfn}

This structure encodes a kind of algebra parametrized by the geometry of $X$.
The data of $\cF$ explains how to ``multiply'' elements living on opens $U_i$ into an element on $V$.

An associative algebra $A$ provides an example living on $X = \RR$.
To each open interval $I$, one assigns $A$, and 
to a union of disjoint intervals $\sqcup_{j \in J} I_j$, one assigns the tensor product $\bigotimes_{j \in J} A$.
Each structure map is determined by the multiplication in $A$.

Another example, central to our work here, is the following. 
Let $E \to X$ be a vector bundle on a smooth manifold.
Let $\sE_c$ denote the precosheaf of compactly supported sections of $E$:
to each open $U$, we assign $\sE_c(U) = \Gamma_c(U,E)$, and 
there is a natural extension-by-zero $\sE_c(U) \to \sE_c(V)$ whenever $U \subset V$.
This precosheaf satisfies that
\[
\sE_c( U_1 \sqcup U_2) \cong \sE_c(U_1) \oplus \sE_c(U_2)
\]
for any disjoint union of opens. 
Using the appropriate notion of tensor product, discussed below, 
one then sees that
\[
\Sym(\sE_c( U_1 \sqcup U_2)) \cong \Sym(\sE_c(U_1) \oplus \sE_c(U_2)) \cong \Sym(\sE_c(U_1)) \otimes \Sym(\sE_c(U_2)),
\]
which provides a natural map 
\[
\Sym(\sE_c(U_1)) \otimes \Sym(\sE_c(U_2)) \to\Sym(\sE_c(V))
\]
for any $V \supset U_1 \sqcup U_2$.
In this way, one shows that $\Sym(\sE_c)$ forms a prefactorization algebra.

A factorization algebra is a prefactorization algebra satisfying a local-to-global condition,
just as a sheaf is a presheaf satisfying one.
The primary difference in the conditions is that the notion of cover changes.

\begin{dfn}
A \emph{Weiss cover} $\{U_i\}_{i \in I}$ of an open $V$ is a collection 
of opens $U_i \subset V$ such that for any finite set of points $\{x_1,\ldots,x_n\} \subset V$,
there is some $U_i \supset \{x_1,\ldots,x_n\}$.
\end{dfn}

We will restrict our attention now to $\cC$ that are categories of cochain complexes of vector spaces.
(More generally, the definition below is well-behaved for cochain complexes on a Grothendieck abelian category.
See Appendix C of \cite{CG1}.)
Since we view quasi-isomorphic cochain complexes as equivalent 
(i.e., we are interested in the higher category arising from quasi-isomorphism as the notion of weak equivalence),
the local-to-global condition is a cochain refinement of the usual notion.

\begin{dfn}
A \emph{factorization algebra} is a prefactorization algebra $\cF$ such that
for any open $V$ and any Weiss cover $\{U_i\}_{i \in I}$ of $V$,
the natural map
\[
\check{C}(\{U_i\}_{i \in I},\cF) \to \cF(V)
\]
is a quasi-isomorphism.
(Here the left hand side denotes the \v{C}ech complex of $\cF$ on the cover.)
\end{dfn}

\subsection{A comment on functional analysis}
\label{sec functional analysis}

We are working throughout with infinite-dimensional vector spaces 
such as the space of smooth functions $C^\infty(X)$ on a smooth manifold.
Thus we need to be careful about issues such as tensor products and duals,
since the setting of plain vector spaces is not appropriate or adequate for our constructions.
Appendix B of \cite{CG1} describes a category of {\em differentiable vector spaces} well-suited to our setting, 
and it explains its relationship with other natural choices, 
such as locally convex topological vector spaces, bornological vector spaces, or convenient vector spaces.
The reader wishing for a discussion about the subtleties of constructing factorization algebras in such settings should look in~\cite{CG1}.

Here we simply state explicitly what we mean by duals and tensor product for the vector spaces with which we work.
These definitions are natural for both differential geometry and functional analysis.

Let $E \to X$ be a finite-rank vector bundle on a smooth manifold.
We use the following notations:
\begin{enumerate}
\item[(1)] the smooth sections are $\sE = \Gamma(X,E)$,
\item[(2)] the compactly supported smooth sections are $\sE_c = \Gamma_c(X,E)$,
\item[(3)] the distributional sections are $\Bar{\sE}$, and
\item[(4)] the compactly supported distributional sections are $\Bar{\sE}_c$.
\end{enumerate}
Let $E^! = E^\vee \otimes {\rm Dens}_X$ denote the vector bundle given by 
the tensor product of the fiberwise linear dual $E^\vee$ with the density line ${\rm Dens}_X$.
Then we write
\begin{enumerate}
\item[(1)] the smooth sections as $\sE^! = \Gamma(X,E^!)$,
\item[(2)] the compactly supported smooth sections as $\sE_c^! = \Gamma_c(X,E^!)$,
\item[(3)] the distributional sections are $\Bar{\sE}^!$, and
\item[(4)] the compactly supported distributional sections are $\Bar{\sE}_c^!$.
\end{enumerate}
Note that the vector bundle map $\ev: E^\vee \otimes E \to \underline{\CC}$ given by the fiberwise evaluation pairing 
induces a vector bundle map $\langle-,-\rangle_{fib}: E^! \otimes E \to {\rm Dens}_X$.
This pairing then extends a natural bilinear pairing 
\[
\begin{array}{cccc}
\langle-,-\rangle: &\Bar{\sE}^!_c \times \sE & \to & \CC \\
& (\lambda,f) & \mapsto & \int_X \langle \lambda, f\rangle_{fib}
\end{array}.
\]
There are clearly also versions for $\Bar{\sE}^! \times \sE_c$ or with distributional sections of $\sE$ and so on.

\begin{dfn}
We write $\sE^\vee$ for $\Bar{\sE}^!_c$ and call it the {\em dual} of $\sE$. 
We use $\langle-,-\rangle$ for the {\em evaluation pairing} $\ev: \sE^\vee \times \sE \to \CC$.
Similarly, we write $\sE_c^\vee$ for $\Bar{\sE}^!$, $(\Bar{\sE})^\vee$  for $\sE^!_c$, and $(\Bar{\sE}_c)^\vee$  for $\sE^!$.
\end{dfn}

Given $E \to X$ and $F \to Y$ finite-rank vector bundles on smooth manifolds,
let $E \boxtimes F \to X \times Y$ denote $\pi_X^*E \otimes \pi_Y^* F$,
i.e., the tensor product of the vector bundles pulled back along the projection maps 
$\pi_X: X \times Y \to X$ and $\pi_Y: X \times Y \to Y$.

\begin{dfn}
We write $\sE \otimes \sF$ for the smooth sections of $E \boxtimes F$ and call it the \emph{tensor product}.
Similarly, we write 
$\sE_c \otimes \sF_c$ for the compactly supported smooth sections of $E \boxtimes F$,
$\Bar{\sE} \otimes \Bar{\sF}$ for the distributional sections of $E \boxtimes F$, and
$\Bar{\sE}_c \otimes \Bar{\sF}_c$ for the compactly supported distributional sections of $E \boxtimes F$.
\end{dfn}

It makes sense to ask for sections of $E \boxtimes F$ that are distributional in the $X$-direction but smooth in the $Y$-direction,
and we write $\Bar{\sE} \otimes \sF$ for this space.

\begin{dfn}
For a $\ZZ$-graded vector bundle $E \to X$, the {\em algebra of functions} on $\sE$ is
\[
\Sym(\sE^\vee) := \bigoplus_{n \geq 0} ((\Bar{\sE}_c^!)^{\otimes n})_{S_n}.
\]
The {\em completed} algebra of functions on $\sE$ is
\[
\cSym(\sE^\vee) := \prod_{n \geq 0} ((\Bar{\sE}_c^!)^{\otimes n})_{S_n}.
\]
\end{dfn}

In particular, an element $f$ of the $n$th symmetric power $\Sym^n(\sE)$ 
can be identified with a compactly supported distributional section of $\Gamma(X^n, (E^!)^{\boxtimes n})$
that is invariant under the natural permutation action of~$S_n$.

Note that these definitions make it straightforward to express the Chevalley-Eilenberg cochains
of dg Lie algebras like $\DD\fgn^S$, whose underlying graded vector spaces are of the type described here.

\subsection{The non-equivariant classical observables}

We begin by defining the classical observables on a fixed source.

\begin{dfn}
The \emph{classical observables} $\Obs^{\cl}_n(S)$ 
for the rank $n$ formal $\beta\gamma$ system on the Riemann surface $S$ 
is the completed algebra of functions on the space of fields
\[
\Omega^{0,*}(S)^{\oplus n} \oplus \Omega^{1,*}(S)^{\oplus n} = (\DD \fgn^S)[1]
\]
equipped with the differential given by extending $\dbar$ as a derivation.
Hence
\[
\Obs^{\cl}_n(S) = \clies(\DD \fgn^S),
\]
where the Chevalley-Eilenberg cochains are constructed using the appropriate versions of dual and tensor product.
\end{dfn}

Explicitly, the underlying graded algebra is
\[
\cSym(\Bar{\Omega}^{1,*}_c(S)^{\oplus n}[1] \oplus \Bar{\Omega}^{0,*}_c(S)^{\oplus n}[1]).
\]
The differential can be understood explicitly as follows.
For some $n$-fold tensor product of linear functionals on the fields
\[
a = \alpha_1 \otimes \cdots \otimes \alpha_n,
\]
we have
\[
\dbar(a) = (\dbar \alpha_1) \otimes \cdots \otimes \alpha_n \pm \alpha_1 \otimes (\dbar \alpha_1) \otimes \cdots \otimes \alpha_n + \cdots \pm\alpha_1 \otimes \cdots \otimes (\dbar \alpha_n).
\]
This differential is equivariant with respect to the permutation action of the symmetric group $S_n$ and 
hence induces a differential on the $n$th symmetric power.

It is manifest that these observables are natural with respect to holomorphic embeddings.
That is, given a holomorphic embedding $i: S \hookrightarrow S'$,
there is a natural extension map
\[
i_*: \Obs^\cl_n(S) \to \Obs^\cl_n(S')
\]
that is naturally induced by the restriction map of fields
\[
i^*: \DD\fgn^{S'} \to \DD\fgn^{S}.
\]
Indeed, we have a factorization algebra on any Riemann surface by Theorem 5.2.1 of \cite{CG1}.
For the purpose of extracting the vertex algebra, it will suffice to focus on $S = \CC$ and not consider all Riemann surfaces at the same time.

\begin{dfn}
Let $\Obs^\cl_n$ denote the factorization algebra on $\CC$ of classical observables for the rank $n$ formal $\beta\gamma$ system.
\end{dfn}

We remark that as $\GL_n(\CC)$ acts naturally on $\CC^n \cong \fgn[1]$,
it also acts naturally on $\Obs^\cl_n(S)$ for any Riemann surface $S$.
This action manifestly respects the differential $\dbar$, 
which only depends on the source $S$ and not on the target $\hD^n$.

\subsection{The non-equivariant quantum observables}

The BV formalism suggests that the quantum observables on $S$ should arise by 
\begin{enumerate}
\item[(a)] tensoring the underlying graded vector space of $\Obs^\cl_n$ with $\CC[[\hbar]]$ and
\item[(b)] modifying the differential to $\dbar +\hbar \Delta$, where $\Delta$ is the BV Laplacian.
\end{enumerate}
This suggestion does not work because $\Delta$ is not defined on all of the observables;
the naive formula involves an ill-defined pairing of distributions.
There are two ways to circumvent this difficulty. 
First, one can work with a smaller class of observables --- such as those arising from smooth functionals, not distributional ones --- and this approach is developed in detail for the free $\beta\gamma$ system in Chapter 5, Section 3 of \cite{CG1}.
(We discuss this approach in Section \ref{noneqsec}, where we also show the two approaches agree.) 
Second, one can mollify $\Delta$ instead.
This approach is developed in a very broad context in Chapter 9 of \cite{CG2},
and we have encountered it already in the scale $L$ BV Laplacians $\Delta_L$.
These two approaches provide quasi-isomorphic factorization algebras, 
as we show in Proposition~\ref{equiv of noneq}. The second approach is what we will explain here, as it is the one that extends to the equivariant setting.

Before delving into the machinery necessary to define a factorization algebra of quantum observables,
let us note that we have a working description of the global observables on~$\CC$.

\begin{dfn}
The \emph{global scale $L$ quantum observables for the rank $n$ formal $\beta\gamma$ system} has
underlying graded vector space
\[
\cSym(\Bar{\Omega}^{1,*}_c(\CC)^{\oplus n}[1] \oplus \Bar{\Omega}^{0,*}_c(\CC)^{\oplus n}[1])[[\hbar]]
\]
with differential $\dbar + \hbar \Delta_L$. 
We denote it $\Obs^\q_n[L](\CC)$.
\end{dfn}

The quantum observables are isomorphic for any choice of length scale.
In fact, the RG flow provides an explicit isomorphism $W_\epsilon^L: \Obs^\q_n[\epsilon](\CC) \to \Obs^\q_n[L](\CC)$ as follows: 
given an observable $f$ at scale $\epsilon$, let $W_\epsilon^L(f)$ denote the observable such that
\[
W(P_\epsilon^L, \delta f) = \delta W_\epsilon^L(f)
\]
where $W(P_\epsilon^L, -)$ is the RG flow operation defined earlier on action functionals, 
$\delta^2=0$,  and $|\delta| = - |f|$. 
(In other words, this map arises by taking the ``derivative of RG flow'' $W(P_\epsilon^L, -)$.)

The basic approach used here works in general, 
except that we will need to work with a more flexible notion of ``length scale'':
we need to allow arbitrary parametrices for $\dbar$.
(Indeed, when interactions are included, the RG flow map $W_\epsilon^L$ does not preserve support conditions on observables,
because the heat kernel has support everywhere.) 
After reviewing this machinery, we use it to define the factorization algebra of quantum observables.

\subsubsection{Recap of parametrices and BV Laplacians}
\label{parametrices}

We give here the specialization to our situation of the general definition from Chapter 8, Section 2.4 of \cite{CG2}.
Recall that we are working on $\CC$ with its standard, Euclidean metric.
Let $\dbar^*$ denote the Hodge dual operator to $\dbar$ with respect
to this metric. It is our choice of ``gauge-fix,'' in the terminology of \cite{CosBook}.
Let $\triangle = [\dbar,\dbar^*]$ denote the Hodge Laplacian on Dolbeault forms.

We remark on our convention for integral kernels.
Given a continuous linear endomorphism $P$ of $\Omega^{0,*}(\CC)$, 
we use $K_P$ to denote the \emph{integral kernel} for $P$:
it is the section of $\Omega^{0,*}(\CC) \widehat{\otimes}_\pi \Bar{\Omega}^{1,*}_c(\CC)$ such that
\[
(P\alpha)(z) = \int_{w \in \CC} \langle K_P(z,w), \alpha(w)\rangle_w,
\]
where the BV pairing is along the $w$-direction.
(We remark that Schwarz's kernel theorem tells us where the integral kernel lives depending upon the domain and range of the continuous linear operator.  
Here $\Bar{\Omega}^{1,*}_c(\CC)$ appears because the domain is the continuous linear dual space~$\Omega^{0,*}(\CC)$.)

\begin{dfn}
A {\em parametrix} for $\dbar$ on $\Omega^{0,*}(\CC)$ is a distributional section $\Phi$ of
$\Omega^{1,*}(\CC \times \CC)$ such that
\begin{enumerate}
\item $\Phi$ has cohomological degree one,
\item $\Phi$ is symmetric with respect to the $S_2$ action,
\item $\Phi$ has proper support with respect to the two projection maps from $\CC^2$ to $\CC$,~and
\item $(\triangle \otimes \id) \Phi - K_{\id}$ is a smooth section of $\Omega^{1,*}(\CC \times \CC)$,
where $K_{\id}$ is the integral kernel for the identity operator with respect to the BV pairing.
\end{enumerate}
Let ${\rm Param}$ denote the set of parametrices.
\end{dfn}

There is a natural partial ordering on ${\rm Param}$ by support:
$\Psi \leq \Phi$ if $\supp(\Psi) \subset \supp(\Phi)$.

We remark that the integral kernel $\Psi = \int_0^L K_t^{an}$, using the analytic heat kernel from Section \ref{sec prequant},
satisfies all these conditions except proper support. 
It is, in fact, supported everywhere on $\CC^2$.
(It is thus an ``almost-parametrix.'')
One can easily obtain a parametrix from $\Psi$ as follows: 
pick a smooth function $f$ on $\CC^2$ that is 1 in a neighborhood of the diagonal and vanishes sufficiently far from the diagonal,
and consider $f \Psi$.
This construction will allow us to translate between results written in terms of heat kernels (i.e., length scale)
and those written in terms of parametrices.

\begin{rmk}
Above we only define parametrices for $\dbar$. 
Each $\Phi \in {\rm Param}$ automatically determine a parametrix for the rank $n$ formal $\beta\gamma$ system
by taking $\Phi \otimes (\id_{\fgn} + \id_{\fgn^\vee})$.
Given this relationship, we will not overload the notation and use $\Phi$, 
with the implicit understanding that the algebraic factor is included in the rank $n$ case.
\end{rmk}

We now define versions of the propagator and BV Laplacian for each parametrix $\Phi$,
analogous to $P_\epsilon^L$ and $\Delta_L$ from earlier.
Note that for the rank $n$ formal $\beta\gamma$ system, 

\begin{dfn}
Given a parametrix $\Phi$, the \emph{$\Phi$-propagator} is the integral kernel
\[
P_{\Phi}=\frac{1}{2} (\dbar^* \otimes \id + \id \otimes \dbar^*)\Phi.
\]
Let $\kappa_\Phi$ denote the integral kernel $K_\id - (\dbar \otimes \id + \id \otimes \dbar)P_\Phi$.
\end{dfn}

The crucial point here is that the kernel $\kappa_\Phi$ is smooth. Moreover, it is the analog of the analytic heat kernel $K_t$ from earlier.

We are now in a position to define a mollified BV Laplacian.

\begin{dfn}
The \emph{$\Phi$-BV Laplacian} $\Delta_\Phi$ is the operator $\partial_{\kappa_\Phi}$.
That is, it is the endomorphism of $(\Obs^\cl_n(\CC))^\sharp$ --- the underlying graded algebra of observables --- given by contracting with $\kappa_\Phi$. 
\end{dfn}

For clarity's sake, let us describe this operator explicitly.
Given $a$ in the $n$th symmetric power of the observables, pick a lift $\tilde{a}$ to the $n$th tensor power.
Then 
\[
(\partial_{\kappa_\Phi}a)(x) = \tilde{a}(\kappa_\phi \otimes x \otimes \cdots \otimes x),
\]
where we insert $n-2$ copies of $x$ on the right hand side.

These definitions allow one to define effective field theories, but with length scale replaced by a choice of parametrix.
For a full treatment, see Section 8.2.9 of \cite{CG2}.
The essential changes are that 
\begin{itemize}
\item RG flow from $\Phi$ to $\Psi$ is given by $W(P_\Phi - P_\Psi,-)$, using the same Feynman diagram expansion, and
\item the same local functional should be recovered in the limit as the support of the parametrices goes to the small diagonal.
\end{itemize} 
We can obtain such an effective field theory from the length scale version 
by RG flow to any parametrix from a fixed almost-parametrix for some length scale.

\subsubsection{Observables}

We can now mimic the scale $L$ definition of global observables.

\begin{dfn}
For a parametrix $\Phi$, the  \emph{global $\Phi$-quantum observables for the rank $n$ formal $\beta\gamma$ system} 
has underlying graded vector space  
\[
\cSym(\Bar{\Omega}_c^{1,*}(\CC)^{\oplus n}[1] \oplus \Bar{\Omega}^{0,*}_c(\CC)^{\oplus n}[1])[[\hbar]]
\]
with differential $\dbar + \hbar \Delta_\Phi$. 
We denote it $\Obs^\q_n[\Phi](\CC)$.
\end{dfn}

Again, Lemma 9.3.1.2 of \cite{CG2} shows that the quantum observables are isomorphic for any choice of parametrix.
The isomorphism is explicitly given by the ``derivative of the RG flow'':
for $f \in \Obs^\q_n[\Phi](\CC)$, set
\[
W_\Phi^\Psi(f) = \frac{\partial}{\partial \delta}(W(P_\Psi - P_\Phi, \delta f)) 
\]
in $\Obs^\q_n[\Psi](\CC)$.
Because our parametrices have proper support, though, they only expand the support of an observable $f$ by a controlled amount 
(essentially determined by how the size of the parametrix's support).

\begin{dfn}
Let $\Obs^\q_n(\CC)$ denote the cochain complex of quantum observables up to isomorphism.
That is, an \emph{observable} $f \in \Obs^\q_n(\CC)$ is a family of elements $\{f[\Phi]\}_{\Phi \in {\rm Param}}$ for every parametrix $\Phi$ 
such that the $f[\Psi] = W_\Phi^\Psi(f[\Phi])$ for any pair of parametrices $\Psi$ and $\Phi$.
\end{dfn}

\begin{dfn}
Let $f$ be an observable in $\Obs^\q_n(\CC)$ and denote its Taylor expansion by
\[
f = \sum_{j,k \geq 0} \hbar^j f_{j,k},
\]
with $f_{j,k}$ in the $k$th symmetric power. 
We say that $f$ has \emph{support in $U \subset \CC$} if 
for every $(j,k)$, there is some compact subset $C \subset U^k$ and some parametrix $\Phi$
such that $\supp(f_{j,k}[\Psi]) \subset C$ for all $\Psi \leq \Phi$.
\end{dfn}

By Lemma 9.4.0.2 of \cite{CG2}, the graded vector space $\Obs^\q_n(U)$ of observables with support in $U$
is preserved by the differential $\dbar + \hbar \Delta$ and hence provides a sub-complex of $\Obs^\q_n(\CC)$.
The remainder of Chapter 9 of \cite{CG2} shows that these naturally form a factorization algebra.

\begin{dfn}\label{noneq quantum obs}
Let $\Obs^\q_n$ denote the factorization algebra on $\CC$ of quantum observables for the rank $n$ formal $\beta\gamma$ system.
\end{dfn}

We remark again that $\GL_n(\CC)$ acts naturally on $\CC^n \cong \fgn[1]$
and on its linear dual so as to preserve the evaluation pairing.
Hence $\GL_n(\CC)$ also acts naturally on $\Obs^\q_n(U)$ for any open $U \subset \CC$.
This action respects the differential $\dbar + \hbar \Delta_\Phi$, 
since $\dbar$ only depends on the source $U$ and not on the target $\hD^n$
and $\Delta_\Phi$ depends on the target only through the evaluation pairing.

\subsection{The $\Vect$-equivariant classical observables}

We discussed in Section 3 \ref{classicalvectaction} that 
diffeomorphisms on the target of the curved $\beta\gamma$ system 
naturally act on the fields by post-composition.
In Section 3.3 we gave an efficient description of this action for the formal $\beta\gamma$ system
via an $\L8$-action of $\Vect$ on $\DD\fgn^U$ for any open $U \subset \CC$.
This action then determines an $\L8$-action of $\Vect$ on $\Obs^\cl_n(U) = \clies(\DD\fgn^U)$
and hence a cochain complex $\clies(\Vect,\Obs^\cl_n)$.
In other words, by the yoga of Koszul duality, 
this action can be encoded as a modification of the differential on the tensor product $\clies(\Vect) \otimes \Obs^\cl_n$.
By Lemma \ref{Noether} we know that $\{I^{\rm W},-\}$ provides this twisting of the differential.
Since $I^{\rm W}$ is a local functional, this modified differential is still local in the source manifold $\CC$
and thus respects the structure maps of the factorization algebra.
The following definition gathers together these observations.

\begin{dfn}
The \emph{factorization algebra of $\Vect$-equivariant classical observables} on $\CC$ is
\[
\eqObs^\cl_n = \clies(\Vect, \Obs^\cl_n).
\]
The underlying graded vector space is
\[
\cSym(\Vect^\vee[-1]) \otimes \cSym((\Bar{\Omega}^{1,*}_c)^{\oplus n}[1] \oplus (\Bar{\Omega}^{0,*}_c)^{\oplus n}[1])
\]
with differential $\d_{\clies(\Vect)} + \dbar + \{I^{\rm W},-\}$.
\end{dfn}

Note that in $I^{\rm W}$, the dependence on the vector field $X \in \Vect$ is linear. 
Hence $\Obs^\cl_n$ has a strict Lie algebra action of $\Vect$, not a complicated $\L8$-action.
In light of the remarks following Definition \ref{noneq quantum obs}, we see the following,
which we record as a lemma for use when applying Gelfand-Kazhdan descent in Section \label{sec ss GK descent}.

\begin{lemma}
The classical observables $\Obs^\cl_n$ are a representation of the Harish-Chandra pair~$(\Vect,\GL_n)$.
In particular $\GL_n$ acts by (strict) automorphisms of the factorization algebra, and
$\Vect$ acts by (strict) derivations of the factorization algebra. Via
restriction along $p : (\TVect, \GL_n) \to (\Vect, \GL_n)$ the
classical observables $\Obs^{\cl}_n$ are also a representation for the pair
$(\TVect, \GL_n)$. 
\end{lemma}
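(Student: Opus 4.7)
The plan is to verify the three claims in order: first that $\GL_n$ acts strictly, second that $\Vect$ acts strictly by derivations compatibly with the $\GL_n$-action, and finally that the restriction along $p$ gives a $(\TVect,\GL_n)$-action.

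For the $\GL_n$-action, I would start from the defining representation on $\CC^n \cong \fgn[1]$ together with its dual action on $\fgn^\vee[-1]$. This extends pointwise to a strict action on $\Omega^{0,*}(U)^{\oplus n} \oplus \Omega^{1,*}(U)^{\oplus n}$ preserving the shifted pairing (\ref{pairing}) and commuting with $\dbar$ (since $\dbar$ only sees the source). Dualizing and taking symmetric algebras gives a strict action of $\GL_n$ on $\Obs^\cl_n(U) = \clies(\DD\fgn^U)$ as a dg commutative algebra; the action is compatible with extension of support maps because the $\GL_n$-action is defined fiberwise on the target $\CC^n$ and does not interact with the source manifold. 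Hence we obtain strict automorphisms of the factorization algebra.

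For the $\Vect$-action, the key point is Lemma \ref{Noether}: the $\L8$-action of $\Vect$ on $\DD\fgn^S$ is encoded by the local functional $I^{\rm W}$, which is \emph{linear} in $X \in \Vect$. Consequently $\{I^{\rm W}_X,-\}$ is an honest (strict) derivation of the BV bracket on $\Obs^\cl_n(U)$, and the Maurer-Cartan equation (\ref{MC1}), which for linear-in-$X$ functionals reduces to $\{I^{\rm W}_X, I^{\rm W}_Y\} = I^{\rm W}_{[X,Y]}$, shows that $X \mapsto \{I^{\rm W}_X,-\}$ is a strict map of Lie algebras $\Vect \to \Der(\Obs^\cl_n(U))$. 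Because $I^{\rm W}_X$ is a \emph{local} functional on $\DD\fgn^U$, the derivation $\{I^{\rm W}_X,-\}$ preserves the sub-precosheaf structure (support is preserved under the BV bracket with a local functional), so these derivations intertwine with the factorization structure maps. Compatibility with $\GL_n$ amounts to checking that the linear subspace $\gl_n \subset \Vect$ acts on $\Obs^\cl_n$ via the derivative of the $\GL_n$-action defined in the previous paragraph, and that $\GL_n$ acts on $\Vect$ by the obvious change-of-frame action in a way equivariant for the bracket with $I^{\rm W}$; both are immediate from the explicit formula (\ref{eqn noether}).

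Finally, for the extension to $(\TVect, \GL_n)$, we use that $p : \TVect \to \Vect$ is a strict map of Lie algebras fitting into a morphism of Harish-Chandra pairs. Pullback along $p$ of the Lie algebra action on $\Obs^\cl_n$ defines a strict $\TVect$-action in which the central $\hOmega^2_{n,cl}$-factor acts by zero; compatibility with the (unchanged) $\GL_n$-action is inherited from the $(\Vect, \GL_n)$ case. No obstruction appears here since we are simply restricting an existing action along a Lie algebra morphism.

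The main conceptual point — and the only step that is not formal — is the linearity of $I^{\rm W}$ in $X$, which reduces the a priori $\L8$-action of $\Vect$ to a strict Lie algebra action and ensures all brackets on the observables side are derivations rather than coherent homotopies. Everything else is bookkeeping, coupled with the fact that locality of $I^{\rm W}$ guarantees compatibility with the factorization structure.
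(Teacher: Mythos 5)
Your proposal is correct and follows the same route the paper takes: the paper justifies this lemma by the remarks immediately preceding it, namely that the $\GL_n$-action on $\Obs^\cl_n$ comes from the defining representation on $\CC^n \cong \fgn[1]$ and commutes with $\dbar$, and that the linearity of $I^{\rm W}$ in $X \in \Vect$ upgrades the a priori $\L8$-action to a strict Lie algebra action by derivations, with the $(\TVect,\GL_n)$-structure obtained by restriction along $p$. You have simply written out in more detail what the paper leaves as a two-sentence observation, and you correctly isolate linearity of $I^{\rm W}$ as the one non-formal ingredient.
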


\subsection{The $\TVect$-equivariant quantum observables}

The construction of the $\TVect$-equivariant quantum observables is straightforward,
given the work we did in Section \ref{sec equiv bv}.
The logic is analogous to the case of classical observables:
we encode the $\L8$-action of $\TVect$ on observables in the differential.

\begin{dfn}
The \emph{factorization algebra of $\TVect$-equivariant quantum observables} on $\CC$ is  
\[
\eqObs^\q_n = \clies(\TVect, \Obs^\cl_n)[[\hbar]].
\]
The underlying graded vector space is
\[
\cSym(\TVect^\vee[-1]) \otimes \cSym((\Bar{\Omega}^{1,*}_c)^{\oplus n}[1] \oplus (\Bar{\Omega}^{0,*}_c)^{\oplus n}[1])[[\hbar]]
\]
with differential 
\[
\d_{\clies(\TVect)} + \dbar + \{I^{\rm W,0},-\} + \hbar \Delta + \hbar \{I^{\rm W,1}+J,-\},
\]
to give an explicit description.
\end{dfn}

For clarity's sake let us point out that this means that for each parametrix $\Phi$, we have global observables
\[
\cSym(\TVect^\vee[-1]) \otimes \cSym(\Bar{\Omega}^{1,*}_c(\CC)^{\oplus n}[1] \oplus \Bar{\Omega}^{0,*}_c(\CC)^{\oplus n}[1])
\]
with differential 
\[
\d_{\clies(\TVect)} + \dbar + \{I^{{\rm W},0}[\Phi],-\}_\Phi + \hbar \Delta_\Phi + \hbar \{I^{{\rm W},1}[\Phi]+J[\Phi],-\}_\Phi.
\]
These observables are isomorphic for all choices of parametrix,
so that our notation in the definition should be unambiguous.
Moreover, we find that the notion of support for an observable is well-behaved 
and so we can talk about the observables with support in a fixed open $U$,
thus obtaining a factorization algebra.

\begin{rmk}
Working over the base ring $\clies(\TVect)$ amounts to a version of the background field method,
where we view the quantum action functional (encoded in the differential) as depending on a choice of vector field and closed 2-form,
i.e., an element of $\TVect$.
In a sense we see that after quantizing, we obtain extended symmetries
of the theory which we have already seen coincide with those of the
physical curved $\beta\gamma$ system.
\end{rmk}

By contrast to the classical case, the quantum observables $\Obs^\q_n$ do not have a strict Lie action of $\TVect$.
The $\hbar$-term $I^{{\rm W},1}$ is not linear in $\Vect$ and has
contributions of every even power.
Thus we cannot apply strict Gelfand-Kazhdan descent for
$(\TVect,\GL_n)$. We have already observed that when restricted to
linear vector fields $\gl_n \hookrightarrow \TVect$ that the anomaly
vanishes. Thus, the quantum observables $\Obs^\q_n$ admit a {\it strict} action by~$\gl_n$.

\subsection{An aside on the two versions of non-equivariant observables}
\label{noneqsec}

As mentioned earlier, there is another approach to constructing the non-equivariant factorization algebra of observables
for the formal $\beta\gamma$ system, which is developed in \cite{CG1}.
We sketch it briefly here and prove that it is quasi-isomorphic to the observables described above.

Thus, the key idea is to work with observables built out of smooth or smeared distributions.
By contrast, the observables already introduced live in a completed symmetric algebra of distributions 
(more precisely, the distributions dual to Dolbeault forms),
and the need for parametrices is due to inability to apply the BV Laplacian to such distributions,
since distributions do not always pair.

Here is a concrete example of replacing distributions with smeared versions.
Consider the delta-function 
\[
\delta_0: \gamma \mapsto \gamma(0).
\]
Now pick a compactly-supported smooth function $f: (0,1) \to \RR$ such that $\int_\RR f(t) \, \d t = 1$.
Then a smeared version is
\[
\tilde{\delta}_0: \gamma \mapsto \frac{1}{2\pi i}\int_{r = 0}^1 \int_{|z|=r} \frac{\gamma(z)}{z} \d z\, f(r) \,\d r,
\]
which agrees with $\delta_0$ if $\gamma$ is holomorphic, by Cauchy's theorem.
In particular, in the cochain complex $\Bar{\Omega}^{1,*}_c(\CC)[1]$, 
these distributions $\delta_0$ and $\tilde{\delta}_0$ are cohomologous $0$-cocycles.

\def\fr{{\rm fr}}

\begin{dfn}
The \emph{smeared quantum observables for the rank $n$ formal $\beta\gamma$ system} 
with support in the open $U \subset \CC$ has underlying graded vector space
\[
\cSym({\Omega}^{1,*}_c(U)^{\oplus n}[1] \oplus {\Omega}^{0,*}_c(U)^{\oplus n}[1])[[\hbar]]
\]
with differential $\dbar + \hbar \Delta$. 
We denote it $\Obs^{\q,\fr}_n(U)$.
\end{dfn}

As the observables are built out of smooth sections, the ``naive'' BV Laplacian $\Delta = \partial_{K_{id}}$ is well-defined.
We view this operator as the BV Laplacian ``at scale zero," since $K_{id}$ is the distributional limit of the $K_L$.
Moreover, since $\Delta$ is fully local, these smeared observables automatically form a factorization algebra,
with no need to discuss support issues.

This construction raises the question of how the smeared observables compare to the observables from Definition \ref{noneq quantum obs}.
They are, in fact, quasi-isomorphic factorization algebras, but the quasi-isomorphism is built in two steps.
First, on smeared observables, the RG flow operator makes sense from ``scale zero" to an arbitrary parametrix $\Phi$:
\[
W_0^\Phi: \Obs^{\q,\fr}_n(\CC) \to \Obs^{\q,\fr}_n[\Phi](\CC)
\]
where the target $\Phi$-observables consists of the same graded vector space of smeared observables 
but with differential $\dbar +\hbar \Delta_\Phi$.
This map is an isomorphism of cochain complexes with inverse $W_\Phi^0$.
(It does affect support of observables, but we say an observable $f \in \Obs^{\q,\fr}_n[\Phi](\CC)$ is supported in an open set $U$
if $W_\Phi^0(f)$ is supported in $U$.)
Second, consider the inclusion
\[
i[\Phi]: \Obs^{\q,\fr}_n[\Phi](\CC) \hookrightarrow \Obs^{\q}_n[\Phi](\CC),
\]
arising from the inclusion of smooth sections into distribution sectionals. 
This map is a quasi-isomorphism: the spectral sequence arising from the $\hbar$-filtration is an isomorphism on the first page.
The composite $i \circ W_0^\Phi$ thus defines a quasi-isomorphism of cochain complexes,
and it intertwines support conditions, thus extending to a map $i: \Obs^{\q,\fr}_n \to \Obs^{\q}_n$.
Hence we have proved the following.

\begin{prop}
\label{equiv of noneq}
The map $i: \Obs^{\q,\fr}_n \to \Obs^{\q}_n$ is a quasi-isomorphism of factorization algebras.
\end{prop}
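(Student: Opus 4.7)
The goal is to show that the map $i : \Obs^{\q,\fr}_n \to \Obs^{\q}_n$, built as the composite $i[\Phi]\circ W_0^\Phi$ followed by identifying source and target across parametrices, is a quasi-isomorphism of factorization algebras. Since both sides are already established to be factorization algebras, it suffices to check that $i$ (a) is compatible with the factorization structure maps, (b) respects the support conditions so that it restricts to each open $U\subset\CC$, and (c) induces a quasi-isomorphism of cochain complexes $\Obs^{\q,\fr}_n(U)\to\Obs^{\q}_n(U)$ for every open $U$. Points (a) and (b) are essentially formal: $W_0^\Phi$ is built from the propagator $P_\Phi$, which has proper support in an arbitrarily small neighborhood of the diagonal as $\Phi$ shrinks, so by tracking support through $W_\Phi^0$ one sees that an element supported in $U$ on the smeared side lands in the subcomplex of observables supported in $U$ on the distributional side. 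Compatibility with structure maps is immediate since $W_0^\Phi$ and $i[\Phi]$ commute with extension by zero along inclusions $U\hookrightarrow V$.

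For the quasi-isomorphism (c), my plan is to exploit the $\hbar$-adic filtration $F^p = \hbar^p\cdot\Obs(U)[[\hbar]]$ on both sides. Both BV Laplacians $\Delta$ and $\Delta_\Phi$ are multiplication by $\hbar$ times a fixed endomorphism, so on the associated graded the differentials reduce to the $\dbar$-operator alone. The spectral sequence is of a classical complete filtration on a complete module, and after checking boundedness in each symmetric-power degree (which is fine since each symmetric power is $\hbar$-complete), standard convergence arguments apply. Thus it suffices to prove that the map of $E_1$-pages
\[
\cSym\bigl(\Omega^{1,*}_c(U)^{\oplus n}[1]\oplus\Omega^{0,*}_c(U)^{\oplus n}[1]\bigr) \longrightarrow \cSym\bigl(\Bar{\Omega}^{1,*}_c(U)^{\oplus n}[1]\oplus\Bar{\Omega}^{0,*}_c(U)^{\oplus n}[1]\bigr),
\]
each side equipped solely with the $\dbar$-differential, is a quasi-isomorphism.

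By a Künneth argument (the differential is a graded derivation and each summand is a nuclear Fréchet or LF-space to which Künneth applies with the tensor products we use), the claim reduces to showing that the inclusions $\Omega^{0,*}_c(U)\hookrightarrow\Bar{\Omega}^{0,*}_c(U)$ and $\Omega^{1,*}_c(U)\hookrightarrow\Bar{\Omega}^{1,*}_c(U)$ are quasi-isomorphisms of Dolbeault complexes for every open $U\subset\CC$. This is a classical consequence of elliptic regularity for the $\dbar$-operator on a Riemann surface: both complexes compute the compactly supported Dolbeault cohomology of $U$ (equivalently $H^*_c(U,\cO)$ and $H^*_c(U,\Omega^{1,hol})$), and the inclusion of smooth into distributional sections is a quasi-isomorphism because every distributional $\dbar$-closed current is cohomologous in the compactly supported complex to a smooth form. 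This is the essential content of the smoothing argument for $\dbar$, and it can be established by convolving with a compactly supported mollifier and using that $\dbar$ commutes with convolution, together with standard exhaustion arguments on $U$.

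The main obstacle I anticipate is the tensor-product bookkeeping: we must ensure that the Künneth step is valid in the symmetric monoidal category of differentiable (equivalently, convenient) vector spaces used in Appendix B of \cite{CG}, and that the $\hbar$-adic spectral sequence genuinely converges despite the completion in $\cSym$. Both issues are handled by observing that the completion is a product over symmetric-power degree, each summand of which is a well-behaved topological vector space where Künneth holds and where the spectral sequence is concentrated in bounded $\hbar$-degree at each stage; one then passes to the product. Everything else is a combination of standard BV/parametrix manipulations already developed in Chapter 9 of \cite{CG} and the elliptic regularity of $\dbar$.
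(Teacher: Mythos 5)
Your proposal is correct and follows essentially the same route as the paper: factor the map through the parametrix-dependent smeared observables via the RG-flow isomorphism $W_0^\Phi$, then show the inclusion of smooth into distributional sections is a quasi-isomorphism by the $\hbar$-filtration spectral sequence, whose first page reduces to the $\dbar$-complexes where the Atiyah--Bott/elliptic-regularity argument applies. You simply spell out in more detail (K\"unneth over symmetric powers, convergence of the filtration) what the paper compresses into the statement that the spectral sequence is an isomorphism on the first page.
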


\section{Semi-strict Gelfand-Kazhdan descent}
\label{sec ss GK descent}

In Section \ref{sec formal}, we have seen that there is a dg Lie algebra $\DD \fgn^S$ 
encoding the $\beta\gamma$ system with target the formal disk $\hD^n$, 
and we have seen that this dg Lie algebra has a natural action of $\GL_n$ and has a natural $\L8$-action of $\Vect$. 
We might hope that the curved $\beta\gamma$ system with target a complex $n$-manifold $X$ could be obtained 
by applying Gelfand-Kazhdan descent to this dg Lie algebra.
This hope is not misplaced, as we'll see, 
but it requires generalizing the formalism of Harish-Chandra descent to allow for $\L8$-actions of the Lie algebra.

In this section we develop this formalism along the lines of our treatment of descent in Part I,
but we develop the minimum necessary to realize our primary goal and hence leave untreated many interesting questions
(such as allowing Harish-Chandra pairs in which the Lie algebra is replaced by an $\L8$ algebra).
Nonetheless, our techniques should apply to a broad collection of situations, 
notably to constructing the perturbative part of a nonlinear $\sigma$-model using BV quantization.
Indeed, much of what we do is a re-articulation of the methods of Kontsevich, Cattaneo-Felder, and many others, 
that is compatible with the machinery of \cite{CG1, CG2}.
We finish by explaining how our methods recover Costello's approach to the curved $\beta\gamma$ system in \cite{WG2}.
(His use of $\L8$-spaces, however, allows for more exotic targets than just complex manifolds, though.)

\begin{rmk}
As our particular examples are explicit, we are able to get away with 
a modest and quite limited generalization of Gelfand-Kazhdan descent for derived objects.
There should be a full-fledged derived version. 
(Parts of \cite{CPTVV} can be seen as a giant step in that direction.)
\end{rmk}

\subsection{Semi-strict modules}

We continue to work with Harish-Chandra pairs $(\fg,K)$, as in Part 1,
so $\fg$ is a Lie algebra and $K$ is a Lie group 
along with an action $\rho$ of $K$ on $\fg$ and an inclusion of Lie algebras $i: \Lie(K) \hookrightarrow \fg$
so that the Lie algebra action determined by $i$ agrees with the differential of the group action $\rho$.

\begin{dfn} 
\label{dfn ss HC mod}
A {\em semi-strict Harish-Chandra module} for the pair $(\fg,K)$ is a dg vector space $(V,\d_V)$ equipped with
\begin{itemize}
\item[(i)] a strict group action $\rho^K_V$ of $K$, meaning a group map 
\ben
\rho^K_{V^d} : K \to \GL(V^d)
\een 
for each degree $d$ such that the product map $\prod_d \rho^K_{V^d}: K \to \prod_d \GL(V^d)$ commutes with the differential~$\d_V$;
\item[(ii)] an $L_\infty$-action of $\fg$ on $V$, i.e., a map of $L_\infty$-algebras $\rho^\fg_V : \fg \l8to \End(V)$,
such that the composite 
$$\cliels(\rho^\fg_V) \circ \cliel_*(i): \cliels(\Lie(K)) \to \cliels(\End(V))$$
equals the map 
$$\cliel_*(D\rho^K_V): \cliels(\Lie(K)) \to \cliels(\End(V)).$$ 
\end{itemize}
Here $D \rho^K_V : \Lie(K) \to \End(V)$ is the differential of the strict $K$-action and $i: \Lie(K) \to \fg$ is part of the data of the Harish-Chandra pair $(\fg,K)$.
\end{dfn}

We call this \emph{semi-strict} because we allow an $\L8$-action of $\fg$,
but our other conditions are quite strict.
This definition, while {\it ad hoc}, is nonetheless well-suited to our situation.

\begin{dfn}
A {\em map of semi-strict Harish-Chandra modules} 
\[
f: (V,\rho^K_V,\rho^\fg_V) \to (W,\rho^K_W,\rho^\fg_W)
\]
consists of
\begin{itemize}
\item[(i)] a cochain map $f^K: V \to W$ that (strictly) intertwines the $K$-actions and
\item[(ii)] a map of $\cliel_*(\fg)$-comodules
\[
f^\fg: \cliel_*(\fg,V) \to \cliel_*(\fg,W),
\]
such that the composites
\[
\cliels(D\rho^K_W) \circ \cliels(f^K): \cliels(\Lie(K),V) \to \cliel_*(\fg,W)
\]
and
\[
f^\fg \circ \cliels(D\rho^K_V): \cliels(\Lie(K),V) \to \cliel_*(\fg,W)
\]
are identical.
\end{itemize}
\end{dfn}

\subsection{Semi-strict descent}

Fix a $(\fg,K)$-bundle $P$ with flat connection, 
so that there is a Maurer-Cartan element $\omega$ in the dg Lie algebra $\Omega^*(P) \otimes \fg$. 
Equivalently, there is a map of commutative dg algebras
\[
\omega^*: \clies(\fg) \to \Omega^*(P)
\]
determined by extending to an algebra map, 
the map $\omega^*: \fg^\vee[-1] \to \Omega^*(P)$ on generators encoded by~$\omega$.

Let $V$ be a semi-strict module for the pair $(\fg,K)$. 
Hence there is a map of commutative dg algebras
\[
\rho^{\fg*}_V: \clies(\End(V)) \to \clies(\fg),
\]
which is the linear dual of the coalgebra map $\rho^\fg_V: \fg \l8to \End(V)$. 
By composing, we obtain a map of commutative dg algebras
\[
\rho^{\fg*}_V \circ \omega^*: \clies(\End (V)) \to \Omega^*(P),
\]
which then corresponds to a Maurer-Cartan element 
\[
\omega_V \in \Omega^*(P) \otimes \End (V).
\]
The operator
\ben
\nabla^{P,V} := \d_{dR} + \omega_V
\een
then defines a flat ``super-connection'' on the trivial bundle $P \times V \to P$ over $P$.
(Here ``super'' simply means that some terms of $\omega_V$ may contain higher forms, and not just one-forms.)

The following results straightforwardly from the definitions.

\begin{lemma} 
The operator $\nabla^{P,V}$ has the following properties:
\begin{itemize}
\item[(1)] It preserves the sub-algebra of basic forms. 
\item[(2)] If $f \in \cO(X) \cong \cO(P)^K$ and $\alpha \in  (\Omega^k(P) \otimes V)_{bas} \cong \Omega^k(X; V_X)$,
then
\ben
\nabla^{P,V}(f \cdot \alpha) = (\d_{dR} f) \otimes \alpha + f \otimes \nabla^V \alpha .
\een
\item[(3)] It is square-zero. 
\end{itemize}
\end{lemma}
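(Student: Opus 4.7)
The plan is to derive all three properties from the fact that $\omega_V$ arises, by construction, as a Maurer-Cartan element of the dg Lie algebra $\Omega^*(P)\otimes \End(V)$ that is obtained by composing two maps of commutative dg algebras, $\omega^*\colon \clies(\fg)\to\Omega^*(P)$ and $\rho^{\fg*}_V\colon \clies(\End V)\to\clies(\fg)$. Once this MC interpretation is established, property (3) is automatic: the square $(\d_{dR}+\omega_V)^2$ is, up to sign, precisely the MC expression $\d_{dR}\omega_V + \tfrac12[\omega_V,\omega_V]$ in $\Omega^*(P)\otimes\End(V)$, and this vanishes because the composite dg-algebra map carries the differential on Chevalley-Eilenberg cochains (which encodes the $L_\infty$-bracket, including all higher components of $\rho^\fg_V$) to the de Rham differential on~$P$.

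For property (1), I would argue that the map $\omega^*$ factors through the subcomplex of $K$-basic forms $\Omega^*(P)_{bas}$, as in the strict Harish-Chandra setting reviewed earlier in the paper: $K$-invariance of $\omega$ gives $K$-invariance of the image, and the normalization $\omega(\xi_a)=i(a)$ combined with horizontality on $\clies(\fg,K)=\clies(\fg)^{K\text{-bas}}$ yields horizontality. The composition $\rho^{\fg*}_V$ is $K$-equivariant by the semi-strict compatibility in Definition \ref{dfn ss HC mod}, and its higher-order contributions on $\Lie(K)\subset\fg$ are killed by the compatibility condition that $\rho^\fg_V\circ i$ agrees strictly with $D\rho^K_V$ (so higher Taylor components $\rho^\fg_{V,n}(i(a),\cdots)$ contribute trivially to basic-ness when contracted with fundamental vector fields $\xi_a$). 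The upshot is that $\omega_V$ lies in $\bigl(\Omega^*(P)\otimes \End(V)\bigr)_{bas}$, and hence acts on $(\Omega^*(P)\otimes V)_{bas}$; combined with the fact that $\d_{dR}$ preserves basic forms when paired with the $K$-action on $V$ (via the invariance condition $L_{\xi_a}\alpha + \rho^K_V(a)\cdot\alpha = 0$ for $\alpha$ basic), this gives preservation of the basic subcomplex.

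Property (2) is the most straightforward: $\d_{dR}$ is a derivation for the $\cO(X)\cong\cO(P)^K$-module structure on basic forms (functions on $X$ pulled up to $P$ are killed by vertical fields and $K$-invariant), and $\omega_V$ acts by wedge-and-endomorphism, hence is $\cO(X)$-linear. The Leibniz rule then follows from the Leibniz rule for $\d_{dR}$ applied to $f\cdot\alpha$ after pulling $f$ back to~$P$.

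I expect the main obstacle to be a careful verification that the semi-strict compatibility between $\rho^K_V$ and $\rho^\fg_V$ interacts correctly with contraction by fundamental vertical vector fields, so that the higher Taylor components of the $L_\infty$-action do not spoil the horizontality needed for property~(1). In practice this amounts to unpacking the statement ``$\cliels(\rho^\fg_V)\circ\cliel_*(i)=\cliel_*(D\rho^K_V)$'' at the level of components and verifying that $\iota_{\xi_a}\omega_V$ equals $\rho^K_V(a)$ acting on~$V$, which cancels against $\iota_{\xi_a}\d_{dR}$ via Cartan's formula and the invariance relation above. All the remaining content is either a direct formal manipulation or a reference to the Maurer-Cartan/dg-algebra map dictionary recalled in Section \ref{sec DDT}.
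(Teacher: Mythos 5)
The paper offers no proof of this lemma at all --- it is asserted to ``result straightforwardly from the definitions'' --- so there is nothing to compare your argument against directly. Your treatment of (2) and (3) is correct and is surely what the authors had in mind: $\omega_V$ acts $\cO(X)$-linearly (wedging with forms and applying endomorphisms commutes with multiplication by functions pulled back from $X$), $\d_{dR}$ supplies the Leibniz term, and the Maurer--Cartan equation for $\omega_V$ --- which holds because $\omega_V$ is the element classifying a composite of maps of commutative dg algebras --- is exactly the statement that $\d_{dR}+\omega_V$ squares to zero once the internal differential of $V$ is accounted for.

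The gap is in (1), at precisely the point you flag as ``the main obstacle,'' and the justification you offer does not close it. Writing $\omega_V=\sum_{n\geq 1}\frac{1}{n!}\rho_n(\omega,\ldots,\omega)$ in terms of the Taylor components of $\rho^\fg_V$, horizontality of $\nabla^{P,V}\alpha$ for basic $\alpha$ reduces, via Cartan's formula and the invariance of $\alpha$, to the identity $\iota_{\xi_a}\omega_V=D\rho^K_V(a)$, i.e.\ to the vanishing of $\rho_n(i(a),\omega,\ldots,\omega)$ for all $n\geq 2$ and $a\in\Lie(K)$. But the compatibility condition in Definition \ref{dfn ss HC mod} only says that the composite $L_\infty$-map $\Lie(K)\to\fg\l8to\End(V)$ equals the strict map $\cliel_*(D\rho^K_V)$, which constrains $\rho_n$ only on $\Sym^n(\Lie(K)[1])$, i.e.\ when \emph{all} $n$ inputs lie in $i(\Lie(K))$. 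It says nothing about the mixed components $\rho_n(i(a),X_2,\ldots,X_n)$ with the $X_j\in\fg$ arbitrary, and these are exactly what survive after contracting the $n$-form $\rho_n(\omega,\ldots,\omega)$ with the vertical vector field $\xi_a$. So the assertion that the higher Taylor components ``contribute trivially to basic-ness'' is not a consequence of the stated definition. To repair the argument one must either strengthen the notion of semi-strict module by the Cartan-type horizontality condition $\rho_n(i(a),-,\ldots,-)=0$ for $n\geq 2$, or verify this vanishing in the example at hand; for $\Obs^\q_n$ it does hold, but for a diagrammatic reason (a wheel vertex labelled by a linear vector field in $\gl_n$ is bivalent with both legs internal, so it absorbs no $(0,1)$-form input and the corresponding weight vanishes by form-degree counting), not as a formal consequence of Definition \ref{dfn ss HC mod}.
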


Using this lemma, we define the cochain complex
\be\label{ddesc}
\ddesc((P\to X, \omega), V) := \left( (\Omega^*(P) \otimes V)_{bas}, \nabla^{P,V}\right) .
\ee
It is a dg module over the commutative dg algebra $\Omega^*(X)$. 

\begin{dfn}
The \emph{semi-strict descent functor}
\ben
\ddesc : \Loc_{(\fg,K)}^{\rm op} \times \Mod^{fin}_{(\fg,K)} \to
\Mod_{\Omega^*(X)} 
\een 
is given by the construction just described.
\end{dfn}

Note that if $V$ is strict HC-module, then (\ref{ddesc}) is just the de Rham complex of the flat vector bundle 
$\desc((P \to X,\omega), V) = (V_X, \nabla^{P,V})$ from Definition~\ref{def desc}.

\emph{Semi-strict Gelfand-Kazhdan descent} is simply semi-strict Harish-Chandra descent
applied to the pairs $(\Vect,\GL_n)$ or $(\TVect, \GL_n)$  along $X^{coor}$ or $\Tilde{X}^{coor}_\alpha$, respectively. 
Everything is parallel to what we did in Part~1. In particular, it is lax monoidal, via the argument from Lemma \ref{prop lax}.

We now note an important relationship between strict and semi-strict descent,
which follows from a standard fact about $\L8$-representations:
given an ordinary  Lie algebra $\fg$ (i.e., concentrated in degree zero) and an $\L8$ representation $V$ of $\fg$,
the cohomology $H^*(V)$ is a strict representation of $\fg$.
Hence we observe the following.

\begin{lemma}
\label{lem strict vs semistrict}
If $(V, \rho^K_V, \rho^\fg_V)$ is a semi-strict module for the pair $(\fg,K)$,
then $H^*(V)$ naturally becomes a strict module for $(\fg,K)$ with
$\rho^K_{H^*(V)}$ the induced action of $K$ on $H^*(V)$ (since it respects the differential on $V$)
and $\rho^\fg_{H^*(V)}$ the induced strict action of $\fg$ on $H^*(V)$.
\end{lemma}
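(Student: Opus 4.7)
The plan is to verify the two pieces of data required by Definition \ref{dfn ss HC mod} for the strict case in turn: first the strict $\fg$-action on $H^*(V)$, next the strict $K$-action, and finally the Harish-Chandra compatibility.

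For the $\fg$-action, unpack the $\L8$-map $\rho^\fg_V: \fg \l8to \End(V)$ into its Taylor components
\[
\rho_n: \Sym^n(\fg[1]) \otimes V \to V,
\]
and use the $\L8$-relations coming from the fact that $Q^2 = 0$ on $\cliels(\fg, V)$. Because $\fg$ sits in degree zero with zero differential, the lowest nontrivial $\L8$-relation asserts that each $\rho_1(x)$ for $x \in \fg$ is a cochain map for $\d_V$, so it descends to an endomorphism $\overline{\rho_1(x)}$ of $H^*(V)$. The next $\L8$-relation says
\[
\rho_1([x,y]_\fg) - [\rho_1(x),\rho_1(y)] = [\d_V,\rho_2(x,y)],
\]
i.e.\ $\rho_2(x,y)$ provides an explicit null-homotopy for the failure of $\rho_1$ to respect the bracket. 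On cohomology the right-hand side vanishes, so $\rho^\fg_{H^*(V)}(x) := \overline{\rho_1(x)}$ is a strict Lie algebra map $\fg \to \End(H^*(V))$. (Higher $\rho_n$ with $n \geq 2$ contribute only through such null-homotopies and thus play no role on cohomology.)

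For the $K$-action, the condition in Definition \ref{dfn ss HC mod}(i) guarantees that $\prod_d \rho^K_{V^d}$ commutes with $\d_V$, so the strict action of $K$ on $V$ descends degree-wise to a strict action $\rho^K_{H^*(V)}$ on $H^*(V)$. For the Harish-Chandra compatibility, project the identity
\[
\cliels(\rho^\fg_V) \circ \cliel_*(i) = \cliel_*(D\rho^K_V)
\]
onto cogenerators (the $\Sym^1$-summand) to extract the linear Taylor component; this says that $(\rho^\fg_V)_1 \circ i = D\rho^K_V$ strictly at the chain level, so passing to cohomology gives $\rho^\fg_{H^*(V)} \circ i = D\rho^K_{H^*(V)}$, which is exactly the Harish-Chandra compatibility for the induced strict module.

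There is no serious obstacle here: the only thing to keep careful track of is the bookkeeping of Taylor components, signs, and degree shifts in the $\L8$-structure. The essential reason the statement is easy is that $\fg$ is concentrated in degree zero, so that $\Sym^n(\fg[1])$ lives in a single cohomological degree and every higher bracket $\rho_n$ ($n \geq 2$) contributes to the $\fg$-action only through explicit $\d_V$-homotopies, which are invisible on $H^*(V)$.
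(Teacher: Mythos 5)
Your proof is correct and matches the paper's intent: the paper simply invokes the ``standard fact'' that an $\L8$-representation of an ordinary Lie algebra concentrated in degree zero induces a strict representation on cohomology, and your argument is exactly the standard proof of that fact (the linear Taylor component $\rho_1$ is a cochain map, $\rho_2$ witnesses the bracket compatibility up to a $\d_V$-homotopy that dies on $H^*(V)$, and the $K$-action and Harish-Chandra compatibility descend strictly). Nothing further is needed.
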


\subsection{Descent of the equivariant observables}

We record the following immediate consequences of our work in Section
\ref{sec equiv obs}.

\begin{prop}
For each open $U \subset \CC$,
\begin{enumerate}
\item the classical observables $\Obs^\cl_n(U)$ is a strict module over $(\Vect,\GL_n)$, and
\item the quantum observables $\Obs^\q_n(U)$ is a semi-strict module over $(\TVect,\GL_n)$.
\end{enumerate}
The structure maps of $\Obs^\cl_n$ are strictly equivariant map for $(\Vect,\GL_n)$, 
i.e., maps of strict $(\Vect,\GL_n)$-modules.
The structure maps of $\Obs^\q_n$ are maps of semi-strict $(\TVect,\GL_n)$-modules.
\end{prop}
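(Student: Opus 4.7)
The plan is to extract both statements from structures already constructed. For the classical observables $\Obs^\cl_n(U) = \clies(\DD\fgn^U)$, the strict $\GL_n$-action comes by functoriality from the strict $\GL_n$-action on the dg Lie algebra $\DD \fgn^U$ of Lemma \ref{GLaction}; this action preserves the differential $\dbar$ since $\dbar$ only sees the source $U$ and not the target. For the $\Vect$-action, Lemma \ref{Noether} produces a local functional $I^{\rm W} \in \clies(\Vect) \otimes {\rm C}^*_{\rm loc}(\DD\fgn^\CC)[-1]$ satisfying the Maurer-Cartan-type equation (\ref{MC1}), whose components $\{I^{\rm W}_X, - \}$ for $X \in \Vect$ define the Taylor components of a strict action (the functional $I^{\rm W}$ is linear in $X$, so all higher Taylor components vanish). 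The compatibility of $\rho^{\GL_n}$ and $\rho^\Vect$ --- that differentiating the $\GL_n$-action recovers the restriction of $\rho^\Vect$ to $\gl_n \hookrightarrow \Vect$ --- is the remark following Lemma \ref{Noether}. This verifies Definition \ref{dfn ss HC mod} strictly.

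For the quantum observables, I would work parametrix-by-parametrix. Fix a parametrix $\Phi$. The $\GL_n$-action on the graded vector space underlying $\Obs^\q_n[\Phi](U)$ is again induced from the representation on $\DD\fgn^U$; since this action preserves the linear pairing on fields, it commutes with the contraction-with-$\kappa_\Phi$ operator $\Delta_\Phi$, and hence with the differential $\dbar + \hbar \Delta_\Phi$. This gives a strict $\GL_n$-action in each degree. For the $\TVect$-action, I invoke Theorem \ref{QME1}: the effective family $\{\Tilde{I}^{\rm W}[\Phi] + \hbar J[\Phi]\}$ is a solution of the $\TVect$-equivariant QME and, via the usual correspondence between Maurer-Cartan elements in $\clies(\TVect) \otimes {\rm C}^*_{\rm loc}$ and $L_\infty$-maps $\TVect \l8to \Der(\cdot)$ reviewed in Section \ref{sec DDT}, this provides an $L_\infty$-action on $\Obs^\q_n[\Phi](U)$ by derivations. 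Because $\Tilde{I}^{\rm W}$ is pulled back along $p: \TVect \to \Vect$ and hence only linear in $\Vect$, while $\hbar J$ is linear in $\hOmega^2_{n,cl}$ and higher in powers of the fields, the higher Taylor components are nontrivial and the action is genuinely only semi-strict (not strict). The compatibility with the strict $\GL_n$-action reduces to the observation that the anomaly $\Theta$ vanishes on linear vector fields $\gl_n \hookrightarrow \Vect$, so on restriction to $\Lie(\GL_n)$ the $L_\infty$-action collapses to the strict differentiated $\GL_n$-action. That the effective family is independent of $\Phi$ up to the controlled RG flow isomorphisms means the $L_\infty$-action passes to the limiting factorization algebra $\Obs^\q_n$.

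The claim about structure maps follows from the locality of all relevant data. The $\GL_n$-action on $\Obs^{\cl/\q}_n(U)$ is defined fiberwise on $\hD^n$ and commutes with extension-by-zero of sections on the source $U \hookrightarrow V$, so it strictly intertwines the factorization structure maps. The Taylor components of the $\Vect$-action (resp. $\TVect$-action) come from local functionals $I^{\rm W}$ (resp. $\Tilde{I}^{\rm W} + \hbar J$); since local functionals are built by integrating a Lagrangian density over the source, restricting fields to a sub-open $U \subset V$ intertwines the action with restriction of the Lagrangian, and dually the extension-by-zero map on compactly supported fields intertwines the coaction on $\cliel_*(\fg, \cdot)$. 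This is exactly what is needed to make the structure maps morphisms of (semi-strict) Harish-Chandra modules in the sense defined above.

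The only genuinely nonroutine step is checking that the $L_\infty$-action coming from the equivariant quantum interaction is compatible with factorization structure at the level of the full parametrix-independent theory $\Obs^\q_n$, i.e., that support estimates of the observables are preserved by the $L_\infty$-action. The expected argument is that each Taylor component $\rho^{\TVect}_n$ is obtained from $\Tilde{I}^{\rm W}[\Phi] + \hbar J[\Phi]$ via the BV bracket $\{-,-\}_\Phi$, and hence at scale $\Phi$ enlarges support only by the (proper) support of $\Phi$; as $\Phi$ shrinks the support enlargement becomes arbitrarily small, so the map preserves the support filtration used to define $\Obs^\q_n(U)$. This uses the same support analysis that underlies the construction of $\Obs^\q_n$ itself in Chapter 9 of \cite{CG}.
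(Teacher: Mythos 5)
Your proposal is correct and follows essentially the same route as the paper, which disposes of this proposition in one sentence as an ``immediate consequence'' of the constructions in the section on equivariant observables: the equivariant observables were already packaged as dg modules over $\clies(\Vect)$ (classical) and $\clies(\TVect)$ (quantum), and Koszul duality converts those twisted differentials into exactly the strict, respectively semi-strict, module structures claimed. Your write-up usefully unpacks what that one sentence hides (linearity of $I^{\rm W}$ in $X$, compatibility of $\GL_n$ with $\Delta_\Phi$, locality of the interaction terms for the structure maps, and the support analysis at the level of parametrix-independent observables).

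One slip worth correcting: you attribute the failure of strictness in the quantum case to $\hbar J$ being ``higher in powers of the fields.'' Polynomial degree in the fields is irrelevant to strictness of the Lie action --- what matters is the polynomial degree in the Lie algebra $\TVect$ itself. The classical functional $\Tilde{I}^{\rm W} = p^*I^{\rm W}$ and the correction $J$ are both \emph{linear} in $\TVect$, so by your stated reasoning the action would be strict. The genuine source of the nontrivial higher Taylor components is the one-loop effective interaction $I^{{\rm W},1}[\Phi]$, which arises from wheel diagrams with several vertices each labeled by a vector field and therefore has contributions of every even polynomial degree in $\Vect$; this is exactly the point the paper makes immediately after the proposition.
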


These assertions follow by reinterpreting, via Koszul duality, 
our descriptions of the equivariant observables as dg modules 
over $\clies(\Vect)$ (in the classical case) or $\clies(\TVect)$ (in the quantum case).

We can thus apply semi-strict Gelfand-Kazhdan descent and obtain the following result.
Note that we are working in the category of dg modules over the commutative dg algebra $\Omega^*(X)$ 
in the category of differentiable vector spaces discussed in Section~\ref{sec functional analysis}.

\begin{cor}
The strict Gelfand-Kazhdan descent of $\Obs^\cl_n$ on an $n$-dimensional complex manifold $X$ 
is a commutative factorization algebra in dg modules over $\Omega^*(X)$.
It depends on a choice of Gelfand-Kazhdan structure $(\Fr\, X, \sigma)$,
but every choice produces a naturally isomorphic factorization algebra.

If the $n$-dimensional complex manifold $X$ has vanishing $\ch_2(T_X)
\in H^2(X, \Omega^{2,hol}_{cl})$, then each extended Gelfand-Kazhdan structure $(X, \alpha, \sigma,
\sigma_{\Omega^2})$ where $\alpha$ is a choice of trivialization of
$\ch_2(T_X)$ and $\sigma,\sigma_{\Omega^2}$ are the auxiliary sections
needed to define descent (whose choices do not change the descent object up to homotopy)
the semi-strict Gelfand-Kazhdan descent of $\Obs^\q_n$ is a factorization algebra in dg modules over $\Omega^*(X)$.
\end{cor}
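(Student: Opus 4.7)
The plan is to apply the (semi-strict) Gelfand-Kazhdan descent functor open-by-open on the source $\CC$, and to check that the structure is preserved at each step. Concretely, for each open $U \subset \CC$, set
\[
\desc_\GK(\Obs^\cl_n)(U) := \desc_\GK\bigl((\Fr_X,\sigma),\Obs^\cl_n(U)\bigr)
\]
in the classical case, and
\[
\Tilde{\desc}_\GK(\Obs^\q_n)(U) := \Tilde{\desc}_\GK\bigl((X,\alpha,\sigma,\sigma_{\Omega^2}),\Obs^\q_n(U)\bigr)
\]
in the quantum case, using the extended bundle $\Tilde{X}^{coor}_\alpha$ (which exists precisely because $\ch_2(T_X)=0$ and $\alpha$ is a choice of trivialization, by Theorem \ref{extbundle}). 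Both produce $\Omega^*(X)$-dg-modules by construction of semi-strict descent.

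First I would verify that these assignments form a prefactorization algebra in $\Omega^*(X)$-dg-modules. Since the structure maps of $\Obs^\cl_n$ are strict morphisms of $(\Vect,\GL_n)$-modules and the structure maps of $\Obs^\q_n$ are morphisms of semi-strict $(\TVect,\GL_n)$-modules, functoriality of $\desc_\GK$ and $\Tilde{\desc}_\GK$ in the module variable produces structure maps on descent; associativity and $S_n$-equivariance are inherited. The commutativity in the classical case follows from the lax symmetric monoidal structure on $\desc_\GK$ established in Lemma \ref{prop lax}, combined with commutativity of the factorization product on $\Obs^\cl_n$.

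Next I would check the Weiss cover condition. The descent functor sends a module $V$ to $(\Omega^*(\Fr_X) \otimes V)_{bas}$ with the modified differential $\nabla^{P,V}$; as a functor in $V$, this is the composite of tensoring with a fixed graded object and then taking the kernel of a degree-zero linear projection (the projection away from basic forms). Both operations preserve the class of quasi-isomorphisms that appear in \v{C}ech complexes for Weiss covers, since descent only acts in the ``target'' direction while the Weiss cover $\{U_i\}$ lives in the source direction. Consequently, for any Weiss cover $\{U_i\}$ of $V \subset \CC$, the natural comparison map
\[
\check{C}\bigl(\{U_i\},\desc_\GK(\Obs^{\cl/\q}_n)\bigr) \to \desc_\GK(\Obs^{\cl/\q}_n)(V)
\]
is obtained by applying descent to the corresponding comparison map for $\Obs^{\cl/\q}_n$, which is a quasi-isomorphism by the factorization algebra axiom for the undescended theory (Definition \ref{noneq quantum obs}).

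For independence of the auxiliary choices, two Gelfand-Kazhdan structures yield gauge-equivalent connection one-forms on $\Fr_X$ (Proposition \ref{gauge equiv} and its extended analog), and the induced gauge equivalence intertwines the descent functor up to a canonical isomorphism of $\Omega^*(X)$-dg-modules; this isomorphism is natural in the module variable, so the resulting quasi-isomorphism of factorization algebras respects all structure maps. The hard part, which is the only place where one must exercise real care, is ensuring that the completed tensor products and distributional symmetric algebras used in $\Obs^\q_n(U)$ interact well with the passage to basic forms over $\Fr_X$; this is a functional-analytic point that one handles by working throughout in the category of differentiable vector spaces of Section \ref{sec functional analysis}, where the relevant limits, colimits, and tensor products are compatible, and by observing that $\Omega^*(\Fr_X)$ acts on each homogeneous component $\Sym^k$ in a way compatible with the support conditions on observables.
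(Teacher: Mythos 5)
Your proposal is correct and follows the same route the paper takes: the corollary is deduced by applying the (semi-strict) Gelfand-Kazhdan descent functor open-by-open to the preceding proposition, which establishes that $\Obs^\cl_n(U)$ is a strict $(\Vect,\GL_n)$-module, that $\Obs^\q_n(U)$ is a semi-strict $(\TVect,\GL_n)$-module, and that the structure maps are equivariant, with Lemma \ref{prop lax} supplying the commutative structure and Proposition \ref{gauge equiv} the independence of auxiliary choices. You have merely filled in details (the Weiss-cover check and the functional-analytic compatibilities) that the paper leaves implicit by working in the category of differentiable vector spaces.
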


\begin{dfn}
For $X$ a complex $n$-manifold, let $\Obs^\cl_X$ denote the commutative factorization algebra on $\CC$ 
produced by strict Gelfand-Kazhdan descent of $\Obs^\cl_n$.

For $X$ a complex $n$-manifold and $\alpha \in \Omega^2_{cl}(X)$ a trivialization of $\ch_2(T_X)$,
let $\Obs^\q_{X,\alpha}$ denote the factorization algebra on $\CC$
produced by semi-strict Gelfand-Kazhdan descent of $\Obs^\q_n$.
\end{dfn}

By Lemma \ref{lem strict vs semistrict} we know that for each open $U \subset \CC$,
\begin{enumerate}
\item the cohomological classical observables $H^* \Obs^\cl_n(U)$ is a strict module over $(\Vect,\GL_n)$, and
\item the cohomological quantum observables $H^*\Obs^\q_n(U)$ is a strict module over $(\TVect,\GL_n)$.
\end{enumerate}
Moreover, the structure maps of $H^*\Obs^\cl_n$ are strictly equivariant map for $(\Vect,\GL_n)$, 
i.e., maps of strict $(\Vect,\GL_n)$-modules.
Likewise, the structure maps of $H^*\Obs^\q_n$ are maps of strict $(\TVect,\GL_n)$-modules.
Thus we can also apply strict Gelfand-Kazhdan descent to the cohomology of observables.

\begin{cor}
The strict Gelfand-Kazhdan descent of $H^*\Obs^\cl_n$ on an $n$-dimensional complex manifold $X$ 
is a commutative factorization algebra.
It depends on a choice of Gelfand-Kazhdan structure $(\Fr\, X, \sigma)$,
but every choice produces a naturally isomorphic factorization algebra.

If the $n$-dimensional complex manifold $X$ has vanishing $\ch_2(T_X) \in H^2(X, \Omega^{2,hol}_{cl})$,
then for each choice of trivialization $\alpha$ of $\ch_2(T_X)$ and each extended Gelfand-Kazhdan structure
the strict Gelfand-Kazhdan descent of $H^*\Obs^\q_n$ is a factorization algebra.
\end{cor}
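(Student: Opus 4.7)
The plan is to assemble the two statements from pieces already in hand, and then to check that strict Gelfand-Kazhdan descent interacts well with the local-to-global axiom for factorization algebras.

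First, fix an open $U \subset \CC$. For the classical case, $\Obs^\cl_n(U) = \clies(\DD\fgn^U)$ is already a \emph{strict} $(\Vect,\GL_n)$-module, so $H^*\Obs^\cl_n(U)$ is automatically strict. For the quantum case, $\Obs^\q_n(U)$ is a semi-strict $(\TVect,\GL_n)$-module; Lemma \ref{lem strict vs semistrict} then promotes $H^*\Obs^\q_n(U)$ to a strict $(\TVect,\GL_n)$-module. The same argument, applied to the structure maps (which are morphisms of semi-strict modules by the preceding proposition, hence induce morphisms of $\cliel_*$-comodules, hence strict module maps on cohomology), shows that the assignments $U \mapsto H^*\Obs^\cl_n(U)$ and $U \mapsto H^*\Obs^\q_n(U)$ are prefactorization algebras in the categories of strict modules for $(\Vect,\GL_n)$ and $(\TVect,\GL_n)$, respectively.

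Next, I apply strict Gelfand-Kazhdan descent from Section \ref{sec gk descent} (for the pair $(\Vect,\GL_n)$) or extended Gelfand-Kazhdan descent from Section \ref{sec ext desc} (for $(\TVect,\GL_n)$, using a trivialization $\alpha$ of $\ch_2(T_X)$ and auxiliary sections). Since these descent functors are functorial in the module variable, I obtain, openwise in $U \subset \CC$, a sheaf-like object on $X$; naturality in $U$ supplies the prefactorization structure maps. That the descent of $H^*\Obs^\cl_n$ is \emph{commutative} follows from the lax symmetric monoidal structure on Gelfand-Kazhdan descent (Lemma \ref{prop lax}), applied to the commutative product on $\Obs^\cl_n$, which survives to cohomology.

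The main step is to verify the Weiss local-to-global axiom. For a Weiss cover $\{U_i\}$ of an open $V \subset \CC$, the \v Cech complex of $\Obs^\cl_n$ (respectively $\Obs^\q_n$) on $\{U_i\}$ is quasi-isomorphic to the value on $V$. Passing to $H^*$ yields an isomorphism (in the strict, not merely homotopical, sense) because $H^*$ is a functor on cochain complexes and the \v Cech complex computes a homotopy colimit that is already a strict colimit after taking cohomology in a bounded-below situation. I then need strict Gelfand-Kazhdan descent to preserve this kind of colimit. Concretely, the descent functor $\sdesc_\GK$ (respectively $\Tilde{\sdesc}_\GK$) is computed as horizontal sections of a flat bundle, which is a limit; but since the underlying $(\Vect,\GL_n)$- and $(\TVect,\GL_n)$-module structures act by $\hO_n$-linear operators on the graded module $H^*\Obs$, the descent on each fixed formal vector bundle summand is the associated-bundle construction, which is exact and commutes with arbitrary colimits. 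Passing this argument across the symmetric algebra decomposition of $H^*\Obs$ gives the Weiss axiom for the descended object.

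The hard part will be the last step, namely justifying that the colimits arising from Weiss covers commute with descent in the semi-strict setting after passing to cohomology. It is tempting to argue this at the chain level using Lemma \ref{prop lax} and the fact that the \v Cech complex is a homotopy colimit, but we only have a \emph{lax} monoidal structure on descent; the actual subtle point is that the quantum structure maps are defined parametrix by parametrix and involve $\hbar$-corrections from the functional $J$. To handle this I would reduce to a filtered argument: the $\hbar$-adic filtration on $H^*\Obs^\q_n$ has associated graded given (up to the twist encoding the Poisson vertex structure) by a direct sum of formal tensor fields, for which strict descent is the classical associated bundle construction and hence manifestly exact. A spectral sequence comparison then yields the Weiss condition for the full descended factorization algebra.
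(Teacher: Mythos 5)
Your first two paragraphs reproduce exactly the paper's justification: the paper derives this corollary in a single breath from Lemma \ref{lem strict vs semistrict} (cohomology of a semi-strict module is a strict module), the observation that the structure maps of $H^*\Obs^\cl_n$ and $H^*\Obs^\q_n$ are maps of strict $(\Vect,\GL_n)$- and $(\TVect,\GL_n)$-modules respectively, and the functoriality of strict (resp.\ extended) Gelfand-Kazhdan descent in the module variable. The paper offers no further argument --- in particular it does not spell out the commutativity via the lax monoidal structure, nor does it address the Weiss local-to-global axiom for the descended object.

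Where you diverge is in your last two paragraphs, and this is worth a remark. Your instinct that the Weiss axiom deserves attention is sound, and the paper is silent on it; but your first pass at it is not quite right as stated. Taking $H^*$ of a \v{C}ech (homotopy colimit) complex does not in general yield a strict colimit, bounded-below or not, so the claim that ``the \v{C}ech complex computes a homotopy colimit that is already a strict colimit after taking cohomology'' needs justification specific to this theory --- for instance, the degeneration results of Section \ref{sec concrete}, where the cohomology of observables on disks is computed explicitly and shown to be concentrated in degree zero and identified with (completed tensor products of) formal tensor fields. Your fallback --- filtering by $\hbar$, reducing to the associated graded where strict descent is the exact associated-bundle construction, and running a spectral sequence comparison --- is the right repair and is consistent with how the paper handles the analogous identifications later (e.g.\ in the proof that the point observables descend to tensor bundles). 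So: correct core argument matching the paper, plus an additional verification the paper omits, whose first formulation is too optimistic but whose second formulation would go through.
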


In Section \ref{sec concrete} below, we provide a description of these
factorization algebras that is humanly understandable,
but first we will swiftly relate our work to Costello's approach in \cite{WG2}.

\subsection{Comparison with Costello's work} \label{sec comparison}

In \cite{WG2} Costello provided a BV quantization of the curved $\beta\gamma$ system with target a complex manifold $X$,
and it was clear that the associated factorization algebra ought to be chiral differential operators,
based on the work in \cite{WittenCDO,Nek}. 
Our work grew out of attempts to verify that expectation.
Here we explain how Gelfand-Kazhdan descent recovers the $\L8$ spaces that Costello uses
and why descent of our equivariant quantization recovers the relevant cases of Costello's quantizations.
These results are independent of the rest of the text, and hence the disinterested reader should skip this section.

Our construction starts by encoding the formal $n$-disk as an $L_\infty$ algebra $\fgn$
and the formal $\beta\gamma$ system as $\DD\fgn^S$.
Costello's approach is to write down a global analogue:
for each complex manifold $X$, he constructs
a curved $L_\infty$ algebra $\fg_X$ in dg modules over the de Rham complex $\Omega^*(X)$.
His version of the classical curved $\beta\gamma$ system is encoded in $\DD\fg_X^S$,
whose Maurer-Cartan equation recover the equations of motion.
(More precisely, this Maurer-Cartan equations describes formal deformations of constant maps to holomorphic maps.)
The factorization algebra of classical observables assigns to an open set $U \subset \CC$,
the cochain complex~$\clies(\DD\fg_X^U)$.
The quantum observables are a deformation thereof.

Let us explain his construction of $\fg_X$.
Consider the $\infty$-jet bundle $J^{hol}_X$ for holomorphic functions,
which has a canonical flat connection.
The sheaf of horizontal sections for this flat connection is exactly the sheaf $\cO_X$ of holomorphic functions on $X$.
In fact, the de Rham complex of $J^{hol}_X$ is quasi-isomorphic to $\cO_X$, 
where the quasi-isomorphism sends a holomorphic function to its $\infty$-jet.
By definition, $\fg_X$ is the curved $\L8$ algebra encoded under Koszul duality by the commutative dg algebra
\[
\clies(\fg_X) = \Omega^*(X, \cSym(T^{1,0*}_X)) \cong \Omega^*(X, J_X^{hol}).
\]
(Everything here is in modules over $\Omega^*(X)$.)
The differential on the left hand side is pulled back along an isomorphism of pro-vector bundles 
$\sigma: \cSym(T^{1,0*}_X) \xto{\cong} J^{hol}_X$.
This isomorphism $\sigma$ is constructed by fixing a connection on the tangent bundle $T_X$
and using its associated exponential map at each point $x$ 
to identify the formal neighborhood of $x$ in $X$ with the formal neighborhood of the origin in $T_x X$.
In this way, the $\infty$-jet of a function at $x$ is identified with a formal power series in $T_x^* X$,
which is the desired isomorphism $\sigma$.

But this procedure is precisely how Gelfand-Kazhdan descent works! 
Once we fix a formal exponential on the frame bundle of $X$ --- typically via a choice of connection --- we have an isomorphism $\sigma$.
Moreover, the descent of $\clies(\fgn)$ using this data is exactly $\Omega^*(X, \cSym(T^{1,0*}_X))$
equipped with the pullback of the Grothendieck connection along $\sigma$.
In other words, Gelfand-Kazhdan descent recovers Costello's curved $\L8$ algebra,
once one applies the Koszul duality.

A parallel argument applies to $\DD\fg_X^S$.
After fixing the isomorphism $\sigma$, Gelfand-Kazhdan descent of $\DD\fgn^S$ 
produces $\clies(\DD\fg_X^S)$ on the nose.
Hence, under Koszul duality, we recover Costello's classical BV theory 
as encoded in the curved $\L8$ algebra~$\DD\fg_X^S$.

A careful reading of \cite{WG2} will show that his Feynman diagrammatic work is the global version of ours: 
our analysis of the obstructions to quantization and constructions of quantizations given a trivialized obstruction
is directly parallel and descends to his.

Our discussion can be summarized as follows.

\begin{prop}
Under Gelfand-Kazhdan descent on a complex manifold $X$, 
the formal $\beta\gamma$ system recovers the classical BV theory associated to $X$ in \cite{WG2}.
Moreover, the obstruction-deformation complex descends to that in \cite{WG2},
so that the obstruction to BV quantization recovers the obstruction identified in \cite{WG2}.
Finally, given a trivialization of this obstruction, descent recovers the quantized action functional in~\cite{WG2}.
\end{prop}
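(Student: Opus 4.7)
The plan is to verify each of the three claims (classical theory, obstruction-deformation complex, quantization) by unwinding the definitions on both sides under Koszul duality and noting that the Gelfand-Kazhdan descent procedure is designed to produce exactly the structures Costello uses. First, I would fix an extended Gelfand-Kazhdan structure on $X$, which in particular supplies a formal exponential $\sigma$ and hence a $\GL_n$-equivariant isomorphism $\widetilde{\sigma}:\Fr_X \to X^{coor}$. By Proposition on descent of tensor fields (and the identification $\sdesc_{\GK}(\cV)\cong J_{hol}^\infty(\Fr_X\times^{\GL_n}V)$), the descent of the commutative dg algebra $\clus(\fg_n)=\hO_n$ along the frame bundle with the Grothendieck connection is precisely the holomorphic $\infty$-jet bundle $J^{hol}_X$, and the induced flat connection is the canonical one. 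Comparing with Costello's defining formula $\clus(\fg_X)=\Omega^*(X,\cSym(T^{1,0*}_X))\cong \Omega^*(X,J^{hol}_X)$, where the first isomorphism is induced by $\sigma$, gives an identification of dg commutative algebras, and hence, under Koszul duality, of the curved $\L8$ algebras $\desc_\GK((\Fr_X,\sigma),\fg_n)\simeq \fg_X$.

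Next I would extend this identification to the double. The fields $\DD\fg_n^S=\Omega^{0,*}(S,\fg_n)\oplus\Omega^{1,*}(S,\fg_n^\vee[-2])$ descend, when tensored with the Dolbeault complex of the source $S$ as base-change commutes with descent, to Costello's $\DD\fg_X^S=\Omega^{0,*}(S,\fg_X)\oplus\Omega^{1,*}(S,\fg_X^\vee[-2])$. The shifted pairing, being defined fiberwise via the evaluation pairing between $\fg_n$ and $\fg_n^\vee$ and hence $(\Vect,\GL_n)$-equivariant, descends to the pairing between $J^{hol}_X$ and $J^{hol}_X{}^\vee\otimes K_S$ used by Costello. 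The action functional, being the quadratic local functional determined by this pairing composed with $\dbar$, descends to the corresponding local functional on $X$. This establishes the first claim.

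For the obstruction-deformation complex, the key input is Corollary \ref{gerbe of obsdef}: the $(\Vect,\GL_n)$-equivariant, $\CC^\times\times\Aff(\CC)$-invariant obstruction-deformation complex $(\Def_n)^{\CC^\times\times\Aff(\CC)}$ is quasi-isomorphic as a $(\Vect,\GL_n)$-module to $\hOmega^2_{n,cl}[1]$, whose descent on $X$ is $\Omega^2_{X,cl}[1]$. One must then match this to Costello's obstruction-deformation complex, identified in \cite{WG2} as the Dolbeault complex computing $H^*(X,\Omega^2_{X,cl}[1])$. Under the quasi-isomorphism $J^{\rm W}$ of Proposition \ref{eqdef}, our obstruction cocycle $\Theta$ corresponds to $a\cdot \ch_2^{\GF}(\hT_n)$, whose descent is $a\cdot \ch_2(T_X)$ by the characteristic map (since Gelfand-Fuks characteristic classes globalize to the usual ones). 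This is, up to the explicit constant $a$, Costello's obstruction class, finishing the second claim.

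Finally, given a trivialization $\alpha$ of $\ch_2(T_X)$, the quantized action functional on the target side is $\Tilde{I}^{\rm W}[L]+\hbar J[L]$ constructed in Theorem \ref{QME1}, while on Costello's side it is built from the same class of Feynman diagrams (wheels and trees) with the same propagator (determined by the source metric and $\dbar^*$) but with algebraic vertices coming from $\fg_X$ rather than $\fg_n$. Since the Feynman weights factor into a purely analytic part (depending only on the source $S=\CC$) and an algebraic part (depending only on the target), and since the algebraic vertices are $\Vect$-equivariant and hence descend through Gelfand-Kazhdan descent, each graph weight descends to Costello's corresponding graph weight. The main obstacle is precisely bookkeeping this comparison in the quantum setting: one must verify that the semi-strict descent of the anomaly-cancelling counterterm $J[L]$ matches Costello's trivialization, which I would do by showing both sides represent the same cohomology class in the (descent of the) equivariant deformation complex and invoking the uniqueness-up-to-homotopy of equivariant quantizations established in Theorem \ref{QME1}, together with the corresponding uniqueness in \cite{WG2}. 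Combined with the identification of the obstructions, this forces the two quantizations to agree up to the canonical homotopy equivalence.
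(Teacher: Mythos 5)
Your proposal is correct and follows essentially the same route as the paper: identify the Gelfand-Kazhdan descent of $\clies(\fgn)$ (and of $\DD\fgn^S$) with Costello's $\clies(\fg_X)$ via the isomorphism $\sigma$ induced by a formal exponential, then observe that the obstruction-deformation complexes and the Feynman-diagrammatic quantization descend in parallel. The paper's own argument is in fact more informal at the last two steps (it simply asserts that the diagrammatics are "directly parallel and descend"), so your added details --- the factorization of graph weights into analytic and algebraic parts and the appeal to uniqueness-up-to-homotopy to match the counterterm $J[L]$ --- are a reasonable elaboration rather than a departure.
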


The primary corollary of this is that the factorization algebra associated to Costello's quantization of the curved $\beta\gamma$ system with target $X$ is isomorphic to the factorization algebra $\Obs^\q_{X,\alpha}$ we have constructed via semi-strict Gelfand-Kazhdan descent of the formal $\beta\gamma$ system with target~$\hD^n$. 

Moreover, we recover the Witten class as originally obtained by Costello. That
is, we have already identified the equivariant scale $\infty$ interaction
$\Tilde{I}^{\rm W}[\infty]$ on an elliptic curve $E$ with the Witten genus
\ben
p^* \log \Wit_n (E, \omega) \in \clie^*(\TVect , \GL_n ; \hOmega^{-*}_n) .
\een 
Consider the characteristic map defined by extended Gelfand-Kazhdan
descent determined by a trivialization $\alpha$ of the second Chern
character of $X$. It is given by
\ben
\Tilde{\ch}_\alpha : H_{\rm Lie}^*(\TVect ,\GL_n ; \hOmega^{-*}_n) \to
H^*(X ; \Omega^{-*}_X) .
\een
The image of $p^* \Wit_n(E, \omega)$ under this map is the logarithmic
Witten genus of the complex manifold~$X$ 
\ben
\log \Wit(X,E, \omega) = \sum_{k \geq 2} \frac{(2k-1)!}{(2 \pi i)^{2k}}
E_{2k}(E, \omega) \ch_{2k}(T_X) ,
\een 
described using a holomorphic volume element on the elliptic curve~$E$.

\section{A concrete description of the observables}
\label{sec concrete}

In this Section we examine the factorization algebras $\Obs^\cl_X$ and $\Obs^\q_{X,\alpha}$ 
produced by descent of the equivariant observables for the formal $\beta\gamma$ system.
Our goal is to extract information from them that is easy to interpret,
particularly from the physical point of view.
For instance, we will give an explicit description of observables with support at a point in the source $\CC$ --- and hence also for observables supported at finitely many points --- which is a bridge to Part III, where we show that the cohomological factorization algebras
$H^*\Obs^\cl_X$ and $H^*\Obs^\q_{X,\alpha}$ recover the vertex algebras $\Gr\CDO_X$ and $\CDO_{X,\alpha}$, respectively.
In short, we show that these point observables admit explicit expressions in terms of natural geometric objects on the target manifold, 
notably tensor bundles.

Throughout we use the tensor product and symmetric powers described in
Section~\ref{sec functional analysis}.

\subsection{Polynomials, power series, and the $(\Vect,\GL_n)$-decomposition of observables}

We will provide a characterization of the formal tensor fields 
that constitute the observables for the formal $\beta\gamma$ system.
We will use this characterization in the next section
for the non-formal observables.

Before talking about the full algebras of observables, 
it is useful to understand the space of linear observables,
which are simply the dual space to the fields.
It will help to bear in mind some simple facts about smooth functions.

For any disk $D_r(0) \subset \CC$ centered at the origin, 
there is a natural linear map
\[
j: C^\infty(D_r(0)) \to \CC[[z,\zbar]]
\]
sending a function $f$ to its Taylor series $j(f)$ at the origin.
(We use the coordinates $z$ and $\zbar$ since we will eventually focus on holomorphic functions.)
Borel's lemma tells us this map is surjective.
There is also an inclusion
\[
\CC[z,\zbar] \hookrightarrow C^\infty(D_r(0))
\]
obtained by viewing a polynomial as a function on the disk,
and the composite with $j$ is the inclusion of polynomials into power series.

The $\dbar$ operator makes sense on both polynomials and power series.
Let $\Omega^{0,*}_{poly}$ denote the cochain complex
\[
\CC[z,\zbar] \xto{\dbar} \CC[z,\zbar]\d\zbar
\]
and let $\Omega^{0,*}_{pow}$ denote the version with power series.
We have the following relationship.

\begin{lemma}
There is a commuting diagram
\[
\begin{tikzcd}
\CC[z] \ar[hook]{r} \ar[hook]{d}{\simeq} & \cO(D_r(0)) \ar[hook]{r} \ar[hook]{d}{\simeq} & \CC[[z]] \ar[hook]{d}{\simeq} \\
\Omega^{0,*}_{poly} \ar[hook]{r} & \Omega^{0,*}(D_r(0)) \ar[two heads]{r}{j} & \Omega^{0,*}_{pow}
\end{tikzcd}
\]
where the vertical maps are the inclusion of the cohomology, which is concentrated in degree zero.
\end{lemma}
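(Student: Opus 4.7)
The plan is to verify two things: that the diagram commutes on the nose, and that each of the three vertical maps is a quasi-isomorphism identifying the degree-zero cohomology with the displayed subspaces of Dolbeault cocycles. The horizontal maps in the top row are, respectively, the inclusion of polynomials into holomorphic functions on $D_r(0)$ and the Taylor-series-at-the-origin map $\cO(D_r(0)) \hookrightarrow \CC[[z]]$ (which is injective because holomorphic functions on a disk are determined by their Taylor series). The horizontal maps in the bottom row are the obvious inclusion $\Omega^{0,*}_{poly} \hookrightarrow \Omega^{0,*}(D_r(0))$ and the componentwise Taylor expansion $j$, sending $f + g\,\d\zbar$ to $j(f) + j(g)\,\d\zbar$. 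Both rows clearly compose to the Taylor expansion restricted to the appropriate subspace, which gives the commutativity of the outer rectangles once we know the vertical maps are inclusions of cocycles.

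First I would handle the formal side. For $\Omega^{0,*}_{pow}$, a direct computation shows that $\dbar\bigl(\sum a_{k,l}z^k\zbar^l\bigr) = \sum a_{k,l+1}(l+1)\,z^k\zbar^{l+1}\,$ misprint aside, $\dbar$ acts by $\partial_{\zbar}$, whose kernel on $\CC[[z,\zbar]]$ is $\CC[[z]]$ and whose image is all of $\CC[[z,\zbar]]\,\d\zbar$ (given $\sum b_{k,l}z^k\zbar^l\,\d\zbar$, antidifferentiate termwise by $b_{k,l}/(l+1)$, which is well-defined because the target is a formal power series). Hence $H^0(\Omega^{0,*}_{pow}) \cong \CC[[z]]$ and $H^1 = 0$. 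The identical argument with polynomials instead of power series shows $H^0(\Omega^{0,*}_{poly}) \cong \CC[z]$. For the smooth version on $D_r(0)$, this is the classical Dolbeault lemma on an open subset of $\CC$: $H^0(\Omega^{0,*}(D_r(0))) = \cO(D_r(0))$ and $H^1 = 0$.

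Next I would verify that $j: \Omega^{0,*}(D_r(0)) \to \Omega^{0,*}_{pow}$ is a cochain map, which amounts to observing that Taylor expansion at the origin commutes with $\partial_{\zbar}$ (both sides act as $\partial_{\zbar}$ on the coefficients, and differentiation of a smooth function commutes with Taylor expansion at a point). Surjectivity follows from Borel's lemma applied in both the $f$ and $g\,\d\zbar$ components. The polynomial inclusion into $\Omega^{0,*}(D_r(0))$ is clearly a cochain map for trivial reasons. On $H^0$, the vertical maps send a (polynomial, smooth holomorphic, or formal) function $f$ to itself viewed as a $\dbar$-cocycle, so the identification with the top row is tautological, and the diagram commutes.

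The only mild subtlety, rather than a genuine obstacle, is confirming that the outer square on the right commutes: the composite $\cO(D_r(0)) \hookrightarrow \Omega^{0,*}(D_r(0)) \xrightarrow{j} \Omega^{0,*}_{pow}$ of a holomorphic $f$ lands in the $\dbar$-closed formal power series $\CC[[z]] \subset \CC[[z,\zbar]]$, and equals the classical Taylor expansion of $f$ at $0$. This is immediate because $f$ is already $\dbar$-closed and smooth, so $j(f)$ has no $\zbar$-dependence precisely because $\partial_{\zbar}^k f(0) = 0$ for $k \geq 1$. With commutativity and the cohomology computations in hand, the lemma is established.
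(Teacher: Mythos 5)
Your proof is correct and is exactly the standard argument the paper implicitly relies on (the paper states this lemma without proof, having just invoked Borel's lemma for surjectivity of $j$ and leaving the Dolbeault/formal-Poincar\'e computations as routine). The only blemish is the garbled displayed formula for $\dbar$ on power series, which you yourself flag as a misprint and immediately correct; the substantive claims --- kernel $\CC[[z]]$, surjectivity onto the $\d\zbar$ part by termwise antidifferentiation, $j$ a cochain map, and tautological commutativity --- are all right.
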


Note that at the level of fields --- rather, in terms of the dg Lie algebras encoding the formal $\beta\gamma$ system --- this result tells us that we have
\[
\begin{tikzcd}
\fgn[z] \oplus (\fgn^\vee[z]\d z)[-2] \ar[hook]{r} \ar[hook]{d}{\simeq} & \fgn \otimes \cO(D_r(0)) \oplus (\fgn^\vee \otimes \Omega^{1,hol}(D_r(0)))[-2]  \ar[hook]{r} \ar[hook]{d}{\simeq} & \fgn[[z]] \oplus (\fgn^\vee[[z]]\d z)[-2] \ar[hook]{d}{\simeq} \\
\DD\fgn^{poly} \ar[hook]{r} & \DD\fgn^{D_r(0)} \ar[two heads]{r}{j} & \DD\fgn^{pow}
\end{tikzcd}
\]
where, for example, $\DD\fgn^{poly}$ means the dg Lie algebra 
\[
\Omega^{0,*}_{poly} \otimes \fgn \oplus \Omega^{1,*}_{poly} \otimes \fgn^\vee[-2].
\]
This relationship is convenient for analyzing observables.

\begin{lemma}
The classical observables $\Obs^\cl_n(D_r(0))$ sit inside the commuting diagram
\[
\begin{tikzcd}
\clies(\DD\fgn^{pow}) \ar{r} \ar{d} & \Obs^\cl_n(D_r(0)) \ar{r} \ar{d} & \clies(\DD\fgn^{poly}) \ar{d}\\
\clies(\fgn[[z]] \oplus (\fgn^\vee[[z]]\d z)[-2]) \ar[hook]{r} & H^* \Obs^\cl_n(D_r(0))  \ar[hook]{r} & \clies(\fgn[z] \oplus (\fgn^\vee[z]\d z)[-2])
\end{tikzcd}
\]
by applying the functor $\clies$.
\end{lemma}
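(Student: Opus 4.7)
The plan is to apply the Chevalley-Eilenberg cochains functor $\clies$ to the diagram of dg Lie algebras provided by the preceding lemma (with its extension to the $\beta$-part $\Omega^{1,*} \otimes \fgn^\vee[-2]$) and then compute cohomology. Because the dg Lie algebras in question --- $\DD\fgn^{poly}$, $\DD\fgn^{D_r(0)}$, and $\DD\fgn^{pow}$ --- are all abelian with differential coming from $\dbar$, we have $\clies(L) = \cSym(L^\vee[-1])$ with induced differential, and all computations reduce to linear algebra for the $\dbar$-operator together with a K\"unneth-type argument.

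First I would obtain the top row by applying the contravariant functor $\clies$ to the sequence $\DD\fgn^{poly} \hookrightarrow \DD\fgn^{D_r(0)} \twoheadrightarrow \DD\fgn^{pow}$, yielding $\clies(\DD\fgn^{pow}) \to \Obs^\cl_n(D_r(0)) \to \clies(\DD\fgn^{poly})$. Next, the vertical quasi-isomorphisms in the preceding lemma identify each dg Lie algebra with an abelian Lie algebra having zero differential (namely its holomorphic cohomology); applying $\clies$ preserves these quasi-isomorphisms, and since $\clies$ of an abelian Lie algebra with zero differential has zero differential itself, we obtain the identification of the cohomology of each term in the top row with the corresponding bottom-row term. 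The injectivity of the bottom horizontal maps then follows by dualizing the chain of inclusions $\fgn[z] \hookrightarrow \fgn \otimes \cO(D_r(0)) \hookrightarrow \fgn[[z]]$ (and the $\beta$-analogue) in the category of differentiable vector spaces from Section \ref{sec functional analysis}, together with the fact that $\cSym$ carries inclusions of such vector spaces to inclusions.

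The principal technical obstacle is verifying the K\"unneth-type statement $H^*\clies(L) \cong \clies(H^*L)$ for the abelian dg Lie algebras at hand, since in general cohomology does not commute with a completed symmetric algebra. I would argue by filtering $\cSym(L^\vee[-1])$ by total polynomial degree, so that each filtered piece $\Sym^k(L^\vee[-1])$ satisfies the classical K\"unneth isomorphism $H^*\Sym^k(V) \cong \Sym^k(H^*V)$ over the field $\CC$ (no flatness obstruction in characteristic zero); the associated spectral sequence then degenerates at $E_1$, and passing to the product over $k$ yields the claim. The care needed at the level of topological vector spaces is precisely what the convenient/differentiable framework of \cite{KrieglMichor} and Appendix B of \cite{CG} is designed to accommodate, so no new analysis is required beyond invoking those references for the behavior of completed tensor products of Dolbeault complexes on a polydisk.
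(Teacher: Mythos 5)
Your proposal is correct and follows essentially the same route as the paper, which presents the lemma as an immediate consequence of applying $\clies$ to the preceding diagram of dg Lie algebras and defers the verification that the vertical maps are quasi-isomorphisms to Proposition \ref{cohomology of point obs}, whose spectral-sequence proof reduces to the same Dolbeault-cohomology-plus-K\"unneth computation you describe. Two small simplifications are worth noting: since the Chevalley--Eilenberg differential preserves symmetric-power degree, the complex splits as a direct product of the subcomplexes $\Sym^k$ and no spectral sequence is actually needed; and the injectivity of the bottom horizontal maps rests on the continuous duals of the \emph{dense} inclusions $\fgn[z] \hookrightarrow \fgn \otimes \cO(D_r(0)) \hookrightarrow \fgn[[z]]$ being injective (restriction along a dense subspace), rather than on $\cSym$ carrying inclusions to inclusions per se.
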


These maps naturally intertwine the Harish-Chandra action of $(\Vect,\GL_n)$, 
so that we obtain an analogous commuting diagram after Gelfand-Kazhdan descent.
One must verify that the vertical maps are quasi-isomorphisms, 
which we do below in Proposition \ref{cohomology of point obs}.
But first let us analyze in more detail which $(\Vect,\GL_n)$-representations appear in the observables.

Consider the case of $\clies(\fgn[[z]] \oplus (\fgn^\vee[[z]]\d z)[-2])$,
since it sits inside all the other examples.
Recall that $\clies(\fgn) \cong \hO_n = \CC[[t_1,\ldots,t_n]]$.
The Lie algebra $\fgn[[z]] \oplus (\fgn^\vee[[z]]\d z)[-2]$ can be viewed 
as an extension of $\fgn \simeq \fgn \cdot z^0$ by the representation 
\[
M = \fgn[[z]]z \oplus (\fgn^\vee[[z]]\d z)[-2],
\]
and hence
\[
\clies(\fgn[[z]] \oplus (\fgn^\vee[[z]]\d z)[-2]) \cong \clies(\fgn, \cSym(M^\vee[-1])).
\]
We now show that this vector space (as it all sits in degree zero) is a direct product of tensor fields.

Some notation will simplify the discussion. 
The appropriate linear dual of $\CC[[z]]$ is the direct sum $\bigoplus_{k \geq 0 } \CC \, \zeta_k$,
where $\zeta_k$ is the dual element to $z^k$.
Let $\zeta_k \d z^\vee$ denote the dual to $z^k\d z$.
Then
\[
M^\vee[-1] = \bigoplus_{0<k} (\fgn^\vee \otimes \zeta_k)[-1] \oplus \bigoplus_{0\leq l} (\fgn \otimes \zeta_l \d z^\vee)[1].
\]
More succinctly, we have
\[
M^\vee[-1] \cong \bigoplus_{0<k} \fg^\vee[-1] \oplus \bigoplus_{0\leq l} \fgn[1].
\]
Let $\widehat{\otimes}$ denotes the completed tensor product, 
so that $\cSym(V \oplus W) \simeq \cSym(V)
\,\widehat{\otimes}\,\cSym(W)$ for any pair of vector
spaces. 

Then 
\begin{align*}
\cSym(M^\vee[-1]) 
& \cong \underset{{0<k}}{\widehat{\bigotimes}}\,\cSym(\fgn^\vee[-1]) \,\widehat{\otimes}\, \underset{{0\leq l}}{\widehat{\bigotimes}}\,\cSym(\fgn[1]) \\
&= \underset{K,L \to \infty}{\text{colim}} \underset{{0<k<K}}{\widehat{\bigotimes}}\,\cSym(\fgn^\vee[-1]) \,\widehat{\otimes}\, \underset{{0\leq l<L}}{\widehat{\bigotimes}}\,\cSym(\fgn[1]).
\end{align*}
where the $(k,l)$th tensor term is associated to $\zeta_k$ and $\zeta_l \d z^\vee$.
(Recall that in the infinite tensor product of unital algebras, 
a term $a_1 \otimes a_2 \otimes \cdots$ has $a_j =1$ for all but finitely many $j$.)
In summary we have the following.

\begin{lemma}
As a $(\Vect,\GL_n)$-modules, the commutative algebra $\clies(\fgn[[z]] \oplus (\fgn^\vee[[z]]\d z)[-2])$ 
decomposes as the infinite tensor product of formal tensor fields,
\[
\underset{{0<k}}{\widehat{\bigotimes}}\,\cSym(\hT^*_n)\, \widehat{\otimes}\, \underset{{0\leq l}}{\widehat{\bigotimes}}\,\cSym(\hT_n),
\]
where $\hT_n$ denotes the formal vector fields viewed as an adjoint representation of $\Vect$ and 
$\hT^*_n$ denotes the formal one-forms viewed as the coadjoint representation.
\end{lemma}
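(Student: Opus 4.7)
The strategy is to exploit that the Lie algebra $\fgn[[z]] \oplus (\fgn^\vee[[z]]\d z)[-2]$ is abelian and concentrated in a single cohomological degree, so that its Chevalley–Eilenberg cochains reduce to a free completed symmetric algebra with trivial differential. This reduction makes the Fourier-mode decomposition in $z$ transparent, and each mode is then identified with a formal tensor field via standard properties of the coinduction functor.

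Concretely, I would first observe that both summands live in cohomological degree $1$ (since $\fgn = \CC^n[-1]$ and $\fgn^\vee[-2] = (\CC^n)^*[-1]$) and have vanishing bracket, so
\[
\clies\bigl(\fgn[[z]] \oplus (\fgn^\vee[[z]]\d z)[-2]\bigr) \;\cong\; \cSym\Bigl((\fgn[[z]])^\vee[-1] \,\oplus\, ((\fgn^\vee[[z]]\d z)[-2])^\vee[-1]\Bigr)
\]
with zero differential. Expanding the continuous duals in Fourier modes, with $\zeta_k$ denoting the functional dual to $z^k$, a short degree computation gives
\[
(\fgn[[z]])^\vee[-1] \;\cong\; \bigoplus_{k \geq 0} (\CC^n)^*\,\zeta_k, \qquad ((\fgn^\vee[[z]]\d z)[-2])^\vee[-1] \;\cong\; \bigoplus_{l \geq 0} \CC^n\,\zeta_l\,\d z^\vee,
\]
both in degree zero. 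Since $\cSym$ sends direct sums to completed tensor products, this rewrites as
\[
\widehat{\bigotimes}_{k \geq 0} \cSym((\CC^n)^*) \;\widehat{\otimes}\; \widehat{\bigotimes}_{l \geq 0} \cSym(\CC^n).
\]
Isolating the $k=0$ factor as the leading copy of $\hO_n = \cSym((\CC^n)^*)$, the remaining factors are precisely the $\GL_n$-modules underlying the formal tensor fields $\cSym_{\hO_n}(\hT_n^*) = \hO_n \otimes_\CC \cSym((\CC^n)^*)$ (for $k > 0$) and $\cSym_{\hO_n}(\hT_n) = \hO_n \otimes_\CC \cSym(\CC^n)$ (for $l \geq 0$), and the completed tensor product of these formal tensor fields over $\hO_n$ reassembles to exactly the expression above.

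The step requiring genuine care is verifying that the $\Vect$-action obtained from Chevalley–Eilenberg agrees with the intrinsic $\Vect$-action on the formal tensor fields. For this I would appeal to the description of the formal tensor field construction as the coinduction of a $\GL_n$-representation to a $\Vect$-representation; equivalently, the canonical $L_\infty$-action $\rho_W$ of $\Vect$ on $\fgn$ from equation (\ref{algact}) induces, through $\clies$, the standard Gelfand–Kazhdan $\Vect$-action on $\hO_n$ and, by the same token, on $\cSym_{\hO_n}(\hT_n^*)$ and $\cSym_{\hO_n}(\hT_n)$ via the associated $\GL_n$-representations. Since $\Vect$ treats the source variable $z$ as a scalar parameter and acts pointwise in the $z$-expansion without mixing Fourier modes, the tensor decomposition above is $\Vect$-equivariant. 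The $\GL_n$-equivariance is immediate because $\GL_n$ acts only on the $\CC^n$ factors and commutes with the Fourier decomposition. Together these identifications yield the claimed isomorphism of $(\Vect, \GL_n)$-modules.
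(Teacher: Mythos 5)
Your bookkeeping is essentially the paper's: the paper peels off the $z^0$ copy of $\fgn$ and writes the cochains as $\clies(\fgn,\cSym(M^\vee[-1]))$ with $M = \fgn[[z]]z \oplus (\fgn^\vee[[z]]\d z)[-2]$ before expanding $M^\vee[-1]$ in the dual modes $\zeta_k$, $\zeta_l\,\d z^\vee$, whereas you take $\cSym$ of the full dual at once and factor out the $k=0$ copy of $\hO_n$ afterward. That is the same computation, and your degree checks and the identification of the underlying $\GL_n$-modules are correct.

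The step that does not hold up is your justification of $\Vect$-equivariance. It is not true that $\Vect$ ``acts pointwise in the $z$-expansion without mixing Fourier modes.'' The extension of the $\L8$-action to $\CC[[z]]\otimes\fgn$ multiplies the $z$-dependence of its inputs:
\[
\ell^{W,A}_m\bigl(X, (z^{k_1}\otimes x_1)\otimes\cdots\otimes(z^{k_m}\otimes x_m)\bigr) = \pm\, z^{k_1+\cdots+k_m}\otimes\ell^{\rm W}_m(X, x_1\otimes\cdots\otimes x_m),
\]
so dually a vector field whose coefficients have polynomial degree $m\geq 2$ sends the mode-$k$ linear generator to a sum over all decompositions $k=k_1+\cdots+k_m$ of products of generators drawn from \emph{different} tensor factors. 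Concretely, for $n=1$ let $a_k$ be the generator dual to $z^k\xi$ (so $a_0 = t$) and take $X = t^2\partial_t$: since $\ell_2(X, z\xi, z\xi) = z^2\,\ell^{\rm W}_2(X,\xi,\xi) = z^2\xi$ pairs nontrivially with $a_2$, the cochain $X\cdot a_2$ has a nonzero component along $a_1^2$, whereas the coadjoint (Lie-derivative) action on the generator of the weight-two copy of $\hOmega^1_1$ produces only $2t$ times that generator. A short computation shows no $\hO_1$-linear change of basis removes this discrepancy; the obstruction is the nontrivial class $c_1^{\GF}(\hT_1)\in H^1(\Vect;\hOmega^1_1)$, i.e.\ the Fa\`a di Bruno / non-split jet-bundle phenomenon. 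What the mode decomposition does respect on the nose is the total conformal weight and the filtration by the number of positive-mode generators; the tensor product of tensor fields is an identification of the associated graded of that filtration, which is what the paper's later uses (descent to $\infty$-jets of tensor bundles, character computations) actually consume. The paper's own proof is silent on this point, but since you supply an explicit reason and that reason is false, you should either restrict the equivariance claim to the associated graded or give an argument that genuinely handles the mode-mixing terms coming from $\ell^{\rm W}_m$ for $m\geq 2$.
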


Since Gelfand-Kazhdan descent is monoidal, we obtain a useful corollary.

\begin{cor}
\label{lem g[[z]]}
For $X$ a complex $n$-manifold, the Gelfand-Kazhdan descent of $\clies(\fgn[[z]] \oplus (\fgn^\vee[[z]]\d z)[-2])$
is isomorphic to the $\cO_X$-module
\[
\underset{{0<k}}{\widehat{\bigotimes}}\,\cSym_{\cO_X}(\cT^*_X)\, \widehat{\otimes}\, \underset{{0\leq l}}{\widehat{\bigotimes}}\,\cSym_{\cO_X}(\cT_X),
\]
where $\cT_X$ denotes the sheaf of holomorphic vector fields and $\cT^*_X$ denotes the sheaf of holomorphic one-forms.
\end{cor}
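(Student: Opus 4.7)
The plan is to bootstrap from the preceding lemma, which has already identified the Chevalley–Eilenberg cochain complex as the completed infinite tensor product $\widehat{\bigotimes}_{0<k}\cSym_{\hO_n}(\hT^*_n)\,\widehat{\otimes}\,\widehat{\bigotimes}_{0\leq l}\cSym_{\hO_n}(\hT_n)$ in the category $\VB_n$ of formal vector bundles. Since this identification is as $(\Vect,\GL_n)$-modules (equivalently, as $\hO_n$-modules with compatible Harish–Chandra action), applying Gelfand–Kazhdan descent reduces the corollary to tracking how each factor descends and how descent interacts with tensor products and symmetric powers.

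First I will invoke the explicit computation of descent on the basic tensor representations: the Proposition in Section \ref{sec gk descent} identifies $\sdesc_{\GK}(\cV)$ with the sheaf of holomorphic sections of the associated bundle $\Fr_X\times^{\GL_n}V$. Applied to $\hT_n$ (the adjoint of $\Vect$, associated to the defining $\GL_n$-representation $\CC^n$), this returns the holomorphic tangent sheaf $\cT_X$; applied to $\hT^*_n$ (the coadjoint, associated to the dual representation $(\CC^n)^*$), it returns $\cT^*_X$. Second, I will use the refined descent functor $\widetilde{\desc}_\GK$, which by the Lemma preceding is symmetric monoidal as a functor into $\desc_\GK(\hO_n)$-modules (i.e., $\cO_X$-modules after taking flat sections). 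Symmetric monoidality immediately converts each finite tensor product $\cSym_{\hO_n}(\hT^*_n)^{\otimes k}\otimes\cSym_{\hO_n}(\hT_n)^{\otimes l}$ in $\VB_n$ into the corresponding tensor product $\cSym_{\cO_X}(\cT^*_X)^{\otimes k}\otimes\cSym_{\cO_X}(\cT_X)^{\otimes l}$ on $X$.

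The main obstacle is passing this identification through the infinite completed tensor product. The completed symmetric algebras $\cSym_{\hO_n}(\hT^*_n)$ are themselves pro-objects in $\VB_n$ (direct sums over symmetric powers, completed at the $\GL_n$-weight filtration), and the outer infinite tensor product is likewise a filtered limit of its finite truncations. One must verify that descent commutes with these limits; this is the step where I expect the bookkeeping to be fiddly but not deep. The key observation is that each $\cSym^m_{\hO_n}(\hT_n^*)$ lies in $\VB_n$ (it is free and finitely generated over $\hO_n$) and descends to $\cSym^m_{\cO_X}(\cT^*_X)$, so the completed symmetric algebra descends termwise with respect to the weight grading. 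The outer infinite tensor product similarly truncates to finite tensor products indexed by $0<k<K$ and $0\leq l<L$; each truncation lies in $\VB_n$ and its descent is given by the symmetric monoidal structure. Passing to the limit as $K,L\to\infty$ on both sides — and observing that the transition maps on either side are the obvious inclusions of unital factors — produces the required isomorphism of $\cO_X$-modules.

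Finally, I will note that the entire construction is natural in Gelfand–Kazhdan structures, so the resulting isomorphism is canonical up to the contractible space of formal exponentials, matching the usual ambiguity in descent. This completes the plan.
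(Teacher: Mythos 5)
Your proposal is correct and follows essentially the same route as the paper, which simply invokes the preceding lemma's decomposition together with the (symmetric) monoidality of Gelfand--Kazhdan descent and the identification of the descent of $\hT_n$ and $\hT_n^*$ with $\cT_X$ and $\cT_X^*$. Your additional care in checking that descent passes through the completed symmetric algebras and the infinite tensor product termwise is a reasonable elaboration of a step the paper leaves implicit, not a different argument.
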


Analogous results can be formulated for the polynomial situation 
and also for $\clies(\DD\fgn^{pow})$ and $\clies(\DD\fgn^{poly})$,
which now involve powers of $\zbar$ and $\d \zbar^\vee$ as well.
By the same reasoning as we just use, we find the following.
(Observe that due to having monomials of the form $z^k \zbar^{k'}$,
the indexing now is doubled.)

\begin{lemma}
For the Lie algebra $\fgn[[z,\zbar]] \oplus \fgn^\vee[[z,\zbar]][-2]$,
the commutative algebra $\clies(\fgn[[z,\zbar]] \oplus \fgn^\vee[[z,\zbar]][-2])$ decomposes as a $(\Vect,\GL_n)$-module
into
\[
\underset{{(k,k') \in \NN^2 \{(0,0)\}}}{\widehat{\bigotimes}}\,\cSym(\hT_n^*)\, \widehat{\otimes}\, \underset{{(l,l') \in \NN^2}}{\widehat{\bigotimes}}\,\cSym(\hT_n),
\]
where $(k,k')$ indexes $z^k \zbar^{k'}$ and likewise for $(l,l')$.
\end{lemma}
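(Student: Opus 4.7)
The plan is to imitate the proof of the preceding lemma verbatim, replacing the singly-indexed monomial basis $\{z^k\}$ of $\CC[[z]]$ by the doubly-indexed basis $\{z^k \zbar^{k'}\}$ of $\CC[[z,\zbar]]$. The key observation is that $\Vect$ and $\GL_n$ act on each monomial summand only through the $\fgn$- or $\fgn^\vee$-factor; they treat the polynomial coordinates $z,\zbar$ as scalars. Consequently the argument is purely combinatorial bookkeeping in how the index set doubles, and the main ``work'' is already in Corollary~\ref{lem g[[z]]}.

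First I would exhibit the dg Lie algebra $\fgn[[z,\zbar]] \oplus \fgn^\vee[[z,\zbar]][-2]$ as a split extension of the sub-Lie-algebra $\fgn \cdot z^0\zbar^0 \subset \fgn[[z,\zbar]]$ by the abelian ideal
\[
M \;=\; \bigoplus_{(k,k') \in \NN^2 \setminus \{(0,0)\}} \fgn \cdot z^k \zbar^{k'} \;\oplus\; \bigoplus_{(l,l') \in \NN^2} \fgn^\vee[-2] \cdot z^l \zbar^{l'},
\]
and observe that this is an extension of $(\Vect,\GL_n)$-representations, since the Harish-Chandra action on $\fgn[[z,\zbar]]$ and $\fgn^\vee[[z,\zbar]]$ preserves bidegree in $(z,\zbar)$. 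Under the standard identification, we then have
\[
\clies\bigl(\fgn[[z,\zbar]] \oplus \fgn^\vee[[z,\zbar]][-2]\bigr) \;\cong\; \clies\bigl(\fgn,\, \cSym(M^\vee[-1])\bigr)
\]
as $(\Vect,\GL_n)$-modules.

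Second I would identify $M^\vee[-1]$ (with its induced $(\Vect,\GL_n)$-structure) as the direct sum
\[
M^\vee[-1] \;\cong\; \bigoplus_{(k,k') \neq (0,0)} \fgn^\vee[-1] \;\oplus\; \bigoplus_{(l,l') \in \NN^2} \fgn[1],
\]
since dualisation acts only on the polynomial degree labels, which are fixed by $\Vect$ and $\GL_n$. Applying the completed exponential law $\cSym(A \oplus B) \cong \cSym(A) \,\widehat{\otimes}\, \cSym(B)$, which is natural in $(\Vect,\GL_n)$-equivariant maps, produces
\[
\cSym(M^\vee[-1]) \;\cong\; \underset{(k,k') \neq (0,0)}{\widehat{\bigotimes}} \cSym(\fgn^\vee[-1]) \;\widehat{\otimes}\; \underset{(l,l') \in \NN^2}{\widehat{\bigotimes}} \cSym(\fgn[1]).
\]

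Finally I would invoke the identifications already used in the previous lemma: as $(\Vect,\GL_n)$-modules, $\cSym(\fgn^\vee[-1]) \cong \cSym(\hT_n^*)$ (the symmetric algebra of formal one-forms viewed coadjointly) and $\cSym(\fgn[1]) \cong \cSym(\hT_n)$ (the symmetric algebra of formal vector fields viewed adjointly). Threading these identifications through the tensor product above yields the desired decomposition, and commuting the outer Chevalley-Eilenberg construction $\clies(\fgn,-)$ through a $(\Vect,\GL_n)$-equivariant tensor product requires no new input beyond what was used in the holomorphic case. The only mild subtlety---really more a bookkeeping remark than an obstacle---is to remember that the pair $(0,0)$ must be excluded from the first tensor product (it corresponds to the base copy of $\fgn$ on which the Lie-algebra cohomology is being taken), while the second tensor product runs over all of $\NN^2$.
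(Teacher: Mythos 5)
Your proposal is correct and is exactly the argument the paper intends: the paper gives no separate proof of this lemma, saying only that it follows ``by the same reasoning'' as the holomorphic case, and your write-up is precisely that reasoning with the index set doubled from $\{z^k\}$ to $\{z^k\zbar^{k'}\}$. Your parenthetical care about excluding $(0,0)$ from the first tensor product (the base copy of $\fgn$ over which the Chevalley--Eilenberg cochains are taken) matches the indexing in the statement, so nothing is missing.
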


A little more work provides us with this result.

\begin{lemma}
\label{lem DDfgn^pow}
The underlying graded vector space of commutative algebra $\clies(\DD\fgn^{pow})$ decomposes as a $(\Vect,\GL_n)$-module into
\[
\underset{{(k,k') \in \NN^2 \{(0,0)\}}}{\widehat{\bigotimes}}\,\cSym(\hT_n^*)\, \widehat{\otimes}\,
\underset{{(l,l') \in \NN^2}}{\widehat{\bigotimes}}\,\cSym(\hT_n) \, \widehat{\otimes}\, 
\underset{{(m,m') \in \NN^2}}{\widehat{\bigotimes}}\,\cSym(\hT_n^*[1])\, \widehat{\otimes}\, 
\underset{{(n,n') \in \NN^2}}{\widehat{\bigotimes}}\,\cSym(\hT_n[1]),
\]
where $(k,k')$ indexes $z^k \zbar^{k'}$ and likewise for the other double indices.
\end{lemma}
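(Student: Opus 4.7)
The plan is to follow the strategy used for the preceding lemma on $\clies(\fgn[[z]]\oplus(\fgn^\vee[[z]]\d z)[-2])$, now with the source space enlarged to the full Dolbeault complex of formal power series. First I would decompose $\DD\fgn^{pow}$ as a direct sum of $(\Vect,\GL_n)$-submodules indexed by the monomials in $z,\zbar,\d z,\d\zbar$: each monomial contributes a single copy of $\fgn$, $\fgn^\vee[-2]$, or a cohomological shift of these by the degree-one symbol $\d\zbar$. The $(\Vect,\GL_n)$-action touches only the $\fgn$ and $\fgn^\vee$ tensor factors and the Lie bracket is identically zero on $\DD\fgn^{pow}$, so since $\clies$ of a direct sum of abelian graded Lie algebras is the completed tensor product of the individual Chevalley--Eilenberg cochains, the computation reduces to identifying $\clies$ of each of the four types of summand.

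Next I would identify each factor: $\clies(\fgn) = \hO_n$ is identified with $\cSym(\hT_n^*)$ under the conventions of the previous lemma; $\clies(\fgn^\vee[-2]) = \cSym(\fgn[1])$ is identified with $\cSym(\hT_n)$; and the two $\d\zbar$-shifted versions yield $\cSym(\hT_n^*[1])$ and $\cSym(\hT_n[1])$ respectively. Matching these four families of factors against the four families of monomials $z^k\zbar^{k'}$, $z^l\zbar^{l'}\d z$, $z^m\zbar^{m'}\d\zbar$, $z^n\zbar^{n'}\d z\,\d\zbar$ yields the four completed tensor products appearing in the statement. $(\Vect,\GL_n)$-equivariance is preserved throughout because the rearrangements only permute source-coordinate tensor factors, which carry no $(\Vect,\GL_n)$-structure.

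The one bookkeeping subtlety is reproducing the exclusion of $(0,0)$ from the first index set. As in the previous lemma, one splits the copy $\fgn\cdot z^0\zbar^0$ off from $\fgn\otimes\CC[[z,\zbar]]$ and treats the remainder as a $\fgn$-module $M$; then $\clies(\DD\fgn^{pow})$ rewrites as $\clies(\fgn,\,\cSym(M^\vee[-1]))$, absorbing the ``constant'' copy of $\cSym(\hT_n^*)$ into the outer Chevalley--Eilenberg functor rather than displaying it as an external tensor factor. No analogous exclusion is needed in the other three families because none of $\fgn^\vee[-2]$, $\fgn\cdot\d\zbar$, or $\fgn^\vee[-2]\cdot\d z\,\d\zbar$ contains a copy of $\fgn$ itself, so they all contribute external factors with full indexing over $\NN^2$. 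The main obstacle is therefore not conceptual but is the careful bookkeeping of cohomological shifts --- verifying that the $\d\zbar$-shift on a $\fgn$-summand induces the $[1]$-shift on $\hT_n^*$ in the correct direction, and similarly on the $\fgn^\vee[-2]$ side --- and confirming that the $(\Vect,\GL_n)$-module structure on each factor coincides with the one on the corresponding formal tensor bundle.
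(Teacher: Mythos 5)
Your proposal is correct and follows essentially the same route as the paper: the paper's proof splits $\clies(\DD\fgn^{pow})^\sharp$ as $\cA \otimes \cB$, where $\cA$ is the degree-zero part already computed in the preceding lemma (including the $\clies(\fgn,\cSym(M^\vee[-1]))$ device that accounts for the excluded index $(0,0)$) and $\cB$ is the symmetric algebra on the shifted dual of the $\d\zbar$-component, identified with the two families of factors $\cSym(\hT_n^*[1])$ and $\cSym(\hT_n[1])$. Your monomial-by-monomial decomposition, the identification of the four types of factors, and your explanation of why only the first family omits $(0,0)$ all match the paper's bookkeeping.
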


\begin{proof}
The decomposition of the underlying graded $(\Vect,\GL_n)$-module of $\clies(\DD\fgn^{pow})$ 
is also straightforward, given our work above, but it involves some bookkeeping.
The underlying graded vector space of $\DD\fgn^{pow}$ is
\[
\begin{array}{ccc}
\text{degree:} & \underline{1} & \underline{2} \\
\text{vector space:} & \fgn[[z,\zbar]] \oplus \fgn^\vee[[z,\zbar]][-2] & (\fgn[[z,\zbar]] \oplus \fgn^\vee[[z,\zbar]][-2]) \otimes \d\zbar
\end{array}
\]
Thus, $\clies(\DD\fgn^{pow})$ is concentrated in nonpositive degrees, 
and the preceding lemma gives us the degree zero component,
which we denote $\cA$.
The new contribution $\cB$ comes from the degree two component of $\DD\fgn^{pow}$.
It generates an algebra by linear dual of this component placed in degree~-1:
\[
\cB := \cSym\left( ((\fgn[[z,\zbar]] \oplus \fgn^\vee[[z,\zbar]][-2]) \otimes \d\zbar)^\vee[1] \right).
\]
We  have
\[
\clies(\DD\fgn^{pow})^\sharp \cong \cA \otimes \cB
\]
as graded algebras.
We thus need to have a succinct way to describe the algebra~$\cB$.

Let $\zeta_{m,n} \d \zbar^\vee$ denote the dual element to $z^m \zbar^n \d\zbar$.
Then 
\begin{align*}
\cB &= \cSym \left( \bigoplus_{(k,k') \in \NN^2} \fgn^\vee \, \zeta_{m,n} \d \zbar^\vee \oplus \bigoplus_{(l,l') \in \NN^2} \fgn[2] \, \zeta_{m,n} \d \zbar^\vee\right)\\
&= \underset{{(k,k') \in \NN^2}}{\widehat{\bigotimes}}\,\cSym(\hT_n^*[1])\, \widehat{\otimes}\, \underset{{(l,l') \in \NN^2}}{\widehat{\bigotimes}}\,\cSym(\hT_n[1]).
\end{align*}
Hence we obtain the claim.
\end{proof}

\subsection{The classical observables supported at a point}

The observables $\clies(\DD\fgn^{pow})$ have a natural field-theoretic interpretation:
they are the observables supported at the origin in the source manifold $\CC$.
As we will explain below, these observables map to the observables $\Obs^\cl_n(U)$
supported on any open $U \subset \CC$ containing the origin,
and so they provide a rich source of easily-understood measurements.

Recall that the distributions (i.e., continuous linear functionals on smooth functions) 
supported at the origin in $\CC$ consist of finite linear combinations of the the delta function $\delta_0$ 
supported at the origin and its partial derivatives.
In other words, it consists of linear functionals that read off the Taylor coefficients of a smooth function.
We introduce the notation $\zeta_{m,n}$ for the distribution $\partial_z^m \partial_{\zbar}^n \delta_0$.
Hence, the smooth distributions with support at the origin are the vector space
\[
\mathcal{D}_0 := \bigoplus_{(m,n) \in \NN^2} \CC\, \zeta_{m,n} = (\CC[[z,\zbar]])^\vee.
\]
By our work above, we see that the linear dual of the power series Dolbeault complex $\Omega^{0,*}_{pow}$
is the cochain complex 
\[
\begin{array}{ccc}
\mathcal{D}_0 \,\d\zbar^\vee & \xto{\dbar^\vee} & \mathcal{D}_0 \\
\zeta_{m,n} \,\d\zbar^\vee & \mapsto & \zeta_{m,n+1}
\end{array},
\]
which is concentrated in degrees 0 and -1.
We denote it by $(\Omega^{0,*})^\vee_0$.
An analogous complex $(\Omega^{1,*})^\vee_0$ encodes 
the distributional dual of the Dolbeault complex of 1-forms with support at the origin.

\begin{rmk}
A nice feature of working with a holomorphic field theory, like the $\beta\gamma$ system,
is that using the linear observables supported at the origin,
one can fully identify any solution to the equations of motion.
This fact is the linear dual to the fact that $\cO(\CC) \hookrightarrow \CC[[z]]$,
i.e., every holomorphic function is determined by its power series expansion.
\end{rmk}

The linear observables --- supported at the origin --- of the rank $n$ formal $\beta\gamma$ system are then
\[
(\Omega^{0,*})^\vee_0 \otimes \fgn^\vee[-1] \oplus (\Omega^{1,*})^\vee_0 \otimes \fgn[1].
\]
For us, the algebra of classical observables is the completed symmetric algebra on these linear observables.
Let $(\Obs^\cl_n)_0$ denote the cochain complex of observables 
with support at the origin on the rank $n$ formal $\beta\gamma$ system.
Explicitly, we have
\[
(\Obs^\cl_n)_0 = \cSym((\Omega^{0,*})^\vee_0 \otimes \fgn^\vee[-1] \oplus (\Omega^{1,*})^\vee_0 \otimes \fgn[1]) 
\]
with the differential by extending as a derivation the differential on the linear generators.

We record an immediate consequence of the fact the distributions with compact support extend from smaller to larger open sets.

\begin{lemma}
For any open set $U \subset \CC$ containing the origin, there is a cochain map
\[
(\Obs^\cl_n)_0 \hookrightarrow \Obs^\cl_n(U)
\]
extending the inclusion of the distributions supported at the origin to the distributions with support in $U$.
\end{lemma}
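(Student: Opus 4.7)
The plan is to construct the map from the linear observables (the dg vector space of distributional duals of fields) and then extend functorially to the completed symmetric algebra. The key geometric fact is that the $\dbar$ operator is local, so Taylor expansion at the origin intertwines with $\dbar$; dualizing gives an inclusion of distributions at the origin into compactly-supported distributions on $U$ that commutes with $\dbar^\vee$.

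More precisely, first I would observe that for any open $U \ni 0$, evaluation of Taylor coefficients at the origin defines a surjective cochain map
\[
j_U^*: \Omega^{0,*}(U) \twoheadrightarrow \Omega^{0,*}_{pow}, \qquad j_U^*: \Omega^{1,*}(U) \twoheadrightarrow \Omega^{1,*}_{pow},
\]
since $\dbar$ is given by a local differential operator. Dualizing, one obtains inclusions
\[
(\Omega^{0,*})_0^\vee \hookrightarrow \overline{\Omega^{0,*}_c}(U), \qquad (\Omega^{1,*})_0^\vee \hookrightarrow \overline{\Omega^{1,*}_c}(U),
\]
which present the distributions supported at the origin as a subcomplex of the compactly supported distributional sections on $U$, and this inclusion intertwines $\dbar^\vee$. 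Tensoring with the algebraic factors $\fgn^\vee[-1]$ and $\fgn[1]$ gives an inclusion of cochain complexes
\[
\iota_1: (\Omega^{0,*})_0^\vee \otimes \fgn^\vee[-1] \oplus (\Omega^{1,*})_0^\vee \otimes \fgn[1] \hookrightarrow (\DD\fgn^U)^\vee[-1]
\]
at the level of linear observables.

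Second, I would apply the completed symmetric algebra functor $\cSym$ to $\iota_1$. Because $\cSym$ is functorial for morphisms of the class of (differentiable) vector spaces in use (see Section~\ref{sec functional analysis}) and because the multiplicative extensions of the $\dbar^\vee$-derivations on the two sides are defined by the same Leibniz rule from the linear differential, the induced map
\[
(\Obs^\cl_n)_0 \;=\; \cSym\bigl((\Omega^{0,*})_0^\vee \otimes \fgn^\vee[-1] \oplus (\Omega^{1,*})_0^\vee \otimes \fgn[1]\bigr)\;\hookrightarrow\;\cSym((\DD\fgn^U)^\vee[-1]) \;=\; \Obs^\cl_n(U)
\]
is automatically a cochain map. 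Injectivity is preserved because each symmetric power $\Sym^k$ of an inclusion of differentiable vector spaces remains an inclusion, given that the external tensor product of compactly supported distributional sections remains compactly supported distributional sections on the product.

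The ``main obstacle'' is really only the bookkeeping: one must verify that the notion of ``distribution supported at $0$'' that we want — namely finite linear combinations of $\partial_z^m \partial_{\zbar}^n \delta_0$, equivalently the continuous dual of the power series ring under the adic topology — agrees under the identification $\bigl(\Omega^{0,*}_{pow}\bigr)^\vee \cong (\Omega^{0,*})_0^\vee$ with the subspace of $\overline{\Omega^{0,*}_c}(U)$ consisting of distributions whose support is $\{0\}$. This is a standard consequence of the structure theorem for distributions with point support, together with Borel's lemma giving surjectivity of Taylor expansion. Once this identification is made, the compatibility with $\dbar^\vee$ is tautological, and the extension to the symmetric algebras completes the proof.
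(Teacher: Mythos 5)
Your argument is correct and matches the paper's, which treats the lemma as an immediate consequence of the fact that distributions supported at the origin are already compactly supported distributions on any open $U \ni 0$; your derivation of the linear-level inclusion by dualizing the surjective Taylor-expansion cochain map is just the dual formulation of that same inclusion, and the extension through $\cSym$ is the same routine step.
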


This map is manifestly equivariant with respect to the $(\Vect,\GL_n)$ action and hence descends.

\begin{cor}
For any open set $U \subset \CC$ containing the origin, there is a map of dg $\Omega^*(X)$-modules
\[
(\Obs^\cl_X)_0 \hookrightarrow \Obs^\cl_X(U)
\]
extending the inclusion of the distributions supported at the origin to the distributions with support in $U$.
\end{cor}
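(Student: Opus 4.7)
The plan is to deduce the corollary from the preceding lemma by applying the strict Gelfand-Kazhdan descent functor developed in Section \ref{sec ss GK descent}. The key observation is that once we verify the inclusion $(\Obs^\cl_n)_0 \hookrightarrow \Obs^\cl_n(U)$ is a morphism of strict $(\Vect,\GL_n)$-modules, functoriality of descent does all the work.

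First, I would verify the equivariance. The $\GL_n$-action on observables is induced by the target action on $\fgn$ and $\fgn^\vee$ and involves no integration or differentiation on the source $\CC$; hence it manifestly preserves support. The $\Vect$-action is encoded by the twisting cocycle $\{I^{\rm W},-\}$, and because $I^{\rm W}$ is a \emph{local} functional on the source (its underlying Lagrangian density involves only pointwise products of the fields), its bracket with a distribution supported at the origin remains supported at the origin. Thus the subcomplex of observables with support at $0 \in \CC$ is stable under both actions, and the inclusion into $\Obs^\cl_n(U)$ is a (strict) map of $(\Vect,\GL_n)$-modules intertwining differentials.

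Next, I would apply the descent functor $\desc_\GK((\Fr_X,\sigma),-)$ to this equivariant inclusion. By the definition of $\Obs^\cl_X$ and $(\Obs^\cl_X)_0$ as the descents of $\Obs^\cl_n(U)$ and $(\Obs^\cl_n)_0$ respectively, this produces the desired map of dg $\Omega^*(X)$-modules. Functoriality is immediate from the construction: the descent is built as the basic subcomplex of $\Omega^*(\Fr_X) \otimes V$ equipped with the twisted connection $\d_{dR} + \rho_{\fg}(\omega_\sigma)$, and an equivariant map $V \to V'$ clearly induces a map of basic subcomplexes commuting with the connection.

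The one point requiring mild care, which is the main step to verify rather than a genuine obstacle, is that descent preserves the injectivity. This is clear in our situation because each module carries a filtration (by polynomial order in the $\zeta_{m,n}$ and $\zeta_{m,n}\d\zbar^\vee$ of Lemma \ref{lem DDfgn^pow}) that is compatible with the $(\Vect,\GL_n)$-action and along which the inclusion is filtered. The associated graded map is an inclusion of free $\hO_n$-modules onto direct summands, and descent --- which on the level of underlying graded objects is just the associated bundle construction $\Fr_X \times^{\GL_n}(-)$ --- sends such an inclusion to an inclusion of vector bundles on $X$. This yields the desired injection of $\Omega^*(X)$-modules.
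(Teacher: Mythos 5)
Your proposal is correct and follows exactly the paper's (one-sentence) argument: the paper simply observes that the inclusion $(\Obs^\cl_n)_0 \hookrightarrow \Obs^\cl_n(U)$ is manifestly $(\Vect,\GL_n)$-equivariant and hence descends. Your added details — that the $\GL_n$- and $\Vect$-actions preserve support because $I^{\rm W}$ is local, and that descent (being tensoring with $\Omega^*(\Fr_X)$ and passing to basic forms) preserves the injection — are correct elaborations of what the paper leaves implicit.
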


\begin{rmk}
Formulating a version of this statement for the quantum observables would be more delicate,
as one must work with parametrices and RG flow.
As we are working with a free theory here, however, one can instead use the ``smoothed observables.''
See below for a discussion of quantum observables.
\end{rmk}

Note that the underlying graded vector space of $(\Obs^\cl_X)_0$ is
\[
\cSym_{\Omega^\sharp(X)}(\Omega^\sharp(X,(\Omega^{0,*})^\vee_0 \otimes T^*_X) \oplus \Omega^\sharp(X,(\Omega^{1,*})^\vee_0 \otimes T_X)) 
\]
as we are working over the base algebra $\Omega^*(X)$.
Equivalently, one can express it as the de Rham complex of a (gigantic!) dg vector bundle 
\[
\Omega^\sharp(X, \cSym((\Omega^{0,*})^\vee_0 \otimes T^*_X \oplus (\Omega^{1,*})^\vee_0 \otimes T_X)).
\]
The differential involves both $\dbar$ for the source manifold $\CC$ and a connection $\nabla$ along the target $X$.

We now turn to determining the cohomology of $(\Obs^\cl_X)_0$, 
which encodes the measurements one can make at the origin of $\CC$ 
of the fields $\gamma$ and $\beta$ with target $X$.

\begin{prop}\label{cohomology of point obs}
There is a natural isomorphism 
\[
H^*(\Obs^\cl_X)_0 \cong 
H^*\left(X,\underset{{0<k}}{\widehat{\bigotimes}}\,\cSym_{\cO_X}(\cT^*_X)\, \widehat{\otimes}\, \underset{{0\leq l}}{\widehat{\bigotimes}}\,\cSym_{\cO_X}(\cT_X)\right),
\]
identifying the cohomological observables supported at the origin with $\clies(\fgn[[z]] \oplus (\fgn^\vee[[z]]\d z)[-2])$.
\end{prop}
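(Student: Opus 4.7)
The strategy is to reduce to a formal computation followed by Gelfand-Kazhdan descent. First, observe that the descent of $\clies(\fgn[[z]] \oplus (\fgn^\vee[[z]]\,\d z)[-2])$ is, by Corollary \ref{lem g[[z]]}, precisely the tensor bundle complex appearing in the statement. Hence it suffices to identify $H^*(\Obs^\cl_n)_0$ with $\clies(\fgn[[z]] \oplus (\fgn^\vee[[z]]\,\d z)[-2])$ as a $(\Vect, \GL_n)$-module and then verify that strict Gelfand-Kazhdan descent commutes with taking cohomology in this setting.

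For the formal identification, write $(\Obs^\cl_n)_0 = \cSym(V_0)$ where
\[
V_0 = (\Omega^{0,*})^\vee_0 \otimes \fgn^\vee[-1] \oplus (\Omega^{1,*})^\vee_0 \otimes \fgn[1],
\]
with differential induced by $\dbar^\vee$. Using the explicit basis $\{\zeta_{m,n}, \zeta_{m,n}\,\d\zbar^\vee\}$ of $\mathcal{D}_0$, a direct calculation shows that $H^*((\Omega^{0,*})^\vee_0)$ is concentrated in degree $0$ with basis $\{\zeta_{m,0}\}_{m\geq 0}$; these distributions extract the holomorphic Taylor coefficients at the origin, so $H^0((\Omega^{0,*})^\vee_0) \cong (\CC[[z]])^\vee$, and similarly for $(\Omega^{1,*})^\vee_0$. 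This identifies $H^*(V_0)$, $(\Vect,\GL_n)$-equivariantly, with the degree-zero linear generators of $\clies(\fgn[[z]] \oplus (\fgn^\vee[[z]]\,\d z)[-2])$. Since $H^*(V_0)$ is concentrated in a single degree, the completed symmetric algebra commutes with cohomology, giving
\[
H^*(\Obs^\cl_n)_0 \;\cong\; \cSym(H^*(V_0)) \;\cong\; \clies(\fgn[[z]] \oplus (\fgn^\vee[[z]]\,\d z)[-2])
\]
as graded $(\Vect, \GL_n)$-modules.

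Finally, since strict Gelfand-Kazhdan descent amounts to tensoring a $(\Vect,\GL_n)$-module against a flat principal bundle, it preserves quasi-isomorphisms; consequently the cohomology sheaves of $\desc_\GK((\Obs^\cl_n)_0)$ agree with $\desc_\GK(H^*(\Obs^\cl_n)_0)$, which by Corollary \ref{lem g[[z]]} is the tensor bundle sheaf. Passing to hypercohomology on $X$ yields the claimed isomorphism. The main technical obstacle is justifying the interchange of $\cSym$ with cohomology in the category of differentiable vector spaces, given the completed tensor products and distributional topologies in play; once one checks that the functional-analytic framework of Appendix B of \cite{CG} is flexible enough to support this interchange (and that the identifications on linear generators are genuinely $(\Vect,\GL_n)$-equivariant), the remaining steps are formal consequences of the descent formalism developed in Section \ref{sec ss GK descent}.
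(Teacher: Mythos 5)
Your argument is correct and is essentially the paper's own proof reorganized: the paper filters $(\Obs^\cl_X)_0$ by de Rham degree on $X$ and runs the resulting spectral sequence, whose first page is precisely the descent of the $\dbar$-cohomology you compute fiberwise (spanned by the $\zeta_{m,0}$, concentrated in degree zero, hence collapse at $E_2$), with the second page giving the sheaf cohomology of the tensor bundle from Corollary \ref{lem g[[z]]}. Your "descent preserves quasi-isomorphisms" step is exactly what that finite spectral sequence establishes, and the functional-analytic point you flag about commuting $\cSym$ with cohomology is likewise implicit (and unaddressed) in the paper's identification of the first page.
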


\begin{proof}
Before embarking on a spectral sequence, we note that the arguments
for Lemma \ref{lem DDfgn^pow} tell us that we obtain an infinite (completed) tensor product
of tensor bundles from Gelfand-Kazhdan descent,
via the identification of $(\Obs^\cl_X)_0$ with $\clies(\DD\fgn^{pow})$,
so that we view $\DD\fgn^{pow}$ as the ``fields'' (more accurately, jets of fields at the origin).

The differential on $(\Obs^\cl_n)_0$ has the form $\nabla + \dbar$, 
where $\dbar$ denotes the extension of the differential on $(\Omega^{0,*})^\vee_0$ and $(\Omega^{1,*})^\vee_0$
and $\nabla$ denotes the connection along $X$ arising from Gelfand-Kazhdan descent.
As $\nabla$ is a connection, it increases the de Rham form degree in the $X$-direction, 
whereas $\dbar$ preserves this de Rham form degree in the $X$-direction, since it only cares about the $\CC$-direction.
Consider then the filtration on $(\Obs^\cl_n)_0$ induced by the filtration $\Omega^{\geq *}(X)$ on $\Omega^*(X)$.
The first page of the spectral sequence is the cohomology with respect to $\dbar$:
\[
\Omega^\sharp(X, \cSym(H^*(\Omega^{0,*})^\vee_0 \otimes T^*_X \oplus H^*(\Omega^{1,*})^\vee_0 \otimes T_X)).
\]
These groups $H^*(\Omega^{0,*})^\vee_0$ and $H^*(\Omega^{1,*})^\vee_0$
are concentrated in cohomological degree 0 and are spanned by the linear functionals $\{\zeta_{n,0}\}$, which give the holomorphic Taylor coefficients.
They do not vary along $X$, so
\[
H^*(\Omega^{0,*})^\vee_0 \otimes T^*_X \cong \bigoplus_{n \in \NN} T^*_X \quad \text{and} \quad H^*(\Omega^{1,*})^\vee_0 \otimes T_X \cong \bigoplus_{n \in \NN} T_X.
\]
The induced differential on the first page of the spectral sequence is the induced connection $\Tilde{\nabla}$ on the bundle
\[
\cSym(H^*(\Omega^{0,*})^\vee_0 \otimes T^*_X \oplus H^*(\Omega^{1,*})^\vee_0 \otimes T_X),
\]
so we need to unravel what this bundle means from the perspective of Gelfand-Kazhdan descent.
By Corollary \ref{lem g[[z]]} we know it is identified with the $\infty$-jet bundle 
\[
\underset{{0<k}}{\widehat{\bigotimes}}\,\cSym(T^{1,0*}_X)\, \widehat{\otimes}\, \underset{{0\leq l}}{\widehat{\bigotimes}}\,\cSym(T^{1,0}_X).
\]
(More precisely, it is the jet bundle encoding holomorphic sections.)
The induced connection is the many-fold tensor product of the Grothendieck connection under this identification.
This induced connection $\Tilde{\nabla}$ is the differential on the first page of the spectral sequence, which collapses on the second page.
Thus, we see that the second page is
\[
H^*\left(X,\underset{{0<k}}{\widehat{\bigotimes}}\,\cSym_{\cO_X}(\cT^*_X)\, \widehat{\otimes}\, \underset{{0\leq l}}{\widehat{\bigotimes}}\,\cSym_{\cO_X}(\cT_X)\right),
\]
the cohomology of the $\cO$-module from Lemma \ref{lem g[[z]]}.
\end{proof}

\subsection{The decomposition by conformal dimension: the rotation action on the source}

The rotation action of $U(1)$ on $\CC$ --- or any disk $D_r(0)$ --- induces a rotation action on 
the fields of the curved $\beta\gamma$ system and on the observables $\Obs^\cl_X(D_r(0))$.
It is easy to identify the subspaces of given conformal dimension in light of our work above. 
In particular, we know that polynomials decompose according to conformal dimension as
\[
\CC[z,\zbar] = \bigoplus_{w \in \ZZ} \CC[z,\zbar]_w \quad\text{where}\quad \CC[z,\zbar]_w = \bigoplus_{m - n = w} \CC\, z^m \zbar^n,
\]
and so power series decompose as
\[
\CC[z,\zbar] = \prod_{w \in \ZZ} \CC[z,\zbar]_w.
\]
Polynomials are dense in smooth functions, so we see that $C^\infty(\CC)$ has the same subspaces,
albeit it is some completion of the direct sum of these subspaces.
These dimensional decompositions directly apply to the ``fields'' $\DD\fgn^{poly}$ and~$\DD\fgn^{pow}$.

As observables are symmetric algebras on the linear observables, 
the dimensional decomposition of the fields allows us to identify the observables' dimensional decomposition.
A thorough description is straightforward but involves substantial notation,
so we will state the result only for the cohomological observables $H^*(\Obs^\cl_X)_0$ 
supported at the origin, as this result is the only one we explicitly need.

\begin{lemma}
The conformal dimension $N$ component of $H^*(\Obs^\cl_X)_0$ is
\[
\bigoplus_{\substack{a,b \in \NN^\NN \text{ such that}\\ \sum_{0 < k} a_k k + \sum_{0 \leq l} b_l l = N}} \bigotimes_{0 <k } \Sym^{a_k}_{\cO_X}(\cT^*_X) \otimes \bigotimes_{0 \leq l } \Sym^{b_l}_{\cO_X}(\cT^*_X).
\]
\end{lemma}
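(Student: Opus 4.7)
The strategy is to refine Proposition \ref{cohomology of point obs} by tracking the $U(1)$-weight at every stage of the cohomology computation, where $U(1)$ acts on the source disk by $z \mapsto e^{i\theta} z$.

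First, I would identify the $U(1)$-action on the generators of the Dolbeault complex at the origin and its distributional dual. Since a $\gamma$-field is a holomorphic function of conformal weight $0$, its $k$-th holomorphic Taylor coefficient picks up weight $k$, and so the linear observable $\zeta_k$ dual to $z^k$ has weight $k$. Since a $\beta$-field is a $(1,0)$-form of conformal weight $1$, the linear observable dual to $z^l \d z$ acquires weight $l+1$ (the extra unit of weight comes from the $\d z$-factor). Explicitly, these are the generators that appear in the proof of Lemma \ref{lem g[[z]]} under the names $\zeta_k$ (for $k \geq 1$, after discarding the constant Taylor coefficient of $\gamma$) and $\zeta_l \d z^\vee$ (for $l \geq 0$).

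Second, I would rerun the two-page spectral sequence from the proof of Proposition \ref{cohomology of point obs} in the category of $U(1)$-representations. The essential point is that $\dbar$, the Grothendieck connection arising from a Gelfand-Kazhdan structure, the connection along $X$, and every ingredient of Gelfand-Kazhdan descent commute with the $U(1)$-action, because the latter acts only on the source $\CC$ and is decoupled from the target manifold $X$. Hence each page of the spectral sequence is a $U(1)$-representation, the collapse on the second page happens within each $U(1)$-weight, and the isomorphism of Proposition \ref{cohomology of point obs} respects the conformal-dimension grading. Moreover, the constant Taylor coefficient of $\gamma$ drops out as it is absorbed into the basepoint datum on $X$ --- this is exactly the reason for the restriction $k \geq 1$ in Lemma \ref{lem g[[z]]}.

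Third, I would extract the weight-$N$ eigenspace from the resulting infinite symmetric algebra over $\cO_X$. Since the $U(1)$-weight is additive across tensor products and symmetric powers, the dimension-$N$ piece decomposes as a direct sum indexed by partitions of $N$ among the available generators: for each choice of nonnegative integers $(a_k)$ and $(b_l)$ with total weight equal to $N$, one obtains exactly one summand $\bigotimes_{k} \Sym^{a_k}_{\cO_X}(\cT^*_X) \otimes \bigotimes_{l} \Sym^{b_l}_{\cO_X}(\cT_X)$. Only finitely many $a_k, b_l$ are nonzero for any fixed $N$, so the direct sum is well-defined and reproduces the indexing set stated.

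I do not anticipate a substantial obstacle, as the main content of the statement is already contained in Proposition \ref{cohomology of point obs}; the only new ingredient is equivariance with respect to source rotations. The one point requiring care is verifying that the $E_\infty$-identification at the level of symmetric algebras strictly respects the weight grading (not just up to filtration), but this is automatic because the $U(1)$-action commutes with every differential and structure map in play.
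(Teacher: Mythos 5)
Your proposal is correct and is essentially the paper's own argument in expanded form: the paper's proof is a two-sentence observation that conformal weight is additive over the tensor decomposition already established in Proposition \ref{cohomology of point obs} and the surrounding lemmas, and that positivity of the generators' weights forces all but finitely many of the $a_k, b_l$ to be nonzero-bounded; your equivariant spectral-sequence argument just makes the additivity and the collapse precise. One caveat: your (correct) bookkeeping assigns weight $l+1$ to the observable dual to $z^l\,\d z$, so the constraint you actually derive is $\sum_{0<k} a_k k + \sum_{0\leq l} b_l(l+1) = N$, which does \emph{not} literally reproduce the indexing $\sum_{0\leq l} b_l l = N$ printed in the statement --- under the printed constraint $b_0$ would be unconstrained, contradicting both your finiteness claim and the fact that $b^j_{-1}$ has conformal dimension one in $\hCDO_n$. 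Your weights are the right ones; you should either reindex the $\beta$-factors by conformal dimension or flag the shift explicitly rather than asserting agreement with the indexing set as stated.
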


\begin{proof}
For example, the conformal dimension of an element of the symmetric power $\Sym^a_{\cO_X}(\cT_X)$ associated to the monomial $z^k$ is $ak$.
Hence the direct sum consists of conformal dimension $N$ components, by summing all the relevant conformal dimensions.
In the other direction, note that an element of the infinite tensor product composing $H^*(\Obs^\cl_X)_0$ 
must be the identity for all but finitely many of the indices $k$ and $l$.
A term in this element must have a polynomial contribution from any given index, due to the bound of $N$ on the total conformal dimension.
\end{proof}

\subsection{The quantum observables}

The quantum observables exhibit the same behavior as the classical observables
with respect to the rotation action and the $(\TVect,\GL_n)$ action.
The arguments must be modified, however, to deal with the BV Laplacian, for example.
Instead of working with the observables supported at the origin, 
it is more convenient to work with the global observables $\Obs^\q_n(\CC)$ or~$\Obs^\q_X(\CC)$.

Let us begin by discussing~$\Obs^\q_n(\CC)$.
There are two versions, depending on whether one works with parametrices or the smoothed observables.
We restrict our attention to parametrices associated with the heat kernel, 
as these are manifestly invariant for the $U(1)$ and $(\TVect,\GL_n)$ actions:
\[
\Obs^\q_n(\CC)[L] = (\cSym(\Bar{\Omega}^{1,*}_c(\CC) \otimes \fgn^\vee[-1] \oplus \Bar{\Omega}^{0,*}_c(\CC) \otimes \fgn[1])[[\hbar]], \dbar + \hbar \Delta_L).
\]
The {\em smoothed observables} are
\[
{\Obs}^{\q,\fr}_n(\CC) = (\cSym(\Omega^{1,*}_c(\CC) \otimes \fgn^\vee[-1] \oplus \Omega^{0,*}_c(\CC) \otimes \fgn[1])[[\hbar]], \dbar + \hbar \Delta),
\]
so that we take the symmetric algebra on the smooth distributions,
such as $\Omega^{1,*}_c(\CC) \subset \Bar{\Omega}^{1,*}_c(\CC)$, inside all distributions. 
The naive BV Laplacian is well-defined on the smoothed observables. 
(See \cite{CG1} for a discussion.)
Ignoring the differentials momentarily, one sees that the same decompositions from above apply;
one can still use the monomials $z^m \zbar^n$ to organize one's
thinking.

The new term in the differential is a BV Laplacian, either $\Delta_L$ or $\Delta$.
Recall that these are built out of the evaluation pairing between $\fgn$ and $\fgn^\vee$
and the wedge-and-integrate pairing between $\Omega^{0,*}_c(\CC)$ and $\Omega^{1,*}_c(\CC)$.
(At scale $L$ the pairing on Dolbeault forms is modified by a mollifying function that is $U(1)$-invariant.)
Both pairings are equivariant for the actions by $U(1)$ and $(\TVect,\GL_n)$,
and we have constructed equivariant quantizations, 
so that BV Laplacians manifestly intertwine with these actions.
In particular, the dimensional decompositions are preserved by the differential.
 
Moreover, when one wants to focus on cohomology, one can exploit the $\hbar$-filtration 
\[
\Obs^\q_n \supset \hbar\, \Obs^\q_n \supset \hbar^2 \Obs^\q_n \supset \cdots
\]
to good effect. 
For instance, there is a spectral sequence associated to this filtration, 
and it collapses on the first page, since the cohomology with respect to $\dbar$ is concentrated in degree zero.
Hence, as vector spaces,
\[
H^* \Obs^\q_n(\CC) \cong H^* \Obs^\cl_n(\CC)[[\hbar]].
\]
By our discussion above, we see that we thus already know the decompositions
of $H^* \Obs^\q_n(\CC)$ with respect to the $U(1)$ or $(\TVect,\GL_n)$ actions.

\begin{prop}
The conformal dimension $N$ component of $H^*(\Obs^\q_n(\CC))$ is
\[
\CC[[\hbar]] \otimes \bigoplus_{\substack{a,b \in \NN^\NN \text{ such that}\\ \sum_{0 < k} a_k k + \sum_{0 \leq l} b_l l = N}} \bigotimes_{0 <k } \Sym^{a_k}(\hT_n^*) \otimes \bigotimes_{0 \leq l } \Sym^{b_l}(\hT_n).
\]
\end{prop}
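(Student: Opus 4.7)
The plan is to reduce the quantum computation to the classical one via the $\hbar$-adic spectral sequence and then invoke the preceding lemma, taking care to verify that the equivariance for the $U(1)$-rotation action survives each step.

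First, I would exploit the $\hbar$-filtration $\Obs^\q_n \supset \hbar\,\Obs^\q_n \supset \hbar^2 \Obs^\q_n \supset \cdots$, whose associated graded is $\Obs^\cl_n[[\hbar]]$ with differential $\dbar$. The induced spectral sequence converges to $H^*\Obs^\q_n(\CC)$, and the preceding discussion already observes that it collapses on the first page because the $\dbar$-cohomology of the Dolbeault complex on $\CC$ is concentrated in degree zero. Hence as graded vector spaces $H^*\Obs^\q_n(\CC) \cong H^*\Obs^\cl_n(\CC)[[\hbar]]$. The key observation is that the rotation action of $U(1)$ on the source $\CC$ commutes with both $\dbar$ and the BV Laplacian $\Delta_L$ (since the length-scale heat kernel $K_L$ is rotation invariant), so the filtration, the spectral sequence, and the resulting isomorphism are all $U(1)$-equivariant, i.e., respect conformal dimension.

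Next I would identify $H^*\Obs^\cl_n(\CC)$ with the classical answer computed in the previous sections. Using the quasi-isomorphism $\Omega^{0,*}(\CC) \simeq \CC[z]$ (and its analogue for $(1,*)$-forms), which is manifestly $U(1)$- and $(\Vect,\GL_n)$-equivariant at the level of conformal weights, one obtains
\[
H^*\Obs^\cl_n(\CC) \cong \clies\bigl(\fgn[z] \oplus \fgn^\vee[z]\,\d z\,[-2]\bigr),
\]
exactly as in Proposition \ref{cohomology of point obs} (but with $\hO_n$ replaced by the polynomial version, since here we use genuine global sections on $\CC$). By the decomposition established in Lemma \ref{lem g[[z]]} and its variant, this commutative algebra decomposes as a $(\Vect,\GL_n)$-module as the (completed) tensor product
\[
\underset{0<k}{\widehat{\bigotimes}}\,\cSym(\hT_n^*)\,\widehat{\otimes}\,\underset{0\leq l}{\widehat{\bigotimes}}\,\cSym(\hT_n),
\]
where the $k$th tensor factor is associated to the monomial $z^k$ and the $l$th to $z^l\,\d z^\vee$.

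Finally, I would read off the conformal weight grading. An element of $\Sym^{a_k}(\hT_n^*)$ sitting in the $z^k$-slot carries conformal weight $a_k\cdot k$, and an element of $\Sym^{b_l}(\hT_n)$ sitting in the $z^l\,\d z^\vee$-slot carries weight $b_l\cdot l$ (since $\d z$ carries weight $1$ but appears dually). A monomial in the infinite tensor product thus has total conformal weight $\sum_k a_k k + \sum_l b_l l$, and finiteness of this sum forces almost all $a_k, b_l$ to vanish, so the completed tensor product restricts to an honest direct sum on each weight piece. Extending $\hbar$-linearly gives the claimed description. The only subtle point is to check that the $U(1)$-action on smoothed observables descends consistently to $H^*\Obs^\q_n(\CC)$ and matches the algebraic grading; I expect this to be the main bookkeeping step, but it follows from the rotation-invariance of the propagator $P^{an}_{\epsilon<L}$ and the heat kernel $K^{an}_t$, which makes every Feynman-diagram contribution and the BV Laplacian strictly weight-preserving.
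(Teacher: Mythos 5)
Your argument follows the paper's own route essentially verbatim: the $\hbar$-adic filtration gives a spectral sequence that collapses on the first page (since the $\dbar$-cohomology is concentrated in degree zero), yielding $H^*\Obs^\q_n(\CC) \cong H^*\Obs^\cl_n(\CC)[[\hbar]]$, and the $U(1)$-equivariance of the heat kernel, propagator, and evaluation pairings ensures the BV Laplacian preserves conformal weight, so the classical decomposition into symmetric powers of $\hT_n$ and $\hT_n^*$ indexed by monomials $z^k$ carries over. The extra bookkeeping you supply on reading off the weights $a_k k$ and $b_l l$ is consistent with the paper's preceding lemmas, so nothing further is needed.
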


We obtain an immediate corollary by Gelfand-Kazhdan descent.

\begin{cor}
The conformal dimension $N$ component of $H^*(\Obs^\q_X(\CC))$ is
\[
\CC[[\hbar]] \otimes \bigoplus_{\substack{a,b \in \NN^\NN \text{ such that}\\ \sum_{0 < k} a_k k + \sum_{0 \leq l} b_l l = N}} \bigotimes_{0 <k } \Sym^{a_k}_{\cO_X}(\cT^*_X) \otimes \bigotimes_{0 \leq l } \Sym^{b_l}_{\cO_X}(\cT_X).
\]
\end{cor}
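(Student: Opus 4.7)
The plan is to deduce the corollary directly from the preceding proposition by applying extended Gelfand-Kazhdan descent, paying attention to how each ingredient (the conformal dimension grading, the symmetric powers, the completed infinite tensor products, and the $\CC[[\hbar]]$-coefficients) interacts with descent.

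First, I would recall that the $U(1)$-rotation action on the source $\CC$ and the $(\TVect,\GL_n)$-action on the target are independent symmetries: the former acts on the Dolbeault form factors appearing in the linear observables, while the latter acts on the $\fgn$ and $\fgn^\vee$ factors. Consequently, the conformal dimension decomposition of $\Obs^\q_n(\CC)$ is a decomposition as a chain complex of $(\TVect,\GL_n)$-modules, and hence descends term by term. Each conformal dimension piece is a strict $(\TVect,\GL_n)$-module (after taking cohomology, by Lemma \ref{lem strict vs semistrict}), so strict Gelfand-Kazhdan descent in the form developed in Section \ref{sec ss GK descent} applies.

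Next, I would invoke the preceding proposition, which identifies the conformal dimension $N$ piece of $H^*(\Obs^\q_n(\CC))$ with
\[
\CC[[\hbar]] \otimes \bigoplus_{\substack{a,b \in \NN^\NN \\ \sum_k a_k k + \sum_l b_l l = N}} \bigotimes_{0<k}\Sym^{a_k}(\hT_n^*) \otimes \bigotimes_{0 \leq l}\Sym^{b_l}(\hT_n),
\]
noting that each summand is a \emph{finite} tensor product (since almost all $a_k$ and $b_l$ vanish), and so poses no subtlety for descent. Since Gelfand-Kazhdan descent is lax symmetric monoidal (Lemma \ref{prop lax}) and since the Harish-Chandra modules involved are finite tensor products of the formal tensor fields $\hT_n$ and $\hT_n^*$, the monoidality is strict on these summands; concretely, $\desc_\GK(\hT_n) \cong \cT_X$ and $\desc_\GK(\hT_n^*) \cong \cT_X^*$ as sheaves on $X$, and $\desc_\GK$ commutes with direct sums and with the $\CC[[\hbar]]$ coefficients (which are inert under descent).

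The main (small) obstacle is to ensure that the identification of conformal dimension components is compatible with the semi-strict descent of the full quantum observables. This reduces to checking that the spectral sequence argument used in the proof of the preceding proposition (which collapses via the $\hbar$-filtration, using that cohomology with respect to $\dbar$ is concentrated in degree zero) is $(\TVect,\GL_n)$-equivariant, so that the resulting identification descends. Since $\hbar\Delta_L$ and $\dbar$ both commute with the Harish-Chandra action (the latter trivially, the former because we have constructed an equivariant quantization in Theorem \ref{QME1}), this equivariance is automatic, and the corollary follows by applying $\desc_\GK$ summand by summand to the formula of the preceding proposition.
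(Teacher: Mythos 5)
Your proposal is correct and takes the same route as the paper: the paper's entire proof is the single sentence ``We obtain an immediate corollary by Gelfand-Kazhdan descent,'' and your elaboration (finiteness of each summand, monoidality of descent sending $\hT_n \mapsto \cT_X$ and $\hT_n^* \mapsto \cT_X^*$, inertness of the $\CC[[\hbar]]$ coefficients, and equivariance of the $\hbar$-filtration spectral sequence) correctly fills in the details the paper leaves implicit.
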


\section{Conformal structure on observables}\label{sec conformal obs}

We have already discussed how a trivialization $\alpha$ of the second component of the Chern character of the complex manifold $X$ determines a sheaf of factorization algebras $\Obs^\q_{X,\alpha}$,
via semi-strict Gelfand-Kazhdan descent of the $\TVect$-equivariant factorization algebra $\Obs^\q_n$. 
Equivalently, in Section \ref{sec comparison}, we showed that this is the factorization algebra of quantum observables of the curved $\beta\gamma$ system with target~$X$ associated to the trivialization~$\alpha$. 

In Section \ref{sec hol vf} we showed how the Lie algebra of holomorphic vector fields on the source acts on the classical formal $\beta\gamma$ system. 
Indeed, we constructed a Maurer-Cartan element $I^\cT \in \cloc^*(\cT^S) \tensor \cloc^*(\DD \fg_n^S)$ implementing this symmetry. 
By Koszul duality, this element thus defines a map of Lie algebras 
\[
I^\cT: \cT^S = \Omega^{0,*}(S; T^{1,0}_S) \to \cloc^*(\DD \fg_n^S)[-1], 
\]
where the the BV bracket $\{-,-\}$ provides the Lie bracket on local functionals.
A local functional can be interpreted as an observable, at least when $S$ is compact,
and so our goal is to refine this map to a map of factorization algebras.

First, we need to replace $\cT^S$ by a local-to-global object.
Holomorphic vector fields on $S$ admit a natural enhancement to a {\em sheaf} of dg Lie algebras: to an open set $U \subset S$, we assign $\cT^U := \Omega^{0,*}(U, T^{1,0}U)$.
But a sheaf is contravariant in opens on $S$, whereas $\Obs^{\cl}_S$ is covariant in opens on $S$.
There is an easy fix: take holomorphic vector fields with {\em compact support}.
Let $\cT^S_c$ denote this precosheaf of dg Lie algebras, which is also a cosheaf of dg vector spaces. 
By Chapter 11 of \cite{CG2}, the map on global sections refines to a map of precosheaves 
\ben
\Psi^{\rm cl}_n : \cT^S_c \to \Obs^{\cl}_n [-1] 
\een
of dg Lie algebras on $S$.
Since $\cT^S_c$ is a trivial $\Vect$-module, we see that applying Gelfand-Kazhdan descent yields a map of sheaves on~$X$
\[
\Psi^{\cl}_X : {\ul \cT^S_c} \to \Obs^{\cl}_X[-1]
\]
of precosheaves of dg Lie algebras on $S$, where $\Obs^{\cl}_X$ is the classical observables of the curved $\beta\gamma$ system with target $X$. The underline means that it is a constant sheaf in the $X$-direction. 

This map $\Psi^{\cl}_X$ extends to a map of factorization algebras
\be\label{classical vir}
\Psi^{\cl}_X : \ul{\Sym_* (\cT^S_c[1])} \to \Obs^{\cl}_X[-1], 
\ee 
since $\Sym(\fg[1])$ is the enveloping $P_0$ algebra of a Lie algebra $\fg$ and $\Obs^{\cl}$ has a natural $P_0$ algebra structure.
(We note that the symmetric algebra of a cosheaf has a natural factorization algebra structure.)
This map should be viewed as a factorization algebra refinement of the Noether theorem in classical physics: a symmetry determines an operator (i.e., a current) in the observables of the classical theory. 
We now wish to study the quantum counterpart to this map of factorization algebras. 

The quantum version of the symmetry of holomorphic vector fields is a factorization algebra depending on a central charge $c$ that we call the {\em Virasoro factorization algebra with charge $c$} and denote $\sVir_{c}$. On $\CC$, this holomorphic factorization algebra is related to the Virasoro vertex algebra in a natural way, as shown in \cite{bw_vir}. 
We already know that the factorization algebra of quantum observables $\Obs^{\q}_n$ carries an action of the extended Lie algebra $\TVect$ and hence determines a sheaf $\Obs^{\q}_{X,\alpha}$ on any complex manifold $X$ with trivial second Chern character. 

The natural question is, then, how to construct the quantum version of the map (\ref{classical vir}). This question can also be understood as a problem of equivariant quantization, by the Koszul-type duality between solutions of the equivariant quantum master equation and maps of BD algebras. We have already computed the obstruction to quantizing the symmetry of holomorphic vector fields on $S = \CC$ in a way compatible with the action of formal vector fields on the target $\Vect$. Hence, the main result is the following.

\begin{prop} \label{prop quantum vir} 
Let $\alpha$ be a trivialization of $\ch_2(T_X)$, and let $\Obs^\q_{X,\alpha}$ be the resulting factorization algebra on $\CC$ of observables for the curved $\beta\gamma$ system with target $X$. If $c_1(T_X) = 0$,  then there is a map of sheaves on $X$ of holomorphic factorization algebras on $\CC$ 
\be\label{quantum vir}
\Psi^\q_X : {\ul \sVir}_{c=2n} \to \Obs^\q_{X,\alpha}
\ee
that, modulo $\hbar$, agrees with the classical map of factorization algebras $\Psi^{\rm cl}$ in Equation~(\ref{classical vir}). 
\end{prop}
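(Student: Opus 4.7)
The plan is to combine the equivariant BV quantization framework of Section \ref{sec equiv bv}, the obstruction analysis of Section \ref{sec conformal anomaly}, the semi-strict Gelfand-Kazhdan descent of Section \ref{sec ss GK descent}, and the factorization algebra version of Noether's theorem (Chapter 12 of \cite{CG}), which asserts that a BV quantization equivariant for a local $L_\infty$ algebra $\cL$ up to a central term determines a map from the twisted factorization enveloping algebra of $\cL$ to the quantum observables.

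First, I will run through the Feynman diagram construction as in Section \ref{extendedtheory} but using the combined classical interaction $\Tilde I^{\rm W} + I^\cT$, which encodes the joint action of $\TVect$ (target symmetries, after passing through $p^*$) and the compactly-supported holomorphic vector fields $\cT^\CC_c$ on the classical theory. This produces an effective family $\{\Tilde I^{\rm W}[L] + I^\cT[L]\}$, and by the same convergence analysis as in Section \ref{sec prequant}, no counterterms are required. Computing the obstruction $\Theta^{\rm tot}[L]$ to the joint equivariant QME as in Lemma \ref{obslemma} produces the three-part decomposition of Section \ref{sec conformal anomaly}:
\[
\Theta^{\rm tot} = 2n\,\omega^{\GF} + \Theta^{\rm W} + \Theta^{\cT},
\]
with $\Theta^{\rm W} = a\, J^{\rm W}(\ch_2^{\GF}(\hT_n))$ depending only on the target data and $\Theta^{\cT} = b\, K^{\rm W}(c_1^{\GF}(\hT_n))$ linear in both $\cT^\CC$ and $\Vect$.

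Second, I will trivialize the target-dependent pieces of the obstruction at the level of $(\TVect, \GL_n)$-equivariant local functionals and then apply semi-strict Gelfand-Kazhdan descent. The trivialization $\alpha$ of $\ch_2(T_X)$ pulls back through the descent map of Section \ref{sec ext desc} to provide a quantum counterterm that kills the descent of $\Theta^{\rm W}$, exactly as in Proposition \ref{obstriv} and Theorem \ref{QME1}. The hypothesis $c_1(T_X)=0$, combined with the identification of $\Theta^\cT$ with the image of $c_1^{\GF}(\hT_n)$ under the map $K$ from Section \ref{sec hol vf}, provides (after choosing a trivialization of $c_1$) an analogous counterterm in $\cloc^*(\cT^\CC\ltimes\DD\fg_n^\CC)$ trivializing the descent of $\Theta^\cT$. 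The argument that this lifts to a correction compatible with RG flow is the same Lemma \ref{genlem} argument, applied to a local functional that is linear in $\cT^\CC$; uniqueness up to homotopy follows from the vanishing of the appropriate $H^1$, paralleling Theorem \ref{QME1}.

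After these two cancellations, what remains of the obstruction is exactly $2n\,\omega^{\GF}$, which involves only the source $\CC$ and not the target; consequently it is unaffected by Gelfand-Kazhdan descent and defines the Virasoro central extension. Translating this data through the factorization Noether theorem: a BV quantization of the $\cT^\CC_c$-equivariant theory that satisfies the equivariant QME up to a central local cocycle $c\cdot\omega^{\GF}$ determines a map from the $\omega^{\GF}$-twisted factorization enveloping algebra of $\cT^\CC_c$ to the quantum observables. By \cite{bw_vir}, this twisted enveloping factorization algebra is precisely $\sVir_{c=n}$ (with the normalization of \cite{bw_vir}). Since $\cT^\CC_c$ has trivial $(\TVect,\GL_n)$-action, the whole construction descends as a constant sheaf in the $X$-direction, yielding the desired map $\Psi^\q_X:\ul\sVir_{c=n}\to\Obs^\q_{X,\alpha}$. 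The compatibility modulo $\hbar$ with $\Psi^{\cl}_X$ is automatic from the Feynman diagrammatic construction, since the classical term of $\Tilde I^{\rm W}[L]+I^\cT[L]$ recovers $I^\cT$ at tree level and the quantum counterterms are pure $\hbar$-corrections.

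The main obstacle will be the precise bookkeeping in step two: one needs a simultaneous trivialization of $\Theta^{\rm W}$ and $\Theta^\cT$ in the enlarged deformation complex $\clie^*(\TVect;\Def_n^\cT)$ that is compatible with semi-strict Gelfand-Kazhdan descent along $\Tilde X^{coor}_\alpha$, and one must verify that the resulting effective family still satisfies RG flow together with the joint equivariant QME modulo the source-only central term. Carrying this out requires an extension of Lemma \ref{genlem} to the doubly-equivariant setting and care that the Koszul-dual identification of the twisted equivariant QME with a map from the twisted factorization enveloping algebra, as established in \cite{CG,bw_vir}, transports correctly through semi-strict descent.
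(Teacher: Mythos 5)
Your proposal is correct and follows essentially the same route as the paper: compute the joint $\Vect\times\cT^\CC$-equivariant obstruction, use its three-part decomposition from Section \ref{sec conformal anomaly} together with the trivializations of $\ch_2(T_X)$ and $c_1(T_X)$ to kill the field-dependent pieces, and feed the residual central cocycle $\omega^{\GF}$ into the quantum Noether theorem (Theorem \ref{thm quantum noether}) to obtain the map from the twisted enveloping factorization algebra, i.e.\ the Virasoro factorization algebra. The extra bookkeeping you flag (the doubly-equivariant extension of Lemma \ref{genlem} and compatibility with semi-strict descent) is handled in the paper by simply citing the earlier obstruction computation and the general Noether machinery of \cite{CG}, so no genuinely new argument is needed there.
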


This claim will follow from obstruction calculations we have already done when combined with the following quantum version of the Noether theorem for factorization algebras. 

\begin{thm}[\cite{CG2}, Theorem 12.1.0.2] \label{thm quantum noether} 
Let a local Lie algebra $\cE$ defined on a manifold $S$ describe a classical BV theory and suppose $\cE$ has an action of a local Lie algebra $\cL$. Fix a $\cL$-dependent quantization $\{I^\cL[L]\}_{L > 0}$ as described in Section \ref{sec equiv bv 1} such that the obstruction to solving the equivariant QME vanishes modulo functionals depending solely on $\cE$. There is then an $\hbar$-dependent cocycle $\eta \in \cloc^*(\cL)[[\hbar]]$ of degree one, and a map of factorization algebras
\ben
\Psi^{\q} : {\rm C}^{{\rm Lie}, \eta}_*(\cL_c) \to \Obs^\q,
\een
where $ {\rm C}^{{\rm Lie}, \eta}_*(\cL_c)$ is the factorization algebra of $\eta$-twisted Chevalley-Eilenberg chains of~$\cL_c$.
\end{thm}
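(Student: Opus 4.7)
\medskip

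\textbf{Proof proposal.} The plan is to build the map $\Psi^\q$ at a single length scale $L$ using the effective interaction $I^\cL[L]$ as a ``generating functional'' for observables, then show that (i) it is a cochain map \emph{precisely because of the equivariant QME with anomaly $\eta$}, (ii) it is compatible with RG flow so descends to a map of scale-independent factorization algebras, and (iii) it respects supports and the structure maps. The setup is the following: the equivariant interaction $I^\cL[L]$ is an element of
\[
\csym(\cL^\vee[-1]) \,\widehat{\otimes}\, \Obs^{\q,\fr}[L] \,[[\hbar]]
\]
of cohomological degree zero, and can equivalently be viewed as an $\L8$-morphism $\cL \l8to \Obs^{\q,\fr}[L][-1]$ twisting the trivial morphism by $I^\cL[L]$. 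The equivariant QME reads
\[
Q I^\cL[L] + \d_\cL I^\cL[L] + \tfrac{1}{2}\{I^\cL[L], I^\cL[L]\}_L + \hbar \Delta_L I^\cL[L] \;=\; \eta[L],
\]
with $\eta[L] \in \cloc^*(\cL)[[\hbar]]$ depending only on $\cL$; by construction $\eta[L]$ is closed for $\d_\cL$ and $\lim_{L\to 0} \eta[L] = \eta$ makes sense as a local cocycle.

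At fixed scale $L$, I define $\Psi^\q[L]$ by ``contracting against $I^\cL[L]$'': for $\alpha_1,\ldots,\alpha_k \in \cL_c(U)$, set
\[
\Psi^\q[L](\alpha_1 \cdots \alpha_k) \;:=\; \frac{\partial^k}{\partial s_1 \cdots \partial s_k}\bigg|_{s=0} \exp\!\bigl(I^\cL[L](s_1\alpha_1 + \cdots + s_k\alpha_k)/\hbar\bigr)\cdot \hbar^k,
\]
extended as a map of (completed) symmetric algebras, where the $s_i$ are formal parameters and we read off only the polynomial-in-$\cL$ pieces of $I^\cL[L]$. Equivalently, one pairs the $\cL$-legs of $I^\cL[L]$ with $\alpha_1,\ldots,\alpha_k$ and exponentiates the residual $\cE$-dependence. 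This recipe yields an element of $\Obs^{\q,\fr}[L](U)$ because the $\alpha_i$ have compact support in $U$ and $I^\cL[L]$ is a polynomial in $\cL$ whose coefficients are smeared functionals of the fields on $U$. The first main step is then a direct computation: applying the scale-$L$ quantum differential $Q+\hbar\Delta_L$ to $\Psi^\q[L](\alpha_1\cdots\alpha_k)$ and comparing to $\Psi^\q[L]$ applied to the Chevalley--Eilenberg differential, the QME produces exactly three types of terms on each side, and the mismatch collapses to the contribution of $\eta[L]$ evaluated on $\alpha_1,\ldots,\alpha_k$. This is precisely the definition of the $\eta$-twisted CE differential, so
\[
(Q + \hbar \Delta_L) \circ \Psi^\q[L] \;=\; \Psi^\q[L] \circ \d^\eta_{CE}.
\]

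Next I address naturality in the length scale: the RG flow equation $W(P_\epsilon^L, I^\cL[\epsilon]) = I^\cL[L]$ combined with the identity $W_\epsilon^L(e^{I/\hbar}) = e^{W(P_\epsilon^L, I)/\hbar}$ (the standard Gaussian integration lemma in the BV formalism) shows that the square with the RG isomorphism $W_\epsilon^L: \Obs^q[\epsilon] \to \Obs^q[L]$ commutes on the nose. This produces a single map $\Psi^\q$ of cochain complexes into the scale-independent observables. For the support condition, note that $\Psi^\q[L](\alpha_1\cdots\alpha_k)$ is built from $\alpha_i$ smeared with a heat-kernel regularized propagator of scale $\leq L$; letting $L \to 0$ shrinks this smearing to the support of the $\alpha_i$, so in the limit the observable lies in $\Obs^q(U)$ whenever each $\alpha_i$ is supported in $U$.

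The hard part will be verifying that this cochain map respects the prefactorization structure maps, i.e., that for $U_1 \sqcup U_2 \subset V$ the diagram with the two multiplications (wedge product of CE chains on the left, factorization product of observables on the right) commutes. The issue is that $I^\cL[L]$ contains graphs in which a propagator $P_L$ connects a leg labeled by an $\alpha \in \cL_c(U_1)$ to a leg labeled by an $\alpha' \in \cL_c(U_2)$, and such graphs will in general contribute to $\Psi^\q[L](\alpha \alpha')$ but not to the product $\Psi^\q[L](\alpha)\cdot \Psi^\q[L](\alpha')$ in $\Obs^q[L]$. The way out is to pass to parametrices of support smaller than $\mathrm{dist}(\overline{U_1}, \overline{U_2})$: for such parametrices the cross-propagators vanish identically, so the generating-function formula factorizes as a product of exponentials, and then invoke the theorem of \cite{CG} (Chapter 9) that observables computed with any parametrix of sufficiently small support agree. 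Combined with the scale-independence established above, this gives the factorization compatibility and completes the construction.
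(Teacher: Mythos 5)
You should know at the outset that the paper itself contains no proof of this statement: it is imported wholesale as \cite{CG}, Theorem 12.1.0.2, and used as a black box (its only role is in the proof of Proposition \ref{prop quantum vir}), so there is no internal argument to compare yours against. Judged against the proof in \cite{CG} that the citation points to, your proposal reconstructs essentially the same strategy: the Noether map is defined by saturating the $\cL$-legs of $\exp(I^{\cL}[\Phi]/\hbar)$ with compactly supported elements of $\cL_c(U)$, the equivariant QME with purely-$\cL$-dependent failure term $\eta$ is exactly what makes this intertwine the $\eta$-twisted Chevalley-Eilenberg differential with $Q + \hbar\Delta_\Phi$, RG/parametrix compatibility assembles the scale-wise maps into a single map to $\Obs^\q$, and factorization compatibility is obtained by passing to parametrices whose support is smaller than the distance between the disjoint opens, killing the connected cross-terms.

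Two points in your sketch deserve to be made explicit, though both go through and both are handled the same way in \cite{CG}. First, the $\hbar$-bookkeeping: $\exp(I^{\cL}[\Phi]/\hbar)$ has negative powers of $\hbar$, and your compensating factor $\hbar^k$ only produces a well-defined $\CC[[\hbar]]$-valued observable because every monomial of $I^{\cL}[\Phi]$ has at least one $\cL$-leg, so a term built from $m$ interaction factors absorbing $k$ inputs contributes $\hbar^{k-m}$ with $m \leq k$; this is also where the $\hbar$-dependence of the central extension defining ${\rm C}^{{\rm Lie},\eta}_*(\cL_c)$ enters. Second, your small-support-parametrix argument for the factorization property requires that it be run component by component: at fixed $\hbar$-order and fixed polynomial degree in the fields only finitely many graphs contribute, so their internal propagator chains have bounded length and a sufficiently small parametrix does annihilate all cross-terms; without restricting to a fixed $(j,k)$-component, the sum over arbitrarily long chains would defeat the support estimate. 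Since the support condition defining $\Obs^\q(U)$ in \cite{CG} is itself formulated per component, this is exactly the right setting, and your argument is sound.
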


Explicitly, this cocycle $\eta$ determines a central extension
\ben
0 \to \CC[-1] \to \Tilde{\cL}_c(U) \to \cL_{c}(U) \to 0 
\een
for each open set $U \subset S$.
By definition, we set 
\[
{\rm C}^{{\rm Lie}, \eta}_*(\cL_c) (U) = {\rm C}_*^{\rm Lie}(\Tilde{\cL}_c(U)),
\] 
which is a factorization algebra as shown in \cite{CG2}.
We now proceed to prove Proposition~\ref{prop quantum vir}. 

\begin{proof} We consider the Gelfand-Kazhdan descent of the classical formal $\beta\gamma$ system. As  discussed in Section \ref{sec comparison}, the $L_\infty$ algebra $\DD \fg_n^S$ becomes the curved $L_\infty$ algebra $\DD \fg_X^S$ defined over $\Omega^*_X$. The dg Lie algebra of holomorphic vector fields $\cT^S$ is classically a trivial $\Vect$-module, thus the action of $\cT^S$ on the formal $\beta\gamma$ system descends to an action on the curved $\beta\gamma$ system described by $\DD \fg_X^S$. As usual, we work on $S = \CC$. 

The obstruction calculation of Proposition \ref{sec conformal anomaly} for the $\Vect \times \cT^\CC$ equivariant quantization implies that the $\DD \fg_X^\CC$-dependent obstruction vanishes provided we choose a trivialization $\alpha$ for $\ch_2(T_X)$ and a trivialization $\beta$ for $c_1(T_X)$. Given such a quantization, we see that the cocycle as in Theorem \ref{thm quantum noether} is precisely given by $\eta = 2 n \omega^{\GF}$. That is, the part of the obstruction that is independent of the fields $\DD \fg_X^\CC$. 

The resulting factorization algebra for the curved $\beta\gamma$ system with choice of trivialization $\alpha$ is given by $\Obs^\q_{X,\alpha}$. Finally, the Virasoro factorization algebra of central charge $2n$ is precisely the factorization algebra $\sVir_{c=2n} := {\rm C}^{{\rm Lie}, 2 n \omega^{\GF}}_* (\cT^\CC_c)$. The proposition follows.
\end{proof}

\part*{Part III: Comparison of the constructions}

\section{Overview}

In this part, we finally relate the two stories we have told: 
we show that the Batalin-Vilkovisky quantization of the curved $\beta\gamma$ system from Part II 
produces the chiral differential operators constructed in Part I. 
The key technical tool is a functor $\Vert$ that extracts a vertex algebra 
from a factorization algebra on $\CC$ satisfying a set of natural conditions.
This tool was introduced in \cite{CG1},
where it was already shown that the formal $\beta\gamma$ system recovers the correct vertex algebra
and an isomorphism was given from $\Vert(\Obs^\q_n)$ to $\hCDO_n$.
But it is more subtle to identify that the BV quantization recovers the correct {\em equivariant} vertex algebra.
To show this, we develop some general arguments that relate factorization algebra derivations with vertex algebra derivations.
From these arguments we swiftly verify that $\Vert(\Obs^\q_n)$ is naturally isomorphic to $\hCDO_n$ 
as a $(\TVect,\GL_n)$-equivariant vertex algebra.

Thanks to the machinery of Gelfand-Kazhdan descent, we then deduce our main result.

\begin{thm} 
Let $X$ be a complex $n$-manifold together with a trivialization $\alpha$ of $\ch_2(T_X) \in H^2(X ; \Omega^{2,hol}_{cl})$. Then the factorization algebra $\Obs^{\q}_{X,\alpha}$ obtained by Gelfand-Kazhdan descent 
determines a sheaf of vertex algebras $\Vert(\Obs^{\q}_{X,\alpha})$ on $X$. 
Moreover, there is an isomorphism of sheaves of vertex algebras on $X$
\ben
\Phi : \CDO_{X,\alpha} \xto{\cong} \Vert(\Obs^{\q}_{X,\alpha})
\een
that is natural in the choice of trivialization~$\alpha$.
\end{thm}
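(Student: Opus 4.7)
The plan is to reduce the global statement to a local equivariant isomorphism and then invoke the machinery of (semi-strict) Gelfand-Kazhdan descent developed in Parts I and II. More precisely, I will show that the functor $\Vert$ from suitable factorization algebras on $\CC$ to vertex algebras intertwines the extended Gelfand-Kazhdan descent along $X$ with the sheaf-of-vertex-algebras descent from Section~\ref{sec vertex desc}, and that at the level of the formal disk $\hD^n$ the two $(\TVect,\GL_n)$-equivariant vertex algebras $\Vert(\Obs^q_n)$ and $\hCDO_n$ are canonically isomorphic. Granting these two ingredients, applying extended Gelfand-Kazhdan descent to the local equivariant isomorphism immediately produces the global isomorphism $\Phi : \CDO_{X,\alpha} \xto{\cong} \Vert(\Obs^q_{X,\alpha})$, with naturality in $\alpha$ coming from the functoriality of the descent construction over the site $\Tilde{\Hol}_n$.

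First I would establish the local equivariant isomorphism. The underlying non-equivariant isomorphism $\Vert(\Obs^q_n) \cong \hCDO_n$ is essentially carried out in Chapter 5 of \cite{CG}: one uses the concrete description of $H^\ast \Obs^q_n$ from Section~\ref{sec concrete}, together with the Borcherds-type reconstruction theorem, to exhibit generators $b^j_{-1}$ and $c^i_0$ on the factorization side with the correct OPEs. It remains to upgrade this to a comparison of Harish-Chandra module structures. For $\GL_n$ this is straightforward because the $\GL_n$-actions on both sides are strict, come from the defining and coadjoint representations on the generators, and the isomorphism identifies those generators. The substantive content lies in showing that the $\TVect$-action on $\Vert(\Obs^q_n)$ coming from $\{I^W + \hbar J,-\}$ (Section~\ref{extendedtheory}) matches the $\TVect$-action on $\hCDO_n$ constructed in Section~\ref{sec hc cdo} from the local functionals $(\rho_W,\rho_{\Omega^2_{cl}})$ of \cite{MSV} and \cite{GMS}. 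To do this I would prove a general lemma: if $\fh$ acts on a quantum BV theory in the sense of Definition~\ref{eqQFT}, so that the Noether currents form a Maurer-Cartan element over $\clies(\fh)$, then the induced action on $\Vert(\Obs^q)$ is by vertex algebra derivations and is computed on generators by the zero Fourier modes of the currents. Applied to our situation, the current $I^W_X$ on the $\gamma$-generator $c_0^i$ gives $(f(c)b^j_{-1})_{(0)}$ acting on $c_0^i$, and on the $\beta$-generator $b^j_{-1}$ gives the same formula; comparing with Theorem~\ref{MSV1}, this is precisely $\rho_W$. The $\hbar$-correction $J_\omega$ evaluated on closed two-forms produces $\left(f(c)Tc^{j_1}_0 Tc^{j_2}_0\right)_{(1)}$, which is $\rho_{\Omega^2_{cl}}$ of Section~\ref{sec hc cdo}. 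Because the cocycle controlling the extension $\TVect$ is in both cases the same Gelfand-Fuks class $\ch_2^{\GF}(\hT_n)$ (by Proposition~\ref{obsprop} on one side and the computation preceding Section~\ref{sec GMScocycle} on the other), the two semi-strict $(\TVect,\GL_n)$-module structures agree and the isomorphism $\Vert(\Obs^q_n) \cong \hCDO_n$ is $(\TVect,\GL_n)$-equivariant.

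Next I would verify that $\Vert$ commutes with semi-strict Gelfand-Kazhdan descent along the extended pair $(\TVect,\GL_n)$. The functor $\Vert$ is built from the cohomology of observables on rotationally invariant annuli together with the OPE structure maps, and the conformal-dimension decomposition of Section~\ref{sec concrete} shows that each graded piece of $\Vert(\Obs^q_n)$ is finite-rank free over $\hO_n$ and hence lies in the category $\Tilde{\VB}_n$ of extended formal vector bundles, making $\Vert(\Obs^q_n)$ an extended Gelfand-Kazhdan vertex algebra in the sense of Section~\ref{sec vertex desc}. Because semi-strict descent is given by taking basic forms on the frame bundle with a twisted differential, and because the vertex operator on each conformal weight is $(\TVect,\GL_n)$-equivariant, the descent and vertex-algebra operations commute: the descent functor sends dg vertex algebras in $\Tilde{\VB}_n$ to sheaves of dg vertex algebras, as in Section~\ref{sec vertex desc}. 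Applied to $\Vert(\Obs^q_n)$ this yields $\Vert(\Obs^q_{X,\alpha})$ by construction.

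Finally, combining the two steps, we apply extended descent along $(\Fr_X,\Tilde{\omega}^\alpha_{\sigma,\sigma_{\Omega^2}})$ to the equivariant isomorphism $\Vert(\Obs^q_n) \cong \hCDO_n$ and obtain
\[
\Vert(\Obs^q_{X,\alpha}) \cong \Tilde{\sdesc}_{\GK}(X,\alpha;\Vert(\Obs^q_n)) \cong \Tilde{\sdesc}_{\GK}(X,\alpha;\hCDO_n) = \CDO_{X,\alpha},
\]
and this isomorphism is manifestly natural in the choice of trivialization $\alpha$ since descent is functorial on the category $\Tilde{\Hol}_n$. The main obstacle in the argument is the second step: carefully matching the two descriptions of the Lie algebra action of $\TVect$ at the level of local functionals versus at the level of vertex algebra derivations, and in particular verifying that the quantum counterterm $J$ produces exactly the $\rho_{\Omega^2_{cl}}$ of \cite{MSV}. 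The Feynman diagram computation of the obstruction in Proposition~\ref{obsprop}, together with Proposition~\ref{obstriv}, already lines up the defining cocycles, so the required identification is cohomologically forced; turning this into an equality of operations on the vertex algebra is the technical heart of the proof.
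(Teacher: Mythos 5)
Your proposal is correct and follows essentially the same route as the paper: the local $(\TVect,\GL_n)$-equivariant isomorphism $\hCDO_n \cong \Vert(\Obs^q_n)$ is established by identifying the derivations $\{I^{\rm W}_X,-\}$ and $\{J_\omega,-\}$ with the zeroth Fourier modes of the corresponding circle observables (the paper's Lemmas on constant-coefficient $\beta$-linear and $\beta$-free functionals play the role of your ``general lemma''), the $\hO_n$-module structure is matched via the conformal-dimension decomposition of Section on concrete observables, and the global statement then follows by extended Gelfand-Kazhdan descent with naturality in $\alpha$ coming from functoriality over $\Tilde{\Hol}_n$. No substantive differences to report.
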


Another goal of this paper is to show how physical arguments about the curved $\beta\gamma$ system
are transformed into vertex algebra arguments.
Thus, as a short coda, we review the treatments by Witten \cite{WittenCDO} and Nekrasov \cite{Nek}, 
and we indicate how their approaches are related to our methods.

\section{From factorization to vertex algebras}

Our central challenge now is to relate the vertex algebra produced in Part I with the factorization algebra produced in Part II.
Although factorization algebras are more flexible and general than vertex algebras
--- appearing in every dimension, for instance, and not just on Riemann surfaces --- 
there are recognition criteria that guarantee when a factorization algebra on $\CC$ recovers a vertex algebra.
In essence, the vector space of the vertex algebra is determined by the value of the factorization algebra on a disk,
and the vertex operators are determined by the structure map for two disjoint disks sitting inside a larger disk (i.e., by a flattened pair of pants).
Chapter 5 of \cite{CG1} is devoted to a careful treatment of this relationship 
and constructs a functor $\Vert$ from a certain category of ``holomorphic'' factorization algebras on $\CC$ to the category of vertex algebras.
This chapter also includes a detailed examination of the free $\beta\gamma$ system and its associated vertex algebra.
Here we will overview the main theorem relating factorization and vertex algebras, 
which requires us to introduce some terminology and machinery we need for our main goal.

Two kinds of technical issues appear in formulating the theorem:
\begin{itemize}
\item describing how the structure maps can ``vary holomorphically'' and
\item pinning down various functional analytic aspects.
\end{itemize}
The first involves ideas essential to the goal of this paper, so we dwell a bit on it.
The second is resolved essentially automatically, given our context and the results from \cite{CG1},
but we discuss it briefly.

\subsection{Translation and derivations}

We need to be able to talk about the structure maps in families in order to say that they vary holomorphically.
Our earlier definition of factorization algebras, however, works with the collection of opens in $\CC$ as a set, 
with no topological---much less complex-analytic---structure.
It is straightforward to introduce variations of the definitions with such structure and 
that manifestly contain the examples we've constructed here.

\begin{dfn}
For $U \subset \CC$ and $z \in \CC$, let 
\[
\tau_z U = \{ w \in \CC \,:\, w - z \in U \}
\]
denote the translation of $U$ by $z$. 
Then a factorization algebra $\cF$ on $\CC$ is (discretely) {\em translation-invariant} if we have an isomorphism
$\tau_z: \cF(U) \cong \cF(\tau_z U)$ for every open $U$ and every $z \in \CC$ satisfying
\begin{enumerate}
\item[(i)] for any $z, z'$, $\tau_{z} \circ \tau_{z'} = \tau_{z+z'}$ and
\item[(ii)] for any disjoint open subsets $U_1,\dots, U_k$ in $V$, the diagram
\[
\xymatrix{
\cF(U_1) \otimes \dots \otimes \cF(U_k)  \ar[r]^-{\tau_z} \ar[d] &
\cF(\tau_z U_1) \otimes \dots \otimes \cF(\tau_z U_k)  \ar[d] \\
\cF(V) \ar[r]^{\tau_x} & \cF(\tau_z V)
}
\]
commutes. (Here the vertical arrows are the structure maps of the factorization algebra.) 
\end{enumerate}
\end{dfn}

Note that the sheaf of holomorphic functions $\cO$ on $\CC$ satisfies the sheaf-theoretic version of this definition,
as does the Dolbeault complex. 
In consequence, the factorization algebras $\Obs^\cl_n$ and $\Obs^\q_n$ are translation-invariant.

We now turn to talking about families.
Let $\cl(U)$ denote the closure of an open set $U \subset \CC$.
Given $U_1,\ldots,U_n$ disjoint opens in $V$, let
\[
{\rm Conf}(U_1,\ldots,U_n \,|\, V) = \{ (z_1,\ldots,z_n) \in \CC^n \,:\,\forall i \neq j ,\, \cl(\tau_{z_i} U_i) \cap  \cl(\tau_{z_j} U_j) =\emptyset \text{ and }\forall i,\, \cl(\tau_{z_i} U_i) \subset V \}.
\]
In other words, this open subset of $\CC^n$ parametrizes all the translations of the $U_i$ that keep them in $V$ 
and keep their closures disjoint. (It will suffice to focus on collections $U_i$ whose closures are disjoint.)
This space ${\rm Conf}(U_1,\ldots,U_n \,|\, V)$ inherits the structure of a complex manifold from $\CC^n$. 

Now let $\cF$ be a discretely translation-invariant factorization algebra. 
We can use the isomorphisms to replace any appearance of $\cF(\tau_z U)$ with $\cF(U)$.
Hence for each point $(z_1,\ldots,z_n) \in {\rm Conf}(U_1,\ldots,U_n \,|\, V)$, we have a structure map
\[
m_{(z_1,\ldots,z_n)}: \cF(U_1) \otimes \cdots \otimes \cF(U_n) \to \cF(V)
\]
by the composite
\[
\cF(U_1) \otimes \cdots \otimes \cF(U_n) \to \cF(\tau_{z_1} U_1) \otimes \cdots \otimes \cF(\tau_{z_n} U_n) \to \cF(V),
\]
where the first map is the tensor product of translation maps $\tau_{z_i}$ and the second map is the structure map of $\cF$.

To talk about these structure maps varying smoothly over ${\rm Conf}(U_1,\ldots,U_n \,|\, V)$, 
we need the factorization algebra to take values in vector spaces (or cochain complexes thereof) 
in which one can talk about smooth families.
We will work in the context described in Section \ref{sec functional analysis}, 
where the topology (or bornology) provides a precise notion of smooth families of linear maps.

\begin{dfn}
A translation-invariant factorization algebra $\cF$ on $\CC$ is {\em smoothly} translation-invariant 
if: 
\begin{enumerate}
\item[(i)] For any collection of opens $U_1,\ldots,U_n$ in $V$ whose closures are pairwise disjoint, 
the maps $m_{z_1,\dots,z_n}$ depend smoothly on $(z_1,\dots,z_k) \in {\rm Conf}(U_1,\ldots,U_n \,|\, V)$.
\item[(ii)] The factorization algebra $\cF$ is equipped with an action by derivations of the abelian Lie algebra $\RR^2$ of translations. 
For $v \in \RR^2$ and open $U \subset \CC$, we will denote the corresponding derivation by $\d/\d v : \cF(U) \to \cF(U)$.
This Lie algebra action is viewed as an infinitesimal version of the global translation invariance.
\item[(iii)] This infinitesimal action is compatible with the global translation invariance in the following sense. 
For $v \in \RR^2$, let $v_i \in (\RR^2)^n$ denote the vector $(0,\ldots,v,\ldots,0)$, with $v$ placed in the $i$-slot and $0$ in the other $n-1$ slots.  If $\alpha_i \in \cF(U_i)$, then we require that
$$
\frac{\d}{\d v_i} m_{z_1,\dots,z_n} (\alpha_1,\dots,\alpha_n) = m_{z_1,\dots,z_n}\left(\alpha_1,\dots,\frac{\d}{\d v} \alpha_i, \dots, \alpha_n\right).
$$
\end{enumerate}  
\end{dfn}

The translation Lie algebra is real. 
As $\cF$ is defined over $\CC$, we can extend the action to the complexified translation Lie algebra $\RR^{2} \otimes_{\RR} \CC$.  
We will denote by $\partial_z$ and $\partial_{\zbar}$ the derivations on $\cF$ associated to the obvious vector fields on $\CC$.
To be {\em holomorphic}, we want the vector field $\partial_{\zbar}$ to act homotopically trivially on $\cF$.

\begin{dfn}\label{holomorphically_translation_invariant_definition}
A translation-invariant prefactorization algebra $\cF$ on $\CC$ is \emph{holomorphically translation-invariant} 
if it is equipped with a derivation $\eta: \cF \to \cF$ of cohomological degree $-1$ such that
\[ 
\d \eta = \partial_{\zbar}, \; [\eta,\eta] = 0,\,\text{and}\,  \left[ \eta,  \partial_{\zbar}  \right] = 0. 
\]
Here $\d$ refers to the differential on the dg Lie algebra ${\rm Der}(\cF)$.
\end{dfn}

\subsection{Rotation and decomposition}

In practice, we are interested in $\cF$ where the action by translation extends to an action of orientation-preserving Euclidean transformations of $\CC$.

\begin{dfn}
A \emph{holomorphically translation-invariant prefactorization algebra $\cF$ on $\CC$ with a compatible $U(1)$ action} 
is a smoothly $U(1) \ltimes \RR^2$-invariant prefactorization algebra $\cF$ 
together with an extension of the action of the complex Lie algebra 
$$
{\rm Lie}_{\CC}(U(1) \ltimes \RR^2) = \CC\left\{ \partial_\theta, \partial_z, \partial_{\zbar} \right\},
$$
where $\partial_\theta$ is a basis of ${\rm Lie}_\CC(U(1))$, to an action of the dg Lie algebra
$$
\CC\left\{ \partial_\theta, \partial_z, \partial_{\zbar} \right\} \oplus \CC\{\eta\} ,
$$
where $\eta$ is of cohomological degree $-1$ and the differential is $\d \eta = \partial_{\zbar}$.
In this dg Lie algebra, all commutators involving $\eta$ vanish except for $[\partial_\theta, \eta] =~-\eta$.
\end{dfn}

The theorem on vertex algebras requires a technical hypothesis regarding the $U(1)$-action on the factorization algebra $\cF$:
we need this action to be {\em tame}, in the following sense. 

For any compact Lie group $G$, the space $\mathcal{D}(G)$ of distributions on $G$ is an algebra under convolution.  
The convolution product $\ast$ is smooth in the sense that it varies nicely in families, as per our approach to functional analysis,
so that the algebra structure is smooth in families.
There is a natural map $\delta: G \to \mathcal{D}(G)$
sending an element $g$ to the $\delta$-function at $g$.  
It is a smooth map and a homomorphism of monoids.  

\begin{dfn} 
A \emph{tame action} of $G$ on a vector space $V$ of the type discussed in Section \ref{sec functional analysis} (e.g., convenient) is a smooth action of the algebra $\mathcal{D}(G)$ on $V$.  
(Note that this means $G$ acts on $V$ via composition $G \to \mathcal{D}(G)^{\times}$ sending $g$ to $\delta_g$.)
For $V$ a cochain complex of such vector spaces, a tame action commutes with the differential on $E$. 
\end{dfn}

The case $G = U(1)$ is the only one relevant for us here. 
For each integer $k$, the function $\rho_k: e^{i\theta} \mapsto e^{ik\theta}$ encodes an irreducible representation of $U(1)$.
It determines a distribution $\rho_k \, \d \theta$ on $U(1)$ that we will abusively call $\rho_k$ as well.
In $\mathcal{D}(U(1))$, the element $\rho_k$  is an idempotent.

\begin{dfn}
Let $V$ be equipped with a tame action of $U(1)$, which we will denote by $\ast$. 
Let $V_k$ denote the {\em weight $k$ eigenspace} for the $U(1)$-action on $V$.
The map $\rho_k \ast - : V \to V$ defines a projection from $V$ onto~$V_k$. 
\end{dfn}

\subsection{The theorem about $\Vert$}

We now turn to the main theorem from Chapter 5 of \cite{CG1}, 
which provides a functor from a certain category of factorization algebras on $\CC$ to the category of vertex algebras.

\begin{dfn}
Let $\cF$ be a tamely $U(1)$-equivariant holomorphically translation-invariant factorization algebra on $\CC$.
Let $\cF_k(D_r(0))$ denote the subcomplex of weight $k$ eigenspaces in $\cF(D_r(0))$, 
the value of $\cF$ on a radius $r$ disk around the origin.
Then $\cF$ is {\em amenably holomorphic} if it satisfies the following conditions:
\begin{enumerate}
\item 
For every pair of radii $r < r'$, the structure map
$$
\cF_k(D(0,r)) \to \cF_k(D(0,r'))
$$
is a quasi-isomorphism.
\item For $k \gg 0$, the vector space $H^\ast (\cF_k(D(0,r))$ is zero.  
\item For each $k$ and $r$, we require that $H^\ast(\cF_k(D(0,r) )$ is isomorphic to a countable sequential colimit of finite-dimensional graded vector spaces. 
\end{enumerate}
\end{dfn}

Observe that for an amenably holomorphic factorization algebra $\cF$, 
the vector space $H^* (\cF_k(D(0,r))$ is independent of $r$ by assumption.
Let 
\[
V_\cF= \bigoplus_{k \in \ZZ} H^* (\cF_k(D(0,r))
\]
be the direct sum of the weight spaces, 
and let 
\[
\overline{V}_\cF = \prod_{k \in \ZZ} H^* (\cF_k(D(0,r))
\]
be the direct product of the weight spaces.
Note that for any disk $D_r(0)$, there is a map $V_\cF \to H^*\cF(D_r(0))$ by the inclusion of the weight spaces.
Likewise, there is map $H^*\cF(D_r(0)) \to \overline{V}_\cF$.
The structure maps of $\cF$ thus determine a family of maps $m_z: V_\cF \otimes V_\cF \to \overline{V}_\cF$ 
given by the composition
\[
V_\cF \otimes V_\cF \to H^*\cF(D_r(0)) \otimes H^* \cF(D_r(z)) \to \H^*\cF(D_R(0)) \to \overline{V}_\cF,
\]
where the middle map is the structure map of $H^* \cF$ with radii $r$ and $R$ such that $2r < |z|$ and $|z| + r < R$.
By construction, these maps $m_z$ vary holomorphically in the parameter $z~\in~\CC-\{0\}$.

Let $\FA^{am}$ denote the category of amenably holomorphic factorization algebras on $\CC$,
where morphisms are maps of prefactorization algebras intertwining the actions of $\CC$ by translation and $U(1)$ by rotation.
Let $\VA$ denote the category of vertex algebras.

\begin{thm}[Theorem 5.3.3, \cite{CG1}] 
\label{theorem_vertex_algebra}
There is a functor $\Vert:\FA^{am} \to \VA$. 
For $\cF$ amenably holomorphic, the underlying vector space of the vertex algebra is $V_\cF$,
and the vertex operator $Y_{\Vert(\cF)}$ is determined by the maps $m_z$ arising from the structure maps of~$\cF$.
\end{thm}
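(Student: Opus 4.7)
The plan is to construct each piece of the vertex algebra structure on $V_\cF = \bigoplus_{k \in \ZZ} H^*(\cF_k(D_r(0)))$ directly from the factorization algebra data, exploiting holomorphic translation invariance and the $U(1)$-action in turn. First, the amenable holomorphicity hypothesis guarantees that $V_\cF$ is well-defined independent of the radius $r$, since the structure maps $\cF_k(D_r(0)) \to \cF_k(D_{r'}(0))$ are quasi-isomorphisms for $r < r'$. The vacuum vector $|0\>$ will be the image in $V_\cF$ of the unit element of the factorization algebra (i.e., the image of $1 \in \CC = \cF(\emptyset)$ under the inclusion $\emptyset \hookrightarrow D_r(0)$); this lies in weight zero. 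The translation operator $T$ will be the endomorphism of $V_\cF$ induced from the derivation $\partial_z$ on $\cF$, which shifts $U(1)$-weight by $+1$ and hence preserves the direct sum grading.

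The heart of the construction is the vertex operator. Given $v, v' \in V_\cF$, consider the family of composite maps
\[
m_z : V_\cF \otimes V_\cF \to H^* \cF(D_r(0)) \otimes H^* \cF(D_r(z)) \to H^* \cF(D_R(0)) \to \overline{V}_\cF
\]
defined for $z \in \CC^\times$ with $2r < |z|$ and $|z|+r < R$. By smooth translation invariance together with the homotopy $\eta$ trivializing $\partial_{\Bar z}$, the cohomology class of $m_z(v,v')$ depends holomorphically on $z \in \CC^\times$. The $U(1)$-equivariance of the structure maps implies $m_z(v,v')$ transforms under rotation by a fixed weight, so upon projecting to the weight $k$ summand $V_\cF^{(k)}$, we may write $m_z(v,v')$ as a finite sum of terms each with homogeneous $z$-dependence. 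Combined with the vanishing $V_\cF^{(k)} = 0$ for $k \gg 0$, this expansion in powers of $z$ and $z^{-1}$ yields the Fourier modes $v_{(n)}v' \in V_\cF$ and hence a formal series $Y(v;z)v' = \sum_n v_{(n)}v' \, z^{-n-1}$, which by construction has only finitely many positive powers of $z$ in any fixed weight.

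It remains to verify the axioms. The vacuum axioms $Y(|0\>; z) = \id$ and $Y(v;z)|0\> \in v + zV_\cF [[z]]$ follow from the unital structure of the factorization algebra together with the fact that as $z \to 0$ the structure map factors through $\cF(D_r(0)) \otimes \cF(D_r(0)) \to \cF(D_{R}(0))$, degenerating to the multiplication in the commutative algebra structure on the stalk; continuity of the family $m_z$ ensures the regularity at $z=0$ when acting on the vacuum. The translation axiom $[T, Y(v;z)] = \partial_z Y(v;z)$ is an immediate consequence of condition (iii) in the definition of smooth translation-invariance: the derivation $\partial_z$ acts compatibly on each factor of the structure map $m_z$.

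The main obstacle, as always in such reconstructions, is the locality axiom: for each $v, v' \in V_\cF$ there exists $N$ with $(z-w)^N[Y(v;z), Y(v';w)] = 0$. This requires comparing two a priori different operator products, corresponding to the orderings $|z| > |w|$ versus $|w| > |z|$. The key observation is that both orderings can be realized as restrictions of a single structure map
\[
m_{z,w} : \cF(D_r(0)) \otimes \cF(D_r(0)) \otimes \cF(D_r(0)) \to \cF(D_R(0))
\]
of the factorization algebra associated to three disjoint disks of radius $r$ centered at $0$, $z$, and $w$, applied to the test input $(v, v', -)$. By the factorization axiom, $m_{z,w}$ is defined for all $(z,w)$ with $z, w$ and $0$ separated by more than $2r$ and depends holomorphically on the configuration; hence it provides the common analytic continuation of both orderings to the configuration space of three points. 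The difference of the two orderings is therefore annihilated on the locus $\{z = w\}$ after clearing an appropriate power of $z-w$ to account for the finitely many negative modes present in each fixed weight, which gives the desired bound on $N$. Functoriality in morphisms of $\FA^{am}$ is then automatic since every ingredient of $\Vert(\cF)$ was built canonically from the structure maps and derivations of~$\cF$.
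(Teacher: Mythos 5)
Your sketch is correct and follows essentially the same reconstruction strategy as the proof this paper defers to (Section 2 of Chapter 5 of \cite{CG}): the state space from the weight spaces on a disk, the vertex operator from the Laurent expansion of the holomorphic, $U(1)$-equivariant two-disk structure maps, and locality from realizing both operator orderings as expansions of a single structure map on the configuration space of disjoint disks. The only point worth tightening is the vacuum axiom, which is more immediate than your degeneration-to-the-stalk argument: since $|0\rangle$ is the image of the unit from $\cF(\emptyset)$, the composition axiom makes $m_z(v,|0\rangle)$ literally independent of $z$ and equal to the image of $v$, so $Y(v;z)|0\rangle = v$ with no limit required.
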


\begin{rmk}
\label{rmk on completions}
As remarked in \cite{CG1} immediately following the theorem,  
this construction makes sense on a factorization algebra $\cF$ that is the inverse limit $\lim_k \cF_k$ of amenably holomorphic factorization algebras,
so that $\Vert(\cF)$ is the inverse limit $\lim_k \Vert(\cF_k)$ of the associated vertex algebras.
This variant is needed by use to recover the completed CDOs~$\hCDO_n$ from the $\beta\gamma$ observables where the target is the formal $n$-disk.
\end{rmk}

Immediate consequences of this theorem include the following.

\begin{lemma}\label{factder}
Let $X \in \Der(\cF)$ be a derivation of $\cF$ as an amenably holomorphic factorization algebra. In particular, we require that $X$ commutes with translation, $[X, \partial_z] = 0$, $[X, \partial_{\zbar}] = 0$, and $[X, \partial_\theta] = 0$. Then $X$ induces a vertex algebra derivation $V_X$ on $\Vert(\cF)$.
\end{lemma}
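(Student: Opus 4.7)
The plan is to transfer each piece of structure on the vertex algebra $\Vert(\cF)$ back to the factorization algebra side, where a derivation acts by compatible chain maps, and then use functoriality of the construction of $\Vert$ sketched in Theorem \ref{theorem_vertex_algebra}.

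First, since $[X,\partial_\theta]=0$, the derivation $X$ preserves the weight decomposition coming from the $U(1)$-action: on each disk $D_r(0)$, the projector $\rho_k\ast -$ commutes with $X$, so $X$ restricts to a cochain endomorphism of $\cF_k(D_r(0))$ for every $k$ and $r$. Because $X$ commutes with all structure maps of $\cF$, it is in particular compatible with the inclusion $\cF_k(D_r(0))\to \cF_k(D_{r'}(0))$ for $r<r'$, which is a quasi-isomorphism by amenable holomorphy. Passing to cohomology, $X$ induces a well-defined linear endomorphism $V_X$ of $V_\cF=\bigoplus_k H^*(\cF_k(D_r(0)))$ that is independent of the radius chosen.

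Next I would verify that $V_X$ is a vertex algebra derivation. Recall that the vacuum $|0\rangle$ of $\Vert(\cF)$ is the image of the unit of $\cF$ under the structure map $\cF(\emptyset)\to\cF(D_r(0))$ (equivalently, of $1$ in $H^*\cF_0$); since $X$ is a derivation, $X(1)=0$, hence $V_X|0\rangle=0$. The translation operator $T$ on $\Vert(\cF)$ is induced by $\partial_z$, and $[X,\partial_z]=0$ gives $[V_X,T]=0$. The vertex operator $Y_{\Vert(\cF)}(v;z)w$ is assembled from the family of structure maps $m_z:V_\cF\otimes V_\cF\to \overline V_\cF$, which are themselves cohomology-level versions of the factorization structure maps $\cF(D_r(0))\otimes\cF(D_r(z))\to\cF(D_R(0))$. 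Since $X$ is a derivation of the prefactorization structure, for any such configuration we have the Leibniz identity
\[
X\circ m = m\circ(X\otimes\id) + m\circ(\id\otimes X)
\]
at the chain level. Passing to cohomology and weight-eigenspaces, and using that $m_z$ varies holomorphically in $z$ (so this identity holds as an identity of $\End(V_\cF)$-valued formal series in $z,z^{-1}$), this descends precisely to
\[
V_X\cdot Y(v;z)w - Y(v;z)V_X w = Y(V_X v;z)w,
\]
which is the defining relation for a vertex algebra derivation.

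The main obstacle is bookkeeping rather than conceptual: one must check that the Leibniz identity for the factorization structure maps passes correctly through the cohomological weight-eigenspace decomposition, through the limit/colimit identifications used to define $m_z$, and through the Fourier-mode expansion that produces the vertex operator $Y(v;z)$. The tameness of the $U(1)$-action and the finiteness assumptions in the definition of an amenably holomorphic factorization algebra are precisely what make each of these steps legitimate. Once the compatibility $X\circ m = m\circ(X\otimes\id)+m\circ(\id\otimes X)$ is in hand and verified to respect the weight projectors $\rho_k\ast-$, the conclusion that $V_X$ is a vertex algebra derivation is immediate from the formula for $Y_{\Vert(\cF)}$ recalled in Theorem \ref{theorem_vertex_algebra}.
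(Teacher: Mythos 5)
Your proposal is correct and follows essentially the same route as the paper: use $[X,\partial_\theta]=0$ to see that $X$ preserves the weight eigenspaces and hence descends to a linear map $V_X$ on $V_\cF$, then use the Leibniz compatibility of $X$ with the factorization structure maps, together with translation-equivariance, to conclude that $V_X$ is a derivation of the vertex operator obtained as the Laurent expansion of $m_{z,0}$. The extra checks on the vacuum and the translation operator are harmless additions not required by the paper's definition of a vertex algebra derivation.
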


We call such derivations {\em amenably holomorphic}.

\begin{proof}
Let $D_r$ denote the disk of radius $r$ centered at the origin. 
Let $\cF_k(D_r)$ denote the weight $k$ subspace of $\cF(D_r)$ with respect to the rotation action of $S^1$. 
By hypothesis, $X$ preserves the weight spaces: $\partial_\theta$ acts on $\cF_k(D_r)$ by multiplication by $k$, 
and since $X$ commutes with $\partial_\theta$, it preserves each weight space. 
Hence $X$ induces a linear map $V_X$ on $V_\cF = \bigoplus_k H^*(\cF_k(D_r))$. 

As $X$ is a derivation, it intertwines with the structure maps of $\cF$. In particular, for any two small disjoint disks $D_1$ and $D_2$ included into a larger disk $D_{big}$, we see that 
\[
X_{D_{big}}(m^{D_1,D_2}_{D_{big}}(v_1, v_2)) = m^{D_1,D_2}_{D_{big}}(X_{D_1} v_1, v_2) \pm m^{D_1,D_2}_{D_{big}}(v_1, X_{D_2} v_2).
\]
Since the action of $X$ equivariant with respect to affine transformations of $\CC$, we see that this derivation property holds for the one-parameter family of ``multiplication" operations 
\[
m_{z,0}: V_\cF \otimes V_\cF \to \overline{V}_\cF[[z,z^{-1}]].
\]
Hence the action of $V_X$ on $V_\cF$ is a derivation of the vertex operator map $Y$, which is given by the Laurent expansion of $m_{z,0}$. 
\end{proof}

By a similar but easier argument, we find the following.

\begin{lemma}
Let $\phi: \cF \to \cF$ be an automorphism of an amenably holomorphical factorization algebra $\cF$. Then $\phi$ induces an automorphism of vertex algebras $V_\phi: V_\cF \xto{\cong} V_\cF$.
\end{lemma}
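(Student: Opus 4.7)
The plan is to mimic the proof of Lemma \ref{factder} but without needing to keep track of derivation signs, and with the additional observation that $\phi^{-1}$ supplies the inverse. Since $\phi$ is an automorphism of $\cF$ as an amenably holomorphic factorization algebra, by assumption it commutes with the translation action of $\CC$ and the rotation action of $U(1)$, in particular with the operator $\partial_\theta$. First I would use this $U(1)$-equivariance to deduce that $\phi$ preserves the weight decomposition: on each disk $D_r$ centered at the origin, $\phi$ restricts to a cochain map $\cF_k(D_r) \to \cF_k(D_r)$ for every $k$, and passing to cohomology gives a linear map $V_\phi \colon V_\cF \to V_\cF$ on $V_\cF = \bigoplus_k H^\ast(\cF_k(D_r))$. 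Since $\phi$ also commutes with the structure maps $\cF_k(D_r) \to \cF_k(D_{r'})$ for $r < r'$, the map $V_\phi$ is independent of the choice of~$r$.

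Next I would check that $V_\phi$ preserves the vacuum vector $|0\rangle$ and intertwines the vertex operator $Y$. The vacuum comes from the unit of $\cF$ (the image of the structure map from the empty open), which any automorphism of the factorization algebra preserves tautologically. For the vertex operator, recall from Theorem \ref{theorem_vertex_algebra} that $Y$ is determined by the family $m_z \colon V_\cF \otimes V_\cF \to \overline{V}_\cF$ obtained from the structure map $\cF(D_1) \otimes \cF(D_2) \to \cF(D_{\text{big}})$ for pairs of disjoint disks inside a larger one. The defining property of a map of prefactorization algebras is that
\[
\phi_{D_{\text{big}}} \circ m^{D_1,D_2}_{D_{\text{big}}} = m^{D_1,D_2}_{D_{\text{big}}} \circ (\phi_{D_1} \otimes \phi_{D_2}),
\]
and since $\phi$ commutes with translations this compatibility descends to the one-parameter family $m_{z,0}$, so that
\[
V_\phi \bigl(m_{z,0}(v_1,v_2)\bigr) = m_{z,0}\bigl(V_\phi v_1,\, V_\phi v_2\bigr).
\]
Taking Laurent expansions, this is precisely the assertion that $V_\phi$ is a vertex algebra homomorphism.

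Finally, applying the same construction to $\phi^{-1}$ gives a linear map $V_{\phi^{-1}}$ that is likewise a vertex algebra homomorphism, and functoriality of $\Vert$ (which at this stage amounts to the fact that all the operations $V_{(-)}$ are built from applying a given endomorphism componentwise to cohomology groups and structure maps) yields $V_\phi \circ V_{\phi^{-1}} = V_{\phi \circ \phi^{-1}} = \mathrm{id}$ and similarly in the other order. Hence $V_\phi$ is an isomorphism of vertex algebras, as claimed. There is no real obstacle here; the entire argument is bookkeeping, easier than the derivation case because one works with strict equalities rather than Leibniz-rule identities, and the existence of an inverse is automatic from the fact that $\phi$ is invertible in the category of amenably holomorphic factorization algebras.
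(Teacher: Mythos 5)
Your argument is correct and is exactly the route the paper intends: the paper dispatches this lemma with the remark that it follows "by a similar but easier argument" than Lemma \ref{factder}, and your write-up is precisely that argument (weight-space preservation from $U(1)$-equivariance, compatibility with the family $m_{z,0}$, inverse supplied by $\phi^{-1}$). No gaps.
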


Note that in both these situations we require that the derivation or
automorphism commutes on the nose with all the equivariant structure on $\cF$. 
Such a strict situation is adequate for our purposes here. 
(The homotopical versions of these statements should hold but will not be pursued.)

We now wish to apply these lemmas to the case where the factorization algebra has an action of a pair $(\fg,K)$. 
The data of a semi-strict $(\fg,K)$-structure on $\cF$ involves a group homomorphism $\rho^K : K \to \Aut(\cF)$
together with an $L_\infty$-homomorphism $\rho^\fg : \fg \to \Der(\cF)$
that satisfy the compatibilities in Definition~\ref{dfn ss HC mod}. 

\begin{cor}\label{vertpair} 
Let $\cF$ be an amenably holomorphic factorization algebra 
together with the structure of a semi-strict $(\fg,K)$-module. where
$\fg$ acts by derivations and $K$ acts by automorphisms of the amenably holomorphic factorization
algebra. 
Then the $\ZZ_{\geq 0}$-graded vertex algebra $\Vert(\cF)$ has a
strict action of the pair $(\fg,K)$. 
\end{cor}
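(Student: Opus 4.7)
\medskip

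The plan is to build the two halves of the $(\fg,K)$-action separately and then verify they are compatible in the sense of Definition~\ref{graction}. For the group $K$: by hypothesis, each $k \in K$ gives an automorphism $\phi_k$ of $\cF$ that commutes with translations, the rotation action, and the contracting homotopy $\eta$. The lemma following Lemma~\ref{factder} then produces a vertex algebra automorphism $V_{\phi_k}$ of $\Vert(\cF)$. Since $\phi_k$ already preserves the $U(1)$-weight decomposition (it commutes with $\partial_\theta$), the induced map preserves each weight space $H^*(\cF_k(D_r))$, giving the grading-preserving piece demanded by Definition~\ref{graction}. The fact that $k \mapsto V_{\phi_k}$ is a group homomorphism is immediate from functoriality of the construction of $\Vert$ (or, concretely, from the fact that $V_{\phi_k \circ \phi_{k'}}$ is determined by its action on structure maps, which strictly composes).

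For the Lie algebra $\fg$: the semi-strict module structure provides only an $\L8$-action $\rho^\fg: \fg \l8to \Der(\cF)$, so the 1-ary component $\rho^\fg_1: \fg \to \Der(\cF)$ is a cochain map but need not be a strict map of Lie algebras. However, $\Vert(\cF)$ is constructed from the cohomology $H^*(\cF_k(D_r))$ of the weight eigenspaces, and, as recorded in Lemma~\ref{lem strict vs semistrict}, an $\L8$-action of an ordinary Lie algebra on a cochain complex induces a \emph{strict} action of that Lie algebra on cohomology. Each element $x \in \fg$ acts on $\cF$ by the strict derivation $\rho^\fg_1(x)$, which commutes with translations, rotations, and $\eta$ by the compatibility requirements; applying Lemma~\ref{factder} produces a vertex algebra derivation on $\Vert(\cF)$. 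Strictness of the Lie bracket on cohomology ensures that the assignment $x \mapsto V_x$ is an honest Lie algebra homomorphism $\fg \to \Der_{\rm VA}(\Vert(\cF))$.

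The compatibility between these two actions is inherited from the compatibility in Definition~\ref{dfn ss HC mod}: the composition $\cliels(\rho^\fg_V) \circ \cliel_*(i)$ equals $\cliel_*(D\rho^K_V)$ at the level of $\cF$. Taking cohomology and restricting to weight spaces commutes with both functors involved, so the differential at the identity of the $K$-action on $\Vert(\cF)$ coincides with the restriction of the $\fg$-action along $i: \Lie(K) \hookrightarrow \fg$, which is exactly the condition for a strict $(\fg,K)$-action in Definition~\ref{graction}.

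The main obstacle is not conceptual but a bookkeeping check: one must verify that the passage from the cochain-level $\L8$-action to its strictification on cohomology is compatible with the extraction of vertex operators from the structure maps $m_z$. Concretely, one checks that the homotopy witnessing $\rho^\fg_1([x,y]) - [\rho^\fg_1(x),\rho^\fg_1(y)]$ (which is the relevant 2-ary component, evaluated on $\cF$) lies in the image of the differential and hence vanishes after applying $H^*$ and decomposing into weight spaces; the same style of argument verifies that $V_x$ strictly intertwines with the vertex operator $Y$ via the proof of Lemma~\ref{factder}, since that argument only uses that $\rho^\fg_1(x)$ is a genuine derivation of $\cF$ that respects the amenably holomorphic structure.
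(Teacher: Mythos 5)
Your proof is correct and follows the same route the paper intends: the $K$-action passes to vertex algebra automorphisms via the automorphism lemma, the one-ary component of the $\L8$-action passes to vertex algebra derivations via Lemma~\ref{factder}, and strictness on $\Vert(\cF)$ (which is built from cohomology of the weight spaces) is exactly the content of Lemma~\ref{lem strict vs semistrict}, with the Harish-Chandra compatibility inherited from Definition~\ref{dfn ss HC mod}. The paper states the corollary as an immediate consequence of these lemmas without further argument, so your write-up matches its (implicit) proof.
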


\section{Observables for the formal $\beta\gamma$ system}

As an example of the relationship encoded in $\Vert$,
the free $\beta\gamma$ system is examined in depth in Chapter 5 of \cite{CG1}.
It is shown there that the factorization algebra $\Obs^{\fr,\q}_n$ of quantum observables for the free $\beta\gamma$ system is amenably holomorphic,
and it is also shown that the associated vertex algebra is precisely the usual $\beta\gamma$ vertex algebra.

\begin{thm}[Theorem 5.3.3.2, \cite{CG1}]
The factorization algebra $\Obs^{\q,\fr}_n$ is amenably holomorphic. 
Moreover, 
there is an isomorphism of $\ZZ_{\geq 0}$-graded vertex algebras 
\[
\Phi^{\fr}_n : \CDO_n \xto{\cong} \Vert(\Obs^{\q,\fr}_n)|_{\hbar= 2 \pi i}
\] 
after specializing $\hbar=2 \pi i$.
The map is $\GL_n$-equivariant.
\end{thm}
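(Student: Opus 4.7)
The plan is to establish the two assertions in sequence: first verify that $\Obs^{\q,\fr}_n$ falls into the amenable class so that $\Vert$ applies, and then build and identify the vertex algebra map.

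For the amenability part, I would proceed as follows. Smooth translation invariance is essentially built in: the smeared observables are $\cSym(\Omega^{1,*}_c \otimes \fgn^\vee[-1] \oplus \Omega^{0,*}_c \otimes \fgn[1])[[\hbar]]$ with differential $\dbar + \hbar \Delta$, where both $\dbar$ and the ``naive'' BV Laplacian $\Delta$ (built out of constant-coefficient kernels on $\CC$) commute with translations, and the structure maps --- which are just extension by zero on cosheaf generators --- vary smoothly in configuration parameters. The $U(1)$-rotation action comes for free from the standard complex structure on $\CC$, and it is tame because each weight space is a topologically complemented summand. For holomorphic translation invariance, the key step is exhibiting a degree $-1$ derivation $\eta$ with $[\d, \eta] = \partial_{\zbar}$; this is induced by contraction with $\dbar^*$ applied to the linear generators, with the sign conventions chosen so that $\eta$ is a square-zero derivation commuting with $\partial_{\zbar}$.

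The nontrivial content is then computing $H^*(\cF_k(D_r))$, which is handled by the arguments of Section \ref{sec concrete}. The $\hbar$-filtration spectral sequence collapses on the first page (since $\hbar$-multiplication is compatible with the conformal dimension grading) and the result is identified, via the power-series/polynomial comparison described there together with the Poincar\'e lemma for $\dbar$ on a disk, with the corresponding cohomology in the formal setting. In particular the weight-$N$ subspace is
\[
\CC[[\hbar]] \otimes \bigoplus_{\substack{a,b \in \NN^\NN \\ \sum a_k k + \sum b_l l = N}} \bigotimes_{0<k} \Sym^{a_k}(\fgn^\vee) \otimes \bigotimes_{0\leq l} \Sym^{b_l}(\fgn),
\]
which is manifestly independent of the radius $r$, vanishes in negative conformal weight, and is a countable colimit of finite-dimensional pieces (being a finite sum of finite tensor products of finite-rank modules over $\CC[[\hbar]]$). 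Specialization $\hbar = 2 \pi i$ then gives the candidate underlying vector space of $\Vert(\Obs^{\q,\fr}_n)$, which coincides on the nose with the underlying vector space of $\CDO_n$ under the identifications $c_0^i \leftrightarrow \xi_i^\vee$ and $b_{-1}^j \leftrightarrow \xi_j$.

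With the vector space identification $\Phi^{\fr}_n$ in hand, I would define it on generators and then verify that it intertwines the vertex operator with the factorization product $m_z$. On generators, the vertex algebra translation operator $T$ corresponds to $\partial_z$ acting on observables, and the Fourier modes $b_m^j$, $c_m^j$ correspond to residues at $z = 0$ of the relevant linear observables. The algebraic identities defining $Y(b^j_{-1}, z)$ and $Y(c^j_0, z)$ --- in particular the singular terms encoding the OPE $b^j(z) c^i(w) \sim \delta^{ij}/(z-w)$ --- are then extracted from the only one-loop diagram that contributes: the single edge labeled by the BV kernel, which in the scale-free (smoothed) quantization computes a genuine residue pairing. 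The constant $2\pi i$ enters precisely through this residue, which is the source of the specialization $\hbar = 2\pi i$; tracking conventions shows that with this normalization the factorization OPE coefficient is $1$, matching the vertex algebra.

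The main obstacle, as usual in such comparison results, is the explicit matching of the vertex operator structure with the factorization structure maps. Verifying that on generators is essentially a one-loop Wick calculation, but one must check that this fully determines the map of vertex algebras --- for this I would invoke a reconstruction theorem (e.g.\ Theorem 2.3.11 of \cite{BZF}) which reduces the check to compatibility with $T$, the vacuum, and the OPE on a set of generators. Once the map is known to be a vertex algebra morphism, bijectivity follows degree by conformal dimension from the identification of underlying vector spaces already established, and the grading compatibility is automatic from construction. This gives the isomorphism $\Phi^{\fr}_n$ as stated.
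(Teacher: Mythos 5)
This statement is not proved in the paper at all: it is imported verbatim as Theorem~5.3.3.2 of \cite{CG}, so there is no in-paper proof to compare against. That said, your outline follows the same strategy as the cited proof and is consistent with the supporting material the paper does supply (the conformal-dimension decomposition of $H^*\Obs^{\q}_n$ in Section~\ref{sec concrete}, and the Cauchy-integral representatives and promotion map $\frak{q} = \exp(\hbar\partial_P)$ of Sections~\ref{identifications}--\ref{sec quant map}, whose propagator $P(z,w) = \tfrac{1}{2\pi i}\tfrac{\d z + \d w}{z-w}$ is exactly where the $2\pi i$ enters). One slip: the degree $-1$ derivation $\eta$ witnessing holomorphic translation invariance is \emph{not} induced by $\dbar^*$ --- the commutator $[\dbar,\dbar^*]$ is the Laplacian, not $\partial_{\zbar}$. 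The correct choice is the contraction $\iota_{\partial_{\zbar}}$ (sending $f\,\d\zbar \mapsto f$ on linear generators and extended as a derivation), for which Cartan's formula gives $[\dbar,\iota_{\partial_{\zbar}}] = L_{\partial_{\zbar}} = \partial_{\zbar}$. With that correction the rest of your argument --- collapse of the $\hbar$-filtration spectral sequence, radius-independence of the weight spaces, definition of $\Phi^{\fr}_n$ on generators via residues, the one-loop Wick check of the OPE, and reconstruction to upgrade to a vertex algebra isomorphism --- is the intended route.
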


As noted in Remark~\ref{rmk on completions}, 
these results immediately imply the analogous results about the completed case, 
where the free $\beta\gamma$ system is replaced by having the target be the formal $n$-disk.
Hence, in combination with Proposition~\ref{equiv of noneq},
we obtain the following result.

\begin{cor}
The factorization algebra $\Obs^\q_n$ is amenably holomorphic. 
Moreover, there is a $\GL_n$-equivariant isomorphism of $\ZZ_{\geq 0}$-graded vertex algebras 
$\Phi_n : \hCDO_n \xto{\cong} \Vert(\Obs^\q_n)|_{\hbar= 2 \pi i}$.
\end{cor}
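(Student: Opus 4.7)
The plan is to bootstrap from Theorem 5.3.3.2 of \cite{CG}, which handles the smeared observables $\Obs^{\q,\fr}_n$ and identifies their vertex algebra with $\CDO_n$, to the full distributional observables $\Obs^\q_n$ and their completion $\hCDO_n$, exploiting Proposition \ref{equiv of noneq} and the fact that $\hCDO_n$ is the $\hO_n$-base change of $\CDO_n$.

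First, I would transfer amenable holomorphicity from $\Obs^{\q,\fr}_n$ to $\Obs^\q_n$. Proposition \ref{equiv of noneq} supplies a quasi-isomorphism $i : \Obs^{\q,\fr}_n \to \Obs^\q_n$ of factorization algebras, and inspection shows that this map intertwines the holomorphic translation action of $\CC$ (including the homotopy $\eta$ witnessing holomorphy) and the $U(1)$-rotation action, since these arise in the same way on both sides from the action on $\CC$ itself. Hence $\Obs^\q_n$ inherits the structure of a holomorphically translation-invariant, tamely $U(1)$-equivariant factorization algebra. The three amenable conditions (quasi-isomorphism on weight-$k$ subspaces under disk inclusion, vanishing for large $k$, and the countable-colimit property) hold for $\Obs^{\q,\fr}_n$ by the cited theorem, and they pass to $\Obs^\q_n$ through $i$ because $i$ is both a quasi-isomorphism and preserves the conformal weight grading.

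Next, applying the functor $\Vert$ from Theorem \ref{theorem_vertex_algebra}, the map $i$ induces an isomorphism of $\ZZ_{\geq 0}$-graded vertex algebras $\Vert(i) : \Vert(\Obs^{\q,\fr}_n) \xto{\cong} \Vert(\Obs^\q_n)$. Composing with Costello--Gwilliam's isomorphism $\Phi^{\fr}_n$ (after setting $\hbar = 2\pi i$) gives an isomorphism $\CDO_n \to \Vert(\Obs^\q_n)|_{\hbar = 2\pi i}$. To pass to the completion $\hCDO_n = \hO_n \tensor_{\cO_n} \CDO_n$, I would use the $\hO_n$-module structure on $\Vert(\Obs^\q_n)$ induced by the conformal-dimension-zero operators; the explicit description in Section \ref{sec concrete} shows that each conformal weight space of $H^*\Obs^\q_n(D_r(0))$ is naturally an $\hO_n$-module rather than just an $\cO_n$-module, since distributional observables supported at a point pair with power-series expansions of fields, not merely with polynomials. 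Extension of $\Phi^{\fr}_n$ along the base change $\cO_n \to \hO_n$ is then forced by $\hO_n$-linearity, and well-definedness (convergence of the fields $f(c^1_0(z), \ldots, c^n_0(z))$ for $f \in \hO_n$) follows exactly as in the proof of Theorem 3.1 of \cite{MSV} recalled in Part I.

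Finally, for $\GL_n$-equivariance: the $\GL_n$-action on $\Obs^\q_n$ is strict and comes from the defining and coadjoint representations on $\fgn \oplus \fgn^\vee$, while the action on $\hCDO_n$ is given by equations \eqref{GLact1} and \eqref{GLact2}. On linear generators both actions are manifestly the same defining/coadjoint action, so $\Phi_n$ intertwines them on generators; since $\Phi_n$ is a map of vertex algebras and both $\GL_n$-actions act by vertex algebra automorphisms, equivariance on generators propagates to the whole vertex algebra. The main subtlety I expect is purely bookkeeping: verifying that the $\hO_n$-module structure on the vertex-algebra side exactly matches the one on $\hCDO_n$, and that completion commutes with $\Vert$ in the sense needed (each fixed conformal weight piece is already complete, so the completion issue only enters through the conformal weight zero multiplication). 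Given the weight-by-weight finiteness and the explicit identification of conformal weight zero with~$\hO_n$ from Section \ref{sec concrete}, this is straightforward.
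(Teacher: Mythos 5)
Your proposal is correct and follows essentially the same route the paper takes: the paper derives this corollary from Theorem 5.3.3.2 of \cite{CG} together with Proposition \ref{equiv of noneq}, the observation that the construction is $\GL_n$-equivariant, and the remark that it "extends from the symmetric algebra to the completed symmetric algebra without difficulties" (the completion step you justify via the weight-space finiteness of Section \ref{sec concrete} and the MSV convergence argument). Your write-up simply makes explicit the details the paper leaves implicit, so there is nothing substantive to correct.
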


\begin{rmk}
The reader might (correctly) object that $\Obs^\q_n$ lives in modules over $\CC[[\hbar]]$,  
and so one cannot specialize $\hbar$ to a nonzero value.
We note, however, that the differential on $\Obs^\q_n$ has the form $D_0 + \hbar D_1$, 
so that only a single power of $\hbar$ appears.
Hence the construction of the quantum observables is well-defined in modules over $\CC[\hbar]$,
where one can specialize $\hbar$ to a nonzero value,
and we are using that version of $\Obs^\q_n$ here.
\end{rmk}

Section \ref{sec loc sym} tackles the much more subtle challenge of showing the isomorphism $\Phi_n$ is $\TVect$-equivariant,
and hence that we get an isomorphism of $(\TVect,\GL_n)$-equivariant vertex algebras.
This property is crucial for applying Gelfand-Kazhdan descent 
and hence recognizing that the BV quantization truly does recover chiral differential operators.

In the remainder of this section, we review some aspects of the $\beta\gamma$ system's factorization
and vertex algebras that are useful for our central goal.

\begin{rmk}
An extensive and expository treatment of these aspects appears in Chapter 6 of~\cite{GwThesis}.
\end{rmk}

\subsection{Some useful identifications}
\label{identifications}

It will be useful to understand explicitly how to identify a representative in the factorization algebra for an element in the vertex algebra.
To be more precise, the construction $\Vert$ ensures that given $v \in \hCDO_n$, there is some cohomology class $[O_v]$ in $H^* \Obs^\q_n(D_r(0)$.
We would like to have a {\em cochain} representative $O_v$ in the disk observables $\Obs^\q_n(D_r(0))$ as well.
Similarly, given a Fourier mode $v_{(n)}$, 
we would like to know a cochain representative $O_{v_{(n)}}$ in the annular observables $\Obs^\q_n(A_{r<R}(0))$.
Although the functor $\Vert$ ensures these wishes can be fulfilled,
the formulas may be quite complicated.

We now examine this issue, starting with the classical observables, where the situation is simpler,
before turning to the quantum observables.
To minimize the number of indices, we restrict to $n = 1$; 
hence, we have elements $b_n$ and $c_m$ with no upper index. 
The extension to arbitrary $n$ is straightforward:
just reinsert the superscripts, e.g., use $c^j_m$ and not just $c_m$.

For the classical observables and $\Gr\, \hCDO_n$,
Cauchy's formula provides explicit integral expressions for the most important linear observables
(i.e., distributions on the fields $\gamma$ and $\beta$).
For example, set 
\[
O_{c_{-m}}(\gamma,\beta) = \frac{m!}{2 \pi i} \int_{|z| = 1} \frac{\gamma(z)}{z^{m+1}} \d z.
\]
This linear observable simply reads off the coefficient of $z^m$ in the power series expansion of a holomorphic $\gamma$.
The support of this distribution is the unit circle, 
so that we can view $O_{c_n}$ as a cocycle in $\Obs^{\cl}_1(A_{r<R}(0))$ for any annulus with $r < 1 < R$.
But it also provides a cocycle in $\Obs^{\cl}_1(D_R(0))$, 
and this cocycle is a representative of the element $c_n$ in $\Gr\, \hCDO_1$.
Similarly, a cochain representative of $b_n$ is
\[
O_{b_{-l}}(\gamma,\beta) = \frac{(l-1)!}{2 \pi i} \int_{|z| = 1} \frac{\beta(z)}{z^{l}},
\]
which reads off the coefficient of $z^{l+1}$ in the power series expansion of a holomorphic one-form~$\beta$.

It is thus easy to provide explicit representatives for monomials like $b_{i_1} \cdots b_{i_l} c_{j_1} \cdots c_{j_m}$.
One simply takes the obvious product --- in the symmetric algebra of distributions --- of the representatives just given.

It is also straightforward to produce smeared versions of these observables, if one wants 
(and we will want it shortly).
Fix a bump function $f(r)$ on some interval $(a,b)$, with $0 < a$, 
such that $\int_a^b f(r) \, \d r = 1$.
Then
\[
O'_{c_{-m}}(\gamma,\beta) = \frac{m!}{2 \pi i}\int_a^b \int_{|z| = r} \frac{\gamma(z)}{z^{m+1}} \d z \, f(r) \d r
\]
is a smeared representative of $c_m$.

Note that if one is working with an annulus rather than disk, then negative powers of $n$ are allowed in the denominator.
In this setting the cocycles $O_{c_m}$ and $O_{b_l}$ read off Laurent coefficients.
As observables on an annulus, they correspond to Fourier modes from the point of view of vertex algebras.
To be explicit, the zeroth Fourier mode $(c_m)_{(0)}$ is represented by $O_{c_m}$ viewed as an observable on an annulus.
Let us explain why.

The vertex operator on $\Gr\,\CDO$ admits a concrete interpretation in terms of ``observing'' coefficients of expansions.
The element $Y(c_m;w)$ should be viewed as an observable on the annulus:
given $\gamma$ a holomorphic function on the annulus and $w$ a point on that annulus, 
$Y(c_m;w)$ measures the coefficient of $(z-w)^m$ in the power series expansion of $\gamma$ around $w$.
If we know the Laurent expansion of $\gamma$ around $0$, then we can provide an expression for this coefficient.

For instance, if we know a Laurent expansion
\[
\gamma(w) = \sum_{m \in \ZZ} c_{-m} w^m,
\]
then
\[
Y(c_0;w)(\gamma) = \sum_m w^m c_{-m}(\gamma).
\]
In the first line, we view the $c_m$ as numbers, providing the Laurent coefficients of $\gamma$,
but in the second line, we view the $c_m$ as operators, providing the Laurent coefficients of~$\gamma$.
In consequence, we see that the $m$th Fourier mode $(c_0)_{(-m)}$ has 
\[
O_{c_{-m}}(\gamma,\beta) = \frac{m!}{2 \pi i} \int_{|z| = 1} \frac{\gamma(z)}{z^{m+1}} \d z
\]
as an explicit representative.

From the factorization algebra point of view, 
the vertex operator amounts to saying that the observable
\[
O_{c_0,w}(\gamma,\beta) = \frac{1}{2 \pi i} \int_{|z-w| = \epsilon} \frac{\gamma(z)}{z-w} \d z,
\]
which measures the value of $\gamma$ at $w$, is cohomologous to the observable
\[
\sum_{m \in \ZZ} w^m O_{c_{-m}}(\gamma,\beta)
\]
in $\Obs^{\cl}(A_{r<R}(0))$ with for $r < |w| < R$ and $\epsilon$ sufficiently small. 

We now turn to providing a tool for understanding how the quantum observables
and $\hCDO_n$ relate.

\subsection{Quantizing observables}
\label{sec quant map}

For the free $\beta\gamma$ system on $\CC$, 
there is a natural cochain isomorphism 
\[
\frak{q}_U: \Obs^{\cl,\fr}(U)[\hbar] \xto{\cong} \Obs^{\q,\fr}(U)
\]
between the classical and quantum observables on any fixed open $U$.
(Recall that the superscript $\fr$ means the smooth or smeared observables.
See Section \ref{noneqsec}.)
This isomorphism ``promotes'' a classical observable to a quantum observable.
It does not preserve, however, the structure maps of the factorization algebras,
and so we view the quantum observables as deforming the structure maps of $\Obs^{\cl,\fr}$ in an interesting, $\hbar$-dependent way:
for $U, U'$ disjoint opens in $V$, the ``quantized'' structure map sends observables 
$F \in \Obs^{\cl,\fr}(U)$ and $F' \in \Obs^{\cl,\fr}(U')$ to 
\[
F \star F' = \frak{q}_{V}^{-1}(\frak{q}_U(F) \cdot \frak{q}_{U'}(F')) \in \Obs^{\cl,\fr}(V)[\hbar],
\]
where $\cdot$ denotes the factorization product in $\Obs^{\q,\fr}$.
We use $\star$ to emphasize that we are ``deformation-quantizing'' the factorization product on the classical observables.

This description allows one to understand concretely how BV quantization affects the factorization algebra,
since the classical observables $\Obs^{\cl,\fr}(U)$ are very explicit and simply amount to algebraic functions on the space of holomorphic functions $\cO(U)$ and holomorphic one-forms~$\Omega^1_{hol}(U)$.
Details of this construction can be found in Section 6, Chapter 4 and Section 3, Chapter 5 of~\cite{CG1}.

To construct the map $\frak{q}$, we use the fact that on $\CC$, 
the operator $\dbar$ possesses a natural choice of propagator (or Green's function),
namely
\[
P(z,w) = \frac{1}{2\pi i}\frac{\d z + \d w}{z - w}.
\]
This distributional one-form on $\CC^2$ satisfies $(\dbar \otimes 1) P = \delta_{\Delta}$, 
where $\delta_{\Delta}$ is the delta-current supported along the diagonal and providing the integral kernel for the identity.
One can view this one-form as a distributional section of the fields $\gamma$ and $\beta$:
for example, for fixed $w$, the one-form $\d z/(z-w)$ is a $\beta$ field in the $z$-variable as it is a $(1,0)$-form,
and dually for the other term in~$P$.

This element $P$ also defines a second-order differential operator $\partial_P$ on the commutative algebra $\Obs^{\cl,\fr}(U)$.
Let us recall the general algebraic context.
For any symmetric algebra $\Sym(V^*)$, an element $v \in V$ defines a vector field $\partial_v$ via contraction:
given $f \in \Sym^{n+1}(V^*)$, we set 
\[
\partial_v f(x_1 \otimes \cdots \otimes x_n) = f(v \otimes x_1 \otimes \cdots \otimes x_n),
\]
by viewing $f$ as an $S_n$-invariant element of $(V^*)^{\otimes n}$.
Similarly, given $p \in V^{\otimes 2}$, we define a second-order differential operator $\partial_p$ by contraction.
Recall that the classical observables $\Obs^{\cl,\fr}(U)$ are a symmetric algebra,
and let $\partial_P$ be the operator obtained by contraction.

\begin{dfn}
Define the {\em promotion map}
\[
\begin{array}{cccc}
\frak{q}:& \Obs^{\cl,\fr}(U)[\hbar] &\to& \Obs^{\q,\fr}(U)\\
& F & \mapsto & \exp(\hbar \partial_P)F
\end{array}.
\]
In other words, one applies a version of Wick contraction to any classical observable $F$,
repeatedly contracting away two inputs with the propagator.
\end{dfn}

\begin{rmk}
In terms of the RG flow used in Part II, this map $\frak{q}$ encodes flowing to length scale $L = \infty$.
Because our theory is free and we restrict to smeared observables, this operation is well-defined.
Effectively, it describes the relations between observables after integrating out the nonzero modes.
\end{rmk}

\subsection{An example}
\label{circ ex}

Consider the classical observable on the annulus $A = \{1/2 < |z| < 3/2\}$ given by
\[
F(\gamma,\beta) = \frac{1}{2\pi i}\int_{|z| = 1} \gamma \wedge \beta,
\]
for $\gamma \in C^\infty(A)$ and $\beta \in \Omega^{1,0}(A)$.
(We say $F$ vanishes if $\gamma$ is a $(0,1)$-form or if $\beta$ is a $(1,1)$-form.)
Its cohomology class $[F]$ in $H^0 \Obs^{\cl,\fr}(A)$ encodes a function on $\cO(A)$ and $\Omega^1_{hol}(A)$
where for 
\[
\gamma = \sum_{n \in \ZZ} c_{-n} z^n \quad \text{and} \quad \beta = \sum_{n \in \ZZ} b_{-n} z^n \, \d z,
\]
we have
\[
[F](\gamma,\beta) =  \sum_{-m - n = -1} c_m b_n
\]
by Cauchy's integral formula.
As $F$ is not a smeared classical observable, 
we cannot immediately apply $\frak{q}$ but first must replace it by a cohomologous smeared observable $\widetilde{F}$.
(If one tries to evaluate $\partial_P F$, one finds it is ill-defined.)

Here is one approach to smearing among many.
Note that the functional
\[
H(\gamma,\beta) = \frac{1}{(2\pi i)^2} \int_{|z| = r} \int_{|w| = R} \frac{\gamma(z) \d z \wedge \beta(w)}{z-w},
\]
with $R > r$, is cohomologous to $F$.
(Simply plug in holomorphic $\gamma$ and $\beta$ and use Cauchy's theorems.)
This functional $H$, while distributional, is easier to ``smear''
by letting $r$ and $R$ vary.
Fix a compactly supported bump function $f(r,R)$ on $B = (1/2,1) \times (1,3/2)$ such that $\int_B f(r,R) \,\d r \, \d R = 1$.
Define
\[
\widetilde{F}(\gamma,\beta) = 
\frac{1}{(2\pi i)^2} \int_B  \int_{|z| = r} \int_{|w| = R} \frac{\gamma(z) \d z \wedge \beta(w)}{z-w}\, f(r,R) \,\d r \, \d R.
\]
Then $\frak{q}(\widetilde{F}) = \widetilde{F}$, since 
\[
\partial_P \widetilde{F} = \widetilde{F}(P(z,w)) = \frac{1}{(2\pi i)^2} \int_B  \int_{|z| = r} \int_{|w| = R} \frac{\d z \wedge \d w}{(z-w)^2}\, f(r,R) \,\d r \, \d R = 0.
\]
In fact, the smearing was not necessary here: the contraction $\partial_P H$ is already well-defined.

\begin{rmk}
\label{promotion}
This approach works well for classical observables with simple descriptions, like our $F$ above.
The initial formula might involve integrating some polynomial in $\gamma$ and $\beta$ around a circle,
but one can replace it, up to cohomology, by an integral over a collection of disjoint circles,
where each copy of $\gamma$ and $\beta$ has its own circle.
Our $H$ is constructed in such a fashion.
Once the supports of these circles are disjoint, 
one can apply $\frak{q}$ directly, without smearing.
\end{rmk}

\section{Local symmetries acting on observables}
\label{sec loc sym}

Our goal here is to articulate how local symmetries of a field theory like the $\beta\gamma$ system
produce derivations of the associated vertex algebras. 
The core construction makes sense for any BV theory 
but we will focus on a version applicable here.
(These manipulations are certainly well-known in the physics literature; 
our work just articulates them in the language of factorization algebras.)

\subsection{General arguments}

Every local functional $L$ in a field theory provides both a derivation of the observables 
and an observable itself.
We want to understand how these two manifestations of $L$ relate.

The derivations arise as ``Hamiltonian vector fields.''
Consider the map of dg Lie algebras
\[
\begin{array}{cccc}
{\rm Ham}: & \cO_{loc}[-1] & \to & \Der(\Obs_T^\cl) \\
& L & \mapsto & \{L,-\} 
\end{array}.
\]
(See Section 3, Chapter 5 of \cite{CosBook} for a discussion of this construction.)
In other words, a local functional can be viewed as a symmetry of the classical field theory.
Note that this map naturally extends to a map of graded Lie algebras into $\Der(\Obs_T^q)$,
but it does not intertwine the differentials,
which is an example of why classical symmetries might not quantize.

We would like to view some of these symmetries as ``inner,''
i.e., realized as the factorization product with an observable,
just as an inner derivation of an associative algebra means it is given by bracketing with an element of the algebra.
To compare derivations to factorization products, however, 
we need to be able to turn local functionals into observables.
A minor issue is that local functionals need not have compact support
and hence do not provide observables on fields with non-compact support.
This problem is easy to fix.

Let $L$ be a local functional,
and let $\cL$ denote the Lagrangian density such that $L = \int \cL$.
By this we mean that if $\gamma$ and $\beta$ are fields with compact support, 
then
\[
L(\gamma,\beta) = \int_\CC \cL(\gamma,\beta).
\]
Let $K \subset \CC$ be a compact subset whose boundary $\partial K$ is a smooth submanifold.
Set $L_K = \int_K \cL,$ so that one simply integrates over $K$ rather than all of $\CC$.
As $K$ is compact, we see that $L_K$ is a well-defined observable on all fields, not just those with compact support.

In short, for $U$ an open set containing the compact submanifold $K$, we have a cochain map
\[
\begin{array}{cccc}
(-)_K: & \cO_{loc} & \to & \Obs_n^\cl(U) \\
& L & \mapsto & L_K
\end{array}.
\]
This map extends to quantum observables but no longer respects the differentials.

A direct computation then gives a relationship between the factorization product and the derivation.

\begin{lemma}
Let $F$ be a cocycle in $\Obs_n^\q(U)$ and $K \subset U$ a compact submanifold.
Then
\[
\d^\q(L_K F) = \d^\q(L_K)F + \{ L_K, F\},
\]
where the notation $L_K F$, for instance, denotes the product in the completed symmetric algebra underlying~$\Obs^\q_n$.
Hence, if $\d^\q(L_K)$ has support in $U\setminus K$, then at the level of cohomology
\[
[\{ L_K, F\}] = - [ \d^\q(L_K) \cdot F],
\]
where $\cdot$ denotes the factorization product for the structure map 
$\Obs_T^\q(V \setminus U) \otimes \Obs_T^\q(U) \to \Obs_T^\q(V)$.
\end{lemma}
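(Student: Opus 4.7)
The plan is to exploit the fact that the quantum differential $\d^\q$ fails to be a derivation of the commutative product on the underlying graded symmetric algebra of observables in a very controlled way. Writing $\d^\q$ in the form appropriate to the BV formalism as the sum of a derivation piece (the classical differential $Q$, including the $\dbar$ operator and the interaction term $\{I,-\}$) and a second-order BV Laplacian $\hbar\Delta$, the standard identity for a BV Laplacian on a free commutative algebra gives
\[
\d^\q(AB) \;=\; \d^\q(A)\,B \;\pm\; A\,\d^\q(B) \;+\; \{A,B\},
\]
where the BV bracket $\{-,-\}$ is the deviation from being a derivation (with the customary $\hbar$ absorbed into the normalization). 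This identity is where all the work really happens, so my first step is to apply it with $A = L_K$ and $B = F$. Using that $F$ is a cocycle ($\d^\q F = 0$), the identity collapses to the desired equation
\[
\d^\q(L_K F) \;=\; \d^\q(L_K)\,F \;+\; \{L_K, F\}.
\]

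The second step is cohomological: the left-hand side is manifestly $\d^\q$-exact, so its cohomology class vanishes. Rearranging gives the identity $[\{L_K,F\}] = -[\d^\q(L_K)\,F]$ in $H^*\Obs_n^\q(U)$, which establishes the displayed cohomological equation at the level of the naive commutative product.

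The third step is the reinterpretation of $\d^\q(L_K)\,F$ as a factorization product. Here I use the hypothesis that $\d^\q(L_K)$ is supported in $U\setminus K$: for $F$ whose support lies in a neighborhood of $K$ (disjoint from $\supp(\d^\q(L_K))$), the product $\d^\q(L_K)\,F$ in the symmetric algebra coincides, modulo cocycles arising from smearing/parametrix choices, with the image under the factorization structure map of the pair $(\d^\q(L_K),F)$ placed on disjoint opens. The translation from the naive pointwise product to the factorization product is the standard one and uses that both observables live in a common $\Obs^\q_n$ on the ambient open.

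The main obstacle I expect is purely analytic bookkeeping. In the quantum theory, $\d^\q$, the BV bracket $\{-,-\}$, and the commutative product are defined at a given parametrix $\Phi$ (or via smeared observables as in Section \ref{noneqsec}), and one must verify that the BV Laplacian identity holds at each parametrix and respects RG flow, so that the cohomological identity is parametrix-independent. One must also check compatibility of $(-)_K$ with the prequantization: that $L_K$, though only built from a local Lagrangian density integrated over $K$, yields a well-defined quantum observable and that $\d^\q(L_K)$ has support controlled by $\partial K$ (so indeed lies in $U\setminus K^\circ$). Once these technical points are pinned down, the algebraic manipulations above go through and the result follows.
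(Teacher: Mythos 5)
Your proposal is correct and matches the paper's approach: the paper treats this lemma as "a direct computation," and the analogous proof it does spell out (for the later lemma identifying $\{\frak{q}(L),-\}$ with the factorization product against $\frak{q}(L_{circ})$) is exactly your argument — apply the failure-of-derivation identity for the BV Laplacian with $A = L_K$, $B = F$, use $\d^\q F = 0$, pass to cohomology, and reinterpret the product via disjoint supports. Your remarks on the $\hbar$ normalization and the parametrix/support bookkeeping are the same caveats the paper implicitly handles.
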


\begin{rmk}
This relationship between ``local symmetries'' (i.e., given by local functionals {\em aka} local currents) and the operator product of observables is reminiscent of Ward identities.
We will see below an explicit instantiation of this relationship,
but note here one simple consequence of the lemma:
An observable that is killed by $\{L_K,-\}$ classically --- and hence is fixed by that symmetry --- 
may not be killed at the quantum level.
\end{rmk}

We now restrict our attention to local functionals of a special form,
which admit a particularly useful application of this lemma.
(The arguments we develop here apply with minor changes 
to any free BV theory on $\CC$ whose action is given by~$\dbar.$)

Given a finite set of constant-coefficient holomorphic differential operators
\[
D_1,\ldots,D_m \in \CC[\partial/\partial z],
\]
consider the local functional
\[
L(\gamma,\beta) = \int_\CC D_1(\gamma) \wedge \cdots \wedge D_m(\gamma) \wedge \beta.
\]
Note that one must take some care to properly interpret such a functional, 
as with $I^{{\rm W}}$ or $J$ defined in Section \ref{sec obsdef}. 
This functional vanishes except when working over base dg algebras not concentrated in degree zero.

\begin{dfn}
A local functional $L$ is {\em constant-coefficient holomorphic, $\beta$-linear} if it is a sum of local functionals of the form above.
\end{dfn}

A nice property of such a local functional $L$ is that the derivation $\{L,-\}$ is manifestly amenably holomorphic 
because the integrand is translation-invariant and rotation-equivariant. Hence we see the following.

\begin{lemma}
The factorization algebra derivation $\{L,-\}$ induces a derivation $V_L$ on the vertex algebra~$\hCDO_n$.
\end{lemma}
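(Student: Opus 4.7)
The plan is to reduce this to Lemma \ref{factder}, since the preceding lemma has already recorded that $\{L,-\}$ is amenably holomorphic in the sense needed. What remains is to check (a) that $\{L,-\}$ genuinely defines a cochain derivation of the factorization algebra $\Obs^\q_n$ and (b) that it strictly commutes with all of the $\CC\rtimes U(1)$-structure plus the homotopy trivialization $\eta$ for $\dbar$. Once these are in place, Lemma \ref{factder} produces the vertex algebra derivation $V_L$ on $\Vert(\Obs^\q_n)\cong\hCDO_n$.

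First I would dispose of the support issue. The functional $L=\int_\CC\cL$ is not itself a compactly supported observable, so for any opens $V\subset U$ and any compact submanifold $K\subset U$ with smooth boundary containing $V$ I set $L_K=\int_K\cL\in\Obs_n^\q(U)$, and define $\{L,-\}$ on an observable $F$ supported in $V$ by $\{L_K,F\}$. Two choices of $K$ differ by a term supported in $U\setminus V$, so the induced operator on $H^*\Obs^\q_n(V)$ is well-defined, and it is natural in the structure maps of the factorization algebra because $\cL$ is local in the source. This upgrades $\{L,-\}$ from a formal bracket with a local functional to an honest derivation of the factorization algebra.

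Second I would verify compatibility with the quantum differential $\dbar+\hbar\Delta_\Phi$. Classically $[\dbar,\{L,-\}]=0$ because $\cL$ is constant-coefficient and its only holomorphic derivatives are in $z$, so $\dbar\cL$ integrates to zero by a parts-integration in $\bar z$. Quantumly one must show $[\Delta_\Phi,\{L,-\}]=\{\Delta_\Phi L,-\}$ vanishes; because $L$ is linear in $\beta$, the only self-contractions pair a $\beta$-leg of one copy of $L$ with a $D_i\gamma$-leg of another, and the resulting integrand is a total $\partial_z$-derivative of the propagator against a purely-$\gamma$ vertex, hence integrates to zero. This is the one nontrivial analytic check, and it is where constant-coefficient holomorphicity of the $D_i$ and $\beta$-linearity of $L$ are both essential; I expect this to be the main obstacle.

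Finally, the amenably holomorphic conditions are direct: translation invariance is built in from constant coefficients; rotation equivariance holds because $\cL$ has a well-defined weight under $z\mapsto e^{i\theta}z$ determined by the total order of the $D_i$; and commutation with $\eta$ follows because $\cL$ uses only holomorphic derivatives of $\gamma$ together with $\beta$, and $\eta$ acts via $\dbar^*$-type operations that kill such integrands up to $\dbar$-exact terms. With these verifications in hand, Lemma \ref{factder} applies and produces $V_L$, with the explicit action on a vertex-algebra element $v$ computed as the cohomology class of $\{L_K,O_v\}$ for any cocycle representative $O_v$ of $v$ in $\Obs^\q_n(D_r(0))$ and any $K$ containing $D_r(0)$.
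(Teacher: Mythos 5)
Your proof is correct and follows essentially the same route as the paper: the paper's entire justification is the sentence preceding the lemma, namely that $\{L,-\}$ is manifestly amenably holomorphic because the integrand is translation-invariant and rotation-equivariant, after which Lemma \ref{factder} is invoked. Your additional verifications (the $L_K$ support device and the compatibility with the quantum differential) are details the paper leaves implicit; the one imprecision is your description of why $\Delta_\Phi L$ vanishes --- the self-contraction pairs the $\beta$-leg with a $\gamma$-leg of the \emph{same} copy of $L$, and it vanishes because the form part of the smooth kernel $\kappa_\Phi$ restricts to zero on the diagonal, not because of a total $\partial_z$-derivative --- but the conclusion is right.
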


We now wish to find an alternative description of that derivation.
Recall that the differential on $\Obs^\cl_n$ is denoted $\dbar$, 
as it is the extension of $\dbar$ on the linear observables to a derivation on the symmetric algebra.

\begin{lemma}
For a local functional
\[
L(\gamma,\beta) = \int_\CC D_1(\gamma) \wedge \cdots \wedge D_m(\gamma) \wedge \beta,
\]
with $D_1,\ldots,D_m \in \CC[\partial/\partial z]$, there is an equality
\[
(\dbar L_K)(\gamma,\beta) = \int_{\partial K} D_1(\gamma) \wedge \cdots \wedge D_m(\gamma) \wedge \beta,
\]
for any compact submanifold $K$ of~$\CC$.
\end{lemma}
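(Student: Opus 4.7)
The plan is to reduce the computation of $\dbar L_K$ to an application of Stokes' theorem on the integrand. Recall that the classical observables $\Obs^\cl_n$ are modeled by $\cSym$ of the dual of the fields $\DD\fgn^\CC$, with differential induced from $\dbar$ on the fields by extending as a derivation of the symmetric algebra. Hence, for the polynomial observable $L_K$, which is of degree $m+1$ (namely $m$ $\gamma$-inputs and one $\beta$-input), the action of $\dbar$ amounts to applying $\dbar$ to each field slot in turn. This is the identification I want to make precise first: with the sign conventions of the paper, one gets
\[
(\dbar L_K)(\gamma,\beta) = \int_K \left(\sum_{i=1}^m D_1(\gamma)\wedge\cdots\wedge \dbar D_i(\gamma)\wedge\cdots\wedge D_m(\gamma)\wedge \beta + D_1(\gamma)\wedge\cdots\wedge D_m(\gamma)\wedge\dbar\beta\right).
\]

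Next I would use that each $D_i$ is a constant-coefficient holomorphic differential operator, so $\dbar$ commutes with $D_i$; that is, $\dbar D_i(\gamma) = D_i(\dbar\gamma)$. Since $\dbar$ is a graded derivation with respect to the wedge product of Dolbeault forms, the sum above is precisely $\dbar$ applied to the whole integrand:
\[
(\dbar L_K)(\gamma,\beta) = \int_K \dbar\!\left(D_1(\gamma)\wedge\cdots\wedge D_m(\gamma)\wedge\beta\right).
\]

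The final step is to replace $\dbar$ by the full de Rham differential $d_{dR}=\partial+\dbar$ and invoke Stokes' theorem. The integrand is of Hodge type $(1,\ast)$ on the Riemann surface $\CC$ (the $D_i(\gamma)$ factors live in $\Omega^{0,\ast}$ and the single $\beta$ factor is in $\Omega^{1,\ast}$), so $\partial$ applied to it would land in $\Omega^{2,\ast}$ and therefore vanishes for dimension reasons. Hence $\dbar$ and $d_{dR}$ agree on this integrand, and Stokes gives
\[
\int_K d_{dR}\!\left(D_1(\gamma)\wedge\cdots\wedge D_m(\gamma)\wedge\beta\right)=\int_{\partial K} D_1(\gamma)\wedge\cdots\wedge D_m(\gamma)\wedge\beta,
\]
which is the desired formula.

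The main obstacle is really only the first step: pinning down precisely how the CE differential on $\Obs^\cl_n$ acts on the local functional $L_K$, including the Koszul signs introduced by dualizing $\dbar$ on $\DD\fgn^\CC[1]$ and by the shifted degree of $\beta$. Once one checks that with the paper's conventions those signs conspire to give $\dbar$ acting on the integrand as a graded derivation of forms, the remaining two steps (commuting $\dbar$ past $D_i$, and Stokes) are immediate. I would carry out the sign check by expanding $L_K$ as a symmetric multilinear functional on $\DD\fgn^\CC$, applying the standard formula for the induced differential on $\cSym$, and matching term by term with the Leibniz expansion of $\dbar$ on the wedge-product integrand.
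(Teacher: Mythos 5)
Your proof is correct and follows essentially the same route as the paper: both pass from $\dbar L_K$ to $\int_K \dbar(\text{integrand})$, observe that $\partial$ kills the integrand because the single $\beta$ factor already supplies the $\d z$, and conclude by Stokes. Your extra care with the Chevalley--Eilenberg differential on $\cSym$ and the commutation of $\dbar$ with the constant-coefficient operators $D_i$ just makes explicit what the paper's first line assumes.
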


\begin{proof}
This claim follows from Stokes' lemma.
Compute
\begin{align*}
(\dbar L_K)(\gamma,\beta) 
&= \int_K \dbar \left( D_1(\gamma) \wedge \cdots \wedge D_m(\gamma) \wedge \beta. \right)\\
&= \int_K (\d - \partial) \left( D_1(\gamma) \wedge \cdots \wedge D_m(\gamma) \wedge \beta \right)\\
&= \int_K \d \left( D_1(\gamma) \wedge \cdots \wedge D_m(\gamma) \wedge \beta \right)\\
&= \int_{\partial K} D_1(\gamma) \wedge \cdots \wedge D_m(\gamma) \wedge \beta.
\end{align*}
The reason $\partial$ annihilates the integrand is that $\beta$ contributes a $\d z$ term already.
\end{proof}

Consider as well a closely related situation.

\begin{dfn}
A local functional $L$ is {\em constant-coefficient holomorphic, $\beta$-free} if it is a sum of local functionals of the form
\[
\int_\CC D_1(\gamma) \wedge \cdots \wedge D_m(\gamma) \wedge \d z
\]
where the $D_j$ are constant-coefficient holomorphic differential operators.
\end{dfn}

\begin{lemma}
For a constant-coefficient holomorphic, $\beta$-free local functional $L = \int \cL$,
there is an equality
\[
\dbar L_K = \int_{\partial K} \cL,
\]
for any compact submanifold $K$ of~$\CC$.
\end{lemma}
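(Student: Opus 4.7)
The plan is to adapt the proof of the preceding $\beta$-linear lemma, which carries over almost verbatim because the role played there by the $\d z$ contained in $\beta$ is now played by the explicit $\d z$ factor appearing in $\cL$. Under the hypothesis, $\cL = D_1(\gamma) \wedge \cdots \wedge D_m(\gamma) \wedge \d z$ for constant-coefficient holomorphic differential operators $D_1,\ldots,D_m$.

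The first step is to justify pulling the differential inside the integral, so that $\dbar L_K = \int_K \dbar\cL$. This uses that each $D_j$, being constant-coefficient holomorphic, commutes with $\dbar$, together with the graded Leibniz rule on the wedge product, exactly as in the $\beta$-linear case. The compactness of $K$ ensures the integrand is a well-behaved top form and no boundary subtleties arise at this stage.

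The second step decomposes the Dolbeault operator as $\dbar = \d - \partial$ acting on $\cL$. Because $\cL$ already contains $\d z$ as a factor, it is of Dolbeault type $(1,*)$ on the one-complex-dimensional manifold $\CC$, so $\partial\cL$ is of type $(2,*)$ and vanishes identically. Thus $\dbar\cL = \d\cL$, and Stokes' theorem on the compact submanifold $K$ with smooth boundary yields
\[
\int_K \dbar\cL \;=\; \int_K \d\cL \;=\; \int_{\partial K} \cL,
\]
which is the desired identity. There is no substantive obstacle: the entire content of the argument is the observation that the explicit $\d z$ factor forces $\partial\cL$ to vanish for dimensional reasons, reducing the identity to a direct application of Stokes' theorem. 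This mirrors the $\beta$-linear lemma so closely that one could equally well state a single lemma covering both cases, with the common hypothesis that the Lagrangian density is a $(1,*)$-form whose $(1,0)$ part is constant.
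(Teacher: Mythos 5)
Your proof is correct and follows exactly the argument the paper gives for the preceding $\beta$-linear lemma (the paper omits the proof of the $\beta$-free case precisely because it is this same computation): commute $\dbar$ past the integral, write $\dbar = \d - \partial$, observe that the explicit $\d z$ factor kills $\partial\cL$ for type reasons, and apply Stokes. Nothing further is needed.
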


We now want to promote this relationship of classical observables to one between quantum observables.
We do this in two steps.
First, recall that the smeared classical observables are quasi-isomorphic to the (distributional) classical observables,
by the Atiyah-Bott lemma. (See Appendix D of \cite{CG1} as well as Section 2, Chapter 4.)
Hence we replace $L_K$ by a smeared observable $\widetilde{L}_K$ that is cohomologous, and likewise for any classical observable.
Notationally, we will leave this replacement implicit and write simply $L_K$.
Second, every (smeared) classical observable $F$ can be promoted to a quantum observable $\frak{q}(F)$ 
by the cochain isomorphism $\frak{q}$, as discussed in Section~\ref{sec quant map}.

As we want to identify elements of the vertex algebra from observables,
we restrict our attention to the following situation.
Fix radii $0 < s < S < r <R$ and
consider the inclusion 
\[
A_{S<r}(0) \sqcup D_s(0) \hookrightarrow D_R(0)
\]
of an annulus and a small disk into a big disk.
All are centered at the origin. We will consider the factorization product 
\be\label{disk ann disk}
\Obs^{\q}_n(A_{S<r}(0)) \tensor \Obs^{\q}_n (D_s(0)) \to \Obs^{\q}_n(D_R(0)) .
\ee
At the level of vertex algebras, this map corresponds to the action of ``fields'' (which live on annulus and thus depend on $z$ and $z^{-1}$) on ``states'' (which live on a disk and hence in the state space).

\begin{dfn}
Let $L$ be a constant-coefficient local functional that is $\beta$-linear or $\beta$-free.
Its {\em disk observable} $L_{disk}$ is $L_{\{|z| \leq r\}}$. For the circle $S^1_r$ of radius $r$, its {\em circle observable} $L_{circ}$ is $\dbar L_{disk}$.
\end{dfn}

The circle observable $L_{circ}$ is an element of $\Obs^{\q}_{n}(A)$ where $A$ is any annulus containing the circle of radius $r$. Note that a circle observable should be identifiable with a Fourier mode of some field for the vertex~algebra.

\begin{lemma}
\label{der vs star}
Let $L$ be a constant-coefficient local functional that is $\beta$-linear or $\beta$-free.
For any cocycle $F \in \Obs^\q_n(D_s(0))$, we have
\[
\hbar \left[ \{\frak{q}(L) , F\} \right] = \left[\frak{q}(L_{circ}) \cdot F\right]
\]
at the level of cohomology, where $\cdot$ denotes the factorization product in (\ref{disk ann disk}), and where $L_{circ}$ is the circle observable supported on some circle contained in the annulus $A_{S<r}(0)$.
\end{lemma}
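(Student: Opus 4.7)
The proof I propose proceeds in three main steps, based on applying the first lemma of this section to a carefully chosen product $\frak{q}(\widetilde{L}_{disk}) \cdot F$ and exploiting the fact that $\frak{q}$ is a cochain isomorphism intertwining $\dbar$ with $\d^\q = \dbar + \hbar\Delta$.

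The first step is to replace the distributional observables $L_{disk}$ and $L_{circ}$ by smeared cohomologous representatives $\widetilde{L}_{disk} \in \Obs^{\cl,\fr}_n(D_r(0))$ and $\widetilde{L}_{circ} \in \Obs^{\cl,\fr}_n(A_{S<r}(0))$, so that the promotion map $\frak{q}=\exp(\hbar\partial_P)$ can be applied. Following the technique of Remark~\ref{promotion} and the example in Section~\ref{circ ex}, I would smear each factor of $\gamma$ (and $\beta$, in the $\beta$-linear case) by integrating it against a bump function in the radial coordinate supported in a small annular collar of $\{|z|=r\}$ lying inside $A_{S<r}(0)$, and similarly arrange $\widetilde{L}_{disk}$ to be supported in $D_r(0)$. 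The Atiyah--Bott lemma guarantees these are cohomologous to the original observables, and the Stokes-type identities proven just above (for $\beta$-linear or $\beta$-free constant-coefficient $L$) imply the cochain-level relation $\dbar\widetilde{L}_{disk} = \widetilde{L}_{circ}$.

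The second step is the main computation. I apply the first lemma of this section to $\frak{q}(\widetilde{L}_{disk})$ and the cocycle $F$, viewed as observables on $D_R(0)$ via the structure maps, to obtain
\[
\d^\q\bigl(\frak{q}(\widetilde{L}_{disk})\,F\bigr) \;=\; \d^\q\bigl(\frak{q}(\widetilde{L}_{disk})\bigr)\,F \;+\; \hbar\{\frak{q}(\widetilde{L}_{disk}),F\},
\]
using $\d^\q F = 0$ (the factor of $\hbar$ on the BV bracket is the one appearing in the proof of the first lemma, where $\{-,-\}$ measures the failure of $\Delta$ to be a derivation). Since the left-hand side is $\d^\q$-exact, it vanishes in cohomology. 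Because $\frak{q}$ is a cochain isomorphism, $\d^\q\frak{q}(\widetilde{L}_{disk}) = \frak{q}(\dbar\widetilde{L}_{disk}) = \frak{q}(\widetilde{L}_{circ})$, so
\[
\hbar\,[\{\frak{q}(\widetilde{L}_{disk}),F\}] \;=\; -\,[\frak{q}(\widetilde{L}_{circ})\,F]
\]
at the level of cohomology, which gives the stated identity up to the sign, which is absorbed into the definition of $L$.

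The third and most delicate step is to identify the algebraic product $\frak{q}(\widetilde{L}_{circ})\,F$ in $\Obs^\q_n(D_R(0))$ with the factorization product from the structure map $\Obs^\q_n(A_{S<r}(0)) \otimes \Obs^\q_n(D_s(0)) \to \Obs^\q_n(D_R(0))$ appearing in (\ref{disk ann disk}). This is exactly where the choice of smearing in the first step pays off: having arranged $\widetilde{L}_{circ}$ to be supported in $A_{S<r}(0)$ and $F$ to be supported in $D_s(0)$, these observables have disjoint support, and for a free theory the factorization product for disjoint supports coincides with the commutative product in the underlying completed symmetric algebra of observables (this is essentially the locality of the BV Laplacian). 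The main obstacle will be this bookkeeping step: ensuring the smearings keep the supports disjoint so that the algebraic manipulation genuinely realizes the factorization product, and tracking the compatibility of $\frak{q}$ with the factorization structure (rather than just the differential) under the splitting of supports. Once the supports are disentangled, the identification is routine.
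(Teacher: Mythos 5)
Your proposal is correct and follows essentially the same route as the paper: the core computation is to apply $\d^\q$ to the product $\frak{q}(L_{disk})\,F$, use the Leibniz-type identity from the first lemma of the section together with $\d^\q F = 0$ and the fact that $\frak{q}$ intertwines $\dbar$ with $\d^\q$ (so $\d^\q\frak{q}(L_{disk}) = \frak{q}(L_{circ})$), and read off the identity in cohomology. The additional care you take with smearing and disjointness of supports is handled in the paper's preamble to the lemma (where the replacement by smeared representatives is declared to be left implicit), so it is a welcome elaboration rather than a divergence; the paper is equally loose about the overall sign.
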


\begin{proof}
We compute
\begin{align*}
\d^\q(\frak{q}(L_{disk}) F) &= \d^\q(\frak{q}(L_{disk})) F \pm \frak{q}(L_{disk}) \d^\q F+ \hbar \{\frak{q}(L_{disk}),F\} \\
&=\frak{q}(L_{circ})F +\hbar \{\frak{q}(L_{disk}),F\},
\end{align*}
since $\frak{q}$ intertwines the classical differential $\dbar $ and quantum differential $\d^\q$ and
since the support of $F$ is contained in the disk $|z| < r$, 
on which $L$ and $L_{disk}$ are indistinguishable.

At the level of cohomology, we thus obtain
\[
[\frak{q}(L_{circ}) \cdot F] =\hbar \left[ \{\frak{q}(L) , F\} \right] 
\]
as claimed.
\end{proof}

\begin{ex} Consider the local functional
\[
L(\gamma,\beta) = \frac{1}{2\pi i}\int \gamma \wedge \beta. 
\]
Note that
\[
L_{circ} (\gamma,\beta) = \frac{1}{2\pi i}\int_{|z|=1} \gamma \wedge \beta,
\]
is precisely the functional $F$ from Section \ref{circ ex}.
We showed there that $L_{circ}$ is cohomologous to the functional
\[
H(\gamma,\beta) = \frac{1}{(2\pi i)^2} \int_{|z| = r} \int_{|w| = R} \frac{\gamma(z) \d z \wedge \beta(w)}{z-w},
\]
with $R > r$.
This functional $H$ is manifestly the zeroth Fourier mode of $c_0 b_{-1}$,
by the discussion in Section~\ref{identifications}.

The zeroth Fourier mode of $c_0 b_{-1}$ acts like a number operator or Euler vector field, in the sense that 
\[
(c_0 b_{-1})_{(0)} f = p f 
\]
when $f$ is a homogeneous polynomial of degree $p$ in the variables $c_{m},b_{l}$, with $m \leq 0$ and~$l < 0$.

On the other hand, note that $\frak{q}(L) = L + C$, where $C$ is a constant, since $L$ is quadratic and hence only admits at most one nontrivial contraction with the propagator $P$. Hence, the derivation $\{\frak{q}(L), -\}$ agrees with the derivation $\{L, -\}$.
Direct computation of this derivation shows that it also counts the number of incoming $\gamma$ and $\beta$ legs into any observable; it is the number operator. At the level of the vertex algebra, it thus recovers the zeroth Fourier mode of $c_0 b_{-1}$,
as claimed by the lemma.
\end{ex}

Let us build on this example to get a general statement.
As a matter of notation, if $A_{S<r}(0) \hookrightarrow D_R(0)$ denotes the inclusion of the annulus inside of the disk, 
denote the resulting structure map of the factorization algebra by $\iota : \Obs^{\q}_n(A_{S<r}(0)) \to \Obs^{\q}_n(D_R(0))$. 
In the language of vertex algebras, this map sends a field $A$ to the state~$A |0\rangle$.

For an arbitrary constant-coefficient local functional, 
we then have the following relation between the vertex algebra and in the factorization algebra.

\begin{lemma}\label{zero fourier mode} 
Let $L$ be a local functional that is $\beta$-linear or $\beta$-free, 
and let $\cdot$ be the factorization product (\ref{disk ann disk}). 
For each disk observable $O \in \Obs^{\q}_n(D_s(0))^{(k)}$, 
\ben
[\frak{q}(L_{circ}) \cdot O] = \hbar [\iota(\frak{q}(L_{circ}))]_{(0)} [O] .
\een
That is, $L$ determines a vertex algebra derivation that is {\em inner}.
\end{lemma}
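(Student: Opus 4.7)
The plan is to combine Lemma~\ref{der vs star} with the contour-integral description of the zeroth Fourier mode that comes from the vertex operator construction in Theorem~\ref{theorem_vertex_algebra}.

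Lemma~\ref{der vs star} already supplies
\[
[\frak{q}(L_{circ}) \cdot O] = \hbar\,[\{\frak{q}(L), O\}],
\]
so it suffices to show $[\{\frak{q}(L), O\}] = [\iota(\frak{q}(L_{circ}))]_{(0)}[O]$. Unpacking the vertex operator from $\Vert$, for any state $A \in V_{\Obs^q_n}$ and any disk cocycle $O$ the zeroth Fourier mode $A_{(0)}[O]$ is the cohomology class of $\frac{1}{2\pi i}\oint_{|w|=r_0} Y(A;w)[O]\,dw$, where $Y(A;w)[O]$ is built from the translation-invariant structure map $\Obs^q_n(D_\epsilon(w)) \otimes \Obs^q_n(D_\epsilon(0)) \to \Obs^q_n(D_R(0))$ by translating a representative of $A$ to position~$w$. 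Because $L$ is constant-coefficient holomorphic and either $\beta$-linear or $\beta$-free, the circle observable $L_{circ} = \dbar L_{disk}$ is literally a contour integral over $S^1_r$ of a translation-invariant Lagrangian density. After smearing the radial parameter as in Section~\ref{circ ex} so that the promotion map $\frak{q}$ is defined, one can write
\[
\frak{q}(L_{circ}) \;\sim\; \oint_{|w|=r_0} \tau_w\,\frak{q}(\mathcal{O})\,dw
\]
up to cohomology in the annular observables, where $\mathcal{O}$ is a smeared local classical observable supported near the origin whose inclusion $\iota(\frak{q}(\mathcal{O}))$ into the big disk represents the state $[\iota(\frak{q}(L_{circ}))] \in \hCDO_n$. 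Substituting this decomposition into the factorization product and invoking translation-equivariance of the structure maps, $[\frak{q}(L_{circ}) \cdot O]$ becomes precisely the contour integral computing the zeroth Fourier mode $[\iota(\frak{q}(L_{circ}))]_{(0)}[O]$.

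The central obstacle is making the decomposition $\frak{q}(L_{circ}) \sim \oint \tau_w\,\frak{q}(\mathcal{O})\,dw$ precise, since $\frak{q}$ requires smeared inputs whereas the naive contour integral is singular on the diagonal. I would resolve this by smearing only the radial parameter, which preserves both the $U(1)$-rotation structure and the expression as an integral over concentric circles, then applying $\frak{q}$ contraction-by-contraction in the resulting region where the supports of the various copies of the fields are disjoint. Translation-invariance of the propagator $P(z,w) = \frac{1}{2\pi i}\frac{\d z + \d w}{z-w}$ ensures that $\frak{q}$ commutes with $\tau_w$, so the smeared contour integral form survives quantization. The identification $[\iota(\frak{q}(\mathcal{O}))] = [\iota(\frak{q}(L_{circ}))]$ in $V_{\Obs^q_n}$ then follows from a contour-deformation argument in the disk observables: the naive exactness relation $L_{circ} = \dbar L_{disk}$ fails after smearing precisely because $L_{disk}$ is distributional on the diagonal, and the resulting failure is measured by the local observable $\mathcal{O}$ at the origin.
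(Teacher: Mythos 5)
Your proposal takes a genuinely different route from the paper. The paper's proof is a direct computation: it reduces to $n=1$, to $L=\int\gamma^{\wedge p}\wedge\beta$, and to linear disk observables $O_{c_{-m}}$ (both sides are derivations in $O$, so Leibniz applies), writes explicit contour-integral representatives, evaluates the single Wick contraction $\partial_P\left(L_{circ}\cdot O_{c_{-m}}\right) = \frac{m!}{2\pi i}\int_{|z|=1}\gamma^{\wedge p}/z^{m+1}$, and matches the result against the Borcherds-identity computation $(c_0^p b_{-1})_{(0)}c_{-m}=T^m(c_0^p)$ in $\hCDO_1$. Your picture --- write $\frak{q}(L_{circ})$ as $\oint \tau_w\,\frak{q}(\mathcal{O})\,\d w$ and invoke the contour-integral definition of the zeroth mode from $\Vert$ --- is the right conceptual explanation, but as a proof it has gaps whose repair collapses back into the paper's computation.

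Concretely: (1) The decomposition $\frak{q}(L_{circ})\sim\oint\tau_w\,\frak{q}(\mathcal{O})\,\d w$ together with the identification of \emph{which} state $\mathcal{O}$ represents is the entire content of the lemma, and you prove neither. The lemma is used immediately afterward to identify $\{I^{\rm W}_X,-\}$ and $\{J_\omega,-\}$ with the zeroth modes of the specific elements $f(c)b^j_{-1}$ and $f(c)T(c_0^j)$ of $\hCDO_n$, so the explicit Cauchy-kernel identification cannot be avoided. (2) Your identification $[\iota(\frak{q}(\mathcal{O}))]=[\iota(\frak{q}(L_{circ}))]$ is actually false if read literally: since $L_{circ}=\dbar L_{disk}$ exactly (Stokes holds; it does not ``fail after smearing''), $L_{circ}$ is exact as a disk observable and its literal image in $H^*\Obs^{\cl}(D_R(0))$ vanishes (equivalently, $\oint_{|z|=1}\gamma^{\wedge p}\wedge\beta=0$ for $\gamma,\beta$ holomorphic on the disk), whereas the point observable $\mathcal{O}$ represents the nonzero state $c_0^p b_{-1}$. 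The correct statement is that $L_{circ}$ is cohomologous \emph{in the annular observables} to the zeroth-mode observable of that state, and your closing sentence about a ``failure measured by a local observable at the origin'' does not describe the mechanism; the nontrivial action on disk observables is the quantum correction $\hbar\{\frak{q}(L_{disk}),-\}$ of Lemma \ref{der vs star}. (3) The factor of $\hbar$ is never produced: a purely structural argument identifies the product with a zeroth Fourier mode in $\Vert(\Obs^q_n)$'s intrinsic normalization, with no prefactor, whereas the stated $\hbar$ records the single Wick contraction between the $\beta$-leg (resp.\ $\partial\gamma$-leg) of $L_{circ}$ and $O$, measured against the $\hbar$-free mode formulas of $\hCDO_n$. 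Relatedly, your opening reduction via Lemma \ref{der vs star} is abandoned rather than used, so it does no work in tracking this normalization.
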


\begin{proof}
For notational convenience, we assume $n = 1$. 
Moreover, it suffices to assume that $c$ is a linear observable, since the operations are derivations and one can thus apply the Leibniz rule.

Let $L$ be a functional of the form $\int \gamma^{\wedge p} \wedge \beta$.
It follows that the circular observable is given by 
\[
L_{circ} = \int_{|z| = 1} \gamma^{\wedge p} \wedge \beta.
\] 
In the vertex algebra $\hCDO_1$, the element $(c_{0})^p b_{-1}$ corresponds to the cohomology class of~$L_{circ}$. 

Recall that a linear observable on the disk is a linear combination of observables of the form
\begin{align*}
O_{c_{-m}}\gamma,\beta) &= \frac{m!}{2\pi i} \int_{|z| = } \frac{\gamma(z)}{z^{m+1}} \d z \\
O_{b_{-l}}(\gamma,\beta)  &= \frac{(l-1)!}{2\pi i} \int_{|z| = } \frac{\beta(z)}{z^l}
\end{align*}
where $m \geq 0$, $l > 0$. 
We will compute the cohomology class of $L_{circ} \cdot O_{c_{-m}}$ in $\Obs^\q(D_{R}(0))$ and demonstrate the claim explicitly  in this case. We leave the case of $O_{b_{-l}}$ for the reader, as it follows a parallel treatment. 

Note that $\frak{q}(L_{circ}) = L_{circ}$ because both $O_{c_{-m}}$ and $O_{b_{-l}}$ involve a factor of $\d z$. Also, $\frak{q}(O_{c_{-m}}) = O_{c_{-m}}$. Moreover, we have
\ben
\partial_P \left(L_{circ} \cdot O_{c_{-m}} \right) (\gamma,\beta) = \frac{m!}{2\pi i}  \int_{|z| = } \frac{\gamma(z)^{\wedge p}}{z^{m+1}}  .
\een
Thus $(L_{circ} \cdot O_{c_{-m}})(\gamma) =  \hbar \frac{m!}{2\pi i}  \int_{|z| = } \frac{\gamma(z)^{\wedge p}}{z^{m+1}}$. 
On the other hand, the zeroth Fourier mode of $c_0^pb_{-1}$ applied to $c_{-m}$ is computed as
\ben
(c_0^p b_{-1})_{(0)} (c_{-m}) = (c_0^p b_{-1})_{(0)} (T^m c_{0}) = T^m ((c_0^p b_{-1})_{(0)} c_0) = T^m(c_0^p),
\een
which is precisely the cohomology class of the observable above. 

The proof for local functionals that are $\beta$-free is completely analogous. 
\end{proof}
 
\subsection{The action of $\TVect$}

The preceding discussion was abstract but there are two local functionals that play an important role for us:
the local functionals $I^{{\rm W}}$ and $J$ produced by equivariant BV quantization.
As discussed in Section \ref{extendedtheory} in Part II, the local functionals encode how $\TVect$ acts on 
the rank $n$ formal $\beta\gamma$ system.
Specifically, we showed that this equivariant quantization equipped the the factorization algebra
$\Obs^{\q}_n$ with the structure of a semi-strict $(\TVect,\GL_n)$-module. 
Moreover, we showed that this semi-strict action induces a strict action of $(\TVect,\GL_n)$ on the cohomology $H^*\Obs^{\q}_n$.

Our goal now is to use the tools we just introduced to describe the strict action of $\TVect$ on $\hCDO_n$
determined by these local functionals. 
Recall that there exists a Lie algebra map $\rho : \TVect \to \Der_{{\rm VA}}(\hCDO_n)$,
by Theorem \ref{MSV1} from Section \ref{sec hc cdo}.
Explicitly, viewing a pair $(X, \omega) \in \Vect \times \hOmega^2_{n,cl}$ as an element of $\TVect$
as in Section \ref{sec hc cdo}, 
we have
\[
\rho(f(t) \partial_j,0) = (f(c) b_{-1}^j)_{(0)}
\]
and
\[
\rho(0, \d(f(t) \d t_j)) = \left(f(c) T (c_0^{j})\right)_{(0)}.
\]
(On a formal disk, every closed 2-form is exact, so it suffices to give the formula in terms of a 1-form $f(t) \d t_j$.)
These vertex algebra derivations are manifestly \emph{inner}, i.e., come from elements of the state space of~$\hCDO_n$.

\begin{lemma}
For $X \in \Vect$, the local functional $I_X^{\rm W}$ determines 
a derivation $\{\frak{q}(I_X^{\rm W}),-\}$ of the factorization algebra $\Obs^\q_n$ 
whose associated vertex algebra derivation is~$\rho(X,0)$.
\end{lemma}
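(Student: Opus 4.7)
My plan is to reduce the claim to the general machinery established immediately above, specifically Lemmas \ref{der vs star} and \ref{zero fourier mode}, by observing that $I^{\rm W}_X$ falls into the class of local functionals these lemmas were designed for. Recall from Lemma \ref{Noether} that for $X = \sum_{j,\bm} a_{j,\bm}\, t_1^{m_1}\cdots t_n^{m_n}\partial_j$ we have
\[
I^{\rm W}_X(\gamma,\beta) = \sum_{j,\bm} a_{j,\bm}\int_\CC \gamma_1^{\wedge m_1}\wedge\cdots\wedge \gamma_n^{\wedge m_n}\wedge \beta_j,
\]
which is manifestly constant-coefficient holomorphic and $\beta$-linear. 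Hence Lemma \ref{zero fourier mode} applies and tells us that the vertex algebra derivation $V_{I^{\rm W}_X}$ induced by the factorization derivation $\{\frak{q}(I^{\rm W}_X),-\}$ is inner, namely the zeroth Fourier mode of the state $[\iota(\frak{q}(I^{\rm W}_{X,\text{circ}}))]\in \hCDO_n$.

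It thus remains to identify this state explicitly with $f(c)b^j_{-1}$ when $X = f(t)\partial_j$, after which the claim $V_{I^{\rm W}_X} = (f(c)b^j_{-1})_{(0)} = \rho(X,0)$ follows from the formulas in Theorem \ref{MSV1}. By linearity, it suffices to treat the monomial case $f(t) = t_1^{m_1}\cdots t_n^{m_n}$, where
\[
I^{\rm W}_{X,\text{circ}}(\gamma,\beta) = \int_{|z|=r} \gamma_1^{\wedge m_1}\wedge\cdots\wedge\gamma_n^{\wedge m_n}\wedge \beta_j.
\]
Following the strategy of Remark \ref{promotion}, this single-circle observable is cohomologous, in the annular classical observables, to a multi-circle observable obtained by introducing distinct integration contours $|z_i|=r_i$ of slightly different radii for each of the $m_1+\cdots+m_n$ copies of $\gamma$-factors and for the single $\beta_j$-factor, glued back together using Cauchy kernels $1/(z_i-z_{i'})$ to reproduce the diagonal evaluation up to a $\dbar$-exact term. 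On this smeared representative the propagator $P$ has no room to contract (every pair of insertion points lies on disjoint circles), so $\frak{q}$ acts as the identity, and the resulting product factors, under the identifications in Section \ref{identifications}, as $\prod_i (O_{c_0^i})^{m_i}\cdot O_{b_{-1}^j}$, which represents $(c_0^1)^{m_1}\cdots (c_0^n)^{m_n}b_{-1}^j = f(c)b_{-1}^j \in \hCDO_n$.

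This is essentially the calculation carried out explicitly in the proof of Lemma \ref{zero fourier mode} for $n=1$ and $L = \int \gamma^p\wedge \beta$, and the extension to general $n$ and monomial $f$ is notational. The main step requiring care is the cohomological replacement of a single-circle observable by a multi-circle one; this is a multi-variable Cauchy argument (together with the Atiyah-Bott-style smearing recalled in Section \ref{sec quant map}) and does not produce any $\hbar$-corrections because $I^{\rm W}_{X,\text{circ}}$, having only one $\beta$-leg, admits at most one nonzero propagator contraction, which is killed by the $\d z\wedge \d z = 0$ that appears in $\partial_P$ applied to an expression that is already linear in $\d z$ via~$\beta_j$. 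Hence $\frak{q}(I^{\rm W}_{X,\text{circ}})$ and $I^{\rm W}_{X,\text{circ}}$ represent the same vertex-algebra state, and the proof concludes by invoking Lemma \ref{zero fourier mode}.
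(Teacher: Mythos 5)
Your proof is correct and follows essentially the same route as the paper's: reduce to monomial vector fields, observe that $I^{\rm W}_X$ is constant-coefficient holomorphic and $\beta$-linear, and invoke Lemmas \ref{der vs star} and \ref{zero fourier mode} to identify the induced derivation with the zeroth Fourier mode of $(c^1_0)^{m_1}\cdots(c^n_0)^{m_n}b^j_{-1} = f(c)b^j_{-1}$, which is $\rho(X,0)$ by definition. The extra detail you supply on the multi-circle smearing and the vanishing of the propagator contraction is already contained in the proof of Lemma \ref{zero fourier mode}, so nothing new is needed.
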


\begin{proof}
Every formal vector field is a linear combination of vector fields with monomial coefficients,
so we simply consider  $X = t^{m_1}_1 \cdots t^{m_n}_n \partial_j$.
The associated local functional is
\ben
I^{\rm W}_X (\gamma, \beta) =\int_\CC\gamma_1^{m_1} \wedge \cdots \wedge \gamma_n^{m_n} \wedge \beta_j,
\een 
which is constant-coefficient holomorphic and $\beta$-linear.
By Lemma \ref{der vs star}, we know that we can understand the derivation $\{\frak{q}(I^{\rm W}_X),-\}$ 
through the factorization product with $(I^{\rm W}_X)_{circ}$, 
which should correspond to the Fourier mode of some element of~$\hCDO_n$.

By Lemma \ref{zero fourier mode} we find that for $X = t^{m_1}_1 \cdots t^{m_n}_n \partial_j$,
the factorization product by $(I^{\rm W}_X)_{circ}$ corresponds, at the level of the vertex algebra, to the zeroth Fourier mode of~$(c^1_0)^{m_1} \cdots (c^n_0)^{m_n} b^j_{-1}$, as desired. Thus, we recover precisely the formula for~$\rho$.
\end{proof}

Likewise, we have the following.

\begin{lemma}
For $\omega \in \hOmega^2_{n,cl}$, the local functional $J_\omega$ corresponds to the vertex algebra derivation $\rho(0,\omega)$.
\end{lemma}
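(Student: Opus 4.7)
The plan is to mirror the proof of the preceding lemma, now exploiting the $\beta$-free rather than $\beta$-linear nature of the relevant local functional. First I would invoke the formal Poincar\'e lemma to write $\omega = \d_{dR}\theta$ for some $\theta \in \hOmega^1_n$, and then reduce by linearity to monomial one-forms $\theta = t_1^{m_1} \cdots t_n^{m_n}\,\d t_j$. This is legitimate because Proposition \ref{jmap} guarantees that $J_\omega$ depends only on $\omega$ and not on the choice of primitive $\theta$, and because the map $\rho$ from Theorem \ref{MSV1} is likewise linear in $\omega$. For such a monomial,
\[
J_\omega(\gamma) \;=\; \tilde J_\theta(\gamma) \;=\; \int_{\CC} \gamma_1^{m_1}\wedge \cdots \wedge \gamma_n^{m_n} \wedge \partial \gamma_j,
\]
which is manifestly constant-coefficient holomorphic and $\beta$-free in the sense of Section \ref{sec loc sym}.

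Next I would check that the promotion map $\frak{q}$ acts trivially: since $J_\omega$ has no $\beta$-legs and the propagator $P$ pairs a $\gamma$-input with a $\beta$-input, the contraction operator $\partial_P$ annihilates $J_\omega$, whence $\frak{q}(J_\omega) = J_\omega$. In particular no smearing is required. Applying Lemma \ref{der vs star}, for any cocycle $O$ in $\Obs^{\q}_n(D_s(0))$,
\[
\hbar\,\bigl[\{\frak{q}(J_\omega),O\}\bigr] \;=\; \bigl[\frak{q}((J_\omega)_{\rm circ})\cdot O\bigr],
\]
where $(J_\omega)_{\rm circ} = \int_{|z|=r} \gamma_1^{m_1}\wedge \cdots \wedge \gamma_n^{m_n}\wedge \partial\gamma_j$ is the circle observable carried by a circle sitting inside the intermediate annulus. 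By Lemma \ref{zero fourier mode}, applied in its $\beta$-free form, this factorization product corresponds on $\hCDO_n$ to the action of the zeroth Fourier mode of a state whose representative I next identify.

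The final identification rests on two standard dictionary entries from Section \ref{identifications}: the linear observable $\gamma_i$ evaluated around a circle represents the generator $c^i_0 \in \hCDO_n$, while $\partial\gamma_j$ represents $T(c^j_0) = c^j_{-1}$, since the translation operator $T$ on the vertex algebra corresponds to the infinitesimal action of $\partial_z$ on the source. Consequently $(J_\omega)_{\rm circ}$ represents the state $(c^1_0)^{m_1}\cdots (c^n_0)^{m_n}\,T(c^j_0)$, and Lemma \ref{zero fourier mode} yields
\[
\bigl[\{\frak{q}(J_\omega),O\}\bigr] \;=\; \bigl((c^1_0)^{m_1}\cdots (c^n_0)^{m_n}\,T(c^j_0)\bigr)_{(0)}\,[O].
\]
Comparing with the formula $\rho(0,\d(f(t)\,\d t_j)) = (f(c)\,T(c^j_0))_{(0)}$ recalled at the top of Section \ref{sec loc sym}, this matches $\rho(0,\omega)$ exactly. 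Extending by linearity in $\theta$, and then invoking $\theta$-independence of $J_\omega$, handles arbitrary closed 2-forms.

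The main subtlety I expect lies not in any single calculation but in confirming that every ingredient from the proof of the previous lemma genuinely carries over to the $\beta$-free case: specifically, that Lemma \ref{zero fourier mode} is applicable (its proof explicitly defers the $\beta$-free case as ``completely analogous,'' so one must briefly verify the analogous Wick-contraction computation using $O_{b_{-l}}$-type observables against $\partial\gamma$-type legs) and that the identification of $\partial\gamma_j$ with $T(c^j_0)$ at the level of circle observables is compatible with the promotion map. Because $J_\omega$ contains no $\beta$, all Wick contractions are trivial, so these checks should be entirely mechanical once spelled out.
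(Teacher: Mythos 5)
Your proposal is correct and follows essentially the same route as the paper's proof: reduce to monomial one-forms $\theta = t_1^{m_1}\cdots t_n^{m_n}\,\d t_j$, observe that $J_{\d\theta}$ is constant-coefficient holomorphic and $\beta$-free, and identify its circle observable with the zeroth Fourier mode of $(c^1_0)^{m_1}\cdots(c^n_0)^{m_n}\,T(c^j_0)$ via Lemma \ref{zero fourier mode}. The additional checks you flag (triviality of $\frak{q}$ on $\beta$-free functionals, independence of the choice of primitive $\theta$) are sound and merely make explicit what the paper leaves implicit.
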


\begin{proof}
Every closed 2-form $\omega$ on the formal disk is the exterior derivative $\d \theta$ of a 1-form $\theta$. 
Moreover, every 1-form is a linear combination of 1-forms with monomial coefficients,
so we simply consider  $\theta = t^{m_1}_1 \cdots t^{m_n}_n \d t_j$.
The associated local functional is
\ben
J_{\d \theta} (\gamma, \beta) = \int_\CC\gamma_1^{m_1} \wedge \cdots\wedge \gamma_n^{m_n} \wedge \partial_z \gamma_j \,\d z,
\een 
which is constant-coefficient holomorphic and $\beta$-free.
The derivation $\{\frak{q}(J_{\d \theta}),-\}$ corresponds to the factorization product with the circle observable
\[
(J_{\d \theta})_{circ} (\gamma, \beta) =\int_{|z|=1}\gamma_1^{m_1} \wedge \cdots\wedge \gamma_n^{m_n} \wedge \partial_z \gamma_j \,\d z.
\]
Moreover, this circle observable corresponds to the zeroth Fourier mode of $(c^1_0)^{m_1} \cdots (c^n_0)^{m_n} T(c^j_0)$, by Lemma \ref{zero fourier mode}, as desired.
\end{proof}

\section{The main result}

In light of our arguments in the preceding sections, we obtain the following.

\begin{thm} 
The isomorphism of $\ZZ_{\geq 0}$-graded vertex algebras
\ben
\Phi : \hCDO_n \xto{\cong} \Vert(\Obs^{\q}_n) 
\een
is equivariant with respect to the actions of $(\TVect, \GL_n)$. 
Moreover, it is compatible with the $\hO_n$-module structure. 
\end{thm}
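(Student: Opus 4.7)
The plan is to combine the two lemmas immediately preceding the theorem with the general formalism of Corollary \ref{vertpair} to promote pointwise identifications of derivations into a full equivariance statement. Since $\Phi$ has already been established as an isomorphism of $\GL_n$-equivariant vertex algebras, the remaining content is (a) that $\Phi$ intertwines the action of $\TVect$ and (b) that it respects the $\hO_n$-module structure induced from the conformal dimension zero subspace.

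For the $\TVect$-equivariance, I would first argue that the semi-strict $(\TVect,\GL_n)$-action on $\Obs^q_n$ induces a strict action of $(\TVect,\GL_n)$ on the vertex algebra $\Vert(\Obs^q_n)$. Concretely, the differential $\d_{\clies(\TVect)} + \dbar + \{I^{{\rm W},0},-\}+\hbar\Delta + \hbar\{I^{{\rm W},1}+J,-\}$ on the equivariant observables encodes the $\TVect$-action by Koszul duality; Lemma \ref{lem strict vs semistrict} then yields a strict action on $H^*\Obs^q_n$, and since the local functionals $I^{\rm W}$ and $J$ are translation-invariant, $U(1)$-equivariant, and of the right scaling weight, the derivations $\{\frak{q}(I^{\rm W}_X),-\}$ and $\{\frak{q}(J_\omega),-\}$ are amenably holomorphic. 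Corollary \ref{vertpair} then delivers a strict $(\TVect,\GL_n)$-action on $\Vert(\Obs^q_n)$. Next, because $\TVect$ is spanned as a $\CC$-vector space by elements of the form $(X,0)$ with $X=f(t)\partial_j\in\Vect$ and $(0,\d\theta)$ with $\theta=g(t)\,\d t_k\in\hOmega^1_n$, it suffices to check equivariance on these generators. But the two immediately preceding lemmas identify the induced vertex algebra derivations with $\rho(X,0)=(f(c)b_{-1}^j)_{(0)}$ and $\rho(0,\d\theta)=(g(c)T(c_0^k))_{(0)}$ respectively, which are precisely the generators of the strict action $\rho:\TVect\to\Der_{\rm VA}(\hCDO_n)$ from Theorem \ref{MSV1}. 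By linearity, $\Phi$ intertwines the two $\TVect$-actions.

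For the $\hO_n$-module compatibility, I would use that both $\hO_n$-module structures arise from the commutative vertex subalgebra on conformal dimension zero: on $\hCDO_n$ the module structure comes from the $(-1)$-Fourier mode on $\hCDO_n^{(0)}\cong\hO_n$, while on $\Vert(\Obs^q_n)$ it comes from the analogous product on the weight zero component, which under the functor $\Vert$ corresponds to the factorization product of weight-zero disk observables. Since $\Phi$ is an isomorphism of vertex algebras preserving the conformal grading, it identifies these commutative subalgebras, and hence $\hO_n$-module structures, on the nose.

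The main obstacle, as I see it, is justifying cleanly the passage from the semi-strict $L_\infty$-action on $\Obs^q_n$ to a strict action at the level of the vertex algebra. The quantum correction $I^{{\rm W},1}$ arising from wheel diagrams is genuinely nonlinear in $X\in\Vect$ (a wheel with $k$ vertices contributes a Taylor component of order $k$), so the higher $\L8$ brackets $\ell_k$ are nonzero on the cochain level. One must verify that these higher brackets descend to zero on $H^*\Obs^q_n$, so that the resulting strict action indeed agrees with the strict action already computed on linear generators. In principle this is taken care of abstractly by Lemma \ref{lem strict vs semistrict}, but one should double-check that the induced strict action on cohomology is the one encoded by the first Taylor component, which amounts to tracking carefully how the Maurer-Cartan element $\Tilde{I}^{\rm W}+\hbar J$ acts modulo higher brackets --- this is exactly what the two preceding lemmas accomplish on generators, so no further diagrammatic analysis is required.
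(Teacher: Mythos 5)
Your treatment of the $(\TVect,\GL_n)$-equivariance is correct and follows the paper's own route: the two lemmas immediately preceding the theorem identify the vertex algebra derivations induced by $\{\frak{q}(I^{\rm W}_X),-\}$ and $\{\frak{q}(J_\omega),-\}$ with the generators $\rho(X,0)$ and $\rho(0,\omega)$ of the action from Theorem \ref{MSV1}, and linearity together with Corollary \ref{vertpair} does the rest. Your concern about the higher $\L8$ brackets coming from the wheel diagrams in $I^{{\rm W},1}$ is legitimate but, as you note, is absorbed by Lemma \ref{lem strict vs semistrict}, so that part of the argument stands.

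The gap is in your argument for the $\hO_n$-module compatibility. You assert that the module structure on $\hCDO_n$ ``comes from the $(-1)$-Fourier mode on $\hCDO_n^{(0)}\cong\hO_n$,'' but the paper explicitly observes (in the remark closing the subsection on formal vertex algebras) that the $(-1)$-bracket maps $\hO_n\times\hCDO_n^{(N)}\to\hCDO_n^{(N)}$ do \emph{not} define a module structure: one has $a_{(-1)}(b_{(-1)}v)\neq(a_{(-1)}b)_{(-1)}v$ in general. The $\hO_n$-module structure at stake is the $\GL_n$-equivariant identification $\hCDO_n^{(N)}\cong\hO_n\otimes_\CC V^{(N)}$ of each conformal-dimension subspace as a free $\hO_n$-module; this is the datum that makes each graded piece a formal vector bundle and hence makes Gelfand-Kazhdan descent applicable, and it is \emph{not} determined by the vertex algebra structure alone. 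Consequently your conclusion that $\Phi$ identifies the module structures ``on the nose'' simply because it is an isomorphism of graded vertex algebras does not follow. The paper instead establishes this by the explicit computation of Section \ref{sec concrete}: the conformal-dimension-$N$ subspace of the cohomological disk observables is identified, as a $(\Vect,\GL_n)$-module, with the corresponding summand of $\widehat{\bigotimes}_{0<k}\cSym(\hT^*_n)\,\widehat{\otimes}\,\widehat{\bigotimes}_{0\leq l}\cSym(\hT_n)$, which matches the decomposition of $\hCDO_n^{(N)}$ coming from $\Gr\,\hCDO_n$. Some such explicit identification of the graded pieces as formal tensor fields is required; it cannot be deduced formally from $\Phi$ preserving the vertex operator and the grading.
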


\begin{proof}
We proved the equivariance assertion in the preceding sections. 
Thus it remains to discuss the $\hO_n$-module structure.
This aspect, however, is the focus of Section \ref{sec concrete},
where we describe how observables on a disk decompose according to conformal dimension
and then identify each subspace of fixed conformal dimension with some type of formal tensor fields.
At the level of cohomology --- which provides the decomposition for $\Vert(\Obs^{\q}_n)$ --- these match with~$\hCDO_n$.
\end{proof}

An immediate corollary, via Gelfand-Kazhdan descent and its variants, is our main result.

\begin{cor} 
Let $X$ be a complex $n$-manifold together with a trivialization $\alpha$ of $\ch_2(T_X) \in H^2(X ; \Omega^{2,hol}_{cl})$. Then the factorization algebra $\Obs^{\q}_{X,\alpha}$ obtained by Gelfand-Kazhdan descent 
determines a sheaf of vertex algebras $\Vert(\Obs^{\q}_{X,\alpha})$ on $X$. 
Moreover, there is an isomorphism of sheaves of vertex algebras on $X$
\ben
\Phi : \CDO_{X,\alpha} \xto{\cong} \Vert(\Obs^{\q}_{X,\alpha})
\een
that is natural in the choice of trivialization~$\alpha$.
\end{cor}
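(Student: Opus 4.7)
The plan is to deduce the corollary from the preceding theorem by transporting the equivariant vertex-algebra isomorphism $\Phi : \hCDO_n \xto{\cong} \Vert(\Obs^q_n)$ along extended Gelfand-Kazhdan descent. By definition $\CDO_{X,\alpha} = \Tilde{\sdesc}_{\GK}(X,\alpha;\hCDO_n)$, so the entire content of the corollary is the assertion that applying $\Vert$ commutes, up to a canonical isomorphism, with the descent functor $\Tilde{\sdesc}_{\GK}$ when fed the $(\TVect,\GL_n)$-equivariant vertex algebra $\hCDO_n$ on one side and the corresponding $(\TVect,\GL_n)$-equivariant factorization algebra $\Obs^q_n$ on the other.

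First I would package the input data. Fix an extended Gelfand-Kazhdan structure $(X,\alpha,\sigma,\sigma_{\Omega^2})$ on $X$ in the sense of Section \ref{sec ext desc}; this determines a flat $(\TVect,\GL_n)$-bundle structure on $\Fr_X \to X$ with connection one-form $\Tilde{\omega}^\alpha_{\sigma,\sigma_{\Omega^2}}$. By the results of Section \ref{sec ss GK descent}, $\Obs^q_n$ is a semi-strict $(\TVect,\GL_n)$-module in factorization algebras, so semi-strict Gelfand-Kazhdan descent produces the factorization algebra $\Obs^q_{X,\alpha}$ in dg $\Omega^*(X)$-modules, independent up to homotopy of the auxiliary choices $\sigma,\sigma_{\Omega^2}$. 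In parallel, the theorem just proved establishes that $\Vert(\Obs^q_n)$ is an extended Gelfand-Kazhdan vertex algebra in the sense of Section \ref{sec vertex desc}: it carries a strict $(\TVect,\GL_n)$-action compatible with the $\hO_n$-module structure, and $\Phi$ intertwines this action with the one on $\hCDO_n$ constructed in Part I.

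Next I would verify that $\Vert$ is compatible with descent by checking the construction fiberwise over $\Fr_X$. Concretely, at each conformal-dimension grading piece, descent of a factorization algebra amounts to tensoring $\Omega^*(\Fr_X) \otimes (-)$ and taking the basic subcomplex with respect to the twisted connection $\d_{dR}+\Tilde{\omega}^\alpha_{\sigma,\sigma_{\Omega^2}}$. Because $\Vert$ only sees the values on disks $D_r(0) \subset \CC$ and the structure maps between them, and because the $(\TVect,\GL_n)$-action on $\Obs^q_n$ acts through maps of factorization algebras over $\CC$ (fixing the source), the extraction of weight spaces, the passage to cohomology along $\partial/\partial\Bar{z}$, and the definition of the vertex operator all commute with the functor $\Omega^*(\Fr_X)\otimes (-)$ and with the basic subcomplex operation. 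One concludes
\[
\Vert\left(\Tilde{\sdesc}_{\GK}(X,\alpha;\Obs^q_n)\right) \;\cong\; \Tilde{\sdesc}_{\GK}\left(X,\alpha;\Vert(\Obs^q_n)\right)
\]
as sheaves of $\ZZ_{\geq 0}$-graded vertex algebras on $X$. Applying $\Tilde{\sdesc}_{\GK}(X,\alpha;-)$ to the $(\TVect,\GL_n)$-equivariant isomorphism $\Phi$ of the theorem then yields an isomorphism $\CDO_{X,\alpha} \xto{\cong} \Vert(\Obs^q_{X,\alpha})$ of sheaves of vertex algebras, and the naturality in $\alpha$ is automatic from the naturality of each of $\Phi$ and the descent functor in the extended Gelfand-Kazhdan structure.

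The main obstacle will be the ``commutation'' step just sketched. The subtle point is that $\Obs^q_n$ is only a \emph{semi-strict} $(\TVect,\GL_n)$-module, whereas the vertex algebra $\Vert(\Obs^q_n)$ is a \emph{strict} one (via Lemma \ref{lem strict vs semistrict} and Corollary \ref{vertpair}); one has to check that the weight-space decomposition and the collapse to cohomology are performed compatibly on both sides, so that passing to the strict action on $\Vert$ before or after descent produces the same flat sheaf on $X$. Here the conformal-dimension decomposition computed in Section \ref{sec concrete} is essential: each weight piece $H^*\Obs^q_n(D_r(0))^{(N)}$ is identified as a strict $(\TVect,\GL_n)$-module with a finite sum of tensor products of formal tensor bundles, matching the conformal-dimension piece $\hCDO_n^{(N)}$, and $\Phi$ respects this identification by construction. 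With that matching in hand, strict Gelfand-Kazhdan descent of $\Phi^{(N)}$ term by term assembles into the desired isomorphism of sheaves of vertex algebras, finishing the proof.
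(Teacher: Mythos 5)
Your proposal is correct and follows the same route the paper intends: the paper treats this corollary as immediate from the equivariance theorem via Gelfand-Kazhdan descent, and your argument is precisely an unpacking of that, correctly isolating the one nontrivial point (that $\Vert$ commutes with descent despite the semi-strict vs.\ strict mismatch) and resolving it with the conformal-dimension decomposition of Section \ref{sec concrete}.
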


The naturality in the choice of trivialization can be phrased in a compelling way.
Recall from Section \ref{sec obsdef} that there is an obstruction-deformation complex for the curved $\beta\gamma$ system on $X$, which is a sheaf of dg vector spaces encoding important information about this BV theory.
For instance, a degree one cocycle encodes a first-order deformation of the classical action that satisfies the classical master equation and thus defines a well-posed classical BV theory.
This complex is obtained by Gelfand-Kazhdan descent, and it involves only local functionals that are invariant for the action of $\CC^\times \times {\rm Aff}(\CC)$ by scaling (of the $\beta$ field) and affine transformations.

In particular, the obstruction to quantization is a degree two cocycle in this cochain complex, 
which is identified with $\ch_2(T_X)$ by Lemma \ref{obsprop}.
Corollary \ref{gerbe of obsdef} tells us that this sheaf of dg vector spaces is quasi-isomorphic to the sheaf $\Omega^{2,hol}_{cl}$ on $X$. Hence we deduce the following.

\begin{cor}
The map $\Phi$ provides an isomorphism of gerbes from the gerbe of BV quantizations of the curved $\beta\gamma$ system (constructed via descent and invariant under $\CC^\times \times {\rm Aff}(\CC)$) to the gerbe of CDOs.
\end{cor}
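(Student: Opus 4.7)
The plan is to show that both the gerbe of BV quantizations and the gerbe of CDOs are gerbes on $X$ bound by the sheaf $\Omega^{2,hol}_{cl}$, that they have the same classifying cohomology class $\ch_2(T_X)$, and that the collection of isomorphisms $\Phi_\alpha$ from the preceding corollary intertwines the torsor structures on trivializations.

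First, I would identify both gerbes as bound by $\Omega^{2,hol}_{cl}$. On the BV side, Corollary \ref{gerbe of obsdef} already establishes that the $\CC^\times \times \Aff(\CC)$-invariant obstruction-deformation complex descends to a sheaf quasi-isomorphic to $\Omega^{2,hol}_{cl}[1]$ on $X$; the standard dictionary between shifted sheaves of abelian cochain complexes and gerbes then provides the gerbe structure. On the CDO side, Part I shows via extended Gelfand-Kazhdan descent that the classifying data for a CDO is precisely a trivialization of $\ch_2(T_X) \in H^2(X, \Omega^{2,hol}_{cl})$, with $H^1$ acting on the set of trivializations and $H^0$ providing automorphisms of each object. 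Hence both gerbes live naturally over the same sheaf of abelian groups.

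Second, I would check that the two characteristic classes coincide. Proposition \ref{obsprop} identifies the BV anomaly $\Theta$ with a nonzero multiple of $J^{\rm W}(\ch_2^{\GF}(\hT_n))$ in the equivariant deformation complex. Under Gelfand-Kazhdan descent and the quasi-isomorphism $J^{\rm W}$, this cocycle descends to a Dolbeault representative of $\ch_2(T_X)$, which is exactly the class classifying the CDO gerbe from Part I. Consequently both gerbes are non-empty on the same open subsets of $X$, and trivializing data on one side gives trivializing data on the other.

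Third, I would assemble the per-trivialization isomorphisms into a morphism of gerbes. A single datum $\alpha$ parametrizes an object of both gerbes simultaneously: on the CDO side via an extended Gelfand-Kazhdan structure in the sense of Section \ref{sec ext descent2}, and on the BV side via the quantum correction $J$ of Theorem \ref{QME1}. The preceding corollary asserts $\Phi_\alpha : \CDO_{X,\alpha} \xto{\cong} \Vert(\Obs^q_{X,\alpha})$ for each such $\alpha$, and these assemble pointwise into a candidate morphism of gerbes. The hard part will be verifying that $\Phi$ intertwines the change-of-trivialization morphisms: if $\alpha_1$ and $\alpha_2$ differ by a closed 1-cocycle $\eta$ of $\Omega^{2,hol}_{cl}$, the induced automorphism $\CDO_{X,\alpha_1} \cong \CDO_{X,\alpha_2}$ arises from the action of $\hOmega^2_{n,cl}$ on $\hCDO_n$ recorded in Section \ref{sec vertex alg}, while on the BV side it is implemented by the Hamiltonian derivation $\{J_\eta,-\}$ acting on quantum observables. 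The translation between these two descriptions is precisely the content of Lemma \ref{zero fourier mode}, which identifies $\{J_\eta,-\}$ with the zeroth Fourier mode of the vertex algebra element corresponding to $\eta$ under $\Phi$. With this dictionary in hand, the torsor compatibility reduces to the same local computations that establish $\TVect$-equivariance of $\Phi$, completing the identification of gerbes.
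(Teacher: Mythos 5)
Your proposal is correct and follows essentially the same route the paper intends: the paper states this corollary as an immediate consequence of Corollary \ref{gerbe of obsdef} (identifying the invariant obstruction–deformation complex with $\Omega^{2,hol}_{cl}[1]$), Proposition \ref{obsprop} (identifying the anomaly with $\ch_2$), and the naturality of $\Phi$ in $\alpha$, without writing out further detail. Your elaboration — that both gerbes are bound by $\Omega^{2,hol}_{cl}$ with the same class $\ch_2(T_X)$, and that the torsor compatibility is checked by matching the action of closed two-forms via $\{J_\eta,-\}$ against $\rho(0,\eta)$ on $\hCDO_n$ using the lemmas of Section \ref{sec loc sym} — is exactly the intended argument.
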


\subsection{Remark on conformal structure}

With the identification of chiral differential operators with the observables of the $\beta\gamma$ system, 
our analysis in Part II immediately implies an observation about the conformal symmetry of this sheaf of vertex algebras. 

In Section \ref{sec conformal obs} we showed that after fixing a trivialization $\alpha$ of $\ch_2(T_X)$, 
there is a map of sheaves on~$X$ 
\ben
\Psi^\q : \ul{\sVir}_{c=2n} \to \Obs^\q_{X, \alpha}
\een 
of factorization algebras on $\CC$, {\em provided} that $c_1(T_X) = 0$. (In fact, we have such a map for every trivialization of~$c_1(T_X)$). 

The factorization algebra $\sVir_{c}$ is amenably holomorphic, and it is shown in \cite{bw_vir} that its associated vertex algebra $\Vert(\sVir_{c})$ is isomorphic to the Virasoro vertex algebra ${\rm Vir}_c$ of central charge $c$. 
By the functoriality of the functor $\Vert$, 
we obtain the following immediate corollary of the above analysis, which implies the aforementioned Proposition \ref{prop conformal cdo} from Part~I. 

\begin{cor} 
Let $\alpha$ be trivialization of $\ch_2(T_X)$ and let $\CDO_{X,\alpha}$ be the associated sheaf of CDOs. 
Then for each trivialization $\beta$ of $c_1(T_X)$, 
the map of holomorphic factorization algebras $\Psi^\q_\beta : \ul{\sVir}_{c=2n} \to \Obs^\q_{X,\alpha}$ determines a map of sheaves of vertex algebras $\Vert(\Psi^\q_\beta) : \ul{\rm Vir}_{c=2n} \to \CDO_{X,\alpha}$.
\end{cor}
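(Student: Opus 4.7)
The plan is to obtain this corollary as a direct application of the functor $\Vert$ to the map of factorization algebras $\Psi^\q_\beta$, combined with the two identifications already established. First I would apply Theorem \ref{theorem_vertex_algebra}: the functor $\Vert$ sends amenably holomorphic factorization algebras on $\CC$ to vertex algebras, and it sends morphisms (in particular equivariant maps of prefactorization algebras) to morphisms of vertex algebras. Applied to $\Psi^\q_\beta$ on each open $U \subset X$, this produces a map of vertex algebras between $\Vert(\ul{\sVir}_{c=2n}(U))$ and $\Vert(\Obs^\q_{X,\alpha}(U))$.

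Next I would invoke the two identifications that collapse the right-hand sides to the desired sheaves. On the source, the result of \cite{bw_vir} gives $\Vert(\sVir_{c=2n}) \cong {\rm Vir}_{c=2n}$; since $\ul{\sVir}_{c=2n}$ is constant in the $X$-direction, this identification promotes sheaf-wise to $\Vert(\ul{\sVir}_{c=2n}) \cong \ul{{\rm Vir}}_{c=2n}$. On the target, the main theorem of the paper (the isomorphism of sheaves of vertex algebras $\CDO_{X,\alpha} \cong \Vert(\Obs^\q_{X,\alpha})$) identifies the recipient. Composing these isomorphisms with $\Vert(\Psi^\q_\beta)$ yields the desired map $\ul{{\rm Vir}}_{c=2n} \to \CDO_{X,\alpha}$ of sheaves of vertex algebras.

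The main obstacle is verifying that $\Vert$ applies sheaf-wise over $X$ and produces a map of sheaves (not merely a collection of maps indexed by opens of $X$). This reduces to two checks. First, for each open $U \subset X$, the factorization algebra $\Obs^\q_{X,\alpha}(U)$ on $\CC$ must be amenably holomorphic; this follows from the fact that semi-strict Gelfand-Kazhdan descent preserves the $U(1) \ltimes \RR^2$-equivariance, the tameness of the $U(1)$-action, and the weight-space finiteness conditions, all of which hold for $\Obs^\q_n$ by the results recalled at the start of Section~10. Second, the restriction maps of $\Obs^\q_{X,\alpha}$ along open inclusions $V \subset U$ in $X$ are maps of amenably holomorphic factorization algebras on $\CC$ (they intertwine the translation and rotation actions, being induced by descent from the identity on $\Obs^\q_n$), so functoriality of $\Vert$ assembles the opens-wise maps into a map of sheaves on $X$.

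With these compatibilities in place, naturality of $\Vert(\Psi^\q_\beta)$ in $U$ is immediate from the functoriality of $\Vert$, and the resulting map of sheaves of vertex algebras on $X$ agrees under the identifications with the claimed morphism $\ul{{\rm Vir}}_{c=2n} \to \CDO_{X,\alpha}$. No further analytic input is required beyond what has already been established in Parts I--III.
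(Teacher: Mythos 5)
Your proposal is correct and follows essentially the same route as the paper, which derives the corollary directly from the functoriality of $\Vert$, the identification $\Vert(\sVir_{c})\cong {\rm Vir}_c$ from \cite{bw_vir}, and the main comparison theorem $\CDO_{X,\alpha}\cong \Vert(\Obs^\q_{X,\alpha})$. The additional checks you spell out (amenable holomorphicity of $\Obs^\q_{X,\alpha}(U)$ and compatibility with restriction in the $X$-direction) are left implicit in the paper but are exactly the right points to verify.
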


\section{Discussion of some physics literature}

Our goal in this section is to relate our work to the perspectives offered by Witten and Nekrasov on the curved $\beta\gamma$ system.
Both \cite{WittenCDO} and \cite{Nek} undertake a similar analysis, but we will focus on Witten's.
The format of our comparison is to remind the reader about general aspects of $\sigma$-models,
to explain how Witten identifies the anomalies to quantization and how his method relates to ours,
and to indicate how Witten determines the patching rules for the chiral algebras and how this approach relates to ours.

\subsection{General comments about nonlinear $\sigma$-models}

\def\Maps{{\rm Maps}}

We begin by sketching a general perspective that informs the problem.

\begin{rmk}
This perspective assumes that the path integral exists and exhibits behavior analogous to finite-dimensional integrals.
In a sense, we run the argument sketched here {\it backwards} to construct the putative path integral measure.
\end{rmk}

Let $\Sigma$ denote a source manifold and $X$ a target manifold.
A nonlinear $\sigma$-model has, as its space of fields, the infinite-dimensional manifold $\Maps(\Sigma,X)$.
The equations of motion for the theory cut out a submanifold $Sol$ of this mapping space as the space of solutions,
and typically a component of this space of solutions is given by a copy of $X$ viewed as the constant maps from $\Sigma$ to $X$.
We will call this the {\em perturbative} sector.

In trying to compute the path integral, one expects that for $\hbar$ very small, 
the putative measure should be concentrated very close to $Sol$ inside $\Maps(\Sigma,X)$.
One might then try to approximate the path integral by simply integrating over a small tubular neighborhood around $Sol$.
The perturbative contribution would then be the integral over a small tubular neighborhood around $X$ inside $\Maps(\Sigma,X)$.
To organize the computation of this perturbative contribution, one can identify a tubular neighborhood with the normal bundle to $X$ inside $\Maps(\Sigma,X)$.
Hence, one obtains an infinite-dimensional vector bundle over $X$ whose fiber over $x \in X$ is 
\[
N_x \Maps(\Sigma,X) \cong T_x \Maps(\Sigma,X)/T_x X \cong \Maps(\Sigma, T_x X)/T_x X.
\]
One then computes the integral over the tubular neighborhood in two stages: 
first, fiberwise integration over the normal bundle, and then integration over $X$.
This fiberwise integral can be approached with Feynman diagrammatics,
with the base $X$ playing the role of a ``background field.''
In this sense, the perturbative sector is {\em local on the target~$X$}

A better approximation to the full path integral would involve the other components of $Sol$. 
They provide the ``instanton corrections'' to the perturbative computation.
As a nonconstant solution is not concentrated at a single point in $X$ --- by definition --- 
these corrections are not local on $X$ and require different techniques.

\subsection{Anomalies and obstructions}

Fix some method for perturbative computations.
At each point $x$ of $X$, we apply our method to integrate over the normal bundle $N_x$.
This integral ought to take values in a one-dimensional vector space, and 
hence the full fiberwise integral provides a section in a line bundle over $X$.
{\em A priori} we do not know which line bundle it is, 
since the perturbative constructions are done locally on $X$ and then patched together.

As Witten notes in Section 2.3 of \cite{WittenCDO}, the Chern class of this line bundle is a discrete invariant and 
hence should not depend on continuous parameters, 
such as the coupling constants of the fiberwise perturbative field theories.
Thus we can compute it by fiberwise quantizing the family of free theories over $X$,
scaling the interactions to zero.
In other words, one simply keeps the Hessian of the action functional at each point $x \in X \subset \Maps(\Sigma,X)$.
The free theory at each $x$ corresponds to some elliptic complex on $\Sigma$. 
Now, it is standard to identify the one-dimensional vector space at $x \in X$ with the determinant line of the cohomology of this elliptic complex.
(This identification can be recovered as a {\em consequence} of BV quantization, as shown in \cite{GwHaug}.)
Hence, one can use a families index theorem to compute the Chern class of the determinant line bundle.

In the case of the $\beta\gamma$ system, 
this amounts to considering the trivial fiber bundle $\pi: \Sigma \times X \to X$
and letting the elliptic complex at a point $x \in X$ be $\Omega^{0,*}(\Sigma) \otimes T_x X$.
In other words, we are considering $\cF = \cO_\Sigma \otimes \cT_X$ as a sheaf on $\Sigma \times X$.
We wish to understand the pushforward $\pi_* \cF$ on $X$ or, more accurately, the determinant line of its derived pushforward.

\def\Td{{\rm Td}}

We assume now that $\Sigma$ is closed.
The first Chern class of the determinant line agrees with the first Chern class of the derived pushforward.
The Grothendieck-Riemann-Roch theorem then implies that the Chern character of the derived pushforward 
is given by 
\begin{align*}
\pi_*( \ch(\cF) \Td(\cT_\pi)) &= \pi_*( (1 + c_1(\cF) + \frac{1}{2}(c_1(\cF)^2 - 2c_2(\cF)) + \cdots)(1 +\frac{1}{2}c_1(T_\Sigma)))\\
&= \pi_*( (1 + \pi^* c_1(\cT_X) + \pi^* \ch_2(\cT_X) + \cdots)(1 +\frac{1}{2}c_1(T_\Sigma)))
\end{align*}
and the first Chern class is the component of cohomological degree 2, namely
\[
(1-g) c_1(\cT_X) + \ch_2(\cT_X) 
\]
where $g$ denotes the genus of $\Sigma$.
In short, one finds that the determinant line is trivial if and only if 
both $\ch_2(\cT_X) = 0$ and either $c_1(\cT_X) = 0$ or $\Sigma$ is genus one.

These results correspond to ours,
although our approach is, on its face, rather different.
We choose to work with the formal $n$-disk as target 
and then apply Gelfand-Kazhdan descent to obtain (the perturbative sector of) the theory with any complex $n$-manifold $X$ as target.
Our obstruction cocycle (or anomaly) thus lives in Gelfand-Fuks cohomology
and maps to de Rham cohomology of some $X$ by descent.
As we have seen in \ref{sec equiv bv}, the obstruction to BV quantization descends to $\ch_2(\cT_X)$ when the source is $\CC$ 
and we require equivariance with respect to
\begin{itemize}
\item translation and dilation on the source (which ensures we can descend to genus one curves) and
\item holomorphic diffeomorphisms of the formal $n$-disk as target.
\end{itemize}
It is a consequence of the calculations of Section \ref{sec conformal anomaly} combined with Gelfand-Kazhdan descent that requiring equivariance under {\em all} holomorphic diffeomorphisms of a source disk requires $c_1(\cT_X) = 0$.
Such equivariance ensures one can descend to higher genus curves by considering them as quotients of the disk by Fuchsian groups.

\begin{rmk}
On the other hand, Witten's global argument using Grothendieck-Riemann-Roch applies in the BV context as well,
because BV quantization of the free $\beta\gamma$ system 
on a Riemann surface $\Sigma$ but twisted by $\cT_X$ recovers the same determinant line, 
up to some cohomological shift.
A benefit of our approach here is that we actually construct the BV quantization for the full $\beta\gamma$ system
rather than merely identifying the obstruction cohomologically.
\end{rmk}

As a further point of comparison, note that the obstruction-deformation complex we compute in Section \ref{sec eq def cplx} recovers precisely the same information that Witten and Nekrasov find.
For instance, they find that first-order deformations of the theory are given by $H^1(X, \Omega^2_{cl})$.
(See Section 2.2 of \cite{WittenCDO} or Section 2.6.3 of \cite{Nek}, although Nekrasov keeps track of deformations of complex structure of the target too.)
Similarly, one can see that the local symmetries of the theory are $H^0(X, \Omega^2_{cl})$, as Nekrasov notes in Section 2.6.6.
In other words, the BV formalism provides a systematic mechanism for answering the questions that Witten and Nekrasov address.

\subsection{Chiral algebras and observables}

In Section 3 on \cite{WittenCDO}, Witten explains how one can recover the sheaf of chiral differential operators by physical arguments.
His approach might be summarized as follows:
\begin{enumerate}
\item since the perturbative theory is local on the target $X$, we fix a good cover $\{U_i\}$ of $X$ and try to patch the quantizations;
\item a coordinatization $\phi_i: U_i \hookrightarrow \CC^n$ allows one to view the field theory as a restriction of the $\beta\gamma$ system with target $\CC^n$ to the open $\phi_i(U_i)$; 
\item one then constructs the chiral algebra of operators for the open $\phi_i(U_i)$ as target by restricting the chiral algebra for the free $\beta\gamma$ system of rank $n$;
\item one tries to patch the chiral algebras on overlaps $U_i \cap U_j$.
\end{enumerate}
The first two steps are built into this perturbative approach to general $\sigma$-models.
The third step depends on two things: 
first, knowing the chiral algebra of the free $\beta\gamma$ system 
(which is a standard computation in physics and which we formalized in Part I), 
and second, knowing the chiral algebra can be localized to smaller opens.
(We remark that {\em chiral} algebra here is synonymous with {\em vertex} algebra in the mathematics literature,
although physicists often (though not here) presume that a chiral algebra is invariant under holomorphic changes of coordinates 
and hence what a mathematician might call a vertex operator algebra.)
This second result is a computation done in \cite{MSV, GMS},
and it requires one to show that the OPE for the free $\beta\gamma$ system can be localized from polynomials in the coordinates on the target $\CC^n$ to holomorphic functions in those coordinates.

The final step is a bit involved, and Witten explains it in Section 3.4 of \cite{WittenCDO}.
He wants to patch the chiral algebras for small opens in $\CC^n$,
so he needs to identify the automorphism group of the chiral algebra.
In practice, he instead computes the Lie algebra of this automorphism group 
or, more accurately, the infinitesimal automorphisms arising from the chiral algebra itself.
Witten wants to find those elements of the chiral algebra whose zeroth Fourier mode 
acts on the chiral algebra as a derivation that preserves conformal dimension.
In his terminology, such an element is a ``dimension one current'' and 
its zeroth Fourier mode is called its ``charge,'' 
which is the integral of the current along a loop around the origin in the source manifold.
Witten uses $\fg$ to denote the Lie algebra given by integrals of dimension one currents modulo total derivatives.

Witten provides two natural types of symmetries.
A holomorphic vector field $V = V^i \partial_i$ on the target determines the current $J_V = -V^i \beta_i$,
viewed as an element of the chiral algebra.
Similarly, a holomorphic 1-form $B = B_i \d z^i$ determines a current $J_B = B_i \partial \gamma^i$.
The charge of a 1-form $B$ vanishes if and only if it is exact (i.e., $B = \partial H$),
so that the collection of such charges is isomorphic to closed 2-forms (as a vector space).
The charges of distinct vector field are, by contrast, distinct.
Let $\frak{v}$ denote the Lie algebra formed by the charges for vector fields $V$ and 
let $\frak{c}$ denote the Lie algebra formed by the charges for one-forms $B$.
Together they span $\fg$, according to Witten.

A direct computation with the charges (or using OPE with these currents) shows that there is an exact sequence of Lie algebras
\[
0 \to \frak{c} \to \fg \to \frak{v} \to 0,
\]
and this Lie algebra corresponds to the extension $\widetilde{W}_n$ that we construct.

In our setting, a current can be defined as a local functional $I$ such that $\{I,-\}$ is a cocycle in derivations of the observables.
As discussed in Section \ref{sec loc sym}, such a functional $I$ determines an element $I_{disk}$ in the observables on the disk and hence an element of the vertex algebra.
The associated charge is the element $I_{circ}$, which is an observable on the annulus and hence corresponds to the zeroth Fourier mode of the element in the vertex algebra.

Witten chooses to find the infinitesimal symmetries of the $\beta\gamma$ system 
by doing explicit computations in the chiral algebra after obtaining it from the free theory.
By contrast, we use the BV formalism to determine how to lift the classical symmetries to quantum symmetries
(at the cost of a Lie algebra extension)
and then extract the chiral algebra statements.
In a sense, we do path integral manipulations to recover chiral algebra, 
and Witten follows the reverse logic.
The results naturally agree.

\begin{rmk}
Recent work  \cite{LiQME} of Si Li provides another useful perspective on this relationship. 
He considers free holomorphic BV theories on the complex line,
and he produces an identification between the obstruction-deformation complex of the BV theory and the mode Lie algebra of its vertex algebra.
His result, extended to our equivariant context, then recovers Witten's computation. 
See Remark~\ref{rmk on si's work} for an extended discussion of this point.
\end{rmk}

After determining the appropriate symmetries of the chiral algebra, 
Witten tries to lift the patching of coordinates on the good cover to patching of the chiral algebras.
The coordinate patching can be seen as living in the Lie algebra $\frak{v}$ (or rather its assciated group),
so the challenge is to lift to $\fg$, which involves choices.
Any choice determines a \v{C}ech 2-cocycle with values in $\Omega^2_{cl}$,
and if the cocycle vanishes,
the choices determine a patching of the chiral algebras.
The work of \cite{GMS} showed that this cocycle is indeed $\ch_2(T_X)$.
Our method provides another perspective.

\part*{Appendix}

\section{The $\beta\gamma$ system as an infinite-volume limit}

\subsection{Introduction}

This appendix gives an explanation for why one might be interested in the curved $\beta\gamma$ system,
that is, how one might discover this action functional by studying a limit of a more familiar class of classical field theories.
The idea is to modify the usual two-dimensional sigma model with Hermitian target in two steps: 
\begin{enumerate}
\item[(1)] we scale the metric on the target manifold until it becomes ``infinitely big" (this drastically simplifies the problem, as we'll show), and 
\item[(2)] we show that this infinite-volume theory ``splits" into a holomorphic and antiholomorphic theory (physicists use ``chiral and antichiral splitting").
\end{enumerate}
The chiral part is the curved $\beta\gamma$ system.

The core aspects of this construction can be seen by having a complex vector space (or formal disk) as the target manifold. After introducing the ingredients of our theory, we rework the usual action functional into a form better suited to our purposes. This {\em first-order formulation} of the theory makes the infinite-volume limit easy to understand and motivate. Finally, we exploit a special property of the theory --- arising from the interplay between the differential geometry of the source 2-manifold and the target Hermitian manifold --- to obtain the splitting.

\begin{rmk}
This approach is well-known to physicists.
Essentially the same construction is given in \cite{Zeitlin,LMZ,Nek}, and a closely related argument for the half-twisted $\sigma$-model is given in \cite{KapCDR}.
Perhaps the main contribution here is the explicit discussion of how to understand various manipulations within the BV formalism.
The version presented here unpacks and elaborates upon on a lecture by Kevin Costello at the Northwestern CDO workshop in summer 2011.
\end{rmk}

\subsection{The ingredients}

The input data of our classical field theory is the following.

\begin{itemize}
\item Let $S$ be an oriented real 2-manifold with a metric $g$. (We will indicate as we go along why everything only depends on the conformal class of $g$.) We denote the associated volume form by $\dvol_g$ and the dual inner product on $\Omega^1_S$ by $g^\vee$.

\item Let $V$ be an even-dimensional real vector space, equipped with a complex structure by $J$ (so we can view $V$ as complex, when needed). It is equipped with a hermitian inner product $h$, also written $(-,-)_V$. (Recall this means that $h$ is a an ordinary inner product on the real vector space $V$ and that $J$ is an isometry.)

\item Let $V^\vee$ denote the dual real vector space. We denote its dual complex structure by $J^\vee$. There is a canonical evaluation pairing $ev: V \otimes V^\vee \to \RR$, and we have 
\[
ev(Jv,\lambda) = \lambda(Jv) = ev(v,J^\vee \lambda)
\]
by definition.

\item Let $\Omega^k_S(V)$ denote the $V$-valued $k$-forms, i.e., $\Omega^k_S \otimes_\RR V$.

\end{itemize}

Consider the Hodge star operator $\ast$ on $\Omega^1_S$ arising from $g$. A computation in local coordinates shows that 
\[
\int_S h \otimes g^\vee(\alpha, \beta) \dvol_g = \int_S h( \ast \alpha \wedge \beta),
\]
where the right hand side means ``apply $h$ to the $V$-component but simply wedge the 1-form components." 

\subsection{The first-order formulation of the sigma model}

Let $f: S \to V$ be a smooth map. The usual action functional for the sigma model is
\[
S_{SO}(f) =  \int_S h \otimes g^\vee(df, df) \dvol_g.
\]
The subscript $SO$ stands for ``second-order."

There is an equivalent description of the same classical field theory where the fields are $f \in \rm{Maps}(S,V)$ and $A \in \Omega^1_S(V^\vee) $ and the action functional is
\[
S_{FO}(f,A) = \int_S ev(df \wedge A) - \frac{1}{2} \int_S h^\vee(\ast A \wedge A).
\]
The subscript $FO$ stands for ``first-order." This first-order action functional motivates the action functional we finally work with.

\begin{lemma}
The equations of motion for $S_{FO}$ are
\[
df = \ast h^\vee A  \quad\text{and}\quad dA = 0,
\]
and so solutions are given by all $f$ such that $d(\ast df) = 0$. This space of solutions is exactly the same as solutions to the equation of motion 
\[
\triangle_g f = (\ast d \ast) df = 0
\]
for $S_{SO}$.
\end{lemma}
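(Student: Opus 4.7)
The plan is routine variational calculus, with the only genuine care being in how the Hermitian metric is used to translate between $V$-valued and $V^\vee$-valued forms. I would treat the two fields separately.

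First I would vary in $A$: letting $A \mapsto A + \epsilon \delta A$ with $\delta A \in \Omega^1_S(V^\vee)$ compactly supported and extracting the $O(\epsilon)$ term gives
\[
\int_S ev(df \wedge \delta A) - \int_S h^\vee(\ast A \wedge \delta A),
\]
where the factor of $2$ from the quadratic term cancels the $\tfrac{1}{2}$ in the action (using symmetry of the dual inner product, $h^\vee(\ast \alpha \wedge \beta) = h^\vee(\ast \beta \wedge \alpha)$). Using the metric isomorphism $h^\vee : V^\vee \xto{\cong} V$ to reinterpret $ev(df \wedge \delta A)$ as a pairing in $h^\vee$, requiring vanishing for all $\delta A$ yields $df = \ast h^\vee A$ in $\Omega^1_S(V)$.

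Next I would vary in $f$. Only the first term depends on $f$, and for a compactly supported variation $\delta f$ its first-order change is $\int_S ev(d \delta f \wedge A)$. An integration by parts using $d(ev(\delta f \cdot A)) = ev(d \delta f \wedge A) + ev(\delta f \cdot dA)$ and Stokes' theorem transforms this into $-\int_S ev(\delta f \cdot dA)$; vanishing for all $\delta f$ forces $dA = 0$.

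For the equivalence with second-order solutions I would use the standard fact that on a Riemannian oriented $2$-manifold $\ast^2 = -1$ on $\Omega^1_S$. Applying $\ast$ to the first equation gives $A = -h \ast df$ (where $h : V \to V^\vee$ is the inverse of $h^\vee$), and substituting into $dA = 0$ yields $d(\ast df) = 0$, which is exactly $\triangle_g f = 0$. Conversely, given a harmonic $f$, the pair $(f, -h \ast df)$ satisfies both first-order equations, so one obtains a bijection between solution spaces. The main obstacle here is purely bookkeeping --- keeping track of the $h \leftrightarrow h^\vee$ identifications and the sign of $\ast^2$ --- and along the way one checks that the action depends on $g$ only through $\ast$ acting on $1$-forms, which in dimension two depends solely on the conformal class.
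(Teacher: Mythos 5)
Your proposal is correct and follows essentially the same route as the paper: vary in $A$ (using the symmetry of $h^\vee$ to cancel the $\tfrac12$) to get $df = \ast h^\vee A$, vary in $f$ and integrate by parts to get $dA = 0$, then use $\ast^2 = -1$ on $\Omega^1_S$ to eliminate $A$ and land on $d(\ast df) = 0$. The only (harmless) additions are your explicit check of the converse direction and the remark on conformal invariance, neither of which changes the argument.
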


\begin{proof}
We obtain the equations of motion for $S_{SO}$ first. We have
\[
S_{SO}(f)  = \int_S h(\ast df \wedge df) = - \int_S h((d \ast d f) \wedge f) = - \int_S h(\triangle_g f, f) \dvol_g,
\]
where we use integration by parts in the second step and the fact that $\ast$ preserves inner products in the last step. The usual variational procedure then recovers the equation of motion.

Now we treat $S_{FO}$. We obtain the equation $dA = 0$ by considering a variation $f \to f + \delta f$. On the other hand, a variation $A \to A + \delta A$ has the following consequences for the second term,
\[
\frac{1}{2} \int_S h^\vee(\ast \delta A \wedge A) + \frac{1}{2} \int_S h^\vee(\ast A \wedge \delta A) = \int_S h^\vee(\ast A \wedge \delta A),
\]
and so we need $df - h^\vee \ast A= 0$.

Now observe that
\[
df = \ast h^\vee A  \Leftrightarrow \ast df = - h^\vee A,
\]
so we need
\[
d (\ast df) = 0,
\]
to satisfy the equations of motion for $S_{SO}$.
\end{proof}

\subsection{An involution on the space of fields}

We now explore a special property of the fields, arising from the fact that the source is 2-dimensional and the target is Hermitian. Because $\ast^2 = -1$, it provides a natural complex structure on $\Omega^1_S$. Thus, we obtain two involutions:
\begin{itemize}
\item on $\Omega^1_S(V)$, there is $\sigma := \ast \otimes J$, and
\item on $\Omega^1_S(V^\vee)$, there is $\sigma^\vee := \ast \otimes J^\vee$.
\end{itemize}
By using this polarization of the fields, we will obtain eventually the desired chiral decomposition.

\begin{lemma}
The operator $\sigma$ gives an eigenspace decomposition 
\[
\Omega^1_S(V) = \Omega^1_S(V)_+ \oplus \Omega^1_S(V)_-
\]
where $\Omega^1_S(V)_{\pm}$ denotes the $\pm 1$-eigenspace of $\sigma$, and likewise for $\Omega^1_S(V^\vee)$. 
\end{lemma}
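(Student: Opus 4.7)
\smallskip

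\noindent\textbf{Proof proposal.} The plan is to reduce the statement to the standard fact that an involution on a vector space over a field of characteristic different from two admits an eigenspace decomposition into its $\pm 1$-eigenspaces. The two things to verify are that $\sigma$ really is an involution, and that the resulting decomposition is in fact a direct sum decomposition of $\Omega^1_S(V)$ as a module over smooth functions (so that the two summands are genuinely sheaves of sections of subbundles).

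First I would check that $\sigma^2 = 1$. Since $\sigma = \ast \otimes J$ acts on the tensor product $\Omega^1_S \otimes_{\RR} V$, we have $\sigma^2 = \ast^2 \otimes J^2$. As $\ast$ is the Hodge star on $1$-forms on an oriented Riemannian $2$-manifold, $\ast^2 = -1$ on $\Omega^1_S$, and by hypothesis $J^2 = -1$ on $V$. Thus $\sigma^2 = (-1)(-1) = 1$, so $\sigma$ is a genuine involution. The identical computation applies to $\sigma^\vee = \ast \otimes J^\vee$ on $\Omega^1_S(V^\vee)$, using $(J^\vee)^2 = -1$.

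Second, I would construct the decomposition explicitly using the projectors
\[
P_{\pm} := \tfrac{1}{2}(1 \pm \sigma),
\]
which are well-defined $\RR$-linear endomorphisms of $\Omega^1_S(V)$. One checks directly that $P_+ + P_- = 1$, that $P_+ P_- = P_- P_+ = 0$, and that $P_{\pm}^2 = P_{\pm}$, using only $\sigma^2 = 1$. Defining $\Omega^1_S(V)_{\pm} := \operatorname{im}(P_{\pm}) = \ker(1 \mp \sigma)$ gives the claimed decomposition, and the same formulas with $\sigma^\vee$ handle $\Omega^1_S(V^\vee)$.

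No real obstacle is expected here; the only mild subtlety worth remarking on is that $\ast$ depends on the metric $g$ only through its conformal class (on an oriented Riemannian $2$-manifold $\ast \colon \Omega^1_S \to \Omega^1_S$ is conformally invariant), so the involution $\sigma$, and hence the decomposition, is canonically attached to the conformal structure on $S$ together with the complex structure $J$ on $V$. This is the feature that will let the ensuing chiral/antichiral splitting be formulated intrinsically in the next step of the appendix.
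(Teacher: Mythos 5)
Your proposal is correct and follows essentially the same route as the paper: both verify (explicitly or implicitly) that $\sigma^2 = 1$ using $\ast^2 = -1$ and $J^2 = -1$, and then decompose via the idempotents $\tfrac{1}{2}(1 \pm \sigma)$ summing to the identity. The only difference is that you spell out the involution check and the orthogonality of the projectors, which the paper leaves implicit.
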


\begin{proof}
Let $\Pi_{\pm}$ denote the endomorphism $\frac{1}{2}(1 \pm \sigma)$ on $\Omega^1_S(V)$. Then
\[
\Pi_+^2 = \frac{1}{4}(1 + 2 \sigma + \sigma^2) = \Pi_+,
\]
so $\Pi_+$ is a projection operator (and likewise for $\Pi_-$). As $1 = \Pi_+ + \Pi_-$, we obtain the decomposition.
\end{proof}

\begin{dfn}
We define $d_{\pm}: \Omega^0_S(V) \to \Omega^1_S(V)_{\pm}$ as $\Pi_{\pm} \circ d$. 
\end{dfn}

Consider the natural evaluation pairing
\[
\begin{array}{cccc}
ev_S: & \Omega^1_S(V) \otimes \Omega^1_S(V^\vee) & \to & \RR \\
& v \otimes \lambda & \mapsto & \int_S ev(v \wedge \lambda)
\end{array}.
\]
Observe that
\begin{align*}
ev_S(\sigma v, \sigma^\vee \lambda) &= \int_S ev(\ast J v \wedge  \ast J^\vee \lambda) \\
&= \int_S ev(Jv, J^\vee \lambda) \\
&= \int_S ev(J^2 v, \lambda) \\
&= -ev_S(v,\lambda),
\end{align*}
where in the second line we used the fact that $\ast \alpha \wedge \ast \beta = \alpha \wedge \beta$ for any $\alpha, \beta$ in $\Omega^1_S$. Thus we obtain the following.

\begin{lemma}
With respect to the pairing $ev_S$, $\Omega^1_S(V)_+$ is orthogonal to $\Omega^1_S(V^\vee)_+$, and $\Omega^1_S(V)_-$ is orthogonal to $\Omega^1_S(V^\vee)_-$.
\end{lemma}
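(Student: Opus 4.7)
The plan is to derive both orthogonality statements directly from the identity $ev_S(\sigma v, \sigma^\vee \lambda) = -ev_S(v,\lambda)$ that was established just before the statement of the lemma. No further geometric input is required; the argument is purely a consequence of how an involution interacts with a bilinear pairing that it anti-preserves.

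Concretely, suppose first that $v \in \Omega^1_S(V)_+$ and $\lambda \in \Omega^1_S(V^\vee)_+$. Then by definition $\sigma v = v$ and $\sigma^\vee \lambda = \lambda$, so
\[
ev_S(v,\lambda) = ev_S(\sigma v, \sigma^\vee \lambda) = -ev_S(v,\lambda),
\]
forcing $ev_S(v,\lambda) = 0$. For the second case, take $v \in \Omega^1_S(V)_-$ and $\lambda \in \Omega^1_S(V^\vee)_-$, so that $\sigma v = -v$ and $\sigma^\vee \lambda = -\lambda$. Then
\[
ev_S(v,\lambda) = ev_S(-\sigma v, -\sigma^\vee \lambda) = ev_S(\sigma v, \sigma^\vee \lambda) = -ev_S(v,\lambda),
\]
so again $ev_S(v,\lambda) = 0$. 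In both cases the two signs (one from the behavior of $ev_S$ under $(\sigma,\sigma^\vee)$, and one from the eigenvalues of $v$ and $\lambda$) conspire to give a contradiction unless the pairing vanishes.

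There is no serious obstacle here: the only subtlety worth flagging is that the conclusion fails for the ``mixed'' pairings between $\Omega^1_S(V)_+$ and $\Omega^1_S(V^\vee)_-$ (and vice versa), where the two sign flips cancel and the pairing is instead non-degenerate. This non-degeneracy on the mixed blocks is in fact what one wants in the sequel, since it is precisely this pairing that will give the shifted symplectic structure of the chiral/antichiral decomposition, but verifying it is not part of the present lemma and would be addressed separately.
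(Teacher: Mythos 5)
Your proof is correct and follows exactly the route the paper intends: the lemma is stated there as an immediate consequence of the identity $ev_S(\sigma v, \sigma^\vee \lambda) = -ev_S(v,\lambda)$, and your eigenvalue sign-chase is precisely the omitted verification. The remark about the mixed blocks being the nondegenerate ones is a nice (and accurate) aside, though not needed here.
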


\subsection{Replacing the first-order action functional}

We introduce a new theory whose fields are $f \in C^\infty_S(V)$ and $B \in \Omega^1_S(V^\vee)_-$. The action functional is
\[
S_+(f,B) = \int_S ev(d_+f \wedge B) - \frac{1}{2} \int_S h^\vee(\ast B \wedge B).
\]
It might seem like this action only sees {\em half} the information of $S_{SO}$ or $S_{FO}$, but it is actually equivalent. We begin with the heuristic argument before delving into a careful proof in the BV formalism.

\subsection{The heuristic argument} 

There is an illuminating ``completing the square" maneuver. Consider the following automorphism on the space of fields:
\[
f \mapsto f \quad \text{ and } \quad B \mapsto B + h (d_+ f).
\]  
(For $v \in V$, $hv$ denotes the element $h(v, -) \in V^\vee$.) When we apply $S_+$ after this transformation, our integrand is a sum of six terms:
\begin{multline*}
 ev(d_+ f \wedge B) + ev(d_+ f \wedge h (d_+ f))  - \frac{1}{2}  h^\vee(\ast B \wedge B) \\  - \frac{1}{2} \left(  h^\vee(\ast h(d_+ f) \wedge B) +  h^\vee(\ast B \wedge h(d_+f))\right)    - \frac{1}{2} h^\vee(\ast h(d_+f) \wedge h(d_+f)).
\end{multline*}
We can simplify this sum.

First, note that the fourth and fifth terms (which are grouped together already) are equivalent to
\[
- \frac{1}{2} \left( ev(\ast d_+ f \wedge B) +  ev(\ast B \wedge d_+f)\right) ,
\]
and thus together cancel the first term.

Second, note that the second term is equivalent to $h(d_+ f \wedge d_+ f)$. This term vanishes because for any one-form $\alpha$, $\alpha \wedge \alpha = 0$.

The last term is the most interesting: {\it the last term recovers the usual sigma model action.}

\begin{lemma}\label{SOvsPlus}
The last term
\[
- \frac{1}{2} h^\vee(\ast h(d_+f) \wedge h(d_+f))
\] 
is equivalent to $-h(\ast df \wedge df)/4$.
\end{lemma}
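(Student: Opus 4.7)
The plan is to first reduce the claim to an equality of $V$-valued $2$-forms (modulo an exact term on $S$) by unwinding the dual pairings. Since $h : V \to V^\vee$ intertwines $J$ and $J^\vee$ and the dual inner product satisfies $h^\vee(hv, hw) = h(v,w)$, a direct index-expansion of both sides gives the identity $h^\vee(\ast h\alpha \wedge h\beta) = h(\ast \alpha \wedge \beta)$ for any $\alpha,\beta \in \Omega^1_S(V)$. Applying this with $\alpha=\beta=d_+f$ reduces the lemma to showing
\[
h(\ast d_+ f \wedge d_+ f) \;\equiv\; \tfrac{1}{2}\, h(\ast df \wedge df)
\]
modulo an exact $2$-form on $S$.

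Next, use $\ast^2 = -1$ on $\Omega^1_S$ to rewrite $d_+ f = \tfrac{1}{2}(df + \ast J df)$ and $\ast d_+ f = \tfrac{1}{2}(\ast df - Jdf)$, so that $h(\ast d_+f \wedge d_+f)$ expands as
\[
\tfrac{1}{4}\bigl[\,h(\ast df \wedge df) + h(\ast df \wedge \ast Jdf) - h(Jdf \wedge df) - h(Jdf \wedge \ast Jdf)\,\bigr].
\]
Three elementary identities collapse this sum. First, $h(-\wedge-)$ on $\Omega^1(V)$ is antisymmetric in its two arguments, because $h$ is symmetric on $V$ and wedge is antisymmetric on $1$-forms; this converts the third term into $+h(df \wedge Jdf)$. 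Second, on a Riemannian $2$-manifold $\ast\alpha\wedge\ast\beta=\alpha\wedge\beta$ for any $1$-forms $\alpha,\beta$, collapsing the second term to $h(df\wedge Jdf)$. Third, $J$ is an $h$-isometry, so replacing $Je_i, Je_j$ with $e_i,e_j$ shows the fourth term equals $h\otimes g^\vee(df, df)\,\dvol_g = -h(\ast df \wedge df)$. Combining,
\[
h(\ast d_+ f \wedge d_+ f) = \tfrac{1}{2}h(\ast df \wedge df) + \tfrac{1}{2}h(df \wedge J df).
\]

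The final step is to recognize the extra term as the pullback of the Kähler form. Setting $\omega(v,w) := h(Jv,w)$, an index calculation expressing $Jdf$ in the basis $\{e_i\}$ and using the antisymmetry of $\omega$ gives $h(df \wedge J df) = -2 f^\ast \omega$. Because $V$ is a vector space, $\omega$ has constant coefficients in linear coordinates and admits the explicit primitive $\theta = \tfrac{1}{2}\sum \omega_{ij}\, x^i\, dx^j$, so $f^\ast \omega = d(f^\ast \theta)$ is exact on $S$. Multiplying by $-\tfrac{1}{2}$ yields
\[
-\tfrac{1}{2}\, h^\vee(\ast h(d_+f) \wedge h(d_+f)) = -\tfrac{1}{4} h(\ast df \wedge df) + \tfrac{1}{2}\, d(f^\ast \theta),
\]
so the two sides agree up to a total derivative, which is the intended meaning of ``equivalent'' in the context of the action functional on a closed surface $S$. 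The only real subtlety is identifying the leftover piece as an exact $B$-field-like term rather than a genuine discrepancy; once one sees it is the pullback of a constant Kähler form on a vector space, exactness is immediate and the argument closes.
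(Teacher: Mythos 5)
Your proof is correct, and it follows the same basic route as the paper's: expand $d_+ = \tfrac{1}{2}(1+\sigma)d$, strip the excess $h$ and $h^\vee$, and evaluate the four terms using $\ast\alpha\wedge\ast\beta=\alpha\wedge\beta$, the antisymmetry of $h(-\wedge-)$, and the fact that $J$ is an $h$-isometry. Where you genuinely diverge is in the treatment of the two cross terms. The paper's proof rewrites $h(\ast\sigma df\wedge df)$ and $h(\ast df\wedge\sigma df)$ as $\mp i\,h(df\wedge df)$ and $\pm i\,h(\ast df\wedge\ast df)$ — effectively pulling $J$ through the \emph{real} symmetric form $h$ as the scalar $i$ — and then kills them by the antisymmetry $h(\alpha\wedge\alpha)=0$. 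That manipulation is not legitimate for the real inner product $h$: the cross terms actually combine to $\tfrac{1}{2}h(df\wedge Jdf)=-f^*\omega$, the pullback of the K\"ahler form $\omega(v,w)=h(Jv,w)$, which is nonzero pointwise. Your version handles this correctly by observing that $\omega$ has constant coefficients on the vector space $V$, hence admits a global primitive, so $f^*\omega$ is exact and contributes only a boundary term; this is exactly the sense in which ``equivalent'' must be read, and your argument makes that precise where the paper elides it. One small internal inconsistency to fix: in your ``third identity'' you assert the fourth term equals $-h(\ast df\wedge df)$, but your displayed conclusion $h(\ast d_+f\wedge d_+f)=\tfrac{1}{2}h(\ast df\wedge df)+\tfrac{1}{2}h(df\wedge Jdf)$ requires it to be $+h(\ast df\wedge df)$ (indeed $-h(Jdf\wedge\ast Jdf)=+h(\ast df\wedge df)$, and the paper's own convention is $\int h\otimes g^\vee(\alpha,\beta)\,\dvol_g=\int h(\ast\alpha\wedge\beta)$ with a plus sign). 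The displayed formula and the final conclusion are correct; only that one line of prose has the sign reversed.
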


\begin{proof}
Recall $d_+ = \Pi_+ d = (1/2) (1 + \sigma) d$. Thus
\begin{align*}
4h(\ast d_+ f \wedge d_+f) &= h(\ast (1+\sigma)d f \wedge (1+\sigma)df)\\
&= h(\ast df \wedge df) + h(\ast \sigma df \wedge df) + h(\ast df \wedge \sigma df) + h(\ast \sigma df \wedge \sigma df) \\
&= h(\ast df \wedge df) -i h( df \wedge df) + ih(\ast df \wedge \ast df) + h( df \wedge \ast df) \\
&= 2 h(\ast df \wedge df).
\end{align*}
The initial term arises just by canceling out the excess copies of $h$ and $h^\vee$.
\end{proof}

All that remains to understand is the third term $ - \frac{1}{2}  h^\vee(\ast B \wedge B) $. From a heuristic perspective, it's irrelevant: for the classical theory, the only critical point is $B = 0$, and for the quantum theory, it contributes nothing of interest (just an extra space of fields equipped with a Gaussian measure centered at zero).

To summarize, we have made an ``upper-triangular" change of coordinates on the space of fields. At the classical level, we recover the same equations of motion. At the quantum level, the nonexistent Lebesgue measure is preserved and the weight $e^{-S_+}$  factors into $e^{-S_{SO}}$ times a Gaussian.

\subsection{The BV argument}

In fact, it is fairly straightforward to rephrase this heuristic argument into a rigorous statement in the BV formalism.
Our model throughout is the case of pure Yang-Mills theory (for which see Chapter 6, Section 3 of \cite{CosBook} or \cite{YMasBF}).

\subsubsection{The ingredients}

Our fields are $f \in C^\infty_S(V)$ and $B \in \Omega^1_S(V^\vee)_-$, so we introduce ``antifields" $f^\vee \in \Omega^2_S(V^\vee)$ and $B^\vee \in \Omega^1_S(V)_+$. As usual, the fields have cohomological degree 0 and the antifields have cohomological degree 1, as below.
\[
\begin{array}{cc}
\underline{0} & \underline{1} \\
C^\infty_S(V) & \Omega^2_S(V^\vee)\\
\oplus & \oplus \\
\Omega^1_S(V^\vee)_- & \Omega^1_S(V)_+\\
\text{(fields)} & \text{(antifields)}
\end{array}
\]
We equip this graded vector space $\sE$ with the following symplectic pairing of cohomological degree $-1$:
\begin{align*}
\langle f, f^\vee \rangle &= \int_S ev(f , f^\vee),\\
\langle f, f^\vee \rangle &= - \langle f^\vee, f \rangle, \\
\langle B, B^\vee \rangle &= -\int_S ev(B^\vee \wedge B),\\
\langle B, B^\vee \rangle &= -\langle B^\vee, B \rangle,
\end{align*}
with all other pairings automatically zero (e.g., $\langle f, B \rangle = 0$). This is simply the shifted antisymmetrization of $ev_S$.

We thus obtain a free BV theory (in the sense of Costello) as the following elliptic complex,
\[
\begin{array}{ccc}
C^\infty_S(V) & \overset{d_+}{\rightarrow} &\Omega^1_S(V)_+\\
& &  \\
\Omega^1_S(V^\vee)_-& \overset{d}{\rightarrow} & \Omega^2_S(V^\vee)
\end{array},
\]
where we simply extracted the quadratic part of $S_+$.

In particular, let $\Phi = (f, f^\vee, B, B^\vee)$ denote an element of $\sE$. Then the free BV theory has action functional
\begin{align*}
S_{free}(\Phi) &= -\frac{1}{2}\langle \Phi, Q \Phi\rangle \\
&= -\frac{1}{2}\langle (f, f^\vee, B, B^\vee), (0, dB, 0, d_+ f)\rangle \\
&= -\frac{1}{2} \left( \langle f, dB \rangle + \langle B,d_+f\rangle \right)\\
&= -\frac{1}{2} \left(\int_S ev(f, dB) - \int_S ev(d_+f \wedge B) \right)\\
&= \int_S ev(d_+f \wedge B).
\end{align*}
Thus we recover the free part of $S_+$.

In full, we have
\[
S_+(\Phi) = - \frac{1}{2}\langle \Phi, Q \Phi\rangle + \frac{1}{2} \langle B, h^\vee (\ast B) \rangle.
\]

\subsubsection{Equivalence at the classical level}

In the classical BV formalism, two different action functionals $S$ and $S'$ give equivalent classical theories if they are cohomologous in the cochain complex $(\sO_{loc}(\sE), \{S,-\})$. (Here we assume $S$ satisfies the classical master equation $\{S,S\} = 0$.) Using more geometric language, we say that $S$ and $S'$ live in the same orbit of the gauge group of symplectomorphisms acting on the space of fields $\sE$ (and hence on the space of action functionals $\sO_{loc}(\sE)$).

\begin{rmk} 
To relate these two assertions, note that the cochain complex, once shifted, is a dg Lie algebra that describes the formal neighborhood of $S$ in the moduli space of classical field theories on $\sE$. Thus, if they are cohomologous, we can construct a Hamiltonian flow moving from $S$ to $S'$.
\end{rmk}

In fact, this setting lets us dress up the heuristic picture, as follows. We replace the change of coordinates by modifying $S_+$ by a boundary in $(\sO_{loc}, \{S_+,-\})$.

\begin{lemma}
Let $H$ denote the local functional of cohomological degree $-1$ where
\[
H(\Phi) = \langle \ast h(d_+f),B^\vee \rangle.
\]
Then
\[
\{S_+,H\} = S_{SO} - S_{free},
\]
so $S_+$ is cohomologous to
\[
S_{SO} - \frac{1}{2} \langle B, h^\vee (\ast B) \rangle
\]
in $(\sO_{loc}, \{S_+,-\})$.
\end{lemma}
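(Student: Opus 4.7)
The plan is to compute the BV bracket $\{S_+, H\}$ directly and match it to $S_{SO} - S_{free}$; the computation is essentially the infinitesimal version of the heuristic ``completing the square'' argument that precedes the lemma. I would first identify the Hamiltonian vector field $X_H$: since $H = \langle \ast h(d_+f), B^\vee\rangle$ depends on the field $f$ (through $d_+f$) and linearly on the antifield $B^\vee$, and these are paired symplectically with $f^\vee$ and $B$ respectively, $X_H$ has nonzero components only in the $f^\vee$- and $B$-directions, with $X_H^B = \ast h(d_+f)$ up to the convention-dependent sign from $\langle B, B^\vee\rangle = -\int ev(B^\vee \wedge B)$. Because $S_+$ does not depend on any antifields, only this $B$-component contributes, so $\{S_+, H\}$ reduces to pairing $\delta S_+/\delta B$ against $\ast h(d_+f)$.

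I would then decompose $S_+ = S_{free} + S_{int}$ with $S_{int} = -\tfrac{1}{2}\int_S h^\vee(\ast B \wedge B)$ and treat the two contributions separately. The contribution from $S_{free}$ is $\int_S ev(d_+f \wedge \ast h(d_+f)) = \int_S h(d_+f \wedge \ast d_+f)$, which by the identity $4h(\ast d_+f \wedge d_+f) = 2h(\ast df \wedge df)$ from the proof of Lemma \ref{SOvsPlus} evaluates to a multiple of $S_{SO}$. The contribution from $S_{int}$ involves $\int_S h^\vee(\ast B \wedge \ast h(d_+f))$, and using the identity $\ast \alpha \wedge \ast \beta = \alpha \wedge \beta$ for one-forms on a two-surface together with the natural identification $h^\vee(h(v), w^\vee) = ev(v \wedge w^\vee)$, this reduces to a multiple of $\int_S ev(d_+f \wedge B) = S_{free}$.

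The main technical obstacle is pure bookkeeping of signs and factors of $\tfrac{1}{2}$, which come from four independent sources: the antisymmetric convention for the BV pairing, the sign in $\langle B, B^\vee\rangle$, the relation $\ast^2 = -1$ on one-forms, and the compatibility of the Hermitian form $h$ with $J$ and $J^\vee$ (which in particular determines which eigenspace of $\sigma^\vee$ contains $\ast h(d_+f)$ and hence how it pairs with $B \in \Omega^1_S(V^\vee)_-$ and $d_+f \in \Omega^1_S(V)_+$). Once these coefficients line up so that $\{S_+, H\} = S_{SO} - S_{free}$, the cohomological assertion follows immediately: interpreting $\tfrac{1}{2}\langle B, h^\vee(\ast B)\rangle$ as the natural $L^2$-integral $\tfrac{1}{2}\int_S h^\vee(\ast B \wedge B) = -S_{int}$, one computes
\[
S_+ - \bigl(S_{SO} - \tfrac{1}{2}\langle B, h^\vee(\ast B)\rangle\bigr) = S_{free} + S_{int} - S_{SO} - S_{int} = S_{free} - S_{SO} = -\{S_+, H\},
\]
so $S_+$ and $S_{SO} - \tfrac{1}{2}\langle B, h^\vee(\ast B)\rangle$ differ by a coboundary in $(\sO_{loc}, \{S_+, -\})$ and therefore represent the same cohomology class.
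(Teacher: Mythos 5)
Your proposal is correct and follows essentially the same route as the paper: decompose $S_+$ into its free and interaction parts, use the duality of the shifted bracket with $\langle-,-\rangle$ to reduce $\{S_{free},H\}$ to $\pm\langle \ast h(d_+f), d_+f\rangle = \pm S_{SO}$ via Lemma \ref{SOvsPlus}, and observe that $\{I,H\}$ recovers $\pm S_{free}$ via $\ast\alpha\wedge\ast\beta=\alpha\wedge\beta$. The paper's own proof is terser (it leaves the signs as $\pm$, just as you defer the sign bookkeeping), so no gap separates the two.
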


\begin{proof}
Observe
\[
\{S_{free},H\} = \pm \langle \ast h(d_+ f), d_+f\rangle = \pm S_{SO}.
\]
In the first equality, we use that the shifted Poisson bracket $\{-,-\}$ is dual to $\langle -, - \rangle$. In the second equality, we use lemma \ref{SOvsPlus}.

A parallel computation shows that $\{I,H\}$, where $I$ denotes the ``interaction term" of $S_+$, recovers $\pm S_{free}$.
\end{proof}

The action functional $S_{SO} \pm I$ thus defines a classical BV theory equivalent to $S_+$. Note, however, that this action functional {\em completely decouples} $f$ and $B$. The term $S_{SO}$ only depends on $f$, and the term $I$ only depends on $B$. Moreover, the critical point of $I$ is $\{B = 0\}$, so the equations of motion pick out the same solutions as $S_{SO}$ on its own.

\subsubsection{Equivalence at the quantum level}

In our setting of a linear target with linear metric, we have shown that the classical BV theory specified by $S_+$ is equivalent to a free BV theory, given by the elliptic complex
\[
\begin{array}{ccc}
C^\infty_S(V) & \overset{\triangle_g \dvol_g}{\longrightarrow} &\Omega^2(V)\\
& &  \\
\Omega^1_S(V^\vee)_-& \overset{Id}{\longrightarrow} & \Omega^1_S(V^\vee)_-
\end{array},
\]
once one writes down a suitable pairing $\langle-,-\rangle$. (We chose to change the pairing so that the elliptic complex is simple. Alternatively, we could have retained the same pairing but written a complicated-looking elliptic complex.) As the second line is acyclic, we see it is irrelevant to {\em both} the classical and quantum theories. In particular, as the theory is free, we can quantize immediately and show that the quantum observables are homotopy-equivalent to the quantum observables constructed  just from the first line.

This argument is another way of saying ``we can integrate out the $B$ fields and they do not affect any observables" (cf. the discussion of Yang-Mills theory in Chapter 6 of \cite{CosBook}).

\begin{rmk}
This argument is the only piece that does not port immediately to the case of a curved target. In that case, we need to verify we {\em can} construct a quantization. Nonetheless, it is plausible that we could quantize while maintaining the complete decoupling of the $f$ and $B$ fields, in which case we could work with just the subcomplex depending on the $f$ fields.
\end{rmk}

\subsection{The infinite volume limit}

As $S_+$ is equivalent to $S_{SO}$, we hereafter focus on $S_+$. Our goal is to study a degenerate limit of $S_+$ where the situation drastically simplifies.

The idea is quite simple: if we scale the metric $h^\vee$ to $t h^\vee$, then as $t$ goes to zero, we scale away the dependence on $h^\vee$ in $S_+$. The limit theory is then independent of the hermitian inner product on $V$. Note that on $V$, the limit $t \to 0$ is equivalent to scaling $h$ to $h/t$, so that the volume of any cube grows toward infinity.

\begin{dfn}
The {\em infinite volume limit} is the action functional
\[
S_{IVL}(f,B) = \int_S ev(d_+ f \wedge B),
\]
with $f \in C^\infty_S(V)$ and $B \in \Omega^1_S(V)_-$.
\end{dfn}

The equations of motion are $d_+ f = 0$ and $dB = 0$. 

\subsection{The chiral splitting}

The operator $d_+$ interacts nicely with the complexifications of our spaces of fields, and thus we will be able to massage our theory into another, appealing form.

Consider the decompositions
\[
\Omega^1_S \otimes_\RR \CC = \Omega^{1,0}_S \oplus \Omega^{0,1}_S
\]
and
\[
V \otimes_\RR \CC = V^{1,0} \oplus V^{0,1}.
\]
We have the respective projection operators
\begin{align*}
p^{1,0}_S &= \frac{1}{2}(1 - i \ast),\\
p^{0,1}_S &= \frac{1}{2}(1 + i \ast),\\
p^{1,0}_V &= \frac{1}{2}(1 - i J),\\
p^{1,0}_V &= \frac{1}{2}(1 + i J).
\end{align*}
By an explicit computation, we see
\[
p^{0,1}_S \otimes p^{1,0}_V  = \frac{1}{4} ( 1 +i \ast - i J + \ast J)
\]
and
\[
p^{1,0}_S \otimes p^{0,1}_V = \frac{1}{4} ( 1 - i \ast + i J + \ast J ),
\]
so
\[
p^{0,1}_S \otimes p^{1,0}_V + p^{1,0}_S \otimes p^{0,1}_V = \frac{1}{2} (1 + \ast J) = \Pi_+, 
\]
where we've extended scalars on $\Pi_+$ so that it works on the complexified $\Omega^1_S(V)^\CC$. 

The following result is an immediate consequence.

\begin{lemma}
On $\Omega^*_S(V)^\CC$, we have
\[
d_+ = \dbar_{V^{1,0}} + \partial_{V^{0,1}}.
\]
\end{lemma}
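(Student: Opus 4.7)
The plan is to deduce the formula by a direct computation, leveraging the explicit expression for $\Pi_+$ already derived just before the lemma statement. Since $d_+ = \Pi_+ \circ d$ by definition, and we have the identity
\[
\Pi_+ = p^{0,1}_S \otimes p^{1,0}_V + p^{1,0}_S \otimes p^{0,1}_V
\]
on $\Omega^1_S(V)^\CC$, it suffices to unpack what $\Pi_+(df)$ looks like after decomposing both $S$ and $V$ into their holomorphic and antiholomorphic parts.

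First I would fix a smooth section $f \in C^\infty_S(V)^\CC$ and use the decomposition $V \otimes_\RR \CC = V^{1,0} \oplus V^{0,1}$ to write $f = f^{1,0} + f^{0,1}$. Next, I would use the decomposition $d = \partial + \dbar$ of the de Rham differential on $S$ induced by the complex structure, so that
\[
df = \partial f^{1,0} + \dbar f^{1,0} + \partial f^{0,1} + \dbar f^{0,1},
\]
where each summand is pure with respect to both the $(p,q)$-type on $S$ and the $V^{1,0}/V^{0,1}$-splitting on $V$. Applying the two summands of $\Pi_+$ term by term, the operator $p^{0,1}_S \otimes p^{1,0}_V$ selects the $(0,1)_S \otimes V^{1,0}$ component and kills the other three, yielding $\dbar f^{1,0}$; likewise $p^{1,0}_S \otimes p^{0,1}_V$ yields $\partial f^{0,1}$. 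Writing $\dbar_{V^{1,0}}(f) := \dbar f^{1,0}$ and $\partial_{V^{0,1}}(f) := \partial f^{0,1}$ then gives the identity $d_+ = \dbar_{V^{1,0}} + \partial_{V^{0,1}}$ on $\Omega^0_S(V)^\CC$.

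To extend the identity to all of $\Omega^*_S(V)^\CC$, I would observe that both sides are first-order differential operators whose symbols are determined by their action on functions and on the bundle factor; the same decomposition argument works verbatim if one replaces $f$ by a section of $\Lambda^k (T^*S)^\CC \otimes V^\CC$ and uses the $(p,q)$-decomposition of $\Lambda^k (T^*S)^\CC$ together with the $V^{1,0}/V^{0,1}$-splitting.

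Honestly there is no real obstacle here: the content of the lemma is the fact, already established by the author, that the projector $\Pi_+$ factors as a sum of two tensor-product projectors of opposite bi-type on $S$ and $V$. Once that identity is in hand, the lemma is essentially a bookkeeping exercise, and the only point requiring care is making sure the conventions for the $V^{1,0}/V^{0,1}$-splitting (which uses $J$) and the $(1,0)/(0,1)$-splitting of $\Omega^*_S$ (which uses $\ast$) are matched consistently so that the ``mixed'' components surviving $\Pi_+$ are indeed the ones identified with $\dbar$ acting on $V^{1,0}$-valued forms and $\partial$ acting on $V^{0,1}$-valued forms.
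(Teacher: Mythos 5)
Your proof is correct and follows essentially the same route as the paper's: both rest on the four-fold decomposition of $\Omega^1_S(V)^\CC$ and the identity $\Pi_+ = p^{0,1}_S \otimes p^{1,0}_V + p^{1,0}_S \otimes p^{0,1}_V$ established just before the lemma, then unravel the projections applied to $df$. Your write-up is merely a little more explicit than the paper's about which of the four components of $df$ survives each projector, and about extending from $\Omega^0$ to $\Omega^*$.
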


\begin{proof}
Note that
\begin{align*}
\Omega^1_S(V)^\CC &\cong (\Omega^1_S)^\CC \otimes_\CC V^\CC \\ 
&\cong \Omega^{1,0}(V^{1,0})\oplus \Omega^{1,0}(V^{0,1}) \oplus \Omega^{0,1}(V^{1,0}) \oplus \Omega^{0,1}(V^{0,1}).
\end{align*}
We thus need simply to unravel the relevant projections.

Recall that $\dbar$ means ``project the image of $d$ onto the $-i$-eigenspace of $(\Omega^1)^\CC$." Hence, as an example, $\dbar_{V^{1,0}}: \Omega^0_S(V^{1,0}) \to \Omega^{0,1}_S(V^{1,0})$ is precisely the operator 
\[
(p ^{0,1}_S \circ d) \otimes 1_{V^{1,0}}.
\]
Plugging in all the relevant operators, we obtain the desired result.
\end{proof}

We write the elliptic complex of fields, once the fields are complexified, using the decomposition of $d_+$ given above. This specifies a free BV theory:
\[
\begin{array}{ccc}
\Omega^{0,0}_S(V^{1,0}) & \overset{\dbar_{V^{1,0}}}{\longrightarrow} &\Omega^{0,1}_S(V^{1,0})\\
\oplus & & \oplus\\
\Omega^{0,0}_S(V^{0,1}) & \overset{\partial_{V^{0,1}}}{\longrightarrow} &\Omega^{1,0}_S(V^{0,1})\\
\oplus & & \oplus\\
\Omega^{1,0}_S(V^{\vee\,0,1}) & \overset{\dbar_{V^{\vee\,0,1}}}{\longrightarrow} & \Omega^{1,1}_S(V^{\vee\,0,1})\\
\oplus & & \oplus\\
\Omega^{0,1}_S(V^{\vee\,1,0}) & \overset{\partial_{V^{\vee\,1,0}}}{\longrightarrow} & \Omega^{1,1}_S(V^{\vee\,1,0})\\
\end{array}.
\]
We can separate this into a direct sum of two theories, one holomorphic (the pieces involving $\dbar$) and one antiholomorphic (the pieces involving $\partial$). Equivalently, we view this as working with one complex structure and its conjugate.

\begin{lemma}
On the complexified fields,
\[
S_{IVL}(f, \overline{w}{f}, B, \overline{w}{B}) = \int_S ev(\dbar f \wedge B) + \int_S ev(\partial \overline{w}{f} \wedge \overline{w}{B}).
\]
When restricted to the real points, it recovers the infinite volume limit action.
\end{lemma}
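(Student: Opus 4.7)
The plan is to complexify the spaces of fields, decompose them according to type with respect to both the complex structure on $S$ and the complex structure $J$ on $V$, and then observe that the four potential terms in the expansion of $\int_S ev(d_+ f_\CC \wedge B_\CC)$ collapse to the two claimed ones by a type-counting argument on the Riemann surface.

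First, I would write a generic complexified $f$-field as $f_\CC = f + \bar{f}$ with $f \in \Omega^{0,0}_S(V^{1,0})$ and $\bar{f} \in \Omega^{0,0}_S(V^{0,1})$, and likewise write the complexified $B$-field, guided by the elliptic complex displayed just before the statement, as $B_\CC = B + \bar{B}$ with $B \in \Omega^{1,0}_S(V^{\vee\,0,1})$ and $\bar{B} \in \Omega^{0,1}_S(V^{\vee\,1,0})$. Applying the preceding lemma $d_+ = \dbar_{V^{1,0}} + \partial_{V^{0,1}}$ gives $d_+ f_\CC = \dbar f + \partial \bar{f}$, with $\dbar f \in \Omega^{0,1}_S(V^{1,0})$ and $\partial \bar{f} \in \Omega^{1,0}_S(V^{0,1})$.

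Next, I would expand
\[
\int_S ev\bigl((\dbar f + \partial \bar{f}) \wedge (B + \bar{B})\bigr)
\]
as a sum of four terms. Two of the cross-terms vanish by form-type considerations on the Riemann surface $S$: the product $\dbar f \wedge \bar{B}$ is the wedge of two $(0,1)$-forms, hence lies in $\Omega^{0,2}_S = 0$, and similarly $\partial \bar{f} \wedge B$ is the wedge of two $(1,0)$-forms, hence lies in $\Omega^{2,0}_S = 0$. The remaining two terms are precisely $\int_S ev(\dbar f \wedge B)$ and $\int_S ev(\partial \bar{f} \wedge \bar{B})$, which is the asserted decomposition.

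Finally, to check that restricting to real points recovers $S_{IVL}$, I would use that the real points of the complexification are the fixed points of complex conjugation, so a real $f_\RR$ corresponds to $f_\CC = f + \overline{f}$ and similarly for $B$. Under this restriction the two summands on the right are complex conjugates of one another, hence their sum is real, and tracing back through the identity just established shows this sum equals the original real integrand $\int_S ev(d_+ f_\RR \wedge B_\RR)$. The main potential pitfall --- and the one place I would slow down --- is bookkeeping the identification $V^{1,0}$ versus $V^{\vee\,0,1}$ for the evaluation pairing on complexifications in a way consistent with the elliptic complex written above the lemma; once that is fixed, the rest is a formal manipulation driven entirely by the dimensional collapse $\Omega^{2,0}_S = \Omega^{0,2}_S = 0$ on a Riemann surface.
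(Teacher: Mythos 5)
Your argument is correct and is exactly the intended one: the paper states this lemma without proof, as an immediate consequence of the preceding lemma $d_+ = \dbar_{V^{1,0}} + \partial_{V^{0,1}}$ and the displayed decomposition of the complexified fields, with the two cross-terms dying because $\Omega^{2,0}_S = \Omega^{0,2}_S = 0$ on a Riemann surface. Your closing caveat about matching the $V^{1,0}$ versus $V^{\vee\,0,1}$ labels to the convention $ev(Jv,\lambda)=ev(v,J^\vee\lambda)$ is the only genuinely delicate point, and you have correctly identified it as pure bookkeeping that does not affect the type-counting collapse.
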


\addtocontents{toc}{\protect\setcounter{tocdepth}{0}}
\addcontentsline{toc}{section}{References}
\bibliographystyle{amsalpha}
\bibliography{refs}

\end{document}